%
%
%
%
%
\documentclass[10pt,a4paper]{article}
\usepackage{amsthm,amsfonts,amsmath,amssymb,latexsym}
\usepackage{epsfig,graphics,color}
\usepackage[utf8]{inputenc}
\usepackage[dvipsnames]{xcolor}
\usepackage[matrix,arrow,curve]{xy}
\usepackage[breaklinks=true]{hyperref}
\usepackage{tikz-cd}
\usepackage{verbatim}
\usepackage{bm}
\usepackage{ulem}
\usepackage{enumitem}
\normalem 


\newcommand{\ri}{\rightarrow}

\newcommand{\calm}{{\cal M}}
\newcommand{\calf}{{\cal F}}

\newcommand{\call}{{\cal L}}
\newcommand{\calp}{{\cal P}}

\newcommand{\crit}{\mathrm{ Crit}}

%
%
\newtheorem{PARA}{}[section]
\newtheorem{theorem}[PARA]{Theorem}
\newtheorem{corollary}[PARA]{Corollary}
\newtheorem{lemma}[PARA]{Lemma}
\newtheorem{proposition}[PARA]{Proposition}
\newtheorem{definition}[PARA]{Definition}
\newtheorem{conjecture}[PARA]{Conjecture}
\theoremstyle{definition}
\newtheorem{remark}[PARA]{Remark}
\theoremstyle{plain}
\newtheorem{example}[PARA]{Example}

\newtheorem{definition-proposition}[PARA]{Definition/Proposition}

\newenvironment{customthm}[1]
  {\innercustomthm}
  {\endinnercustomthm}

\newenvironment{customcor}[1]
  {\innercustomcor}
  {\endinnercustomcor}
\newcommand{\para}{\begin{PARA}\rm}
\newcommand{\arap}{\end{PARA}\rm}
\newcommand{\dfn}{\begin{definition}\rm}
\newcommand{\nfd}{\end{definition}\rm}
\newcommand{\rmk}{\begin{remark}\rm}
\newcommand{\kmr}{\end{remark}\rm}
\newcommand{\xmpl}{\begin{example}\rm}
\newcommand{\lpmx}{\end{example}\rm}

\newcommand{\Iso}{\mathrm{Iso}}

\newcommand{\cA}{\mathcal{A}}
\newcommand{\cB}{\mathcal{B}}

\newcommand{\cC}{\mathcal{C}}
\newcommand{\cD}{\mathcal{D}}
\newcommand{\cF}{\mathcal{F}}
\newcommand{\cE}{\mathcal{E}}
\newcommand{\cG}{\mathcal{G}}
\newcommand{\cH}{\mathcal{H}}

\newcommand{\cL}{\mathcal{L}}
\newcommand{\cM}{\mathcal{M}}

\newcommand{\cN}{\mathcal{N}}

\newcommand{\cP}{\mathcal{P}}

\newcommand{\cS}{\mathcal{S}}

\newcommand{\cY}{\mathcal{Y}}

\newcommand{\ueta}{{\underline{\eta}}}
\newcommand{\oev}{\overline{\mathrm{ev}}}

\newcommand{\h}{\boldsymbol{\mathrm{h}}}
\newcommand{\bolddelta}{\boldsymbol{\delta}}
\newcommand{\q}{\boldsymbol{q}}


%

%
%
\newcommand{\one}
{{{\mathchoice \mathrm{ 1\mskip-4mu l} \mathrm{ 1\mskip-4mu l}
\mathrm{ 1\mskip-4.5mu l} \mathrm{ 1\mskip-5mu l}}}}

\newcommand{\C}{{\mathbb{C}}}
\newcommand{\D}{{\mathbb{D}}}

\renewcommand{\H}{{\mathbb{H}}}

\newcommand{\N}{{\mathbb{N}}}
\newcommand{\Q}{{\mathbb{Q}}}
\newcommand{\R}{{\mathbb{R}}}
\renewcommand{\SS}{{\mathbb{S}}}
\newcommand{\T}{{\mathbb{T}}} 

\newcommand{\Z}{{\mathbb{Z}}}
\newcommand{\caly}{{\cal Y}}
\newcommand{\coker}{\mathrm{ coker }}  
\newcommand{\colim}{\mathrm{ colim}\, }  
\newcommand{\im}{\mathrm{im}\,}        
\newcommand{\geo}{\mathrm{ geo}}   
\newcommand{\id}{\mathrm{ id}}         
\newcommand{\Id}{\mathrm{ Id}}
\newcommand{\ind}{\mathrm{ind}\,}
\renewcommand{\Im}{\mathrm{ Im\,}}       

\newcommand{\pr}{{\mathrm{pr}}}
\newcommand{\ev}{\mathrm{ev}}
\newcommand{\Crit}{\mathrm{ Crit}}
\newcommand{\Diff}{\mathrm{ Diff}}        
\newcommand{\End}{\mathrm{ End}}          

\newcommand{\Per}{\mathrm{Per}}

\newcommand{\Sp}{\mathrm{Sp}}

\newcommand{\Hom}{\mathrm{Hom}}
\newcommand{\Map}{\mathrm{Map}}
\newcommand{\eps}{{\varepsilon}}

 \newcommand{\CZ}{\mathrm{CZ}}
\def\NABLA#1{{\mathop{\nabla\kern-.5ex\lower1ex\hbox{$#1$}}}}
\def\Nabla#1{\nabla\kern-.5ex{}_{#1}}
\def\Tabla#1{\Tilde\nabla\kern-.5ex{}_{#1}}
\renewcommand{\Tilde}{\widetilde}
\newcommand{\wh}{\widehat}

\newcommand{\p}{{\partial}}

\newcommand{\ol}{\overline}

\definecolor{vincent}{rgb}{0,0,1.0}

\parskip=5pt
\parindent=0pt
\newcommand{\footremember}[2]{
\footnote{#2}
\newcounter{#1}
\setcounter{#1}{\value{footnote}}
}
\newcommand{\footrecall}[1]{
\footnotemark[\value{#1}]
} 
\title{Floer Homology with DG Coefficients. Applications to Cotangent Bundles}
\author{%
  Jean-Fran\c{c}ois Barraud\footremember{Toulouse}{IMT, Universit\'e de Toulouse}
  \and Mihai Damian\footremember{Strasbourg}{IRMA, Universit\'e de Strasbourg}
  \and Vincent Humili\`ere\footremember{Jussieu}{IMJ-PRG, Sorbonne Universit\'e}
  \and Alexandru Oancea\footrecall{Strasbourg}
}
\date{\today}

\begin{document}

\maketitle


\begin{abstract}
We define Hamiltonian Floer homology with differential graded (DG) local coefficients for symplectically aspherical manifolds. The differential of the underlying complex involves chain representatives of the fundamental classes of the moduli spaces of Floer trajectories of arbitrary dimension. This setup allows in particular to define and compute Floer homology with coefficients in chains on fibers of fibrations over the free loop space of the underlying symplectic manifold. We develop the DG Floer toolset, including continuation maps and homotopies, and we also define and study symplectic homology groups with DG local coefficients. We define spectral invariants and establish general criteria for almost existence of contractible periodic orbits on regular energy levels of Hamiltonian systems inside Liouville domains.      

In the case of cotangent bundles, we prove a Viterbo isomorphism theorem with DG local coefficients. This serves as a stepping stone for applications to the almost existence of contractible closed characteristics on closed smooth hypersurfaces. In this context, our methods allow to access for the first time the dichotomy between closed manifolds that are $K(\pi,1)$ and those that are not.     
\end{abstract}

\setcounter{tocdepth}{2}
\tableofcontents

\section{Introduction and main results}

\subsection{Motivation and overview}\label{sec:overview}

The study of closed characteristics on   hypersurfaces of symplectic manifolds is a fundamental topic in symplectic geometry. When such a closed  hypersurface is of contact type the famous Weinstein conjecture \cite{Weinstein79}, first proved by Viterbo in $\R^{2n}$~\cite{Viterbo-WeinsteinR2n}, claims the existence of at least one closed characteristic.  Without assuming the contact type hypothesis,  given some symplectic manifold $(W,\omega)$ and some {\it proper} autonomous smooth Hamiltonian $H:W\ri \R$, one can ask which regular levels $H^{-1}(c)$ contain at least one closed characteristic. The question whether {\it all} the regular levels of any Hamiltonian satisfy this property is known as the Hamiltonian Seifert conjecture. It was disproved already for $W=\R^{2n}$ by Ginzburg~\cite{Ginzburg-Ham-Seifert1} \cite{Ginzburg-Ham-Seifert2} and Herman~\cite{Herman} for $2n\geq6$; G\"urel and Ginzburg later provided a  ${\cal C}^2$ counterexample for $2n=4$ \cite{Ginzburg-Gurel1}, \cite{Ginzburg-Gurel2}. Once the conjecture is disproved it is natural to ask whether  at least some of the regular levels $H^{-1}(c)$ contain closed characteristics. In the particular case $W=\R^{2n}$ endowed with the standard symplectic form  it actually turns out that many of them do. More precisely, a celebrated result of Hofer-Zehnder~\cite{HZ} and Struwe~\cite{Struwe} asserts that almost all (regular) levels of a smooth proper Hamiltonian $H:\R^{2n}\ri \R$ carry at least one closed characteristic. This result is known as {\it the almost existence property}. 
  
The study of the almost existence property in various settings has been a driving question in Hamiltonian dynamics. We give in~\S\ref{sec:intro-other_work} a historical overview of this problem and discuss the relationship between our results and other work. In this paper we focus on the existence of {\it contractible} closed characteristics in the neighborhood of a given closed hypersurface.

 \begin{definition}\label{def:almost-sure} Let $(W,\omega)$ be a symplectic manifold and $H: W\ri \R$ a proper 
  smooth\footnote{We assume by default that our Hamiltonians are smooth, but ${\cal C}^2$ regularity would suffice for our results.} 
  Hamiltonian. A regular level $\Sigma= H^{-1}(c)\subset \mathrm{int}(W)$ \emph{has the almost existence property  for contractible closed characteristics}  if there exists $\epsilon >0$ such that almost every (in the sense of measure theory) regular  level $H^{-1}(c')$,  $c'\in (c-\epsilon, c+\epsilon)$  carries a closed characteristic that is contractible in $W$. 
  \end{definition}
  
We will sometimes use the shorthand formulation ``the contractible almost existence property holds near $\Sigma$",  or ``$\Sigma$ has the contractible almost existence property"  instead of ``$\Sigma$ has the almost existence property for contractible closed characteristics". 

 Two general  questions arise naturally: 
\begin{quote} QUESTION 1: Under which assumptions  does a closed hypersurface $\Sigma= H^{-1}(c)$ satisfy this property ?\\
\\ 
QUESTION 2: For which symplectic manifolds does  {\it any} closed hypersurface $\Sigma$ satisfy this property ?
\end{quote} 

We mainly focus on the case where the manifold $W$ is the cotangent bundle $T^*Q$ endowed with the standard symplectic form $\omega_Q=\sum_i dp_i\wedge dq_i$. A first remark is that we cannot expect a positive answer to Question~2 for any $Q$: if $Q$ is a manifold that admits a metric with nonpositive curvature, the boundary of its unit cotangent bundle $\Sigma=S^*Q$ does not satisfy the contractible almost existence property 
   since $Q$ does not admit contractible closed geodesics. It is therefore reasonable to work under the assumption that $Q$ is not a $K(\pi, 1)$: under this hypothesis and various topological conditions on $\Sigma$ we give a positive answer to Question 1. Then, we show that for some compact manifolds $Q$ the answer to Question 2 is positive as well. Our ultimate goal is the following statement that we conjecture: 
   
   \begin{conjecture}\label{magic} If $Q$ is not a $K(\pi,1)$ then any closed hypersurface $\Sigma \subset T^*Q$ has the contractible almost existence property.
   \end{conjecture}
   
   We point out that our paper provides a general method to approach this
   conjecture. We briefly explain it here:  Suppose more generally that
   $W^{2n}$ is a Weinstein domain.   Going back to the seminal
     paper of Viterbo~\cite{Viterbo99}, in order to prove existence
     results for closed characteristics it is enough for symplectic
     homology $SH_*(W)$ to be ``different enough" from the  usual
     homology $H_*(W, \p W)$.   Considering the canonical map $i_{W*}:
   H_{*+n}(W,\p W)\ri SH_*(W)$ we adapt the work of
     Irie~\cite{Irie14} to formulate two criteria for positive answers
     respectively to the questions stated above. 
   \begin{quote} CRITERION 1: Let $j:U\hookrightarrow W$ be a bounded domain with smooth boundary $\Sigma=\p U$. 
   If $\ker(i_{W*}) \not\subset \ker(j_!)$ then $\Sigma$ has the contractible almost existence property.
   \end{quote} Here $j_! : H_*(W, \p W)\ri H_*(U, \p U)$ is the shriek map, i.e., the Poincaré dual of the restriction map $H^*(W)\ri H^*(U)$. 
   \begin{quote} CRITERION 2: Suppose that $\ker(i_{W*})$ contains the fundamental class $[W]$. 
   Then any 
   hypersurface $\Sigma$ that bounds a domain in $W$ has the contractible almost existence property.
   \end{quote}
   Criterion~2 is a corollary of Criterion~1 since $j_![W]= [U]\neq 0$.
   As we will emphasize in the sequel, the assumption of Criterion 2 actually implies more generally that the $\pi_1$-sensitive Hofer-Zehnder capacity $c^{\circ}_{HZ}(W)$ is finite. 
   
   Now the problem with these two criteria is that they  cannot be fulfilled as stated in cotangent bundles. Indeed, for $W=T^*Q$, the map $i_{W*}$ is identified via the Viterbo isomorphism   with the map  induced by the inclusion $i_Q$ of constant loops $Q$ into the space of contractible free loops $\cL_0Q$, which is injective. 
   The approach taken in~\cite{Albers-Frauenfelder-Oancea} is to prove Criterion 2 for some well-chosen {\it  local coefficients} under the assumption that the Hurewicz map $\pi_2(Q)\ri H_2(Q)$ does not vanish.  A different approach is taken in~\cite{Frauenfelder-Pajitnov}, where the key role is played by $S^1$-localization and Criterion 2 is proved for rationally inessential manifolds~$Q$.
   
   In the present paper we establish and apply these criteria for differential graded (DG) coefficients. We developed a  Morse theory with DG coefficients in our previous work \cite{BDHO} following the work of Barraud and Cornea \cite{BC07}.  The following example illustrates how the use of DG coefficients changes  the kernel of $i_{Q*}:H_*(Q)\ri H_*(\call_0 Q)$ enabling us to apply our criteria in some cases.  We proved in \cite[Theorem 7.2]{BDHO}   that, for any Hurewicz fibration $F\ri E\ri X$, the homology of $X$ with DG coefficients in the cubical chains $C_*(F)$ is isomorphic to the singular homology of $E$. In our case, if $E\ri \call_0 Q$ is a fibration and  $E|_Q$ is the total space of its pullback by $i_Q$, then  $i_{Q*}$ becomes the map associated to the inclusion  $E|_Q\hookrightarrow E$ in singular homology. Let us now  specify our example:  consider  the path-loop fibration $E=\calp\call_0Q \ri \call_0 Q$, with $E$ being the space of paths in $\call_0Q$ starting at a fixed basepoint $\star$; we may depict the elements of $E$ as cones in $Q$.  Then $E|_Q$ is formed of  cones that end up in a point of $Q$, i.e., spheres with north pole in $\star$.
  
  \begin{figure}
  \centering
  \includegraphics[scale=.6]{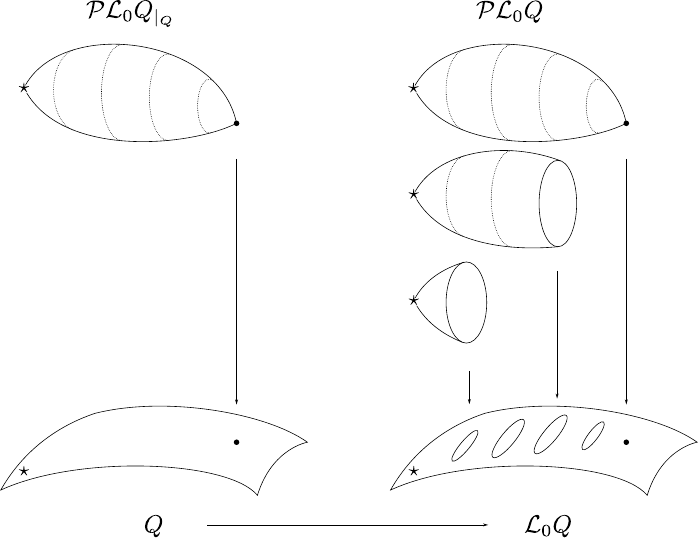} 
  \caption{Inclusion of spheres into the (contractible) space of cones.}
  \label{fig:bananes-cones}
\end{figure}

   The space $E$ is clearly contractible whereas $E|_Q$ is not, unless $Q$ is a $K(\pi, 1)$ --- this is obvious if $\pi_2(Q)\neq 0$; we prove it in general in Proposition~\ref{prop:Ker-i}. Therefore we have some elements in the kernel of $i_{Q*}$.  See Figure \ref{fig:bananes-cones}.
   
   In order to establish the above criteria with DG coefficients the first step was of course to develop the DG coefficients setup in Morse theory. Other steps were needed, most of them of independent interest. We present them in the following  ``to do list'': 
   \begin{enumerate} 
   \item Establish a  Morse theory with DG coefficients for compact manifolds $Q$ and generalise it to the case of $\call_0 Q$. Define direct and shriek maps in this setting:  done in \cite{BDHO}, recalled in \S\ref{sec:DGMorse} of this paper.
   \item Define Hamiltonian Floer homology and symplectic homology with DG coefficients: stated in \S\ref{sec:intro-DGFloer} below and done in \S\ref{sec:DGFloer}.
   \item State and prove the Viterbo isomorphism theorem for DG coefficients: stated in \S\ref{sec:intro-Viterbo} below and proved in  \S\ref{sec:DGViterbo}.
   \item Establish Criterion 1 for DG coefficients: stated in \S\ref{sec:intro-Liouville} and done in \S\ref{sec:criterion1}.
   \item In view of the statement of  Criterion 2, define the notion of homology class with DG coefficients ``living over the fundamental class'': done in \S\ref{sec:classes_over_fund_class}.
   \item Establish Criterion 2 for DG coefficients: stated at the end of  \S\ref{sec:intro-Liouville} and proved in \S\ref{sec:criterion2}.
   \item Apply Criterion 1 for well chosen DG coefficients to get some answers to Question 1: Our results are stated in \S\ref{sec:intro-almost-existence} and proved in \S\ref{sec:almost-existence-cotangent} and in \S\ref{sec:knotted}.
   \item Apply Criterion 2  for well chosen DG coefficients to get some answers to  Question 2: Our results are stated in \S\ref{sec:intro-finiteness-HZ} and proved in \S\ref{sec:case-cotangents}-\S\ref{sec:abundant_examples}.
  
   \end{enumerate}

 In view of the above,  the remaining part of the introduction is structured as follows: in~\S\ref{sec:intro-almost-existence} and \S\ref {sec:intro-finiteness-HZ} we present our dynamical results in cotangent bundles, in~\S\ref{sec:intro-DGFloer} we  discuss Floer homology with DG coefficients, and in~\S\ref{sec:intro-Viterbo} we state the Viterbo isomorphism with DG coefficients. We complement the introduction with   another section \S\ref{sec:intro-Liouville}, that contains the  dynamical criteria 1 and  2   for DG coefficients. In Section~\S\ref{sec:intro-other_work} we discuss the relationship between our results and other work.

\subsection[Almost existence in cotangent bundles]{Almost existence in cotangent bundles. \\ Answers to Question 1} \label{sec:intro-almost-existence}

Given a closed manifold $Q$ we consider its cotangent bundle $T^*Q$ endowed with the standard symplectic form $\omega_Q= \sum_i dp_i\wedge dq_i$. We establish the  contractible almost existence property for cotangent bundles in the following setting.

    \begin{customthm}{A} \label{thm:almost-existence-simplified} Let $Q^n$ be a closed orientable $n$-dimensional manifold, and let $\varphi:S\to T^*Q$ be a continuous map defined on a 
  closed orientable $n$-dimensional manifold $S$. Let $d=\deg(\pi\circ\varphi)$ and assume one of the following: 
 \begin{enumerate}
 \item  $Q$ is not a $K(\pi,1)$ and $d=\pm 1$, or 
 \item $Q$ is not a rational $K(\pi,1)$ and $d$ is nonzero. 
\end{enumerate}
Then, any hypersurface $\Sigma\subset T^*Q$ that bounds a relatively compact domain $U$ that contains $\varphi(S)$ has the almost existence property for closed characteristics that are contractible in $T^*Q$.
\end{customthm}    
    
That $Q$ is {\it not a rational} $K(\pi,1)$ means that $\pi_\ell(Q)\otimes \Q\neq 0$ for some $\ell\geq 2$.

\begin{remark}
Let $Q^n$ be closed connected of dimension $n\ge 2$. Then any closed connected hypersurface $\Sigma^{2n-1}\subset T^*Q$ is necessarily orientable and separating, meaning that $T^*Q\setminus \Sigma$ consists of two connected components of which exactly one is relatively compact.\footnote{This is proved as follows: (i) if the complement of $\Sigma$ was connected, we could find a loop in $T^*Q$ that has exactly one transverse intersection point with $\Sigma$, which implies that $[\Sigma]\neq 0\in H_{2n-1}(T^*Q;\Z_2)$. This contradicts the fact that the homology of $Q$ is supported in degree $\le n$ with $n\ge 2$. 
(ii) the complement of $\Sigma$ inside $T^*Q$ actually has  exactly  two connected components because any point in $T^*Q\setminus \Sigma$ can be connected by a path disjoint from $\Sigma$ to a point in the boundary of a tubular neighborhood and since this boundary fibers over $\Sigma$ with fiber $\SS^0$ it has at most two connected components. (iii) the complement of a disc bundle in $T^*Q$ is connected for $n\ge 2$, and this implies that one (and only one) of the two connected components of $T^*Q\setminus \Sigma$ is bounded. In particular $\Sigma$ is orientable  because $T^*Q$ is oriented. The argument works more generally for hypersurfaces inside Weinstein domains of dimension $2n\ge 4$. We thank K.~Niederkr\"uger and J.-Y.~Welschinger for a useful discussion on this topic.} Accordingly, in the statement of Theorem~\ref{thm:almost-existence-simplified} and also in the sequel, the fact that $\Sigma$ bounds a relatively compact domain $U$ should be understood in a notational sense.  
\end{remark}

\begin{remark}
One notable situation in which Theorem~\ref{thm:almost-existence-simplified} applies is for $S\subset T^*Q$ a closed exact Lagrangian. Then, by work of Abouzaid-Kragh~\cite{Abouzaid-nearby,Kragh2013} and Guillermou~\cite{Guillermou2023}, the projection $\pi:S\to Q$ is a homotopy equivalence and therefore has degree $+1$ (for a suitable orientation of $S$). 
\end{remark}

 We also prove another version of Theorem~\ref{thm:almost-existence-simplified} under a stronger homotopical condition but without referring to any auxiliary manifold $S$.

\begin{customthm}{B} \label{thm:almost-existence-Thom-simplified} Let $Q^n$ be a closed orientable $n$-dimensional  manifold that is not a rational $K(\pi, 1)$. 
Any hypersurface $\Sigma\subset T^*Q$ that bounds a relatively compact domain $U$ such that the projection $\pi :U\ri Q$ induces a nonzero map $H_n(U;\Z)\ri H_n(Q;\Z)$ has the  contractible almost existence property. 
\end{customthm}
    
 Until now $Q$ was  supposed orientable, but we can also prove the following statement on the contractible almost existence property in the nonorientable case. 
 
   \begin{customthm}{C}\label{thm:almost-existence-nonorientable} Let $Q^n$ be a closed non-orientable $n$-dimensional manifold that is not a $\Z_2$-$K(\pi,1)$,
   and let $\varphi:S \ri T^*Q$ be a continuous map defined on a closed $n$-dimensional manifold $S$ such that $\pi\circ\varphi$ has non-zero mod-2 degree. 
   
   Then, any hypersurface $\Sigma\subset T^*Q$ that bounds a relatively compact domain $U$ that contains $\varphi(S)$ has the  contractible almost existence property.
   \end{customthm} 
   
   That $Q$ is not a $\Z_2$-$K(\pi,1)$ means that   $\pi_\ell(Q)\otimes_\Z \Z_2\neq 0$ for some $\ell\geq 2$, which arises for instance  if $\pi_\ell(Q)$ has  $\Z$, or  $\Z_{2m}$ for some $m >0$ as a direct summand. 
     
    Note that the above theorems imply the  contractible almost existence property for boundaries of domains $U$ that contain the zero section, or, more generally, that contain  the graph of a (not necessarily  closed) $1$-form (take  $S=Q$ and the embedding $\varphi:Q\ri T^*Q$ whose image is this graph  to get degree $=1$ in Theorem~\ref{thm:almost-existence-simplified} and Theorem~\ref{thm:almost-existence-nonorientable}). 
    
A \emph{magnetic cotangent bundle} is the cotangent bundle $T^*Q$ endowed with a symplectic form $\omega_Q+\pi^*\beta$, where $\beta\in\Omega^2(Q)$ is a closed $2$-form called \emph{magnetic form}. If $\beta$ is exact we speak of an \emph{exact} magnetic cotangent bundle. In this case, we get from the previous observation as an immediate consequence\footnote{We thank Leonardo Macarini for pointing out this corollary.}: 
    
    \begin{customcor}{D}[contractible almost existence in exact magnetic cotangent bundles] \label{cor:exact-magnetic} Let $Q$ be a closed orientable manifold and $\sigma$ a $1$-form on $Q$.  Theorems~\ref{thm:almost-existence-simplified}, \ref{thm:almost-existence-Thom-simplified} and \ref{thm:almost-existence-nonorientable} are valid for $T^* Q$ endowed with  the magnetic symplectic form 
    $\omega_\sigma=\omega_Q+\pi^*d\sigma$.
    
    In particular if $Q$ is not a $K(\pi,1)$ and $U\subset (T^*Q, \omega_\sigma)$ is a bounded domain that contains the zero section, its boundary $\Sigma=\p U$ has the contractible almost existence property. 
    \end{customcor} 
    
\begin{proof}
Let $U$ be a relatively compact domain in $(T^*Q, \omega_Q+\pi^*d\sigma)$ with smooth boundary $\Sigma =\p U$, where $\omega_Q$ is the standard symplectic form on $T^*Q$ and $\sigma$ is an arbitrary 1-form on $Q$. Let $\omega_\sigma=\omega_Q+\pi^*d\sigma$ be the twisted symplectic form and remark that $\Psi(p,q) =(p+\sigma(q),q)$ is a symplectomorphism between $(T^*Q,\omega_\sigma)$ and the standard cotangent bundle $(T^*Q,\omega_Q)$. It therefore suffices to prove the contractible almost existence property for $\Psi(\Sigma)=\p (\Psi(U))$ in the standard cotangent bundle. 

It is straightforward that if $U$ satisfies the assumptions of Theorem~\ref{thm:almost-existence-simplified}, respectively Theorem~\ref{thm:almost-existence-Thom-simplified}, or Theorem~\ref{thm:almost-existence-nonorientable}, then $\Psi(U)$ also does: the composition with $\Psi$ does not alter the projection $\pi$.  We apply these theorems to $\Psi(U)$ in the standard cotangent and we get the corresponding results for $\Sigma= \p U$ in the exact magnetic cotangent. 
\end{proof}
 
As another straightforward consequence we can prove instances of the Weinstein conjecture. If $\Sigma$ is of contact type the  contractible almost existence property immediately implies the Weinstein conjecture for $\Sigma$ endowed with the contact form given by the restriction of the Liouville form, i.e., the existence of at least one closed characteristic for $\Sigma$ that is contractible in $T^*Q$. Indeed, the characteristic foliation on nearby hypersurfaces can be chosen to be conjugate to the one on $\Sigma$. It is natural to ask whether such a contact type hypersurface satisfies the Weinstein conjecture for \emph{all} contact forms that define the same contact structure $\xi$. We say in this case that \emph{$(M,\xi)$ satisfies the Weinstein conjecture}. We say that $\Sigma$ is of \emph{restricted contact type} if it admits a transverse Liouville vector field that is globally defined on $T^*Q$.

\begin{customcor}{E} \label{cor:intro-Weinstein}
Let $\Sigma=\p U$ be as in Theorems~\ref{thm:almost-existence-simplified}, \ref{thm:almost-existence-Thom-simplified} or \ref{thm:almost-existence-nonorientable}, and assume that it is of restricted contact type with complete Liouville vector field. Then $(\Sigma,\xi)$ satisfies the Weinstein conjecture for closed Reeb orbits that are contractible in $T^*Q$.
\end{customcor}

\begin{proof}
Let $\alpha_0$ be the contact form induced on $\Sigma$. The fact that $\Sigma$ has restricted contact type with complete Liouville vector field $Z$ ensures that its symplectization $((0,\infty)\times \Sigma,d(r\alpha_0))$ admits a Liouville embedding in $T^*Q$ by the flow of $Z$, and the positive half of the symplectization $(1,\infty)\times\Sigma$ embeds in the complement of $U$. Up to global scaling by a constant, an arbitrary contact form $\alpha$ on $\Sigma$ such that $\ker\alpha=\ker\alpha_0=\xi$ can be written $\alpha=f_\alpha\alpha_0$ for some function $f_\alpha:\Sigma\to (1,\infty)$. As such, $\alpha$ is induced by the restriction of the Liouville form $r\alpha_0$ to the graph of $f_\alpha$ in $(1,\infty)\times\Sigma$, denoted $\Sigma_\alpha$. The interior of $\Sigma_\alpha$ contains $U$ and retracts onto $U$, and therefore the assumptions of Theorems~\ref{thm:almost-existence-simplified}, \ref{thm:almost-existence-Thom-simplified}, and~\ref{thm:almost-existence-nonorientable} are satisfied by $\Sigma_\alpha$ if and only if they are satisfied by $\Sigma$. From this we conclude.  
\end{proof}

An obvious example of such a hypersurface is $S^*Q$ (the standard Liouville vector field on $T^*Q$ is complete). Starting from this, the embedded isotropic surgery method of Laudenbach~\cite{Laudenbach97} allows us to construct restricted contact type hypersurfaces in $T^*Q$ with complete Liouville vector field (equal to the standard one at infinity) and arbitrarily complicated topology. These contact manifolds $(\Sigma,\xi)$ satisfy the Weinstein conjecture for closed Reeb orbits that are contractible in $T^*Q$.

If $\Sigma$ is of contact type and $H^1(\Sigma;\R)=0$, then $\Sigma$ is of restricted contact type with complete Liouville vector field. Indeed, a local Liouville vector field can be extended to a global one that coincides with the canonical Liouville vector field on $T^*Q$ outside of a neighborhood of $\Sigma$. 

\begin{remark}
Theorems~\ref{thm:almost-existence-simplified} and~\ref{thm:almost-existence-Thom-simplified} are particular cases of the more general Theorems~\ref{thm:almost-existence} and~\ref{thm:almost-existence-Thom}, which we prove in~\S\ref{sec:almost-existence}. As such, Corollaries~\ref{cor:exact-magnetic} and~\ref{cor:intro-Weinstein} remain true under the more general assumptions of those theorems, with the same proofs. 
\end{remark}

We end this section with another easy corollary of our results based on the work of Ginzburg and Niche \cite{Ginzburg-Niche}. Recall that a 1-dimensional foliation is said to be \emph{uniquely ergodic} if it is generated by a vector field whose flow admits a unique invariant probability measure. A regular level set $\Sigma= H^{-1}(0)$ of an autonomous Hamiltonian $H$ always admits at least one invariant probability measure obtained by taking the limit of normalized Liouville measures on $H^{-1}([-\eps, \eps])$ as $\eps$ goes to 0 \cite[\S1.4]{HZ}. 
A closed characteristic of $\Sigma$, if it exists, provides another probability measure, obtained by pushing forward the Lebesgue measure of the circle by the parametrization of the closed characteristic given by the flow. Therefore, a hypersurface which admits a closed characteristic is not uniquely ergodic. 

  \begin{customcor}{F}\label{cor-unique-ergodicity}
In the settings of Theorems~\ref{thm:almost-existence-simplified}, \ref{thm:almost-existence-Thom-simplified}, \ref{thm:almost-existence-nonorientable} or Corollary \ref{cor:exact-magnetic}, the characteristic foliation of $\Sigma=\partial U$ is not uniquely ergodic.
\end{customcor}

In view of the previous discussion, this corollary can be seen as a very weak version of the Weinstein conjecture which holds for all (not necessarily contact) hypersurfaces. 

  \begin{proof} Ginzburg--Niche \cite{Ginzburg-Niche} shows the following. Consider a Hamiltonian structure, i.e., a $2n-1$-manifold endowed with an exact 2-form $\Omega$ such that $\Omega^{n-1}\neq 0$ pointwise. Assume 
  its characteristic foliation is uniquely ergodic and
  satisfies $\mathrm{lk}(\Omega)\neq 0$, where $\mathrm{lk}(\Omega)=\int\alpha\wedge\Omega^{n-1}$ for any primitive $\alpha$ of $\Omega$. 
    Then \cite{Ginzburg-Niche} prove that the Hamiltonian structure has contact type, i.e., $\Omega$ admits a primitive which is a contact form.

   Let $\Sigma=\partial U$ be a hypersurface as in Theorems~\ref{thm:almost-existence-simplified}, \ref{thm:almost-existence-Thom-simplified}, \ref{thm:almost-existence-nonorientable} or Corollary \ref{cor:exact-magnetic}. The restriction of $\omega$ to $\Sigma$ defines a Hamiltonian structure on $\Sigma$. Moreover, by the Stokes formula, \[\mathrm{lk}(\omega|_{\Sigma})=\int_U\omega^n\neq 0.\]
   Assume now that the characteristic foliation of $\Sigma$ were uniquely ergodic. By \cite{Ginzburg-Niche}, $\Sigma$ would have contact type, hence would admit a closed characteristic as explained in the paragraph that precedes Corollary \ref{cor:intro-Weinstein}. This would contradict unique ergodicity.   
  \end{proof}

\subsubsection{A generalization: Almost existence for hypersurfaces knotted with sphe\-res}

Our final result giving an answer to Question 1 is a theorem about  contractible almost existence for relatively compact open subsets $U\subset T^*Q$ that contain a submanifold that may not cover the base with nontrivial degree, but is somehow knotted with a
sphere. This generalizes Theorem~\ref{thm:almost-existence-simplified}.

Let $Q$ be a closed orientable manifold, and suppose that
\begin{itemize}
\item 
  $Q$ contains an embedded sphere $X$ of dimension $k\ge 2$,
\item
  $Q$ contains an orientable $(n-k)$-submanifold $Y$ that intersects $X$ transversely at a single point $Y\cap X=\{p\}$.
\item $Y$ is $\ell$-connected, with $\ell=2k-2$ if $k$ is even, and $\ell=k-1$ if $k$ is odd. 
\item the normal bundle of $Y$ is trivial.
\end{itemize}

\begin{customthm}{G}\label{thm:orbits-knotted-case-intro}
  In the above situation, consider a continuous map $\varphi:S\to
  \pi^{-1}(Y)\subset T^*Q$ defined on a smooth $(n-k)$-dimensional
  orientable closed manifold $S$, and suppose that the map
  $\pi\circ\varphi:S\to Y$ has degree $d\neq 0$.

  Then, any hypersurface $\Sigma\subset T^*Q$ that bounds a relatively
  compact domain $U$ that contains $\varphi(S)$ has the almost 
  existence property for contractible closed characteristics.
\end{customthm}

This result is proved in~\S\ref{sec:knotted} as Theorem~\ref{thm:orbits-knotted-case}. The example below is revisited in~\S\ref{sec:knotted} as Example~\ref{example:knotted-with-spheres-section-7}. 

\begin{example} \label{example:knotted_with_sphere}
  Let $k\ge 2$ and $Q=Q'\sharp (\SS^k\times Y^{n-k})$, with $Q'$ closed orientable and $Y$ closed orientable and $(2k-2)$-connected (in particular $n\ge 3k-1$). Then $Q$ satisfies the assumptions of Theorem~\ref{thm:orbits-knotted-case-intro}.
  
  Indeed, let $B\subset \SS^k\times Y$ be the ball used to
  construct the connected sum, and let 
  $X=\SS^{k}\times\{q\}$ and $Y\equiv \{p\}\times Y$ for some point $(p,q)\in \SS^{k}\times Y$ such that $X\cap B=Y\cap B=\emptyset$. 
  Then  
  $X$ and $Y$ meet transversely at the single point $(p,q)$ and
  the normal bundle of $Y$ is trivial. Also, $Y$ is connected enough.

  \medskip

  The case where $Q'$ is a $K(\pi, 1)$, for instance $Q=\T^{3k}\sharp(\SS^k\times\SS^{2k})$, is particularly interesting. Indeed, no technique from the literature, nor from this paper, allows to prove finiteness of the $\pi_1$-sensitive Hofer-Zehnder capacity for $D^*Q$, whereas Theorem~\ref{thm:orbits-knotted-case-intro} goes strictly beyond the scope of Theorems~\ref{thm:almost-existence-simplified} and~\ref{thm:almost-existence-Thom-simplified}.
\end{example}

\subsection[Finiteness of the $\pi_1$-sensitive Hofer-Zehnder capacity]{Finiteness of the $\pi_1$-sensitive Hofer-Zehnder capacity.\\ Answers to Question 2} \label{sec:intro-finiteness-HZ} 
The results presented in~\S\ref{sec:intro-almost-existence} are conditional on some assumption on the hypersurface $\Sigma$. To obtain results that are unconditional on the hypersurface $\Sigma=\p U$ one is led to consider the \emph{$\pi_1$-sensitive Hofer-Zehnder capacity} $c_{HZ}^\circ(U)\in (0,\infty]$.  
We exhibit a class of closed manifolds $Q$, which we call \emph{abundant with $2$-spheres}, for which $c_{HZ}^\circ(D^*Q)<\infty$, so that almost existence for contractible closed characteristics holds for all closed hypersurfaces in $T^*Q$. 

 To define manifolds abundant with $2$-spheres we introduce the space $\Map(\SS^2,Q)$, and the fibration $\Map(\SS^2,Q)\to Q$ given by evaluating each map $\SS^2\to Q$ at a fixed point in $\SS^2$. The fiber is the space $\Map_*(\SS^2,Q)\equiv \Omega^2Q$ of maps $\SS^2\to Q$ that preserve the basepoint. Let $j:\star\hookrightarrow Q$ be the inclusion of the basepoint.  

\begin{definition} \label{defi:abundant_with_2spheres-intro}
Let $Q^n$ be a closed connected oriented $n$-dimensional manifold. We say that \emph{$Q$ is abundant with $2$-spheres} if there exists a class $\alpha\in H_*(\Map(\SS^2,Q))$ in the total space of the fibration $\Map(\SS^2,Q)\to Q$ such that:
\begin{itemize}
\item[(i)] $\alpha$ lives over the fundamental class (see Definition~~\ref{def:living-above-fundamental}, Ex.~\ref{def:living-above-fundamental-fibration}), meaning that the shriek map to a point $j_!:H_*(\Map(\SS^2,Q))\to H_{*-n}(\Map_*(\SS^2,Q))$ satisfies $j_!\alpha\neq 0$,  and
\item[(ii)] if $\mathrm{deg}(\alpha)=n$  we also have $j_!\alpha\neq 0$ in reduced homology $\widetilde{H}_0(\Map_*(\SS^2,Q))$.
\end{itemize} 
\end{definition}

\begin{customthm}{H} \label{thm:abundant-intro}
Let $Q$ be a closed connected oriented manifold that is abundant with $2$-spheres. Then $c_{HZ}^\circ(D^*Q)<\infty$ and the almost existence property for closed contractible characteristics holds in $T^*Q$. 
\end{customthm}

Theorem~\ref{thm:abundant-intro} is proved in~\S\ref{sec:HZ} as Theorem~\ref{thm:abundant}. 

In~\S\ref{sec:abundant_examples} we exhibit several classes of manifolds that are abundant with $2$-spheres. That discussion is summarized, but not exhausted, by the following statement.

\begin{customthm}{I} \label{thm:examples-intro}
The following manifolds $Q$ are abundant with $2$-spheres, and therefore $c_{HZ}^\circ(D^*Q)<\infty$ and the  contractible almost existence property holds in $D^*Q$. 
\begin{itemize}
\item[(i)] Manifolds that admit fibered families of $2$-spheres (Definition~\ref{defi:fibered_2spheres}). In particular, the spheres $\SS^n$ of even dimension $n=2k$, $k\ge 1$ (Example~\ref{example:spheres}) and of odd dimension $n=4k-1$, $k\ge 1$ (Example~\ref{example:spheres-4k-1}), and also the total spaces of fiber bundles with such spherical fibers over highly connected bases (Example~\ref{example:fibrations_spherical}). 
\item[(ii)] Manifolds that are not a $K(\pi,1)$ and that are covered by diffeomorphisms in the sense of Definitions~\ref{defi:covered_by_diffeomorphisms} and~\ref{defi:ell_covered_by_diffeomorphisms}, in particular all compact connected Lie groups other than tori (Proposition~\ref{prop:Lie_groups}), and all odd-dimen\-sio\-nal spheres $\SS^{n=2k+1}$, $k\ge 1$ (Proposition~\ref{prop:odd-spheres}).
\item[(iii)] The compact rank 1 symmetric spaces $\C P^d$, $\H P^d$, $d\ge 1$, or $Ca P^2$ (Propositions~\ref{prop:CPd} and~\ref{prop:HPd-CaP2}). 
\item[(iv)] The product $Q=M\times N$, where $M$ is abundant with
  $2$-spheres and $N$ is an arbitrary closed oriented manifold 
  (Proposition~\ref{prop:abundant_products}). 
\end{itemize}
\end{customthm}

The examples from Theorem~\ref{thm:examples-intro} go well beyond (but do not cover entirely) those of~\cite{Albers-Frauenfelder-Oancea}. However, they are strictly contained in the class of rationally inessential manifolds of Frauenfelder-Pajitnov~\cite{Frauenfelder-Pajitnov}. Despite this fact, the method that we use is new and, for that reason, we believe that it has potential. Our result relies on the existence of the fibration 
$$
\Map_*(\SS^2,Q)\hookrightarrow \cP_Q\cL_0Q\to \cL_0Q,
$$
where $\cP_Q\cL_0Q$ is the space of paths in $\cL_0Q$ starting at a constant path.
The resulting symplectic
homology with DG coefficients is equal to the homology of $Q$. In contrast, the DG
Morse homology of the constant loops $Q$ is that of $\cP_Q\cL_0Q|_Q=\Map(\SS^2,Q)$, and this is typically larger than the homology of $Q$ when the latter is not a $K(\pi,1)$.  
Moreover, it contains suitable classes ``that live over the fundamental class" (see~\S\ref{sec:classes_over_fund_class}) whenever $Q$ is abundant
with $2$-spheres, which allows us to conclude.  This example echoes the one presented in~\S\ref{sec:overview}.

The playfield is wide open. It is absolutely conceivable that other fibrations may do the job in much more general situations and maybe even provide a proof of  Conjecture \ref{magic}. We point out that in view of Criterion 2 for DG coefficients presented in~\S\ref{sec:intro-Liouville} (Theorem \ref{thm:finiteness-HZ-intro}), this becomes a topological problem about the existence of suitable fibrations, or more general DG local systems, over $\cL_0 Q$. 


\subsection{Floer homology with DG coefficients} \label{sec:intro-DGFloer}

The results presented in the previous section are obtained using a new Floer homology theory with DG local coefficients that we develop in~\S\ref{sec:DGFloer}. Our construction builds crucially on~\cite{BC07} and it parallels the Morse homology theory with DG coefficients from~\cite{BDHO}. In this section we summarize it.

In one word, the difference between classical Floer homology and Floer homology with DG local coefficients is the following: the former uses the compactified moduli spaces of connecting Floer trajectories of \emph{dimension $0$} (to define maps) \emph{and $1$} (to infer relations), while the latter uses representatives of the fundamental classes rel boundary for the compactified moduli spaces of Floer trajectories \emph{of arbitrary dimension} (to define maps \emph{and} to infer relations).  

Given a based and path-connected topological space $B$, let $C_*(\Omega B)$ be the DGA of cubical chains on the based Moore loop space $\Omega B$.  A \emph{Moore loop} is a path $\gamma:[0,L]\to B$ with $L\ge 0$ arbitrary and $\gamma(0)=\gamma(L)$. If the last condition is dropped, we speak of a \emph{Moore path}. 

\begin{definition} A \emph{DG local system} on $B$ is a chain complex $\cF_*$ with an additional structure of DG right $C_*(\Omega B)$-module. 
\end{definition} 

The main source of DG local systems are Serre fibrations $F\hookrightarrow E\stackrel{\pi}{\longrightarrow} B$: up to a canonical fiberwise weak homotopy equivalence, the holonomy of such a fibration is encoded in a \emph{holonomy map} $F\times \Omega B\to F$ determined by a choice of \emph{lifting function} $E \times_B \cP B\to \cP E$ which preserves the concatenation of paths. Here $\cP B$ is the space of Moore paths in $B$. The holonomy map induces a right $C_*(\Omega B)$-module structure on $\cF=C_*(F)$. When $B$ is a manifold, and more generally a colimit of manifolds, the Morse complex of $B$ with DG coefficients in $C_*(F)$ is a model for the chains on the total space of $E$, see~\cite[\S7]{BDHO}. 

Given a manifold $X$, let $\cL X$ be its free loop space and $\cL_0 X$ be the component of contractible loops. 
We shall define Hamiltonian Floer homology with coefficients in a DG local system $\cF$ on $\cL_0 X$ for a symplectic manifold $(X,\omega)$ such that $\omega$ vanishes on $2$-spheres (\emph{symplectically aspherical}), and with coefficients in a DG local system $\cF$ on $\cL X$ for a symplectic manifold $(X,\omega)$ such that $\omega$ vanishes on $2$-tori (\emph{symplectically atoroidal}). We address both closed manifolds and Liouville domains. Our setup includes the particular case where the DG local system is supported in degree $0$ (we will refer to such DG local systems as \emph{classical local systems}). In this case, we recover the classical Floer homology with local coefficients.

{\bf The DG Floer toolset} provides a hierarchy of maps with the same formal structure as in classical Floer theory (differential, continuation maps, homotopies), but enriched with coefficients in a fixed DG local system $\cF$ over $\cL X$. (It is understood that, if $\cL X$ has several connected components, we work componentwise.) In addition to Hamiltonians $H$ and almost complex structures $J$, one needs to choose additional auxiliary data.\footnote{In list form: representing chain systems for the moduli spaces of Floer trajectories, embedded trees in $\cL X$ connecting the $1$-periodic orbits of the Hamiltonian to a chosen basepoint, homotopy inverses for the maps that collapse these trees, functions that allow to suitably modify the value of the action functional along continuation Floer trajectories. See~\S\ref{sec:DGFloer}.} We reserve the details for~\S\ref{sec:DGFloer} and, at this stage, we just record the necessity of this additional choice by the generic notation $\Xi_{(H,J)}$. The discussion that follows is similar to~\cite[\S2]{BDHO}.

Let $R_*=C_*(\Omega\cL X)$. 

\begin{itemize}
\item to a regular admissible pair $(H,J)$ consisting of a Hamiltonian and an almost complex structure, complemented by additional data $\Xi_{(H,J)}$, one associates \emph{the Floer complex with coefficients in the DG local system $\cF$} defined as 
$$
FC_*(H,J,\Xi_{(H,J)};\cF)=(\cF\otimes C_\bullet,D),
$$
where $C_\bullet = \langle \Per(H)\rangle$ and $\Per(H)$ is the set of $1$-periodic orbits of the Hamiltonian $H$. With respect to the canonical basis $\Per(H)$ of $C_\bullet$, the differential $D$ takes the explicit form
$$
D(\alpha\otimes x) = \p\alpha\otimes x + (-1)^{|\alpha|} \sum_y \alpha \cdot m_{x,y} \otimes y,
$$
with $m_{x,y}\in C_{|x|-|y|-1}(\Omega \cL X)$ such that the Maurer-Cartan condition   
$$
\p m_{x,y}-\sum_{z}(-1)^{|x|-|z|}m_{x,z}\cdot m_{z,y} = 0
$$
holds. Here the product $m_{x,z}\cdot m_{z,y}$ is induced by concatenation of based loops.

\begin{definition}\label{def:general-twisting} 
A family of chains $m_{x,y}\in C_{|x|-|y|-1}(\Omega \cL X)$ that satisfies the Maurer-Cartan equation above is called a \emph{twisting cocycle}.
\end{definition}

In practice, the chains $m_{x,y}$ are constructed from chain representatives of the fundamental classes of the compactified moduli spaces of Floer trajectories \emph{of arbitrary dimension} by evaluating them into chains on $\Omega \cL X$. This construction is the content of~\S\ref{sec:twisting-cocycle}.  We call the family of chains obtained by this method {\it Barraud-Cornea twisting cocycle}.

\item to a regular admissible homotopy $(H_s,J_s)$ that interpolates between regular pairs $(H_\pm,J_\pm)$ at $\pm\infty$, complemented by additional data $\Xi_{(H_s,J_s)}$ that interpolates between $\Xi_{(H_+,J_+)}$ and $\Xi_{(H_-,J_-)}$, one associates a degree $0$ chain map
$$
\Psi:FC_*(H_+,J_+,\Xi_{(H_+,J_+)};\cF)\to FC_*(H_-,J_-,\Xi_{(H_-,J_-)};\cF),
$$ 
called \emph{continuation map}. With respect to the canonical bases $\Per(H_+)$ and $\Per(H_-)$, the map $\Psi$ takes the explicit form 
$$
\Psi(\alpha\otimes x^+) = \sum_{y^-} \alpha\cdot \nu_{x^+,y^-}\otimes y^-,
$$
with $\nu_{x^+,y^-}\in C_{|x^+|-|y^-|}(\Omega \cL X)$ such that 
$$
\p \nu_{x^+,y^-}=\sum_{z^+} m_{x^+,z^+} \nu_{z^+,y^-}+ \sum_{z^-}(-1)^{|x^+|-|z^-|-1} \nu_{x^+,z^-} m_{z^-,y^-}.
$$

\begin{definition}\label{def:general-continuation} 
A family of chains $\nu_{x^+,y^-}\in C_{|x^+|-|y^-|}(\Omega \cL X)$ that satisfies the  equation above is called \emph{continuation  cocycle subordinated to the twisting cocycles $(m_{x^+, y^+})$ and $(m_{x^-, y^-})$}.
\end{definition}

In practice, the chains $\nu_{x^+,y^-}$ are constructed from chain representatives of the fundamental classes of the compactified moduli spaces of continuation Floer trajectories of \emph{arbitrary dimension} by evaluating them into chains on $\Omega \cL X$. This construction is the content of~\S\ref{sec:continuation} and the family of chains thus obtained is called {\it Barraud-Cornea continuation cocycle}. 

\item to a regular homotopy of homotopies $\{(H^\tau,J^\tau)\}$ connecting two regular homotopies $(H^i, J^i)$, $i=0,1$ with endpoints $(H_\pm,J_\pm)$, complemented by additional data $\Xi_{(H^\tau,J^\tau)}$ that interpolates between $\Xi_{(H^0,J^0)}$ and $\Xi_{(H^1,J^1)}$, one associates a degree $1$ map 
$$
\h:FC_*(H_+,J_+,\Xi_{(H_+,J_+)};\cF)\to FC_*(H_-,J_-,\Xi_{(H_-,J_-)};\cF)
$$
that realizes a chain homotopy between the continuation maps $\Psi^i$ determined by $(H^i, J^i,\Xi_{(H^i,J^i)})$, i.e.,  
\begin{equation} \label{eq:Psi10h-intro}
\Psi^1 - \Psi^0 = D^-\h + \h D^+.
\end{equation}
With respect to the canonical bases $\Per(H_+)$ and $\Per(H_-)$, the map $\h$ takes the explicit form 
$$
\h(\alpha\otimes x^+) = (-1)^{|\alpha|}\sum_{y^-} \alpha\cdot h_{x^+,y^-}\otimes y^-,
$$
where $h_{x^+,y^-}\in C_{|x^+|-|y^-|+1}(\Omega \cL X)$ are such that 
\begin{align*}
\partial h_{x^+,y^-}\, =\,  \nu^1_{x^+,y^-} & - \nu^0_{x^+, y^-} + \sum_{z^+} (-1)^{|x^+|-|z^+|} m_{x^+,z^+} h_{z^+,y^-}\\
& + \sum_{z^-}(-1)^{|x^+|-|z^-|} h_{x^+,z^-} m_{z^-,y^-}.
\end{align*}
\begin{definition}\label{def:general-parametrized} 
A family of chains $h_{x^+,y^-}\in C_{|x^+|-|y^-|+1}(\Omega \cL X)$ that satisfies the  equation above is called \emph{parametrized continuation  cocycle subordinated  to the twisting cocycles $(m_{x^+, y^+})$ and $(m_{x^-, y^-})$ and to the continuation cocycles $(\nu^1_{x^+, y^-})$ and $(\nu^2_{x^+, y^-})$}.
\end{definition} 

In practice, the chains $h_{x^+,y^-}$ are constructed from chain representatives of the fundamental classes of the compactified moduli spaces of solutions of a Floer problem parametrized by the interval $[0,1]$, of \emph{arbitrary dimension}, by evaluating them into chains on $\Omega \cL X$. We call the family of chains thus obtained {\it Barraud-Cornea parametrized continuation cocycle}. This is the content of~\S\ref{sec:homotopies}.
 
\end{itemize}

The continuation maps $\Psi=\Psi^{H_s,J_s,\Xi_{(H_s,J_s)}}$ satisfy the following two additional properties. 
\begin{itemize}
\item Let $\Id_{H,J}$ be the constant homotopy equal to a regular pair $(H,J)$. For any data $\Xi_{(H,J)}$ the continuation map $\Psi^{\mathrm{Id}_{H,J}, \Xi_{(H,J)}}$ is homotopic to the identity. See Proposition~\ref{prop:Id-Id} and Corollary~\ref{cor:Id-Id}.
\item Given three regular homotopies $(H^{ij},J^{ij})$, $1\le i<j\le 3$ interpolating between regular pairs $(H^i,J^i)$ at $+\infty$ and $(H^j,J^j)$ at $-\infty$, complemented by additional data $\Xi_{(H^{ij},J^{ij})}$, the chain maps $\Psi^{23}\circ\Psi^{12}$ and $\Psi^{13}$ are homotopic. See Proposition~\ref{prop:composition-v1}.
\end{itemize}

As in the classical case, this construction is compatible with action filtrations: 
\begin{itemize}
\item The Floer complex $FC_*(H,J,\Xi_{(H,J)};\cF)$ is filtered by the values of the action. We denote $FC_*^{<b}(H,J,\Xi_{(H,J)};\cF)$, $b\in\R$ the subcomplex generated by orbits of action $<b$. We emphasize that the complex of coefficients $\cF$ need not be filtered. See~\S\ref{sec:DGFloercomplex}.
\item The continuation maps induced by monotone homotopies $H_s$ such that $\p_s H_s\le 0$ preserve the action filtration. See Corollary~\ref{cor:DGcontinuation-monotone}.
\item For any two triples $(H_\pm,J_\pm,\Xi_{(H_\pm,J_\pm)})$ and any $\eps>0$, there exists a homotopy $(H_s,J_s,\Xi_{(H_s,J_s)})$ such that the induced  continuation map sends $FC_*^{<b}(H_+,J_+,\Xi_{(H_+,J_+)};\cF)$ to $FC_*^{<b+E}(H_-,J_-,\Xi_{(H_-,J_-)};\cF)$, where $E=\int_{0}^1\max (H_+^t-H_-^t)\,dt +\eps$. See Corollary~\ref{cor:DGcontinuation-Hofer-norm}.
\end{itemize}

This is the complete DG toolset at homology level. We do not address higher chain level structures in this paper, nor multiplicative structures. 

\begin{remark}
We would like to emphasize the universality of this procedure. It applies to any Floer theory (Lagrangian, instanton etc.) and can be used to enrich Floer maps in a variety of settings, for example in the context of Lagrangian correspondences. 
\end{remark}

We also define \emph{symplectic homology with DG coefficients} for a Liouville domain $X$. The usual construction adapts in a straightforward way once the DG Floer toolset has been put into place. See~\S\ref{sec:symplectic_homology}.

We close this subsection by discussing an important computational device. As in the case of  Morse theory with DG coefficients \cite {BDHO}, Hamiltonian Floer homology with DG coefficients is the limit of a canonical spectral sequence defined by filtering the DG Floer complex $FC_*(H,J,\Xi_{(H,J)};\cF)= \calf\otimes C_{\bullet}$ by the index of the generators in $\Per(H)$; more precisely this filtration is given by $F_p= \bigoplus_{i\leq p}\calf\otimes C_i$. See Theorem~\ref{thm:spectral_sequence}. 
We illustrate the use of this spectral sequence by the following theorem, which is proved in~\S\ref{sec:spectral-sequence-sympl-asph}.

\begin{customthm}{J} \label{thm:FH_for_fibrations-intro} Let $X$ be a closed symplectically aspherical manifold of dimension $2n$. Let $F\hookrightarrow E\to \cL_0 X$ be a Serre fibration and let $\cF=C_*(F)$ be the corresponding DG local system on $\cL_0 X$. For any regular pair $(H,J)$ and any choice of enriched Floer data $\Xi_{(H,J)}$, 
Floer homology in the component of contractible loops on $X$ is given by 
$$
FH_*(H,J,\Xi_{(H,J)};\cF)\simeq H_{*+n}(E|_X),
$$
where $H_{*+n}(E|_X)$ is the homology of the total space of the fibration $E|_X$. Moreover, this isomorphism is the limit of an isomorphism between the DG Floer spectral sequence and the Leray-Serre spectral sequence. 
\end{customthm}

\subsection{The Viterbo isomorphism with DG local coefficients} \label{sec:intro-Viterbo}

Let $Q$ be a closed smooth manifold and $T^*Q$ its cotangent bundle. We denote $D^*Q$ and $S^*Q$ the disc cotangent bundle, respectively the sphere cotangent bundle with respect to some Riemannian metric on $Q$. The celebrated Viterbo isomorphism identifies the symplectic homology of the cotangent bundle, denoted $SH_*(T^*Q)$, with the free loop space homology of the base, twisted by a suitable rank one local system $\underline\eta$. 
We call the latter \emph{loop homology} and denote it $H_*(\cL Q;\underline\eta)$. The precise statement is the following. 

{\bf Theorem} (``Classical" Viterbo isomorphism {\cite{Viterbo-cotangent,AS,AS-corrigendum,AS2,SW,Abouzaid-cotangent}})
{\it 
There is an isomorphism  
$$
\Psi:SH_*(T^*Q)\stackrel\simeq\longrightarrow H_*(\cL Q;\underline\eta) 
$$
that intertwines the canonical BV algebra structures defined on both sides.
}   

This theorem is established in this degree of generality in~\cite{Abouzaid-cotangent}, and we refer to \emph{loc.\@ cit.}\@ for a detailed history. The map $\Psi$ in the statement is a variant of a map first considered by Cieliebak and Latschev in the context of contact homology~\cite{Cieliebak-Latschev}. The necessity of involving a local system was first pointed out by Kragh~\cite{Kragh}. 
Cieliebak, Hingston and the fourth author of this paper proved that the map $\Psi$ descends to \emph{reduced} versions of symplectic homology and loop homology, where it intertwines the unital infinitesimal bialgebra structures defined on both sides~\cite{CHO-MorseFloerGH}. We describe the local system $\underline\eta$ in~\S\ref{sec:DGViterbo}.

As a matter of fact, Abouzaid's proof of the above theorem implies a Viterbo isomorphism with arbitrary (classical) local coefficients. Given a classical local system $M$ on $\cL T^*Q$ we denote $SH_*(T^*Q;M)$ symplectic homology with local coefficients in $M$. Denote $\pi:T^*Q\to Q$ the projection and 
$$
\Pi:\cL T^*Q\to\cL Q
$$ 
the induced map between free loop spaces. Since $\pi$ is a homotopy equivalence, so is the map $\Pi$ and therefore any local system on $\cL T^*Q$ is of the form $\Pi^*L $ for some local system $L$ on $\cL Q$. 

{\bf Theorem} (Viterbo isomorphism with local coefficients~{\cite{Abouzaid-cotangent}})
{\it 
For any classical local system $L$ on $\cL Q$, there is an isomorphism 
\begin{equation} \label{eq:Viterbo_iso_loc_sys}
\Psi:SH_*(T^*Q;\Pi^*L)\stackrel\simeq\longrightarrow H_*(\cL Q;L\otimes\underline\eta).  
\end{equation}
}

Whenever $L$ is \emph{transgressive} in the sense of~\cite{Abouzaid-cotangent}, each of the two sides carries a canonical BV structure and the map $\Psi$ intertwines those. More generally, whenever $L$ is \emph{compatible with products} in the sense of~\cite[Appendix~A]{CHO-MorseFloerGH}, the reduced versions of each of the two terms on the right carry unital infinitesimal bialgebra structures and the map $\Psi$ intertwines those. 

We prove in this paper the following generalization of the above theorem. 

\begin{customthm}{K}[DG Viterbo isomorphism] 
\label{thm:Viterbo_iso_loc_sys_DG}
Let $\cF$ be a DG local system on $\cL Q$  such that $H_*(\cF)$ is bounded from below. 

There is an isomorphism 
$$
\widetilde\Psi:SH_*(T^*Q;\Pi^*\cF)\stackrel\simeq\longrightarrow H_*(\cL Q;\cF\otimes\underline\eta).  
$$
The map $\widetilde\Psi$ is the limit of an isomorphism between the canonical spectral sequences associated to its source and target. At the second page, this isomorphism of spectral sequences is given by the map from~\eqref{eq:Viterbo_iso_loc_sys}:  
$$
\widetilde\Psi^2_{p,q}=\Psi:
SH_p(T^*Q;\Pi^*H_q(\cF))\stackrel\simeq\longrightarrow 
H_p(\cL Q;H_q(\cF)\otimes \underline\eta).
$$ 
\end{customthm}

In the second part of the statement we implicitly use the identifications 
$$
H_q(\Pi^*\cF)=\Pi^*H_q(\cF) \qquad \mbox{and} \qquad H_q(\cF\otimes\underline\eta)=H_q(\cF)\otimes\underline\eta.
$$ 

 This DG Viterbo isomorphism is a key ingredient for our dynamical applications to cotangent bundles, because it bridges Floer theory with DG coefficients in $T^*Q$ to Morse theory with DG coefficients on the free loop space $\cL Q$. For the more algebraically minded reader, note that~\cite[\S3, Remark 3.9]{BDHO}
$$
H_*(\cL Q;\cF\otimes\underline\eta) = \mathrm{Tor}_*^{C_*(\Omega \cL Q)}(\cF\otimes \underline\eta,\Z).
$$

\subsection{General criteria for Liouville domains} \label{sec:intro-Liouville}

We prove in the paper two dynamical criteria valid for general Liouville domains, based on symplectic homology with DG coefficients. The first is item (a) below and concerns the almost existence of contractible closed characteristics in the neighborhood of closed energy levels, the second is item (b) below and concerns the finiteness of the $\pi_1$-sensitive Hofer-Zehnder capacity. 
 A Liouville domain is a compact exact symplectic manifold such that the vector field dual to the primitive of the symplectic form points outwards along the boundary, a condition that has to be interpreted as a symplectic notion of convexity. For the purpose of this paper it is important that unit disc cotangent bundles are Liouville domains, but the latter class is much vaster~\cite{Seidel07,Cieliebak-Eliashberg-book}.

Let $W$ be a Liouville domain of dimension $2n$ and $\cF$ a DG local system on $\cL W$. Let $i_W:W\hookrightarrow \cL_0 W$ be the inclusion of constant loops.

(a) Given a relatively compact open subset $U\subset W$ with smooth boundary, the inclusion $j_U:U\hookrightarrow W$ induces a shriek map 
$$j_{U!}:H_*(W,\p W;i_W^*\cF)\to H_*(U,\p U;j_U^*i_W^*\cF)$$ between Morse homologies with DG coefficients~\cite[\S\S9--10]{BDHO}. 

On the other hand, a variant of Theorem~\ref{thm:FH_for_fibrations-intro} (see Proposition~\ref{prop:SH-action-zero}) shows that the canonical map from the zero energy sector of symplectic homology to the full symplectic homology with DG coefficients translates into a canonical map $i_{W*}: H_{*+n}(W,\p W;i_W^*\cF)\to SH_*(W;\cF)$. 

We consider the diagram
\begin{equation}\label{eq:diagram-criterion-intro}
  \xymatrix{
     H_{*+n}(U, \partial U;j_U^*i_W^*\cF) &\, \\
      H_{*+n}(W,\p W;i_W^*\cF) \ar[r]^-{i_{W*}}\ar[u]^-{j_{U!}}     
    & SH_*(W;\cF).}
\end{equation}

\begin{customthm}{L}\label{thm:almost-sure-existence-W-intro}[Criterion for  contractible almost existence] Let $\Sigma\subset W$ be a closed hypersurface bounding a relatively compact open subset $U$. We assume that there exists a DG local system $\cF$ on $\cL_0 W$ and a class $\alpha\in H_*(W,\p W;i_W^*\cF)$ such that $j_{U!}(\alpha) \neq 0$ and $i_{W*}(\alpha)=0$.

Then the  contractible almost existence property holds near $\Sigma$.
\end{customthm} 

Theorem~\ref{thm:almost-sure-existence-W-intro} is proved in~\S\ref{sec:almost-existence} as Theorem~\ref{thm:almost-sure-existence-W}.

(b) The inclusion of a point $j:\mathrm{pt}\hookrightarrow W$ induces a shriek map
$$j_!:H_{*+n}(W,\p W;i_W^*\cF)\to H_{*-n}(\mathrm{pt};j^*\cF)=H_{*-n}(\cF).$$ We consider the diagram  
\begin{equation}\label{eq:diagram-finiteness-HZ-intro}
  \xymatrix{
     H_{*-n}(\cF) &\, \\
      H_{*+n}(W,\p W;i_W^*\cF) \ar[r]^-{i_{W*}}\ar[u]^-{j_{!}}     
    & SH_*(W;\cF).}
\end{equation}

We denote $c_{HZ}^\circ(W)\in (0,\infty]$ the \emph{$\pi_1$-sensitive Hofer-Zehnder capacity} of $W$, and recall that the finiteness of $c_{HZ}^\circ(W)$ implies the  contractible almost existence for \emph{any} closed hypersurface $\Sigma\subset W$, see~\cite{HZ,Struwe}.

\begin{customthm}{M}\label{thm:finiteness-HZ-intro}[Criterion for finiteness of $c_{HZ}^\circ(W)$] Assume that there exists a DG local system $\cF$ on $\cL_0W$ and a class $\alpha\in H_*(W,\p W;i_W^*\cF)$ such that $j_{!}(\alpha) \neq 0$ and $i_{W*}(\alpha)=0$. Then 
$$
c_{HZ}^\circ(W)<\infty. 
$$
In particular, the  contractible almost existence property holds in $W$.
\end{customthm} 

Theorem~\ref{thm:finiteness-HZ-intro} is proved in~\S\ref{sec:HZ} as Theorem~\ref{thm:finiteness-HZ}.

\subsection{Relation to other work} \label{sec:intro-other_work}

{\it Dynamics.} We mentioned in~\S\ref{sec:intro-almost-existence} the foundational result by Hofer-Zehnder and Struwe on the almost existence property for hypersurfaces in $\R^{2n}$. The almost existence property is valid for other symplectic manifolds, such as $\C P^n$ and sub-critical Stein manifolds \cite{FHV90,Hofer-Viterbo-92,Lu98}, but false in general (e.g., in the ``Zehnder torus"~\cite{Ze87}). In the same vein, there are many results that prove the almost existence property for particular classes of closed hypersurfaces, e.g., hypersurfaces located in a neighborhood of a symplectic submanifold~\cite{CGK,Ginzburg-Gurel-relative-capacity,Usher}.

In the context of cotangent bundles $T^*Q$, the foundational result is due to Hofer-Viterbo~\cite{HV88}, who prove almost existence for hypersurfaces that enclose the 0-section in cotangent bundles (without any statement on contractibility). Further flavors of this result were proved by Biran-Polterovich-Salamon~\cite{BPS}, Weber~\cite{Weber06}, Gong-Xue~\cite{Gong-Xue-2020}, Asselle-Starostka~\cite{AS2020}. This almost existence result was recently enhanced by Benedetti-Kang~\cite{Benedetti-Kang} to almost existence of contractible closed characteristics for hypersurfaces that enclose the $0$-section, under the assumption that $H(\cL_0 Q,Q)\neq 0$. In comparison, our Theorem~\ref{thm:almost-existence-simplified} holds in optimal generality for all bases $Q$ that are not aspherical, and it enlarges significantly the class of hypersurfaces. Theorems~\ref{thm:almost-existence-Thom-simplified}, \ref{thm:almost-existence-nonorientable}, and~\ref{thm:orbits-knotted-case-intro} constitute further enhancements. 

 Contractible almost existence unconditional on the hypersurface is a consequence of the  finiteness of the $\pi_1$-sensitive Hofer-Zehnder capacity, and the latter has been established for $D^*Q$ under various conditions on $Q$ by Ritter~\cite{Ritter09}, Irie~\cite{Irie14}, Frauenfelder-Pajitnov~\cite{Frauenfelder-Pajitnov}, Albers, Frauenfelder and the fourth author~\cite{Albers-Frauenfelder-Oancea}. Our Theorems~\ref{thm:abundant-intro} and~\ref{thm:examples-intro} go significantly beyond~\cite{Ritter09,Irie14,Albers-Frauenfelder-Oancea}, but stay within the class of rationally inessential manifolds studied by Frauenfelder-Pajitnov~\cite{Frauenfelder-Pajitnov}. The method is however radically different, and this motivates our Conjecture~\ref{magic}. 
 
We refer to~\cite{Ginzburg-overview} for other related results and a comprehensive description of the problem of almost existence, and to the introduction of~\cite{Benedetti-Kang} for a detailed account of the various available results for cotangent bundles. 

The case of magnetic cotangent bundles  has also inspired a vast literature. Our Corollary \ref{cor:exact-magnetic} generalizes Contreras' work \cite{Contreras2006} which covers the case where the open subset $U$ is fiberwise convex with an exact magnetic form. A strong existence result for exact magnetic cotangent bundles on surfaces was proved by Contreras-Macarini-Paternain~\cite{CMP}. The case of a non exact magnetic form was settled by Groman-Merry \cite{Groman-Merry} who proved that if the magnetic form $\beta$ is not weakly-exact (i.e., does not vanish on $\pi_2(Q)$) then $D^*Q$  has  finite $\pi_1$-sensitive Hofer-Zehnder capacity in $(T^*Q, \omega+\pi^*\beta)$. Our result complements the latter. The case of a weakly exact but non exact magnetic form is studied for surfaces by Benedetti-Ritter~\cite{Benedetti-Ritter}, but remains largely unexplored in higher dimensions.

{\it Floer homology with DG coefficients.}  Differential graded local coefficients also appear in the literature under the name \emph{infinity local systems}, or \emph{derived local systems}. The equivalence between these different notions is proved by Holstein~\cite[Theorem~26]{Holstein}. We refer to the introduction of~\cite{BDHO} for a more detailed discussion of this equivalence and for further references. 

Our construction of the Floer complex with DG coefficients builds on the original ideas of Barraud and Cornea~\cite{BC07}. The key 
ingredient is to consider simultaneously all the compactified moduli spaces of Floer trajectories, of any dimension, and a coherent system of chain representatives of their fundamental classes rel boundary. In order to further extract geometric information, one needs to choose a suitable space of paths into which these moduli spaces evaluate. Our definition of the Barraud-Cornea cocycle is universal in the sense that we choose to evaluate into spaces of \emph{paths on the free loop space of the underlying symplectic manifold}. The connection with the Brown universal cocycle described in~\cite{BDHO} gives yet another meaning, of a more algebraic nature, to this universality property. The original Barraud-Cornea cocycle was obtained, in the case of Hamiltonian Floer homology, by evaluating into a space of \emph{paths on the symplectic manifold itself}, and can be viewed as a pull-back of our universal construction~\cite{BC07b}. Correspondingly, the Lagrangian Barraud-Cornea cocycle from~\cite{BC07} was obtained by evaluating moduli spaces of Floer strips into paths on a Lagrangian submanifold.   

Abouzaid reinterpreted in~\cite{Abouzaid2012a} the Lagrangian Barraud-Cornea cocycle as a twisted complex, and used this point of view in order to construct an $A_\infty$ functor from the wrapped Fukaya category of a Liouville domain to the category of twisted complexes on the path category of a given closed exact Lagrangian. The construction of this functor makes use of compactified moduli spaces of Floer punctured discs of arbitrary dimension, which is the next level of complexity. Abouzaid's functor plays a role in his proof of the nearby Lagrangian conjecture~\cite{Abouzaid-nearby}, though the Fukaya category enriched in complexes of local systems from~\cite[\S2]{Abouzaid-nearby} only uses moduli spaces of dimension 0.

The original construction of Barraud-Cornea has been also revisited in recent years by Zhou~\cite{Zhou-MB,Zhou-ring}, Rezchikov~\cite{rezchikov}, Charette~\cite{Charette2017}. 
 Compared to all these works, our perspective is to use DG coefficients as auxiliary data that feeds into a filtered homological Floer package, from which we extract useful information, e.g., spectral invariants.

{\it Floer homotopy theory.} The construction of the Floer complex with DG coefficients 
connects to Floer homotopy theory. This is a rapidly unfolding field envisioned by Cohen-Jones-Segal~\cite{cohen-jones-segal} and going back to Floer~\cite{Floer-Witten}. 

Floer homotopy theory aims to  extract generalized cohomology theories, or spectra, from higher dimensional trajectory spaces. Using the terminology of Cohen-Jones-Segal~\cite{cohen-jones-segal}, the ensemble of all trajectory spaces has the structure of a \emph{flow category}, see also Pardon~\cite{Pardon-algebraic} and Abouzaid~\cite{Abouzaid-flows}. 

We would like to argue that the Floer complex with DG coefficients sits midway between homology and homotopy, see also the discussion in~\cite[Introduction]{BDHO}. This can be seen by discussing how much information one retains from the Floer trajectory spaces: classical Floer theory use moduli spaces of dimension $0$ and $1$; Floer homotopy theory uses the full moduli spaces together with their framing; our Floer homology with DG coefficients uses (representatives of) the fundamental classes rel boundary of all the compactified moduli spaces.

In relation to Floer homotopy theory, we should note that the construction of DG Floer theory only requires for the compactified moduli spaces the structure of ``manifolds with corners" established in~\cite[Appendix~A]{BC07}, i.e., 
topological manifolds with smoothly stratified boundary. This is strictly weaker than the smooth structure needed in Floer homotopy theory. The latter has now been constructed in full generality for Hamiltonian Floer theory by Abouzaid and Blumberg~\cite{Abouzaid-Blumberg-FHT}, building on their previous work~\cite{Abouzaid-Blumberg-Morava} and on the work of Abouzaid-McLean-Smith~\cite{Abouzaid-McLean-Smith}, Bai-Xu~\cite{Bai-Xu}, and Rezchikov~\cite{Rezchikov-Arnold}.

\subsection{Structure of the paper} 

The paper is organized as follows. In~\S\ref{sec:DGMorse} we briefly recall the construction of Morse homology with DG coefficients. In~\S\ref{sec:DGFloer} we develop Hamiltonian Floer theory with DG coefficients, and we prove in particular Theorem~\ref{thm:FH_for_fibrations-intro}. In~\S\ref{sec:DGViterbo} we prove the Viterbo isomorphism from Theorem~\ref{thm:Viterbo_iso_loc_sys_DG}. In~\S\ref{sec:almost-existence} we prove our applications to almost existence, namely Theorems~\ref{thm:almost-existence-simplified}, \ref{thm:almost-existence-Thom-simplified} and~\ref{thm:almost-existence-nonorientable}, as well as the criterion from Theorem~\ref{thm:almost-sure-existence-W-intro}. In~\S\ref{sec:HZ} we prove the criterion from Theorem~\ref{thm:finiteness-HZ-intro} and our applications to the finiteness of the $\pi_1$-sensitive Hofer-Zehnder capacity from Theorems~\ref{thm:abundant-intro} and~\ref{thm:examples-intro}. In~\S\ref{sec:knotted} we prove our application to almost existence for hypersurfaces knotted with spheres from Theorem~\ref{thm:orbits-knotted-case-intro}.

\bigskip

{\it Acknowledgements.} We thank Leonardo Macarini for pointing out Corollary~\ref{cor:exact-magnetic}. 
We thank Gabriele Benedetti for help with the bibliography on magnetic cotangent bundles. We thank Kai Cieliebak for his careful reading of the manuscript and for his valuable suggestions. We have benefited from discussions with Mohammed Abouzaid, Colin Fourel, Klaus Niederkr\"uger, Robin Riegel, Ivan Smith, and Jean-Yves Welschinger.
  
The authors were able to meet in excellent conditions during the Covid pandemic at CIRM Marseille in February 2021 and at IHP Paris in May-June 2021. They acknowledge partial funding from the ANR, grant no. 21-CE40-0002 (COSY). The fourth author acknowledges support from a Fellowship of the University of Strasbourg Institute for Advanced Study (USIAS), within the French national programme ``Investment for the future" (IdEx-Unistra).

\section{Morse homology with DG local coefficients} \label{sec:DGMorse}

In this section we recall the definition and some properties of Morse homology with DG local coefficients following~\cite{BDHO}, we present an alternative formulation where the DG local system is twisted by a rank $1$ (classical) local system, and we discuss the new notion of a class ``living over the fundamental class". We work over a ground ring $A$. 

\subsection{Definition of Morse homology with DG coefficients}\label{sec:def-MorseDG}

Given a path-connected topological space $X$ with a basepoint $\star$, we call \emph{DG local system on $X$} the data of a right $C_*(\Omega X)$-module. When we work on a disconnected space, we fix a basepoint on each connected component, and by DG local system we mean the data of a DG right $C_*(\Omega X)$-module for each path-connected component $X$. Here $\Omega X$ denotes the space of  based Moore loops on $X$ and $C_*(\Omega X)$ denotes the DG algebra of cubical chains on $\Omega X$ with product given by the concatenation of loops.

We place ourselves in the setup where $X$ is a closed connected manifold (extensions to the case of manifolds with boundary, and to spaces that have the homotopy type of manifolds, are discussed in~\cite{BDHO}). Fix a basepoint $\star$ and let $\cF$ be a DG local system on $X$. Let $f:X\to \R$ be a Morse function and $\xi$ a Morse-Smale negative pseudo-gradient. Given $x,y\in\Crit(f)$ we denote by $\cL(x;y)$ the moduli space of $\xi$-trajectories connecting $x$ at $-\infty$ to $y$ at $+\infty$. Let $|x|$ denote the Morse index of $x\in\Crit(f)$. For $x\neq y$ the space $\cL(x;y)$ is a smooth manifold of dimension $|x|-|y|-1$, and it admits a compactification $\ol\cL(x;y)$ that is a manifold with boundary with corners such that, ignoring orientations, $\p\ol\cL(x;y)=\cup_z\ol\cL(x;z)\times\ol\cL(z;x)$. We call \emph{representing chain system for the moduli spaces $\ol\cL(x;y)$} a collection $s_{x,y}\in C_{|x|-|y|-1}(\ol\cL(x;y),\p \cL(x;y))$ such that $s_{x,y}$ is a cycle rel boundary representing the fundamental class, and there holds the relation
$$
\p s_{x,y} 	= \sum_z (-1)^{|x|-|z|} s_{x,z}\times s_{z,y},
$$
where the product of chains is defined via the inclusions $\ol\cL(x;z)\times\ol\cL(z;x)\subset \p\ol\cL(x;y)\subset \ol\cL(x;y)$. We call \emph{enriched Morse data for $(f,\xi)$} a tuple consisting of an orientation $o=(o_x)_{x\in\crit(f)}$ for the unstable manifolds, a choice of representing chain system $(s_{x,y})$, an embedded tree $\cY$ rooted at the basepoint $\star$ and whose leaves are the critical points of $f$, and a map $\theta:X/\cY\to X$ that is a based homotopy inverse for the projection $p:X\to X/\cY$. We refer to the tuple $\Xi=(f,\xi, o, s_{x,y},\cY,\theta)$ as \emph{enriched Morse data}. 

Given enriched Morse data $\Xi$, we construct a collection $m_{x,y}\in C_{|x|-|y|-1}(\Omega X)$ that satisfies the relation 
\begin{equation} \label{eq:DGMorsedifferential}
\p m_{x,y}=\sum_z (-1)^{|x|-|z|} m_{x,z}\cdot m_{z,y},
\end{equation}
where the product is understood in $C_*(\Omega X)$. We call such a collection \emph{Barraud-Cornea twisting cocycle}, or simply \emph{twisting cocycle} (see Definition \ref{def:general-twisting}). The construction of $m_{x,y}$ proceeds by first considering the evaluation map $\ev_{x,y}:\ol\cL(x;y)\to \cP_{x\to y}X$ with target the space of Moore paths from $x$ to $y$, where the image of each element of $\ol\cL(x;y)$ is parametrized by the levels of the function $f$, see~\cite{BDHO}. We further define $q_{x,y}=\tilde \theta \circ \tilde p\circ \ev_{x,y}$, where $\tilde p:\cP_{x\to y}X\to\Omega (X/\cY)$ is the map induced by $p$, and $\tilde \theta:\Omega (X/\cY)\to \Omega X$ is the map induced by $\theta$. Finally, we set $m_{x,y}=(q_{x,y})_*(s_{x,y})$. 

\emph{The Morse complex of $X$ with DG coefficients in $\cF$ determined by the enriched Morse data $\Xi$}, denoted $C_*(X,\Xi;\cF)$, has as an underlying module $\cF_*\otimes \langle \Crit(f)\rangle$. The differential is defined by 
\begin{equation} \label{eq:DGdifferrential}
\p(\alpha\otimes x)=\p \alpha\otimes x + (-1)^{|\alpha|}\sum_y \alpha\cdot m_{x,y}\otimes y.
\end{equation}
\emph{The Morse homology of $X$ with DG coefficients in $\cF$ determined by the enriched Morse data $\Xi$}, or, for short, \emph{enriched Morse homology of $X$}, is 
$$
H_*(X,\Xi;\cF)=H_*(C_*(X,\Xi;\cF)).
$$
The homotopy type of the chain complex does not depend on the choice of enriched Morse data; see \cite{BDHO}. Therefore, its homology $H_*(X,\Xi;\cF)$ is independent of this choice and we will often drop $\Xi$ from the notation.

\paragraph{Manifolds with boundary.}
  Similarly to the classical Morse homology, one can define the homology $H_*(X,\Xi;\cF)$ when $X$ is a manifold with boundary by using a Morse-Smale pair such that the vector field $\xi$ points inwards along the boundary $\p X$. By working with an outward pointing vector field $\xi$, one defines the \emph{homology rel boundary} of $X$ with DG coefficients.
We denote it by $H_*(X,\partial X, \Xi;\cF)$, or simply $H_*(X,\p X;\cF)$.

\paragraph{Fundamental example.} 
Let $F\to E\to X$ be a Hurewicz fibration. The complex of cubical 
chains $C_*(F)$ carries a natural $C_*(\Omega X)$-module structure induced by the choice of a lifting function. We prove in \cite[Theorem 7.2, (see also Remark 7.9)]{BDHO} that   $H_*(X, \p X; C_*(F))\simeq H_*(E, E|_{\p X})$. 

\paragraph{Functoriality.} In \cite{BDHO} (see \S 8-10 and in particular Theorem~8.2 therein) we give a comprehensive study of the invariance and functoriality properties of this theory. 
Specifically, given two manifolds $X,Y$ and a DG local system $\cF$ on $Y$, any continuous map $\varphi:X\to Y$ induces a canonical linear \emph{direct map}
  \[\varphi_*:H_*(X;\varphi^*\cF)\to H_*(Y;\cF),\]
  and, if $X,Y$ are oriented, a canonical linear \emph{shriek map}
  \[\varphi_!:H_*(Y;\cF)\to H_{*+\dim X-\dim Y}(X;\varphi^*\cF).\]
 The pull-back $\varphi^*\cF$ is defined by endowing $\cF$ with the $C_*(\Omega X)$-module structure obtained by pulling-back the $C_*(\Omega Y)$-module structure by the DGA map $(\Omega\varphi)_*:C_*(\Omega X)\to C_*(\Omega Y)$.
  
These maps are homotopy invariant and satisfy covariant functoriality for the direct map and contravariant functoriality for the shriek map. 

Given a Hurewicz fibration $F\to E\to Y$ and a continuous map $\varphi:X\ri Y$, the  direct map $\varphi_*$ corresponds to the canonical  map $H_*(\varphi^*E)\to H_*(E)$, via the isomorphism above, where $\varphi^*E$ is the total space of the pullback by $\varphi$. We prove this result in   \cite[Proposition 9.14]{BDHO}. In \S\ref{sec:classes_over_fund_class} below we also interpret the shriek map for fibrations in the special case when $X$ is a point. For the general case we refer to~\cite[Theorem~D]{Rie}.

If $X$ and $Y$ have boundary, one can also define direct and shriek maps with similar properties. For instance, if $X$ and $Y$ are oriented the shriek map takes the form 
\begin{equation*}
   \varphi_!:H_*(Y, \partial Y;\cF)\to H_{*+\dim X-\dim Y}(X, \partial X;\varphi^*\cF).
\end{equation*}

Shriek maps can also be defined in the nonorientable case up to twisting the DG local systems by the orientation of the underlying manifold, cf.\@ \emph{loc.\@ cit.}\@ 

The case where $X$ is the basepoint $\star$ is of particular interest in the context of shriek maps, see~\S\ref{sec:classes_over_fund_class}. The homology of a point with coefficients in a DG local system $\calf$ is easy to compute: 
$$
H_*(\star ; \calf) \simeq H_*(\calf).
$$

The Morse complex with DG coefficients is filtered by the index of the critical points of the Morse function. The resulting spectral sequence is canonical starting with the $E^2$-page, and is analogous to the Leray-Serre spectral sequence for Hurewicz fibrations. We refer to~\cite[\S7]{BDHO} for details.

\subsection{Morse homology with (classical) local coefficients}

Another important particular case is that of classical local systems. We denote such an object by $L$,
and the condition of being a local system means that $L$ possesses the structure of an $A[\pi_1]$-module, with $\pi_1=\pi_1(X)$. We note that there is a canonical isomorphism $A[\pi_1]\simeq H_0(\Omega X)$ and we view $A[\pi_1]$ as a DG algebra supported in degree $0$. We view $L$ as being supported in degree $0$ and having trivial differential, and we can view it as a $C_*(\Omega X)$-module via the canonical DG algebra map $C_*(\Omega X)\to A[\pi_1]$ defined as the composition of canonical projections $C_*(\Omega X)\to C_0(\Omega X)\to H_0(\Omega X)$. \emph{Morse homology with local coefficients in $L$} is  defined as the homology of the \emph{Morse complex with local coefficients} $C_*(X,\Xi;L)=L\otimes \langle \Crit(f)\rangle$, with differential 
\begin{equation} \label{eq:nonDGMorsedifferential}
\p(f\otimes x)=\sum_{|y|=|x|-1} f\cdot \hat m_{x,y}\otimes y, 
\end{equation} 
where $\hat m_{x,y}\in H_0(\Omega X)\equiv A[\pi_1]$ is the class of the $0$-chain represented by $m_{x,y}$. That formula~\eqref{eq:DGdifferrential} reduces in this case to~\eqref{eq:nonDGMorsedifferential} is a consequence of the fact that higher dimensional chains act trivially on $L$. 

\subsection{Twist by a rank $1$ local system} \label{sec:twist}

There is a well-defined notion of tensor product of two DG local systems, see~\cite[\S12]{BDHO}. We will focus in this subsection on the particular case where one of the local systems is classical and is free of rank $1$. This material will be used in~\S\ref{sec:DGViterbo}. 

Let $L$ be a rank $1$ classical local system on $X$ supported in degree $0$. By definition this is a rank $1$ free $A$-module with an additional structure of an $A[\pi_1]$-module. The assumption that $L$ is free of rank $1$ implies that $\pi_1$ acts by multiplication with invertible elements $a\in A$, i.e., via a group homomorphism 
$$
\rho^L:\pi_1\to A^\times
$$ 
whose target is the multiplicative group of units of $A$. The group ring $A[\pi_1]$ then acts by multiplication with elements in $A$ via the composed morphism $A[\pi_1]\to A[A^\times]\to A$,  where the first map is induced by $\rho^L$ and the second map is induced by multiplication and addition in $A$. We can view $L$ as a $C_0(\Omega X)$-module via the ring map 
$$
\hat \rho^L : C_0(\Omega X)\to A
$$ 
given by the composition $C_0(\Omega X)\to H_0(\Omega X)\equiv A[\pi_1]\to A[A^\times]\to A$. Here $C_0(\Omega X)\to H_0(\Omega X)\equiv A[\pi_1]$ is the canonical projection. If we view $L$ as a $C_*(\Omega X)$-module, all chains of positive degree act by the zero map. 

Given a cube $\sigma:I^n\to \Omega X$, $I=[0,1]$, we use the notation $\sigma_0=\sigma(0,\dots,0)$, viewed as an element of $C_0(\Omega X)$. We define a linear map 
$$
\Phi^L:C_*(\Omega X)\to C_*(\Omega X)
$$
by setting
$$
\Phi^L(\sigma)=\hat\rho^L(\sigma_0) \sigma
$$
on cubes $\sigma$. The map is then uniquely determined by linearity.  

\begin{lemma}
The map $\Phi^L$ is a DG algebra map. 
\end{lemma}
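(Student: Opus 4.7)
The plan is to verify the two defining properties of a DG algebra map: compatibility with differentials and compatibility with products (plus preservation of the unit).

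The key preliminary observation is the following: if $\sigma:I^n\to \Omega X$ is a cube, then all values $\sigma(t)$ for $t\in I^n$ lie in the same path component of $\Omega X$, so they represent a single class in $\pi_0(\Omega X)=\pi_1(X,\star)$. Consequently $\hat\rho^L$ takes the \emph{same} value on any $0$-chain of the form $\sigma(t)$ for $t\in I^n$; in particular, for every vertex $v\in\{0,1\}^n$ one has $\hat\rho^L(\sigma(v))=\hat\rho^L(\sigma_0)$. This remark makes the scalar $\hat\rho^L(\sigma_0)$ ``constant over the cube'' and is what permits it to pass through the boundary operator.

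For compatibility with the differential: writing $\partial\sigma=\sum_{i,\epsilon}(-1)^{i+\epsilon}\partial_i^\epsilon\sigma$ in the usual cubical sign convention, the basepoint of the face $\partial_i^\epsilon\sigma$ is $\sigma$ evaluated at the vertex obtained from $(0,\ldots,0)$ by replacing the $i$-th coordinate with $\epsilon$. By the key observation this vertex is $\pi_1$-equivalent to $\sigma_0$, so $\hat\rho^L$ assigns to it the same scalar $\hat\rho^L(\sigma_0)$. Factoring out this common scalar yields
\[
\partial\bigl(\Phi^L(\sigma)\bigr)=\hat\rho^L(\sigma_0)\,\partial\sigma=\Phi^L(\partial\sigma),
\]
and extending linearly (also handling degeneracies if one works in the normalized cubical complex) gives $\partial\Phi^L=\Phi^L\partial$.

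For multiplicativity: for cubes $\sigma:I^m\to\Omega X$ and $\tau:I^n\to\Omega X$, the concatenation product in $C_*(\Omega X)$ produces the cube $\sigma\cdot\tau:I^{m+n}\to \Omega X$ given by $(s,t)\mapsto \sigma(s)*\tau(t)$, whose basepoint is $\sigma_0*\tau_0$. Since $\rho^L:\pi_1\to A^\times$ is a group homomorphism,
\[
\hat\rho^L(\sigma_0*\tau_0)=\rho^L([\sigma_0])\rho^L([\tau_0])=\hat\rho^L(\sigma_0)\,\hat\rho^L(\tau_0),
\]
so $\Phi^L(\sigma\cdot\tau)=\hat\rho^L(\sigma_0)\hat\rho^L(\tau_0)(\sigma\cdot\tau)=\Phi^L(\sigma)\cdot\Phi^L(\tau)$. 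The unit of $C_*(\Omega X)$ is the constant $0$-cube at the trivial loop at $\star$, whose $\pi_1$-class is trivial, hence $\hat\rho^L$ sends it to $1\in A^\times$ and $\Phi^L$ fixes it. The only mildly subtle point is the bookkeeping in the first step, namely that every vertex of a cube in $\Omega X$ lies in the same $\pi_1(X)$-class as $\sigma_0$; once this is in hand, both DG algebra axioms are immediate.
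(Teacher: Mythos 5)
Your proof is correct and follows essentially the same route as the paper: the unit and product relations come from $\hat\rho^L$ being a ring homomorphism together with $(\sigma\cdot\tau)_0=\sigma_0\cdot\tau_0$, and commutation with $\partial$ comes from the fact that $\hat\rho^L$ factors through $A[\pi_1]$ and hence takes the same value at every vertex of a cube. Your write-up just spells out the vertex/path-component argument slightly more explicitly than the paper does.
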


\begin{proof}
One needs to check the relations $\Phi^L(e)=e$, $\Phi^L(\sigma\cdot\tau)=\Phi^L(\sigma)\cdot\Phi^L(\tau)$ and $\Phi^L(\p\sigma)=\p\Phi^L(\sigma)$ for cubes $\sigma,\tau$ and $e$ the unit given by the constant loop at the baspoint. The first two relations follow from the fact that $\hat\rho^L$ is a ring homomorphism together with the equality $(\sigma\cdot \tau)_0=\sigma_0\cdot \tau_0$. The third relation follows from the fact that $\hat\rho^L$ factors through $A[\pi_1]$ and therefore it takes the same value on all the faces of a cube. 
\end{proof}
    
Given a DG local system $\cF$, the \emph{tensor product DG local system $\cF\otimes L$} is defined degree-wise as ($\cF\otimes L)_k=\cF_k\otimes L$, and chains $c$ on $\Omega X$ act on $\cF\otimes L$ by $(\alpha\otimes \ell)\cdot c= (\alpha\otimes \ell)\cdot \Delta_*c$, where $\Delta_*:C_*(\Omega X)\to C_*(\Omega X)\otimes C_*(\Omega X)$ is the diagonal map. Explicitly, given  a cube $\sigma:I^n\to \Omega X$, $I=[0,1]$ we have $\Delta_*\sigma = \sum_{k=0}^n\sigma|_{I^k\times\{0\}}\otimes \sigma|_{\{0\}\times I^{n-k}}$. Since chains of positive degree act trivially on $L$, it follows that a cube $\sigma$ acts on the tensor product by 
\begin{align*}
(\alpha\otimes \ell)\cdot \sigma & =(\alpha\cdot \sigma)\otimes (\ell\cdot \hat\rho^L(\sigma_0)) \\
& = (\alpha\cdot  \hat\rho^L(\sigma_0) \sigma)\otimes \ell\\
& = (\alpha\cdot \Phi^L(\sigma))\otimes\ell.
\end{align*}
By linearity, the same formula expresses the action of $C_*(\Omega X)$ on $\cF\otimes L$, i.e.,  
$$
(\alpha\otimes \ell)\cdot c = (\alpha\cdot \Phi^L(c))\otimes \ell,\qquad c\in C_*(\Omega X). 
$$
 This shows in particular that we have an isomorphism of $C_*(\Omega X)$-modules
$$
\cF\otimes L\simeq (\Phi^L)^*\cF.
$$

\begin{proposition} \label{prop:FotimesL-mL}
Let $\Xi$ be enriched Morse data with corresponding twisting cocycle $(m_{x,y})$, let $\cF$ be a DG local system and let $L$ be a free rank $1$ local system supported in degree $0$.
\begin{enumerate}
\item The formula   
$$
m^L_{x,y}=\Phi^L(m_{x,y})
$$
defines a twisting cocycle, i.e., the following equation holds: 
\begin{equation} \label{eq:DGMorsedifferential-L}
\p m^L_{x,y}=\sum_z (-1)^{|x|-|z|} m^L_{x,z}\cdot m^L_{z,y}.
\end{equation}
\item There is a chain isomorphism between $C_*(X,\Xi;\cF\otimes L)$ and the complex $\cF\otimes \Crit(f)$ endowed with the differential $\p^L$ given by~\eqref{eq:DGdifferrential} with twisting cocycle $(m^L_{x,y})$, i.e., 
$$
\p^L(\alpha\otimes x)=\p\alpha\otimes x +(-1)^{|\alpha|} \sum_y \alpha\cdot m^L_{x,y}\otimes y.
$$
\end{enumerate}
\end{proposition}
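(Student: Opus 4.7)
The plan is to deduce both parts from the fact, already established in the lemma preceding the proposition, that $\Phi^L$ is a DG algebra map, together with the explicit formula $(\alpha\otimes \ell)\cdot c = (\alpha\cdot \Phi^L(c))\otimes \ell$ for the action of $C_*(\Omega X)$ on $\cF\otimes L$ derived just above the proposition.

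For part (1), I would simply apply $\Phi^L$ to the Maurer–Cartan identity~\eqref{eq:DGMorsedifferential} satisfied by $(m_{x,y})$. Since $\Phi^L$ commutes with $\p$, $\Phi^L(\p m_{x,y}) = \p\Phi^L(m_{x,y}) = \p m^L_{x,y}$. Since $\Phi^L$ is multiplicative, $\Phi^L(m_{x,z}\cdot m_{z,y}) = \Phi^L(m_{x,z})\cdot \Phi^L(m_{z,y}) = m^L_{x,z}\cdot m^L_{z,y}$. Signs are preserved term by term, so~\eqref{eq:DGMorsedifferential-L} holds.

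For part (2), the natural candidate for the isomorphism is the obvious $A$-linear rearrangement
\[
\Theta:\,(\cF\otimes L)\otimes \langle\Crit(f)\rangle \;\longrightarrow\; (\cF\otimes \langle\Crit(f)\rangle)\otimes L,\qquad (\alpha\otimes \ell)\otimes x \;\longmapsto\; (\alpha\otimes x)\otimes \ell,
\]
which is tautologically an isomorphism of underlying graded modules (noting $L$ is concentrated in degree $0$, so $|\alpha\otimes\ell|=|\alpha|$). It remains to verify that $\Theta$ intertwines the differential of $C_*(X,\Xi;\cF\otimes L)$, defined by~\eqref{eq:DGdifferrential} applied to the twisted coefficients, with the differential $\p^L$ built from $(m^L_{x,y})$. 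Unwinding the definition of the left-hand differential on an element $(\alpha\otimes \ell)\otimes x$, and using that $L$ is supported in degree $0$ with trivial internal differential,
\[
\p\bigl((\alpha\otimes\ell)\otimes x\bigr) \;=\; (\p\alpha\otimes\ell)\otimes x \;+\; (-1)^{|\alpha|}\sum_y \bigl((\alpha\otimes\ell)\cdot m_{x,y}\bigr)\otimes y.
\]
Substituting $(\alpha\otimes\ell)\cdot m_{x,y} = (\alpha\cdot \Phi^L(m_{x,y}))\otimes \ell = (\alpha\cdot m^L_{x,y})\otimes \ell$ and applying $\Theta$ yields
\[
\Theta\bigl(\p((\alpha\otimes\ell)\otimes x)\bigr) \;=\; \bigl(\p\alpha\otimes x + (-1)^{|\alpha|}\sum_y (\alpha\cdot m^L_{x,y})\otimes y\bigr)\otimes \ell \;=\; (\p^L(\alpha\otimes x))\otimes \ell,
\]
which is precisely $\p^L\circ \Theta$ applied to $(\alpha\otimes\ell)\otimes x$.

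There is no real obstacle here: the whole content is encoded in the formula for the action of chains on $\cF\otimes L$ established before the proposition, which already implements $\Phi^L$. The only minor point deserving care is signs, but since $L$ sits in degree $0$ and contributes nothing to $|\alpha\otimes\ell|$, no Koszul sign arises from the rearrangement $\Theta$, and the signs in the differential formula pass through verbatim.
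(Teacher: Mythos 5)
Your proof is correct and is essentially the paper's argument made explicit: part~(1) is the same appeal to $\Phi^L$ being a DGA map, and part~(2) unwinds, via the identity $(\alpha\otimes\ell)\cdot c=(\alpha\cdot\Phi^L(c))\otimes\ell$, exactly what the paper packages as $\cF\otimes L\simeq(\Phi^L)^*\cF$ together with the functoriality remark from~\cite[Remark~4.6]{BDHO}. The only small thing left implicit is the final (noncanonical) identification $(\cF\otimes\langle\Crit(f)\rangle)\otimes L\simeq\cF\otimes\langle\Crit(f)\rangle$ obtained by choosing a generator of the free rank~$1$ module $L$, which is routine.
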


\begin{proof}
1. This is a direct consequence of the fact that $\Phi^L$ is a DG algebra map.

 2. The differential $\p^L$ expresses the differential on $C_*(X,\Xi;(\Phi^L)^*\cF)$. We then conclude using $\cF\otimes L\simeq (\Phi^L)^*\cF$. Compare with~\cite[Remark~4.6]{BDHO}.
\end{proof}

\subsection{Classes living over the fundamental class} \label{sec:classes_over_fund_class}

In this section we study some special classes in the homology of a finite dimensional manifold $Y$ with coefficients in a DG local system $\calf$. They may be defined by the property that the chains representing  them  contain at least one maximum of the Morse function. Their existence will be essential in the proofs of the theorems stated in \S\ref{sec:intro-finiteness-HZ}. These proofs will be given in \S\ref{sec:HZ}.

Throughout this section $Y^n$ is a smooth orientable manifold, possibly with boundary, $\calf$ is a right $C_*(\Omega Y)$-module and $j:\star\rightarrow Y$ is the inclusion of the basepoint. 

\begin{definition}\label{def:living-above-fundamental} We say that a class $\alpha \in H_*(Y,\p Y;\calf)$ \emph{lives above the fundamental class of $Y$} if its image through the shriek map 
$$j_! : H_{*}(Y,\p Y;\calf) \ri H_{*-n}(\star; j^*\calf)\simeq H_{*-n}(\calf)$$
is nonzero.  
\end{definition} 

This terminology is motivated by the fact that, for $\calf =\Z$ the trivial local system, these classes are the nonzero multiples of the fundamental class rel boundary $[Y,\p Y]$.

The above definition applies to fibrations, and we find it important to spell it out in that case. Let 
$$
F\hookrightarrow E\to Y
$$ 
be a fibration, let $\p E=E|_{\p Y}$ and recall 
that $H_*(E,\p E)\simeq H_*(Y,\p Y;C_*(F))$. 
In particular we have a shriek map 
$$
j_! : H_*(E,\p E)\simeq H_{*}(Y,\p Y;C_*(F)) \ri H_{*-n}(\star; j^*C_*(F))\simeq H_{*-n}(F).
$$

\xmpl\label{def:living-above-fundamental-fibration}  In the above situation, a class $\alpha \in H_*(E,\p E)$ lives above the fundamental class of $Y$ if its image through the shriek map 
$$j_! : H_*(E,\p E)\to H_{*-n}(F)$$
is nonzero. 
\lpmx

The next statement provides equivalent characterizations of classes that live above the fundamental class. 

\begin{proposition}\label{prop:living-fundamental} The following are equivalent: 
\begin{itemize} 
\item[(i)] The class $\alpha \in H_k(Y,\p Y;\calf)$ lives above the fundamental class. 
\item[(ii)] For any Morse complex with DG coefficients defining $H_*(Y, \p Y; \calf)$, any chain representative of the class $\alpha$ 
has the form 
\begin{equation}\label{eq:Morse-over-fundamental}
\beta\otimes M + \sum_{x_i\neq M}\beta_i \otimes x_i,
\end{equation}
with $M$ a local maximum of the Morse function and $[\beta]\neq 0\in H_{k-n}(\calf)$. (Note that $\beta\in C_{k-n}(\calf)$ is automatically a cycle.)
\item[(iii)] The class $\alpha$ can be represented in some Morse complex with DG coefficients defining $H_*(Y, \p Y; \calf)$ by a cycle of the form \eqref{eq:Morse-over-fundamental}.
\item[(iv)] The projection of $\alpha$ on the  rightmost column $E_{n,k-n}^\infty$ of the limit of the spectral sequence  is nonzero. 
  \end{itemize}
In particular, there exists a class $\alpha\in H_k(Y,\p Y;\calf)$ that lives above the fundamental class if and only if $E_{n, k-n}^\infty\neq 0$.
\end{proposition}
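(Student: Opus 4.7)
The plan is to produce an explicit chain-level Morse model for $j_!$ as the projection onto the top-degree piece of the Morse complex, and then identify this projection with the edge morphism of the spectral sequence arising from the canonical filtration by the Morse index.

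For (i)$\Leftrightarrow$(iii), choose enriched Morse data on $Y$ consisting of a function $f$ with a unique local maximum $M$ (and outward-pointing pseudo-gradient along $\p Y$ if $\p Y\neq\emptyset$), basepoint $\star = M$, and a trivial tree at $M$. Since no critical point of $f$ has index exceeding $n=\dim Y$, the twisting cocycle entries $m_{x_i,M}$ with $x_i\neq M$ all vanish (they would be chains of strictly negative degree), and $m_{M,M}$ does not enter the differential~\eqref{eq:DGdifferrential}. Consequently, the map
\[
\pi_M:C_k(Y,\p Y;\calf)\to \calf_{k-n},\quad \beta\otimes M+\sum_{x_i\neq M}\beta_i\otimes x_i\mapsto \beta,
\]
is a chain map. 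Applying the Morse-theoretic construction of shriek maps from~\cite[\S\S9-10]{BDHO} to $j:\star\hookrightarrow Y$, using the trivial Morse data on $\star$ together with the above data on $Y$, identifies $j_!$ with $[\pi_M]$. Hence $j_!([\alpha])=[\beta]$ for any cycle representative $\alpha=\beta\otimes M+\sum_{x_i\neq M}\beta_i\otimes x_i$, which establishes (i)$\Leftrightarrow$(iii).

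For (i)$\Leftrightarrow$(iv), consider the filtration $F_p=\bigoplus_{|x_i|\leq p}\calf\otimes x_i$ and the associated spectral sequence. Because $Y$ has no critical points of index strictly greater than $n$, no differential $d^r$ can land in the rightmost column $p=n$, and so $E^\infty_{n,*}$ embeds as a subspace of $E^2_{n,*}$. Since $F_n$ is the whole complex, the canonical map $H_k(Y,\p Y;\calf)\twoheadrightarrow F_nH_k/F_{n-1}H_k=E^\infty_{n,k-n}$ is surjective. For the Morse complex of the previous paragraph, $E^1_{n,k-n}\cong H_{k-n}(\calf)$ and this surjection coincides with $[\pi_M]$, so $j_!$ factors as
\[
H_k(Y,\p Y;\calf)\twoheadrightarrow E^\infty_{n,k-n}\hookrightarrow H_{k-n}(\calf).
\]
This immediately yields (i)$\Leftrightarrow$(iv), and the final equivalence of the proposition (existence of a class living above the fundamental class iff $E^\infty_{n,k-n}\neq 0$) follows from the surjectivity of the left-hand arrow.

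Finally, (ii)$\Rightarrow$(iii) is trivial. For (iii)$\Rightarrow$(ii), pass through (iv): take any other enriched Morse data, possibly with several local maxima $M_1,\dots,M_r$. By canonicity of the spectral sequence from $E^2$ onwards and of $j_!$, the image of $\alpha$ in $E^\infty_{n,k-n}$ for this new complex is still nonzero. For any cycle representative $\alpha=\sum_j\beta_j\otimes M_j+\sum_{|x_i|<n}\beta_i\otimes x_i$, this image is represented by $\sum_j[\beta_j]\otimes M_j\in E^1_{n,k-n}$, so at least one $[\beta_j]\in H_{k-n}(\calf)$ is nonzero, which is (ii). The main technical point throughout is the identification of $[\pi_M]$ with $j_!$ in the first step, for which the Morse-theoretic description of shriek maps of~\cite{BDHO} applied to the inclusion of a point is essential.
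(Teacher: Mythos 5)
Your proof is correct and takes essentially the same approach as the paper: identify $j_!$ at chain level with the projection $\pi_M$ onto the unique-maximum component of a Morse complex, and observe that $\pi_M$ factors through the edge of the spectral sequence arising from the index filtration. The one presentational difference is that the paper establishes the factorization $j_! = j^\infty_{n,k-n,!}\circ\pi$ by invoking naturality of the spectral sequence under the shriek map (producing a commutative square with the spectral sequence of the point and then checking $j^\infty_{n,k-n,!}$ is injective), whereas you establish it directly by recognizing $[\pi_M]$ as the composite $H_k\twoheadrightarrow E^\infty_{n,k-n}\hookrightarrow E^1_{n,k-n}$ of the standard edge morphisms; the two arguments are equivalent, and your route arguably streamlines the verification, especially for the step (iii)$\Rightarrow$(ii), where passing through (iv) cleanly handles Morse functions with several local maxima. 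A small point to be explicit about: the identity $j_!=[\pi_M]$ at chain level, which both proofs rely on, is exactly the definition of the Morse-theoretic shriek map to a point from \cite[\S9.2]{BDHO}, and your observation that $m_{x_i,M}=0$ for $|x_i|<n$ (degree reasons) is what makes $\pi_M$ a chain map — this is a useful sanity check that the paper leaves implicit.
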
 

\begin{proof} $(iii)\Rightarrow (i)$ The Morse function on $Y$ that underlies a DG Morse complex that computes $H_*(Y,\p Y;\cF)$ has  negative gradient pointing outwards  along $\p Y$.  Thus $f$ has at least one maximum, which we denote $M$.  W.l.o.g.\@ we may suppose that the basepoint $\star$ is located at $M$. By definition~\cite[\S9.2]{BDHO}, the shriek morphism at the level of complexes acts by 
$$ j_! (\beta\otimes M )\, =\,  \beta$$
and $$j_!(\beta_x\otimes x)\, =\, 0$$ for $x\neq M$.
The implication  is then straightforward.

\medskip $(i)\Rightarrow (ii)$ Arguing by contradiction, suppose that for some Morse function $f$, the class $\alpha$  can be represented as $\sum_{|x_i|<n}\beta_i\otimes x_i$, i.e., with no local maximum. We may suppose again that  $f$ has a maximum in $\star$. Considering the definition of $j_!$  given in the proof of  $(iii)\Rightarrow (i)$ we get  $j_!(\alpha)=0$, a contradiction. 

\medskip 
$(ii)\Rightarrow (iii)$ Obvious.

\medskip
$(i)\Leftrightarrow (iv)$ We  use a Morse function  with a unique maximum $M=\star$ to construct the enriched complex of $Y$. (The spectral sequence is independent on the choice of Morse function starting with the second page~\cite[\S6.2]{BDHO}.) Denote by $\pi$ the projection of the enriched homology of degree $k$ on $E_{n,k-n}^\infty$.  By the naturality of the spectral sequence~\cite[Remark~9.3]{BDHO} we have the following commutative diagram:
\begin{equation*}\label{welldefined-shriek2} 
  \xymatrix{
     H_k(Y,\p Y; \calf)\ar[r]^-{\pi_Y}  \ar[d]^{j_!}& E_{n, k-n}^\infty(Y,\p Y;\calf)\ar[d]^{j_{n,k-n,!}^\infty}
     \\
    H_{k-n} (\star; j^*\calf)\ar[r]^-{\pi_\star}  & E_{0, k-n}^\infty(\star;j^*\calf) }
\end{equation*}

We denote by $\pi_Y$ and $\pi_\star$ the two projections, in order to distinguish them.)
We have  
  $H_{k-n}(\star; j^*\calf)= H_{k-n}(\calf)$ and the spectral sequence for $H_*(\star; j^*\calf)$ is supported in the $0$-th column, where it equals
$E_{0,*}^\infty(\star;j^*\calf)= E_{0, *}^1(\star;j^*\calf)=H_{*}(\calf)$.
  Under this identification the projection $\pi_\star$ is the identity and therefore the commuting diagram above means that 
$$
  j_{n,k-n, !}^\infty\circ \pi_Y \, =\,  j_! \, .
$$
  
  Remark that 
$$
E^1_{n, k-n}(Y,\p Y; \calf)  = \Z\langle M\rangle\otimes H_{k-n}(\calf)
$$ and  
$$
E^1_{0, k-n}(\star; j^*\calf)  = \Z\langle \star \rangle\otimes H_{k-n}(\calf).
$$ 
  The map  $j_{n,k-n, !}^1$ between the two is the obvious identification.  Note that  $E^\infty_{n, k-n}(Y,\p Y; \calf)$ is a submodule of  $E^1_{n, k-n}(Y,\p Y; \calf)$ since the $n$-th column of the spectral sequence is the rightmost one. By considering the commutative diagram 
  $$
  \xymatrix{
  E_{n,k-n}^\infty(Y,\p Y;\calf) \ar[d]^{j_{n,k-n,!}^\infty} \ar@{^(->}[r] & 
  E_{n,k-n}^1(Y,\p Y;\calf) 
  \ar@{=}[d]^{j_{n,k-n,!}^1} \\
  E_{0,k-n}^\infty(\star;j^*\calf)\ar@{=}[r]& 
  E_{0,k-n}^1(\star;j^*\calf)
  }
  $$
  we see that the map $j_{n,k-n,!}^\infty$
  corresponds to the inclusion under these identifications, and is therefore injective. This implies that $j_!(\alpha)\neq 0$ if and only if $\pi_Y(\alpha)\neq 0$, completing the proof of the proposition. 
\end{proof}

We now turn to equivalent characterizations of classes  that live above the fundamental class in the case of fibrations. Let $F\hookrightarrow E\to Y$ be a fibration with base a connected oriented compact $n$-manifold, possibly with boundary.

\begin{proposition}\label{prop:living-fundamental-fibrations} The following are equivalent: 
\begin{itemize}
\item[(i)] the class $\alpha\in H_*(E,\p E)$ lives above the fundamental class. 
\item[(ii)] the class $\alpha$ has nonzero projection onto the rightmost column $E^\infty_{n,*}$ of the limit of the Leray-Serre spectral sequence of the fibration $F\hookrightarrow (E,\p E)\to (Y,\p Y)$. 
\item [(iii)] let $Y_{n-1}$ be the $(n-1)$-skeleton of $Y$ for some CW-decomposition relative to the boundary $\p Y$, with a single $n$-dimensional cell. The class $\alpha\in H_*(E,\p E)$ has nonzero image under the map 
$$
H_*(E)\to H_*(E,E|_{Y_{n-1}}).
$$
\end{itemize} 
\end{proposition}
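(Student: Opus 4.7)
The plan is to reduce (i)$\Leftrightarrow$(ii) to the already proved Proposition~\ref{prop:living-fundamental} by choosing the DG local system $\cF=C_*(F)$, and to obtain (ii)$\Leftrightarrow$(iii) from a direct inspection of the skeletal filtration that defines the Leray--Serre spectral sequence.

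For the equivalence (i)$\Leftrightarrow$(ii), I would invoke the identification $H_*(E,\p E)\simeq H_*(Y,\p Y;C_*(F))$ recalled in the statement (see~\cite[\S7, Remark 7.9]{BDHO}), under which the shriek map $j_!:H_*(E,\p E)\to H_{*-n}(F)$ corresponds to the shriek map for the DG local system $C_*(F)$. One key point, stated in~\cite[\S7]{BDHO}, is that the canonical DG Morse spectral sequence computing $H_*(Y,\p Y;C_*(F))$ is isomorphic from the $E^2$ page onwards to the Leray--Serre spectral sequence of $F\hookrightarrow (E,\p E)\to (Y,\p Y)$. In particular the rightmost columns $E^\infty_{n,*}$ of the two spectral sequences are canonically identified, compatibly with the projection maps from $H_*(E,\p E)$. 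The equivalence of (i) and (ii) is then the content of the implication (i)$\Leftrightarrow$(iv) of Proposition~\ref{prop:living-fundamental}.

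For the equivalence (ii)$\Leftrightarrow$(iii), I would use the standard construction of the Leray--Serre spectral sequence from the skeletal filtration $F_pH_*(E,\p E)=\mathrm{image}\bigl(H_*(E|_{Y_p},\p E)\to H_*(E,\p E)\bigr)$, with $Y_{-1}=\p Y$ and $Y_n=Y$. Then $F_nH_*(E,\p E)=H_*(E,\p E)$ and, as the assumed CW-decomposition has a single $n$-cell, $E^1_{n,q}=H_{n+q}(E|_{Y_n},E|_{Y_{n-1}})=H_{n+q}(E,E|_{Y_{n-1}})$. The long exact sequence of the triple $(\p E,E|_{Y_{n-1}},E)$
\begin{equation*}
H_*(E|_{Y_{n-1}},\p E)\to H_*(E,\p E)\to H_*(E,E|_{Y_{n-1}})
\end{equation*}
shows that the kernel of the map in (iii) equals $F_{n-1}H_*(E,\p E)$. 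Therefore the map in (iii) factors as the injection
\begin{equation*}
H_*(E,\p E)\twoheadrightarrow F_nH_*(E,\p E)/F_{n-1}H_*(E,\p E)=E^\infty_{n,*-n}\hookrightarrow E^1_{n,*-n}=H_*(E,E|_{Y_{n-1}}),
\end{equation*}
the rightmost injection being a consequence of the fact that no differentials land in the top row of the spectral sequence (so $E^\infty_{n,q}$ is a subgroup of $E^1_{n,q}$, obtained as the intersection of the kernels of the outgoing differentials). Hence $\alpha$ has nonzero image in $H_*(E,E|_{Y_{n-1}})$ if and only if its projection onto $E^\infty_{n,*-n}$ is nonzero.

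The only delicate point is the compatibility asserted in the first paragraph between the DG Morse spectral sequence and the Leray--Serre spectral sequence of the fibration together with the shriek maps to a point, but this has been established in~\cite[\S7, \S9]{BDHO}, so no new work is required.
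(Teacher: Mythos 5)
Your proof is correct. For (i)$\Leftrightarrow$(ii) you invoke Proposition~\ref{prop:living-fundamental}(i)$\Leftrightarrow$(iv) together with the identification of the DG Morse and Leray--Serre spectral sequences, which is exactly what the paper does. Where you genuinely diverge is in handling (iii): the paper proves (i)$\Leftrightarrow$(iii) via Lemma~\ref{lem:shriek-point}, which uses excision, triviality of the fibration over the top cell, K\"unneth, and then a Morse function with a single maximum to identify the composition $H_*(E)\to H_*(E,E|_{Y_{n-1}})\to H_{*-n}(F)$ with the DG shriek map; in contrast, you prove (ii)$\Leftrightarrow$(iii) by working directly with the skeletal filtration underlying the Leray--Serre spectral sequence, observing that the kernel of $H_*(E,\p E)\to H_*(E,E|_{Y_{n-1}})$ is exactly $F_{n-1}$ and that the induced map factors as $H_*(E,\p E)\twoheadrightarrow E^\infty_{n,*-n}\hookrightarrow E^1_{n,*-n}$, the injection holding because the top column of a spectral sequence on a finite base receives no differentials. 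Your route is shorter and purely classical, and in fact the paper itself notes after its proof that such a spectral-sequence argument is available. The price, if any, is that the paper's Lemma~\ref{lem:shriek-point} is not merely a step toward this proposition: it also establishes explicitly that the composition above \emph{is} the shriek map, a fact that is then reused (for instance in Remark~\ref{rmk:shriek-fundamental-class} and Corollary~\ref{cor:shriek-commute}); your argument would leave that identification to be supplied separately if needed.
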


\begin{remark}
Condition~(ii) implies in particular that there exists a class on the $n$-th column $E_{n,*}^2$ of the Leray-Serre spectral sequence of the fibration $F\hookrightarrow (E,\p E)\to (Y,\p Y)$ that survives to $E^\infty$.  
\end{remark}

\begin{remark} Because $E^\infty$ is the graded object associated to a filtration on $(E,\p E)$, all the elements of the rightmost column $E^\infty_{n,*}$ necessarily arise from homology classes in $H_*(E,\p E)$\end{remark}

\begin{proof} $(i)\Leftrightarrow (ii)$ This is the same as (i)$\Leftrightarrow$(iv) in Proposition~\ref{prop:living-fundamental}. 

$(i)\Leftrightarrow (iii)$ This is a trivial consequence of the following lemma from \cite[Lemma 9.18]{BDHO}

\begin{lemma}\label{lem:shriek-point} 
We have a canonical isomorphism $H_*(E,E|_{Y_{n-1}})\simeq H_{*-n}(F)$, and the composition
$$
H_*(E)\to H_*(E,E|_{Y_{n-1}})\stackrel{\simeq}{\longrightarrow} H_{*-n}(F)
$$
is the shriek map induced by the inclusion $\star \hookrightarrow Y$. 
\end{lemma}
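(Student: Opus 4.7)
My plan is to establish the isomorphism by excision on the single $n$-cell combined with the triviality of the fibration over that contractible cell, and then to identify the resulting composition with $j_!$ by reducing to the Morse-theoretic description of the shriek map recalled in the proof of Proposition~\ref{prop:living-fundamental}.

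For the first part, let $\chi:(D^n,\p D^n)\to(Y,Y_{n-1})$ be the characteristic map of the unique $n$-cell. Being a relative homeomorphism of good pairs, $\chi$ induces an excision isomorphism $H_*(\chi^*E,\chi^*E|_{\p D^n})\xrightarrow{\sim}H_*(E,E|_{Y_{n-1}})$. Since $D^n$ is contractible, the pulled-back fibration trivializes as $\chi^*E\simeq D^n\times F$ canonically up to homotopy, and the K\"unneth formula yields
\[
H_*(\chi^*E,\chi^*E|_{\p D^n})\simeq H_*(D^n,\p D^n)\otimes H_*(F)\simeq H_{*-n}(F),
\]
the last identification being pinned down by the orientation of $Y$, which provides a preferred generator of $H_n(D^n,\p D^n)\simeq\Z$.

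For the second part, I would choose a Morse function $f$ on $Y$ with $\p Y$ as the outgoing boundary and a unique local maximum at $\star$; then $\overline{W^u(\star)}$ is the unique $n$-cell of the associated CW-decomposition, and the remaining unstable manifolds (together with $\p Y$) make up $Y_{n-1}$. By the identification of the Leray-Serre spectral sequence of the fibration $F\hookrightarrow(E,\p E)\to(Y,\p Y)$ with the DG Morse spectral sequence (see~\cite[\S7]{BDHO}), the top-row term $E^1_{n,*-n}=H_*(E,E|_{Y_{n-1}})$ is canonically identified with the top-degree summand of the DG Morse complex, namely $C_*(F)\otimes\langle\star\rangle$, which computes $H_{*-n}(F)$. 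Under this identification, the natural map $H_*(E,\p E)\to H_*(E,E|_{Y_{n-1}})$ becomes the chain-level projection onto the $\star$-component, which by the proof of Proposition~\ref{prop:living-fundamental} is precisely $j_!$ at the chain level. Passing to homology yields the desired equality of maps.

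The main technical point I anticipate is the compatibility between the excision-plus-K\"unneth isomorphism of the first stage and the spectral sequence identification of the second. This reduces to matching the orientation of $\overline{W^u(\star)}$ implicit in the Morse-theoretic definition of $j_!$ with the orientation of $D^n$ used in the K\"unneth iso, and to checking that the trivialization of the fibration over $\overline{W^u(\star)}$ is compatible with the choice of lifting function underlying the $C_*(\Omega Y)$-module structure on $C_*(F)$.
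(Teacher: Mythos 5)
Your proof is correct and follows essentially the same route as the paper's: excision onto the single $n$-cell, triviality of the fibration over the contractible cell, and K\"unneth for the isomorphism, then identification of the composition with $j_!$ via the chain-level Morse description of the shriek map (projection onto the $C_*(F)\otimes\langle\star\rangle$ summand) together with the Fibration Theorem of~\cite{BDHO}. The compatibility of orientations and trivializations that you flag at the end is indeed the point the paper glosses over with ``clearly does not depend on the choice of CW-decomposition,'' so your extra care there is welcome but not a divergence in method.
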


The proof of Proposition~\ref{prop:living-fundamental-fibrations} is now complete. \end{proof}

\rmk\label{rmk:shriek-fundamental-class} The proof of the lemma above also shows that, if $E$ is finite dimensional and orientable, then the DG shriek map of the inclusion $\star\hookrightarrow Y$ is precisely the usual shriek map of the inclusion $F\hookrightarrow E$. In particular it maps the fundamental class $[E,\p E]$ to the fundamental class $[F]$. \kmr

\begin{remark} It follows from Lemma~\ref{lem:shriek-point} that the composition
$$
\xymatrix{
H_{q+n}(E,\p E) \ar@{->>}[r] & E^\infty_{n,q} \ar@{^(->}[r] & \ \dots \ \ar@{^(->}[r] & E^2_{n,q}=H_q(F)
}
$$
is identified with the shriek map. This general property of the Leray-Serre spectral sequence can be used in order to give an alternative proof of the equivalence (i)$\Leftrightarrow$(ii) in Proposition~\ref{prop:living-fundamental-fibrations}. 
\end{remark}

We finish this section with the following result:

\begin{proposition}\label{prop:shriek-commute} Let $Y^n$ be a closed orientable manifold endowed with a basepoint $\star$ and two Hurewicz fibrations $F\ri E\stackrel{\pi_E}{\longrightarrow} Y$ and $F'\ri E'\stackrel{\pi_{E'}}{\longrightarrow} Y$.  Consider a fibration map $\sigma: E\ri E'$ (meaning that $\pi_{E'}\circ\sigma=\pi_{E}$) and let $j:\star \ri Y$ be the inclusion of the basepoint. Then the following diagram is commutative: 
$$\xymatrix{H_*(E)\ar[r]^-{\sigma_*}\ar[d]^-{j_!}&H_*(E')\ar[d]^-{j_!}\\
H_{*-n}(F_\star)\ar[r]^-{\sigma_*}&H_{*-n}(F_\star')}$$

\end{proposition}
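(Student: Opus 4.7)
The plan is to exploit the description of $j_!$ from Lemma~\ref{lem:shriek-point} and to establish naturality at each step of that factorization. Choose a CW-decomposition of $Y$ with a unique top-dimensional cell $e^n$ whose interior contains the basepoint $\star$. By Lemma~\ref{lem:shriek-point}, the shriek map $j_!$ factors as
\[
H_*(E)\xrightarrow{q_E} H_*(E, E|_{Y_{n-1}}) \xrightarrow{\Phi_E} H_{*-n}(F_\star),
\]
and analogously for $E'$. It is enough to show that $\sigma_*$ is compatible with both $q$ and $\Phi$.

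Compatibility with $q$ is automatic: since $\pi_{E'}\circ\sigma=\pi_E$, the map $\sigma$ sends $E|_{Y_{n-1}}$ into $E'|_{Y_{n-1}}$ and hence defines a map of pairs $(E, E|_{Y_{n-1}})\to (E', E'|_{Y_{n-1}})$; naturality of the quotient yields $q_{E'}\circ\sigma_* = \sigma_*\circ q_E$.

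For compatibility with $\Phi$, we unfold $\Phi_E$ as in the proof of Lemma~\ref{lem:shriek-point}: picking a closed disc $\ol D^n\subset e^n$ that contains $\star$ in its interior, $\Phi_E$ is the composition of excision $H_*(E, E|_{Y_{n-1}})\simeq H_*(E|_{\ol D^n}, E|_{\p D^n})$, a fiber-homotopy trivialization $E|_{\ol D^n}\simeq F_\star\times \ol D^n$, and the K\"unneth formula. Excision and K\"unneth are manifestly natural. The only point requiring care is that the trivializations for $E$ and $E'$ can be chosen compatibly with $\sigma$. This follows from the standard principle that a Hurewicz fibration over a contractible base is canonically, up to fiber homotopy, trivial: concretely, fix a contraction of $\ol D^n$ to $\star$, and use the homotopy lifting property for $\pi_E$, starting from the inclusion $F_\star\hookrightarrow E|_{\star}$, to construct $\tau_E: F_\star\times\ol D^n\simeq E|_{\ol D^n}$; construct $\tau_{E'}$ analogously for $E'$. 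Then $\sigma\circ\tau_E$ and $\tau_{E'}\circ(\sigma_\star\times \id_{\ol D^n})$, where $\sigma_\star: F_\star\to F'_\star$ is the restriction of $\sigma$, are both lifts of the same homotopy in $Y$ starting from the same initial data; by uniqueness of such lifts up to vertical homotopy over the contractible base $\ol D^n$, they are fiber homotopic, yielding the required commutativity in homology.

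The main obstacle is precisely the naturality of the trivialization step, and it is resolved as above by invoking the homotopy lifting property together with the uniqueness of lifts over a contractible base. An alternative, essentially equivalent, route would invoke the naturality of the Leray--Serre spectral sequence with respect to fibration maps, combined with the identification, recorded in the Remark following Proposition~\ref{prop:living-fundamental-fibrations}, of $j_!$ with the composition $H_{q+n}(E)\twoheadrightarrow E^\infty_{n,q}\hookrightarrow E^2_{n,q}=H_q(F_\star)$.
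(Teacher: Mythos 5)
Your proof is correct and follows the same overall skeleton as the paper's: both factor $j_!$ through Lemma~\ref{lem:shriek-point} as quotient map, excision, fiber-homotopy trivialization over a disc around the basepoint, and K\"unneth, and then check naturality of each step against $\sigma$, with the trivialization square being the only nontrivial one. Where the two proofs diverge is in how that square's commutativity is established. The paper constructs an explicit vertical homotopy between $\sigma\circ\Psi^E$ and $\Psi^{E'}\circ(\sigma_\star\times\id)$ by splitting the radial path $\gamma^a$ at the intermediate point $ta$ and applying the lifting function of $E$ on the initial segment and the lifting function of $E'$ on the remaining segment. You instead appeal to the abstract principle that any two lifts of a given homotopy with the same initial data are vertically homotopic rel the initial slice (a consequence of the homotopy lifting property applied once more); noting that $\sigma\circ\tilde h_E$ and $\tilde h_{E'}\circ(\sigma_\star\times\id\times\id)$ are both lifts of the contraction $h(f,a,t)=c(a,t)$ with the same initial condition $(f,a)\mapsto\sigma_\star(f)$ then gives the result. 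Both arguments are sound; yours is slightly more conceptual and avoids writing the homotopy, while the paper's formula serves as an explicit witness for the homotopy your argument asserts exists. One small imprecision: contractibility of $\ol D^n$ is what lets you build the trivialization $\tau_E$ from a contraction in the first place; the uniqueness of lifts up to vertical homotopy holds over any base, so attributing the uniqueness to contractibility slightly misplaces where the hypothesis is used. Your closing remark about routing through naturality of the Leray--Serre spectral sequence is also valid and matches the observation recorded after Proposition~\ref{prop:living-fundamental-fibrations}.
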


\begin{corollary}\label{cor:shriek-commute} Using the notation of Proposition~\ref{prop:shriek-commute}, if $E$ is a finite dimensional closed  orientable manifold and if $\sigma: F\ri F'$ satisfies $\sigma_*[F]\neq 0$, then $\sigma_*[E]$ lives over the fundamental class in the fibration $E'\ri Y$. 
\end{corollary}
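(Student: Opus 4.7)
The plan is essentially to chase the commutative square from Proposition~\ref{prop:shriek-commute} starting at the fundamental class $[E]\in H_m(E)$, where $m=\dim E$. The right vertical arrow is exactly the shriek map that detects whether a class lives over the fundamental class, so it suffices to compute its value on $\sigma_*[E]$ by going around the square through the left vertical arrow, whose value on $[E]$ is already known.

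More precisely, I will first invoke Remark~\ref{rmk:shriek-fundamental-class}: since $E$ is closed and orientable, the DG shriek map $j_!:H_*(E)\to H_{*-n}(F)$ induced by $\star\hookrightarrow Y$ agrees with the classical topological shriek of the fiber inclusion $F\hookrightarrow E$, and therefore sends the fundamental class $[E]$ to the fundamental class $[F]\in H_{m-n}(F)$. Combined with the commutativity of the diagram in Proposition~\ref{prop:shriek-commute} applied to $\alpha=[E]$, this yields
$$
j_!\bigl(\sigma_*[E]\bigr)\;=\;\sigma_*\bigl(j_!\,[E]\bigr)\;=\;\sigma_*[F]\;\neq\;0,
$$
where the last inequality is the standing hypothesis. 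By the equivalent characterization recorded in Example~\ref{def:living-above-fundamental-fibration} (shriek to the fiber is nonzero), this nonvanishing is precisely the statement that $\sigma_*[E]$ lives over the fundamental class in the fibration $E'\to Y$.

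There is no genuine obstacle here: the whole content has been packaged into the preceding proposition and remark. One should of course verify that the degrees line up, but this is automatic since $E$ is closed orientable of dimension $m$, $Y$ has dimension $n$, and $F$ has dimension $m-n$. The corollary is a pure diagram chase once Proposition~\ref{prop:shriek-commute} is in hand; the nontrivial work, namely identifying the DG shriek maps of $\star\hookrightarrow Y$ for the two fibrations with the maps induced by fiber inclusion and checking their naturality under the fibration morphism $\sigma$, has already been carried out in the proof of that proposition via the Leray–Serre spectral sequences.
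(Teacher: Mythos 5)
Your proof is correct and takes essentially the same route as the paper: cite Remark~\ref{rmk:shriek-fundamental-class} to get $j_![E]=[F]$, then chase the square from Proposition~\ref{prop:shriek-commute} to obtain $j_!\sigma_*[E]=\sigma_*j_![E]=\sigma_*[F]\neq 0$. One small misstatement in your closing remark: the paper's proof of Proposition~\ref{prop:shriek-commute} proceeds by a CW-decomposition and excision argument together with a lifting-function homotopy, not by Leray--Serre spectral sequences (the spectral sequence characterization is used in Proposition~\ref{prop:living-fundamental-fibrations}), but this does not affect the validity of your argument for the corollary.
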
 
\begin{proof}[Proof of the corollary assuming Proposition~\ref{prop:shriek-commute}]

We have $j_![E]=[F]$ by Remark~\ref{rmk:shriek-fundamental-class}, and by Proposition~\ref{prop:shriek-commute} we get $j_!\sigma_*[E]= \sigma_*j_![E]=\sigma_*[F]\neq 0.$
\end{proof}

\begin{proof}[Proof of Proposition~\ref{prop:shriek-commute}]
We consider a CW-decomposition with a single $n$-dimensional cell $M$ whose center is the basepoint $\star$. Let $Y_{n-1}$ be the $(n-1)$-skeleton, and let $D= Y\setminus Y_{n-1}$. We may suppose that $D$ is an embedded $n$-ball centered at $\star$ by slightly thickening  $Y_{n-1}$ with a collar of the boundary of the $n$-cell. We will use the description of $j_!$ given in (the proof of) 
Lemma 9.18 in \cite{BDHO}, namely as the composition of four maps as in the diagram below.
The fibration map $\sigma$ yields the  following diagram, in which the second vertical maps are excision isomorphisms: 
$$\xymatrix{ H_*(E)\ar[d]\ar[r]^-{\sigma_*}\ar@/_4,7pc/[dddd]_-{j_!}&H_*(E')\ar[d]\ar@/^4,7pc/[dddd]^-{j_!}\\
H_*(E, E|_{Y_{n-1}})\ar[d]^-{\mathrm{exc}}\ar[r]^-{\sigma_*}&H_*(E', E'|_{Y_{n-1}})\ar[d]^-{\mathrm{exc}}\\
H_*(E|_D,E|_{\p D})\ar[d]^-{\simeq}\ar[r]^-{\sigma_*}&H_*(E'|_D,E'|_{\p D}\ar[d]^-{\simeq})\\
H_*(F_\star\times(D,\p D))\ar[d]^-{\simeq}\ar[r]^-{(\sigma\times\id)_*}&H_*(F_\star'\times (D, \p D)\ar[d]^-{\simeq})\\
 H_{*-n}(F_\star)\ar[r]^{\sigma_{*}}&H_{*-n}(F'_\star)}$$
 
 Let us show that this diagram is commutative. This is obvious for the upper  and for the second upper  square, as well as for the lower one. As for the third one, recall that the vertical isomorphisms are the respective inverses of the maps induced in homology by $ \Psi^E: (F\times (D,\p D))\ri (E|_D, E|_{\p D})$ and $\Psi^{E'}: (F'\times (D,\p D))\ri (E'|_{D},E'|_{\p D})$ that are defined as follows. For any $a\in D$ we denote by $\gamma^a$ the radial path starting at $\star$ and finishing at $a$; so $\gamma^\star$ is the constant path. Let $\Phi^E: \calp Y\times E\ri E $ be a lifting function for $E\ri Y$. Then $\Psi^E$ is defined by the formula   $\Psi^E(a, f)= \Phi^E(\gamma^a, f)$. The definition of $\Psi^{E'}$ is analogous,  using a lifting function $\Phi^{E'}$ for $E'\to Y$. We prove 
 \begin{lemma} The diagram $$\xymatrix {F_\star\times(D, \p D) \ar[d]^-{\Psi^E} \ar[r]^-{\sigma\times \id}&F'_\star\times (D, \p D)\ar[d]^-{\Psi^{E'}}\\
 (E|_D,E|_{\p D})\ar[r]^-{\sigma}&(E'|_D, E'|_{\p D})}$$
 is commutative up to homotopy.
 
 \end{lemma}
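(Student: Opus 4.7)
The goal is to construct a homotopy between $\sigma\circ\Psi^E$ and $\Psi^{E'}\circ(\sigma\times\id)$ as maps of pairs $(F_\star\times D, F_\star\times \p D)\to (E'|_D, E'|_{\p D})$. The key observation is that since $\sigma$ is fiber-preserving ($\pi_{E'}\circ\sigma=\pi_E$) and both $\Phi^E$, $\Phi^{E'}$ are lifting functions, each of the two maps in question sends $(a,f)$ to an element of $E'$ lying over $a=\gamma^a(1)\in D$. They are therefore both lifts, through the restricted fibration $E'|_D\to D$, of the projection $F_\star\times D\to D$, $(a,f)\mapsto a$. Assuming the lifting functions are chosen unital (a hypothesis that may be arranged without loss of generality), both lifts agree on $F_\star\times\{\star\}$, where they reduce to $\sigma$.

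The core of the argument then exploits the contractibility of $D$. Because $E'|_D\to D$ is a Hurewicz fibration over a contractible base, it is fiber-homotopy equivalent over $D$ to the trivial fibration $D\times F'_\star\to D$. Under such a trivialization, both $\sigma\circ\Psi^E$ and $\Psi^{E'}\circ(\sigma\times\id)$ correspond to maps $F_\star\times D\to F'_\star$ that restrict to $\sigma:F_\star\to F'_\star$ on $F_\star\times\{\star\}$. Any two such extensions over the contractible space $D$ are homotopic, and one may arrange the homotopy rel $F_\star\times\{\star\}$. Transporting back through the fiber-homotopy equivalence yields a homotopy through lifts of the projection, which is automatically a homotopy of pairs $(F_\star\times D, F_\star\times\p D)\to (E'|_D, E'|_{\p D})$ since projections to $D$ are preserved throughout.

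A more concrete alternative is to apply the homotopy lifting property to the degenerate square $(s,t)\mapsto\gamma^a(t)$ in $Y$, for each fixed $(a,f)$: the $s=0$ and $s=1$ sides are lifted to $E'$ using $\sigma\circ\Phi^E$ starting at $\sigma(f)$, respectively $\Phi^{E'}$ starting at $\sigma(f)$; the $t=0$ side is the constant path at $\sigma(f)$. The $t=1$ side of the resulting lifted square supplies the homotopy at $(a,f)$. The main technical point in either approach is the parametric character of the construction: continuity and naturality in the variables $(a,f)$ must be ensured, which follow from the continuity of the lifting functions, and possibly from choosing a transitive lifting function as in the setup of \cite{BDHO}. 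The boundary behavior over $\p D$ is automatic because the homotopy stays above $D$ throughout.
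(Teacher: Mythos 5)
Your argument is correct, but it takes a genuinely more abstract route than the paper. The paper's proof writes down an \emph{explicit} homotopy in one formula: for $t\in[0,1]$, set
$$H(t,f,a)=\Phi^{E'}\bigl(\delta^{[ta,a]},\,\sigma(\Phi^{E}(\gamma^{ta},f))\bigr),$$
where $\gamma^{ta}$ is the radial path from $\star$ to $ta$ and $\delta^{[ta,a]}$ the radial path from $ta$ to $a$. In words, the two constructions are connected by sliding the ``handoff point'' along the radial segment: lift via $\Phi^E$ up to the point $ta$, apply $\sigma$, then continue via $\Phi^{E'}$. One checks directly that $H_0=\Psi^{E'}\circ(\sigma\times\id)$ and $H_1=\sigma\circ\Psi^E$, and that $H$ commutes with the projection to $D$ throughout, so the boundary condition is automatic. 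Your two alternatives (fiber-homotopy trivialization over the contractible ball, or a parametric HLP on a degenerate square) both prove existence of the homotopy, and they are sound, but they invoke machinery — the fiber-homotopy triviality theorem, or a choice of retraction onto three sides of a square together with parametric HLP — where the paper needs none. In your first approach you also need a small extra argument (which you gesture at but do not supply) to ensure that the trivialization can be chosen to be the literal identity over $\star$, so that the two restrictions to $F_\star\times\{\star\}$ agree on the nose and not merely up to homotopy; using $\Psi^{E'}$ itself as the trivialization resolves this, but the cleanest resolution is exactly the paper's explicit formula. The trade-off is that the explicit interpolation is tied to the radial structure of $D$, while your abstract arguments would carry over verbatim to any contractible base, so yours is slightly more general even if less transparent here.
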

 \begin{proof}[Proof of the lemma] We interpolate between the two lifting functions for $t\in [0,1]$  by using $\Phi^E$ on the radial path $\gamma^{ta}$ in $D$ between $\star$ and $ta$ and the lifting function $\Phi^{E'}$ on the radial path between $ta$ and $a$. Denote the latter by $\delta^{[ta,a]}$. More precisely we define the homotopy  $H: [0,1]\times  F_\star\times (D,\p D)\ri (E'|_D, E'|_ {\p D})$ by 
 $$H(t, f, a) \, =\, \Phi^{E'}(\delta^{[ta,a]}, \sigma(\Phi^E(\gamma^{ta},f))).$$
 We see that $H_0= \Psi^{E'}\circ (\sigma\times\id)$ and $H_1= \sigma\circ\Psi^E$ and the lemma is proved. 
 \end{proof} 
 Since $\Psi^E$ and $\Psi^{E'}$ induce isomorphisms in homology we infer that the third square of the diagram above is commutative and therefore that the whole diagram is commutative, proving thus Proposition~\ref{prop:shriek-commute}. 
\end{proof}

\section{Floer theory with DG coefficients} \label{sec:DGFloer}

In this section we construct the DG Floer toolset announced in~\S\ref{sec:intro-DGFloer}, and we prove all the results announced in that subsection, in particular Theorem~\ref{thm:FH_for_fibrations-intro}.

\subsection{Orientation conventions} \label{sec:orientations}

We follow the orientation conventions of Abouzaid~\cite[\S9.7]{Abouzaid-cotangent}. 

{\bf Orientation lines.} We call a $\Z$-graded free abelian group of rank 1 an \emph{orientation line}. An isomorphism of orientation lines is a graded isomorphism of the underlying free rank 1 abelian groups. In this setting we have the following canonical isomorphisms.
\begin{itemize}
\item Given two orientation lines $\ell_1,\ell_2$, their tensor product $\ell_1\otimes \ell_2$ is by definition supported in degree $\deg \ell_1 +\deg \ell_2$. The abelian group $\Z$ 
supported in degree $0$ is a neutral element for the tensor product.  
We have a canonical \emph{twist isomorphism} 
\begin{equation} \label{eq:ori_lines_twist}
\ell_1\otimes \ell_2\stackrel \simeq \longrightarrow \ell_2\otimes \ell_1
\end{equation}
given by $v_1\otimes v_2\mapsto (-1)^{\deg \ell_1 \cdot \deg \ell_2} v_2\otimes v_1$. 
\item  Given an orientation line $\ell$, we denote by $\ell^{-1}=\Hom_\Z(\ell,\Z)$ its dual, supported in degree $\deg \ell^{-1}=-\deg \ell$. There is a canonical \emph{evaluation isomorphism} 
\begin{equation} \label{eq:ori_lines_evaluation}
\ell^{-1}\otimes \ell\stackrel \simeq \longrightarrow \Z,\qquad \alpha\otimes v\mapsto \alpha(v). 
\end{equation}
Thus $\ell^{-1}$ plays the role of an inverse for $\ell$ with respect to the tensor product. 
There is a canonical isomorphism 
\begin{equation} \label{eq:ori_lines_inverse_of_product}
\ell_2^{-1}\otimes\ell_1^{-1}\simeq (\ell_1\otimes\ell_2)^{-1}, 
\end{equation}  
where a tensor product $\alpha_2\otimes\alpha_1\in\ell_2^{-1}\otimes\ell_1^{-1}$ is seen as an element of $(\ell_1\otimes \ell_2)^{-1}$ via $\langle \alpha_2\otimes \alpha_1,v_1\otimes v_2\rangle = \alpha_2(v_2) \alpha_1(v_1)$. Also, there is a canonical isomorphism 
\begin{equation} \label{eq:ori_lines_inverse_of_inverse}
\ell\simeq \left(\ell^{-1}\right)^{-1}
\end{equation}
given by $v\mapsto \left(\alpha\mapsto \alpha(v)\right)$.
\end{itemize}

{\bf Oriented lines.} To orient an orientation line means to choose one of its two generators, called ``positive". An isomorphism of orientation lines  has a well-defined sign $\pm 1$ if its source and target are oriented.

Given two oriented orientation lines $\ell_1$ and $\ell_2$, we induce canonically an orientation on their tensor product $\ell_1\otimes \ell_2$ as follows: the positive generator is $v_1\otimes v_2$, where $v_1\in\ell_1$ and $v_2\in\ell_2$ are the positive generators. With this convention, the following \emph{Koszul sign rule} holds:
\begin{center}
{\it The twist isomorphism~\eqref{eq:ori_lines_twist} has sign $(-1)^{\deg\ell_1\cdot \deg\ell_2}$.}
\end{center}

The neutral element $\Z$ for the tensor product is canonically oriented by its generator $1$. As a consequence, given an orientation on $\ell$ we induce canonically an orientation on $\ell^{-1}$ by requiring that the evaluation isomorphism~\eqref{eq:ori_lines_evaluation} be orientation preserving. In other words, a generator $\alpha:\ell\to\Z$ is positive if and only if $\alpha(v)=1$ for the positive generator $v\in\ell$. (Note that the twist isomorphism $\ell^{-1}\otimes \ell\simeq \ell\otimes \ell^{-1}$ has sign $(-1)^{\deg \ell}$.)

These conventions for orienting the tensor product and the inverse imply that, given oriented lines $\ell_1$, $\ell_2$, and $\ell$, the isomorphisms~\eqref{eq:ori_lines_inverse_of_product} and~\eqref{eq:ori_lines_inverse_of_inverse} are orientation preserving.

{\bf Orientation lines of real vector spaces.} To a graded $1$-dimensional real vector space $L$ one associates canonically an orientation line $|L|$, defined as the free rank $1$ abelian group generated by the two orientations of $L$ modulo the relation that their sum vanishes, supported in degree $\deg L$. 

Given a finite dimensional real vector space $V$, its \emph{determinant line} is the graded $1$-dimensional real vector space $\det V=\Lambda^{\max}V$ supported in degree $\dim V$. We denote $|V|=|\det V|$ the corresponding orientation line. Since an orientation of $\det V$ is canonically equivalent to an orientation of $V$, we can see $|V|$ as being the free rank 1 abelian group generated by the two orientations of $V$ modulo the relation that their sum vanishes. We have a canonical isomorphism $|0|\simeq \Z$, which sends the positive orientation $+1$ of the $0$-dimensional vector space to $1$.  

{\bf Orientation local systems.} Given a manifold $M$ we denote $|M|$ the local system of graded free rank $1$ abelian groups whose fiber at a point $p$ is the orientation line $|T_pM|$. We call it the \emph{orientation local system of $M$}. Given a real vector bundle $E\to M$, we denote by $|E^{\text{fiber}}|$ the local system on $M$ whose fiber at $p$ is the orientation line $|E_p|$ of the fiber $E_p$. Yet another local system of interest is $|E||_M$, the restriction of $|E|$ to $M$, whose fiber at $p$ is the orientation line of the total tangent space $|T_pE|$.

{\bf Orientation conventions.} 

{\it Direct sum.} Given two finite dimensional real vector spaces $V$ and $W$, we induce an orientation on $V\oplus W$ from orientations of $V$ and $W$ as follows: given positive bases $(v_1,\dots,v_m)$ of $V$ and $(w_1,\dots,w_n)$ of $W$, the basis $(v_1,\dots,v_m,w_1,\dots,w_n)$ is positive. At the level of orientation lines, this is phrased as a canonical isomorphism 
\begin{equation} \label{eq:ori_lines_direct_sum}
|V|\otimes |W| \simeq |V\oplus W|. 
\end{equation}

{\it Short exact sequences.} Given a short exact sequence of finite dimensional real vector spaces 
\begin{equation} 
0\to A\to B\to C\to 0, 
\end{equation}
we induce an orientation on $B$ from orientations of $A$ and $C$ as follows: given positive bases $(a_1,\dots,a_m)$ of $A$ and $(c_1,\dots,c_n)$ of $C$, we choose lifts $\tilde c_1,\dots,\tilde c_n$ for $c_1,\dots,c_n$ and declare the basis $(a_1,\dots,a_m,\tilde c_1,\dots,\tilde c_n)$ of $B$ to be positive. This yields a canonical isomorphism 
\begin{equation} \label{eq:ori_lines_short_exact_sequence}
|A|\otimes |C|\simeq |B|. 
\end{equation}
This isomorphism can be used to induce an orientation on any of the factors $A$, $B$, $C$ from orientations of the two other factors. Note also that this orientation rule is equivalent to the one for the direct sum under the convention that a choice of splitting $C\to B$ gives rise to an isomorphism $B\simeq A\oplus C$.

{\it Fiber products.} Let $f_i : V_i\to V$, $i = 1,2$ be linear maps between oriented vector spaces so that $f:V_1\oplus V_2\to V$, $(v_1,v_2)\mapsto f_1(v_1)-f_2(v_2)$ is surjective. The orientation on the fiber product $V_1 \times_V V_2 = \ker f$ is defined from the short exact sequence $0\to\ker f \to V_1\oplus V_2\stackrel{f}\to V\to 0$.

{\it Vector bundles.} Let $E\stackrel \pi\longrightarrow M$ be a real vector bundle over a manifold $M$. Denote the fiber at a point $p$ by $E_p$ and consider the canonical short exact sequence $0\to E_p\to T_pE \stackrel {d\pi} \longrightarrow T_pM\to 0$. This gives rise to a canonical isomorphism 
$$
 |E_p|\otimes |T_pM|\simeq |T_p E|. 
$$ 
The previous rule for orienting $T_pE$ from orientations of $E_p$ and $T_pM$ can be succinctly rephrased as {\it ``fiber first, base second"}. Pasting these canonical isomorphisms together we get the isomorphism of graded free rank 1 local systems on $M$ 
$$
 |E^{\text{fiber}}|\otimes |M|\simeq |E||_M.
$$

{\it Manifolds with boundary.} Given a manifold $M$ with boundary, consider the normal bundle $\nu\to \p M$ along the boundary. The previous recipe provides an isomorphism $|\nu^{\text{fiber}}|\otimes |\p M|\simeq |M||_{\p M} $ of local systems on $\p M$. 

In this situation the normal bundle is trivial. {\it We trivialize the normal bundle along $\p M$ using an outward pointing vector field along the boundary $\nu^{\text{out}}$.} This determines a canonical isomorphism $|\R|\otimes |\p M|\simeq |M||_{\p M}$. 

Explicitly, we split $T_pM$ as $T_pM\simeq \R\nu_p^{\text{out}}\oplus T_p\p M$, or equivalently we project $T_pM$ onto $T_p\p M$ with kernel $\R\nu_p^{\text{out}}$, leading to the exact sequence $0\to \R\nu_p^{\text{out}}\to T_pM\to T_p\p M\to 0$. This induces a canonical isomorphism 
$$
|\R\nu_p^{\text{out}}|\otimes |T_p\p M|\simeq |T_pM|. 
$$
The canonical isomorphism $\R\nu_p^{\text{out}}\simeq \R$ induces $|\R|\otimes |\p M|\simeq |M||_{\p M}$.

{\it Stabilized Fredholm operators.} Let $D:X\to Y$ be a Fredholm operator between Banach spaces, meaning that the image is closed and the kernel and cokernel are finite dimensional. We define the \emph{determinant line of $D$} to be the graded $1$-dimensional vector space 
$$
\det D = (\det \coker D)^\vee\otimes \det \ker D.
$$ 
Thus $\det D$ is supported in degree $\ind(D)= \dim \ker D - \dim\coker D$. 

Given a finite dimensional vector space $V$ and a linear map $\Phi:V\to Y$, the \emph{stabilization of $D$ (by $V$ and $\Phi$)} is the Fredholm operator 
$$
D^V:V\oplus X\to Y, \qquad (v,x)\mapsto Dx + \Phi v. 
$$ 
In this situation there is a canonical isomorphism (which depends on $\Phi$)
$$
\det D^V\simeq \det V\otimes \det D.
$$
We infer a canonical isomorphism of orientation lines 
$$
|\det D^V|\simeq |V|\otimes |\det D|.
$$

{\it Gluing of stabilized Cauchy-Riemann operators.} Let $D_1,D_2:X\to Y$ be Cauchy-Riemann operators on the cylinder, with the same asymptotic behavior at $-\infty$ for $D_1$ and at $+\infty$ for $D_2$. Denote $D_1\# D_2$ the glued operator for a large enough value of the gluing parameter. Linear gluing theory~\cite{FH-coherent} provides a canonical isomorphism that preserves coherent orientations
$$
|\det D_1| \otimes |\det D_2| \simeq |\det D_1\# D_2|.
$$
Note that one does not need to assume that the operators are surjective. 

Let now $D_1^{V_1}:V_1\oplus X\to Y$ and $D_2^{V_2}:V_2\oplus X\to Y$ be stabilizations of $D_1$ and $D_2$, and denote $D_1^{V_1}\# D_2^{V_2}=(D_1\# D_2)^{V_1\oplus V_2}$. We have isomorphisms $|\det D_i^{V_i}|\simeq |V_i|\otimes |\det D_i|$ for $i=1,2$ and $|\det D_1^{V_1}\# D_2^{V_2}|\simeq |V_1\oplus V_2|\otimes |\det D_1\# D_2|$. Moreover, if $D_1^{V_1}$ and $D_2^{V_2}$ are surjective, then so is $D_1^{V_1}\# D_2^{V_2}$. 

We obtain by composition a canonical isomorphism 
\begin{align*}
|\det D_1^{V_1}|\otimes |\det D_2^{V_2}|& \simeq |V_1| \otimes |\det D_1|\otimes |V_2|\otimes |\det D_2|\\
& \simeq (-1)^{\dim V_2\cdot \ind D_1}|V_1|\otimes |V_2|\otimes |\det D_1|\otimes |\det D_2|\\
& \simeq (-1)^{\dim V_2\cdot \ind D_1}|V_1\oplus V_2|\otimes |\det D_1\# D_2|\\
& \simeq (-1)^{\dim V_2\cdot \ind D_1} |\det D_1^{V_1}\# D_2^{V_2}|.
\end{align*}

Given a system of coherent orientations as in~\cite{FH-coherent}, and given orientations of $V_1$ and $V_2$, all the terms above are canonically oriented and the equations need to be understood as follows.

\begin{lemma} \label{lem:gluing-stabilized-CR}
The gluing of stabilized Cauchy-Riemann operators $D_1^{V_1}$ and $D_2^{V_2}$ as above changes orientations by $(-1)^{\dim V_2\cdot \ind D_1}$.  \qed
\end{lemma}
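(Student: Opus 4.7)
The proof is essentially the computation displayed immediately before the lemma; I will package it as a sequence of canonical isomorphisms and track signs carefully. The plan is to start from the left-hand side $|\det D_1^{V_1}|\otimes |\det D_2^{V_2}|$ and walk along the four isomorphisms displayed in the excerpt, recording at each step whether it is sign-preserving or not.

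First, I will apply the stabilization isomorphism $|\det D_i^{V_i}|\simeq |V_i|\otimes |\det D_i|$ to each factor. This is sign-preserving by the chosen convention ``fiber first, base second'' for orienting stabilized Fredholm operators. Next, I will commute $|\det D_1|$ past $|V_2|$; since $|\det D_1|$ is supported in degree $\ind D_1$ and $|V_2|$ in degree $\dim V_2$, the twist isomorphism~\eqref{eq:ori_lines_twist} contributes precisely the sign $(-1)^{\dim V_2\cdot \ind D_1}$ by the Koszul sign rule. Then I will merge $|V_1|\otimes|V_2|\simeq |V_1\oplus V_2|$ using~\eqref{eq:ori_lines_direct_sum}, which is sign-preserving by the direct sum convention. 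Finally, I will apply linear gluing for the unstabilized Cauchy--Riemann operators to obtain $|\det D_1|\otimes|\det D_2|\simeq |\det D_1\#D_2|$, and reassemble via the identification $D_1^{V_1}\# D_2^{V_2}=(D_1\# D_2)^{V_1\oplus V_2}$, together with the corresponding stabilization isomorphism for the glued operator. The linear gluing step is by hypothesis coherent, i.e., sign-preserving, following~\cite{FH-coherent}.

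Composing these four steps, the only sign that is introduced is $(-1)^{\dim V_2\cdot \ind D_1}$, which establishes the claim. The main conceptual point is that the ``natural'' grouping imposed on the glued stabilization is $(V_1\oplus V_2)$ first, then $\det(D_1\# D_2)$, whereas the left-hand side groups each $V_i$ with its own $\det D_i$; the sign of Lemma~\ref{lem:gluing-stabilized-CR} is exactly the price of reordering these factors. The only subtlety to double-check is that the stabilization isomorphism for $(D_1\# D_2)^{V_1\oplus V_2}$ produced from the two individual stabilizations agrees with the ``fiber first, base second'' convention applied directly, which is immediate since the stabilization map $\Phi=\Phi_1\oplus \Phi_2$ decomposes as a direct sum and the orientation on $V_1\oplus V_2$ is defined by the direct sum rule~\eqref{eq:ori_lines_direct_sum}; no further sign is picked up here.
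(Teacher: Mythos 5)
Your proof is correct and is essentially the paper's own argument: the lemma is proved by exactly the chain of canonical isomorphisms displayed just before its statement, and you walk through the same four steps, correctly isolating the single Koszul sign $(-1)^{\dim V_2\cdot \ind D_1}$ coming from commuting $|\det D_1|$ past $|V_2|$. No gaps.
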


\subsection{Conventions for Floer theory} \label{sec:conventions_Floer}

We place ourselves in the following setup. We will consider symplectic manifolds $(X,\omega)$ that are \emph{symplectically atoroidal}, meaning that for any continuous map $f:T^2\to X$ with source the $2$-torus $T^2$ we have $f^*[\omega]=f^*(2c_1)=0$, where $c_1=c_1(TX)$ is the first Chern class with respect to some compatible almost complex structure.
\footnote{The relevance of the conditions $f^*[\omega]=0$ and $f^*(2c_1)=0$ for maps $f:T^2\to X$ is the following. The first condition implies that the Hamiltonian action functional is univalued on the free loop space $\cL X$. The second condition allows to define a $\Z$-grading on Floer homology groups in all free homotopy classes by choosing unitary trivializations of the tangent bundle along reference loops, one for each free homotopy class, and using those to induce trivializations along all the Hamiltonian $1$-periodic orbits. 

If one restricts to contractible loops, the condition $f^*[\omega]=f^*(2c_1)=0$ is requested only for maps $f:\SS^2\to X$ defined on the $2$-sphere. We then say that $X$ is \emph{symplectically aspherical}.

If one wishes to study \emph{operations} on Floer homology groups, $\Z$-gradings need to be chosen consistently over all free homotopy classes. The condition $f^*(2c_1)=0$ is then not sufficient anymore and needs to be replaced by $c_1=0$. In the specific case of cotangent bundles we have $c_1=0$ if the base is orientable, and $2c_1=0$ in the general case. This weaker condition is sufficient in that setup in order to define graded operations~\cite{Abouzaid-cotangent}.} 
 
In case the manifold $X$ is not compact (typically for us it will be the symplectic completion of a Liouville domain), we will need to restrict the class of Hamiltonians and almost complex structures under consideration in order to ensure \emph{a priori} $C^0$-bounds for Floer trajectories. We postpone the details to~\S\ref{sec:symplectic_homology} and, in this section and the following  ones, we will simply speak of \emph{admissible} Hamiltonians, almost complex structures, homotopies of Hamiltonians and almost complex structures, and homotopies of homotopies of such. 

Let $S^1=\R/\Z$. We will consider $1$-periodic time-dependent Hamiltonians $H:S^1\times X\to \R$, $H=(H_t)$. The \emph{Hamiltonian vector field} $X_H^t$ is defined by $\omega(X_H^t,\cdot)=-dH_t$, and we denote $\Per(H)$ the set of $1$-periodic orbits of $X_H^t$. We say that $H$ is \emph{nondegenerate} if every $1$-periodic orbit is nondegenerate (and therefore isolated). 

Denote by $\cL X$ the space of smooth free loops on $X$, i.e., smooth maps $f:S^1\to X$.\footnote{Depending on the context one sometimes needs to work with loops of lower regularity, for example $W^{1,p}$ or $C^0$, but this is irrelevant for the topology of $\cL X$ because the resulting spaces of loops are all homotopy equivalent~\cite{Palais1968}, see also~\cite[\S2]{Chataur-Oancea}.} For each free homotopy class of loops $a$ we denote by $\cL_a X$ the connected component of $\cL X$ formed by the loops in the class $a$, and we choose for each such component a reference loop $\gamma_a\in\cL_a X$. If $a=0$, the class of contractible loops, we choose $\gamma_0$ to be constant. We denote by $\Per^a(H)$ the set of $1$-periodic orbits of $H$ in the free homotopy class $a$. Given a free loop $\gamma$ we denote $[\gamma]$ its free homotopy class. 

We define the action functional $\cA_H: \cL X\ri \R$ by 
$$ 
\cA_H(\gamma)\, =\, \int_{S^1\times[0,1]}\bar\gamma^*\omega - \int_{S^1} H(t,\gamma(t))\, dt,
$$
where $\bar\gamma:S^1\times[0,1]\to X$ is a homotopy between $\gamma=\bar\gamma(\cdot,1)$ and the reference loop $\gamma_a=\bar\gamma(\cdot,0)$ with $a=[\gamma]$. By (half of) the atoroidality condition, the value of $\cA_H$ does not depend on the choice of homotopy $\bar\gamma$. The set of critical points of $\cA_H$ is $\crit(\cA_H)=  \Per(H)$.

We will consider $1$-periodic time-dependent compatible almost complex structures $J=(J_t)$, $t\in S^1$. The gradient of $\cA_H$ with respect to the $L^2$-metric on $\cL X$ determined by $J$ is $\nabla \cA_H(\gamma)=-J_t(\gamma(t))(\dot\gamma(t)-X_H^t(\gamma(t)))$ and \emph{positive} gradient trajectories $u:\R\to\cL X$ for $\cA_H$ are equivalent to maps $u:\R\times S^1\to X$ solving the \emph{Floer equation} $\p_s u + J_t(u)(\p_t u-X_H^t(u))=0$. 

As mentioned at the beginning of this subsection, if the manifold $X$ is not compact we need to impose some additional admissibility conditions on $H$ and $J$, to be specified later in~\S\ref{sec:symplectic_homology}. The set of admissibility conditions is void if the manifold $X$ is closed.

An admissible pair $(H,J)$ is said to be \emph{regular} if, for any choice $x^\pm\in\Per(H)$, the {\it space of connecting Floer trajectories from $x^+$ to $x^-$} 
\begin{align*}
\wh \cM(x^+;x^-) & = \wh \cM(x^+;x^-;H,J) \\
&  = \{u:\R\times S^1\to X\, :  \, \p_s u  + J_t(u)(\p_t u -X_H^t(u))=0,\\
& \qquad \qquad \lim_{s\to \pm\infty} u(s,\cdot)=x^\pm\}
\end{align*}
is cut out transversally, i.e., the linearization of the Floer equation at any $u\in \wh \cM(x^+;x^-)$ is surjective. These spaces of Floer trajectories are smooth manifolds of dimension $|x^+|-|x^-|$, with $|x|=\CZ(x)$ (the Conley-Zehnder index of $x$). 
 Here the Conley-Zehnder index is computed with respect to trivializations of the tangent bundle over the elements of $\Per(H)$ constructed as follows: we choose trivializations of the tangent bundle over the reference loops $\gamma_a$, and then we induce trivializations over the elements of $\Per^a(H)$ by choosing homotopies to $\gamma_a$ and extending the trivialization at $\gamma_a$ along these homotopies.  

Since the Floer equation is equivalent to the positive gradient equation, we find that $\cA_H(x^+)-\cA_H(x^-)=\int_{\R\times S^1}\|\p_s u(s,t)\|^2 \, dsdt\ge 0$, with equality if and only if $u$ is constant in $s$ and $x^+=x^-$. Whenever $x^+\neq x^-$, the additive group $\R$ acts freely on $\wh \cM(x^+;x^-)$ by translations $\sigma\cdot u = u(\cdot + \sigma,\cdot)$. The \emph{moduli space of Floer trajectories connecting $x^+$ to $x^-$}, denoted 
$$
\cM(x^+;x^-)=\wh \cM(x^+;x^-)/\R,
$$ 
is then a smooth manifold of dimension $|x^+|-|x^-|-1$. (The case $x^+=x^-=x$ is special: we denote $\cM(x;x)=\wh\cM(x;x)$; this consists of a point and therefore has dimension $0$.)

We will also consider admissible $s$-dependent homotopies $H=(H_s)$, $s\in\R$ that interpolate between Hamiltonians $H_+=(H_{+,t})$ at $+\infty$ and $H_-=(H_{-,t})$ at $-\infty$, and admissible $s$-dependent families of compatible almost complex structures $J=(J_s)$ that interpolate between $J_+=(J_{+,t})$ at $+\infty$ and $J_-=(J_{-,t})$ at $-\infty$. We say that a pair $(H,J)$ is \emph{regular} if, for any choice $x^\pm\in\Per(H_\pm)$ the \emph{moduli space of continuation Floer trajectories from $x^+$ to $x^-$}, defined as
\begin{align*}
\cH(x^+;x^-) & = \cH(x^+;x^-;H_s,J_s)\\
& = \{u:\R\times S^1\to X\, :  \, \p_s u + J_{s,t}(u)(\p_t u -X_{H_s}^t(u))=0,\\
& \qquad \qquad \lim_{s\to \pm\infty} u(s,\cdot)=x^\pm\},
\end{align*}
is cut out transversally. Each of these spaces is then a smooth manifold of dimension $|x^+|-|x^-|$. (In contrast to the case of Floer trajectories discussed in the previous paragraph, the additive group $\R$ does not act on $\cH(x^+;x^-)$ since the Floer data is $s$-dependent.)

Finally, given $(H_+,J_+)$, $(H_-,J_-)$ two regular pairs of admissible Hamiltonians and compatible almost complex structures, we will consider admissible \emph{homotopies of homotopies} $\{(H^\tau,J^\tau)\}$, $\tau\in [0,1]$, i.e., $1$-parameter families of homotopies interpolating from $(H_+,J_+)$ near $+\infty$ to $(H_-,J_-)$ near $-\infty$. Such a family is \emph{regular} if, for any $x^\pm\in\Per(H_\pm)$, the \emph{parametrized moduli space of continuation Floer trajectories} defined by 
\begin{align*}
\cH^{[0,1]}(x^+;x^-) & = \cH^{[0,1]}(x^+;x^-;H^\tau,J^\tau)\\
& = \{(\tau,u)\, : \,  \, \tau\in [0,1],\, u:\R\times S^1\to X \\
& \qquad \p_s u + J^\tau_{s,t}(u)(\p_t u -X_{H^\tau_s}^t(u))=0,\\
& \qquad \lim_{s\to \pm\infty} u(s,\cdot)=x^\pm\}
\end{align*}
is cut out transversally. Each of these spaces is then a smooth manifold of dimension $|x^+|-|x^-|+1$. We denote 
$\cH^i(x^+;x^-)$, $i=0,1$ the moduli spaces of continuation Floer trajectories for the pairs $(H^i, J^i)$, $i=0,1$.

A key ingredient in Floer theory is the construction of coherent orientations for the moduli spaces of Floer trajectories. The classical reference is~\cite{FH-coherent}, but we use in this paper a slight variation borrowed from~\cite{BOauto} and inspired by~\cite{Bourgeois-Mohnke}, which is directly suited for a comparison between Floer theory and Morse theory in the symplectically aspherical case. The construction takes as input the choice, for each $1$-periodic orbit $x\in \Per(H)$, of an orientation of the determinant line bundle over the space of Cauchy-Riemann operators on the Riemann sphere with one positive puncture (identified with $\C$) and asymptotic behavior at that puncture given by the linearization of the Hamiltonian flow at $x$.  This choice induces orientations for moduli spaces of Floer trajectories by requiring that the gluing isomorphisms preserve the orientations. The resulting orientations are coherent as a consequence of an associativity property of the gluing isomorphisms. We refer to~\cite{BOauto} for the Floer theoretic construction, to~\cite[Appendix~A.2]{BDHO} for the Morse theoretic construction, and to~\S\ref{sec:sympl-asph} for a comparison between the two.

\subsection{The twisting cocycle}\label{sec:twisting-cocycle}

In this subsection we define the DG Floer complex by constructing a Maurer-Cartan element $\mathfrak{m}=(m_{x,y})\in \End_{-1}(R_*\otimes C_\bullet)$ as announced in~\S\ref{sec:intro-DGFloer}. Here $R_*=C_*(\Omega \cL X)$ and $C_\bullet=\langle \Per(H)\rangle$, with $H$ an admissible nondegenerate Hamiltonian. We also refer to $\mathfrak{m}$ as a \emph{twisting cocycle}, or as \emph{the Barraud-Cornea twisting cocycle}~\cite{BC07}. 

\subsubsection{Orientations} \label{sec:orientations-twisting-cocycle}
Let $(H,J)$ be a regular admissible pair as in~\S\ref{sec:conventions_Floer}, to which one associates spaces of parametrized Floer trajectories $\wh \cM(x;y)$ and moduli spaces of unparametrized Floer trajectories $\cM(x;y)$ for $x,y\in\Per(H)$.  We recall that, given $u\in\wh \cM(x;y)$, the orbit $x$ is the asymptote of $u$ at its positive puncture and the orbit $y$ is the asymptote of $u$ at its negative puncture. 

The Floer compactification of $\cM(x;y)$ is denoted by $\ol\cM(x;y)$. This is a topological manifold with boundary with corners, 
whose boundary consists of \emph{broken Floer trajectories}~\cite[Appendix A]{BC07}. Without taking into account orientations the boundary can be written as 
$$
\p\ol\cM(x;y)=\bigcup_{z} \ol\cM(x;z)\times \ol \cM(z;y).
$$  
In order to construct the twisting cocycle a key step is to discuss orientations.  

 Let $\p_{x,y}$ be the vector field on $\wh \cM(x;y)$ given by ${\p_{x,y}}\big|_u=-\p_s u$. We refer to $\p_{x,y}$ as \emph{the canonical constant vector field on $\wh\cM(x;y)$}. We have a short exact sequence 
$$
0\to \R\p_{x,y} \to T_u \wh \cM(x;y)\to T_{[u]}\cM(x;y)\to 0
$$
that determines a canonical isomorphism of orientation lines 
\begin{equation} \label{eq:orientMMhat}
|\R\p_{x,y}|\otimes |T_{[u]}\cM(x;y)|\simeq |T_u\wh\cM(x;y)|.
\end{equation}

There is a gluing map
$$
\#_R:\wh \cM(x;z)\times \wh \cM(z;y)\to \wh \cM(x;y)
$$
that is a diffeomorphism onto its image and that induces a canonical isomorphism of orientation local systems  
$$
|\wh \cM(x;z)|\otimes |\wh \cM(z;y)|\simeq |\wh \cM(x;y)|. 
$$

\begin{remark} \label{rmk:remark-on-gluing}
The gluing map $\#_R$ is strictly speaking only partially defined on a compact subset of $\wh \cM(x;z)\times \wh \cM(z;y)$ for any given value of the gluing parameter, but its isotopy class in a neighborhood of any pair of trajectories $(u,v)$ is well-defined, and the resulting isomorphism of local systems is independent of all choices. Also, the gluing map is not surjective onto $\wh \cM(x;y)$. Nevertheless the orientation local system $|\wh \cM(x;y)|$ is trivial~\cite{FH-coherent}, so that a trivialization at a point induces uniquely a global trivialization over the path connected component of that point. 
\end{remark}

 Consider the canonical constant vector fields $\p_{x,z}$, $\p_{z,y}$, and $\p_{x,y}$, defined respectively on $\wh \cM(x;z)$, $\wh \cM(z;y)$, and $\wh \cM(x;y)$. We have the following sequence of canonical isomorphisms, the first one being induced by the inverse of the gluing map $\#_R$ from~\eqref{eq:orientMMhat}:  
\begin{align*}
|\R\p_{x,y}| \otimes |\cM(x;y)| & \simeq |\R\p_{x,z}| \otimes |\cM(x;z)| \otimes | \R \p_{z,y}| \otimes |\cM(z;y)| \\
& \simeq (-1)^{|x|-|z|-1} |\R \p_{x,z}| \otimes |\R \p_{z,y}| \otimes |\cM(x;z)| \otimes |\cM(z;y)| \\
& \simeq (-1)^{|x|-|z|-1} |\R \p_{x,z} \oplus \R \p_{z,y}| \otimes |\cM(x;z)| \otimes |\cM(z;y)| \\
& \simeq (-1)^{|x|-|z|-1}|\R (\p_{x,z}+ \p_{z,y})| \otimes |\R (-\p_{x,z}+ \p_{z,y})| \\
& \qquad \qquad\qquad\qquad \otimes |\cM(x;z)| \otimes |\cM(z;y)|. 
\end{align*}
The gluing map induces an orientation preserving isomorphism 
$$
|\R( \p_{x,z}+\p_{z,y})|\simeq |\R\p_{x,y}| 
$$ 
since it is equivariant with respect to the diagonal $\R$-action on the factors. 

We claim that the gluing map induces an orientation preserving isomorphism 
$$ 
|\R( -\p_{x,z}+ \p_{z,y})| \simeq |\R\nu^\text{in}|,
$$
where $\nu^{\text{in}}$ is a vector field on $\cM(x;y)$ pointing away from the boundary. To prove this we first consider the situation at the level of the pregluing map. Denote by $R$ the gluing parameter, let $u\in\wh\cM(x;z)$, $v\in\wh\cM(z;y)$, write $u(s,t)=\exp_{z(t)}X_u(s,t)$ for $s\le -R\ll 0$ and $v(s,t)=\exp_{z(t)}X_v(s,t)$ for $s\ge R\gg 0$ with $X_u$, $X_v$ uniquely determined vector fields along $z$, fix a nondecreasing cutoff function $\rho:\R\to [0,1]$ such that $\rho\equiv0$ on $(-\infty,0]$, $\rho\equiv1$ on $[1,\infty)$, and consider the vector field $X_R(s,t)=\rho(-s)X_v(s+R,t)+\rho(s)X_u(s-R,t)$ that interpolates on the interval $[-1,1]$ between $X_v(\cdot+R,\cdot)$ and $X_u(\cdot-R,\cdot)$. The \emph{pregluing of the trajectories $u$ and $v$ with gluing parameter $R$} is defined to be 
$$
u\wh\#_Rv(s,t)=\left\{\begin{array}{rl}u(s-R,t), & s\ge 1,\\
\exp_{z(t)}X_R(s,t), & s\in[-1,1],\\
v(s+R,t), & s\le -1.
\end{array}\right.
$$
This defines an approximate solution of the Floer equation for $R\gg 0$, and the gluing $u\#_Rv$ is obtained by applying a projection operator, denoted $\pi_R$, to the approximate solution $u\wh\#_R v$ (see for example~\cite[\S9]{Audin-Damian_English}). It follows from the definition of the pregluing map that  $u(\cdot+\eps,\cdot)\wh\#_R v(\cdot-\eps,\cdot)=u\wh\#_{R-\eps} v$, 
and in particular 
$$
\frac{d}{d\eps}\Big|_{\eps=0} u(\cdot+\eps,\cdot)\wh\#_R v(\cdot-\eps,\cdot)=-\frac{d}{dR}u\wh\#_R v.
$$ 
By definition, the image of the left hand side under the differential of the projection $\pi_R$ is the image of $-\p_{x,z}+\p_{z,y}$ under the differential of the gluing map. The image of the right hand side under the differential of $\pi_R$ points away from the boundary, because the broken trajectory $(u,v)$ is obtained as $\lim_{R\to\infty} u\#_R v$. Since the two coincide, this proves the claim.

Given $\nu^\text{out}$ a vector field on $\cM(x;y)$ that points towards the boundary, the resulting canonical isomorphism $ |\R( -\p_{x,z}+ \p_{z,y})| \simeq |\R\nu^\text{out}|$ is orientation reversing, i.e., it has sign $-1$. In conclusion we obtain a canonical isomorphism 
$$
|\cM(x;y)|\simeq (-1)^{|x|-|z|}|\R\nu^{\text{out}}|\otimes |\cM(x;z)| \otimes |\cM(z;y)|.
$$
In view of our orientation convention for the boundary of a manifold, we have proved:
\begin{proposition}\label{orientationsFloer} 
Let $x,z,y\in\Per(H)$ be $1$-periodic orbits with $|x|>|z|>|y|$. The orientation induced from $\ol \cM(x,y)$ by the outward normal vector on the $1$-codimensional stratum $\cM(x,z)\times\cM(z,y)$ of the boundary differs from the product orientation by $(-1)^{|x|-|z|}$. \qed
\end{proposition}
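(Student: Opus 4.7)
The plan is to deduce the sign by carefully tracking orientations through the gluing isomorphism for the parametrized moduli spaces $\wh\cM$ and then descending to the unparametrized quotients $\cM$. The proof is essentially the computation displayed in the paragraphs preceding the proposition, organized as follows.

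First, I would use the canonical constant vector field $\p_{x,y}\big|_u=-\p_s u$ to relate the orientation lines of $\wh\cM(x;y)$ and $\cM(x;y)$ via the short exact sequence
$$0\to \R\p_{x,y}\to T\wh\cM(x;y)\to T\cM(x;y)\to 0,$$
obtaining $|\R\p_{x,y}|\otimes|\cM(x;y)|\simeq|\wh\cM(x;y)|$. The gluing map on parametrized moduli spaces then yields $|\wh\cM(x;z)|\otimes|\wh\cM(z;y)|\simeq|\wh\cM(x;y)|$ as a coherent orientation isomorphism.

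Second, I would assemble these two ingredients and rearrange the factors. Commuting $|\cM(x;z)|$ (degree $|x|-|z|-1$) past $|\R\p_{z,y}|$ (degree $1$) costs a Koszul sign $(-1)^{|x|-|z|-1}$. Combining $|\R\p_{x,z}|\otimes|\R\p_{z,y}|\simeq|\R\p_{x,z}\oplus\R\p_{z,y}|$ and then changing basis to $(\p_{x,z}+\p_{z,y},\,-\p_{x,z}+\p_{z,y})$ (determinant $2>0$, hence orientation-preserving) gives
$$|\cM(x;y)|\simeq (-1)^{|x|-|z|-1}\,|\R(\p_{x,z}+\p_{z,y})|\otimes|\R(-\p_{x,z}+\p_{z,y})|\otimes|\cM(x;z)|\otimes|\cM(z;y)|.$$

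Third, I would identify the two resulting $\R$-factors geometrically. The diagonal direction $\p_{x,z}+\p_{z,y}$ corresponds to the residual $\R$-translation on the glued trajectory, so under the gluing map it matches $\p_{x,y}$ orientation-preservingly and cancels the $|\R\p_{x,y}|$-factor on the left hand side. The antidiagonal direction $-\p_{x,z}+\p_{z,y}$ is the main analytic point: differentiating the pregluing $u\wh\#_R v$ in opposite translation parameters yields $-\tfrac{d}{dR}(u\wh\#_R v)$, and since $R\to\infty$ is precisely the direction in which a glued configuration converges to the broken pair $(u,v)$, projecting by $\pi_R$ one sees that $-\p_{x,z}+\p_{z,y}$ maps to an \emph{inward} normal $\nu^{\mathrm{in}}$ on the boundary stratum of $\ol\cM(x;y)$. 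Passing from $\nu^{\mathrm{in}}$ to the outward normal $\nu^{\mathrm{out}}$ used in the boundary orientation convention introduces one further sign $-1$.

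Combining the Koszul sign $(-1)^{|x|-|z|-1}$ with this flip yields $(-1)^{|x|-|z|}$, which is exactly the asserted discrepancy between the induced orientation and the product orientation on $\cM(x;z)\times\cM(z;y)$. The main obstacle—really the only non-formal step—is the geometric identification of $-\p_{x,z}+\p_{z,y}$ with an inward normal via the asymptotic behaviour of the pregluing map; the remainder is Koszul bookkeeping within the orientation-line formalism set up in \S\ref{sec:orientations}.
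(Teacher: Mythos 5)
Your proposal is correct and follows essentially the same route as the paper: the short exact sequence $0\to\R\p_{x,y}\to T\wh\cM(x;y)\to T\cM(x;y)\to 0$, the gluing isomorphism on parametrized spaces, the Koszul sign $(-1)^{|x|-|z|-1}$ from commuting $|\cM(x;z)|$ past $|\R\p_{z,y}|$, the diagonal/antidiagonal change of basis, and the pregluing identity $u(\cdot+\eps,\cdot)\wh\#_R v(\cdot-\eps,\cdot)=u\wh\#_{R-\eps}v$ identifying $-\p_{x,z}+\p_{z,y}$ with an inward normal, whence the final flip to $\nu^{\mathrm{out}}$ gives $(-1)^{|x|-|z|}$. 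You correctly isolate the antidiagonal identification as the only non-formal step, exactly as in the paper.
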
 

In an equivalent formulation, the boundary of the oriented Floer compactification of the moduli spaces of Floer trajectories with respect to a system of coherent orientations as in~\cite{FH-coherent} can be written as 
$$
\p \ol \cM (x;y)=\bigcup_{z} (-1)^{|x|-|z|}\ol\cM(x;z)\times \ol \cM(z;y).
$$

\subsubsection{Representing chain systems} 

The following two statements are Floer counterparts of Propositions 5.6 and 5.8 of \cite{BDHO}. They are proved in exactly the same way, by upgrading with signs the original argument of Barraud-Cornea~\cite[Lemma 2.2]{BC07}, which proceeds by induction on the dimension of the moduli spaces.

\begin{proposition}[Existence of representing chain systems for Floer moduli spaces] \label{representingchainFloer} 
Given $x,y\in\Per(H)$, denote $C_*(\ol\cM(x;y))$ the complex of cubical chains on the space $\ol\cM(x;y)$ of broken Floer trajectories. There exists a collection 
$\{s_{x,y}\, : \, x,y\in \Per(H)\}$ with $s_{x,y}\in C_{|x|-|y|-1}(\ol\cM(x,y))$ satisfying the following properties:  
\begin{enumerate}
\item $s_{x,y}$ is a cycle relative to the boundary and represents the fundamental class $[\ol\cM(x;y)]$. 
\item  
$$\partial s_{x,y}\, =\, \sum_z(-1)^{|x|-|z|}s_{x,z}\times s_{z,y},$$
where the product of chains is defined via the inclusions 
$$\ol\cM(x;z)\times\ol\cM(z;y)\, \subset \, \partial \ol\cM(x;y)\, \subset\, \ol\cM(x;y).$$
\end{enumerate}
\qed
\end{proposition}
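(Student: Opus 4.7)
The plan is to follow the inductive scheme of Barraud-Cornea, as adapted in [BDHO, Propositions 5.6 and 5.8] to the DG-Morse setting, and upgrade it to the Floer setting using the signs prescribed by Proposition~\ref{orientationsFloer}. The induction is on the dimension $k=|x|-|y|-1\geq 0$ of $\cM(x;y)$.

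For the base case $k=0$, each $\cM(x;y)$ is a finite set of points equipped with $\pm1$ signs coming from the coherent orientations. Set $s_{x,y}$ to be the signed sum of these points. No intermediate $z$ with $|x|>|z|>|y|$ exists, so both sides of the Maurer-Cartan relation vanish, and $s_{x,y}$ is automatically a cycle representing the fundamental class of the zero-dimensional compact manifold $\ol\cM(x;y)$.

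For the inductive step, assume $\{s_{x',y'}\}$ has been constructed for all pairs with $|x'|-|y'|-1\le k$. For $(x,y)$ with $|x|-|y|-1=k+1$, first define
$$
b_{x,y}\ :=\ \sum_z (-1)^{|x|-|z|}\, s_{x,z}\times s_{z,y}\ \in\ C_k(\p\ol\cM(x;y)).
$$
The first task is to verify that $b_{x,y}$ is a cycle. Applying the Leibniz rule for cubical chains together with the inductive expressions for $\p s_{x,z}$ and $\p s_{z,y}$, the resulting sum decomposes over triples $|x|>|z_1|>|z_2|>|y|$ into pairs of contributions of the form $\pm s_{x,z_1}\times s_{z_1,z_2}\times s_{z_2,y}$: the sign arising from expanding $\p(s_{x,z_1}\times s_{z_1,z_2})\times s_{z_2,y}$ in one occurrence cancels the sign arising from $s_{x,z_1}\times\p(s_{z_1,z_2}\times s_{z_2,y})$ in the other, exactly as in the Morse case in [BDHO]. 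The nontrivial bookkeeping is that the extra factor $(-1)^{|x|-|z|}$ in the definition of $b_{x,y}$ combines cleanly with the Koszul sign $(-1)^{|x|-|z|-1}$ from the chain-level Leibniz rule so as to leave one term with a $+$ sign and the corresponding paired term with a $-$ sign. This sign check is the main technical obstacle, but it is formally identical to the Morse computation once the signs have been set up as in \S\ref{sec:orientations-twisting-cocycle}.

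The second task is to identify the homology class of $b_{x,y}$. The top-dimensional open strata of $\p\ol\cM(x;y)$ are precisely the products $\cM(x;z)\times\cM(z;y)$ for $|x|>|z|>|y|$; the strata of codimension $\ge 2$ are negligible for top homology. By the induction hypothesis the cubical product $s_{x,z}\times s_{z,y}$ represents the fundamental class of $\ol\cM(x;z)\times\ol\cM(z;y)$ with the product orientation, and by Proposition~\ref{orientationsFloer} the boundary orientation induced from $\ol\cM(x;y)$ on this stratum differs from the product orientation precisely by $(-1)^{|x|-|z|}$. Therefore $b_{x,y}$ represents $[\p\ol\cM(x;y)]$. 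Finally, the pair $(\ol\cM(x;y),\p\ol\cM(x;y))$ is a compact topological manifold with boundary with corners of dimension $k+1$, so the connecting homomorphism $H_{k+1}(\ol\cM(x;y),\p\ol\cM(x;y))\to H_k(\p\ol\cM(x;y))$ sends the relative fundamental class to $[\p\ol\cM(x;y)]$. Choosing any cubical chain representative $s_{x,y}$ of the relative fundamental class whose boundary equals the specific cycle $b_{x,y}$ completes the inductive step and the proof.
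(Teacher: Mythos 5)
Your proof is correct and follows exactly the route the paper intends: the paper's own ``proof'' is just a citation to the inductive argument of Barraud--Cornea as adapted in [BDHO, Props.~5.6 and 5.8], upgraded with the signs computed in Proposition~\ref{orientationsFloer}, which is precisely the induction on $|x|-|y|-1$ that you carry out. The only step you compress slightly is the last one (one should pick any representative $\sigma$ of the relative fundamental class, note $\p\sigma$ is homologous to $b_{x,y}$ in $\p\ol\cM(x;y)$, and correct $\sigma$ by a chain in the boundary), but this is the standard conclusion of that induction.
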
 

\begin{definition}\label{chainsystemFloer} Following Barraud and Cornea, we will call the family $(s_{x,y})$ a \emph{representing chain system for  the Floer moduli spaces}.
\end{definition} 

\begin{proposition}[Uniqueness of representing chain systems for Floer moduli spaces] \label{prop:uniquenessFloer} Any two representing chain systems $(s'_{x,y})$ and $(s_{x,y})$ are homologous in the following sense: there exists a family $(\kappa_{x,y})$ of chains $\kappa_{x,y}\in C_{|x|-|y|}(\ol\cM(x;y))$ such that:
\begin{enumerate}
\item $\kappa_{x,x}$ is the constant $0$-chain for all $x$ and $\kappa_{x,y}=0$ for $|x|=|y|$, $x\neq y$.
\item for all $x,y$ we have 
\begin{equation}\label{eq:sprime-s-homologous}
\p \kappa_{x,y} = \sum_z s'_{x,z}\times \kappa_{z,y} + (-1)^{|x|-|z|-1}\kappa_{x,z}\times s_{z,y}.
\end{equation}
\end{enumerate}
\qed
\end{proposition}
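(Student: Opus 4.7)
The plan is to prove Proposition~\ref{prop:uniquenessFloer} by induction on the index difference $k = |x|-|y|$, in direct analogy with the Morse-theoretic counterpart~\cite[Proposition~5.8]{BDHO}, itself a signed upgrade of the original construction of Barraud-Cornea~\cite[Lemma~2.2]{BC07}.

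\emph{Initialization.} For $k\le 0$ the space $\cM(x;y)$ is either empty (when $x\ne y$) or reduced to the constant trajectory at $x$, and one sets $\kappa_{x,y}=0$ in accordance with the prescribed conditions. For $k=1$, no intermediate orbit $z$ satisfies $|y|<|z|<|x|$, so the right-hand side of~\eqref{eq:sprime-s-homologous} is empty; set $\kappa_{x,y}=0$. In this case $s_{x,y}=s'_{x,y}$ is automatic, both being forced to equal the signed sum of points of the $0$-dimensional space $\ol\cM(x;y)$.

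\emph{Inductive step.} Suppose $\kappa_{z,w}$ has been constructed for every pair with $|z|-|w|<k$, satisfying~\eqref{eq:sprime-s-homologous} and the degenerate cases. For $(x,y)$ with $|x|-|y|=k\ge 2$, consider the candidate boundary chain
$$
b_{x,y} := \sum_{z:\,|y|<|z|<|x|} \Bigl( s'_{x,z}\times\kappa_{z,y} + (-1)^{|x|-|z|-1}\kappa_{x,z}\times s_{z,y} \Bigr)
$$
in $C_{k-1}(\p\ol\cM(x;y))\subset C_{k-1}(\ol\cM(x;y))$. The target $\kappa_{x,y}\in C_k(\ol\cM(x;y))$ should satisfy $\p\kappa_{x,y}=b_{x,y}$, and its construction splits in two steps.

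\emph{Step A: $b_{x,y}$ is a cycle.} Apply the graded Leibniz rule to each cross product in $b_{x,y}$, substitute $\p s_{z,w}$ and $\p s'_{z,w}$ from Proposition~\ref{representingchainFloer}.2, and substitute $\p\kappa_{z,w}$ from the inductive hypothesis. The resulting expansion of $\p b_{x,y}$ consists of triple-product terms of the three shapes $s'\times s'\times \kappa$, $s'\times\kappa\times s$ and $\kappa\times s\times s$. The signs coming from the Koszul rule, from the Floer boundary sign $(-1)^{|x|-|z|}$ of Proposition~\ref{orientationsFloer}, and from the prescribed sign $(-1)^{|x|-|z|-1}$ in the definition of $b_{x,y}$ are arranged so that these terms cancel in matching pairs. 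The bookkeeping is formally identical to the Morse case of~\cite[Proposition~5.8]{BDHO}, the Floer boundary sign of Proposition~\ref{orientationsFloer} coinciding with its Morse analogue.

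\emph{Step B: $b_{x,y}$ admits a primitive.} The chain $b_{x,y}$ has top degree $k-1=\dim\ol\cM(x;y)$ and is supported on $\p\ol\cM(x;y)$, hence vanishes on any connected component of $\ol\cM(x;y)$ without boundary. On every component $C$ with nonempty boundary, $C$ is a compact topological manifold with boundary and corners of dimension $k-1$, so its top homology $H_{k-1}(C;\Z)$ vanishes. Consequently $[b_{x,y}]=0$ in $H_{k-1}(\ol\cM(x;y))$, and there exists $\kappa_{x,y}\in C_k(\ol\cM(x;y))$ with $\p\kappa_{x,y}=b_{x,y}$, closing the induction.

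\emph{Main obstacle.} The conceptual content is routine once Propositions~\ref{representingchainFloer} and~\ref{orientationsFloer} are in hand; the principal technical difficulty is the sign verification in Step~A, where Koszul signs, the Floer-orientation sign $(-1)^{|x|-|z|}$, and the signs built into $b_{x,y}$ must align so that cross terms cancel. Because this calculation is structurally identical to its Morse counterpart in~\cite[\S5]{BDHO}, with the Floer orientation conventions of~\S\ref{sec:orientations-twisting-cocycle} playing the role of the Morse conventions, we plan to import the computation verbatim.
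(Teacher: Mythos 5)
Your overall strategy (induction on $|x|-|y|$, Leibniz-rule cancellation, then finding a primitive) is exactly the intended one — the paper itself only says the proof upgrades \cite[Lemma~2.2]{BC07} with signs as in \cite[Prop.~5.8]{BDHO}. But there is a genuine error in your inductive step: you have dropped the terms $z=x$ and $z=y$ from the sum in~\eqref{eq:sprime-s-homologous}. Since $\kappa_{x,x}$ and $\kappa_{y,y}$ are the constant $0$-chains (the fundamental classes of the one-point spaces $\ol\cM(x;x)$, $\ol\cM(y;y)$), these two terms contribute $s'_{x,y}\times\kappa_{y,y}=s'_{x,y}$ and $(-1)^{-1}\kappa_{x,x}\times s_{x,y}=-s_{x,y}$, so the equation you must solve is
$$
\p\kappa_{x,y}\;=\;s'_{x,y}-s_{x,y}\;+\;b_{x,y},
$$
with $b_{x,y}$ your sum over strictly intermediate $z$. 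Your Step~A is therefore false as stated: $b_{x,y}$ alone is not a cycle. Carrying out the Leibniz computation with the correct inductive hypothesis, the triple products cancel but leftover terms of the form $\pm s'_{x,z}\times(s'_{z,y}-s_{z,y})$ and $\pm(s'_{x,z}-s_{x,z})\times s_{z,y}$ survive and assemble into $-\p(s'_{x,y}-s_{x,y})$; it is the full chain $c_{x,y}=s'_{x,y}-s_{x,y}+b_{x,y}$ that is a cycle.

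This also invalidates Step~B and, more importantly, shows why your argument misses the essential input of the proposition. The cycle $c_{x,y}$ is \emph{not} supported on $\p\ol\cM(x;y)$, so you cannot conclude by saying a boundary-supported top cycle bounds. The correct argument is: on each component $C$ of $\ol\cM(x;y)$ with $\p C\neq\varnothing$ one has $H_{k-1}(C)=0$, so $c_{x,y}|_C$ is exact; on each closed component $C$ one has $b_{x,y}|_C=0$ and $[s'_{x,y}|_C]=[s_{x,y}|_C]=[C]$ because \emph{both chain systems represent the fundamental class} (Proposition~\ref{representingchainFloer}.1), so $[c_{x,y}|_C]=0$ in $H_{k-1}(C)\simeq\Z$. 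Only then does a primitive $\kappa_{x,y}$ exist, and it satisfies the correct equation. Without invoking that $s'$ and $s$ both represent $[\ol\cM(x;y)]$ rel boundary, the statement would simply be false, and your proposal never uses this hypothesis in the inductive step. (Your initialization for $k\le 1$ is fine.)
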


\subsubsection{Evaluation maps} \label{sec:evaluation_Floer_traj}

Given an admissible Hamiltonian $H$, we complement the data of a regular admissible almost complex structure $J$ and of a representing chain system $(s_{xy})$ for the Floer moduli spaces by the following two additional choices for each connected component $\cL_a X\subset \cL X$: 
\begin{itemize} 
\item an embedded tree $\cY_a$ connecting the base loop $\gamma_a$ to the elements of $\Per^a(H)$. 
\item a homotopy inverse $\theta_a$ for the quotient map $p_a:\cL_a X\to \cL_a X/\cY_a$. 
\end{itemize}
We denote $s^a_{x,y}$ the representing chain system for the moduli spaces of Floer trajectories connecting $1$-periodic orbits of $H$ in the free homotopy class $a$. We denote $\Xi_a=(s^a_{x,y},\cY_a,\theta_a)$ and we refer to such a tuple as an \emph{enriched Floer datum}. 

In the sequel we work componentwise on the free loop space and, in order to alleviate the notation, we do not specify anymore the free homotopy class. Thus we write $\theta$, $p$, $\Xi$, $s_{x,y}$ instead of $\theta_a$, $p_a$, $\Xi_a$, $s^a_{x,y}$. 

Given two orbits $x,y\in \Per(H)$, let $\cP_{x\to y}\cL X$ be the space of Moore paths in $\cL X$ running from $x$ to $y$. Each moduli space $\cM(x;y)$, $x,y\in\Per(H)$ is equipped with a canonical \emph{evaluation map} 
$$
\ev_{x,y}:\cM(x;y)\to\cP_{x\to y}\cL X,
$$ 
that sends a Floer cylinder to the path that it defines in $\cL X$, parametrized backwards on the interval $[0,A_H(x)-A_H(y)]$ by the values of the action functional $A_H$ shifted down by $A_H(y)$. More precisely
\begin{align*}
\ev_{x,y}([u])=\big( \ell\mapsto u(s,\cdot), \quad & \ell\in[0,A_H(x)-A_H(y)],\\
& \text{where }A_H(u(s,\cdot))=A_H(x)-\ell \big).
\end{align*}
This evaluation map extends continuously to the compactification 
$$
\overline\ev_{x,y}:\overline\cM(x;y)\to \cP_{x\to y}\cL X
$$
and we define 
\begin{equation} \label{eq:qxy}
\ol q_{x,y}=\theta\circ p\circ \overline\ev_{x,y}:\overline\cM(x;y)\to \Omega\cL X. 
\end{equation}

The restriction of $\ol q=\ol q_{x,y}$ to the boundary $\p\ol\cM(x;y)$ satisfies
\begin{equation}\label{eq:relation1-Mxy}
\ol q(u,v)\, =\, \ol q(u)\, \# \, \ol q(v)
\end{equation} 
for all $(u,v)\in \ol\cM(x;z)\times \ol\cM(z;y)$,  where $\#$ stands for the concatenation of based Moore loops. 

\subsubsection{Twisting cocycles and DG Floer complex} \label{sec:DGFloercomplex}

We define 
$$
m_{x,y}=(\ol q_{x,y})_* (s_{x,y})\in C_{|x|-|y|-1}(\Omega \cL X).
$$

\begin{definition}[Barraud-Cornea twisting cocycle] A family $(m_{x,y})$ obtained in this way is called \emph{a Barraud-Cornea twisting cocycle}. 
\end{definition}

By abuse of language, and in view of the uniqueness statement below, we will also refer to $(m_{x,y})$ as \emph{the} Barraud-Cornea twisting cocycle. The next result shows that it actually is a twisting cocycle in the sense of Definition \ref{def:general-twisting}. 
It is a straightforward consequence of Proposition~\ref{representingchainFloer} and Equation~\eqref{eq:relation1-Mxy}. 

\begin{proposition}[Existence of twisting cocycles]
\qquad 

Any Barraud-Cornea twisting cocycle satisfies the equation 
$$
\p m_{x,y}-\sum_{z}(-1)^{|x|-|z|}m_{x,z}m_{z,y} = 0.
$$
\qed
\end{proposition}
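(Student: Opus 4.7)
The plan is to derive the Maurer-Cartan relation directly from the definition $m_{x,y} = \ol q_{x,y*}(s_{x,y})$ by computing $\p m_{x,y}$ in two moves: first moving $\p$ inside the pushforward, then identifying each boundary stratum contribution as a concatenation product in $C_*(\Omega \cL X)$.

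First, since the cubical pushforward by any continuous map is a chain map, one has $\p m_{x,y} = \ol q_{x,y*}(\p s_{x,y})$. Substituting the boundary relation from Proposition~\ref{representingchainFloer}(2) yields
$$\p m_{x,y} \, =\, \sum_z (-1)^{|x|-|z|}\, \ol q_{x,y*}\!\left(s_{x,z}\times s_{z,y}\right).$$
The second move is to identify each summand with $m_{x,z}\cdot m_{z,y}$. This is where Equation~\eqref{eq:relation1-Mxy} enters: on the codimension-one boundary stratum $\ol\cM(x;z)\times\ol\cM(z;y)\subset \p\ol\cM(x;y)$, the map $\ol q_{x,y}$ factors as the composite of $\ol q_{x,z}\times \ol q_{z,y}$ with the concatenation map $\#:\Omega \cL X\times \Omega \cL X \to \Omega \cL X$. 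Functoriality of the pushforward, combined with the fact that the product on $C_*(\Omega \cL X)$ is by definition the one induced by $\#$, then gives $\ol q_{x,y*}(s_{x,z}\times s_{z,y}) = m_{x,z}\cdot m_{z,y}$, and assembling the sum produces the desired identity.

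There is essentially no serious obstacle once the two cited ingredients are in place: all potentially problematic signs are already encoded in the boundary relation of Proposition~\ref{representingchainFloer}, whose signs were themselves designed to match the outward-normal convention of Proposition~\ref{orientationsFloer}. The one point worth attention is the compatibility of $\ol q_{x,y}$ with concatenation on boundary strata, namely that a path in $\cL X$ of Moore length $\cA_H(x)-\cA_H(y)$ obtained from a broken trajectory $(u,v)\in \ol\cM(x;z)\times\ol\cM(z;y)$ under the action-parametrized evaluation~\eqref{eq:qxy} genuinely equals the concatenation of the two shorter paths obtained from $u$ and $v$; but this is precisely the content of~\eqref{eq:relation1-Mxy}, which we take as given.
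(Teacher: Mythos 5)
Your proof is correct and is exactly the argument the paper intends: the paper states the proposition as "a straightforward consequence of Proposition~\ref{representingchainFloer} and Equation~\eqref{eq:relation1-Mxy}", and you have simply spelled out that deduction by pushing $\partial$ through $\ol q_{x,y*}$, substituting the boundary relation for $s_{x,y}$, and using the factorization of $\ol q_{x,y}$ through concatenation on boundary strata.
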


We also state for the record a uniqueness statement that mirrors Proposition~\ref{prop:uniquenessFloer}.

\begin{proposition}[Uniqueness of twisting cocycles]  \label{prop:uniqueness-BC}
\qquad 

Any two Barraud-Cornea cocycles $(m_{x,y})$ and $(m'_{x,y})$ are homologous in the following sense: there exists a family $\nu_{x,y}\in C_{|x|-|y|}(\Omega \cL X)$ such that $\nu_{x,x}$ is the basepoint for any $x$ and 
$$
\p \nu_{x,y}=\sum_z m'_{x,z}\nu_{z,y} + (-1)^{|x|-|z|-1}\nu_{x,z}m_{z,y}.
$$
\qed
\end{proposition}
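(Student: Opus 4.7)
The plan is to reduce the uniqueness statement to the existence of continuation chains referenced as Proposition~\ref{prop:existence_cont_cocycles}, applied to a homotopy in which the Hamiltonian $H$ is kept constant while the remaining enriched Floer data is interpolated. Concretely, given $\Xi=(J,(s_{x,y}),\cY,\theta)$ and $\Xi'=(J',(s'_{x,y}),\cY',\theta')$ producing $(m_{x,y})$ and $(m'_{x,y})$ respectively, I would fix the constant homotopy $H_s\equiv H$ and choose an admissible $s$-dependent family $J_s$ that equals $J$ for $s\gg 0$ and $J'$ for $s\ll 0$, together with auxiliary interpolating data (representing chain systems on the parametrized moduli spaces $\cH(x;y;H,J_s)$, and an interpolation of the trees and homotopy inverses). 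Genericity makes all parametrized moduli spaces transverse, and the whole machinery of~\S\ref{sec:continuation} then outputs chains $\nu_{x,y}\in C_{|x|-|y|}(\Omega\cL X)$ obtained by pushing representing chains through the evaluation $\ol{q}_{x,y}=\theta\circ p\circ\ol{\ev}_{x,y}$.

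The key formula for the continuation map stated in~\S\ref{sec:intro-DGFloer}, with $(m^+_{x,y},m^-_{x,y})=(m_{x,y},m'_{x,y})$, reads
\[
\p\nu_{x,y}=\sum_z m'_{x,z}\,\nu_{z,y}+\sum_z(-1)^{|x|-|z|-1}\nu_{x,z}\,m_{z,y},
\]
which is exactly the desired homotopy relation. Since the continuation chains satisfy this identity essentially by construction (boundary strata of $\ol\cH(x;y)$ break into a Floer piece at one end concatenated with a continuation piece at the other), we only need the construction of continuation cocycles to be compatible with keeping $H$ fixed while only varying the auxiliary data. This is just a special case of the general construction in~\S\ref{sec:continuation} and does not require any modification.

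It remains to check that $\nu_{x,x}$ is the basepoint. Because $H_s\equiv H$, the energy identity gives
\[
\cA_H(x)-\cA_H(y)=\int_{\R\times S^1}\|\p_s u\|^2_{J_s}\,ds\,dt\ge 0
\]
for every $u\in\cH(x;y)$. Taking $x=y$ forces $\p_s u\equiv 0$, so $\cH(x;x)$ consists solely of the constant cylinder $u(s,t)=x(t)$, with $\ol\cH(x;x)$ reduced to a point. Its evaluation $\ev_{x,x}$ is the Moore path of length $\cA_H(x)-\cA_H(x)=0$ at the loop $x\in\cL X$; since $x\in\cY\cap\cY'$ the quotient $p$ collapses this to the Moore loop of length $0$ at the basepoint of $\cL X/\cY$, and $\theta$ takes basepoint to basepoint, so $\nu_{x,x}$ is precisely the unit for concatenation in $C_\ast(\Omega\cL X)$, i.e.\ the basepoint, as required.

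The only real point to verify is that the continuation construction of~\S\ref{sec:continuation} is insensitive to whether the Hamiltonian varies with $s$ or not (so that in particular the compactness, transversality and chain representatives all go through for the constant-$H$ homotopy). This is unproblematic: admissibility is preserved and the only new input is the standard energy identity above, which additionally guarantees that the $0$-dimensional diagonal moduli spaces $\cH(x;x)$ contribute exactly the unit, matching the normalization $\nu_{x,x}=\mathrm{basepoint}$ in the statement.
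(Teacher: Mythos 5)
Your overall strategy is precisely the paper's: treat the uniqueness of twisting cocycles as a special case of the existence of continuation cocycles (Proposition~\ref{prop:existence_cont_cocycles}) applied to a constant-Hamiltonian homotopy that interpolates only the auxiliary data $(J,\Xi)$. The paper's own ``proof'' is literally this one-line reduction. One small presentation issue: you set $J$ at $+\infty$ and $J'$ at $-\infty$, but Proposition~\ref{prop:existence_cont_cocycles} places the cocycle of the $+\infty$ data on the left and that of the $-\infty$ data on the right of $\nu$, so this choice would produce $\p\nu_{x,y}=\sum_z m_{x,z}\nu_{z,y}+(-1)^{|x|-|z|-1}\nu_{x,z}m'_{z,y}$, with the roles of $m$ and $m'$ swapped relative to the statement; you need $J'$ at $+\infty$ and $J$ at $-\infty$.

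The genuine gap is your verification that $\nu_{x,x}$ equals the basepoint of $\Omega\cL X$. You correctly show $\cH(x;x)$ reduces to the constant cylinder, but the continuation evaluation $q_{x,y}$ in \S\ref{sec:eval_maps_homotopies} is not parametrized on $[0,\cA_H(x^+)-\cA_H(y^-)]$: it is parametrized on $[0,\cA_{H_+}(x^+)+C-\cA_{H_-}(y^-)]$ using the modified data $H_s^{\rho,C}=H_s-\rho(s)C$, where $C>0$ is required strictly (for the constant homotopy $\p_sH_s=0$, condition~\eqref{eq:HrhoC} reads $-\rho'(s)C<0$, forcing $C>0$). Thus the evaluation of the constant cylinder is the constant Moore loop of \emph{length $C>0$} at $\star$, which is not the unit $(0,\star)$ of the Moore loop space, hence not the basepoint $0$-cube in $C_0(\Omega\cL X)$. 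The paper itself flags this subtlety: Proposition~\ref{prop:Id-Id} does not assert that the continuation cocycle for $\mathrm{Id}_{(H_0,J_0)}$ \emph{equals} the normalized $\nu'$ with $\nu'_{x,x}=\star$, but only that it is \emph{homologous} to it, and proves this via the multi-step argument borrowed from~\cite[Lemmas 6.9--6.12]{BDHO}. To complete your proof along the same lines you would need either to allow $C=0$ (permissible for monotone homotopies, but deliberately not adopted in the paper's setup; see the footnote in \S\ref{sec:eval_maps_homotopies}), or to run the post-processing of Proposition~\ref{prop:Id-Id} to replace your $\nu$ by a homologous cocycle with $\nu_{x,x}=\star$.
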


\begin{remark}\label{rmk-uniqueness-twisting}
The statement of Proposition~\ref{prop:uniqueness-BC} differs from that of Proposition~\ref{prop:uniquenessFloer} in that it does not guarantee $\nu_{x,y}=0$ if $x\neq y$ and $|x|=|y|$. Indeed, here only the Hamiltonian $H$ is fixed, whereas the auxiliary data, including $J$, is allowed to vary. As such, there may be nontrivial continuation Floer trajectories between periodic orbits that are different but have the same index. 

This  uniqueness property is a particular case of the existence of continuation cocycles stated in Proposition \ref{prop:existence_cont_cocycles}.
\end{remark}

\begin{remark} \label{rmk:twisting-MC}
As explained in~\cite[\S2]{BDHO}, the twisting cocycle $(m_{x,y})$ can be interpreted as a Maurer-Cartan element in the DG Lie algebra $\End(R_*\otimes C_\bullet)$, where $R_*=C_*(\Omega\cL X)$ is the dga of cubical chains with integer coefficients, and $C_\bullet=\langle \Per(H)\rangle$ is the free abelian group generated by the $1$-periodic orbits of $H$, viewed as a complex with zero differential.
\end{remark}

\begin{definition} Let $\cF$ be a DG right $C_*(\Omega \cL X)$-module. Given a regular admissible pair $(H,J)$ with enriched Floer datum $\Xi$, the \emph{DG Floer complex for $(H,J,\Xi)$ with coefficients in $\cF$} is 
$$
FC_*(H,J,\Xi;\cF)=\cF\otimes \langle \Per(H)\rangle,
$$ 
with elements of $\Per(H)$ graded by their Conley-Zehnder index, and with differential 
$$
\p(\alpha\otimes x)=\p\alpha\otimes x + (-1)^{|\alpha|} \sum_y \alpha\cdot m_{x,y}\otimes y.
$$
\end{definition}

Proposition~\ref{prop:uniqueness-BC} and the results from the next section imply the following statement. 

\begin{proposition} \label{prop:FC*htpytype}
Let $\cF$ be a DG right $C_*(\Omega \cL X)$-module. Given an admissible nondegenerate Hamiltonian $H$, the DG Floer complexes $FC_*(H,J,\Xi;\cF)$ and $FC_*(H,J',\Xi';\cF)$ are chain homotopy equivalent for any two choices $(J,\Xi)$, $(J',\Xi')$ consisting of regular almost complex structures $J$, $J'$ and enriched Floer data $\Xi$, $\Xi'$. 
\qed
\end{proposition}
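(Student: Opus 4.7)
My plan is to deduce the chain homotopy equivalence from the continuation map machinery developed in~\S\ref{sec:continuation}, applied to the constant homotopy of Hamiltonians $H_s\equiv H$. First I would choose a smooth path $s\mapsto (J_s,\Xi_s)$ of admissible almost complex structures and enriched Floer data that interpolates from $(J,\Xi)$ at $+\infty$ to $(J',\Xi')$ at $-\infty$. For a generic such choice the parametrized moduli spaces of continuation Floer trajectories will be cut out transversally, and the construction of~\S\ref{sec:continuation} will produce a chain map
$$
\Psi:FC_*(H,J,\Xi;\cF)\to FC_*(H,J',\Xi';\cF)
$$
whose associated continuation cocycle $(\nu_{x,y})$ satisfies exactly the homotopy relation of Proposition~\ref{prop:uniqueness-BC}. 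In particular Proposition~\ref{prop:uniqueness-BC} will itself drop out of this construction at the chain level, by inspecting the boundary of the one-dimensional strata of the compactified continuation moduli spaces.

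Reversing the interpolation yields a chain map $\Psi':FC_*(H,J',\Xi';\cF)\to FC_*(H,J,\Xi;\cF)$ in the opposite direction. By the composition property (Proposition~\ref{prop:composition-v1}), $\Psi'\circ\Psi$ is chain homotopic to the continuation map associated to the concatenation of the two homotopies. Because the Hamiltonian remains equal to $H$ throughout, this concatenated homotopy can be connected, through an admissible homotopy of homotopies, to the constant homotopy at $(H,J,\Xi)$, and then the chain homotopy produced in~\S\ref{sec:homotopies} together with Proposition~\ref{prop:Id-Id} and Corollary~\ref{cor:Id-Id} identifies $\Psi'\circ\Psi$, up to chain homotopy, with the identity of $FC_*(H,J,\Xi;\cF)$. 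Symmetrically $\Psi\circ\Psi'$ is chain homotopic to the identity of $FC_*(H,J',\Xi';\cF)$. Hence $\Psi$ is a chain homotopy equivalence.

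The hard part is bookkeeping rather than analytic. I will need to define what it means to interpolate admissibly between two enriched Floer data $\Xi=(s_{x,y},\cY,\theta)$ and $\Xi'=(s'_{x,y},\cY',\theta')$ along an $s$-dependent family, in a way that is compatible with the parametrized compactified continuation moduli spaces: this amounts to choosing a representing chain system on the parametrized moduli spaces whose boundary decomposition matches the prescribed data at $\pm\infty$, and choosing a path of embedded trees and homotopy inverses between those at the endpoints. I expect this to be handled in the same spirit as the Morse-theoretic construction in~\cite{BDHO}, with no new phenomenon beyond the classical case. Once this is in place, the three paragraphs above assemble formally into the claimed chain homotopy equivalence.
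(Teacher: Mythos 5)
Your proposal is correct and follows essentially the same route as the paper: both use the constant Hamiltonian homotopy $H_s\equiv H$ (which is trivially monotone in either direction, so the admissibility needed in the noncompact setting is automatic), the continuation cocycle machinery of~\S\ref{sec:continuation}, the composition property (Proposition~\ref{prop:composition-v1}), and the identity-homotopy statement (Proposition~\ref{prop:Id-Id}, Corollary~\ref{cor:Id-Id}). This is precisely the argument the paper spells out in the proof of Proposition~\ref{prop:continuation_maps_homotopy_equiv} and leaves implicit here.

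One small point of precision on your bookkeeping paragraph: the continuation data does not literally interpolate the two representing chain systems $(s_{x,y})$ and $(s'_{x,y})$, since those live on moduli spaces for $J$ and $J'$ respectively. What is needed is a \emph{continuation} representing chain system $(\sigma_{x,y})$ on the compactified continuation moduli spaces $\ol\cH(x;y)$, subordinated to the given $(s_{x,y})$ and $(s'_{x,y})$ in the sense of Proposition~\ref{representingchainFloercontinuation}, together with a genuine interpolating family of trees and homotopy inverses $(\cY_s,\theta_s)$ and a pair $(\rho,C)$; this is the notion of monotone enriched Floer continuation datum from~\S\ref{sec:eval_maps_homotopies}. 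With that reading, your argument is complete.
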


One way to read Proposition~\ref{prop:FC*htpytype} is by saying that, for fixed Hamiltonian $H$ and DG local system $\cF$, the chain homotopy type of the DG Floer complex $FC_*(H,J,\Xi;\cF)$ is well-defined.

The DG Floer complex for $(H,J,\Xi)$ with coefficients in $\cF$ admits a canonical filtration by the Hamiltonian action. For $b\in\ol\R$ we denote $\Per^{<b}(H)$ the set of $1$-periodic orbits of action $<b$, and define for a regular pair $(H,J)$ with enriched Floer datum $\Xi$ the \emph{action filtered DG Floer chain (sub)complex} 
$$
FC_*^{<b}(H,J,\Xi;\cF)=\cF\otimes \langle \Per^{<b}(H)\rangle.
$$

\subsection{Continuation maps} \label{sec:continuation}

Let $H=(H_s)$ and $J=(J_s)$ be a regular pair of admissible homotopies of Hamiltonians and almost complex structures, interpolating between regular pairs $(H_+,J_+)$ at $+\infty$ and $(H_-,J_-)$ at $-\infty$. Following~\S\ref{sec:conventions_Floer}, we associate to $(H,J)$ the moduli spaces of continuation Floer trajectories $\cH(x^+;y^-)$ for $x^+\in\Per(H_+)$ and $y^-\in\Per(H_-)$. In a manner analogous to the construction of the Barraud-Cornea twisting cocycle from~\S\ref{sec:twisting-cocycle}, we will first construct from the compactified moduli spaces $\ol \cH(x^+;y^-)$ a representing chain system $\sigma_{x^+,y^-}\in C_{|x^+|-|y^-|}(\ol\cH(x^+;y^-))$, and then, using appropriate evaluation maps, we will construct a {\it continuation cocycle} $\nu_{x^+,y^-}\in C_{|x^+|-|y^-|}(\Omega \call X)$. The latter will define the continuation map between the Floer complexes with DG coefficients associated to $(H_{+},J_{+})$ and $(H_-,J_-)$. 

\subsubsection{Orientations} \label{sec:orientations_continuation}

The moduli spaces $\cH(x^+;y^-)$ admit Floer compactifications $\ol \cH(x^+;y^-)$ that are manifolds with boundary with corners~\cite{BC07}.  
The codimension $1$ strata in the boundary $\p \ol \cH(x^+;y^-)$ are of the type 
$$
\cM(x^+;z^+)\times \cH(z^+;y^-) \quad \mbox{for }z^+\in\Per(H_+)
$$
and
$$
\cH(x^+;z^-)\times \cM(z^-;y^-) \quad \mbox{for }z^-\in\Per(H_-).
$$
Fixing a coherent set of orientations as in~\cite{FH-coherent}, we now wish to understand for each of these strata what is the difference in sign between the product orientation and the boundary orientation. There are two gluing maps to consider, which correspond to the above two types of strata.

Consider first the gluing map
$$
\wh \cM(x^+;z^+)\times \cH(z^+;y^-) \to \cH(x^+;y^-).
$$
This is a diffeomorphism from its domain to its image and induces an isomorphism of orientation local systems  
$$
|\wh \cM(x^+;z^+)|\otimes |\cH(z^+;y^-)|\simeq |\cH(x^+;y^-)|. 
$$
 Consider the canonical constant vector field $\p_{x^+,z^+}$ on $\wh \cM(x^+;z^+)$, which we view under the gluing map as defining a vector field on $\cH(x^+;y^-)$. A discussion similar to the one in~\S\ref{sec:orientations-twisting-cocycle} shows that this vector field points towards the boundary. Using~\eqref{eq:orientMMhat} we infer a canonical isomorphism 
\begin{align*}
|\cH(x^+;y^-)| & \simeq |\R\p_{x^+,z^+}| \otimes |\cM(x^+;z^+)| \otimes |\cH(z^+;y^-)| \\
& \simeq |\R\nu^{\text{out}}| \otimes |\cM(x^+;z^+)| \otimes |\cH(z^+;y^-)|.
\end{align*}
Here the second isomorphism is simply notational: we record that $\p_{x^+,z^+}$ points towards the boundary by denoting it $\nu^{\text{out}}$. As a consequence the product orientation on $\cM(x^+;z^+)\times \cH(z^+;y^-)$ coincides with its boundary orientation. 

Consider now the gluing map 
$$
\cH(x^+;z^-)\times \wh \cM(z^-;y^-) \to  \cH(x^+;y^-).
$$
This is again a diffeomorphism from its domain to its image and induces a canonical isomorphism of orientation local systems 
$$
|\cH(x^+;z^-)|\otimes |\wh \cM(z^-;y^-)|\simeq |\cH(x^+;y^-)|. 
$$
 Consider the canonical constant vector field $\p_{z^-,y^-}$ on $\wh \cM(z^-;y^-)$, viewed under the gluing map as a vector field on $\cH(x^+;y^-)$. As above, one sees that this vector field points away from the boundary. Using~\eqref{eq:orientMMhat} we infer a canonical isomorphism 
\begin{align*}
|\cH(x^+;y^-)| & \simeq |\cH(x^+;z^-)| \otimes |\R\p_{z^-,y^-}| \otimes |\cM(z^-;y^-)|  \\
& \simeq (-1)^{|x^+|-|z^-|}|\R\p_{z^-,y^-}| \otimes |\cH(x^+;z^-)| \otimes  |\cM(z^-;y^-)| \\
& \simeq (-1)^{|x^+|-|z^-|-1} |\R\nu^{\text{out}}| \otimes |\cH(x^+;z^-)| \otimes  |\cM(z^-;y^-)|.
\end{align*}
The last isomorphism is again notational: the vector field $\p_{z^-,y^-}$ points away from the boundary and, if $\nu^{\text{out}}$ is a vector field pointing towards the boundary, the canonical isomorphism $|\R\p_{z^-,y^-}|\simeq |\R\nu^{\text{out}}|$ reverses orientation and therefore has sign $-1$. 

As a consequence we obtain that the boundary of the oriented Floer compactification of the moduli spaces of continuation Floer trajectories with respect to a system of coherent orientations as in~\cite{FH-coherent} can be written as 
\begin{align*}
\p \ol \cH(x^+;y^-)=\bigcup_{z^+} \ol \cM(x^+;z^+) & \times \ol \cH(z^+;y^-) \\
&  \cup \quad \bigcup_{z^-}(-1)^{|x^+|-|z^-|-1}\ol \cH(x^+;z^-)\times  \ol \cM(z^-;y^-).
\end{align*}

\subsubsection{Representing chain systems} 

\begin{proposition}[Existence of representing chain systems for continuation Floer moduli spaces] \label{representingchainFloercontinuation} Fix a regular admissible homotopy $(H_s,J_s)$ that interpolates between $(H_+,J_+)$ at $+\infty$ and $(H_-,J_-)$ at $-\infty$ as above, and choose representing chain systems $(s_{x^+,y^+})$ for the Floer moduli spaces of $(H_+,J_+)$, and $(s_{x^-,y^-})$ for the Floer moduli spaces of $(H_-,J_-)$. 

Given $x^+\in\Per(H_+)$ and $y^-\in\Per(H_-)$, denote $C_*(\ol\cH(x^+;y^-))$ the complex of cubical chains on the compactified moduli space of continuation Floer trajectories. There exists a collection 
$$
\{\sigma_{x^+,y^-}\, : \, x^+\in \Per(H_+),\, y^-\in\Per(H_-)\}
$$ 
with $\sigma_{x^+,y^-}\in C_{|x^+|-|y^-|}(\ol\cH(x^+;y^-))$ satisfying the following properties:  
\begin{enumerate}
\item $\sigma_{x^+,y^-}$ is a cycle rel boundary and represents the fundamental class\break $[\ol\cH(x^+;y^-)]$. 
\item there holds the relation
$$
\partial \sigma_{x^+,y^-}\, =\, \sum_{z^+} s_{x^+,z^+} \times \sigma_{z^+,y^-}
+ \sum_{z^-}(-1)^{|x^+|-|z^-|-1} \sigma_{x^+,z^-} \times s_{z^-,y^-},
$$
where the product of chains is defined via the inclusions 
$$\ol\cM(x^+;z^+)\times\ol\cH(z^+;y^-) \, \subset \, \partial \ol\cH(x^+;y^-)\, \subset\, \ol\cH(x^+;y^-)$$
and 
$$\ol\cH(x^+;z^-)\times\ol\cM(z^-;y^-) \, \subset \, \partial \ol\cH(x^+;y^-)\, \subset\, \ol\cH(x^+;y^-).$$
\end{enumerate}
\end{proposition}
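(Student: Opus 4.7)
I would follow the inductive strategy used in the proofs of~\cite[Lemma~2.2]{BC07} and Proposition~\ref{representingchainFloer}, upgraded with the signs dictated by the coherent orientation analysis of~\S\ref{sec:orientations_continuation}. The induction will be on the dimension $d = |x^+| - |y^-|$ of the continuation moduli spaces.

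\textbf{Base case ($d=0$).} The space $\ol\cH(x^+;y^-)$ is a compact $0$-manifold without boundary (any nontrivial breaking would demand either some $s_{x^+,z^+}$ of negative degree or some $\sigma_{z^+,y^-}$ of negative degree, which is impossible). I would define $\sigma_{x^+,y^-}$ as the signed count of its points relative to the chosen coherent orientations, and condition~(2) holds trivially for degree reasons.

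\textbf{Inductive step.} Assume the $\sigma_{x^+,y^-}$ have been constructed for all pairs of total relative index $< d$. For a pair $(x^+,y^-)$ with $|x^+|-|y^-|=d$, consider the candidate boundary chain
$$
\tau_{x^+,y^-} = \sum_{z^+} s_{x^+,z^+} \times \sigma_{z^+,y^-} + \sum_{z^-} (-1)^{|x^+|-|z^-|-1} \sigma_{x^+,z^-} \times s_{z^-,y^-},
$$
viewed as a chain in $C_{d-1}(\p\ol\cH(x^+;y^-))$ via the inclusions of the codimension-$1$ strata. I would then proceed in three steps:
\emph{(i) Closedness:} verify that $\p\tau_{x^+,y^-} = 0$ by expanding using the Maurer--Cartan relations $\p s = \sum \pm s \times s$ for both $(s_{x^+,y^+})$ and $(s_{x^-,y^-})$ and the inductive hypothesis for $\p\sigma_{z^+,y^-}$ and $\p\sigma_{x^+,z^-}$. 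Here the Koszul signs need to be tracked carefully; the exponents $|x^+|-|z^-|-1$ and $|x^+|-|z^+|$ are precisely engineered so that the double-corner terms (of shape $s \times \sigma \times s$ and $s \times s \times \sigma$ and $\sigma \times s \times s$) cancel in pairs.
\emph{(ii) Fundamental class:} check that $\tau_{x^+,y^-}$ represents the fundamental class of $\p\ol\cH(x^+;y^-)$. This combines the inductive assumption that each $\sigma_{\bullet,\bullet}$ and $s_{\bullet,\bullet}$ already represents the corresponding fundamental class with the orientation comparison of~\S\ref{sec:orientations_continuation}, which says that on the stratum $\ol\cM(x^+;z^+)\times\ol\cH(z^+;y^-)$ the product and boundary orientations agree, while on $\ol\cH(x^+;z^-)\times\ol\cM(z^-;y^-)$ they differ by $(-1)^{|x^+|-|z^-|-1}$. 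These are exactly the signs appearing in the definition of $\tau_{x^+,y^-}$.
\emph{(iii) Extension:} since $\ol\cH(x^+;y^-)$ is a compact topological manifold with corners of dimension $d$, the long exact sequence of the pair gives a short exact sequence $0 \to H_d(\ol\cH,\p\ol\cH)\to H_{d-1}(\p\ol\cH) \to \cdots$, and the connecting map sends the relative fundamental class to the fundamental class of the boundary. Thus the cycle $\tau_{x^+,y^-}$ lifts to a chain $\sigma_{x^+,y^-}\in C_d(\ol\cH(x^+;y^-))$ representing the fundamental class rel boundary with $\p\sigma_{x^+,y^-}=\tau_{x^+,y^-}$.

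\textbf{Main obstacle.} The one point requiring genuine care is step~(i): the cancellation is a six-term computation in which every sign must be correct, and it is the place where the specific sign conventions in~\S\ref{sec:orientations-twisting-cocycle} and~\S\ref{sec:orientations_continuation} are put to the test. I would handle it by expanding $\p\tau$ literally, grouping terms by the middle orbit in each triple product, and matching each term arising from $\p s$ with the corresponding term from $\p\sigma$. The other steps are direct adaptations of the Morse-theoretic Barraud--Cornea argument, since only the manifold-with-corners structure of $\ol\cH(x^+;y^-)$ (established in~\cite[Appendix~A]{BC07}) is used, which is strictly weaker than a smooth structure.
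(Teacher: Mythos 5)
Your proposal is correct and is essentially the argument the paper has in mind: the paper omits the proof, referring to the analogous inductive construction of~\cite[Lemma~6.1(ii)]{BDHO} (itself modelled on~\cite[Lemma~2.2]{BC07}), which is exactly the dimension induction you describe, with the signs in the candidate boundary chain matching the orientation comparison of~\S\ref{sec:orientations_continuation}. Your three-pair cancellation in step~(i) and the lift via the long exact sequence of the pair in step~(iii) check out.
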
 

\begin{proof}
The proof is similar to that of~\cite[Lemma~6.1(ii)]{BDHO}, and we omit it. 
\end{proof}

\begin{definition}\label{chainsystemFloercontinuation} We call the family $(\sigma_{x^+,y^-})$ a \emph{representing chain system for  the continuation Floer moduli spaces, subordinated to the representing chain systems for Floer moduli spaces $(s_{x^+,y^+})$ and $(s_{x^-,y^-})$}. We will also use the shorthand terminology \emph{continuation representing chain system}. 
\end{definition}

\begin{proposition}[Uniqueness of representing chain systems for continuation Floer moduli spaces] \label{prop:uniquenessFloercontinuation} Fix a regular admissible homotopy $(H_s,J_s)$ between $(H_+,J_+)$ at $+\infty$ and $(H_-,J_-)$ at $-\infty$ as above, and choose representing chain systems $(s_{x^+,y^+})$ for the Floer moduli spaces of $(H_+,J_+)$, and $(s_{x^-,y^-})$ for the Floer moduli spaces of $(H_-,J_-)$. 

Any two representing chain systems $(\sigma'_{x^+,y^-})$ and $(\sigma_{x^+,y^-})$ for the continuation Floer moduli spaces, that are subordinated to $(s_{x^+,y^+})$ and $(s_{x^-,y^-})$, are chain homotopic in the following sense: there exists a family $(\cS_{x^+,y^-})$ with $\cS_{x^+,y^-}\in C_{|x^+|-|y^-|+1}(\ol\cH(x^+;y^-))$ such that, for all $x^+\in\Per(H_+)$ and $y^-\in\Per(H_-)$,  we have 
\begin{align}\label{eq:cprime-c-homologous}
\p \cS_{x^+,y^-} =  \sigma'_{x^+,y^-} - \sigma_{x^+,y^-} 
& + \sum_{z^+} (-1)^{|x^+|-|z^+|}s_{x^+,z^+}\times \cS_{z^+,y^-} \\
& +  \sum_{z^-}(-1)^{|x^+|-|z^-|}\cS_{x^+,z^-}\times s_{z^-,y^-}. \nonumber
\end{align}
\end{proposition}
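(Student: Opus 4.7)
The plan is to proceed by induction on the nonnegative integer $k = |x^+| - |y^-|$, mirroring the Morse-theoretic argument of Proposition~\ref{prop:uniquenessFloer} and, more generally, the Barraud--Cornea strategy of~\cite[Lemma~2.2]{BC07}. The base case $k=0$ is immediate: the moduli space $\cH(x^+,y^-)$ is already compact and zero-dimensional, so $C_0(\ol\cH(x^+,y^-))$ admits no relations; the two $0$-chains $\sigma_{x^+,y^-}$ and $\sigma'_{x^+,y^-}$ representing the same signed count of points must coincide, and we may set $\cS_{x^+,y^-}=0$. For the inductive step, assuming $\cS_{z^+,w^-}$ has been constructed for all pairs with $|z^+|-|w^-|<k$, the candidate boundary for pairs with $|x^+|-|y^-|=k$ is
\begin{align*}
E_{x^+,y^-} := \sigma'_{x^+,y^-} - \sigma_{x^+,y^-}
& + \sum_{z^+} (-1)^{|x^+|-|z^+|}\, s_{x^+,z^+}\times \cS_{z^+,y^-} \\
& + \sum_{z^-}(-1)^{|x^+|-|z^-|}\, \cS_{x^+,z^-}\times s_{z^-,y^-},
\end{align*}
viewed as an element of $C_k(\ol\cH(x^+,y^-))$ via the inclusions of the codimension-$1$ strata. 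The goal is then to find $\cS_{x^+,y^-}\in C_{k+1}(\ol\cH(x^+,y^-))$ with $\p\cS_{x^+,y^-}=E_{x^+,y^-}$.

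The first step is to check $\p E_{x^+,y^-}=0$. One expands all boundaries via the Leibniz rule and substitutes the formulas for $\p\sigma,\p\sigma'$ from Proposition~\ref{representingchainFloercontinuation}, the formulas for $\p s_{x^+,z^+}$ and $\p s_{z^-,y^-}$ from Proposition~\ref{representingchainFloer}, and the inductive formulas for $\p\cS_{z^+,y^-}$ and $\p\cS_{x^+,z^-}$. The signs appearing in Propositions~\ref{orientationsFloer} and~\ref{representingchainFloercontinuation} are tuned precisely so that the resulting terms cancel pairwise: the quadratic $s\times s\times\cS$ and $\cS\times s\times s$ contributions vanish after a renaming of the intermediate orbit, using the identity $(-1)^{|w|-|z|}=(-1)^{|w|+|z|}$; the mixed $s\times\cS\times s$ contributions from the two sums combine to zero by an analogous sign analysis; and the residual $(\sigma'-\sigma)$-type terms cancel in pairs by $(-1)^{a-1}+(-1)^a=0$.

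Once $E_{x^+,y^-}$ is known to be a cycle, observe that $\ol\cH(x^+,y^-)$ is a compact topological manifold with corners of dimension $k$. On each connected component with non-empty boundary, $H_k$ vanishes (such a component is homotopy equivalent to a compact manifold with non-empty boundary), and $E_{x^+,y^-}$ is automatically null-homologous there. On any closed component, no broken configuration exists and the correction sums are empty, so $E_{x^+,y^-}$ restricts there to $\sigma'_{x^+,y^-}-\sigma_{x^+,y^-}$, which is null-homologous in $H_k\cong\Z$ as both chains represent the fundamental class of that component. In either case $E_{x^+,y^-}$ bounds, producing the desired $\cS_{x^+,y^-}$ and closing the induction. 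The main obstacle is the sign bookkeeping in the verification $\p E_{x^+,y^-}=0$; a secondary but benign technical point is the vanishing of the top homology of a compact manifold with corners with non-empty boundary, which reduces to the standard fact for manifolds with boundary after smoothing corners.
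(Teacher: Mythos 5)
Your proposal is correct and follows the same inductive strategy that the paper invokes (the paper itself omits the details, referring to the analogous Morse-theoretic argument in \cite[Lemma~6.11]{BDHO}, which is a Barraud--Cornea-style induction on the dimension of the moduli space). The sign verification you defer does indeed go through with the signs of Propositions~\ref{orientationsFloer} and~\ref{representingchainFloercontinuation}, and the null-homology of the candidate boundary $E_{x^+,y^-}$ on each connected component of $\ol\cH(x^+;y^-)$ is exactly the geometric input that closes the induction. One small simplification: the parenthetical appeal to smoothing corners is unnecessary, since a compact $k$-dimensional topological manifold with boundary with corners is already a topological manifold with boundary (the local models $[0,\infty)^m\times\R^{k-m}$ are homeomorphic to a half-space), so the vanishing of $H_k$ on components with nonempty boundary is the standard fact with no reduction needed.
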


\begin{proof}
The proof is similar to that of~\cite[Lemma~6.11]{BDHO}, and we omit the details. 
\end{proof}

\subsubsection{Evaluation maps} \label{sec:eval_maps_homotopies}
\label{sec:evaluation-for-continuation}
Let $(H,J)$ be a regular admissible homotopy interpolating from $(H_+,J_+)$ at $+\infty$ to $(H_-,J_-)$ at $-\infty$, and let $(\sigma_{x^+,y^-})$ be a continuation representing chain system subordinated to representing chain systems $(s_{x^+,y^+})$ and $(s_{x^-,y^-})$. Assume further that we have chosen pairs $(\cY_+,\theta_+)$ and $(\cY_-,\theta_-)$ consisting of trees $\cY_\pm$ and homotopy inverses $\theta_\pm$ for the collapsing maps $p_\pm:\cL X\to \cL X/\cY_\pm$, so that $(J_\pm,s_{x^\pm,y^\pm},\cY_\pm,\theta_\pm)$ are enriched Floer data at $\pm\infty$. (We recall that we omit from the notation the free homotopy classes of loops.) We complement the data $(\sigma_{x^+,y^-})$ with the following additional choices: 
\begin{itemize} 
\item  a family of embedded trees $\cY=(\cY_s)$ rooted at the basepoint, which is continuous with respect to Hausdorff distance and connects $\cY_+$ at $+\infty$ to $\cY_-$ at $-\infty$
\item a continuous family $\theta=(\theta_s)$ of homotopy inverses for the quotient maps $p_s:\cL X\to \cL X/\cY_s$, $s\in\R$. Here $\theta $ being continuous means that we can write $\theta_s=\Theta(s,\cdot)$ for a continuous map $\Theta:([0,1]\times \cL X)/\!\sim\,\longrightarrow \cL X$, where $(s,x)\sim (s',x')$ if and only if $s=s'$ and $x,x'\in \cY_s$.
\end{itemize}
We refer to 
$$
\Upsilon=(\sigma_{x^+,y^-},\cY,\theta)
$$ 
as an \emph{enriched Floer continuation datum}. 

We wish to evaluate continuation Floer trajectories into Moore paths in $\cL X$ parametrized by the values of the action functional. To this end, we choose the following additional data:  
\begin{itemize}
\item a smooth cutoff function $\rho:\R\to [0,1]$ such that $\rho(s)=0$ for $s\le s_- -1$, $\rho(s)=1$ for $s\ge s_+ +1$, and $\rho'>0$ on $(s_- -1,s_+ +1)$, where $s_\pm\in\R$ are chosen such that $(H_s,J_s,\cY_s,\theta_s)$ is constant equal to $(H_\pm,J_\pm,\cY_\pm,\theta_\pm)$ for $\pm s\ge \pm s_\pm$.  
\item a constant $C> 0$ such that 
\begin{equation} \label{eq:HrhoC}
\p_s H_s -\rho'(s) C < 0 \mbox{ for } s\in [s_-,s_+].
\end{equation}
\end{itemize}
Let $H_s^{\rho,C}=H_s-\rho(s)C$. As a consequence of the last condition we have $\p_s H_s^{\rho,C}\le 0$ on $\R$.
The existence of such a constant $C> 0$ is clear if $X$ is compact. If $X$ is non-compact (typically the symplectic completion of a Liouville domain), the existence of such a constant follows whenever $\p_sH_s$ is bounded, which is a consequence of the definition of admissibility in~\S\ref{sec:symplectic_homology}. If the homotopy $H_s$ is monotone, i.e., $\p_s H_s\le 0$, any choice of constant $C>0$ is allowed.\footnote{One could allow $C= 0$ if $\p_s H_s\le 0$, or more generally $\p_s H_s -\rho'(s) C \le 0$ for $s\in[s_-,s_+]$, but that would complicate the notation later. We opted for the easiest setup.} We refer to the tuple 
$$
\Xi=(\Upsilon,\rho,C)
$$ 
as a \emph{monotone enriched Floer continuation datum}. 
Upon replacing $H_s$ with $H_s^{\rho,C}$, the continuation Floer trajectories remain the same since $X_{H_s}=X_{H_s^{\rho,C}}$, but the values of the action along a continuation Floer trajectory change as 
$$
A_{H_s^{\rho,C}}(u(s,\cdot)) = A_{H_s}(u(s,\cdot))+\rho(s)C.
$$
Under condition~\eqref{eq:HrhoC} we have 
\begin{align*}
\p_s A_{H_s^{\rho,C}}(u(s,\cdot)) & = \| \nabla A_{H_s}(u(s,\cdot))\|^2 - \int_{S^1} \p_s H_s^{\rho,C}(u(s,\cdot)) \, dt \\
& = \|\p_s u(s,\cdot)\|^2 - \int_{S^1} \p_s H_s^{\rho,C}(u(s,\cdot)) \, dt \\
& \ge 0
\end{align*}
for any $u\in\cH(x^+;y^-)$, with
$$
\p_s A_{H_s^{\rho,C}}(u(s,\cdot))>0 \, \mbox{ for } s\in[s_--1,s_++1].
$$

Given $u\in\cH(x^+;y^-)$, consider the map 
$$
A_u:\ol\R\to [0,A_{H_+}(x^+)+C-A_{H_-}(y^-)], \quad s\mapsto A_{H_s^{\rho,C}}(u(s,\cdot))-A_{H_-}(y^-).
$$
This map is surjective, increasing on $\ol\R$ and strictly increasing on $[s_--1,s_++1]$. On each of the intervals $[-\infty,s_--1]$ and $[s_++1,+\infty]$ it is either constant (in which case $u(s,\cdot)$ is also constant), or strictly increasing (in which case $u(s,\cdot)$ varies with $s$). Denote by $I_u\subset \ol\R$ the maximal closed interval such that $A_u:I_u\to [0,A_{H_+}(x^+)+C-A_{H_-}(y^-)]$ is a bijection, and denote $A_u^{-1}: [0,A_{H_+}(x^+)+C-A_{H_-}(y^-)]\to I_u$ its inverse. The interval $I_u$ is uniquely determined and we have either $I_u=\ol\R$, or $I_u=[s_--1,+\infty]$, or $I_u=[-\infty,s_++1]$, or $I_u=[s_--1,s_++1]$.

Given orbits $x^+\in\Per(H_+)$, $y^-\in \Per(H_-)$, we denote $\cP_{x^+\to y^-}\cL X$ the space of Moore paths in $\cL X$ running from $x^+$ to $y^-$. Each moduli space $\cH(x^+;y^-)$ is equipped with a canonical \emph{evaluation map} 
$$
\ev_{x^+,y^-}:\cH(x^+;y^-)\to\cP_{x^+\to y^-}\cL X,
$$ 
\begin{align*}
u\mapsto \big( \ell\mapsto u(s,\cdot), \quad & \ell\in [0,A_{H_+}(x^+)+C-A_{H_-}(y^-)], \\
& s=A_u^{-1}(A_{H_+}(x^+)+C-\ell)\big).
\end{align*}
This map sends a Floer continuation cylinder to the path that it defines in $\cL X$, parametrized backwards on the interval $[0,A_{H_+}(x^+)-A_{H_-}(y^-)+C]$ by the values of the action functional $A_{H_s^{\rho,C}}$ shifted down by $A_{H_-}(y^-)$. 

Using the projections $p_s:\cL X\to \cL X/\cY_s$ and their homotopy inverses $\theta_s$ we further define an evaluation map into Moore loops on $\cL X$ by 
$$
q_{x^+,y^-}:\cH(x^+;y^-)\to\Omega\cL X,
$$ 
\begin{align*}
u\mapsto \big( \ell\mapsto \theta_s p_s u(s,\cdot), \quad & \ell\in [0,A_{H_+}(x^+)+C-A_{H_-}(y^-)], \\
& s=A_u^{-1}(A_{H_+}(x^+)+C-\ell)\big).
\end{align*}

The above evaluation maps both extend continuously to the compactification $\ol\cH(x^+;y^-)$, and we denote the extension of the second one 
$$
\overline q_{x^+,y^-}:\overline\cH(x^+;y^-)\to \Omega\cL X.
$$

Moreover, denoting by $\ol q_{\pm}:\ol\calm(x^\pm,y^\pm)\ri \Omega\call X$ the evaluation maps   defined in \S\ref{sec:twisting-cocycle}   by the data $(H^\pm,J^\pm,s_{x^\pm,y^\pm },\caly^\pm,\theta^\pm)$,  we have that the restriction of $\ol q=\ol q_{x^+,y^-}$ to the boundary $\p\ol\cH(x^+;y^-)$ satisfies
\begin{equation}\label{eq:relation1}
\ol q(u,v)\, =\, \ol q_+(u)\, \# \, \ol q(v)
\end{equation} 
for all $(u,v)\in \ol\calm(x^+;z^+)\times \ol\cH(z^+;y^-)$, and 
\begin{equation}\label{eq:relation2}
\ol q(u,v)\, =\,  \ol q(u)\, \# \, \ol q_-(v)
\end{equation}
for all $(u,v)\in \ol\cH(x^+;z^-)\times \ol\calm(z^-;y^-)$.

\subsubsection{Continuation cocycles and continuation maps}

Given the continuation representing chain system $(\sigma_{x^+,y^-})$ we define  
$$
\nu_{x^+,y^-}=(\ol q_{x^+,{y^-}})_* (\sigma_{x^+,y^-})\in C_{|x^+|-|y^-|}(\Omega \cL X).
$$

\begin{definition} We call a family $(\nu_{x^+,y^-})$ obtained in this way \emph{a Barraud-Cornea
continuation cocycle subordinated to the twisting cocycles $(m_{x^+,y^+})$ and $(m_{x^-,y^-})$}. 
\end{definition}

The next result states that $(\nu_{x^+,y^-})$ satisfies the equation of continuation cocycles from Definition \ref{def:general-continuation}. It is a straightforward consequence of Proposition~\ref{representingchainFloercontinuation} and of properties~\eqref{eq:relation1} and~\eqref{eq:relation2}.

\begin{proposition}[Existence of continuation cocycles] \label{prop:existence_cont_cocycles}
Any Barraud-Cornea continuation cocycle satisfies the equation 
$$
\p \nu_{x^+,y^-}=\sum_{z^+} m_{x^+,z^+} \nu_{z^+,y^-}+ \sum_{z^-}(-1)^{|x^+|-|z^-|-1} \nu_{x^+,z^-} m_{z^-,y^-}.
$$
\qed
\end{proposition}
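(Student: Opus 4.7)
The plan is to push forward the boundary relation for the representing chain system $(\sigma_{x^+,y^-})$ via the evaluation map $\ol q_{x^+,y^-}$, and then convert each cross product of chains on moduli spaces into a concatenation product of chains on $\Omega\cL X$ by means of the boundary relations \eqref{eq:relation1} and \eqref{eq:relation2}. This mimics exactly the derivation of the Maurer--Cartan equation for the Barraud--Cornea twisting cocycle in \S\ref{sec:twisting-cocycle}, but one extra layer on (with two sorts of codimension-one boundary strata appearing in $\p\ol\cH(x^+;y^-)$).

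First, since $\ol q_{x^+,y^-}:\ol\cH(x^+;y^-)\to\Omega\cL X$ is continuous, the pushforward $(\ol q_{x^+,y^-})_*$ is a chain map for cubical chains, hence
\[
\p\nu_{x^+,y^-}=\p(\ol q_{x^+,y^-})_*(\sigma_{x^+,y^-})=(\ol q_{x^+,y^-})_*(\p\sigma_{x^+,y^-}).
\]
Substituting the formula from Proposition~\ref{representingchainFloercontinuation}(2) yields
\[
\p\nu_{x^+,y^-}=\sum_{z^+}(\ol q_{x^+,y^-})_*(s_{x^+,z^+}\times\sigma_{z^+,y^-})+\sum_{z^-}(-1)^{|x^+|-|z^-|-1}(\ol q_{x^+,y^-})_*(\sigma_{x^+,z^-}\times s_{z^-,y^-}).
\]
So it remains to identify each pushforward of a cross product on a boundary stratum with the corresponding concatenation product in $C_*(\Omega\cL X)$.

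For the first family of strata, relation \eqref{eq:relation1} says that the restriction of $\ol q_{x^+,y^-}$ to $\ol\cM(x^+;z^+)\times\ol\cH(z^+;y^-)$ factors as $(u,v)\mapsto \ol q_+(u)\# \ol q_{x^+,y^-}(v)$, i.e., through the concatenation map $\mathrm{conc}:\Omega\cL X\times\Omega\cL X\to\Omega\cL X$ composed with $(\ol q_+\,,\,\ol q_{x^+,y^-})$. Since the concatenation product on cubical chains is by definition the pushforward under $\mathrm{conc}$ of the cross product of cubical chains, we obtain
\[
(\ol q_{x^+,y^-})_*(s_{x^+,z^+}\times\sigma_{z^+,y^-})=(\ol q_+)_*(s_{x^+,z^+})\cdot(\ol q_{x^+,y^-})_*(\sigma_{z^+,y^-})=m_{x^+,z^+}\cdot\nu_{z^+,y^-}.
\]
The analogous computation using \eqref{eq:relation2} on the second family of strata gives
\[
(\ol q_{x^+,y^-})_*(\sigma_{x^+,z^-}\times s_{z^-,y^-})=\nu_{x^+,z^-}\cdot m_{z^-,y^-}.
\]
Plugging both identities into the displayed expression for $\p\nu_{x^+,y^-}$ yields the claimed Maurer--Cartan type equation.

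The only delicate point is the book-keeping. The signs in the boundary formula for $\sigma_{x^+,y^-}$ were already determined in \S\ref{sec:orientations_continuation} (so the factor $(-1)^{|x^+|-|z^-|-1}$ is intrinsic, produced by the gluing of a broken end at the $-\infty$ side), and the identification of the restriction of $\ol q_{x^+,y^-}$ to a boundary stratum with a composition through $\mathrm{conc}$ is rigorously the content of \eqref{eq:relation1}--\eqref{eq:relation2}; these formulas hold on the nose because the action parametrization of $\ev_{x^+,y^-}$ is compatible with the action parametrizations on each piece (here one uses that $H_s^{\rho,C}$ has been chosen so that the action along continuation trajectories is nondecreasing and interpolates continuously with the corresponding parametrizations at $\pm\infty$), and because the families $(\cY_s,\theta_s)$ specialize at $\pm\infty$ to $(\cY_\pm,\theta_\pm)$, so that $\theta_s p_s$ restricts on the broken ends to $\theta_\pm p_\pm$. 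Once these compatibilities are on the table, the argument is a direct translation of the Barraud--Cornea Maurer--Cartan equation to the continuation setting, with no further obstacles.
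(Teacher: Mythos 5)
Your proposal is correct and is precisely the argument the paper has in mind: the paper dismisses the proof as a ``straightforward consequence of Proposition~\ref{representingchainFloercontinuation} and of properties~\eqref{eq:relation1} and~\eqref{eq:relation2}'', and you have spelled that out accurately — pushing the boundary relation for $(\sigma_{x^+,y^-})$ forward by $\ol q_{x^+,y^-}$, factoring the restriction to each codimension-one stratum through the concatenation map via~\eqref{eq:relation1}--\eqref{eq:relation2}, and observing that the signs are wholly inherited from Proposition~\ref{representingchainFloercontinuation}(2).
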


We also have a uniqueness result for continuation cocycles. 

\begin{proposition}[Uniqueness of Barraud-Cornea continuation cocycles]  \label{prop:uniqueness_continuation}
\qquad 

Given regular admissible Floer data $(H_\pm,J_\pm,\Xi_\pm)$,  any two associated Barraud-Cornea
continuation cocycles $(\nu^0_{x^+,y^-})$ and $(\nu^1_{x^+,y^-})$ are homologous in the following sense: 
there exists a family $h_{x^+,y^-}\in C_{|x^+|-|y^-|+1}(\Omega \cL X)$ such that  
\begin{align} \label{eq:homotopy-nuprime-nu}
\p h_{x^+,y^-} =  \nu^1_{x^+,y^-} - \nu^0_{x^+,y^-} 
& + \sum_{z^+} (-1)^{|x^+|-|z^+|}m_{x^+,z^+}h_{z^+,y^-} \\
& +  (-1)^{|x^+|-|z^-|}h_{x^+,z^-} m_{z^-,y^-}. \nonumber
\end{align}
\end{proposition}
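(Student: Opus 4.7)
The plan is to construct the chain homotopy $h_{x^+,y^-}$ via parametrized continuation moduli spaces, by the exact same pattern used earlier in this subsection: enlarge the Floer data by one parameter, establish a representing chain system, push forward to $\Omega\cL X$ by a suitable evaluation map, and read off the relation from the boundary of the representing chains.

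First, I would interpolate between the two continuation data $(H^i_s,J^i_s,\sigma^i_{x^+,y^-},\cY^i_s,\theta^i_s,\rho^i,C^i)$, $i=0,1$ by choosing a regular admissible homotopy of homotopies $\{(H^\tau_s,J^\tau_s)\}_{\tau\in[0,1]}$ (in the sense of~\S\ref{sec:conventions_Floer}) together with continuous families of trees $\cY^\tau_s$, homotopy inverses $\theta^\tau_s$, cutoffs $\rho^\tau$ and constants $C^\tau$ interpolating between the given data at $\tau=0,1$, all constant equal to the common asymptotic data $(H_\pm,J_\pm,\cY_\pm,\theta_\pm)$ near $\tau=0,1$ in a collar and near $s=\pm\infty$. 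For each pair $(x^+,y^-)$ this yields the parametrized moduli space
\[
\cH^{[0,1]}(x^+;y^-) = \{(\tau,u) : \tau\in[0,1],\ u\in \cH(x^+;y^-;H^\tau_s,J^\tau_s)\},
\]
which is a smooth manifold of dimension $|x^+|-|y^-|+1$ admitting a Floer compactification $\ol\cH^{[0,1]}(x^+;y^-)$ that is a manifold with corners.

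Next I would identify, with signs, the codimension-one boundary. There are two kinds of contributions: the ``endpoint'' strata $\{0\}\times\ol\cH^0(x^+;y^-)$ and $\{1\}\times\ol\cH^1(x^+;y^-)$, and the ``breaking'' strata $\ol\cM(x^+;z^+)\times\ol\cH^{[0,1]}(z^+;y^-)$ and $\ol\cH^{[0,1]}(x^+;z^-)\times\ol\cM(z^-;y^-)$. Repeating verbatim the gluing/orientation analysis of~\S\ref{sec:orientations-twisting-cocycle}--\S\ref{sec:orientations_continuation}, now with the extra factor $|\R\p_\tau|$ placed as ``fiber first'' on top of $\cH(x^+;y^-)$, the breaking strata acquire an extra sign $(-1)$ coming from the displacement of $\p_\tau$, while the endpoint strata come with opposite signs at $\tau=0$ and $\tau=1$ because the outward normals there are $-\p_\tau$ and $+\p_\tau$. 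The upshot is the identity
\begin{align*}
\p\ol\cH^{[0,1]}(x^+;y^-) \, = \, & \ol\cH^1(x^+;y^-)-\ol\cH^0(x^+;y^-) \\
& + \sum_{z^+}(-1)^{|x^+|-|z^+|}\ol\cM(x^+;z^+)\times\ol\cH^{[0,1]}(z^+;y^-)\\
& + \sum_{z^-}(-1)^{|x^+|-|z^-|}\ol\cH^{[0,1]}(x^+;z^-)\times\ol\cM(z^-;y^-).
\end{align*}

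Inductively on $|x^+|-|y^-|$, the argument of Barraud--Cornea recalled in Propositions~\ref{representingchainFloer} and~\ref{representingchainFloercontinuation} produces a representing chain system $\cS_{x^+,y^-}\in C_{|x^+|-|y^-|+1}(\ol\cH^{[0,1]}(x^+;y^-))$ subordinated to $(\sigma^0_{x^+,y^-})$, $(\sigma^1_{x^+,y^-})$, $(s_{x^+,y^+})$ and $(s_{x^-,y^-})$, whose boundary is given by the chain-level analogue of the above identity (cf.~\eqref{eq:cprime-c-homologous}). The parametrized data $\{(H^\tau_s,\rho^\tau,C^\tau,\cY^\tau_s,\theta^\tau_s)\}$ determine an evaluation map $\ol Q_{x^+,y^-}:\ol\cH^{[0,1]}(x^+;y^-)\to\Omega\cL X$ by exactly the recipe of~\S\ref{sec:eval_maps_homotopies} applied fiberwise in $\tau$; it restricts to $\ol q^i_{x^+,y^-}$ on $\{i\}\times\ol\cH^i(x^+;y^-)$ and satisfies the concatenation formulas~\eqref{eq:relation1}--\eqref{eq:relation2} on the breaking strata. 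Setting
\[
h_{x^+,y^-} \, = \, (\ol Q_{x^+,y^-})_*\cS_{x^+,y^-}\, \in\, C_{|x^+|-|y^-|+1}(\Omega\cL X),
\]
pushing forward the boundary identity and using that the pushforwards of the breaking strata split as concatenations $m_{x^+,z^+}\cdot h_{z^+,y^-}$, respectively $h_{x^+,z^-}\cdot m_{z^-,y^-}$, delivers~\eqref{eq:homotopy-nuprime-nu}.

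The main technical obstacle is the sign bookkeeping in the parametrized orientation analysis; once one checks that the ``fiber-first'' insertion of $|\R\p_\tau|$ contributes precisely the $(-1)^{|x^+|-|z^\pm|}$ shift (instead of the $(-1)^{|x^+|-|z^\pm|-1}$ appearing in Proposition~\ref{representingchainFloercontinuation}) and that the two endpoint strata carry opposite signs, the rest of the argument is a formal replay of the construction of the continuation cocycle itself. Everything else -- existence of the parametrized representing chain system, construction of $\ol Q_{x^+,y^-}$, and the concatenation identities on strata -- is a straightforward adaptation of the results already established in~\S\ref{sec:twisting-cocycle} and~\S\ref{sec:continuation}.
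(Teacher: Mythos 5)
Your proposal reproduces the paper's proof exactly: the paper's own argument simply defers to Proposition~\ref{prop:cocycle_homotopy_of_homotopies} (the parametrized continuation cocycle from~\S\ref{sec:homotopies}), applied to a homotopy of homotopies interpolating the two monotone continuation data, which is precisely what you construct. One caveat: your intermediate heuristic for the signs (``the breaking strata acquire an extra sign $(-1)$ from displacing $\p_\tau$'', with the ``instead of $(-1)^{|x^+|-|z^\pm|-1}$'' parenthetical) misdescribes the $z^+$ stratum, whose sign shift from $+1$ in Proposition~\ref{representingchainFloercontinuation} to $(-1)^{|x^+|-|z^+|}$ here comes from the gluing of stabilized Cauchy--Riemann operators (Lemma~\ref{lem:gluing-stabilized-CR}, with the $\R\simeq T_\tau[0,1]$ stabilization sitting on the second factor), not from a Koszul sign of size $(-1)$; nonetheless the displayed boundary formula you wrote is the correct one and matches~\S\ref{sec:ori-homotopies}.
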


\begin{proof}
This is the content of Proposition~\ref{prop:cocycle_homotopy_of_homotopies} 
proved in the next section, applied to a homotopy of homotopies $(H^\tau,J^\tau,\Xi^\tau)$ interpolating between the monotone continuation data $(H^i,J^i,\Xi^i)$, $i=0,1$ that determine the continuation cocycles $\nu^i_{x^+,y^-}$. We use the fact that the space of pairs $(\rho,C)$ is convex, the spaces of $s$-families of embedded trees $\cY=(\cY_s)$ and of $s$-families of homotopy inverses $\theta_s$ are path connected, and the space of homotopies $(H_s,J_s)$ interpolating between $(H_+,J_+)$ and $(H_-,J_-)$ is contractible. 
\end{proof}

By abuse of language, and in view of this uniqueness statement, we will also refer to $(\nu_{x^+,y^-})$ as \emph{the} continuation cocycle subordinated to the twisting cocycles $(m_{x^+,y^+})$ and $(m_{x^-,y^-})$.

\begin{remark} \label{rmk:cont-MC}
Following~\cite[\S2]{BDHO}, the continuation cocycle $(\nu_{x^+,y^-})$ can be viewed as a degree $0$ cycle in the complex $\Hom((R_*\otimes C^+_\bullet,D^+),\break(R_*\otimes C^-_\bullet,D^-))$, where $R_*=C_*(\Omega \cL X)$ is the dga of cubical chains with integer coefficients, $C^\pm_\bullet=\langle \Per(H_\pm)\rangle$ is the free abelian group generated by the $1$-periodic orbits of $H_\pm$, and $D^\pm$ are the differentials on $R_*\otimes C^\pm_\bullet$ twisted by the Maurer-Cartan elements given by the corresponding twisting cocycles (see Remark~\ref{rmk:twisting-MC}). 
\end{remark}

\begin{definition}\label{def:continuation-map}
Let $\cF$ be a DG right $C_*(\Omega \cL X)$-module and $(H_\pm,J_\pm)$ be regular admissible pairs with enriched Floer data $\Xi_\pm$. Let $(H_s,J_s)$ be a regular admissible homotopy interpolating between $(H_+,J_+)$ at $+\infty$ and $(H_-,J_-)$ at $-\infty$, and let $\Xi$ be a monotone enriched continuation Floer datum interpolating between $\Xi_+$ and $\Xi_-$. The \emph{continuation map induced by $(H_s,J_s,\Xi)$} is denoted  
$$
\Psi^\cF : FC_*(H_+,J_+,\Xi_+;\cF)\to FC_*(H_-,J_-,\Xi_-;\cF),
$$  
and it is defined with respect to the canonical bases $\Per(H_+)$ and $\Per(H_-)$ by 
$$
\Psi^\cF(\alpha\otimes x^+)=\sum_{y^-} \alpha\cdot \nu_{x^+,y^-}\otimes y^-.
$$
Here $(\nu_{x^+,y^-})$ is the continuation cocycle determined by $(H_s,J_s,\Xi)$.
\end{definition}

\subsubsection{Properties of continuation maps}

In this section we use the notation from Definition~\ref{def:continuation-map}, i.e., $\cF$ is a DG local system, $(H_\pm,J_\pm)$ are admissible pairs with enriched Floer data $\Xi_\pm$, $(H_s,J_s)$ is a regular admissible homotopy interpolating between $(H_\pm,J_\pm)$ at $\pm\infty$, and $\Xi$ is a monotone enriched Floer continuation datum that interpolates between $\Xi_\pm$.

We first discuss the compatibility of continuation maps with action filtrations. 

Recall that, by definition, $H_s$ is constant equal to $H_\pm$ in the neighborhood of $\pm\infty$. As such, we can always write $H_s^t=K_{\beta(s)}^t$ for some nondecreasing function $\beta:\R\to [0,1]$ which is constant equal to $0$ near $-\infty$ and constant equal to $1$ near $+\infty$, and where $K_\vartheta^t$, $\vartheta\in[0,1]$ is a smooth homotopy equal to $H_-^t$ at $0$ and equal to $H_+^t$ at $1$. Then $\p_sH_s^t=\beta'(s)\p_\vartheta K_{\beta(s)}^t$. 

\begin{proposition}[Compatibility of continuation maps with action filtrations] \label{prop:DGcontinuation-filtration} Let $H_s^t=K_{\beta(s)}^t$ as above. The map $$\Psi^\cF : FC_*(H_+,J_+,\Xi_+;\cF)\to FC_*(H_-,J_-,\Xi_-;\cF)$$ has the property that  
\begin{equation} \label{eq:continuation-filtration}
\Psi^\cF(FC_*^{<b}(H_+,J_+,\Xi_+;\cF))\subset FC_*^{<b+E}(H_-,J_-,\Xi_-;\cF) 
\end{equation}
for all $b\in\R$, with $E=\int_0^1\max_{x,\vartheta} \p_\vartheta K_\vartheta^t(x) \,dt$. 
\end{proposition}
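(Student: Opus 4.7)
The plan is to reduce~\eqref{eq:continuation-filtration} to a standard action estimate on continuation Floer trajectories. By Definition~\ref{def:continuation-map},
\begin{equation*}
\Psi^\cF(\alpha\otimes x^+) = \sum_{y^-}\alpha\cdot\nu_{x^+,y^-}\otimes y^-, \qquad \nu_{x^+,y^-} = \ol q_{x^+,{y^-}*}(\sigma_{x^+,y^-}),
\end{equation*}
where the chain $\sigma_{x^+,y^-}$ is supported in the compactified moduli space $\ol\cH(x^+;y^-)$. Hence it suffices to show that $\ol\cH(x^+;y^-)$ is empty (forcing $\sigma_{x^+,y^-}$, and consequently $\nu_{x^+,y^-}$, to vanish) whenever $A_{H_-}(y^-)-A_{H_+}(x^+) > E$. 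Combined with the hypothesis $A_{H_+}(x^+)<b$, this will force any contributing $y^-$ to satisfy $A_{H_-}(y^-)\le A_{H_+}(x^+)+E<b+E$, yielding the claimed inclusion.

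For every $u\in\cH(x^+;y^-)$, the identity $\p_s u = \nabla A_{H_s}(u(s,\cdot))$ from the Floer equation gives
\begin{equation*}
\frac{d}{ds} A_{H_s}(u(s,\cdot)) = \|\p_s u\|_{L^2(S^1)}^2 - \int_{S^1}\p_s H_s^t(u(s,t))\, dt.
\end{equation*}
Integrating over $s\in\R$ and discarding the nonnegative square term yields
\begin{equation*}
A_{H_-}(y^-) \le A_{H_+}(x^+) + \int_{\R}\int_{S^1}\p_s H_s^t(u(s,t))\,dt\,ds.
\end{equation*}
Substituting $\p_s H_s^t = \beta'(s)\,\p_\vartheta K_{\beta(s)}^t$ and using the pointwise inequality $\p_\vartheta K_{\beta(s)}^t(u(s,t))\le \max_{x,\vartheta}\p_\vartheta K_\vartheta^t(x)$ together with $\beta'\ge 0$ and $\int_\R\beta'(s)\,ds = 1$, the double integral is bounded by
\begin{equation*}
\int_{S^1}\max_{x,\vartheta}\p_\vartheta K_\vartheta^t(x)\, dt = E.
\end{equation*}
Thus $A_{H_-}(y^-)-A_{H_+}(x^+)\le E$ for every $u\in\cH(x^+;y^-)$, which gives the desired emptiness statement.

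No substantive obstacle is anticipated: the argument is the classical Floer filtration estimate transposed to the DG setting, and the verification is essentially mechanical. The one point worth flagging is that the monotone reparametrization $H_s\rightsquigarrow H_s^{\rho,C}$ introduced in~\S\ref{sec:evaluation-for-continuation} to define the evaluation map does not interfere here: since $X_{H_s} = X_{H_s^{\rho,C}}$, the moduli spaces $\cH(x^+;y^-)$ are unaffected by this reparametrization, and it is precisely the bound for the original actions $A_{H_\pm}$ that is needed and established above.
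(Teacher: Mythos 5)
Your proof is correct and follows essentially the same argument as the paper: both establish the action estimate $\cA_{H_-}(y^-)\le \cA_{H_+}(x^+)+E$ for any $u\in\cH(x^+;y^-)$ via the derivative identity $\frac{d}{ds}\cA_{H_s}(u(s,\cdot))=\|\p_s u\|^2-\int_{S^1}\p_s H_s\circ u\,dt$, discarding the nonnegative gradient term and bounding $\int\beta'\p_\vartheta K_{\beta(s)}$ by $E$. Your remark that the modified homotopy $H_s^{\rho,C}$ used for the evaluation maps does not affect the moduli spaces (since $X_{H_s}=X_{H_s^{\rho,C}}$) is precisely the point the paper highlights in the remark following the proposition.
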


\begin{proof} Given a continuation curve $u\in \mathcal H(x^+; y^-)$ associated to $(H_s,J_s)$, a standard computation involving the Stokes formula yields (see also~\S\ref{sec:eval_maps_homotopies})
\begin{align*}
  0&\leq \int_{\R}\int_{S^1}\|\p_su\|^2\,dt\,ds\\
   &=\cA_{H_+}(x^+)-\cA_{H_-}(y^-)+\int_\R\int_{S^1}(\p_sH_s)\circ u\,dt\,ds\\
   & =\cA_{H_+}(x^+)-\cA_{H_-}(y^-)+\int_\R\int_{S^1}\beta'(s)\p_\vartheta K_{\beta(s)}^t(u(s,t))\,dt\,ds \\
   & \leq \cA_{H_+}(x^+)-\cA_{H_-}(y^-) + \int_0^1\max_{x,\vartheta} \p_\vartheta K_\vartheta^t(x) \,dt.
\end{align*}
This implies the conclusion. 
\end{proof}

\begin{remark}
This is a classical computation that underlies the continuity of spectral invariants in Floer theory. As an illustration, the argument in the proof of Corollary~\ref{cor:DGcontinuation-Hofer-norm} below is also used in the proof of the continuity of the spectral invariant from Proposition~\ref{prop:spectral-invariant-W}. One comprehensive reference on this circle of ideas in the context of classical Floer theory is~\cite[Chapter~21]{Oh-Vol2}. 

The key point in our setting is that the modified homotopy $H_s^{\rho,C}$ that is used in order to evaluate the elements of $\cH(x^+;y^-)$ into Moore paths $\cP_{x^+\to y^-}\cL X$ does not interfere with the action filtration. (Recall that $\Xi=(\Upsilon,\rho,C)$ consists of an enriched Floer continuation datum $\Upsilon=(\sigma_{x^+,y^-},\cY,\theta)$, and of a pair $(\rho,C)$ with $\rho$ a cutoff function and $C>0$ a constant. The latter are used in order to define the modified homotopy $H_s^{\rho,C}=H_s-\rho(s)C$, whose continuation Floer trajectories coincide with those of the homotopy $H_s$.)
\end{remark}

\begin{corollary} \label{cor:DGcontinuation-monotone}
The continuation map $\Psi^\cF$ associated to a regular admissible monotone homotopy $H_s$ such that $\p_sH_s\le 0$ preserves the action filtration, i.e., 
$$
\Psi^\cF(FC_*^{<b}(H_+,J_+,\Xi_+;\cF))\subset FC_*^{<b}(H_-,J_-,\Xi_-;\cF) 
$$
for all $b\in\R$.
\end{corollary}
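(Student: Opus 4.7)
The plan is to reuse the Stokes formula computation that appears in the proof of Proposition~\ref{prop:DGcontinuation-filtration}. For any continuation trajectory $u\in\cH(x^+;y^-)$, that proof established
$$0\le\int_\R\!\int_{S^1}\|\p_s u\|^2\, dt\, ds = \cA_{H_+}(x^+) - \cA_{H_-}(y^-) + \int_\R\!\int_{S^1}(\p_s H_s)\circ u\, dt\, ds.$$
Under the hypothesis $\p_s H_s\le 0$, the last term is nonpositive, and I would immediately deduce $\cA_{H_-}(y^-)\le\cA_{H_+}(x^+)$ whenever $\cH(x^+;y^-)$ is nonempty.

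Next, I would note that by definition $\nu_{x^+,y^-}=\ol q_{x^+,y^-\,*}(\sigma_{x^+,y^-})$ is the pushforward of a representing chain system supported on $\ol\cH(x^+;y^-)$. Hence the matrix coefficient $\nu_{x^+,y^-}$ vanishes identically when $\ol\cH(x^+;y^-)=\emptyset$, i.e., whenever $\cA_{H_-}(y^-)>\cA_{H_+}(x^+)$.

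It then follows that for any $\alpha\in\cF$ and any $x^+\in\Per^{<b}(H_+)$, every term appearing in the sum $\Psi^\cF(\alpha\otimes x^+)=\sum_{y^-}\alpha\cdot\nu_{x^+,y^-}\otimes y^-$ corresponds to an orbit $y^-$ with $\cA_{H_-}(y^-)\le\cA_{H_+}(x^+)<b$, which yields the desired inclusion $\Psi^\cF(FC_*^{<b}(H_+,J_+,\Xi_+;\cF))\subset FC_*^{<b}(H_-,J_-,\Xi_-;\cF)$. There is no real obstacle here: the corollary is just a transcription of the pointwise monotonicity of the action functional along continuation Floer trajectories into the algebraic framework of the continuation cocycle. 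An equally valid alternative would be to apply Proposition~\ref{prop:DGcontinuation-filtration} directly, after observing that the hypothesis $\p_s H_s\le0$ permits a reparametrization $H_s^t=K_{\beta(s)}^t$ with $\p_\vartheta K_\vartheta^t\le0$ and hence $E\le 0$, giving the slightly weaker but sufficient inclusion $FC_*^{<b+E}\subset FC_*^{<b}$.
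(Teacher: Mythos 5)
Your proposal is correct and matches the paper's argument: the paper proves this corollary in one line by applying Proposition~\ref{prop:DGcontinuation-filtration} with $\p_\vartheta K_\vartheta^t\le 0$, hence $E\le 0$ — exactly the "alternative" you mention at the end. Your primary route merely inlines the same Stokes computation to get the slightly sharper statement that $\nu_{x^+,y^-}=0$ whenever $\cA_{H_-}(y^-)>\cA_{H_+}(x^+)$, which is fine (and the emptiness of the compactified moduli space follows since each piece of a broken configuration is also action-nonincreasing).
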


\begin{proof}
We use Proposition~\ref{prop:DGcontinuation-filtration} with $\p_\vartheta K_\vartheta^t(x)\le 0$, so that $E\le 0$.
\end{proof}

\begin{corollary} \label{cor:DGcontinuation-Hofer-norm}
For any two triples $(H_\pm,J_\pm,\Xi_{(H_\pm,J_\pm)})$ and any $\eps>0$, there exists a homotopy $(H_s,J_s,\Xi_{(H_s,J_s)})$ such that the induced  continuation map $\Psi^\cF$ satisfies 
$$
\Psi^\cF(FC_*^{<b}(H_+,J_+,\Xi_{(H_+,J_+)};\cF))\subset FC_*^{<b+E}(H_-,J_-,\Xi_{(H_-,J_-)};\cF),
$$ 
with $E=\int_{0}^1\max (H_+^t-H_-^t)\,dt +\eps$. 
\end{corollary}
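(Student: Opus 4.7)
The plan is to apply Proposition~\ref{prop:DGcontinuation-filtration} to a homotopy that is a small perturbation of the linear interpolation between $H_-$ and $H_+$.

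First I would consider the linear homotopy $K_\vartheta^t = (1-\vartheta)H_-^t + \vartheta H_+^t$, for which $\p_\vartheta K_\vartheta^t = H_+^t - H_-^t$ and hence
$$
\int_0^1 \max_{x,\vartheta}\p_\vartheta K_\vartheta^t(x)\,dt \; = \; \int_0^1\max_x(H_+^t-H_-^t)(x)\,dt.
$$
Choosing any nondecreasing $\beta:\R\to[0,1]$ equal to $0$ near $-\infty$ and to $1$ near $+\infty$, I would set $H_s^t = K_{\beta(s)}^t$. This makes the linear case a candidate homotopy realizing the infimum, but in general the resulting pair $(H_s,J_s)$ with a naively chosen $J_s$ will fail to be regular.

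Next I would perturb to achieve regularity. By standard transversality in Floer theory, I can find a perturbation $\eta_\vartheta^t$ with $\eta_0^t = \eta_1^t = 0$ (to preserve the asymptotic data $(H_\pm,J_\pm)$) and $\|\p_\vartheta \eta\|_{C^0}<\eps$, together with a homotopy $J_s$ of compatible almost complex structures interpolating between $J_-$ and $J_+$, such that the pair $(\widetilde K_{\beta(s)}^t,J_s)$ with $\widetilde K_\vartheta^t = K_\vartheta^t + \eta_\vartheta^t$ is a regular admissible homotopy. Then
$$
\int_0^1\max_{x,\vartheta}\p_\vartheta \widetilde K_\vartheta^t(x)\,dt \; \leq \; \int_0^1\max_x(H_+^t-H_-^t)(x)\,dt + \eps \; = \; E.
$$
I would then complete the triple by picking any monotone enriched Floer continuation datum $\Xi_{(H_s,J_s)}$ interpolating between $\Xi_{(H_\pm,J_\pm)}$; the space of such data is nonempty by the convexity and connectedness arguments already used in the proof of Proposition~\ref{prop:uniqueness_continuation}. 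Applying Proposition~\ref{prop:DGcontinuation-filtration} to this data yields exactly the claimed filtration bound.

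The main obstacle I would expect is the perturbation step: arranging that the regularizing $\eta$ vanish at $\vartheta\in\{0,1\}$ while keeping $\|\p_\vartheta\eta\|_{C^0}$ arbitrarily small. This is standard --- it is a Sard--Smale argument applied to the universal moduli space, with perturbations drawn from a Banach space of compactly supported perturbations in the interior of $(0,1)$ --- but the norm to be controlled is the $C^0$-norm of the $\vartheta$-derivative, so one must choose the Banach space structure on the perturbation space so that it dominates this quantity. Once this is arranged, everything else is a direct invocation of Proposition~\ref{prop:DGcontinuation-filtration}.
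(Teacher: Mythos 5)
Your proposal is correct and follows essentially the same route as the paper's one-line proof: take the linear interpolation $K_\vartheta^t = H_-^t + \vartheta(H_+^t - H_-^t)$, perturb it generically so that the $\vartheta$-derivative stays within $\eps$ of $H_+^t - H_-^t$, reparametrize by a cutoff $\beta$, and invoke Proposition~\ref{prop:DGcontinuation-filtration}. The extra detail you supply on which norm the transversality perturbation must control (the $C^0$-norm of $\p_\vartheta\eta$, with $\eta$ supported in the interior of $(0,1)$) is accurate and is exactly what "a generic homotopy that is $\eps$-close" in the paper is implicitly packaging.
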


\begin{proof}
We apply Proposition~\ref{prop:DGcontinuation-filtration} with $K_\vartheta^t$, $\vartheta\in[0,1]$ a generic homotopy that is $\eps$-close to $H_-^t + \vartheta (H_+^t-H_-^t)$ and $H_s^t=K_{\beta(s)}^t$, where $\beta:\R\to[0,1]$ is a smooth nondecreasing cutoff function as above. 
\end{proof}

We now discuss the chain homotopy type of continuation maps.

\begin{proposition}[Chain homotopy type of continuation maps] \label{prop:continuation_maps_homotopic} 
Given regular admissible Floer data $(H_\pm,J_\pm,\Xi_\pm)$ and a DG local system $\cF$, any two continuation maps 
$$
\Psi^\cF : FC_*(H_+,J_+,\Xi_+;\cF)\to FC_*(H_-,J_-,\Xi_-;\cF)
$$ 
are chain homotopic.
\end{proposition}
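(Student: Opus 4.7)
The plan is to deduce this statement as a formal algebraic consequence of Proposition~\ref{prop:uniqueness_continuation}, which already provides a homotopy between any two continuation cocycles at the level of $C_*(\Omega\cL X)$-valued chains. Indeed, given two continuation maps $\Psi^{\cF,0}$ and $\Psi^{\cF,1}$ associated to monotone enriched Floer continuation data $(H^0_s,J^0_s,\Xi^0)$ and $(H^1_s,J^1_s,\Xi^1)$ interpolating between the same endpoints $(H_\pm,J_\pm,\Xi_\pm)$, the maps are, by Definition~\ref{def:continuation-map}, determined entirely by their continuation cocycles $\nu^0 = (\nu^0_{x^+,y^-})$ and $\nu^1 = (\nu^1_{x^+,y^-})$. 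Proposition~\ref{prop:uniqueness_continuation} supplies a family $(h_{x^+,y^-})$ with $h_{x^+,y^-}\in C_{|x^+|-|y^-|+1}(\Omega\cL X)$ satisfying the relation~\eqref{eq:homotopy-nuprime-nu}.

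Using this, I would define the candidate chain homotopy $\h^\cF : FC_*(H_+,J_+,\Xi_+;\cF)\to FC_{*+1}(H_-,J_-,\Xi_-;\cF)$ on the canonical basis by
$$
\h^\cF(\alpha\otimes x^+) = (-1)^{|\alpha|}\sum_{y^-}\alpha\cdot h_{x^+,y^-}\otimes y^-.
$$
The claim is that $\Psi^{\cF,1}-\Psi^{\cF,0}=D^-\h^\cF+\h^\cF D^+$, where $D^\pm$ denotes the DG Floer differential on the complex at $\pm\infty$ as given by~\eqref{eq:DGdifferrential} with the Barraud-Cornea twisting cocycles $(m_{x^\pm,y^\pm})$. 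The verification is a direct expansion: compute $D^-\h^\cF(\alpha\otimes x^+)$ and $\h^\cF D^+(\alpha\otimes x^+)$, use the Leibniz rule $\p(\alpha\cdot h_{x^+,y^-})=\p\alpha\cdot h_{x^+,y^-}+(-1)^{|\alpha|}\alpha\cdot\p h_{x^+,y^-}$ together with the right $C_*(\Omega\cL X)$-module structure on $\cF$, substitute the expression for $\p h_{x^+,y^-}$ given by~\eqref{eq:homotopy-nuprime-nu}, and check that the terms involving $m_{x^+,z^+}$ and $m_{z^-,y^-}$ cancel against the contributions of $D^+$ and $D^-$, leaving precisely $\alpha\cdot(\nu^1_{x^+,y^-}-\nu^0_{x^+,y^-})\otimes y^-$, which is $\Psi^{\cF,1}(\alpha\otimes x^+)-\Psi^{\cF,0}(\alpha\otimes x^+)$.

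The only delicate step is the bookkeeping of signs, which is the counterpart in our setting of the standard verification that a chain homotopy in the Hom complex between two degree~$0$ cycles (the cocycles $\nu^0$ and $\nu^1$, interpreted as in Remark~\ref{rmk:cont-MC}) produces a chain homotopy between the corresponding maps of twisted complexes; the sign conventions in~\eqref{eq:DGdifferrential}, in Definition~\ref{def:continuation-map}, and in~\eqref{eq:homotopy-nuprime-nu} have been deliberately calibrated so that this identity holds on the nose. Consequently, the essential content of the proposition is contained in Proposition~\ref{prop:uniqueness_continuation}, whose own proof is the genuinely hard part — it requires constructing the cocycle $(h_{x^+,y^-})$ by evaluating a representing chain system for the parametrized moduli spaces $\cH^{[0,1]}(x^+;y^-)$ of continuation trajectories interpolating between the two continuation data, and that is the step deferred to the subsequent treatment of homotopies of homotopies (Proposition~\ref{prop:cocycle_homotopy_of_homotopies}).
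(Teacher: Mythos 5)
Your proposal is correct and coincides with the paper's own argument: the paper's proof of Proposition~\ref{prop:continuation_maps_homotopic} is precisely the one-line reduction to Proposition~\ref{prop:uniqueness_continuation}, with the chain homotopy $\h^\cF$ given by the same formula (and the same sign $(-1)^{|\alpha|}$) that appears in \S\ref{sec:homotopies}, and with the construction of $(h_{x^+,y^-})$ deferred to the parametrized moduli spaces $\cH^{[0,1]}(x^+;y^-)$ via Proposition~\ref{prop:cocycle_homotopy_of_homotopies}. Your fleshing out of the Leibniz-rule verification is exactly the bookkeeping the paper leaves implicit.
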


\begin{proof}
This is a reformulation of Proposition~\ref{prop:uniqueness_continuation}. 
\end{proof}

We now discuss continuation maps induced by ``constant" homotopies. Given an admissible pair $(H_0,J_0)$ we denote $\mathrm{Id}_{(H_0,J_0)}=(H_s,J_s)$ the constant homotopy equal to $(H_0,J_0)$ for all $s\in\R$. We denote $\star$ the basepoint of (the relevant path-component of) $\cL X$, and we view it as a constant based loop and further as a $0$-chain $\star\in C_0(\Omega \cL X)$.

\begin{proposition}[Continuation map induced by the identity] \label{prop:Id-Id}
Let $(H_0,J_0)$ be a regular admissible pair with enriched Floer datum $\Xi_0=(s_{x,y},\cY_0,\theta_0)$. 
\begin{enumerate}
\item The homotopy $\mathrm{Id}_{(H_0,J_0)}$ is regular, and the moduli spaces of continuation Floer trajectories coincide with the spaces of Floer trajectories for $(H_0,J_0)$, 
$$
\cH(x;y)=\wh\cM(x;y),\qquad x,y\in\Per(H).
$$
\item Any Barraud-Cornea continuation 
cocycle $(\nu_{x,y})$ associated to the homotopy $\mathrm{Id}_{(H_0,J_0)}$ is homologous in the sense of~\eqref{eq:homotopy-nuprime-nu} to the continuation cocycle 
$$
\nu'_{x,y}=\left\{\begin{array}{ll}\star,&x=y,\\0,&\mbox{ else}.\end{array}\right.
$$
\end{enumerate}
\end{proposition}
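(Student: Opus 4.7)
For Part (1), the observation is direct: when $(H_s,J_s) = (H_0,J_0)$ is independent of $s$, the continuation Floer equation $\p_s u + J_{s,t}(u)(\p_t u - X_{H_s}^t(u))=0$ reduces to the Floer equation for $(H_0,J_0)$. The linearized operator at any solution is therefore identical to the linearization of the Floer equation, so regularity of $(H_0,J_0)$ transfers to $\mathrm{Id}_{(H_0,J_0)}$, yielding $\cH(x;y) = \wh\cM(x;y)$ as smooth manifolds of dimension $|x|-|y|$ with matching coherent orientations (the latter since coherent orientations are determined by the same Cauchy-Riemann determinant lines).

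For Part (2), I would first verify the cocycle equation of Proposition~\ref{prop:existence_cont_cocycles} for $\nu'$. The left-hand side $\p\nu'_{x,y}=0$ in all cases, since $\nu'_{x,y}$ is either the $0$-chain $\star$ or vanishes. For $x\neq y$, the right-hand side $\sum_z m_{x,z}\nu'_{z,y} + \sum_z (-1)^{|x|-|z|-1}\nu'_{x,z}m_{z,y}$ has nontrivial contributions only from $z=y$ in the first sum, giving $m_{x,y}\cdot\star = m_{x,y}$, and from $z=x$ in the second sum, giving $(-1)^{-1}\star\cdot m_{x,y} = -m_{x,y}$; these cancel, using that $\star$ is the two-sided unit for concatenation in $\Omega\cL X$. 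For $x=y$ both nontrivial terms involve $m_{x,x}$, which vanishes by degree ($|x|-|x|-1=-1$). Thus $\nu'$ satisfies the cocycle equation in the algebraic sense.

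To construct the chain homotopy $(h_{x,y})$ satisfying~\eqref{eq:homotopy-nuprime-nu} between a geometric cocycle $\nu$ and $\nu'$, I would exploit the $\R$-equivariance of $\wh\cM(x;y) \simeq \R \times \cM(x;y)$ for $x\neq y$: the evaluation $\ol q_{x,y}$ factors, up to the reparametrization induced by translation in the $\R$-factor, through the evaluation on the quotient $\cM(x;y)$. All reparametrized Moore loops obtained in this way represent the same path in $\cL X$ and are canonically homotopic in $\Omega\cL X$. Combining this canonical reparametrization homotopy with a representing chain for $\cM(x;y)$ produces a chain $h_{x,y} \in C_{|x|-|y|+1}(\Omega\cL X)$ that witnesses $\nu_{x,y}$ (for $x\neq y$) as a coboundary in $C_*(\Omega\cL X)$; for $x=y$ the constant-trajectory stratum already evaluates to $\star=\nu'_{x,x}$ on the nose, so $h_{x,x}=0$ suffices in degree $0$. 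The $m \cdot h$ and $h \cdot m$ terms of~\eqref{eq:homotopy-nuprime-nu} then arise from the codimension-$1$ broken-trajectory strata of $\wh\cM(x;y)$, and the full identity is verified by a Stokes computation together with an induction on $|x|-|y|$, mirroring the proof strategy for representing chain systems in Proposition~\ref{representingchainFloer}. An alternative route is the homotopy-of-homotopies argument of~\S\ref{sec:homotopies}, applied to a $1$-parameter family of enriched continuation data for the constant homotopy, which by Proposition~\ref{prop:uniqueness_continuation} yields any two geometric cocycles as homologous once one realizes $\nu'$ itself from a suitable (possibly degenerate) choice.

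The main obstacle in either approach is bookkeeping: one must set up the parametrized evaluation so that its boundary contributes precisely the four terms of~\eqref{eq:homotopy-nuprime-nu}, with the correct signs and with the codimension-$1$ strata at infinity in the $\R$-direction matching the breaking strata of $\wh\cM(x;y)$. This is essentially the same type of orientation computation as in~\S\ref{sec:orientations-twisting-cocycle} and~\S\ref{sec:orientations_continuation}, carried through one dimension higher.
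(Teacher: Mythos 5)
Your Part~(1) is essentially what the paper says, and your verification that $\nu'$ satisfies the continuation-cocycle equation (via the unit property of $\star$ and the vanishing of $m_{x,x}$ in negative degree) is correct, though it is not logically needed once $\nu'$ is realized as a geometric cocycle.

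For Part~(2) you have the right conceptual ingredients (exploit the $\R$-action on $\wh\cM(x;y)$, and use a homotopy of evaluation maps together with the breaking strata to get the four-term relation), but there is a genuine missing step, and it is exactly the one that makes the argument close. You suggest that the evaluation $\ol q_{x,y}$ ``factors, up to reparametrization, through the evaluation on the quotient $\cM(x;y)$'' and that the reparametrization homotopy ``witnesses $\nu_{x,y}$ as a coboundary.'' What this does \emph{not} establish is why the alternative evaluation $\ol q'_{x,y}=\ol q_{0,x,y}\circ\ol\pi$ (where $\ol\pi:\ol\cH(x;y)\to\ol\cM(x;y)$ is the projection) applied to a representing chain $\sigma_{x,y}$ yields $\nu'_{x,y}$ on the nose. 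Indeed $\sigma_{x,y}$ has degree $|x|-|y|$ but is pushed through a map to a space of dimension $|x|-|y|-1$; there is no a priori reason for $\ol\pi_*(\sigma_{x,y})$ to vanish. The paper's proof hinges on the fact that, working with \emph{cubical} chains, one can choose the representing chain system so that $\ol\pi_*(\sigma_{x,y})$ consists of degenerate cubes and hence is zero by definition; this is the content of~\cite[Lemma~6.9]{BDHO}, invoked in step~(ii) of the proof. That choice is what makes $\ol q'_{x,y*}(\sigma_{x,y})$ equal $0$ for $|x|>|y|$ and $\star$ for $x=y$, i.e.\ exactly $\nu'$. Your phrase ``a suitable (possibly degenerate) choice'' gestures at this, but without identifying the device the realization of $\nu'$ is unjustified.

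Once $\nu'$ is realized geometrically by the pair (representing chain system with degenerate projection, factored evaluation $\ol q'$), the remaining task is as you indicate in your alternative route: show that $(\ol q'_{x,y})$ and the honest evaluation system $(\ol q_{x,y})$ are homotopic through families of evaluation maps satisfying~\eqref{eq:relation1} and~\eqref{eq:relation2}, and then apply~\cite[Lemma~6.12]{BDHO} (the analogue of Proposition~\ref{prop:uniqueness_continuation}) to conclude the cocycles are homologous. Your primary ``direct Stokes plus induction'' route would, if carried out, reproduce that general lemma in a special case; the paper instead cites it and concentrates the novelty in the degenerate-cube observation. In summary: your outline is compatible with the paper's, but the essential lemma that allows $\ol\pi_*(\sigma_{x,y})=0$ is the missing idea, and without it the proposed construction of $h_{x,y}$ has no verified base case.
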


\begin{proof}
1. This is straightforward from the definitions. 

2. By Proposition~\ref{prop:uniqueness_continuation}, any two continuation cocycles induced by monotone continuation data with the same endpoints are homologous. It is therefore enough to construct one representing chain system $(\sigma_{x,y})$ for $\ol \cH(x;y)$ such that, with monotone enriched Floer datum given by $\cY_s=\cY_0$, $\theta_s=\theta_0$, and an arbitrary function $\rho$ and constant $C>0$, the corresponding continuation cocycle $(\nu_{x,y})$ is homologous to $(\nu'_{x,y})$.

The proof is very similar to~\cite[Lemmas~6.9--6.12]{BDHO} and we spell out the main steps. We denote $\pi:\cH(x;y)=\widehat \cM(x;y)\to\cM(x;y)$ the canonical projection, which induces $\ol\pi:\ol\cH(x;y)\to\ol\cM(x;y)$. 
\renewcommand{\theenumi}{\roman{enumi}}
\begin{enumerate}
\item If $|x|=|y|$, we have $\ol\cH(x;y)=\widehat \cM(x;y)=\varnothing$ if $x\neq y$, and $\ol\cH(x;x)=\widehat \cM(x;x)=\{c_x\}$, the constant Floer trajectory at $x$. The representing chain system $\sigma_{x,y}$ is therefore uniquely determined for such $x,y$. 
\item We complete the above choice to a continuation representing chain system by adding chains $\sigma_{x,y}$ for $|x|>|y|$. By ~\cite[Lemma~6.9]{BDHO}, we can choose $\sigma_{x,y}$ such that $\ol\pi_*(\sigma_{x,y})=0$ for all $|x|>|y|$. The heuristic idea is that, in the cubical complex, the degenerate cubes are by definition equal to $0$.
\item Consider the evaluation map $\ol q'_{x,y}:\ol\cH(x;y)\to \Omega\cL X$ defined by $\ol q'_{x,y}=\ol q_{0,x,y}\circ\ol\pi$, where $\ol q_{0,x,y}:\cM(x;y)\to \Omega \cL X$ is defined in~\eqref{eq:qxy}. These maps satisfy conditions~\eqref{eq:relation1} and~\eqref{eq:relation2} because the maps $\ol q_{0,x,y}$ do. As a consequence $(\ol q'_{x,y})_*(\sigma_{x,y})$ is a continuation cocycle, which clearly coincides with $\nu'_{x,y}$. 
\item One proves exactly as in~\cite[Lemma~6.10]{BDHO} that the two systems of evaluation maps $(\ol q'_{x,y})$ from above and $(\ol q_{x,y})$ determined by the choice of $\cY_s=\cY_0$, $\theta_s=\theta_0$, and some choice of $\rho$, $C$, are homotopic through a family of evaluation maps that each satisfy~\eqref{eq:relation1} and~\eqref{eq:relation2}. As a consequence of~\cite[Lemma~6.12]{BDHO}, the resulting continuation cocycles are homologous.
\end{enumerate}

\end{proof}

\begin{corollary} \label{cor:Id-Id} The continuation map induced by the identity $\mathrm{Id}_{(H_0,J_0)}$ is homotopic to the identity.
\end{corollary}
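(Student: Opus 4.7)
The plan is to derive the corollary directly from Proposition~\ref{prop:Id-Id} together with the basic observation that a chain homology between cocycles translates, at the Floer complex level, into a chain homotopy between the associated continuation maps.

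First I would unpack what the ``trivial'' cocycle $(\nu'_{x,y})$ from Proposition~\ref{prop:Id-Id}(2) induces at the level of the DG Floer complex. By Definition~\ref{def:continuation-map}, the map associated to $(\nu'_{x,y})$ sends
$$
\Psi'^\cF(\alpha\otimes x) \,=\, \sum_{y}\alpha\cdot\nu'_{x,y}\otimes y \,=\, \alpha\cdot\star\otimes x.
$$
Here $\star\in C_0(\Omega\cL X)$ is the $0$-cube on the constant Moore loop of length $0$ at the basepoint, which is precisely the unit of the DGA $R_*=C_*(\Omega \cL X)$. Hence $\alpha\cdot\star=\alpha$ and $\Psi'^\cF=\mathrm{id}$.

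Next I would turn the chain homology from Proposition~\ref{prop:Id-Id}(2) into a chain homotopy. Let $(\nu_{x,y})$ be the continuation cocycle built from the data defining the continuation map $\Psi^\cF$ associated to $\mathrm{Id}_{(H_0,J_0)}$. By part (2) of that proposition, Proposition~\ref{prop:uniqueness_continuation} (which is invoked with $\nu^0=\nu'$ and $\nu^1=\nu$) provides chains $h_{x,y}\in C_{|x|-|y|+1}(\Omega\cL X)$ satisfying
\begin{align*}
\partial h_{x,y} \,=\, \nu_{x,y}-\nu'_{x,y}
&\,+\,\sum_{z}(-1)^{|x|-|z|}m_{x,z}\, h_{z,y}\\
&\,+\,\sum_{z}(-1)^{|x|-|z|}h_{x,z}\, m_{z,y},
\end{align*}
where $(m_{x,y})$ is the Barraud--Cornea cocycle determined by $\Xi_0$. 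Define
$$
\h:FC_*(H_0,J_0,\Xi_0;\cF)\to FC_{*+1}(H_0,J_0,\Xi_0;\cF),\qquad \h(\alpha\otimes x)=(-1)^{|\alpha|}\sum_{y}\alpha\cdot h_{x,y}\otimes y.
$$

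Finally, a direct computation using the Leibniz rule for $\p$ in $\cF$, the differential formula~\eqref{eq:DGdifferrential} for the DG Floer differential $D$, and the homology relation above, shows that
$$
D\h+\h D \,=\, \Psi^\cF-\Psi'^\cF \,=\, \Psi^\cF-\mathrm{id}.
$$
The signs in the computation match the signs in the homology relation exactly as in~\cite[\S2]{BDHO}, so there is no subtlety beyond bookkeeping. This establishes that $\Psi^\cF$ is chain homotopic to the identity, which is the statement of the corollary. The only step requiring care is the verification of the signs in the last identity, but these are parallel to the sign conventions already fixed in the definitions of $D$, $\Psi^\cF$, and the homotopy~\eqref{eq:homotopy-nuprime-nu}.
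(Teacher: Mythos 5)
Your proposal is correct and takes essentially the same approach as the paper's (one-sentence) proof: you identify that the cocycle $(\nu'_{x,y})$ induces the identity at chain level because $\star$ is the unit of $C_*(\Omega\cL X)$, and then you promote the homology between cocycles furnished by Proposition~\ref{prop:Id-Id}(2) to a chain homotopy between the corresponding maps via the standard formula from~\S\ref{sec:homotopies}. One small streamlining: the homology relation you write comes directly from Proposition~\ref{prop:Id-Id}(2), so there is no need to reroute through Proposition~\ref{prop:uniqueness_continuation} (whose hypotheses are strictly about continuation cocycles obtained from representing chain systems, which $(\nu'_{x,y})$ is only in the extended sense established inside the proof of Proposition~\ref{prop:Id-Id}); citing Proposition~\ref{prop:Id-Id}(2) directly is both cleaner and what the paper implicitly does.
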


\begin{proof}
This is a straightforward consequence of the fact that the continuation map induced by the continuation cocycle $(\nu'_{x,y})$ from Proposition~\ref{prop:Id-Id} is the identity at chain level. 
\end{proof}

To close this section, we state a result on the composition of continuation maps. We will prove it in~\S\ref{sec:composition}. 

\begin{proposition}[Composition of continuation maps]  \label{prop:composition-v1}
Given three regular admissible homotopies $(H^{ij},J^{ij})$, $1\le i<j\le 3$ interpolating between regular admissible pairs $(H^i,J^i)$ at $+\infty$ and $(H^j,J^j)$ at $-\infty$, complemented by additional monotone continuation data $\Xi_{(H^{ij},J^{ij})}$, the chain maps $\Psi^{23}\circ\Psi^{12}$ and $\Psi^{13}$ are homotopic. 
\end{proposition}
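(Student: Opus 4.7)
The approach is to realize the composition $\Psi^{23}\circ \Psi^{12}$ as the continuation map associated to a single ``glued'' homotopy from $(H^1, J^1)$ to $(H^3, J^3)$, and then to invoke the uniqueness of continuation cocycles from Proposition~\ref{prop:uniqueness_continuation}.

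For a large gluing parameter $R \gg 0$, I would construct an admissible homotopy $(H^R_s, J^R_s)$ from $(H^1, J^1)$ at $+\infty$ to $(H^3, J^3)$ at $-\infty$ by concatenating suitably translated copies of $(H^{12}, J^{12})$ near $+\infty$ and $(H^{23}, J^{23})$ near $-\infty$, separated by a buffer of length $2R$ on which the data is constant equal to $(H^2, J^2)$. For $R$ large and generic, $(H^R, J^R)$ is regular, and standard Floer gluing analysis provides a compactification-preserving, orientation-preserving identification
\begin{equation*}
\Phi_R:\bigsqcup_{z^2 \in \Per(H^2)} \cH^{12}(x^1; z^2) \times \cH^{23}(z^2; y^3) \;\hookrightarrow\; \cH^R(x^1; y^3)
\end{equation*}
whose image exhausts $\cH^R(x^1; y^3)$. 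I would then choose a representing chain system $\sigma^R_{x^1, y^3}$ for $\ol\cH^R(x^1; y^3)$ that, through $\Phi_R$, equals $\sum_{z^2} \sigma^{12}_{x^1, z^2} \times \sigma^{23}_{z^2, y^3}$, and equip $(H^R, J^R)$ with a monotone enriched Floer continuation datum $\Xi^R$ whose embedded tree, homotopy inverse, cutoff function and action constant are obtained by concatenating those of $\Xi_{(H^{12}, J^{12})}$ and $\Xi_{(H^{23}, J^{23})}$ on the respective sides of the buffer. Because the evaluation of a glued continuation trajectory into Moore loops on $\cL X$ is homotopic rel endpoints to the concatenation of the evaluations of its two pieces (the $H^2$-buffer contributes a constant segment that is absorbed by the Moore concatenation), the resulting continuation cocycle is
\begin{equation*}
\nu^R_{x^1, y^3} \;=\; \sum_{z^2} \nu^{12}_{x^1, z^2} \cdot \nu^{23}_{z^2, y^3}
\end{equation*}
at chain level. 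By Definition~\ref{def:continuation-map}, the continuation map it induces is therefore exactly $\Psi^{23}\circ \Psi^{12}$.

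Since both $\nu^R$ and the cocycle $\nu^{13}$ associated to $(H^{13}, J^{13}, \Xi_{(H^{13}, J^{13})})$ are continuation cocycles between the same endpoint Floer data $(H^1,J^1,\Xi_{(H^1,J^1)})$ and $(H^3,J^3,\Xi_{(H^3,J^3)})$, Proposition~\ref{prop:uniqueness_continuation} furnishes a family $(h_{x^1, y^3})$ with $h_{x^1, y^3}\in C_{|x^1|-|y^3|+1}(\Omega\cL X)$ satisfying~\eqref{eq:homotopy-nuprime-nu} with $\nu^0 = \nu^{13}$ and $\nu^1 = \nu^R$. Reading this identity through the tensor product with $\cF$ gives precisely the chain homotopy relation $\Psi^{23}\circ \Psi^{12} - \Psi^{13} = D^- H + H D^+$ between the corresponding degree $0$ maps of DG Floer complexes. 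The principal obstacle is the gluing step and its accompanying sign analysis: establishing that $\Phi_R$ preserves coherent orientations runs parallel to Lemma~\ref{lem:gluing-stabilized-CR}, with the key observation that the $R$-direction of the gluing identifies the outward-normal directions on both sides of the $(H^2, J^2)$-buffer in a compatible way, so no sign correction is introduced when passing from the product $\nu^{12}\cdot \nu^{23}$ to $\nu^R$.
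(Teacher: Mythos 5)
Your overall plan — to glue the two homotopies into one and then invoke the uniqueness statement, Proposition~\ref{prop:uniqueness_continuation} — is close in spirit to the paper's argument, but there is a genuine gap at the central gluing step.

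You assert that for $R \gg 0$ the gluing map
\[
\Phi_R:\bigsqcup_{z^2\in\Per(H^2)}\cH^{12}(x^1;z^2)\times\cH^{23}(z^2;y^3)\hookrightarrow\cH^R(x^1;y^3)
\]
has image \emph{exhausting} $\cH^R(x^1;y^3)$. This is false once the moduli spaces have positive dimension (and the DG construction precisely needs all dimensions). The gluing construction is only defined on compact subsets of the broken configurations and yields a diffeomorphism onto an open subset; it is never a surjection onto a fixed-$R$ slice. Indeed the paper itself warns of this: Remark~\ref{rmk:remark-on-gluing} states explicitly that ``the gluing map is not surjective onto $\wh\cM(x;y)$.'' There is moreover a second, structural problem with the proposed identification: the domain of $\Phi_R$, a \emph{disjoint} union indexed by $z^2$, has compactification boundary strata coming from Floer breaking at the intermediate orbits $z^2$ (e.g.\ $\cH^{12}(x^1;w^2)\times\cM(w^2;z^2)\times\cH^{23}(z^2;y^3)$), which do not correspond to boundary strata of $\ol\cH^R(x^1;y^3)$. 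Those would have to become interior seams under $\Phi_R$, so $\Phi_R$ cannot be a homeomorphism from the disjoint union, and pushing forward the product chain $\sum_{z^2}\sigma^{12}_{x^1,z^2}\times\sigma^{23}_{z^2,y^3}$ does not produce a cycle rel boundary in $\ol\cH^R(x^1;y^3)$.

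The fix, which is what the paper does, is to let the gluing parameter run and work with the one-dimension-higher parametrized moduli space $\cH_{[R_0,\infty)}(x^1;y^3)=\bigcup_{R\ge R_0}\cH(x^1;y^3;H^{13}_R,J^{13}_R)$. Its compactification has \emph{four} kinds of codimension-$1$ strata: the slice $\cH(x^1;y^3;H^{13}_{R_0},J^{13}_{R_0})$ at $R=R_0$; the products $\cH^{12}(x^1;z^2)\times\cH^{23}(z^2;y^3)$ at $R=\infty$; and the usual Floer breakings at $x^1$ and at $y^3$. A representing chain system for this parametrized space then directly provides a homotopy in the sense of~\eqref{eq:homotopy-nuprime-nu} between the cocycle $\nu^{13}_{R_0}$ (which is in turn homologous to $\nu^{13}$ by Proposition~\ref{prop:uniqueness_continuation}) and the composite cocycle $\sum_{z^2}\nu^{12}_{x^1,z^2}\cdot\nu^{23}_{z^2,y^3}$. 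No claim of surjectivity of a gluing map at fixed $R$ is ever needed: one only uses that the product moduli space is the $R=\infty$ stratum, not that it is diffeomorphic to a fixed slice. Your orientation remark is on the right track (the gluing of two unstabilized continuation operators preserves the coherent orientation with sign $+1$, consistent with what the paper finds), but it is the missing cobordism over the parameter $R$ that carries the proof.
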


\subsection{Homotopies} \label{sec:homotopies}
Let $(H_+,J_+)$, $(H_-,J_-)$ be regular admissible pairs, and let $\{(H^\tau,J^\tau)\}$, $\tau\in [0,1]$ be a regular admissible homotopy of homotopies interpolating from $(H_+,J_+)$ near $+\infty$ to $(H_-,J_-)$ near $-\infty$. Following~\S\ref{sec:conventions_Floer}, we associate to $\{(H^\tau,J^\tau)\}$ the parametrized moduli spaces of continuation Floer trajectories $\cH^{[0,1]}(x^+;y^-)$, $x^+\in\Per(H_+)$, $y^-\in\Per(H_-)$. The space $\cH^{[0,1]}(x^+;y^-)$ is a smooth manifold of dimension $|x^+|-|y^-|+1$.  By a procedure similar to the ones in~\S\ref{sec:twisting-cocycle} and~\S\ref{sec:continuation}, we will first construct a representing chain system $(\cS_{x^+,y^-})$ and then, by  evaluating into $\Omega \call X$,  a {\it parametrized continuation cocycle} $h_{x^+,y^-}\in C_{|x^+|-|y^-|+1}(\Omega \call X)$. 
Using the latter we will prove that a homotopy of homotopies as above yields continuation maps that are chain homotopic.

\subsubsection{Orientations} \label{sec:ori-homotopies}
The moduli spaces $\cH^{[0,1]}(x^+;x^-)$ admit Floer compactifications $\ol \cH^{[0,1]}(x^+;x^-)$ that are manifolds with boundary with corners. The codimension $1$ strata in the boundary $\p \ol \cH^{[0,1]}(x^+;y^-)$ are of the type 
$$
\cH^0(x^+;y^-) \mbox{ and } \cH^1(x^+;y^-), 
$$
$$
\cM(x^+;z^+)\times \cH^{[0,1]}(z^+;y^-) \quad \mbox{for }z^+\in\Per(H_+),
$$
and
$$
\cH^{[0,1]}(x^+;z^-)\times \cM(z^-;y^-) \quad \mbox{for }z^-\in\Per(H_-).
$$
Fixing a coherent set of orientations as in~\cite{FH-coherent}, we again need to understand for each of these strata what is the difference in sign between its coherent orientation (given by the product in the last two cases) and the boundary orientation. 

The moduli space $\cH^{[0,1]}(x^+;y^-)$ is the vanishing locus of a section of a Banach bundle as follows. Consider $\cB(x^+;y^-)$ the Banach manifold of maps $\R\times S^1\to X$ locally of class $W^{k,p}$, $p>2$, $k\ge 1$ and converging in $W^{k,p}$-norm to $x^+$ and $y^-$ at $+\infty$ and $-\infty$ respectively. Let $\cE\to \cB(x^+;y^-)$ be the Banach bundle of $W^{k-1,p}$-sections of $\Lambda^{0,1}T^*(\R\times S^1)\otimes u^*TX\simeq u^*TX$. We have a pull-back diagram of bundles 
$$
\xymatrix
@C=60pt
@R=40pt
{
[0,1]\times\cE = \mathrm{pr}_2^*\cE \ar[r]\ar[d]& \cE \ar[d]\\
[0,1]\times \cB(x^+;y^-) \ar[r]_{\mathrm{pr}_2}\ar@/^2pc/[u]^{\ol\p=\{\ol\p_{H^\tau,J^\tau}\}}& \cB(x^+;y^-)
}
$$
and $\cH^{[0,1]}(x^+;y^-)=\ol\p^{-1}(0)$. The regularity of the pair $\{(H^\tau,J^\tau)\}$ means that the vertical differential $D\ol\p^{vert}_{\tau,u}:T_\tau[0,1]\times T_u\cB(x^+;y^-)\to W^{k-1,p}(u^*TX)$ is surjective for any $(\tau,u)\in\ol\p^{-1}(0)$. This is a stabilization by the $1$-dimensional vector space $T_\tau[0,1]\simeq\R$ of the linearized operator $D_u^\tau$ at $u$ for the Floer equation corresponding to the pair $(H^\tau,J^\tau)$.\footnote{We have $D\ol\p^{vert}_{\tau,u}(\ell,\xi)=\frac{d\ol\p(\tau)}{d\tau}\cdot \ell + D_u^\tau\xi$.} As a consequence we infer a canonical isomorphism 
$$
|T_{(\tau,u)}\cH^{[0,1]}(x^+;y^-)|=|\ker D\ol\p^{vert}_{(\tau,u)}|=|\det D\ol\p^{vert}_{(\tau,u)}|\simeq |T_\tau[0,1]|\otimes |\det D_u^\tau|.
$$ 
Note that $T_\tau[0,1]$ is canonically identified with $\R$ and has a canonical orientation. Accordingly $|T_\tau[0,1]|$ is  identified with $\Z$ supported in degree $1$. 

After these preliminaries, we are now ready to discuss the orientation on the boundary of $\ol \cH^{[0,1]}(x^+;y^-)$.

$\bullet$ {\it The component $\cH^0(x^+;y^-)$.} There is a canonical identification $|T_0[0,1]|\simeq |\R\nu^{in}(0)|$, where $\nu^{in}(0)$ is an inward pointing vector in $T_0[0,1]$. Extending $\nu^{in}(0)$ to a neighborhood of $0\in [0,1]$ and lifting it to a neighborhood of $\cH^0(x^+;y^-)\subset \p \cH^{[0,1]}(x^+;y^-)$ we obtain a vector field that points away from the boundary. As a consequence we have canonical orientation preserving isomorphisms
\begin{align*}
|T_{(0,u)}\cH^{[0,1]}(x^+;y^-)|& \simeq |T_0[0,1]|\otimes |T_u\cH^0(x^+;y^-)|\\
& \simeq (-1) \cdot |\R\nu^{out}|\otimes  |T_u\cH^0(x^+;y^-)|,
\end{align*}
where $\nu^{out}$ is a vector field pointing towards the boundary in the neighborhood of $\cH^0(x^+;y^-)\subset \p \cH^{[0,1]}(x^+;y^-)$. Thus the boundary orientation on $\cH^0(x^+;y^-)$ differs from the coherent orientation by a sign $(-1)$. 

$\bullet$ {\it The component $\cH^1(x^+;y^-)$.} There is a canonical identification $|T_1[0,1]|\simeq |\R\nu^{out}(1)|$, where $\nu^{out}(1)$ is an outward pointing tangent vector in $T_1[0,1]$. By a similar reasoning we conclude that the boundary orientation on $\cH^1(x^+;y^-)$ coincides with the coherent orientation. 

$\bullet$ {\it The component $\cM(x^+;z^+)\times \cH^{[0,1]}(z^+;y^-)$, with $z^+\in\Per(H_+)$.} The gluing map 
$$
\wh \cM(x^+;z^+)\times \cH^{[0,1]}(z^+;y^-) \to \cH^{[0,1]}(x^+;y^-)
$$
is a diffeomorphism from its domain of definition to its image. As such, it induces an isomorphism of orientation local systems over the relevant connected components 
$$
|\wh \cM(x^+;z^+)|\otimes |\cH^{[0,1]}(z^+;y^-)|\to  |\cH^{[0,1]}(x^+;y^-)|.
$$
This isomorphism reverses the coherent orientations by a sign $(-1)^{|x^+|-|z^+|}$. Indeed, at a linear level the gluing map is modelled by the gluing of the non-stabilized Cauchy-Riemann operator $D_1$ arising from moduli space problem $\wh \cM(x^+;z^+)$ and of the stabilized Cauchy-Riemann operator $D_2^{V_2}$ arising from the moduli space problem $\cH^{[0,1]}(z^+;y^-)$, with $V_2=\R\simeq T_\tau[0,1]$ carrying its canonical orientation. In view of Lemma~\ref{lem:gluing-stabilized-CR}, the linear gluing map reverses the coherent orientations by a sign $(-1)^{\ind D_1} = (-1)^{|x^+|-|z^+|}$. 

Consider the canonical constant vector field $\p_{x^+,z^+}$ on $\wh \cM(x^+;z^+)$, viewed further under the gluing map as a vector field on $\cH^{[0,1]}(x^+;y^-)$. As in the previous sections one sees that this vector field points towards the boundary. Using~\eqref{eq:orientMMhat} we infer a sequence of orientation preserving isomorphisms 
\begin{align*}
(-1)^{|x^+|-|z^+|}|\cH^{[0,1]}(x^+;y^-)| & \simeq |\wh \cM(x^+;z^+)|\otimes |\cH^{[0,1]}(z^+;y^-)| \\
& \simeq |\R\p_{x^+,z^+}|\otimes |\cM(x^+;z^+)|\otimes |\cH^{[0,1]}(z^+;y^-)| \\
& \simeq |\R\nu^{\text{out}}|\otimes |\cM(x^+;z^+)|\otimes |\cH^{[0,1]}(z^+;y^-)|.
\end{align*}
Here the third isomorphism is notational: we observed that the vector field $\p_{x^+,z^+}$ points towards the boundary, and we record this by denoting it $\nu^{\text{out}}$. As a consequence the product orientation on $\cM(x^+;z^+)\times \cH^{[0,1]}(z^+;y^-)$ differs from its boundary orientation by a sign $(-1)^{|x^+|-|z^+|}$.

$\bullet$ {\it The component $\cH^{[0,1]}(x^+;z^-)\times \cM(z^-;y^-)$ for $z^-\in\Per(H_-)$.} We again consider the gluing map 
$$
\cH^{[0,1]}(x^+;z^-)\times \wh \cM(z^-;y^-) \to \cH^{[0,1]}(x^+;y^-),
$$
which is a diffeomorphism from its domain of definition to its image. As such, it induces an isomorphism of orientation local systems over the relevant connected components 
$$
|\cH^{[0,1]}(x^+;z^-)|\otimes |\wh \cM(z^-;y^-)| \to |\cH^{[0,1]}(x^+;y^-)|.
$$
This isomorphism preserves the coherent orientations. We reason as before: at the linear level the gluing map is modelled by the gluing of the stabilized Cauchy-Riemann operator $D_1^{V_1}$, $V_1=\R\simeq T_\tau[0,1]$ arising from the moduli space problem $\cH^{[0,1]}(x^+;z^-)$ and of the non-stabilized Cauchy-Riemann operator $D_2$ arising from the moduli space problem $\wh\cM(z^-;y^-)$. In view of Lemma~\ref{lem:gluing-stabilized-CR}, and since the second operator is not stabilized, the linear gluing map preserves the coherent orientations.

 We consider the canonical constant vector field $\p_{z^-,y^-}$ on $\wh \cM(z^-;y^-)$, also viewed as a vector field on $\cH^{[0,1]}(x^+;y^-)$ under the gluing map. As before we see that this vector field points away from the boundary. Using~\eqref{eq:orientMMhat} we infer a sequence of orientation preserving isomorphisms 
\begin{align*}
|\cH^{[0,1]}(x^+;y^-)| & \simeq |\cH^{[0,1]}(x^+;z^-)|\otimes |\wh \cM(z^-;y^-)| \\
& \simeq |\cH^{[0,1]}(x^+;z^-)|\otimes |\R\p_{z^-,y^-}|\otimes  |\cM(z^-;y^-)| \\
& \simeq (-1)^{|x^+|-|z^-|} |\R\nu^{out}|\otimes |\cH^{[0,1]}(x^+;z^-)|\otimes |\cM(z^-;y^-)| .
\end{align*}
The last isomorphism is again notational: the vector field $\p_{z^-,y^-}$ points away from the boundary and, if $\nu^{\text{out}}$ is a vector field pointing towards the boundary, the canonical isomorphism $|\R\p_{z^-,y^-}|\simeq |\R\nu^{\text{out}}|$ reverses orientation and therefore has sign $-1$. As a consequence the product orientation on $\cH^{[0,1]}(x^+;z^-)\times \cM(z^-;y^-)$ differs from its boundary orientation by a sign $(-1)^{|x^+|-|z^-|}$. 

Summing up we obtain that the boundary of the oriented Floer compactification of the parametrized moduli spaces of continuation Floer trajectories with respect to a system of coherent orientations as in~\cite{FH-coherent} can be written as 
\begin{align*}
\p \ol \cH^{[0,1]}(x^+;y^-) = & -\cH^0(x^+;y^-) \cup \cH^1(x^+;y^-) \\
& \cup \quad \bigcup_{z^+} (-1)^{|x^+|-|z^+|}\ol \cM(x^+;z^+) \times \ol \cH^{[0,1]}(z^+;y^-) \\
&  \cup \quad \bigcup_{z^-}(-1)^{|x^+|-|z^-|}\ol \cH^{[0,1]}(x^+;z^-)\times  \ol \cM(z^-;y^-).
\end{align*}

\subsubsection{Representing chain systems} 

\begin{proposition}[Existence of representing chain systems for parametrized continuation Floer moduli spaces] \label{representingchainparametrizedFloercontinuation} Fix a regular admissible homotopy of homotopies $\{(H^\tau,J^\tau)\}$, $\tau\in [0,1]$  between $(H_+,J_+)$ at $+\infty$ and $(H_-,J_-)$ at $-\infty$ as above. Choose representing chain systems $(s_{x^\pm,y^\pm})$ for the Floer moduli spaces of $(H_\pm,J_\pm)$, and subordinated representing chain systems $(\sigma^i_{x^+,y^-})$, $i=0,1$ for the continuation Floer moduli spaces determined by the homotopies $(H^i, J^i)$.

Given $x^+\in\Per(H_+)$ and $y^-\in\Per(H_-)$, let $C_*(\ol\cH^{[0,1]}(x^+;y^-))$ be the complex of cubical chains on the compactified moduli space of parametrized continuation Floer trajectories. There exists a collection 
$$
\{\cS_{x^+,y^-}\, : \, x^+\in \Per(H_+),\, y^-\in\Per(H_-)\}
$$ 
with $\cS_{x^+,y^-}\in C_{|x^+|-|y^-|+1}(\ol\cH^{[0,1]}(x^+;y^-))$ satisfying the following properties:  
\begin{enumerate}
\item $\cS_{x^+,y^-}$ is a cycle rel boundary and represents the fundamental class\break $[\ol\cH^{[0,1]}(x^+;y^-)]$. 
\item  there holds the relation
\begin{align*}
\partial \cS_{x^+,y^-}\, =\,  \sigma^1_{x^+,y^-} & - \sigma^0_{x^+, y^-} + \sum_{z^+} (-1)^{|x^+|-|z^+|} s_{x^+,z^+} \times \cS_{z^+,y^-}\\
& + \sum_{z^-}(-1)^{|x^+|-|z^-|} \cS_{x^+,z^-} \times s_{z^-,y^-},
\end{align*}
where the product of chains is defined via the inclusions 
$$\ol\cM(x^+;z^+)\times\ol\cH^{[0,1]}(z^+;y^-) \, \subset \, \partial \ol\cH^{[0,1]}(x^+;y^-)\, \subset\, \ol\cH^{[0,1]}(x^+;y^-)$$
and 
$$\ol\cH^{[0,1]}(x^+;z^-)\times\ol\cM(z^-;y^-) \, \subset \, \partial \ol\cH^{[0,1]}(x^+;y^-)\, \subset\, \ol\cH^{[0,1]}(x^+;y^-).$$
\end{enumerate}
\end{proposition}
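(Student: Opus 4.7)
The proof will proceed by induction on the dimension $d = |x^+|-|y^-|+1$ of the moduli space $\ol\cH^{[0,1]}(x^+;y^-)$, following the pattern established by Barraud--Cornea~\cite{BC07} and used in the preceding Propositions~\ref{representingchainFloer} and~\ref{representingchainFloercontinuation}. The plan is to promote the construction from~\cite[Lemma~6.11]{BDHO} (or equivalently the argument in Proposition~\ref{prop:uniquenessFloercontinuation}, which has the same formal structure) to the Floer setting using the orientation computation carried out in~\S\ref{sec:ori-homotopies}.

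For the base case $d=0$ we have $|x^+|=|y^-|-1$, so the corresponding spaces $\ol\cH^i(x^+;y^-)$ and all breaking strata have negative virtual dimension and are empty. Hence $\ol\cH^{[0,1]}(x^+;y^-)$ is a closed $0$-manifold and we take $\cS_{x^+,y^-}\in C_0(\ol\cH^{[0,1]}(x^+;y^-))$ to be the sum of its points weighted by their coherent orientations; the desired relation reduces to $\partial\cS_{x^+,y^-}=0$, which holds trivially.

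For the inductive step, assume $\cS_{z^+,y^-}$ and $\cS_{x^+,z^-}$ have been constructed with the required properties for all $z^\pm$ with $|z^+|<|x^+|$ and $|z^-|>|y^-|$. Define the candidate boundary chain
\begin{align*}
R_{x^+,y^-} \,=\, & \sigma^1_{x^+,y^-}-\sigma^0_{x^+,y^-}
+\sum_{z^+}(-1)^{|x^+|-|z^+|}s_{x^+,z^+}\times\cS_{z^+,y^-}\\
& +\sum_{z^-}(-1)^{|x^+|-|z^-|}\cS_{x^+,z^-}\times s_{z^-,y^-},
\end{align*}
which is a cubical chain in $C_{|x^+|-|y^-|}\bigl(\partial\ol\cH^{[0,1]}(x^+;y^-)\bigr)$. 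The key verification is that $R_{x^+,y^-}$ is a cycle and represents the fundamental class of the boundary $\partial\ol\cH^{[0,1]}(x^+;y^-)$ with the orientation derived in~\S\ref{sec:ori-homotopies}. The cycle condition $\partial R_{x^+,y^-}=0$ is a purely algebraic sign bookkeeping: one expands using the defining relations for $s_{x^\pm,y^\pm}$ from Proposition~\ref{representingchainFloer}, the relations for $\sigma^i_{x^+,y^-}$ from Proposition~\ref{representingchainFloercontinuation}, and the inductive hypotheses for the $\cS$ chains of lower dimension, and checks that all terms cancel in pairs. The signs $(-1)^{|x^+|-|z^\pm|}$ in the statement of the proposition are precisely the ones dictated by the boundary orientation formula for $\partial\ol\cH^{[0,1]}$ established in~\S\ref{sec:ori-homotopies}, so that $R_{x^+,y^-}$ represents the oriented fundamental class of the boundary.

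Once $R_{x^+,y^-}$ is known to be a cycle representing $[\partial\ol\cH^{[0,1]}(x^+;y^-)]$, one concludes by invoking the fact that $\ol\cH^{[0,1]}(x^+;y^-)$ has the structure of a topological manifold with corners with smooth strata on the boundary (see~\cite[Appendix~A]{BC07}), so that the long exact sequence of the pair $(\ol\cH^{[0,1]}(x^+;y^-),\partial\ol\cH^{[0,1]}(x^+;y^-))$ yields a chain $\cS_{x^+,y^-}\in C_{|x^+|-|y^-|+1}(\ol\cH^{[0,1]}(x^+;y^-))$ with $\partial\cS_{x^+,y^-}=R_{x^+,y^-}$ and representing the fundamental class rel boundary. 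The main delicate point is the sign check in the cycle condition for $R_{x^+,y^-}$; once the orientation conventions of~\S\ref{sec:orientations} and~\S\ref{sec:ori-homotopies} are used consistently, this is routine but must be done carefully. We omit the detailed computation, which is entirely parallel to~\cite[proof of Lemma~6.11]{BDHO}.
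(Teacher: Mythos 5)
Your proof is correct and follows the same route the paper takes, namely the inductive Barraud--Cornea scheme: build the candidate boundary chain from the chains in lower dimension, check that it is a cycle representing the fundamental class of $\partial\ol\cH^{[0,1]}(x^+;y^-)$ using the orientation formula of~\S\ref{sec:ori-homotopies}, and lift via the long exact sequence of the pair. The paper itself omits the details and points to~\cite[Theorem~6.6]{BDHO}; your parallel to~\cite[Lemma~6.11]{BDHO} and Proposition~\ref{prop:uniquenessFloercontinuation} is the same formal mechanism, since the defining relations are structurally identical.
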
 

\begin{proof}
The proof is very similar to that of~\cite[Theorem~6.6]{BDHO} and we omit the details.  
\end{proof}

\subsubsection{Evaluation maps}

Let $\{(H^\tau,J^\tau)\}$, $\tau\in [0,1]$ be a regular admissible homotopy of homotopies between $(H_+,J_+)$ at $+\infty$ and $(H_-,J_-)$ at $-\infty$. Assume we have chosen enriched Floer data $\Xi_\pm=(s_{x^\pm,y^\pm},\cY_\pm,\theta_\pm)$, subordinated monotone enriched Floer continuation data $\Xi^i=(\sigma^i_{x^+,y^-}, \cY^i, \theta^i,\rho^i,C^i)$, $i=0,1$, and a subordinated representing chain system $\cS_{x^+,y^-}$ for the parametrized moduli spaces $\ol\cH^{[0,1]}(x^+,y^-)$. We complement this data with the choice of a continuous family $(\cY^\tau_s,\theta^\tau_s,\rho^\tau,C^\tau)$, $\tau\in[0,1]$ that interpolates between $(\cY^0_s,\theta^0_s,\rho^0,C^0)$ and $(\cY^1_s,\theta^1_s,\rho^1,C^1)$, where $\cY^\tau_s$ are embedded trees rooted at the basepoint, $\theta^\tau_s$ are homotopy inverses for the projections $\cL X\to \cL X/\cY^\tau_s$, and the function $\rho^\tau$ together with the constant $C^\tau$ satisfy the requirements from the previous section for the homotopy $H^\tau$. 
Let $\Upsilon^\tau=(\cS_{x^+,y^-},\cY^\tau,\theta^\tau)$, $\tau\in[0,1]$. We refer to $\{\Upsilon^\tau\}$ as an \emph{enriched parametrized Floer continuation datum}, and to $\{\Xi^\tau=(\Upsilon^\tau,\rho^\tau,C^\tau)\}$ as a \emph{monotone enriched parametrized Floer continuation datum}.  

The construction from~\S\ref{sec:eval_maps_homotopies}  
provides continuous evaluation maps 
$$
\ev_{x^+,y^-}:\cH^{[0,1]}(x^+;y^-)\to\cP_{x^+\to y^-}\cL X
$$
and 
$$
q_{x^+,y^-}:\cH^{[0,1]}(x^+;y^-)\to \Omega \cL X.
$$
These extend to the compactification and we denote the extension of the second map by 
$$
\ol q_{x^+,y^-}:\ol\cH^{[0,1]}(x^+;y^-)\to \Omega \cL X.
$$

\subsubsection{Parametrized continuation cocycle and chain homotopy}

Given a representing chain system $(\cS_{x^+,y^-})$ we define 
$$
h_{x^+,y^-}=(\ol q_{x^+,y^-})_*(\cS_{x^+,y^-})\in C_{|x^+|-|y^-|+1}(\Omega \cL X).
$$

\begin{definition} We call a family $(h_{x^+,y^-})$ obtained in this way \emph{a Barraud-Cornea
parame\-trized continuation cocycle subordinated to the twisting cocycles $(m_{x^+,y^+})$, $(m_{x^-,y^-})$ and to the continuation cocycles $(\nu^i_{x^+,y^-})$ for $i=0,1$}. 
\end{definition}

By abuse of language we will also refer to $(h_{x^+,y^-})$ as \emph{the} parametrized continuation cocycle subordinated to the twisting cocycles $(m_{x^+,y^+})$, $(m_{x^-,y^-})$ and to the continuation cocycles $(\nu^i_{x^+,y^-})$ for $i=0,1$. The next result states that it indeed satisfies the equation of Definition \ref{def:general-parametrized}. It is a straightforward consequence of Proposition~\ref{representingchainparametrizedFloercontinuation}.

\begin{proposition} \label{prop:cocycle_homotopy_of_homotopies}
Any parametrized continuation cocycle satisfies the equation 
\begin{align*}
\partial h_{x^+,y^-}\, =\,  \nu^1_{x^+,y^-} & - \nu^0_{x^+, y^-} + \sum_{z^+} (-1)^{|x^+|-|z^+|} m_{x^+,z^+} h_{z^+,y^-}\\
& + \sum_{z^-}(-1)^{|x^+|-|z^-|} h_{x^+,z^-} m_{z^-,y^-}.
\end{align*}
\qed
\end{proposition}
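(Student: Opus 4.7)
The plan is to apply the chain map $\ol q_{x^+,y^-*}$ to the boundary formula for $\cS_{x^+,y^-}$ given by Proposition~\ref{representingchainparametrizedFloercontinuation}, and then reinterpret each resulting term using the compatibility of the evaluation maps with the stratification of $\p\ol\cH^{[0,1]}(x^+;y^-)$ described in~\S\ref{sec:ori-homotopies}. Since pushforward under a continuous map commutes with the cubical boundary operator, the left-hand side $\ol q_{x^+,y^-*}(\p\cS_{x^+,y^-})$ is exactly $\p h_{x^+,y^-}$ by the definition $h_{x^+,y^-}=\ol q_{x^+,y^-*}(\cS_{x^+,y^-})$.

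I would then examine the four types of codimension-one boundary strata in turn. For the strata $\cH^i(x^+;y^-)$, $i=0,1$, the hypothesis that the family $(\cY^\tau_s,\theta^\tau_s,\rho^\tau,C^\tau)$ interpolates between the monotone continuation data $(\cY^i_s,\theta^i_s,\rho^i,C^i)$ at the endpoints $\tau=i$ guarantees that the restriction of $\ol q_{x^+,y^-}$ to $\cH^i(x^+;y^-)\subset \p\ol\cH^{[0,1]}(x^+;y^-)$ coincides with the evaluation map used in~\S\ref{sec:continuation} to build $\nu^i_{x^+,y^-}$. Consequently $\ol q_{x^+,y^-*}(\sigma^i_{x^+,y^-})=\nu^i_{x^+,y^-}$, which accounts for the term $\nu^1_{x^+,y^-}-\nu^0_{x^+,y^-}$.

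For the product strata, I would establish evaluation-map identities that are the exact analogues of~\eqref{eq:relation1} and~\eqref{eq:relation2} in the parametrized setting: for $(u,v)\in\ol\cM(x^+;z^+)\times\ol\cH^{[0,1]}(z^+;y^-)$ one has $\ol q_{x^+,y^-}(u,v)=\ol q_+(u)\,\#\,\ol q_{z^+,y^-}(v)$, and for $(u,v)\in\ol\cH^{[0,1]}(x^+;z^-)\times\ol\cM(z^-;y^-)$ one has $\ol q_{x^+,y^-}(u,v)=\ol q_{x^+,z^-}(u)\,\#\,\ol q_-(v)$. These follow from the fact that the Moore-loop parametrization by the values of the modified action $A_{H_s^{\rho,C}}$ splits on a broken trajectory into the concatenation of the two pieces, combined with the agreement of $(\cY^\tau_s,\theta^\tau_s)$ with the asymptotic data $(\cY_\pm,\theta_\pm)$ near $s=\pm\infty$. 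Pushing forward the cubical cross products under these concatenation identities yields
\[
\ol q_{x^+,y^-*}\bigl(s_{x^+,z^+}\times\cS_{z^+,y^-}\bigr)=m_{x^+,z^+}\cdot h_{z^+,y^-},\qquad
\ol q_{x^+,y^-*}\bigl(\cS_{x^+,z^-}\times s_{z^-,y^-}\bigr)=h_{x^+,z^-}\cdot m_{z^-,y^-}.
\]

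Combining these identifications with the signs carried by each stratum in Proposition~\ref{representingchainparametrizedFloercontinuation} gives precisely the asserted equation. The main obstacle is not a conceptual one but a matter of careful bookkeeping: one must check that the restriction of the $\tau$-family of evaluation data $(\cY^\tau_s,\theta^\tau_s,\rho^\tau,C^\tau)$ to each codimension-one stratum matches, on the nose, the data used to construct $\nu^i_{x^+,y^-}$ and $m_{x^\pm,y^\pm}$, and that the signs produced by the Koszul rule for pushforward of cross products agree with those already isolated in~\S\ref{sec:ori-homotopies}. This is the direct analogue of the construction in~\cite[\S6]{BDHO} and no new geometric input is needed beyond Proposition~\ref{representingchainparametrizedFloercontinuation} and the compatibility of the action-parametrized evaluation maps with breaking.
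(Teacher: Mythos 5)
Your proposal is correct and is essentially the expansion of the paper's own one-sentence proof, which simply cites Proposition~\ref{representingchainparametrizedFloercontinuation} as in the non-parametrized continuation case (Proposition~\ref{prop:existence_cont_cocycles}, which in turn invokes~\eqref{eq:relation1} and~\eqref{eq:relation2}). You supply the details the paper suppresses — the concatenation relations for $\ol q_{x^+,y^-}$ on the product boundary strata, the matching of the $\tau$-endpoint evaluation data with the data defining $\nu^i$ — and these are exactly what is needed and no more.
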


\begin{remark}
One can also phrase a uniqueness statement for parametrized continuation cocycles, similar to the uniqueness statement for continuation cocycles. This would involve continuation cocycles parametrized by the square $[0,1]^2$. The latter are uniquely determined up to continuation cocycles parametrized by the cube $[0,1]^3$, and so on. Higher hierarchies of this kind have already made their appearance in the literature, see for example~\cite{CO} or~\cite{Varolgunes}, and a complete homotopical treatment in the context of Morse theory was given by Mazuir~\cite{Mazuir2}. 
\end{remark}

\begin{remark} As explained in~\cite[\S2]{BDHO}, and recalling the notation from Remark~\ref{rmk:cont-MC}, the parametrized continuation cocycle $(h_{x^+,y^-})$ can be viewed as a degree $1$ element in the $\Hom$ complex $\Hom((R_*\otimes C^+_\bullet,D^+),(R_*\otimes C^-_\bullet,D^-))$ whose boundary is the difference of the degree $0$ continuation cycles at the endpoints of the parametrizing interval. 
\end{remark}

\begin{definition} Let $\cF$ be a DG right $C_*(\Omega \cL X)$-module and $(H_\pm,J_\pm)$ be regular admissible pairs with enriched Floer data $\Xi_\pm$.  Let $(H^i_s,J^i_s)$, $i=0,1$ be two regular admissible homotopies interpolating between $(H_+,J_+)$ at $+\infty$ and $(H_-,J_-)$ at $-\infty$, with monotone enriched continuation Floer data $\Xi^i$ interpolating between $\Xi_\pm$. Let $(H^\tau,J^\tau,\Xi^\tau)$, $\tau\in [0,1]$ be a regular admissible homotopy with monotone parametrized Floer data interpolating between $\Xi^0$ and $\Xi^1$. The \emph{chain homotopy induced by $(H^\tau,J^\tau,\Xi^\tau)$} is denoted by
$$
h:FC_*(H_+,J_+,\Xi_+;\cF)\to FC_{*+1}(H_-,J_-,\Xi_-;\cF),
$$
and it is defined with respect to the canonical bases $\Per(H_+)$, $\Per(H_-)$ by 
$$
h(\alpha\otimes x^+)=(-1)^{|\alpha|}\sum_{y^-} \alpha\cdot h_{x^+,y^-}\otimes y^-.
$$
Here $(h_{x^+,y^-})$ is the parametrized continuation cocycle for $(H^\tau,J^\tau,\Xi^\tau)$. 
\end{definition}

This map satisfies 
$$
\Psi^1-\Psi^0=D^- h + h D^+,
$$
where $\Psi^i$, $i=0,1$ are the continuation maps defined by $(H^i_s,J^i_s, \Xi^i,\rho^i,C^i)$, and $D^\pm$ are the differentials on the corresponding Floer chain complexes with DG coefficients. 

\subsubsection{The composition of continuation maps} \label{sec:composition}

We close this section with a proof of Proposition~\ref{prop:composition-v1}. 

\begin{proof}[Proof of Proposition~\ref{prop:composition-v1}]
For $R\gg 0$ let $H^{13}_R=(H^{13}_{R,s})$ be the homotopy given by 
$$
H^{13}_{R,s}=\left\{\begin{array}{ll}
H^{12}_{s-R},& s\ge 0,\\
H^{23}_{s+R},& s\le 0.
\end{array}\right.
$$
Let $J^{13}_R$ be the homotopy of almost complex structures defined by the analogous formula involving homotopies $J^{12}$ and $J^{23}$.  
 
Fix $R_0\gg 0$ and consider the moduli space 
$$
\cH_{[R_0,\infty)}(x^1;y^3)=\bigcup_{R\ge R_0} \cH(x^1;y^3;H^{13}_R,J^{13}_R).
$$
The moduli spaces $\cH_{[R_0,\infty)}(x^1;y^3)$ admit Floer compactifications that we denote $\ol\cH_{[R_0,\infty)}(x^1;y^3)$. These are manifolds with boundary with corners. The codimension $1$ strata in the boundary $\p \ol\cH_{[R_0,\infty)}(x^1;y^3)$ are of the type 
$$
\cH(x^1;y^3;H^{13}_{R_0},J^{13}_{R_0}), 
$$
$$
\cH(x^1;z^2;H^{12},J^{12})\times \cH(z^2;y^3;H^{23},J^{23}) \quad \mbox{for }z^2\in\Per(H^2),
$$
$$
\cM(x^1;z^1)\times \cH_{[R_0,\infty)}(z^1;y^3) \quad \mbox{for }z^1\in\Per(H^1),
$$
and
$$
\cH_{[R_0,\infty)}(x^1;z^3)\times \cM(z^3;y^3) \quad \mbox{for }z^3\in\Per(H^3).
$$
The first type of boundary corresponds to the boundary $R=R_0$ for the parametrizing interval $[R_0,\infty)$, the second type of boundary corresponds to $R\to\infty$, whereas the third and fourth types of boundary correspond to Floer breaking at values $R\in (R_0,\infty)$. 

Fixing a coherent set of orientations as in~\cite{FH-coherent}, we again need to understand for each of these strata what is the difference in sign between its coherent orientation (given by the product in the last three cases) and the boundary orientation. 
 
The first, third, and fourth kind of boundary strata are analyzed as in~\S\ref{sec:ori-homotopies}, and we find that the sign difference is given by $-1$, $(-1)^{|x^1|-|z^1|}$, and respectively $(-1)^{|x^1|-|z^3|}$.

To analyze the component $\cH(x^1;z^2;H^{12},J^{12})\times \cH(z^2;y^3;H^{23},J^{23})$ we proceed as in~\S\ref{sec:orientations-twisting-cocycle} (and with the same caveat as in Remark~\ref{rmk:remark-on-gluing}). For $R\ge R_0\gg 0$ there is a gluing map
$$
\cH(x^1;z^2;H^{12},J^{12})\times \cH(z^2;y^3;H^{23},J^{23})\to \cH(x^1;y^3;H^{13}_R,J^{13}_R)
$$
that is a diffeomorphism onto its image and that induces a canonical isomorphism of orientation local systems  
$$
|\cH(x^1;z^2;H^{12},J^{12})|\otimes |\cH(z^2;y^3;H^{23},J^{23})|\simeq |\cH(x^1;y^3;H^{13}_R,J^{13}_R)|. 
$$
As a consequence, the relative sign of the orientations is $+1$. 

Summing up we obtain that the boundary of the oriented Floer compactification of the moduli spaces $\cH_{[R_0,\infty)}(x^1;y^3)$  with respect to a system of coherent orientations as in~\cite{FH-coherent} can be written as 
\begin{align*}
\p \ol \cH_{[R_0,\infty)}(x^1;y^3) = & -\ol\cH(x^1;y^3;H^{13}_{R_0},J^{13}_{R_0}) \\
& \cup \quad \bigcup_{z^2} \ol\cH(x^1;z^2;H^{12},J^{12})\times \ol\cH(z^2;y^3;H^{23},J^{23}) \\
& \cup \quad \bigcup_{z^1} (-1)^{|x^1|-|z^1|}\ol \cM(x^1;z^1) \times \ol \cH_{[R_0,\infty)}(z^1;y^3) \\
&  \cup \quad \bigcup_{z^3}(-1)^{|x^1|-|z^3|}\ol \cH_{[R_0,\infty)}(z^1;y^3)\times  \ol \cM(z^3;y^3).
\end{align*}
We choose on the product $\ol\cH(x^1;z^2;H^{12},J^{12})\times \ol\cH(z^2;y^3;H^{23},J^{23})$ as representing chain system the product of representing chain systems for the factors. Upon evaluation, we find that the resulting continuation cocycle is 
$(\nu^{12}\cdot \nu^{23})_{x^1,y^3}=\sum_{z^2}\nu^{12}_{x^1,z^2}\cdot \nu^{23}_{z^2,y^3}$. 
Reasoning as in the proof of Proposition~\ref{prop:cocycle_homotopy_of_homotopies}, we obtain that the continuation cocycle $\nu^{13}_{R_0}$ associated to the homotopy $(H^{13}_{R_0},J^{13}_{R_0})$ is homologous to the cocycle $(\nu^{12}\cdot \nu^{23})$. The former is homologous to the continuation cocycle for the homotopy $(H^{13},J^{13})$ by Proposition~\ref{prop:cocycle_homotopy_of_homotopies} and therefore induces a map chain homotopic to $\Psi^{13}$, whereas the latter induces at chain level the composition of continuation maps $\Psi^{12}\circ \Psi^{23}$. As a consequence, $\Psi^{13}$ is chain homotopic to $\Psi^{12}\circ \Psi^{23}$.
\end{proof}

\subsection{Symplectically aspherical manifolds} \label{sec:sympl-asph}

In this section $(X,\omega)$ is either a closed symplectically atoroidal manifold, in which case we allow ourselves to work in any free loop space component, or a closed symplectically aspherical manifold, in which case we work in the component of contractible loops.

\subsubsection{Definition and computation} 

For a closed symplectic manifold all Hamiltonians and compatible almost complex structures are admissible. As a consequence, Proposition~\ref{prop:continuation_maps_homotopic} can be strengthened as follows. 

\begin{proposition}[Continuation maps are chain homotopy equivalences] \label{prop:continuation_maps_homotopy_equiv} 
\qquad 

Let $X$ be a closed symplectically atoroidal manifold. Given regular Floer data $(H_\pm,J_\pm,\Xi_\pm)$ and a DG local system $\cF$, any continuation map 
$$
\Psi^\cF : FC_*(H_+,J_+,\Xi_+;\cF)\to FC_*(H_-,J_-,\Xi_-;\cF)
$$ 
is a chain homotopy equivalence. 
\end{proposition}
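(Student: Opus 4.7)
The plan is to prove this by the standard ``reverse continuation'' argument, exploiting the fact that on a closed symplectic manifold there are no admissibility restrictions on Hamiltonians, almost complex structures, or their homotopies. Concretely, I would argue as follows.

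First, given regular data $(H_\pm,J_\pm,\Xi_\pm)$ and the homotopy $(H_s,J_s,\Xi)$ from $(H_+,J_+,\Xi_+)$ to $(H_-,J_-,\Xi_-)$ that produces the continuation map $\Psi^\cF_{+-}$, I would pick a second regular homotopy in the reverse direction, say $(\widetilde H_s,\widetilde J_s,\widetilde\Xi)$ from $(H_-,J_-,\Xi_-)$ to $(H_+,J_+,\Xi_+)$. In the closed case this exists because the space of homotopies with fixed endpoints is nonempty and contractible, and regularity is generic. It induces a continuation map $\Psi^\cF_{-+}:FC_*(H_-,J_-,\Xi_-;\cF)\to FC_*(H_+,J_+,\Xi_+;\cF)$.

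Next, I would consider the two composed homotopies (obtained by gluing with a large parameter $R$): the composition from $(H_+,J_+,\Xi_+)$ to itself via $(H_-,J_-,\Xi_-)$, and the composition from $(H_-,J_-,\Xi_-)$ to itself via $(H_+,J_+,\Xi_+)$. By Proposition~\ref{prop:composition-v1}, the continuation maps induced by these glued homotopies are chain homotopic to $\Psi^\cF_{-+}\circ\Psi^\cF_{+-}$ and to $\Psi^\cF_{+-}\circ\Psi^\cF_{-+}$ respectively. On the other hand, each of these composed homotopies is an admissible homotopy with both endpoints equal to the same pair, and by Proposition~\ref{prop:continuation_maps_homotopic} the induced continuation map is chain homotopic to the continuation map associated to the constant homotopy $\mathrm{Id}_{(H_\pm,J_\pm)}$ with the \emph{same} enriched data $\Xi_\pm$ on both sides. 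Finally, Corollary~\ref{cor:Id-Id} identifies the latter up to chain homotopy with the identity map of $FC_*(H_\pm,J_\pm,\Xi_\pm;\cF)$. Stringing these homotopies together yields $\Psi^\cF_{-+}\circ\Psi^\cF_{+-}\simeq \mathrm{id}$ and $\Psi^\cF_{+-}\circ\Psi^\cF_{-+}\simeq \mathrm{id}$, which is the claim.

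The only place where the hypothesis ``closed'' enters in an essential way is in guaranteeing that the reverse homotopy $(\widetilde H_s,\widetilde J_s)$ is admissible, and that the glued ``round-trip'' homotopies are admissible as well; on a closed symplectic manifold admissibility is vacuous, so this step is automatic. Because of this, I do not expect any genuine obstacle: the argument is formal once Propositions~\ref{prop:continuation_maps_homotopic}, \ref{prop:Id-Id}, Corollary~\ref{cor:Id-Id}, and Proposition~\ref{prop:composition-v1} are in hand. The main subtlety to keep track of is simply that in Proposition~\ref{prop:continuation_maps_homotopic} one must use the \emph{same} enriched Floer data $\Xi_\pm$ at the endpoints when comparing the round-trip homotopy with $\mathrm{Id}_{(H_\pm,J_\pm)}$; this is not a restriction since the homotopy of homotopies in~\S\ref{sec:homotopies} only requires interpolating parametrized enriched data with the chosen endpoint data at $\pm\infty$.
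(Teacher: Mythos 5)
Your argument is correct and follows essentially the same route as the paper: pick a reverse homotopy (the paper takes the explicit time-reversal $H'_s=H_{-s}$, $J'_s=J_{-s}$), then combine Proposition~\ref{prop:composition-v1} (gluing gives the composition), Proposition~\ref{prop:continuation_maps_homotopic} (any two continuation maps with the same endpoints are chain homotopic), and Corollary~\ref{cor:Id-Id} (the constant homotopy gives a map chain homotopic to the identity). You also correctly identify that the only role of the hypothesis that $X$ is closed is to remove the admissibility (monotonicity) constraint so that the reverse homotopy exists.
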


\begin{proof} This is a direct consequence of Propositions~\ref{prop:Id-Id} and~\ref{prop:composition-v1}. Indeed, if the Floer data for the continuation map $\Psi^\cF$ is $(H_s,J_s,\Xi_s)$, we consider $H'_s=H_{-s}$, $J'_s=J_{-s}$, and complement this with monotone Floer continuation data $\Xi'$. Then, by the above results, the continuation map $\Psi^{\prime\cF}$ induced by $(H'_s,J'_s,\Xi')$ is a homotopy inverse of $\Psi^\cF$. 
\end{proof}

Proposition~\ref{prop:continuation_maps_homotopy_equiv} motivates the shorthand notation 
$$
FH_*(X;\cF)
$$
to designate Floer homology with coefficients in a DG-local system $\cF$, without specific mention of the set of Floer data used in the definition.

Let $i_X:X\to \cL_0 X$ be the inclusion of $X$ into the connected component of contractible free loops, which associates to $x\in X$ the constant loop at $x$. This induces a DGA map $i_{X*}:C_*(\Omega X)\to C_*(\Omega \cL_0 X)$, and therefore any DG local system $\cF$ on $\cL_0 X$ induces a DG local system $i_X^*\cF$ on $X$ by viewing it as a $C_*(\Omega X)$-module via $i_{X*}$. Let $H_*(X;i_X^*\cF)$ be the (Morse) homology with DG local coefficients given by $i_X^*\cF$, see~\S\ref{sec:DGMorse} and~\cite{BDHO}.

\begin{theorem} \label{thm:Floer-Morse} Let $X$ be a closed symplectically atoroidal manifold of dimension $2n$, and $\cF$ a DG local system on $\cL X$. We have canonical isomorphisms
$$
FH_*(X;\cF)\simeq H_{*+n}(X;i_X^*\cF)
$$
in the component of contractible loops, and
$$
FH_*(X;\cF)=0
$$
in the other components of $\cL X$. 
\end{theorem}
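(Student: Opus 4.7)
The plan is to reduce the DG Floer complex to a DG Morse complex by choosing the Floer data in a very special form. By Proposition~\ref{prop:FC*htpytype} and Proposition~\ref{prop:continuation_maps_homotopy_equiv}, the chain homotopy type of $FC_*(H,J,\Xi;\cF)$ is independent of the regular admissible pair $(H,J)$ and of the enriched Floer datum $\Xi$, so it suffices to exhibit one convenient choice for which the identification is transparent.

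First I would take $H=\eps f$ for a Morse function $f:X\to\R$ and $\eps>0$ small, together with a generic time-independent compatible almost complex structure $J$. By the classical argument of Floer--Salamon--Zehnder, for $\eps$ small enough: (i) the only $1$-periodic orbits of $X_H$ are the constant orbits at critical points of $f$, so $\Per(H)$ has no elements in non-contractible components of $\cL X$, which immediately yields the vanishing statement; (ii) for any pair of critical points $x,y$ of $f$, the moduli space $\cM(x;y;H,J)$ of Floer trajectories is cut out transversally and agrees, as an oriented smooth manifold, with the Morse moduli space $\cL(x;y;f,\xi)$ associated to the negative pseudo-gradient $\xi=-\nabla_J f$; (iii) the Conley--Zehnder index of $x$ viewed as a constant $1$-periodic orbit of $H$ is $\CZ(x)=\mathrm{ind}_{\mathrm{Morse}}(x)-n$, which accounts for the degree shift in the statement. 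Orientation coherence between the Floer and Morse side is the content of~\cite{BOauto} and~\cite[Appendix~A.2]{BDHO}, as already recalled at the end of~\S\ref{sec:conventions_Floer}.

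Next I would match the auxiliary data at the DG level. Since the Floer and Morse compactified moduli spaces coincide, I choose the same representing chain system $(s_{x,y})$ on both sides. The key observation is that a Floer trajectory $u:\R\times S^1\to X$ corresponding to a Morse trajectory $\gamma:\R\to X$ is constant in $t$ and equal to $\gamma(s)$, so the Floer evaluation map $\ev_{x,y}:\ol\cM(x;y)\to \cP_{x\to y}\cL X$ factors through $i_X$ composed with the Morse evaluation map $\ev^{\mathrm{Morse}}_{x,y}:\ol\cL(x;y)\to \cP_{x\to y}X$ (both parametrized by the same action/$f$-values, up to the factor $\eps$). Choosing the embedded tree $\cY\subset\cL_0X$ in the enriched Floer datum to be $i_X(\cY^{\mathrm{Morse}})$ for an embedded tree $\cY^{\mathrm{Morse}}\subset X$ rooted at the basepoint, and the homotopy inverse $\theta$ to be compatible with $i_X$, the induced evaluations $\ol q_{x,y}:\ol\cM(x;y)\to\Omega\cL_0 X$ factor as $(\Omega i_X)\circ \ol q^{\mathrm{Morse}}_{x,y}$. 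Consequently the Barraud--Cornea twisting cocycle on the Floer side is the image under $i_{X*}:C_*(\Omega X)\to C_*(\Omega \cL_0 X)$ of the Barraud--Cornea twisting cocycle on the Morse side.

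Putting these pieces together, under the identification $\Per(H)\leftrightarrow\crit(f)$ with degree shift $n$, the chain complex $FC_*(H,J,\Xi;\cF)$ is canonically identified with the Morse chain complex $C_{*+n}(X,\Xi^{\mathrm{Morse}};i_X^*\cF)$ with DG coefficients, and the desired isomorphism follows by passing to homology. The main technical point, and the one that requires the most care, is the compatibility at the level of evaluation maps and enriched data: one has to verify that any choice of enriched Morse datum on $X$ can be ``lifted" to an enriched Floer datum on $\cL X$ so that the factorization through $i_X$ is preserved, and that this is in turn compatible with orientations. Invariance of the chain homotopy type (Propositions~\ref{prop:FC*htpytype} and~\ref{prop:continuation_maps_homotopy_equiv}) ensures that any specific compatible choice suffices to identify the homologies.
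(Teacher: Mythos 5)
Your proposal follows essentially the same route as the paper's own proof: pick $H$ equal to a $C^2$-small time-independent Morse function, invoke Salamon–Zehnder regularity to reduce Floer trajectories to gradient trajectories, check that the coherent orientations agree with the analytic Morse orientations, and then match the enriched Floer datum with an enriched Morse datum so that the Barraud--Cornea twisting cocycle factors through $i_X$. The reduction to the case of constant orbits also gives the vanishing in non-contractible components, exactly as in the paper.

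One point to re-examine is the sign convention in your step (iii) and the matching of moduli-space labels. With the paper's convention $\omega(X_H,\cdot)=-dH$, time-independent Floer cylinders satisfy $\partial_s u=\nabla(-H)(u)$, so $\cM(x;y;H,J)$ (with $x$ at $+\infty$) corresponds to $\cL(x;y)$ for the Morse function $-H$ with its negative pseudo-gradient; the index relation is $\mathrm{ind}_{-H}(x)=\CZ(x)+n$. If you insist on using $f=H/\eps$ as the Morse function, then $\mathrm{ind}_f(x)=2n-\mathrm{ind}_{-H}(x)$, so $\CZ(x)=n-\mathrm{ind}_f(x)$ (not $\mathrm{ind}_f(x)-n$), and the moduli-space identification reads $\cM(x;y;H,J)\simeq\cL(y;x;f,\xi)$ with $\xi=-\nabla_J f$ — the labels are transposed relative to what you wrote. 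The paper sidesteps this by stating the identification directly in terms of the Morse function $-H$, for which the labels and the degree shift line up without any flip. As written, your conventions would produce $FH_*\simeq H_{n-*}(X;i_X^*\cF)$ rather than $H_{*+n}(X;i_X^*\cF)$; the fix is simply to use $-f$ (equivalently $-H$) as the Morse function throughout, after which your argument coincides with the paper's.
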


\begin{proof}
By Proposition~\ref{prop:continuation_maps_homotopy_equiv} it is enough to prove the statement for some appropriate choice of $(H,J,\Xi)$. Let $H$ be time-independent and equal to a $C^2$-small Morse function. For such a Hamiltonian all $1$-periodic orbits are constants given by the critical points of $H$, and in particular contractible. As a consequence, Floer homology vanishes in all the components of $\cL X$ other than that of contractible loops because the Floer complex has an empty set of generators. 

We now focus on the component of contractible loops. By a theorem of Salamon and Zehnder~\cite[Theorem~7.3]{SZ92}, any time-independent almost complex structure $J$ that defines a Morse-Smale gradient flow for $H$ is Floer regular. Moreover, for such a pair $(H,J)$ all Floer trajectories are time-independent. 

If $u:\R\times S^1\to X$ is time-independent, then it solves $\p_su+J(u)(\p_t u -X_H(u))=0$ if and only if it solves $\p_su=J X_H(u)=\nabla (-H)(u)$. Therefore Floer trajectories for $(H,J)$, i.e., \emph{positive} $L^2$-gradient trajectories for the functional $A_H$, are in one-to-one bijective correspondence with \emph{positive} gradient trajectories of $-H$. Accordingly, the twisting cocycle and the Floer differential are obtained from the compactified moduli spaces of \emph{negative} gradient trajectories of $-H$. 

The coherent orientation of the Floer moduli spaces $\cM(x;y)$ that we described in~\S\ref{sec:conventions_Floer} and~\S\ref{sec:orientations-twisting-cocycle} is equivalent to the analytic coherent orientation of the moduli spaces $\cL(x,y)$ of negative gradient trajectories of $-H$, as described in~\cite[Appendix~A.2]{BDHO}. This is explained as follows. 
\begin{itemize}
\item The construction of coherent orientations in Floer theory takes as input the choice, for each $1$-periodic orbit $x\in \Per(H)$, of an orientation of the determinant line bundle over the space $\cD_x^+$ of Cauchy-Riemann operators on the Riemann sphere with one positive puncture (identified with $\C$) and asymptotic behavior at that puncture given by the linearization of the Hamiltonian flow at $x$ (see~\cite{BOauto}, and also~\cite{Bourgeois-Mohnke}). We denote such an operator by $D^+_x$. 
This choice induces orientations for moduli spaces of Floer trajectories by requiring that the gluing isomorphisms preserve the orientations. The resulting orientations are coherent as a consequence of an associativity property of the gluing isomorphisms. 

\item In an analogous manner, the construction of analytic coherent orientations in Morse theory as described in~\cite[Appendix~A.2]{BDHO} takes as input the choice, for each critical point $x$ of a Morse function $f$, of an orientation of the determinant line bundle over the space $\cD^u_x$ of operators $D^u_x:W^{1,2}((-\infty,0],\R^{2n})\to L^2((-\infty,0],\R^{2n})$ of the form $D^u_x=\p_s+A(s)$, where $A:(-\infty,0]\to Sym_{2n}(\R)$ is a path of symmetric matrices such that $\lim_{s\to-\infty} A(s)=A_x$, the matrix that expresses the Hessian of $f$ at $x$ in a given trivialization.  This choice induces orientations for moduli spaces of Morse trajectories by requiring that the gluing isomorphisms preserve the orientations. The resulting orientations are coherent as a consequence of an associativity property of the gluing isomorphisms.\footnote{In~\cite{BDHO} we have included the choice of orientations of $\det \cD^u_x$ as part of the auxiliary data needed to define DG Morse homology. Strictly speaking, we should have included the choice of orientations of $\det \cD^+_x$ as part of the auxiliary data needed to define DG Floer homology, but we omitted it since we did not need to examine the dependence on that choice.}

\item In our situation, by choosing $H$ to be time-independent and $C^2$-small, and $J$ to be time-independent and such that the corresponding Riemannian metric is Morse-Smale for $H$, the following holds: for each $x\in\Crit(-H)=\Per(H)$, there is a canonical correspondence between orientations of the determinant bundle over the space $\cD_x^+$ and orientations of the determinant bundle over the space $\cD^u_x$. This is because, by defining a Cauchy-Riemann operator on $\C$ from radial Floer data obtained by cutting off to zero in the neighborhood of the origin the Hamiltonian $H$, the arguments of~\cite[Theorem~7.3]{SZ92} carry over in order to show that the corresponding operators $D^+_x$ and $D^u_x$ are both surjective and their kernels are in canonical bijective correspondence (elements of $\ker D^+_x$ are radial, and correspond to elements of $\ker D^u_x$). 

\item By choosing orientations of determinant bundles over $\cD^+_x$ and $\cD^u_x$ so that they correspond under the previous equivalence, the recipes to induce orientations on the Floer moduli spaces, resp. on the Morse moduli spaces, are the same, and therefore these orientations are the same under the canonical identification $\cM(x;y)\equiv \cL(x,y)$.  
\end{itemize}

To conclude, we choose the enriched Floer data for $(H,J)$ to be equal to the enriched Morse data 
for the pair consisting of the Morse function $-H$ and its negative gradient vector field, where the Morse representing chain system is oriented by its analytic orientation. Then, with our convention that inputs of Floer cylinders are taken at $+\infty$, the Floer complex with coefficients in $\cF$ is canonically identified with the Morse complex of $-H$ with coefficients in $i_X^*\cF$, and this proves the isomorphism between DG Floer homology and Morse homology up to a shift in the grading. 

The relationship between the two gradings is a general property of the Conley-Zehnder index: the Morse index of a critical point of $-H$ is equal to the Conley-Zehnder index of that critical point seen as a $1$-periodic orbit of $X_H$, plus $n$. This is proved for example in~\cite[Corollary~7.2.2]{Audin-Damian_English} (the Hamiltonian vector field in \emph{loc.\@ cit.}\@ is the opposite of ours). 
\end{proof}

\subsubsection{Canonical spectral sequence} \label{sec:spectral-sequence-sympl-asph}

Let $(X,\omega)$ be a symplectically aspherical manifold of dimension $2n$, and let $\cF$ be a DG local system on $\cL_0 X$. 

Let $(H,J)$ be a regular pair with enriched Floer data $\Xi_{(H,J)}$, and recall the notation $C_\bullet=\langle \Per(H)\rangle$.

\begin{definition} \label{defi:canonical_filtration} The \emph{canonical filtration} on $FC_*(H,J,\Xi_{(H,J)};\cF)=\cF\otimes C_\bullet$ is given by 
\begin{equation} \label{eq:canonical_filtration}
F_p\, =\, \bigoplus_{i\leq p}\cF\otimes \cC_i,\qquad p\in\Z.
\end{equation}
\end{definition}

\begin{theorem}[Canonical spectral sequence] \label{thm:spectral_sequence} The spectral sequence $E^r_{p,q}$ associated to the  filtration~\eqref{eq:canonical_filtration} converges to $FH_*(H,J,\Xi_{(H,J)};\cF)$ and has $E^2$-page 
\begin{equation} \label{eq:second-page}
E^2_{p,q}=FH_p(H,J;H_q(\cF)), 
\end{equation}
the classical Floer homology group of the pair $(H,J)$ with coefficients in the (classical) local system $H_q(\cF)$ on $\cL_0 X$.  

The continuation map induced by a regular homotopy between regular Floer data 
is the limit of a map induced between the corresponding spectral sequences, which moreover is an isomorphism starting with the $E^2$-page. 
\end{theorem}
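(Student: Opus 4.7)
The plan is to analyze the canonical filtration degree-by-degree, identify the early pages of the associated spectral sequence with classical Floer theory, and then invoke standard comparison arguments for the continuation statement. First I would check that the filtration is compatible with $D$: an element $\alpha\otimes x\in \cF\otimes C_p$ is sent by $D$ to $\partial_\cF \alpha\otimes x$ (which stays in $\cF\otimes C_p\subset F_p$) plus terms $(-1)^{|\alpha|}\alpha\cdot m_{x,y}\otimes y$. Since the chain $m_{x,y}\in C_{|x|-|y|-1}(\Omega\cL X)$ can be nonzero only when $|y|\le |x|-1$, these latter contributions lie in $F_{p-1}$. Hence $D(F_p)\subset F_p$, and the induced differential on the associated graded piece $E^0_{p,q}=\cF_q\otimes C_p$ is $d^0=\partial_\cF\otimes \id$, so
$$E^1_{p,q}\,=\,H_q(\cF)\otimes C_p.$$
Since $\Per(H)$ is finite, the filtration is bounded, so convergence to $FH_*(H,J,\Xi_{(H,J)};\cF)$ is automatic.

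Next I would compute $d^1$. By the filtration analysis above, the only part of $D$ that contributes modulo $F_{p-2}$ is the sum over $y$ with $|y|=|x|-1$; these correspond to $m_{x,y}\in C_0(\Omega\cL X)$. Under the canonical projection $C_0(\Omega\cL X)\twoheadrightarrow H_0(\Omega\cL X)=A[\pi_1\cL X]$, these $0$-chains descend to exactly the twisting factors of the classical (non-DG) Floer differential with coefficients in the local system $H_q(\cF)$ viewed as an $A[\pi_1\cL X]$-module. Thus $d^1$ is precisely the classical Floer differential with coefficients in $H_q(\cF)$, yielding
$$E^2_{p,q}\,=\,FH_p(H,J;H_q(\cF))$$
as claimed.

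For the naturality and invariance statement, given a regular homotopy interpolating regular Floer data with enriched continuation datum, the continuation map $\Psi$ acts on $\alpha\otimes x^+$ via chains $\nu_{x^+,y^-}\in C_{|x^+|-|y^-|}(\Omega\cL X)$, which can be nonzero only when $|y^-|\le |x^+|$. Hence $\Psi(F_p)\subset F_p$ and $\Psi$ induces a map of spectral sequences. On $E^1$, only the length-$0$ part contributes (i.e., $|y^-|=|x^+|$, $\nu_{x^+,y^-}\in C_0$), and its projection to $H_0(\Omega\cL X)$ recovers the classical continuation map with coefficients in $H_q(\cF)$. Classical Floer theory with local coefficients shows this map is an isomorphism on $E^2$ (via Propositions~\ref{prop:Id-Id}, \ref{prop:composition-v1}, and the fact that the $E^2$-page identification is functorial in the homotopy). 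A standard spectral sequence comparison theorem then upgrades this to an isomorphism on all pages $r\ge 2$ and on the limit.

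The main technical care needed is in the signs and in verifying that the length-$0$ portion of $d^1$ (resp. of the continuation map on $E^1$) indeed coincides with the classical Floer differential (resp. classical continuation map) with local coefficients in $H_q(\cF)$; this requires unpacking the definition of the $C_*(\Omega\cL X)$-module structure on $H_q(\cF)$ and matching the orientation conventions of~\S\ref{sec:orientations-twisting-cocycle}--\ref{sec:orientations_continuation} with the classical ones, but presents no substantial obstacle beyond bookkeeping.
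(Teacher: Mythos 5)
Your proposal is correct and follows essentially the same route as the paper's proof: the paper's argument for the $E^2$-page is simply to "unravel the definitions, exactly as in [BDHO, Lemma 4.3 and Theorem 7.2]", which is precisely the explicit $E^0 \Rightarrow E^1 \Rightarrow E^2$ analysis you carry out; and for the continuation statement the paper likewise invokes filtration preservation at chain level plus the classical fact that (non-DG) twisted continuation maps are isomorphisms. The only cosmetic remarks are that the relevant fundamental group should be that of the single component $\cL_0 X$ (or the component under consideration), not of all of $\cL X$, and that the overall sign $(-1)^{|\alpha|}$ appearing in $d^1$ is a column-wise unit which does not affect the identification of $E^2$ with classical Floer homology with local coefficients.
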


\begin{proof}
Since $C_\bullet$ is finite dimensional, and therefore supported in a finite range of degrees, 
the spectral sequence $(E_{p,q}^r, d^{r})$ associated to the canonical filtration converges to $FH_*(H,J,\Xi_{(H,J)};\cF)$.

That the second page has the expression~\eqref{eq:second-page} is proved by unravelling the definitions, exactly as in~\cite[Lemma~4.3 and Theorem~7.2]{BDHO}. 

It is clear that any continuation map preserves the filtrations at chain level, and therefore induces a map between the spectral sequences that correspond to the Floer data at the endpoints. That the continuation map is an isomorphism at the $E^2$-page follows from the identification of the second page with Floer homology with local coefficients, and the fact that continuation maps induce isomorphisms in Floer homology with (classical) local coefficients.   
\end{proof}

As explained in~\cite[\S7]{BDHO}, any Serre fibration $F\hookrightarrow E \to \cL_0 X$ determines canonically up to homotopy a DG local system $\cF$ with underlying chain complex $C_*(F)$, the cubical chains on the fiber. Moreover, the DG local system $i_X^*\cF$ is canonically identified with the one determined by the fibration $E|_X=i_X^*E$.

We now have all the ingredients to prove Theorem~\ref{thm:FH_for_fibrations-intro} from the Introduction.

\begin{proof}[Proof of Theorem~\ref{thm:FH_for_fibrations-intro}]
By Theorem~\ref{thm:spectral_sequence} the Floer spectral sequence is canonical, in the sense that continuation maps induce isomorphisms between the spectral sequences that correspond to different Floer data. By Theorem~\ref{thm:Floer-Morse} we can choose the Floer data such that the Floer complex gets identified with the Morse complex, and therefore the Floer spectral sequence gets identified with the Morse spectral sequence. Finally, the result follows from the Fibration theorem for DG Morse homology~\cite[Theorem~A]{BDHO}. 
\end{proof}

\begin{example} Let $(X,\omega)$ be a closed symplectically aspherical manifold of dimension $2n$.
We illustrate Theorem~\ref{thm:FH_for_fibrations-intro} by choosing $\cF=C_*(\Omega \cL_0 X)$ to be the DG local system defined by the path-loop fibration for $\cL_0 X$, i.e., 
$$
\Omega \cL_0 X \hookrightarrow \cP_\star \cL_0 X \to \cL_0 X. 
$$
Here $\cP_\star \cL_0 X$ is the space of  paths in $\cL_0 X$ starting at a basepoint $\star\in X$, and the map to $\cL_0 X$ is given by evaluation at the endpoint. By Theorem~\ref{thm:FH_for_fibrations-intro} we have  
$$
FH_*(H,J,\Xi;\cF_X)\simeq H_{*+n}(\cP_\star \cL_0 X|_X).
$$
\end{example}

\begin{lemma} \label{lem:kpi1} Let $Q$ be a closed manifold. We have 
$$
H_*(\cP_\star\mathcal{L}_0Q|_Q)\simeq H_*(\mathrm{pt})
$$
if and only if $Q$ is a $K(\pi,1)$.
\end{lemma}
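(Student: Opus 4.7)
The plan is to identify $\cP_\star\cL_0Q|_Q$ up to homotopy with the double based loop space $\Omega^2 Q$, and then read off both implications from standard Hurewicz-theoretic arguments for $H$-spaces.

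First, I will construct an explicit homotopy equivalence. A point of $\cP_\star\cL_0Q|_Q$ is a pair $(x,\gamma)$, where $x\in Q$ and $\gamma:[0,1]\to\cL_0Q$ is a path from the constant loop at $\star$ to the constant loop at $x$. Viewing $\gamma$ as a map $[0,1]\times S^1\to Q$ that sends $\{0\}\times S^1$ to $\star$ and $\{1\}\times S^1$ to $x$, it factors through the quotient of the cylinder obtained by collapsing each of the two boundary circles to a point, which is canonically $\SS^2$ equipped with two marked points $p$ and $q$. The endpoint $x$ is recovered as the image of $q$, so the assignment $(x,\gamma)\mapsto\phi$ yields a natural homotopy equivalence
$$
\cP_\star\cL_0Q|_Q\;\simeq\;\Map_*(\SS^2,Q)\;=\;\Omega^2 Q.
$$
Every such $\phi$ genuinely produces a path in the component $\cL_0 Q$ of contractible loops, rather than in the full free loop space, because every meridional loop on $\SS^2$ bounds a disc in $\SS^2$ and is therefore sent to a nullhomotopic loop in $Q$.

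For the ``only if'' direction, if $Q$ is a $K(\pi,1)$ then $\pi_k(\Omega^2 Q)=\pi_{k+2}(Q)=0$ for all $k\ge 0$, so $\Omega^2 Q$ is weakly contractible and has the integral homology of a point. For the ``if'' direction, assume $H_*(\Omega^2 Q)\simeq H_*(\mathrm{pt})$. From $H_0(\Omega^2 Q)=\Z$ I obtain $\pi_2(Q)=\pi_0(\Omega^2 Q)=0$, so $\Omega^2 Q$ is path connected. Since $\Omega^2 Q$ is an $H$-space, the group $\pi_1(\Omega^2 Q)=\pi_3(Q)$ is abelian, and the Hurewicz theorem yields $\pi_1(\Omega^2 Q)=H_1(\Omega^2 Q)=0$. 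A straightforward induction on $k$, applying Hurewicz in each degree, then gives $\pi_k(\Omega^2 Q)=0$ for all $k\ge 1$; equivalently, $\pi_j(Q)=0$ for $j\ge 2$, so $Q$ is a $K(\pi,1)$.

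The only technical point requiring attention is the homotopy identification $\cP_\star\cL_0Q|_Q\simeq\Omega^2 Q$; it amounts to a reparametrisation argument for the restriction of the path-loop fibration over $\cL_0Q$ to the subspace of constant loops, and once it is in place the rest of the argument is entirely standard.
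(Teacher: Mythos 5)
Your proof is correct and follows essentially the same route as the paper: identify $\cP_\star\cL_0Q|_Q$ with $\Omega^2Q$, use $\pi_k(\Omega^2Q)\simeq\pi_{k+2}(Q)$, and invoke Hurewicz (plus abelianity of $\pi_1$ of an $H$-space) to pass between homology and homotopy. One cosmetic slip: you have swapped the labels ``if'' and ``only if''---the argument starting from the $K(\pi,1)$ hypothesis is the ``if'' direction, and the Hurewicz induction from trivial homology is the ``only if'' direction---but the mathematics in each is sound.
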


\begin{proof}
The space $\cP_\star\mathcal{L}_0Q|_Q$ can be identified with the double based loop space $\Omega^2Q$. This implies 
$
\pi_i \cP_\star\mathcal{L}_0Q|_Q \simeq \pi_{i+2} Q
$ 
for all $i\ge 0$. 

$\Rightarrow$. Assume $H_*(\cP_\star\mathcal{L}_0Q|_Q)\simeq H_*(\mathrm{pt})$ and assume by contradiction that $Q$ is not a $K(\pi,1)$. Let $j\ge 2$ be minimal such that $\pi_jQ\neq 0$. 
\begin{itemize}
\item If $j\ge 4$ then $H_{j-2} (\cP_\star\mathcal{L}_0Q|_Q)=\pi_{j-2} \cP_\star\mathcal{L}_0Q|_Q =\pi_j Q\neq 0$, a contradiction.
\item If $j=3$ then $\pi_1 \cP_\star\mathcal{L}_0Q|_Q = \pi_3Q$ is nontrivial and abelian, therefore $H_1(\cP_\star\mathcal{L}_0Q|_Q)=\pi_1 \cP_\star\mathcal{L}_0Q|_Q\neq 0$, a contradiction. 
\item If $j=2$ then $\pi_0\cP_\star\mathcal{L}_0Q|_Q=\pi_2 Q\neq 0$, hence $\cP_\star\mathcal{L}_0Q|_Q$ has more than one path-connected component and therefore $H_0(\cP_\star\mathcal{L}_0Q|_Q)\neq H_0(\mathrm{pt})$, a contradiction again. 
\end{itemize}

$\Leftarrow$. If $Q=K(\pi,1)$ then the space $\Omega_0 Q$ of contractible based loops has trivial homotopy groups and is therefore contractible. By the homotopy long exact sequence of the fibration $\Omega_0 Q\hookrightarrow \mathcal{L}_0 Q\to Q$, the projection $\cL_0 Q\to Q$ is a weak homotopy equivalence, hence a homotopy equivalence. As a consequence $\cP_\star\mathcal{L}_0Q|_Q\simeq \cP_\star\mathcal{L}_0Q$ is contractible  and its homology is that of a point. 
\end{proof}

\begin{remark} The previous lemma is also the basis of our applications to cotangent bundles, with a more involved analysis of the topology of the space $\Omega^2Q$. We find it useful to present it in its simplest form as above.
\end{remark}

\begin{remark}
Since the DG local system $\cF=C_*(\Omega\cL_0 X)$ is canonically associated to the manifold $X$, the previous example can be interpreted as saying that Floer homology with DG coefficients detects closed symplectically aspherical manifolds that are $K(\pi,1)$'s. However, this is not as strong a statement as it may seem: given a symplectically aspherical manifold $X$ with fundamental group $\pi$, the Floer homology of $X$ with local (classical) coefficients in $\Z[\pi]$ is isomorphic to the homology of the universal cover of $X$, which in turn is isomorphic to the homology of a point if and only if $X$ is a $K(\pi,1)$. Therefore classical local coefficients already detect $K(\pi,1)$'s.
\end{remark}

\subsection{Symplectic homology} \label{sec:symplectic_homology}

Our conventions for symplectic homology are those of~\cite{CO}. The theory goes back to Cieliebak, Floer, Hofer, Wysocki~\cite{FH94,CFH95,CFHW} and, in its current classical version, to Viterbo~\cite{Viterbo99}. We refer to~\cite{CO} for a comprehensive list of references and to~\cite{Seidel07} for an influential early survey. Our goal in this section is to explain how to upgrade symplectic homology theory to DG local coefficients and to emphasize some specific features in that context. This construction has already appeared in a much more restrictive context in the work of Zhou~\cite{Zhou-ring} under the name ``Symplectic homology of sphere bundles". Symplectic homology with classical local coefficients has already been used extensively in symplectic topology, not in the least because such local coefficients appear naturally in the case of cotangent bundles, see~\S\ref{sec:DGViterbo}. 

We work on Liouville domains and their symplectic completions, which we now introduce. A \emph{Liouville domain} is a triple $(W,\omega,\lambda)$ with $W$ a compact manifold with boundary and $\omega$ an exact symplectic form with primitive $\lambda$, such that the restriction of $\lambda$ to $\p W$ is a contact form that defines the boundary orientation of $\p W$. The last condition is equivalent to the requirement that the \emph{Liouville vector field} $Z$ defined by $\iota_Z\omega=\lambda$ points outwards along $\p W$. We denote by $\alpha=\lambda|_{\p W}$ the induced contact form on the boundary, and $R_\alpha$ the Reeb vector field on $\p W$ defined by $\iota_{R_\alpha}\alpha=1$ and $\iota_{R_\alpha}d\alpha=0$. The \emph{symplectic completion of $W$} is defined by gluing the \emph{positive half-symplectization} $((1-\eps,\infty)\times\p W, d(r\alpha), r\alpha)$ to $(W,\omega,\lambda)$ via the diffeomorphism $((1-\eps,1]\times\p W,d(r\alpha),r\alpha)\stackrel\simeq\longrightarrow (\cN(\p W),\omega,\lambda)$, $(r,p)\mapsto \varphi_Z^{\ln r}(p)$, where $\varphi_Z^t$ is the flow of the Liouville vector field $Z$ and $\cN(\p W)$ denotes an unspecified neighborhood of $\p W$. We denote the symplectic completion by $(\hat W,\hat \omega,\hat\lambda)$. Note that the previous map also provides an embedding of the full \emph{negative half-symplectization} $((0,1)\times\p W, d(r\alpha),r\alpha)$ into $W$.  Outstanding examples of Liouville domains are the so-called \emph{Weinstein domains}~\cite[Chapter~11]{Cieliebak-Eliashberg-book}, of which unit disc cotangent bundles of closed manifolds are a particular, and important, class. 

We now discuss admissible Hamiltonians and almost complex structures. 

A Hamiltonian $H:S^1\times \hat W\to \R$ is said to be \emph{linear} if, for $r\ge 1$ large enough, it is a time-independent affine function of $r$. The slope of that affine function is called \emph{the slope at infinity}.\footnote{In the context of symplectic completions of Liouville domains, ``at infinity" means ``for $r\ge 1$ large enough".}
An important example are time-independent Hamiltonians that are constant on $W$ and equal on $[1,\infty)\times\p W$ to a function $h(r)$ that is affine for $r\gg 1$. Our conventions for the Hamiltonian vector field imply that $X_h=h'(r)R_\alpha$, so that $1$-periodic orbits of $h$ are in one-to-one correspondence with closed Reeb orbits of period $h'(r)$ (parametrized backwards if $h'(r)<0$, respectively constant if $h'(r)=0$). A Hamiltonian $H:S^1\times \hat W\to \R$ is \emph{admissible for symplectic homology} if it is linear with positive slope at infinity and nondegenerate. Note that the nondegeneracy condition implies that the slope at infinity cannot be equal to the period of a closed Reeb orbit on $\p W$. 

A time-dependent compatible almost complex structure $J=(J_t)$, $t\in S^1$ is called \emph{admissible} if, at infinity, it is time-independent and cylindrical, i.e., its restriction to the contact distribution $\xi=\ker \alpha$ is independent of $r$, and $J_{(r,p)}r\frac{\p}{\p r}=R_\alpha(p)$. Given an admissible Hamiltonian $H$, the pair $(H,J)$ is \emph{regular} for a generic admissible $J$. 

An \emph{admissible homotopy of Hamiltonians} is a homotopy $H_s$, $s\in\R$ of admissible Hamiltonians that is \emph{monotone}, i.e., such that $\p_sH(t,x)\le 0$ for all $(t,x)\in S^1\times\hat W$.\footnote{To establish \emph{a priori} $C^0$-bounds on Floer continuation trajectories one only needs to assume that, in the region where the Hamiltonians are affine of the form $a_sr+b_s$, we have $\p_s a_s\le 0$. However, we restrict to monotone homotopies in order to obtain action filtered symplectic homology groups. Accessorily, this also simplifies the exposition.} An \emph{admissible homotopy of almost complex structures} is a homotopy $J_s$ such that each $J_s$ is an admissible almost complex structure. Given regular pairs $(H_\pm,J_\pm)$, 
the regularity condition on admissible homotopies $(H_s,J_s)$ that interpolate between $(H_\pm,J_\pm)$ at $\pm\infty$ is generic.
 
Given an admissible pair $(H,J)$, there exists a compact set such that all Floer trajectories for $(H,J)$ are contained in that compact set. Given an admissible homotopy $(H_s,J_s)$, there exists a compact set such that all continuation Floer trajectories for $(H_s,J_s)$ are contained in a compact set. This is the basic fact that makes symplectic homology well-defined, and the vast literature on \emph{a priori} $C^0$-bounds goes back to Viterbo's foundational paper~\cite{Viterbo99}.

Classical symplectic homology groups are defined as 
$$
SH_*(W)=\colim_{(H,J)} FH_*(H,J),
$$
where $(H,J)$ are admissible regular pairs subject to the condition $H\le 0$ on $W$, and the directed system is defined by continuation maps induced by monotone homotopies. Heuristically, the symplectic homology groups compute the Floer homology groups of a Hamiltonian that is $0$ on $W$ and $\infty$ outside $W$. Since the Floer differential and the Floer continuation maps preserve the free homotopy classes of loops, symplectic homology groups split as a direct sum $SH_*(W)=\bigoplus_a SH_{*,a}(W)$ indexed over the free homotopy classes $a$ of loops in $\hat W$. 

Denote by $\cL_a W$ and $\cL_a\hat W$ the connected components of the free loop spaces of $W$, respectively $\hat W$, indexed by free homotopy classes $a$. As in~\S\ref{sec:conventions_Floer}, we pick a basepoint $\gamma_a$ in each $\cL_a W\subset \cL_a \hat W$, and we assume that $\gamma_0$ is a constant loop contained in $W$. Since $W\hookrightarrow \hat W$ is a strong deformation retract, the same holds for the induced inclusion $\cL W\hookrightarrow \cL \hat W$ and $C_*(\Omega \cL W)\hookrightarrow C_*(\Omega \cL \hat W)$ is a chain homotopy equivalence of DGAs. We will therefore identify in the sequel DG local systems on $\cL W$ and on $\cL \hat W$.  

Let $\cF$ be a DG local system on $\cL W$, meaning the data of a DG local system on each connected component $\cL_a W$. We define the \emph{symplectic homology groups of $W$ with coefficients in $\cF$} as 
$$
SH_*(W;\cF)=\colim_{(H,J,\Xi)} FH_*(H,J,\Xi;\cF).
$$
Here the direct limit runs over regular admissible pairs $(H,J)$ such that $H\le 0$, complemented by enriched Floer data $\Xi$. Just as in the case of constant coefficients, we have a direct sum decomposition over free homotopy classes of loops in $\hat W$ given by $SH_*(W;\cF)=\bigoplus_a SH_{*,a}(W;\cF)$.

A special feature of continuation maps induced by monotone homotopies is that they preserve the action filtration, and consequently we can define action filtered symplectic homology groups. Recall from~\S\ref{sec:DGFloercomplex} the action filtered DG Floer chain complex 
$$
FC_*^{<b}(H,J,\Xi;\cF)=\cF\otimes \langle \Per^{<b}(H)\rangle, \quad b\in\ol\R,
$$
where $\Per^{<b}(H)$ is the set of $1$-periodic orbits of $H$ with action $<b$. 
For $-\infty\le b<c\le\infty$ define 
$$
FC_*^{(b,c)}(H,J,\Xi;\cF)=FC_*^{<c}(H,J,\Xi;\cF)/FC_*^{<b}(H,J,\Xi;\cF),
$$
and denote further by $FH_*^{(b,c)}(H,J,\Xi;\cF)$ the homology of this complex.  Then the continuation maps from~\S\ref{sec:continuation} induced by monotone regular admissible homotopies induce \emph{action filtered continuation maps}
$$
\Psi^{(b,c)}:FH_*^{(b,c)}(H_+,J_+,\Xi_+;\cF)\to FH_*^{(b,c)}(H_-,J_-,\Xi_-;\cF).
$$
When $H_+=H_-$, the map $\Psi^{(b,c)}$ is a (canonical) isomorphism for all $b,c$. Therefore, we will often drop $J$ and $\Xi$ from the notation and simply write $FH_*^{(b,c)}(H;\cF)$.
Finally, the \emph{action filtered symplectic homology groups} are defined as 
$$
SH_*^{(b,c)}(W;\cF)=\colim_{H} FH_*^{(b,c)}(H;\cF),\qquad -\infty \le b<c\le \infty. 
$$

Observe that for any Hamiltonian $H$ that is linear at infinity with slope $c$ not equal to the period of a closed Reeb orbit on $\p W$, we have a canonical isomorphism 
  \begin{equation}
    \label{eq:isom-SH-filtered-FH}
    \Psi_H:FH_*(H;\cF) \stackrel{\sim}\longrightarrow SH_*^{(-\infty, c)}(W;\cF).
  \end{equation}
This is because $SH_*^{(-\infty, c)}(W;\cF)$ can be expressed as the Floer homology of a Hamiltonian $H_c:S^1\times \hat W\to\R$ that is linear of slope $c$ for $r\ge 1$ and such that all the $1$-periodic orbits of $H_c$ have action $<c$. For such a Hamiltonian we have $FH_*^{<c}(H_c;\cF)=FH_*(H_c;\cF)$, and the isomorphism~\eqref{eq:isom-SH-filtered-FH} is realized by a continuation map from $H$ to $H_c$. 

If $b\leq b'$ and $c\leq c'$, inclusions of subcomplexes induce maps $FH_*^{(b,c)}(H;\cF)\to FH_*^{(b',c')}(H;\cF)$, which in turn induce $SH_*^{(b,c)}(W;\cF)\to SH_*^{(b',c')}(W;\cF)$. For $b\leq b'< 0$, the map  $SH_*^{(b,c)}(W;\cF)\to SH_*^{(b',c)}(W;\cF)$ is an isomorphism. Therefore, we denote $SH^{<c}(W;\cF)$ the homology $SH_*^{(b,c)}(W;\cF)$ for any $b< 0$. 

Moreover, we have
  \begin{equation}
    \label{eq:SH-limit-filtered}
    SH_*(W;\cF)=\colim_{c\to\infty}SH_*^{<c}(W;\cF).
  \end{equation}

The following two particular cases are especially significant. Let $\eps>0$ be smaller than the period of any closed Reeb orbit on $\p W$, and define \emph{the action zero part of symplectic homology with DG coefficients}
$$
SH_*^{=0}(W;\cF)=SH_*^{(-\eps,\eps)}(W;\cF).
$$
Similarly, define \emph{the positive symplectic homology with DG coefficients}
$$
SH_*^{>0}(W;\cF)=SH_*^{(\eps,\infty)}(W;\cF).
$$

The following results are proved just like their classical counterparts, see for example~\cite{CO}. 

\begin{proposition}[Action zero] \label{prop:SH-action-zero}
We have an isomorphism
$$
SH_*^{=0}(W;\cF)\simeq H_{*+n}(W,\p W; i_W^*\cF), 
$$
where $i_W: W \hookrightarrow \call_0 W$ is the inclusion of the constant loops.
\qed
\end{proposition}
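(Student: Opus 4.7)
The plan is to reduce the computation to DG Morse homology via the same device as in the proof of Theorem~\ref{thm:Floer-Morse}, applied to a cofinal family of Morse-type admissible Hamiltonians adapted to the Liouville structure.

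Fix $\eps>0$ smaller than the minimal period of a closed Reeb orbit on $\p W$. Choose a time-independent admissible Hamiltonian $H:\hat W\to\R$ with $H\le 0$, whose restriction to $W$ is a $C^2$-small nonpositive Morse function vanishing on $\p W$, with no critical points on a collar of $\p W$ and $\nabla H$ pointing outward there, and which equals $a(r-1)$ on the cylindrical end $[1,\infty)\times\p W$ with $0<a<\eps$. A standard action computation (using $\cA_H(x)=-H(x)$ on constant loops and the fact that closed Reeb-type orbits on the cylindrical end have period $a<\eps$ smaller than $T_{\min}$) shows that for sufficiently small $C^2$-norm of $H|_W$, the set $\Per^{(-\eps,\eps)}(H)$ consists exactly of the critical points of $H|_W$, viewed as constant loops in the contractible component of $\cL\hat W$; in particular $FC_*^{(-\eps,\eps)}(H,J,\Xi;\cF)=\cF\otimes\langle\Crit(H|_W)\rangle$.

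Choose an admissible almost complex structure $J$, time-independent on a neighborhood of $W$ and cylindrical at infinity, so that the induced metric on $W$ is Morse--Smale for $f:=-H$. By~\cite[Theorem~7.3]{SZ92}, the pair $(H,J)$ is Floer regular and every Floer trajectory between the critical orbits is time-independent; a direct computation using $X_H=J\nabla H$ shows that the reduced Floer equation reads $\p_s u=-\nabla H(u)=\nabla f(u)$. Time-reversal $s\mapsto -s$ then identifies the Floer moduli space $\cM(x;y)$ with the Morse moduli space $\cL(x;y)$ for the pair $(f,-\nabla f)$ on $W$. By construction, $-\nabla f=\nabla H$ points outward along $\p W$, so $(f,-\nabla f)$ is admissible for defining $H_*(W,\p W;i_W^*\cF)$ in the sense of~\S\ref{sec:DGMorse}. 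Choose an enriched Floer datum $\Xi_{(H,J)}$ induced from enriched Morse data for $(f,-\nabla f)$ via this identification; the Floer evaluation map $\ol q_{x,y}:\ol\cM(x;y)\to\Omega\cL_0\hat W$ then factors as the Morse evaluation into $\Omega W$ followed by the DGA map induced by $i_W$, so the Barraud--Cornea twisting cocycles match. Matching the coherent Floer orientations with the analytic Morse orientations as in the proof of Theorem~\ref{thm:Floer-Morse}, and using the degree identity $\CZ(x)=\ind_f(x)-n$, one obtains a canonical chain isomorphism
$$
FC_*^{(-\eps,\eps)}(H,J,\Xi;\cF)\ \simeq\ C_{*+n}(W,\p W;i_W^*\cF),
$$
and hence $FH_*^{(-\eps,\eps)}(H;\cF)\simeq H_{*+n}(W,\p W;i_W^*\cF)$.

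Finally, Hamiltonians of the above form are cofinal among admissible Hamiltonians with $H\le 0$ when restricted to the action window $(-\eps,\eps)$. By Corollary~\ref{cor:DGcontinuation-monotone}, monotone homotopies between two such Hamiltonians preserve the action filtration, and under the identification above the induced continuation maps on the $(-\eps,\eps)$-part correspond to continuation maps in DG Morse theory, which are chain homotopy equivalences by the DG Morse invariance results of~\cite{BDHO}. Passing to the colimit yields $SH_*^{=0}(W;\cF)=SH_*^{(-\eps,\eps)}(W;\cF)\simeq H_{*+n}(W,\p W;i_W^*\cF)$. The main obstacle is the careful matching of coherent Floer orientations with analytic Morse orientations in the presence of a boundary, together with verifying that the Floer evaluation maps coincide with the Morse evaluation maps post-composed with the DGA map induced by $i_W$; once this is in place, the remainder is a direct transcription of the classical action-filtered argument combined with the DG invariance results of~\cite{BDHO}.
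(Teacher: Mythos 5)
Your proposal is correct and takes the approach the paper itself uses for the closed case in Theorem~\ref{thm:Floer-Morse}, adapted to the boundary setting; the paper omits the proof of this proposition (pointing to the classical references such as~\cite{CO}), and your argument is exactly the standard one those references use. The only place where the write-up is slightly loose is the final colimit step: Morse-type Hamiltonians with slope $a<\eps$ are \emph{not} cofinal among all admissible $H\le 0$ (slopes must go to $\infty$), so one should either allow arbitrary slope while keeping $H|_W$ $C^2$-small and then argue that the Reeb orbits of the resulting Hamiltonians still have action $\ge T_{\min}>\eps$, or prove directly that for a single small-slope $H_0$ the canonical map $FH_*^{(-\eps,\eps)}(H_0;\cF)\to SH_*^{=0}(W;\cF)$ is an isomorphism; either fix is routine and does not affect the validity of the proof.
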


\begin{proposition}[Tautological long exact sequence]
We have a long exact sequence 
\begin{align*}
\cdots \to H_{*+n}(W,\p W& ;i_W^*\cF)\to SH_*(W;\cF)\to SH_*^{>0}(W;\cF) \\
& \to H_{*-1+n}(W,\p W;i_W^*\cF)\to \cdots
\end{align*}
\qed
\end{proposition}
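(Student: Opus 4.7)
The plan is to mirror the standard construction of the tautological long exact sequence in symplectic homology, using the fact that the DG Floer complex is filtered by the action and that continuation maps induced by monotone admissible homotopies preserve this filtration (Corollary~\ref{cor:DGcontinuation-monotone}). The only input specific to the DG setting is that the Floer complex $FC_*(H,J,\Xi;\cF)$ is itself a direct sum $\cF\otimes\langle\Per(H)\rangle$, so subcomplexes spanned by subsets of generators remain $\cF$-enriched in an obvious way.

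First, for a fixed regular admissible pair $(H,J)$ with enriched data $\Xi$ and any $\eps>0$ smaller than the minimal period of a closed Reeb orbit on $\partial W$, I would observe that we have a short exact sequence of chain complexes
\begin{equation*}
0\to FC_*^{<\eps}(H,J,\Xi;\cF)\to FC_*(H,J,\Xi;\cF)\to FC_*^{(\eps,\infty)}(H,J,\Xi;\cF)\to 0,
\end{equation*}
which is tautological from the definition of the action filtration. Passing to homology gives a long exact sequence relating $FH_*^{<\eps}$, $FH_*$ and $FH_*^{(\eps,\infty)}$.

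Next I would upgrade this to a commutative ladder of long exact sequences indexed by the directed system of admissible Hamiltonians $H\le 0$ on $W$. By Corollary~\ref{cor:DGcontinuation-monotone}, a monotone admissible homotopy $(H_s,J_s,\Xi_s)$ with $H_\pm$ linear of slopes $c_-\ge c_+$ induces continuation chain maps preserving both $FC_*^{<\eps}$ and $FC_*^{(\eps,\infty)}$, and therefore induces a morphism of the three-term short exact sequence, hence of its associated long exact sequence. Different choices of homotopies yield chain-homotopic maps (Proposition~\ref{prop:continuation_maps_homotopic}), which induce the same maps on the cohomology of the three terms, so the colimits are well-defined.

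Finally, since the direct limit functor is exact, passing to the colimit over $(H,J,\Xi)$ yields the long exact sequence
\begin{equation*}
\cdots\to SH_*^{<\eps}(W;\cF)\to SH_*(W;\cF)\to SH_*^{(\eps,\infty)}(W;\cF)\to SH_{*-1}^{<\eps}(W;\cF)\to\cdots
\end{equation*}
It remains to identify the outer terms. By the paragraph following~\eqref{eq:isom-SH-filtered-FH}, the canonical map $SH_*^{(b,\eps)}(W;\cF)\to SH_*^{<\eps}(W;\cF)$ is an isomorphism for any $b<0$; choosing $-\eps<b<0$ we obtain $SH_*^{<\eps}(W;\cF)\cong SH_*^{(-\eps,\eps)}(W;\cF)=SH_*^{=0}(W;\cF)$, which by Proposition~\ref{prop:SH-action-zero} is isomorphic to $H_{*+n}(W,\partial W;i_W^*\cF)$. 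For the middle outer term, $SH_*^{(\eps,\infty)}(W;\cF)=SH_*^{>0}(W;\cF)$ by definition, giving the desired sequence.

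The main (minor) obstacle is verifying that the short exact sequence of Floer complexes is genuinely strict: one must ensure that the twisting cocycle $(m_{x,y})$ respects the action filtration, which is automatic because Floer trajectories go from higher to lower action, so $m_{x,y}$ contributes only when $\cA_H(x)>\cA_H(y)$. Similarly, the continuation cocycles $(\nu_{x^+,y^-})$ satisfy $\cA_{H_+}(x^+)\ge \cA_{H_-}(y^-)$ in the monotone case by the computation in Proposition~\ref{prop:DGcontinuation-filtration}, so all maps involved strictly preserve the filtration at chain level. Once this is recorded, the remainder of the argument is purely formal homological algebra.
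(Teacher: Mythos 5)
Your proposal reconstructs the classical argument correctly and matches what the paper intends: the paper dispatches this proposition with the remark that it is "proved just like its classical counterpart" (citing \cite{CO}), and your outline is exactly that omitted standard proof transported to the DG setting. The filtration-preservation points you flag at the end are the right ones to check, and they hold for the reasons you give; the only small elaboration one might add is that independence of the filtered colimits from the choice of monotone homotopy relies on the chain homotopies themselves preserving the action filtration, which follows because the space of monotone homotopies between two fixed admissible Hamiltonians is convex, so one may interpolate through monotone data and apply the same action estimate to the parametrized moduli spaces.
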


\begin{proposition}[Viterbo transfer]
Associated to an exact codimension $0$ symplectic embedding $\iota:V\hookrightarrow W$ there is a canonical \emph{Viterbo transfer} homomorphism 
$$
\iota_!:SH_*(W;\cF)\to SH_*(V;\iota^*\cF),
$$
with $\iota^*\cF$ the local system on $\cL V$ induced by the inclusion $\cL V\hookrightarrow \cL W$. 
\qed
\end{proposition}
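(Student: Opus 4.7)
\medskip

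The plan is to adapt Viterbo's classical step-Hamiltonian construction~\cite{Viterbo99} to the DG-coefficient setting, leveraging the fact that the DG Floer complex carries a canonical action filtration and that monotone continuation maps preserve it (Corollary~\ref{cor:DGcontinuation-monotone}). Using the exact symplectic embedding $\iota$, identify $V$ with a codimension-$0$ sub-Liouville domain of $W$ and fix a collar $(1-\delta,1]\times\p V$ of $\p V$ inside $V$ on which $\lambda_W$ restricts to $r\alpha_V$. For each slope $\mu>0$ outside the action spectrum of $\p V$ and each constant $C\gg 0$, construct a cofinal family of admissible Hamiltonians $K=K^{\mu,C}$ for $W$ of ``two-step'' shape: $C^2$-small and Morse on $V\setminus(1-\delta,1]\times\p V$, linear of slope $\mu$ on the inner neck, essentially constant equal to $-C'$ on $W\setminus V$, and linear of large slope $\mu'\gg\mu$ near $\p W$ and beyond. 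The $1$-periodic orbits of $K$ split into ``inner'' orbits (constants in $V$ plus orbits in the inner neck coming from closed Reeb orbits of $\p V$ of period $\le\mu$) and ``outer'' orbits (constants in $W\setminus V$ plus orbits in the outer neck coming from closed Reeb orbits of $\p W$ of period $\le\mu'$). The standard action computation yields an action gap: inner orbits have action $\ge -\eta(\mu)$ while outer orbits have action $\le -C''$ with $C''\to\infty$ as $C\to\infty$. Pick $b$ in this gap.

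With this setup, the filtered DG Floer subcomplex $FC_*^{<b}(K,J,\Xi;\cF)$ is generated precisely by the outer orbits, and the quotient complex
\[
FC_*(K,J,\Xi;\cF)\big/FC_*^{<b}(K,J,\Xi;\cF)
\]
is generated by the inner orbits. The key identification to carry out is that this quotient is canonically chain-isomorphic to $FC_*(H_V,J_V,\Xi_V;\iota^*\cF)$ for a suitable admissible Hamiltonian $H_V$ on $V$ of slope $\mu$. By the integrated maximum principle applied to $K$ linear at infinity (which applies verbatim in the present setting), Floer trajectories between any two inner orbits are entirely contained in $V$ together with a thin neck of $\p V$; in particular the moduli spaces $\cM(x^+;y^-)$ for inner orbits of $K$ agree with the corresponding moduli spaces for $H_V$, together with their coherent orientations. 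Choose a representing chain system, embedded tree $\cY$ and homotopy inverse $\theta$ for $K$ that restrict on the inner orbits to admissible data $\Xi_V$ for $H_V$: this is possible because the inner orbits together with their connecting Floer trajectories all sit inside $\cL V\subset \cL W$, so the evaluation maps $\ev_{x,y}$ factor through $\cP_{x\to y}\cL V$. Consequently the Barraud--Cornea twisting cocycles $m_{x,y}\in C_*(\Omega\cL W)$ for inner orbits of $K$ lie in the image of the DGA map $C_*(\Omega \cL V)\to C_*(\Omega\cL W)$, and their action on $\cF$ coincides with the action of the corresponding cocycles on $\iota^*\cF$. This produces the desired chain isomorphism of quotient complexes and hence a chain map $FC_*(K;\cF)\twoheadrightarrow FC_*(H_V;\iota^*\cF)$.

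Finally, to assemble the transfer morphism, vary the parameters $(\mu,C)$ through a cofinal sequence and check compatibility: monotone homotopies $K^{\mu_1,C_1}\rightsquigarrow K^{\mu_2,C_2}$ can be chosen so that the corresponding $H_V^{\mu_1}\rightsquigarrow H_V^{\mu_2}$ is also monotone, and by Corollary~\ref{cor:DGcontinuation-monotone} the resulting continuation maps fit into a square that commutes up to chain homotopy (using Proposition~\ref{prop:composition-v1} for the composition of continuation maps applied on both sides). Passing to the colimit as in~\eqref{eq:SH-limit-filtered} yields the sought-after canonical homomorphism
\[
\iota_!:SH_*(W;\cF)\longrightarrow SH_*(V;\iota^*\cF).
\]
The main obstacle is the identification in the second paragraph: one must verify with care that the whole DG enrichment --- representing chain systems, the trees $\cY_a$ and homotopy inverses $\theta_a$ used to build the twisting cocycle, and the data interpolating between different Floer pairs in the colimit --- can be chosen \emph{coherently} on the inner stratum, so that the quotient construction is strictly compatible with the pull-back of DG local systems along $\cL V\hookrightarrow\cL W$. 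The invariance results of \S\ref{sec:DGFloer} (existence and uniqueness up to homotopy of twisting cocycles, continuation cocycles, and homotopies of homotopies) provide exactly the flexibility needed to make these compatible choices.
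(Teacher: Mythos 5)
Your overall strategy is the right one, and it is essentially what the paper intends (the paper just says ``proved just like their classical counterparts, see for example [CO]'' and gives no details): step Hamiltonians, action gap, identify the quotient complex of the action filtration with a Floer complex on $V$ by a confinement argument, take care of the DG enrichment, and pass to the colimit. You also correctly identify the main DG-specific subtlety (coherent choices of representing chain systems, collapsing trees $\cY_a$, and homotopy inverses $\theta_a$ so that the twisting cocycles for inner orbits are pushed forward from $C_*(\Omega\cL V)$), and correctly point to the flexibility results of Section 3 as the tool that makes this manageable.

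However, there is a sign/shape error that, as written, breaks the filtration structure. Recall the paper's conventions: $\cA_H(\gamma)=\int\bar\gamma^*\omega - \int H$, so for a constant orbit $\cA_H(x)=-H(x)$, and the Floer differential \emph{decreases} action, so $FC_*^{<b}$ is a \emph{sub}complex consisting of the \emph{low}-action orbits. Your proposed Hamiltonian $K$ is $C^2$-small ($\approx 0$) on $V$ and $\approx -C'$ on $W\setminus V$, which gives the inner constants action $\approx 0$ and the outer constants action $\approx +C'$ (large positive). In other words, with this shape the \emph{outer} orbits have \emph{high} action and the \emph{inner} orbits have \emph{low} action, so $FC_*^{<b}(K;\cF)$ is generated by the inner orbits, and the quotient is the outer ones --- exactly the opposite of what you write, and it does not produce a map $SH_*(W;\cF)\to SH_*(V;\iota^*\cF)$. (Your stated action estimates, ``inner $\ge -\eta$'', ``outer $\le -C''$'', are consistent with the opposite sign convention $\cA_H = \int H - \int\bar\gamma^*\omega$, which is not the one used in this paper.) Also note that with your shape, the inner neck is a region where $h(r)$ goes down, i.e.\ $h'(r)<0$, which generates backwards Reeb orbits --- another symptom that the shape is inverted. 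The fix is to flip the shape: take $K\approx -a$ with $a\gg 0$ on $V$ (well away from the inner neck), climbing with slope $\mu$ across a collar of $\p V$, approximately $0$ on $W\setminus V$, and then of slope $\mu'$ near $\p W$ and at infinity. Then inner orbits have high action, outer orbits have low action, $FC_*^{<b}$ is generated by the outer orbits, and the quotient projection gives the transfer. The rest of your argument (confinement, identification of the quotient with a Floer complex on $V$, cofinality and passage to the colimit) then goes through as you describe.

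Two more minor points on the DG enrichment step. First, even granting confinement so that the evaluation maps $\ev_{x,y}$ for inner orbits factor through $\cP_{x\to y}\cL V$, the composed maps $q_{x,y}=\theta\circ p\circ\ev_{x,y}$ pass through $\cL W/\cY$ and then $\theta$, which is valued in $\cL W$, not in $\cL V$; so the twisting cocycles do not automatically lie in the image of $C_*(\Omega\cL V)\to C_*(\Omega\cL W)$. One needs to choose the subtree $\cY_V\subset\cY$ joining the inner orbits to live in $\cL V$ and to choose $\theta$ extending a homotopy inverse $\theta_V$ for $\cL V\to\cL V/\cY_V$ --- or, more robustly, not insist on a strict chain isomorphism of the quotient with $FC_*(H_V;\iota^*\cF)$ but instead produce a chain homotopy equivalence using Propositions \ref{prop:uniqueness-BC}, \ref{prop:uniqueness_continuation} and \ref{prop:cocycle_homotopy_of_homotopies}. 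Second, the compatibility of the transfer with the colimit requires checking that the quotient projections commute up to homotopy with continuation maps; this follows from Proposition \ref{prop:composition-v1} together with Corollary \ref{cor:DGcontinuation-monotone}, as you indicate.
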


\begin{remark}
One of the remarkable features of symplectic homology in the setting of constant coefficients is that it carries a ring structure. This is also the case for large classes of classical local coefficients of rank $1$, see~\cite{Abouzaid-cotangent} and~\cite[Appendix~A]{CHO-MorseFloerGH}. In the case of DG coefficients such a ring structure can be exhibited provided $\cF$ is a DGA such that the multiplication is suitably compatible with the module structure. This circle of ideas is related to the construction in~\cite{Albers-Frauenfelder-Oancea}  and is explored in ongoing work of Riegel.
\end{remark}

\begin{remark}
We focused in this subsection on symplectic homology with DG local coefficients, but similar definitions can be given in the context of other homology theories for non-compact manifolds, all encompassed by the setup of pairs of Liouville cobordisms with fillings~\cite{CO}. Special cases of particular significance are symplectic cohomology and Rabinowitz Floer homology and cohomology. 

Other homology theories can also be enriched with DG local coefficients, for example the relative symplectic cohomology of Varolgunes~\cite{Varolgunes}, or wrapped Floer cohomology~\cite{Abouzaid-Seidel}.
\end{remark}

An important computational device for symplectic homology with DG coefficients is provided by the following canonical spectral sequence (see also~\S\ref{sec:spectral-sequence-sympl-asph}).  

\begin{theorem} \label{thm:spectral-sequence-SH} Let $W$ be a Liouville domain and $\cF$ a DG local system on $\cL W$. 
\begin{enumerate}
\item[(i)] There is a canonical spectral sequence $E^r_{p,q}$, $r\ge 2$ with second page
\begin{equation} \label{eq:second-page-SH}
E^2_{p,q}=SH_p(W;H_q(\cF)), 
\end{equation}
the symplectic homology group with coefficients in the (classical) local system $H_q(\cF)$ on $\cL W$. 
\item[(ii)] Assume that one of the following conditions holds:  
\begin{center}
(A) The homology $H_*(\cF)$ is bounded from below and the symplectic homology $SH_*(W;H_q(\cF))$ is bounded from below, uniformly in $q$. 

(B) The homology $H_*(\cF)$ is bounded from above, and the symplectic homology $SH_*(W;H_q(\cF))$ is bounded from above, uniformly in $q$. 
\end{center}
Then the spectral sequence converges to $SH_*(W)$. 
\end{enumerate}
\end{theorem}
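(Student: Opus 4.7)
The plan is to derive the spectral sequence for a single regular admissible triple using the canonical filtration, and then pass to a filtered colimit over the directed system defining symplectic homology. Fix such a triple $(H,J,\Xi)$ with $H\le 0$ on $W$ and filter the Floer complex $FC_*(H,J,\Xi;\cF)$ by $F_p=\bigoplus_{i\le p}\cF\otimes\cC_i$ as in Definition~\ref{defi:canonical_filtration}. The twisting cocycles $m_{x,y}\in C_{|x|-|y|-1}(\Omega\cL X)$ vanish unless $|y|<|x|$ (cubical chains in negative degree are zero), so the differential $D$ preserves the filtration. Since $\Per(H)$ is finite the filtration is bounded, and the argument of Theorem~\ref{thm:spectral_sequence} applies verbatim to produce a strongly convergent spectral sequence $E^r(H)$ with $E^2_{p,q}(H)=FH_p(H;H_q(\cF))$.

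I would next observe that continuation maps induced by monotone regular admissible homotopies also preserve the canonical filtration: the continuation cocycles $\nu_{x^+,y^-}\in C_{|x^+|-|y^-|}(\Omega\cL X)$ vanish whenever $|y^-|>|x^+|$ for the same degree reason, so the continuation map $\Psi$ of Definition~\ref{def:continuation-map} sends $F_p$ into $F_p$. It therefore induces a morphism $E^r(\Psi)$ of spectral sequences; at the $E^2$ page this is the classical continuation morphism in Floer homology with non-DG local coefficients $H_q(\cF)$, since the only terms surviving to $E^2$ act through $H_0(\Omega\cL X)=\Z[\pi_1(\cL X)]$. Compositions of these maps are controlled up to homotopy by Proposition~\ref{prop:composition-v1}, so we obtain a functor from the directed system of admissible data to spectral sequences. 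Because filtered colimits of abelian groups are exact and hence commute with homology and with taking associated gradeds, setting $E^r_{p,q}:=\colim_{(H,J,\Xi)} E^r_{p,q}(H)$ yields a spectral sequence whose second page is
\[
E^2_{p,q}\;=\;\colim_H FH_p(H;H_q(\cF))\;=\;SH_p(W;H_q(\cF))
\]
by the very definition of symplectic homology with non-DG local coefficients. This proves~(i).

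For the convergence claim~(ii), under assumption~(A) the $E^2$ page vanishes for $q<q_0$ (a lower bound coming from $\cF$) and for $p$ below some $q$-dependent bound (a lower bound coming from $SH_p(W;H_q(\cF))$), a ``half-plane'' vanishing condition; assumption~(B) gives the dual condition. The colimit filtration on $SH_*(W;\cF)=\colim FH_*(H;\cF)$ is exhaustive by construction, and the half-plane vanishing forces the differentials $d^r$ to stabilize in each fixed bidegree for $r$ sufficiently large, since their source or target eventually lands in the zero region. Strong convergence to $SH_*(W;\cF)$ then follows from the standard convergence criterion for half-plane spectral sequences, combined with the exactness of filtered colimits applied to the strongly convergent $E^r(H)$. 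The principal difficulty I anticipate is precisely this last step: the colimit filtration is typically unbounded in both directions, so one must verify carefully that the one-sided boundedness at $E^2$ provided by (A) or (B) yields a Hausdorff and complete filtration on the target whose associated graded agrees with $E^\infty$.
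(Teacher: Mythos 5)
Your proof of (i) is the same as the paper's: define $E^r_{p,q}:=\colim_H E^r_{p,q}(H,J,\Xi_{(H,J)};\cF)$, use the exactness of filtered colimits so the pages still satisfy $E^{r+1}=H(E^r)$, and commute the colimit through to the second page to obtain~\eqref{eq:second-page-SH}. Your auxiliary observations --- that the twisting and continuation cocycles preserve the canonical filtration and that Proposition~\ref{prop:composition-v1} controls compositions --- are consistent with, and implicitly used by, the paper.

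For (ii) you frame the hypotheses as a ``half-plane'' vanishing condition, whereas the paper frames them as confining $E^2$ to a shifted first (resp.\ third) quadrant: under (A) the boundedness of $H_*(\cF)$ bounds $q$ from below and the boundedness of $SH_*(W;H_q(\cF))$ bounds $p$ from below, so both coordinates are one-sided, not just one. More to the point, the concern you explicitly flag at the end --- that a colimit filtration need not be Hausdorff or complete even when the associated graded vanishes outside a quadrant, so that the one-sided vanishing at $E^2$ does not by itself deliver strong convergence --- is the genuine subtlety here, and your proof stops short of resolving it. The paper's proof of (ii) is equally brief (``converges for dimensional reasons'') and does not spell this out either; the reader is meant to supply the argument that, for each total degree $n$, the finite range of nonzero associated-graded pieces, together with the fact that the spectral sequence arises as an exact filtered colimit of strongly convergent spectral sequences with bounded filtrations, forces the colimit filtration on $SH_n(W;\cF)$ to be degreewise bounded. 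To be complete your proof would need to carry out this last verification (which can be done after replacing $\cF$ by a bounded-below quasi-isomorphic truncation, so that the canonical filtration is exhausted uniformly in $H$ in each total degree), but your identification of exactly where the work lies is accurate and matches the point the paper glosses over.
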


\begin{proof}
(i) Let $(H,J)$ be a regular admissible pair and $\Xi_{(H,J)}$ be a choice of enriched Floer data. By Theorem~\ref{thm:spectral_sequence} there is a spectral sequence, denoted $E^r_{p,q}(H,J,\Xi_{(H,J)};\cF)$, $r\ge 2$, that converges to $FH_*(H,J,\Xi_{(H,J)};\cF)$ and whose second page is $E^2_{p,q}(H,J,\Xi_{(H,J)};\cF)=FH_p(H,J;H_q(\cF))$. We define
$$
E^r_{p,q}=\colim_H E^r_{p,q}(H,J,\Xi_{(H,J)};\cF).
$$ 
Since the colimit functor is exact, this is a spectral sequence, with the meaning that $E^{r+1}=H(E^r)$. Its second page is 
\begin{align*}
E^2_{p,q}=\colim_H E^2_{p,q}(H,J,\Xi_{(H,J)};\cF) & = \colim_H FH_p(H,J;H_q(\cF)) \\
& = SH_p(W;H_q(\cF)).
\end{align*}
(ii) The hypothesis ensures that the second page of the spectral sequence is contained up to shift either in the first quadrant (condition (A)) or in the third quadrant (condition (B)). This implies convergence for dimensional reasons. 
\end{proof}

\section{DG Viterbo isomorphism} \label{sec:DGViterbo}

We prove in this section a Viterbo isomorphism theorem with DG coefficients, stated in the Introduction as Theorem~\ref{thm:Viterbo_iso_loc_sys_DG}. Our proof is based on Abouzaid's proof for the case of classical local coefficients~\cite{Abouzaid-cotangent}. 

\subsection{Preliminaries} \label{sec:DGViterbo-context}

Let $Q$ be a closed manifold and $T^*Q$ its cotangent bundle. We denote by $D^*Q$ and $S^*Q$ the disc cotangent bundle, respectively the sphere cotangent bundle with respect to some Riemannian metric on $Q$. We describe in this section the local system $\underline{\eta}$ from Theorem~\ref{thm:Viterbo_iso_loc_sys_DG}, and we state a result that explains the behavior of the Viterbo isomorphism at energy zero. 

Following~\cite[Chapter~11]{Abouzaid-cotangent} the \emph{fundamental local system} $\underline\eta$ is defined as the tensor product of three local systems of rank 1 free $\Z$-modules on $\cL Q$ supported in degree $0$,  
$$
\underline\eta=\sigma\otimes \underline\mu\otimes \tilde o,
$$ 
where: 
\begin{itemize}
\item The local system $\sigma$ is obtained by transgressing the second Stiefel-Whitney class $w_2=w_2(TQ)$: the monodromy along a loop $S^1\to \cL Q$, identified by adjunction to a map $S^1\times S^1\to Q$, is $\one$ if $w_2$ evaluates trivially on that $2$-torus, respectively $-\one$ if $w_2$ evaluates nontrivially. 
\item The local system $\underline\mu=\ev_0^*\underline{|Q|}^{-1}$, with $\underline{|Q|}$ the orientation local system of $Q$ seen as supported in degree $0$.\footnote{We have $\underline{|Q|}^{-1}\simeq \underline{|Q|}$, but we use the inverse in the notation in agreement with~\cite{Abouzaid-cotangent}. See also the discussion below about graded classical local systems.} 
\item The local system $\tilde o=\ev_0^*\underline{|Q|}^{-w}$, with 
\begin{equation} \label{eq:w}
w:\cL Q\to \{0,-1\}
\end{equation} 
the locally constant function that, on a connected component $\cL_{[\gamma]}Q$, takes the value $0$ if $\gamma^*TQ$ is orientable, respectively the value $-1$ if $\gamma^*TQ$ is nonorientable. 
\end{itemize}

In light of the discussion on orientation conventions from~\S\ref{sec:orientations}, it is actually useful to allow classical local systems to be supported in arbitrary integer degrees. Given any local system $F$ we denote by $F[k]$ the local system obtained by shifting the degree \emph{down} by $k$, i.e., $F[k]_*=F_{*+k}$. If $F$ is supported in degree $d$ then $F[d]$ is supported in degree $0$ and we denote it by $\underline{F}$. With our conventions the orientation local system $|Q|$ is naturally supported in degree $n=\dim Q$, the local system $|Q|^{-1}$ is supported in degree $-n$ and $\underline{|Q|}^{-1}=(|Q|[n])^{-1}=(|Q|^{-1})[-n]$ is supported in degree $0$. We denote for future reference $\mu=\ev_0^*|Q|^{-1}$ and 
$$
\eta=\sigma\otimes \mu\otimes\tilde o,
$$
both being local systems of rank 1 free $\Z$-modules supported in degree $-n$. 

{\bf Notational convention.} Recall that we denote by $\Pi:\cL T^*Q\to\cL Q$ the map between free loop spaces induced by the projection $\pi:T^*Q\to Q$. 
We will use in the sequel the pull-back $\Pi^*w:\cL T^*Q\to \{0,-1\}$ and, for readability, we will abuse notation and denote it also by $w$. Similarly, we will use the notation $\eta$ for the pull-back $\Pi^*\eta$.

The next result describes the behavior of the Viterbo isomorphism at energy zero. Denote $i_Q:Q\to \cL Q$ the inclusion of constant loops and note that $i_Q^*\ueta\simeq \underline{|Q|}^{-1}\simeq \underline{|Q|}$, the orientation local system of $Q$. 

\begin{proposition} \label{prop:Viterbo-iso-energy-zero} Let $\cF$ be a DG local system on $\cL Q$. The isomorphism $\widetilde\Psi$ from Theorem~\ref{thm:Viterbo_iso_loc_sys_DG} induces an isomorphism 
$$
{\widetilde\Psi}^{=0}:SH_*^{=0}(T^*Q;\Pi^*\cF)\stackrel\simeq\longrightarrow H_*(Q;i_Q^*\cF\otimes \underline{|Q|}) 
$$
that fits into the commutative diagram 
$$
\xymatrix{
SH_*^{=0}(T^*Q;\Pi^*\cF)\ar[r] \ar[d]_\simeq^{{\widetilde\Psi}^{=0}} & SH_*(T^*Q;\Pi^*\cF) \ar[d]_\simeq^{\widetilde\Psi} \\
H_*(Q;i_Q^*\cF\otimes\underline{|Q|}) \ar[r]^-{i_{Q*}} & H_*(\cL Q;\cF\otimes \ueta),
}
$$
where the top horizontal arrow is the tautological map in symplectic homology. 
 
Under the identification $$SH_*^{=0}(T^*Q;\Pi^*\cF)\simeq H_{*+n}(D^*Q,S^*Q;i_{D^*Q}^*\Pi^*\cF)=H_{*+n}(D^*Q,S^*Q; \pi^*i_Q^*\cF)$$ from Proposition~\ref{prop:SH-action-zero}, the isomorphism ${\widetilde\Psi}^{=0}$ coincides with the Thom isomorphism .  \end{proposition}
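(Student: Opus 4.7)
The plan is to reduce to the classical (non-DG) statement via the spectral sequence isomorphism of Theorem~\ref{thm:Viterbo_iso_loc_sys_DG}, using a bifiltration by action and by Conley--Zehnder index.

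First, I would set up the action-zero subcomplex carefully at the chain level. Choose an admissible Hamiltonian $H$ that is a $C^2$-small Morse function on $D^*Q$ (extended linearly in the Liouville direction with a small positive slope avoiding Reeb periods). By the Salamon--Zehnder-type argument recalled in the proof of Theorem~\ref{thm:Floer-Morse}, the $1$-periodic orbits of small action are precisely the critical points of $H|_Q$ (viewed as constant loops), and all low-action Floer trajectories are time-independent. The action filtration on $FC_*(H,J,\Xi;\Pi^*\cF)$ is compatible with the canonical filtration by index from Definition~\ref{defi:canonical_filtration}: the subcomplex $FC_*^{(-\eps,\eps)}(H,J,\Xi;\Pi^*\cF)$ inherits the canonical filtration, producing a morphism of spectral sequences $\{E^r(SH^{=0})\}\to \{E^r(SH)\}$ that in the limit realizes the tautological map $SH_*^{=0}\to SH_*$. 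On the loop space side, the inclusion $i_Q:Q\hookrightarrow \cL Q$ is modeled by choosing a DG Morse function on $\cL Q$ whose constant-loop critical locus $Q$ forms a subcomplex, and an analogous morphism of Morse spectral sequences realizes $i_{Q*}$.

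Second, I would invoke the classical case. At the $E^2$-page, Theorem~\ref{thm:Viterbo_iso_loc_sys_DG} specializes to the classical Viterbo isomorphism $\Psi:SH_p(T^*Q;\Pi^*H_q(\cF))\stackrel{\simeq}{\to}H_p(\cL Q;H_q(\cF)\otimes\ueta)$ with the non-DG local system $H_q(\cF)$. For classical coefficients, the statement of the proposition is essentially part of Abouzaid's construction in~\cite{Abouzaid-cotangent}: the action-zero generators are constant orbits whose evaluation images lie in $Q\subset\cL Q$, and the restriction of the local system $\ueta$ to constant loops is $i_Q^*\ueta\simeq\underline{|Q|}$ (since $\sigma$ transgresses trivially over constants, $w$ vanishes, and $\underline\mu$ restricts to $\underline{|Q|}^{-1}\simeq\underline{|Q|}$). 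Hence $\Psi$ restricts to an isomorphism $\Psi^{=0}:SH_p^{=0}(T^*Q;\Pi^*H_q(\cF))\stackrel{\simeq}{\to}H_p(Q;i_Q^*H_q(\cF)\otimes\underline{|Q|})$ which, via Proposition~\ref{prop:SH-action-zero} identifying the source with $H_{p+n}(D^*Q,S^*Q;\pi^*i_Q^*H_q(\cF))$, coincides with the Thom isomorphism.

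Third, I would pass to the limit. The restricted isomorphism of $E^2$-pages, coming from a morphism of bifiltrations, converges to an isomorphism $\widetilde\Psi^{=0}:SH_*^{=0}(T^*Q;\Pi^*\cF)\stackrel{\simeq}{\to}H_*(Q;i_Q^*\cF\otimes\underline{|Q|})$; convergence on the RHS uses that the Morse spectral sequence for $H_*(Q;i_Q^*\cF\otimes\underline{|Q|})$ has the same $E^2$-page $H_p(Q;i_Q^*H_q(\cF)\otimes\underline{|Q|})$ and is canonically isomorphic to the restricted spectral sequence. The commutativity of the square and the identification with the tautological map $SH^{=0}_*\to SH_*$ on one side and $i_{Q*}$ on the other follow from the morphism of spectral sequences being functorial on subcomplexes. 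The Thom isomorphism identification at the limit follows because the Thom iso commutes with the Leray--Serre-type spectral sequence used here (it is induced by capping with a Thom class on constant coefficients, which is natural in the coefficient system).

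The main technical obstacle will be verifying the compatibility of the action filtration with the canonical (index) filtration at the chain level, and ensuring that the resulting spectral sequence of the action-zero subcomplex agrees, in a manner natural with respect to the Viterbo chain map, with the DG Morse spectral sequence of $(Q,i_Q^*\cF\otimes\underline{|Q|})$ after the $n$-shift of Proposition~\ref{prop:SH-action-zero}. Concretely, one must check that the natural isomorphism between the Floer chain model of $SH^{=0}_*$ and the Morse chain model of $H_{*+n}(D^*Q,S^*Q;\pi^*i_Q^*\cF)$ from Proposition~\ref{prop:SH-action-zero} is filtered, so that the classical Thom isomorphism at each $E^2$-page assembles to the DG Thom isomorphism in the limit.
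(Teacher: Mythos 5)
Your route is genuinely different from the paper's, and it is substantially longer. The paper's proof is one sentence: at energy zero, the chain map $\widetilde\Psi^r$ built from the hybrid moduli spaces $\cB(x;y)=\cM(x)\times_{\ev_r}W^s(y)$ degenerates (via the Salamon--Zehnder identification of low-action Floer half-cylinders with Morse flow lines for a $C^2$-small $H$) into a DG Morse \emph{continuation map} in the sense of~\cite[\S10, Proposition~10.3]{BDHO}, and that proposition already identifies such continuation maps with the canonical (shriek/Thom) comparison maps of the theory; the commutative square is then the compatibility of continuation maps with inclusion of subcomplexes. You instead bifilter by action and by Conley--Zehnder index, invoke the classical (non-DG) statement at the $E^2$-page, and pass to the limit. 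What your approach buys is that it avoids any chain-level inspection of $\cB(x;y)$; what it costs is precisely what you flag at the end. Two places need more than naturality hand-waving: (i) the claim that the Thom isomorphism ``commutes with the Leray--Serre-type spectral sequence'' because it is ``capping with a Thom class'' does not immediately translate to the DG framework, where the Thom isomorphism is \emph{defined} as the shriek map $\pi_!$ of~\cite[\S\S9--10]{BDHO}, not as a cap product — you would need to check that $\pi_!$ preserves the canonical index filtration and induces the classical Thom iso page by page (this is true, but is a lemma, not a given); and (ii) an abstract spectral-sequence isomorphism of the filtered pieces does not by itself identify the limit map with the one \emph{induced} by $\widetilde\Psi$ on the action-zero subquotient — you need to know that the $E^2$ comparison you construct is the one induced by the restriction of the actual chain map $\widetilde\Psi^r$, which again points back to the chain-level moduli-space degeneration that the paper uses directly. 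So your plan can be made to work, but closing these two gaps essentially reconstructs the paper's argument; the paper's route is the shorter path.
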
 

We prove Proposition \ref{prop:Viterbo-iso-energy-zero} in Section~\S\ref{sec:DGViterbo-proof}.

\rmk\label{rmk:Thom-explication}  In the DG setting the Thom isomorphism   
is the shriek map $$ j_{Q!} : H_{*+n}(D^*Q,S^*Q; {\cal G})\ri H_*(Q;  j_Q^*{\cal G}\otimes \underline{|Q|})$$ of the inclusion $ j_Q:Q\hookrightarrow D^*Q$, as defined in \cite[\S9.3, \S10.4  and \S12.4]{BDHO} for any DG local system ${\cal G}$ on $D^*Q$.  Since $\pi\circ  j_Q=\Id$, its inverse is $\pi_!$ and, setting ${\cal G}=  i_{D^*Q}^*\Pi^*\cF$, we get
$$\pi_! : H_*(Q;i_Q^*\cF\otimes\underline{|Q|}) \ri H_{*+n}(D^*Q,S^*Q; \pi^*i_Q^*\cF),$$
the inverse of the isomorphism that we identified with $\widetilde{\Psi}^{=0}$ in the proposition above. 
\kmr

\subsection{Canonical grading for Floer homology of cotangent bundles}

Given a closed manifold $Q$, the disc cotangent bundle with respect to any Riemannian metric on $Q$ is a Liouville domain, and the full cotangent bundle is identified with its symplectic completion. In local coordinates $(p,q)$ the symplectic form is $\omega=dp\wedge dq=d(pdq)$, the Liouville form is $pdq$ and the Liouville vector field is $p\frac{\partial}{\partial p}$. The symplectic homology groups with DG coefficients $SH_*(T^*Q;\cF)$ are defined as in~\S\ref{sec:symplectic_homology}. One specific feature is the existence of a canonical $\Z$-grading for symplectic homology of cotangent bundles, which we now briefly discuss following~\cite[\S9.4.5]{Abouzaid-cotangent}. 

There are two key observations that lead to this canonical grading. The first observation is that 
\begin{equation} \label{eq:complexification}
TT^*Q\simeq \pi^*TQ\otimes\C
\end{equation}
is the complexification of a real vector bundle. 
This follows from the fact that $TT^*Q\simeq \pi^*TT^*Q|_Q$ together with $TT^*Q|_Q\simeq TQ\otimes \C$. In particular  $TT^*Q$ is isomorphic to its conjugate bundle and its first Chern class is $2$-torsion, i.e., $2c_1(TT^*Q)=0$. 

Homotopy classes of trivializations of $TT^*Q$ along a given loop are in one-to-one bijective correspondence with homotopy classes of trivializations of $\det TT^*Q$. This holds generally for symplectic vector bundles, and is implied by the fact that the inclusion $U(1)\hookrightarrow \Sp(2n)$ induces an isomorphism on $\pi_1$. 

Passing to determinant lines in~\eqref{eq:complexification} we find $\det TT^*Q\simeq \det(\pi^*TQ)\otimes\C$. Thus any unitary trivialization of $\det TT^*Q$ along a loop determines a \emph{Gauss map} $S^1\to\R P^1$ defined by the fibers of $\det(\pi^*TQ)$. Since the covering map $U(1)\to\R P^1$ has degree $2$, the degree of the Gauss map changes by an even integer upon changing the homotopy class of trivialization, and the second key observation is that there is a unique up to homotopy trivialization such that the degree of the Gauss map is either $0$ or $1$ (the two cases correspond to $\det(\pi^*TQ)$ being orientable or not along the loop). 

The outcome of this discussion is that, for any loop in $\cL TT^*Q$, there is a canonical up to homotopy trivialization of $TT^*Q$ along that loop. 

Given a nondegenerate $1$-periodic orbit $x\in\Per(H)$ one associates to it a Conley-Zehnder index $\CZ(x)$ computed from the path of symplectic matrices determined by the linearization of the Hamiltonian flow in the above trivialization. 
We define \emph{the degree of $x$} by 
\begin{equation} \label{eq:degreex}
|x|=\CZ(x)+w(x),
\end{equation}
where $w:\cL Q\to\{0,-1\}$ is the function defined in~\eqref{eq:w}.
 
\begin{remark}
The remarkable property of this grading is that it is compatible with product structures~\cite{Abouzaid-cotangent}. Although we do not address product structures in this paper, it is important to use the ``correct grading". 
\end{remark}

\begin{remark}
Our degree is equal to $n$ minus the degree defined by Abouzaid in~\cite[Definition~9.4.20]{Abouzaid-cotangent}. We made this choice so that the Viterbo isomorphism is degree preserving. 
\end{remark}

\subsection{Finite dimensional approximation for free loops}  \label{sec:DGViterbo-finite-dim}

We describe in this subsection the construction of a finite dimensional approximation for the space of free loops $\cL Q$ due to Abouzaid. We follow \emph{verbatim}~\cite[\S11.2]{Abouzaid-cotangent}.  

Fix a metric on $Q$ with injectivity radius larger than $4$ and denote by $d$ the associated distance. For each integer $r\ge 1$ and each $1\le i\le r$ we consider the functions 
$$
\rho_i:Q^r\to\R,\qquad \rho_i(q_0,\dots,q_{r-1})=d(q_i,q_{i-1}),
$$
with the convention $q_r=q_0$. 

Fix $1\ge \delta>0$ sufficiently small and a collection $\bolddelta^r=\{\delta_1^r,\dots,\delta_r^r\}$ such that 
\begin{equation} \label{eq:delta}
\frac{\delta}2<\delta_i^r<\delta.
\end{equation}
We define  
$$
\cL_{\bolddelta^r}^rQ=\{\q\in Q^r\, : \, \rho_i(\q)\le \delta_i^r,\ i=1, \dots, r\}.
$$
For a generic choice of the collection $\bolddelta^r$ the space $\cL_{\bolddelta^r}^rQ\subset Q^r$ is a codimension $0$ smooth submanifold with boundary with corners. We choose a family of generic parameters $\bolddelta^r$, $r\ge 1$ such that, in addition, 
$$
\delta_i^r\le \delta_j^{r+1} \quad \mbox{ for all } 1\le i\le r \mbox{ and } 1\le j\le r+1,
$$
and denote 
$$
\cL^rQ=\cL_{\bolddelta^r}^rQ.
$$

There are natural embeddings 
$$
\iota^r:\cL^rQ\to \cL^{r+1}Q,\qquad \iota^r(q_0,\dots,q_{r-1})=(q_0,q_0,\dots,q_{r-1})
$$
that make the collection of spaces and maps $(\cL^r Q,\iota^r)$ into a directed system.\footnote{Here the role of $q_0$ is somewhat artificial. For each $1\le k\le r-1$, the embedding $\cL^rQ\hookrightarrow \cL^{r+1}Q$, $(q_0,\dots,q_{r-1})\mapsto (q_0,\dots,q_{k-1},q_k,q_k,q_{k+1},\dots,q_{r-1})$ is homotopic to $\iota^r$.} 

There are also natural maps 
$$
\geo^r:\cL^rQ\to \cL Q
$$
that send a collection of points to the piecewise geodesic loop connecting them, parametrised at unit speed. (We work with Moore free loops.) Clearly 
\begin{equation} \label{eq:geo-iota}
\geo^{r+1}\circ \iota^r=\geo^r,
\end{equation}
so that the collection of maps $(\geo^r)$ induces a map 
$$
\colim \geo^r:\colim \cL^r Q\to \cL Q. 
$$ 
This map is a homotopy equivalence~\cite[Proposition~11.2.4]{Abouzaid-cotangent}. 

Finally, there is a continuous map 
$$
\ev_r:\cL Q\to Q^r, \qquad \gamma\mapsto (\gamma(0),\gamma(\tfrac{1}{r}L_\gamma),\dots,\gamma(\tfrac{r-1}{r}L_\gamma)),
$$
where the loop $\gamma$ is parametrized on the interval $[0,L_\gamma]$.
Since $\ev_r$ is continuous, its restriction to any compact set takes values in $\cL^r Q$ 
 for $r$ large enough. 

\begin{lemma} \label{lem:cLL1Q}
For $L\ge 0$, we denote by $\cL^{\le L}Q \subset \cL Q$ the subspace of loops of length $\le L$, and by $\cL^{\le L}_1Q\subset \cL^{\le L}Q$ the subspace of loops parametrized at unit speed. Then,
\begin{enumerate}
\item The inclusion $\cL^{\le L}_1Q\subset \cL^{\le L}Q$ is a strong deformation retract. 
\item There exists $r(L)>0$ such that, for $r\ge r(L)$, the image of the map $\ev_r$ restricted to $\cL^{\le L}_1Q$ belongs to $\cL^r Q$, and the composition $\geo^r\circ \ev_r:\cL^{\le L}_1Q\to \cL^{\le L}_1Q$ is homotopic to the identity. 
\end{enumerate}
\end{lemma}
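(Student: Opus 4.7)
The plan is as follows.

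For part (1), I would construct the strong deformation retract explicitly via arc-length reparametrization. Given a (rectifiable) Moore loop $\gamma:[0,L_\gamma]\to Q$ of length $\ell:=\ell(\gamma)\leq L$, let $s_\gamma:[0,L_\gamma]\to[0,\ell]$ be its arc-length function and $\tilde\gamma:[0,\ell]\to Q$ the unit-speed reparametrization satisfying $\gamma=\tilde\gamma\circ s_\gamma$. For $u\in[0,1]$ I would set $L_u:=(1-u)L_\gamma+u\ell$ and define the Moore loop $\gamma_u:[0,L_u]\to Q$ by affinely interpolating between the two parametrizing intervals, so that $\gamma_0=\gamma$, $\gamma_1=\tilde\gamma$, and $\gamma_u=\gamma$ whenever $\gamma$ is already of unit speed. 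One then checks that the assignment $(u,\gamma)\mapsto\gamma_u$ is continuous and stays within $\cL^{\leq L}Q$, which yields the strong deformation retract.

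For part (2), the first statement follows from a direct triangle inequality. For a unit-speed loop $\gamma$ of length $\ell\leq L$, consecutive evaluation points satisfy
\[
d\bigl(\gamma(i\ell/r),\gamma((i-1)\ell/r)\bigr)\leq \ell/r\leq L/r.
\]
Setting $r(L):=\lceil 2L/\delta\rceil$ and invoking the lower bound $\delta^r_i>\delta/2$ from~\eqref{eq:delta} ensures $\ev_r(\gamma)\in\cL^rQ$ for all $r\geq r(L)$.

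To produce the homotopy $\geo^r\circ\ev_r\simeq\id_{\cL^{\leq L}_1Q}$, I would straighten each segment of $\gamma$ onto a minimizing geodesic, which is possible since the distance between consecutive evaluation points is less than the injectivity radius (the latter is larger than $4$ while $\delta\leq 1$). Concretely, given a unit-speed loop $\gamma$ of length $\ell$, on each subinterval $[(i-1)\ell/r,i\ell/r]$ the restriction of $\gamma$ and the minimizing geodesic $g_i$ from $\gamma((i-1)\ell/r)$ to $\gamma(i\ell/r)$ share endpoints; I would linearly interpolate between them pointwise along shortest geodesics to obtain a homotopy through loops of length $\leq\ell\leq L$. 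At time $t=1$ this produces the concatenation of the $g_i$, parametrized on $[0,\ell]$ proportionally to the original subdivision rather than at unit speed; composing with the retraction from part~(1) yields $\geo^r\circ\ev_r(\gamma)$ and the desired homotopy inside $\cL^{\leq L}_1Q$.

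The main technical point will be continuity of these constructions as the loop $\gamma$ varies, especially in the presence of near-constant loops ($\ell(\gamma)\to 0$) or of consecutive evaluation points which collide. Working with Moore loops (whose parametrizing interval is allowed to vary) accommodates such degenerations continuously, which is precisely the reason Abouzaid's finite-dimensional model from \S\ref{sec:DGViterbo-finite-dim} uses Moore rather than standard loops.
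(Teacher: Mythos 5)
Your proof is correct and follows essentially the same route as the paper: part (1) via contractibility of the space of reparametrizations (the paper factors this into a uniform rescaling onto $[0,L]$ followed by the starshapedness of nondecreasing surjections, while you write one explicit interpolation formula, which amounts to the same thing), and part (2) via the triangle inequality $\ell/r\le L/r<\delta/2$ together with the canonical geodesic-straightening homotopy of each short segment. Your extra remark that the straightened loop must still be reparametrized at unit speed using part (1) is a detail the paper glosses over, and is a correct addition rather than a divergence.
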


\begin{proof}
1. Denote $\cL^{=L}Q$ the space of loops of length $L$ and $\cL^{=L}_1Q\subset\cL^{=L}Q$ the subspace of loops of length $L$ parametrized at unit speed. Consider the intermediate space $\cL^{=L}_1Q\subset \underline{\cL}^{=L}Q\subset \cL^{=L}Q$ of loops of length $L$ parametrized on the interval $[0,L]$. 
\begin{itemize}
\item The inclusion $\underline{\cL}^{=L}Q\subset \cL^{=L}Q$ is a strong deformation retract since the group $\R^*_+$ of reparametrizations that rescale the speed uniformly is contractible.  The retraction $\cL^{=L}Q\to \underline{\cL}^{=L}Q$ associates to a loop $\gamma$ the unique reparametrization $\gamma(\sigma\cdot)$, $\sigma>0$ whose interval of definition is $[0,L]$.
\item The inclusion $\cL^{=L}_1Q\subset \underline{\cL}^{=L}Q$ is a strong deformation retract because, for a given loop $\gamma$, the possible parametrizations are in bijective correspondence with nondecreasing maps $[0,L]\to [0,L]$ that are equal to $0$, resp. $L$ at the endpoints, and the space of such maps is contractible because it is starshaped with respect to the identity map. (A given parametrization determines such a map by associating to a parameter $t\in[0,L]$ the length of $\gamma|_{[0,t]}$, and the identity map corresponds to the unique reparametrization of the loop at unit speed.)  The retraction $\underline{\cL}^{=L}Q\to \cL^{=L}_1Q$ associates to a loop its unique reparametrization at unit speed.
\end{itemize}
 The above retractions are canonical and, by considering all lengths $\le L$ simultaneously, they fit into a retraction $\cL^{\le L}Q\to \cL^{\le L}_1Q$.

2. It is enough to choose $r(L)$ large enough such that $L/r(L)<\delta/2$, where $\delta>0$ is defined as in~\eqref{eq:delta}. This ensures that the image of $\ev_r|_{\cL^{\le L}_1Q}$ belongs to $\cL^r Q$. That the composition $\geo^r\circ\ev_r$ is homotopic to the identity on $\cL^{\le L}_1Q$ follows from the fact that any path of length $<\delta/2$ is canonically homotopic to the unique geodesic path connecting its endpoints. See also~\cite[Proposition~11.2.4]{Abouzaid-cotangent} for a similar argument. 
\end{proof}

The previous discussion carries over word for word componentwise on the free loop space. For each free homotopy class of loops $a$, which determines a connected component $\cL_aQ$, we set 
$$
\cL^r_aQ=\{\q\in\cL^rQ\, : \, \geo^r(\q)\in\cL_aQ\}\subset \cL^rQ
$$
and 
$$
\geo^r_a=\geo^r|_{\cL^r_aQ}:\cL^r_aQ\to\cL_aQ.
$$
Since $\geo^r$ is continuous over $\cL^rQ$, each subspace $\cL^r_aQ\subset \cL^rQ$ is a union of connected components. Moreover
$$
\colim \geo_a^r:\colim \cL_a^r Q\to \cL_a Q 
$$
is a homotopy equivalence. 

A statement similar to Lemma~\ref{lem:cLL1Q} holds for the map $\ev^a_r=\ev_r|_{\cL_aQ}$. We omit the details.

\subsection{DG Morse homology in the finite dimensional approximation} \label{sec:DGViterbo-Morse-DG}

Our definition of DG Morse homology in~\cite{BDHO} (see~\S\ref{sec:DGMorse}) uses in an essential way basepoints for the connected components of the spaces under study, and we now discuss these. For each connected component $\cL_aQ$ we choose as basepoint a geodesic loop $\gamma_a$ parametrized by arc length on an interval $[0,L_a]$, with $L_a$ the length of $\gamma_a$. If $a=0$ we choose $\gamma_a$ to be constant. There exists an integer $r_a$ such that, for $r\ge r_a$, we have 
$$
(\gamma_a(0),\gamma_a(\tfrac{1}{r}L_a),\dots, \gamma_a(\tfrac{r-1}{r}L_a))\in\cL^r_aQ.
$$
We fix such an integer $r_a$ and we set 
$$
\q_a^{r_a}=(\gamma_a(0),\gamma_a(\tfrac{1}{r_a}L_a),\dots, \gamma_a(\tfrac{r_a-1}{r_a}L_a)),
$$ 
and 
$$
\q_a^r=\iota^{r-1}\circ\dots\circ\iota^{r_a}(\q_a^{r_a})
$$
for $r\ge r_a$. It follows from the construction that $\q_a^r\in\cL_a^rQ$. We henceforth fix this as a basepoint and we note that $\geo_a^r(\q_a^r)=\gamma_a$ since $\gamma_a$ is a geodesic loop. We denote by $\cL_{a*}^rQ$ the connected component of $\q_a^r$ in $\cL_a^rQ$, and $\geo_{a*}^r=\geo_a^r|_{\cL_{a*}^rQ}$. Although $\cL^r_aQ$ may be disconnected for some values of $r$, it still does hold that 
\begin{equation} \label{eq:pointedLra}
\colim \geo^r_{a*}: \cL_{a*}^rQ\stackrel\sim\longrightarrow\cL_aQ
\end{equation}
is a homotopy equivalence. Indeed, any path in $\cL_aQ$ can be deformed into a path lying in $\geo_{a*}^r(\cL_{a*}^r)$ for $r$ large enough.

{\bf Notation.} In view of~\eqref{eq:pointedLra}, we shall write in the sequel $\cL_a^rQ$ instead of $\cL_{a*}^rQ$ and $\geo^r_a$ instead of $\geo_{a*}^r$, discarding from the discussion the components of $\cL_a^rQ$ other than the one containing the basepoint $\q^r_a$. Accordingly, we denote $\cL^rQ=\sqcup_a\cL^r_aQ$. 
The maps $\geo_a^r:(\cL_a^rQ,\q_a^r)\to(\cL_aQ,\gamma_a)$ preserve the basepoints by construction. As a consequence, a DG local system $\cF$ on $\cL Q$ induces DG local systems 
$$
\cF^r=(\geo^r)^*\cF
$$
on the spaces $\cL^r Q$, and we have
$$
(\iota^r)^*\cF^{r+1}=\cF^r
$$
by~\eqref{eq:geo-iota}. We denote by $\cF^r_a$ the DG local system on the component $\cL^r_aQ$ of $\cL^rQ$. 

We choose enriched Morse data $\Xi^r=\sqcup_a\Xi^r_a$ on $\cL^rQ$ for $r\ge 1$, consisting of the following: 
\begin{itemize}
\item a Morse function $f^r:\cL^rQ\to \R$ with outward pointing gradient along $\p \cL^rQ$. We denote $f^r_a$ its restriction to $\cL^r_aQ$. 
\item a Morse-Smale negative pseudo-gradient vector field $\xi^r$ (pointing inwards along the boundary $\p \cL^rQ$). 
\item a representing chain system $(s^r_{x,y})$ for the compactified moduli spaces $\ol\cL(x;y)$ of $\xi^r$-trajectories connecting critical points $x,y\in\Crit(f^r)$.    
\item for each connected component $\cL^r_aQ$, an embedded tree $\cY^r_a\subset\cL^r_aQ$ connecting the basepoint $\q^r_a$ to the critical points of $f^r_a$. 
\item for each connected component $\cL^r_aQ$, a homotopy inverse $\theta^r_a$ for the collapsing map $\cL^r_aQ\to\cL^r_aQ/\cY^r_a$. 
\end{itemize}

We denote Morse homology with coefficients in $\cF^r$ by $H_*(\cL^rQ,\Xi^r;\cF^r)$, and for each connected component $\cL_a^rQ$ of $\cL^rQ$ we denote Morse homology with coefficients in $\cF_a^r$ by $H_*(\cL^r_a,\Xi^r_a;\cF^r_a)$. The corresponding Morse complexes are respectively denoted by $C_*(\cL^r,\Xi^r;\cF^r)$ and $C_*(\cL^r_a,\Xi^r_a;\cF^r_a)$. 

Following~\cite[\S 13]{BDHO}, the embeddings $\iota^r_a:\cL^r_aQ\to\cL^{r+1}_aQ$ admit chain level Morse models 
$$
\iota^r_a:C_*(\cL^r_a,\Xi^r_a;\cF^r_a)\to C_*(\cL^{r+1}_a,\Xi^{r+1}_a;\cF^{r+1}_a),
$$
and these induce in the limit isomorphisms 
$$
\colim_r \, (\iota^r_a)_*:\colim_r \,  H_*(f^r_a,\Xi^r_a;\cF^r_a)\stackrel\simeq\longrightarrow H_*(\cL_aQ;\cF_a).
$$

\subsection{From Floer to Morse: moduli spaces} \label{sec:DGViterbo-moduli}

In~\S\ref{sec:discs-one+-representing-chains}-\S\ref{sec:discs-one+-chains-in-paths} we discuss moduli spaces of discs with 1 positive puncture and boundary on the zero section $Q\subset T^*Q$. In~\S\ref{sec:interpolatingFM-representing-chains}-\S\ref{sec:interpolatingFM-chains} we discuss moduli spaces of configurations consisting of punctured discs and gradient trajectories that interpolate between Floer chains and Morse chains. Finally, in~\S\ref{sec:DGViterbo-proof} we give the proof of Theorem~\ref{thm:Viterbo_iso_loc_sys_DG} and Proposition~\ref{prop:Viterbo-iso-energy-zero}. 

For readability purposes we opted for a bit of redundancy: Proposition~\ref{prop:sx} is not strictly speaking used in the proof of the main theorem, but we feel that clarity is added by leading in parallel the 
discussions of moduli spaces of discs on the one hand, and moduli spaces of interpolating configurations on the other hand. 

\subsubsection{Discs with 1 positive puncture: representing chain systems} \label{sec:discs-one+-representing-chains}

Let $(H,J)$ be a regular admissible pair as in the definition of symplectic homology, see~\S\ref{sec:symplectic_homology}. For each $x\in\Per(H)$ we denote $\Psi_x:[0,1]\to \Sp(2n)$, $\Psi_x(0)=\one$ the linearization of the Hamiltonian flow of $H$ along $x$, read in the canonical trivialization of $TT^*Q$ along $x$. We consider two contractible spaces of Fredholm operators:
\begin{itemize}
\item The space $\cD^+_x$ of Cauchy-Riemann operators on the sphere with one positive puncture and asymptotic data given by $\Psi_x$. We denote by $\det^+_x\to\cD^+_x$ the determinant line bundle, supported in degree $n+\CZ(x)$ equal to the index. This is an orientable line bundle because $\cD^+_x$ is contractible. 
\item The space $\cD^-_x$ of Cauchy-Riemann operators on the sphere with one negative puncture and asymptotic data given by $\Psi_x$. We denote by $\det^-_x\to\cD^-_x$ the determinant line bundle, supported in degree $n-\CZ(x)$ equal to the index. This line bundle is also orientable because $\cD^-_x$ is contractible. 
\end{itemize}
A choice of orientation for $\det^+_x$ is equivalent to a choice of orientation for $\det^-_x$, and the equivalence is induced by requiring that the gluing isomorphisms preserve orientations: the gluing of two operators in $\cD^\pm_x$ is a Cauchy-Riemann operator on the Riemann sphere, and the determinant line over the space of such operators is canonically oriented by the  orientation of the determinant lines of complex linear Cauchy-Riemann operators. 

We choose once and for all orientations $o^+_x$ of the determinant line bundles $\det^+_x$, $x\in\Per(H)$ or, equivalently, orientations $o^-_x$ of the determinant line bundles $\det^-_x$, $x\in\Per(H)$. These in turn determine coherent orientations for the moduli spaces of Floer trajectories $\cM(x;y)$, $x,y\in\Per(H)$, as described in~\S\ref{sec:conventions_Floer}.

We consider a monotone homotopy of Hamiltonians and almost complex structures $(H_s,J_s)$, $s\ge 0$ defined on the positive half-cylinder $[0,\infty)\times S^1$, such that:
\begin{itemize}
\item the Hamiltonians have constant slope at infinity, 
\item $H_0|_Q\equiv0$,
\item $(H_s,J_s)=(H,J)$ for $s\gg0$.
\end{itemize}
Given $x\in\Per(H)$ we define 
\begin{align*}
\cM(x)=\{ u:[0,\infty)\times S^1\to T^*Q\, : \, \, & \p_su+J_{s,t}(u)(\p_t u-X_{H_{s,t}}(u))=0,\\
& u(0,t)\in Q, \\
& \lim_{s\to+\infty}u(s,\cdot)=x(\cdot) \}.
\end{align*}

For a generic choice of Floer data the moduli space $\cM(x)$ is a smooth manifold. Its dimension and its behavior with respect to coherent orientations are explained by the next proposition.

Let $\ev:\cM(x)\to\cL Q$ be the \emph{evaluation map} given by $u\mapsto u(0,\cdot)$. Recall the definition of the degree $|x|=\CZ(x)+w(x)$ from~\eqref{eq:degreex}. 
 
\begin{proposition}[{\cite[Lemmas~12.3.1 and~12.3.2]{Abouzaid-cotangent}}] \label{prop:Mx} 
For any regular pair $(H,J)$, the following hold:
\enumerate{
\item The dimension of the moduli space $\cM(x)$ is 
$$
\dim\cM(x)=|x|.
$$
\item There is a canonical isomorphism 
$$
|{\det}^-_x|[w(x)]\otimes |\cM(x)| \simeq \ev^*\eta^{-1}.
$$}
\qed
\end{proposition}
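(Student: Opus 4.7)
My plan is to treat the two parts in parallel, since they both rest on linearization at a solution $u\in\cM(x)$ and a careful analysis of the resulting Cauchy-Riemann operator on the half-cylinder with Lagrangian boundary condition $TQ$.

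For part 1, I would pick $u\in\cM(x)$ and consider the linearized operator $D_u$ on sections of $u^*TT^*Q$ with boundary values along $s=0$ in $(u(0,\cdot))^*TQ$ and asymptotic condition at $+\infty$ dictated by the path of symplectic matrices $\Psi_x$. Using the complexification isomorphism $TT^*Q\simeq \pi^*TQ\otimes_\R\C$ together with Abouzaid's canonical trivialization of $TT^*Q$ along any loop (where the Gauss map of $\det(\pi^*TQ)$ has degree $0$ or $1$ according as $\gamma^*TQ$ is orientable or not), the operator $D_u$ becomes a standard totally-real boundary value problem on $[0,\infty)\times S^1\simeq \D\setminus\{p\}$ with asymptotic operator of Conley-Zehnder index $\CZ(x)$. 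The Riemann-Roch formula in this setting gives $\ind D_u=\CZ(x)+\mu(L)$, where $\mu(L)$ is the Maslov index of the loop of totally real subspaces $L=(u(0,\cdot))^*TQ\subset \C^n$ in the canonical trivialization. I would then verify case by case that $\mu(L)=w(x)$: if $(u(0,\cdot))^*TQ$ is orientable then the trivialization extends and $L$ is constant, so $\mu(L)=0=w(x)$; if it is nonorientable then Abouzaid's trivialization introduces a half-twist, yielding $\mu(L)=-1=w(x)$. This gives $\dim\cM(x)=|x|$.

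For part 2, I would use linear gluing. Gluing $u$ with an arbitrary operator $D^-\in\cD^-_x$ (disc with one negative puncture asymptotic to $x$) along the positive puncture of $u$ produces a Cauchy-Riemann operator $D^{\mathrm{disc}}(u)$ on the closed disc with totally real boundary $(u(0,\cdot))^*TQ$. Linear gluing (as recalled in \S\ref{sec:orientations}) provides a canonical isomorphism
\[
|\det D_u|\otimes|\det D^-|\ \simeq\ |\det D^{\mathrm{disc}}(u)|,
\]
and since $D_u$ is surjective, $|\det D_u|=|\cM(x)|_{[u]}$. Thus the statement reduces to exhibiting a canonical identification $|\det D^{\mathrm{disc}}(u)|\simeq \ev^*\eta^{-1}[-w(x)]|_{[u]}$ that depends only on the boundary loop $\ev(u)=u(0,\cdot)\in\cL Q$ and varies continuously in families, because once this is established, shifting both sides by $[w(x)]$ and using $|\det D^-_x|=|\det^-_x|$ yields the claim.

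For the identification of $|\det D^{\mathrm{disc}}(u)|$ with $\ev^*\eta^{-1}[-w(x)]$, I would invoke the standard orientation theory for Cauchy-Riemann operators on a disc with totally real boundary condition $L\to S^1$ pulled back from $TQ$ along a loop $\gamma:S^1\to Q$, in the form worked out by Fukaya-Oh-Ohta-Ono, Seidel, and Solomon, and used in~\cite{Abouzaid-cotangent}. The determinant line of such an operator decomposes canonically as a tensor product of contributions coming from (i) a spin-structure-type obstruction on $L$ — the transgression of $w_2(TQ)$ which is exactly $\sigma$; (ii) an orientation of the fiber $L_{\gamma(0)}=T_{\gamma(0)}Q$ at the basepoint — encoded by $\mu=\ev_0^\ast|Q|^{-1}$; and (iii) a contribution from the orientability of the loop bundle $\gamma^\ast TQ\to S^1$ — encoded by $\tilde o=\ev_0^\ast \underline{|Q|}^{-w}$, with the degree shift $-w(x)$ accounting for the fact that in the nonorientable case the canonical orientation line lives in the appropriate degree dictated by the Maslov index. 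Combining (i)-(iii) gives $|\det D^{\mathrm{disc}}|\simeq \ev^*(\sigma\otimes\mu\otimes\tilde o)^{-1}[-w(x)]=\ev^*\eta^{-1}[-w(x)]$.

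The main technical obstacle I anticipate is bookkeeping of signs and grading shifts, especially reconciling Abouzaid's convention for the canonical trivialization (which forces $\mu(L)=w(x)$) with the conventions in the orientation theory for bordered Riemann surfaces (which naturally involve the Maslov index in the degree shift); getting the shift by $[w(x)]$ consistent on both sides of the gluing isomorphism is where I expect the most care to be needed. Once this is pinned down, coherence of the isomorphism in families of $u$'s follows from the continuity of linear gluing and of the coherent orientation machinery.
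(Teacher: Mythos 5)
Your approach coincides with the paper's, which cites the proposition from Abouzaid's book (Lemmas 12.3.1--12.3.2) and sketches exactly the argument you give: glue an operator from $\cD^-_x$ to the linearization $D_u$ at $u\in\cM(x)$ to produce a Cauchy--Riemann operator on the disc with totally real boundary condition $TQ$ along $\ev(u)$, and then identify $\eta^{-1}=(\sigma\otimes\mu\otimes\tilde o)^{-1}$ with the orientation local system of the index bundle for such disc operators (Abouzaid's Proposition 12.2.8). The one claim in your Part 1 that deserves explicit verification is $\mu(L)=w(x)$: Abouzaid normalizes the canonical trivialization so that the Gauss map has degree $0$ or $+1$, while $w$ takes values in $\{0,-1\}$, so the equality relies on an orientation-reversal hidden in the conventions (the boundary circle of $[0,\infty)\times S^1$ is traced with the opposite orientation to that of the asymptotic orbit), which is exactly the kind of sign bookkeeping you rightly flag as the main technical obstacle.
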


The isomorphism from Proposition~\ref{prop:Mx} is a consequence of the gluing theorem for Cauchy-Riemann operators: gluing an operator $D^-_x\in\cD^-_x$ to the operator $D_u$ that linearizes the Floer equation at an element $u\in \cM(x)$ results in a Cauchy-Riemann operator on the disc, with Lagrangian boundary condition given by the restriction of $TQ$ to the loop $\ev(u)$. The fact that underlies Proposition~\ref{prop:Mx} is that the local system $\eta^{-1}$ describes the index bundle over the space of such Cauchy-Riemann operators on the disc, see \cite[Proposition~12.2.8 and Lemma~12.3.1]{Abouzaid-cotangent}. 

The Floer compactification of $\cM(x)$, denoted $\ol\cM(x)$, is a manifold with boundary with corners. The evaluation map extends continuously to the compactification, where we denote it $\ol\ev:\ol\cM(x)\to\cL Q$. Without taking into account the orientations, the boundary of $\ol\cM(x)$ can be expressed as 
\begin{equation} \label{eq:pMx}
\p \ol\cM(x)=\bigcup_{y\in\Per(H)} \ol\cM(x;y)\times \ol\cM(y),
\end{equation}
where $\ol\cM(x;y)$ is the compactified moduli space of Floer trajectories connecting $x$ to $y$, see~\S\ref{sec:conventions_Floer}-\S\ref{sec:twisting-cocycle}. 

Note that the isomorphism from Proposition~\ref{prop:Mx} can be improved further to $|{\det}^-_x|[w(x)]\otimes |\cM(x)| \simeq \eta_x^{-1}$, since any loop of the form $\ev(u)$, $u\in\cM(x)$ is homotopic to $x$ along $u$. As a consequence we see that the moduli spaces $\cM(x)$ are orientable, and so are the compactified moduli spaces $\ol\cM(x)$. However, since the canonical trivialization of $|\cM(x)|$ depends on the whole moduli space, in order to understand the algebraic relation implied by the decomposition~\eqref{eq:pMx} it is most convenient to work with fundamental classes with coefficients in $|\ol\cM(x)|[\dim\ol\cM(x)]\simeq \ol\ev^*\ueta^{-1}$. 

\begin{proposition} \label{prop:sx} Assume $(H,J)$ is a regular pair. Let $s_{x,y}$ be a representing chain system for the moduli spaces of Floer trajectories $\ol\cM(x;y)$, $x,y\in\Per(H)$. There exists a collection of chains 
$$
s_x\in C_{|x|}(\ol\cM(x);\ol\ev^*\ueta^{-1}),\quad x\in\Per(H)
$$
with the following properties:
\begin{enumerate}
\item Each $s_x$ is a cycle rel boundary and it represents the fundamental class $[\ol\cM(x)]$ with coefficients in $|\cM(x)|$. (This is canonically defined.) 
\item Each $s_x$ satisfies 
\begin{equation} \label{eq:psx}
\p s_x = \sum_y s_{x,y}\times s_y.
\end{equation}
The product of chains is defined via the inclusion $\ol\cM(x;y)\times\ol\cM(y)\subset \p\ol\cM(x)\subset \ol\cM(x)$, which induces a map 
$$
C_*(\ol\cM(x;y))\, \otimes  \, C_*(\ol\cM(y);\ol\ev^*\ueta^{-1})
\stackrel\times\longrightarrow C_*(\ol\cM(x);\ol\ev^*\ueta^{-1}).
$$
\end{enumerate}
\end{proposition}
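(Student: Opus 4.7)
The plan is to prove Proposition~\ref{prop:sx} by induction on $|x|$, following the Barraud--Cornea strategy already used in Proposition~\ref{representingchainFloer}, but now with twisted coefficients.

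For the base case $|x|=0$, the moduli space $\ol\cM(x)=\cM(x)$ is a compact oriented $0$-manifold: no broken configuration $\ol\cM(x;y)\times\ol\cM(y)$ can contribute to the boundary, since such a splitting would require $|y|<0$ and hence $\ol\cM(y)=\varnothing$. By Proposition~\ref{prop:Mx}, each point of $\cM(x)$ carries a canonical generator of the fiber of $\ol\ev^*\ueta^{-1}$, and $s_x$ is defined to be the sum of these distinguished generators over the points of $\cM(x)$. It is automatically a cycle (no boundary) and represents $[\ol\cM(x)]$.

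For the inductive step, assume $s_y$ has been constructed for all $y\in\Per(H)$ with $|y|<|x|$, and set
$$
b_x \;:=\; \sum_y s_{x,y}\times s_y \;\in\; C_{|x|-1}\bigl(\p\ol\cM(x);\,\ol\ev^*\ueta^{-1}\bigr),
$$
where the product is defined via $\ol\cM(x;y)\times\ol\cM(y)\hookrightarrow\p\ol\cM(x)$ after identifying the restriction of $\ol\ev^*\ueta^{-1}$ to a boundary face with the external tensor product of the trivial local system on $\ol\cM(x;y)$ and $\ol\ev^*\ueta^{-1}$ on $\ol\cM(y)$, using that the loop $\ev(v)$ is canonically homotopic to $\ev$ of the glued configuration for $v\in\cM(y)$. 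A direct computation using $\p s_{x,y}=\sum_z (-1)^{|x|-|z|}s_{x,z}\times s_{z,y}$ from Proposition~\ref{representingchainFloer} together with the inductive hypothesis $\p s_y=\sum_z s_{y,z}\times s_z$ yields, after reindexing, $\p b_x=0$. Next, one verifies that $b_x$ represents the fundamental class of $\p\ol\cM(x)$ with coefficients in $\ol\ev^*\ueta^{-1}|_{\p\ol\cM(x)}$: on each codimension $1$ stratum, the K\"unneth product of a trivially oriented fundamental class with a twisted fundamental class gives the desired local representative. Once this is established, the pair $(\ol\cM(x),\p\ol\cM(x))$ admits a fundamental class rel boundary with coefficients in $\ol\ev^*\ueta^{-1}$, and we can lift $b_x$ to a chain $s_x\in C_{|x|}(\ol\cM(x);\ol\ev^*\ueta^{-1})$ with $\p s_x=b_x$ that represents the fundamental class, exactly as in the classical Barraud--Cornea argument.

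The main technical obstacle will be the orientation bookkeeping: one must check that the canonical trivialization $|\det^-_x|[w(x)]\otimes |\cM(x)|\simeq \ev^*\eta^{-1}$ from Proposition~\ref{prop:Mx} is compatible, on each boundary face $\ol\cM(x;y)\times\ol\cM(y)$, with the coherent orientation of $\ol\cM(x;y)$ from \S\ref{sec:orientations-twisting-cocycle} together with the trivialization $|\det^-_y|[w(y)]\otimes|\cM(y)|\simeq \ev^*\eta^{-1}$. Concretely, gluing an operator $D^-_x\in\cD^-_x$ to the linearized Floer operator at a trajectory $u\in\ol\cM(x;y)$ produces an element of $\cD^-_y$, and the chosen orientations $o^-_x$ propagate coherently to $o^-_y$; combining this with the identity $w(x)=w(y)$ (since $x$ and $y$ lie in the same free homotopy class) and the gluing rule for stabilized Cauchy--Riemann operators of Lemma~\ref{lem:gluing-stabilized-CR} produces the correct sign match with no extra twist. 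Once the orientation compatibility is verified on a single stratum, coherence of the family of orientations as in~\cite{FH-coherent} propagates the identification to all of $\ol\cM(x)$, and the inductive construction goes through.
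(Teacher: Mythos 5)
Your proposal is correct and follows essentially the same route as the paper: induction on $|x|\ge 0$ over the Barraud--Cornea boundary decomposition $\p\ol\cM(x)=\bigcup_y\ol\cM(x;y)\times\ol\cM(y)$, with the orientation analysis reduced to the gluing map $\wh\cM(x;y)\times\cM(y)\to\cM(x)$ and the trivialization of $\ol\ev^*\ueta^{-1}$ along the glued configurations. The one point worth making explicit is that the absence of any sign in \eqref{eq:psx} comes from the canonical constant vector field $\p_{x,y}$ pointing \emph{towards} the boundary (so the product orientation on $\ol\cM(x;y)\times\ol\cM(y)$ coincides with the boundary orientation), rather than from Lemma~\ref{lem:gluing-stabilized-CR}, which is vacuous here since neither operator is stabilized.
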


\begin{proof}
We argue as in~\S\ref{sec:orientations_continuation}. There is a gluing map
$$
\wh \cM(x;y)\times \cM(y) \to \cM(x)
$$
that is a diffeomorphism from its domain to its image and that induces a canonical isomorphism of orientation local systems  
$$
|\wh \cM(x;y)|\otimes |\cM(y)|\simeq |\cM(x)|. 
$$
Consider the canonical constant vector field $\p_{x,y}$ on $\wh\cM(x;y)$, which we view under the gluing map as defining a vector field on $\cM(x)$. As in~\S\ref{sec:orientations-twisting-cocycle} we see that this vector field points towards the boundary. Using~\eqref{eq:orientMMhat} we infer a canonical isomorphism 
\begin{align*}
|\cM(x)| & \simeq |\R\p_{x,y}| \otimes |\cM(x;y)| \otimes |\cM(y)| \\
& \simeq |\R\nu^{\text{out}}| \otimes |\cM(x;y)| \otimes |\cM(y)|.
\end{align*}
Here the second isomorphism is notational: the vector field $\p_{x,y}$ points towards the boundary, and we record this by denoting it $\nu^{\text{out}}$. 

From this point on the chain representatives $s_x$ can be constructed by induction over $\deg x\ge 0$ (since the moduli spaces are transversely cut out, they are empty for $\deg x<0$). The induction step is similar to~\cite[Proposition~5.6 and Lemma~7.6]{BDHO}.
\end{proof}

\begin{remark}[Behavior under trivializations] \label{rmk:twist-by-eta} 
The local system $\oev^*\ueta^{-1}$ on $\ol\cM(x)$ is canonically isomorphic to $\ueta_x^{-1}$ since the evaluation map $\oev:\ol\cM(x)\to \cL Q\subset \cL T^*Q$ at the endpoint is canonically homotopic to the evaluation map at the initial point, i.e., the constant map equal to $x$. In this trivialization equation~\eqref{eq:psx} keeps the same form, but the product of chains is to be understood via the map  
$$
C_*(\ol\cM(x;y))\, \otimes  \, C_*(\ol\cM(y);\ueta_y^{-1})
 \stackrel\times\longrightarrow C_*(\ol\cM(x);\ueta_x^{-1})
$$
that extends linearly the following action: let $\tau:I^k\to \ol\cM(x;y)$, $\vartheta:I^\ell\to \ol\cM(y)$ be cubes and $e_y\in \ueta_y^{-1}$ be a coefficient for $\vartheta$. We denote $\tau_0=\tau(0,\dots,0)$ and view it as a path in $\cL T^*Q$ from $x$ to $y$. 
We also denote $\rho^\eta:\pi_0(\cP_{x\to y}\cL T^*Q)\to \Iso(\ueta_x^{-1},\ueta_y^{-1})$ the parallel transport for the local system $\ueta^{-1}$. Then
$$
\tau\times (e_y\vartheta) = \rho^\eta(\tau_0)^{-1}(e_y) (\tau\times \vartheta), 
$$
where the product $\tau\times \vartheta$ is understood via the map induced at chain level by the inclusions $\ol\cM(x;y)\times \ol\cM(y)\subset \p\ol\cM(x)\subset \ol\cM(x)$. 
\end{remark}

The takeaway from the previous remark is the following: 
\begin{center}
{\it Upon trivializing $\oev^*\ueta^{-1}$ on $\ol\cM(x)$, $x\in\Per(H)$ as $\ueta_x^{-1}$, the product of chains from the strata of $\p\ol\cM(x)$ gets twisted by the parallel transport of $\ueta^{-1}$.
}  
\end{center}

\subsubsection{Discs with 1 positive puncture: evaluation into path spaces} \label{sec:discs-one+-evaluation}

Given the continuation pair $(H_s,J_s)$, $s\ge 0$, let $s_+\ge 0$ be such that $(H_s,J_s)$ is constant equal to $(H,J)$ for $s\ge s_+$. Following~\S\ref{sec:eval_maps_homotopies} we complement the previous choice of representing chain system $s_x$, $x\in\Per(H)$ to a full set of \emph{monotone enriched Floer continuation data} by making the following additional choices: (i) for each free homotopy class $a$ of loops in $T^*Q$, a collapsing tree $\cY_a$ and a homotopy inverse $\theta_a$ for the projection $p_a:\cL_a T^*Q\to \cL_a T^*Q/\cY_a$. (In the sequel we will drop the mention of the free homotopy class from the notation and simply write $\cY, \theta, p$ instead of $\cY_a,\theta_a,p_a$.) (ii) a smooth cutoff function $\rho:[0,\infty)\to [0,1]$ such that $\rho(0)=0$, $\rho'>0$ on $[0,s_+ +1)$ and $\rho(s)=1$ for $s\ge s_+ +1$. (iii) a constant $C>0$ such that $\p_sH_s-\rho'(s)C<0$ for $s\in[0,s_+]$. 

Following~\S\ref{sec:eval_maps_homotopies} we denote $H_s^{\rho,C}=H_s-\rho(s)C$ and consider the map
$$
A_u:[0,+\infty]\to [0,A_H(x)+C],\qquad s\mapsto A_{H_s^{\rho,C}}(u(s,\cdot)).
$$ 
This map is strictly increasing on $[0,s_+ +1]$ and it is a bijection from an interval $I_u\subset [0,+\infty]$ onto $[0,A_H(x)+C]$, with $I_u=[0,s_+ +1]$ if $u(s,\cdot)$ is constant equal to $x$ for $s\ge s_+ +1$, or $I_u=[0,+\infty]$ if not. 

We denote $\cP_{x\to \cL Q}\cL T^*Q$ and $\cP_{x\to \cL T^*Q}\cL T^*Q$ the spaces of Moore paths in $\cL T^*Q$ with initial point $x$ and endpoint in $\cL Q$, respectively in $\cL T^*Q$. (In the second case, this simply means that we do not impose any constraint on the endpoint.) Following~\S\ref{sec:eval_maps_homotopies} we define the evaluation map 
$$
\ev^\cP_x:\cM(x)\to\cP_{x\to \cL Q}\cL T^*Q,
$$ 
by
\begin{align*}
u\mapsto \big( \ell\mapsto u(s,\cdot), \quad & \ell\in [0,A_H(x)+C], \\
& s=A_u^{-1}(A_H(x)+C-\ell)\big).
\end{align*}
This map sends a Floer continuation cylinder to the path that it defines in $\cL T^*Q$, parametrized backwards on the interval $[0,A_H(x)+C]$ by the values of the action functional $A_{H_s^{\rho,C}}$. 

Using the projection $p:\cL T^*Q\to \cL T^*Q/\cY$ and its homotopy inverse $\theta$ we now define an evaluation map 
$$
q_x:\cM(x)\to\cP_{\star\to \cL Q}\cL T^*Q. 
$$ 
We first define $q'_x:\cM(x)\to\cP_{\star\to \cL T^*Q}\cL T^*Q$ by 
\begin{align*}
u\mapsto \big( \ell\mapsto \theta \circ p (u(s,\cdot)), \quad & \ell\in [0,A_H(x)+C], \\
& s=A_u^{-1}(A_H(x)+C-\ell)\big). 
\end{align*}
This almost works as a candidate for $q_x$ except that it does not preserve the endpoint $u(0,\cdot)$, so that we correct it as follows. Let $H:[0,1]\times \cL T^*Q\to \cL T^*Q$ be a homotopy between $\Id$ and $\theta\circ p$, so that $H(0,\cdot)=\Id$ and $H(1,\cdot)=\theta\circ p$. We define $q_x$ by 
$$
q_x(u)=q'_x(u)\# (t\mapsto H(1-t, u(0,\cdot))).
$$
Then, by construction, the value of $q_x$ at its endpoint is equal to $u(0,\cdot)$. 

The evaluation maps $\ev^\cP_x$ and $q_x$ extend continuously to the compactification $\ol\cM(x)$, and we denote their extensions 
$$
\oev^\cP_x:\ol\cM(x)\to \cP_{x\to \cL Q}\cL T^*Q
$$ 
and 
$$
\overline q_x:\ol\cM(x)\to \cP_{\star\to \cL Q}\cL T^*Q.
$$

Denoting by $\ol q_+:\ol\calm(x;w)\ri \Omega\cL T^*Q$ the evaluation map defined in \S\ref{sec:twisting-cocycle}   by the data $(H,J,\caly,\theta)$, we have that the restriction of $\ol q=\ol q_x$ to the boundary $\p\ol\cM(x)$ satisfies
\begin{equation}\label{eq:relation1-Mx}
\ol q(u,v)\, =\, \ol q_+(u)\, \# \, \ol q(v)
\end{equation} 
for all $(u,v)\in \ol\calm(x;w)\times \ol\cM(w)$.

\subsubsection{Discs with 1 positive puncture: chains in paths spaces} \label{sec:discs-one+-chains-in-paths}

\paragraph{Chains in path spaces $\cP_{x\to \cL Q}\cL T^*Q$, $x\in\Per(H)$.} Consider the commutative diagram 
$$
\xymatrix{
\ol\cM(x)\ar[rr]^{\oev^\cP_x} \ar[dr]_\oev & & \cP_{x\to\cL Q}\cL T^*Q \ar[dl]^{\ev_\text{end}} \\
& \cL Q, &
}
$$
where the map $\ev_\text{end}$ associates to a path the loop at its endpoint. 
We define chains 
$$
m^\cP_x=\oev^\cP_{x*} s_x\in C_{|x|}(\cP_{x\to \cL Q}\cL T^*Q;\ev_\text{end}^*\ueta^{-1}),\quad x\in\Per(H).
$$
We further trivialize $\ev_\text{end}^*\ueta^{-1}$ over $\cP_{x\to \cL Q}\cL T^*Q$ as $\ueta_x^{-1}$ and view $m^\cP_x$ as 
$$
m^\cP_x \in C_{|x|}(\cP_{x\to \cL Q}\cL T^*Q;\ueta_x^{-1}).
$$
Denote as in~\S\ref{sec:evaluation_Floer_traj} by $\oev_{x,y}:\ol\cM(x;y)\to \cP_{x\to y}\cL T^*Q$ the evaluation of the Floer moduli spaces into the spaces of Moore paths connecting $x$ to $y$, and 
$$
m^\cP_{x,y}=(\oev_{x,y})_*s_{x,y}\in C_{|x|-|y|-1}(\cP_{x\to y}\cL T^*Q).
$$
Equation~\eqref{eq:psx} then implies 
\begin{equation} \label{eq:mPx}
\p m^\cP_x = \sum_y m^\cP_{x,y}\cdot m^\cP_y,
\end{equation}
where the product of chains 
$$
C_*(\cP_{x\to y}\cL T^*Q)\, \otimes  \, C_*(\cP_{y\to \cL Q}\cL T^*Q;  \ueta_y^{-1}) \stackrel\cdot\longrightarrow C_*(\cP_{x\to \cL Q}\cL T^*Q;\ueta_x^{-1})
$$
is induced by the composition of paths twisted by the parallel transport of $\ueta^{-1}$ along elements in $\cP_{x\to y}\cL T^*Q$, as in Remark~\ref{rmk:twist-by-eta}. 

\paragraph{Chains in path spaces $\cP_{\star\to \cL Q}\cL T^*Q$.} Consider also the commutative diagram 
$$
\xymatrix{
\ol\cM(x)\ar[rr]^{\ol q_x} \ar[dr]_\oev & & \cP_{\star\to\cL Q}\cL T^*Q \ar[dl]^{\ev_\text{end}} \\
& \cL Q &
}
$$
and define chains
$$
m_x=\ol q_{x*} s_x\in C_{|x|}(\cP_{\star\to \cL Q}\cL T^*Q;\ev_\text{end}^*\ueta^{-1}).
$$
We further trivialize $\ev_\text{end}^*\ueta^{-1}$ as $\ueta_\ast^{-1}\simeq A$ (the ground ring), so that 
$$
m_x\in C_{|x|}(\cP_{\star\to \cL Q}\cL T^*Q).
$$
Then equation~\eqref{eq:mPx} becomes 
\begin{equation} \label{eq:mx}
\p m_x=\sum_y m_{x,y}\cdot m_y,
\end{equation}
where the product of chains is to be understood via the map 
$$
C_*(\Omega \cL T^*Q)\, \otimes  \, C_*(\cP_{\star\to \cL Q}\cL T^*Q)
\longrightarrow C_*(\cP_{\star\to \cL Q}\cL T^*Q),
$$
$$
\sigma\otimes\tau\mapsto \Phi^\eta(\sigma)\cdot \tau,
$$
where we denote $\Phi^\eta=\Phi^{\ueta^{-1}}$ defined as in~\S\ref{sec:twist} and $\cdot$ is the Pontryagin product. In other words, we have proved:

\begin{proposition} 
The chains $m_x\in C_{|x|}(\cP_{\star\to \cL Q}\cL T^*Q)$, $x\in\Per(H)$ satisfy the equation 
$$
\p m_x = \sum_y m^\eta_{x,y}\cdot m_y,
$$
where $\{m^\eta_{x,y}\}$ is the cocycle from~\S\ref{sec:twist} defined by
$$
m^\eta_{x,y}=\Phi^\eta(m_{x,y}).
$$
The product of chains is induced by the concatenation of paths 
$$\Omega \cL T^*Q\times \cP_{\star\to \cL Q}\cL T^*Q\to \cP_{\star\to \cL Q}\cL T^*Q.$$  
\qed
\end{proposition}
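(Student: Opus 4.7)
The plan is that this proposition is essentially a direct consequence of Proposition~\ref{prop:sx} combined with the boundary compatibility~\eqref{eq:relation1-Mx} and the behavior of the coefficient system $\overline{ev}^*\underline\eta^{-1}$ under trivialization. In particular, the statement unpacks the intermediate equation~\eqref{eq:mPx} one step further by pushing forward via $\overline q_x$ instead of $\overline{ev}^\cP_x$, and identifying the resulting twist with the action of $\Phi^\eta$.

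First, I would start from the relation $\partial s_x = \sum_y s_{x,y}\times s_y$ from Proposition~\ref{prop:sx}, with chains valued in $\overline{ev}^*\underline\eta^{-1}$. Trivializing this local system on each $\overline\cM(x)$ as $\underline\eta_x^{-1}$ (using that $\overline{ev}$ is canonically homotopic on $\overline\cM(x)$ to the constant map at $x$), and similarly on each $\overline\cM(y)$ as $\underline\eta_y^{-1}$, the product on the right-hand side acquires a twist by the parallel transport of $\underline\eta^{-1}$ along a representative path in $\overline\cM(x;y)$ from $x$ to $y$, exactly as described in Remark~\ref{rmk:twist-by-eta}.

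Next, I would apply $\overline q_{x*}$ to both sides and use~\eqref{eq:relation1-Mx} to identify $\overline q_x|_{\overline\cM(x;y)\times \overline\cM(y)}$ with the composition of $\overline q_+\times \overline q_y$ followed by concatenation of paths. The additional homotopy correction built into the definition of $\overline q_x$ (via the homotopy $H$ between $\mathrm{Id}$ and $\theta\circ p$) ensures that the output paths start at the basepoint $\ast$, so that $\overline q_{x*}s_{x,y}$ lies in $C_*(\Omega\cL T^*Q)$ and $\overline q_{x*}s_y$ lies in $C_*(\cP_{\ast\to\cL Q}\cL T^*Q)$. The pushforward of the trivialized coefficient system $\underline\eta_x^{-1}$ becomes $\underline\eta_\ast^{-1}\simeq A$, so $m_x$ has integer coefficients.

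Finally, I would identify the twisted product with the action of $\Phi^\eta$. This is where the rank-one hypothesis is crucial: since $\underline\eta^{-1}$ is a classical local system of rank one supported in a single degree, its parallel transport along a loop depends only on the free homotopy class of the loop, and hence is \emph{constant on each cube} $\sigma:I^k\to\Omega\cL T^*Q$. This constant value is precisely $\hat\rho^\eta(\sigma_0)$, so the twisted concatenation $\sigma\cdot_\eta\tau$ equals $\hat\rho^\eta(\sigma_0)\sigma\cdot\tau=\Phi^\eta(\sigma)\cdot\tau$. Applying this identification term by term to $\sum_y \overline q_{x*}(s_{x,y}\times s_y)$ yields
\[
\partial m_x=\sum_y \Phi^\eta(m_{x,y})\cdot m_y=\sum_y m^\eta_{x,y}\cdot m_y,
\]
as required. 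The only step requiring genuine care is the verification that the parallel transport twist reduces to multiplication by $\hat\rho^\eta(\sigma_0)$ on each cube; once this is noted, the rest is a bookkeeping unwinding of definitions.
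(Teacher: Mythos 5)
Your proof is correct and takes essentially the same route as the paper: apply $\ol q_{x*}$ to Proposition~\ref{prop:sx} (equivalently, push forward the intermediate relation~\eqref{eq:mPx}), use~\eqref{eq:relation1-Mx} to match boundary strata with concatenated paths, and observe that the parallel-transport twist of Remark~\ref{rmk:twist-by-eta} reduces, for the rank-one degree-zero local system $\ueta^{-1}$, to multiplication by $\hat\rho^\eta(\sigma_0)$, hence to $\Phi^\eta$. The paper treats the proposition as an immediate reformulation of equation~\eqref{eq:mx}, which is derived in precisely this way in the preceding paragraph.
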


\subsubsection{Interpolating from Floer to Morse: representing chain systems} \label{sec:interpolatingFM-representing-chains}

We now consider the map 
$$
\ev_r:\cM(x)\to Q^r,\qquad u\mapsto \Big(\ev(u)(0),\ev(u)(\tfrac{1}{r}),\dots,\ev(u)(\tfrac{r-1}r)\Big), 
$$
and denote $\ol\ev_r:\ol\cM(x)\to Q^r$ its extension to the compactification. For $r$ large enough, the image of $\ol\ev_r$ lies in $\cL^rQ$~\cite[Lemma~12.3.5]{Abouzaid-cotangent}. 

Given $x\in\Per(H)$ and $y\in\Crit(f^r)$ we form the moduli spaces 
$$
\cB(x;y)=\cM(x)\times_{\ev_r} W^s(y),
$$
where $W^s(y)$ is the stable manifold of $y$ with respect to the negative pseudo-gradient $\xi^r$.  Each moduli space $\cB(x;y)$ is endowed with a canonical evaluation map inherited from $\cM(x)$ and denoted also
$$
\ev_r:\cB(x;y)\to Q^r.
$$
The Floer compactification of $\cB(x;y)$ is $\ol\cB(x;y)=\ol\cM(x)\times_{\oev_r} \ol{W}^s(y)$. It inherits from $\ol\cM(x)$ an evaluation map that extends $\ev_r$, denoted also 
$$
\oev_r:\ol\cB(x;y)\to Q^r
$$ 
and whose image is also contained in $\cL^r Q$ for $r$ large enough.

We denote $\pr_1:\cB(x;y)\to \cM(x)$ the first projection and $\pr_2:\cB(x;y)\to W^s(y)$ the second projection.  

\begin{proposition}[{\cite[Lemma~12.3.7]{Abouzaid-cotangent}}]
For a generic choice of pseudo-gradient $\xi^r$ the moduli space $\cB(x;y)$ is a smooth manifold of dimension 
$$
\dim\cB(x;y)=|x|-\ind(y).
$$
Moreover, we have a canonical isomorphism 
\begin{equation} \label{eq:iso-ori-Bxy}
|{\det}^-_x|[w(x)]\otimes |\cB(x;y)|\otimes \pr_2^*|W^u(y)|\simeq\ev_r^*\eta^{-1},
\end{equation}
where we view $|W^u(y)|$ as a trivial bundle over $W^s(y)$, and we view $\eta^{-1}$ as a local system on $\cL^r Q$ by pulling it back from $\cL Q$ by the map $\geo^r$.  
\end{proposition}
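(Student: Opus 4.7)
The proof proceeds in two steps: first establish smoothness and the dimension count by transversality, then deduce the orientation isomorphism from the corresponding one for $\cM(x)$ in Proposition~\ref{prop:Mx} combined with the fiber product convention of~\S\ref{sec:orientations}.

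For the first step, the moduli space $\cB(x;y) = \cM(x) \times_{\ev_r} W^s(y)$ is cut out by requiring the evaluation $\ev_r:\cM(x)\to\cL^rQ$ to land in $W^s(y)$. For generic $\xi^r$ (equivalently, for a generic small perturbation of the stable manifold $W^s(y)$), the map $\ev_r$ is transverse to $W^s(y)$, so $\cB(x;y)$ is a smooth manifold. The dimension count is straightforward:
$$\dim\cB(x;y) = \dim\cM(x) + \dim W^s(y) - \dim\cL^rQ = |x| - \ind(y),$$
using $\dim\cM(x) = |x|$ from Proposition~\ref{prop:Mx} and $\dim W^s(y) = \dim\cL^rQ - \ind(y)$.

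For the orientation isomorphism, I would combine three ingredients. First, the fiber product convention of~\S\ref{sec:orientations} applied to the short exact sequence $0\to T\cB(x;y)\to T\cM(x)\oplus TW^s(y)\to T\cL^rQ|_y\to 0$ induces a canonical isomorphism
$$|\cB(x;y)|\otimes |T\cL^rQ|_y \simeq |\cM(x)|\otimes |W^s(y)|.$$
Second, at the critical point $y$ the splitting $T_y\cL^rQ = T_yW^u(y)\oplus T_yW^s(y)$ and the direct sum convention of~\S\ref{sec:orientations} yield a canonical isomorphism $|T\cL^rQ|_y \simeq |W^u(y)|\otimes |W^s(y)|$, which (using the evaluation isomorphism~\eqref{eq:ori_lines_evaluation} to cancel the $|W^s(y)|$ factors) leads to
$$|\cB(x;y)|\otimes \pr_2^*|W^u(y)|\simeq |\cM(x)|.$$
Third, by Proposition~\ref{prop:Mx} we have $|{\det}^-_x|[w(x)]\otimes |\cM(x)|\simeq \ev^*\eta^{-1}$, and since for $r$ large enough the evaluation $\ev=\ev_0:\cM(x)\to\cL Q$ is canonically homotopic to $\geo^r\circ\ev_r$ (the finite dimensional approximation of~\S\ref{sec:DGViterbo-finite-dim}), we get $\ev^*\eta^{-1}\simeq \ev_r^*(\geo^r)^*\eta^{-1}=\ev_r^*\eta^{-1}$, the last equality holding because $\eta$ on $\cL^rQ$ is defined as the pullback under $\geo^r$. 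Chaining the three isomorphisms yields the claimed identification.

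The main obstacle is bookkeeping: ensuring that the conventions of~\S\ref{sec:orientations} (fiber product via the short exact sequence $0\to A\to B\to C\to 0$ giving $|A|\otimes|C|\simeq |B|$, direct sum ``fiber first, base second'', evaluation and twist signs) chain together without sign discrepancies. In particular, the order in which $|W^u(y)|$ and $|W^s(y)|$ appear in the splitting of $|T\cL^rQ|_y$ must be consistent with the direction chosen for the short exact sequence defining the fiber product; this is where one has to be most careful. Once this is handled, there are no further analytic or topological difficulties and no higher machinery beyond the conventions already fixed in~\S\ref{sec:orientations} and the input from Proposition~\ref{prop:Mx}.
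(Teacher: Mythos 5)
Your argument is correct and takes essentially the same approach as the paper's proof: the dimension count via transversality of $\ev_r$ with $W^s(y)$, the fiber-product orientation convention yielding $|\cB(x;y)|\otimes|\cL^rQ|\simeq|\cM(x)|\otimes|W^s(y)|$, the stable/unstable splitting $|\cL^rQ|\simeq|W^u(y)|\otimes|W^s(y)|$, and then Proposition~\ref{prop:Mx}.2. The only (minor) addition is that you make explicit the homotopy $\geo^r\circ\ev_r\simeq\ev$ underlying the identification $\ev^*\eta^{-1}\simeq\ev_r^*\eta^{-1}$, which the paper's proof leaves implicit.
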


\begin{proof}
The dimension formula follows from Proposition~\ref{prop:Mx}.1, since the codimension of $W^s(y)$ is equal to $\ind(y)=\dim W^u(y)$. 

The description of the isomorphism of orientation lines is the following. On the one hand, by definition of the orientation of a fiber 
product we have a canonical isomorphism 
$$
|\cB(x;y)|\otimes |\cL^r Q|\simeq |\cM(x)|\otimes |W^s(y)|. 
$$
On the other hand 
$$
|\cL^rQ|\simeq |W^u(y)|\otimes |W^s(y)|,$$
which gives 
$$
|\cB(x;y)|\otimes |W^u(y)|\simeq |\cM(x)|.
$$
(We suppressed $\pr_2^*$ from the notation of $\pr_2^*|W^u(y)|$ for readability.) The conclusion follows by combining this with Proposition~\ref{prop:Mx}.2. 
\end{proof}

The compactification $\ol\cB(x;y)$ is a manifold with boundary with corners. Without taking into account the boundary orientation we have  
$$
\p\ol\cB(x;y)=\bigcup_z \ol\cM(x;z)\times\ol\cB(z;y)\cup \bigcup_w\ol\cB(x;w)\times\ol\cL(w;y),
$$
and we now discuss the signs for the various strata of the boundary. 

Assume that we have chosen orientations of ${\det}^-_x$, $x\in\Per(H)$ and of $W^u(y)$, $y\in\Crit(f^r)$. We endow the moduli spaces of Floer trajectories $\ol\cM(x;w)$, $x,w\in\Per(H)$ with the corresponding coherent orientation, and the moduli spaces of Morse trajectories $\ol\cL(y;z)$, $y,z\in\Crit(f^r)$ with the corresponding analytic orientation (see~\cite[Appendix~A.2]{BDHO}).

\begin{proposition}  \label{prop:representing-chains-Bxy}
Let $s^{\mathrm{Floer}}_{x,w}$ be a representing chain system for the moduli spaces of Floer trajectories $\ol\cM(x;w)$, $x,w\in\Per(H)$, and $s^{\mathrm{Morse}}_{y,z}$ be a representing chain system for the moduli spaces of Morse trajectories $\ol\cL(y;z)$, $y,z\in\Crit(f^r)$. 

There exists a collection of chains 
$$
\sigma_{x,y}\in C_{|x|-\ind(y)}(\ol\cB(x;y);\ol\ev_r^*\ueta^{-1}),\quad x\in\Per(H), \ y\in\Crit(f^r),
$$
with the following properties:
\begin{enumerate}
\item Each $\sigma_{x,y}$ is a cycle rel boundary and represents the fundamental class $[\ol\cB(x;y)]$ with coefficients in $|\cB(x;y)|$. (This is canonically defined.) 
\item Each $\sigma_{x,y}$ satisfies 
\begin{equation} \label{eq:psigmaxy}
\p \sigma_{x,y} = \sum_{w\in\Per(H)} s^{\mathrm{Floer}}_{x,w}\times \sigma_{w,y} + \sum_{z\in\Crit(f^r)} (-1)^{|x|-\ind(z)-1} \sigma_{x,z}\times s^{\mathrm{Morse}}_{z,y}.
\end{equation}
The product of chains is defined via the inclusions $\ol\cM(x;w)\times\ol\cB(w;y)\subset \p\cB(x;y)\subset \cB(x;y)$ and 
$\ol\cB(x;z)\times \ol\cL(z;y)\subset\p\cB(x;y)\subset \cB(x;y)$, which induce maps 
$$
C_*(\ol\cM(x;w))\, \otimes  \, C_*(\ol\cB(w;y);\ol\ev_r^*\ueta^{-1})
 \stackrel\times\longrightarrow C_*(\ol\cB(x;y);\ol\ev_r^*\ueta^{-1}),
$$
and
$$
C_*(\ol \cB(x;z);\ol\ev_r^*\ueta^{-1})\, \otimes \, C_*(\ol\cL(z;y))  
 \stackrel\times\longrightarrow C_*(\ol\cB(x;y);\ol\ev_r^*\ueta^{-1}).
$$
\end{enumerate}
\end{proposition}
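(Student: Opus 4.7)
The plan is to follow the Barraud–Cornea inductive construction used throughout this section, adapting the sign bookkeeping to the mixed Floer–Morse setting and to the twist by the local system $\ol\ev_r^*\ueta^{-1}$. The construction proceeds by induction on $d=|x|-\ind(y)$, taking as input the existing chain systems $s^{\mathrm{Floer}}_{x,w}$ and $s^{\mathrm{Morse}}_{z,y}$ as well as the isomorphism~\eqref{eq:iso-ori-Bxy}.

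First I would pin down the boundary orientations of $\ol\cB(x;y)$. For the Floer stratum, the gluing map $\wh\cM(x;w)\times\cB(w;y)\to\cB(x;y)$ preserves coherent orientations (the Cauchy–Riemann operator on the $\wh\cM(x;w)$ side is not stabilized), and the canonical constant vector field $\p_{x,w}$ on $\wh\cM(x;w)$ points towards the boundary just as in \S\ref{sec:orientations-twisting-cocycle}; this yields relative sign $+1$ on $\ol\cM(x;w)\times\ol\cB(w;y)$. For the Morse stratum, gluing the unparametrised trajectory space $\wh\cL(z;y)$ onto $\cB(x;z)$ is analogous to gluing at the $-\infty$ end of a continuation trajectory: the canonical flow direction on $\wh\cL(z;y)$ points \emph{away} from the boundary of $\cB(x;y)$, and a discussion parallel to the one in \S\ref{sec:orientations_continuation} produces the sign $(-1)^{|x|-\ind(z)-1}$ on $\ol\cB(x;z)\times\ol\cL(z;y)$. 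This matches Proposition~\ref{representingchainFloercontinuation} exactly, with the Morse end here playing the role of the $-\infty$ Floer end there.

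The induction on $d$ is then standard. For $d<0$ the spaces are empty; for $d=0$ they consist of finitely many transversally cut-out points, and $\sigma_{x,y}$ is the canonical sum of these points with coefficients prescribed by~\eqref{eq:iso-ori-Bxy}. For the inductive step, one forms
\begin{equation*}
C_{x,y}\, =\, \sum_{w}s^{\mathrm{Floer}}_{x,w}\times\sigma_{w,y}\, +\, \sum_{z}(-1)^{|x|-\ind(z)-1}\sigma_{x,z}\times s^{\mathrm{Morse}}_{z,y}
\end{equation*}
in $C_{d-1}(\p\ol\cB(x;y);\ol\ev_r^*\ueta^{-1})$, checks using the inductive hypothesis together with the defining equations for $s^{\mathrm{Floer}}$ and $s^{\mathrm{Morse}}$ that $\p C_{x,y}=0$, and observes that $C_{x,y}$ represents the boundary of the fundamental class of $\ol\cB(x;y)$ rel boundary. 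The existence of $\sigma_{x,y}\in C_d(\ol\cB(x;y);\ol\ev_r^*\ueta^{-1})$ representing $[\ol\cB(x;y)]$ with $\p\sigma_{x,y}=C_{x,y}$ then follows from the long exact sequence of the pair $(\ol\cB(x;y),\p\ol\cB(x;y))$, exactly as in \cite[Lemma~2.2]{BC07} or \cite[Proposition~5.6]{BDHO}.

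The main obstacle is the verification $\p C_{x,y}=0$. This is intricate because the product of chains with values in $\ol\ev_r^*\ueta^{-1}$ carries the local-system coefficient on the right factor in both terms (see Remark~\ref{rmk:twist-by-eta}), while the Floer and Morse differentials come with different sign conventions ($(-1)^{|x|-|z|}$ versus $(-1)^{|x|-\ind(z)-1}$). The cancellations that must be checked take place on the codimension-two corner strata $\ol\cM(x;w)\times\ol\cM(w;w')\times\ol\cB(w';y)$, $\ol\cB(x;z)\times\ol\cL(z;z')\times\ol\cL(z';y)$, and the mixed corner $\ol\cM(x;w)\times\ol\cB(w;z)\times\ol\cL(z;y)$; they are forced by the sign conventions determined in the first step. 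The computation is routine but careful, entirely parallel to those underlying Propositions~\ref{representingchainFloer}, \ref{representingchainFloercontinuation} and \ref{representingchainparametrizedFloercontinuation}, of which the present statement is essentially a hybrid.
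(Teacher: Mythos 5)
Your proof is correct and follows essentially the same approach as the paper, which defers entirely to the analogy with Proposition~\ref{representingchainFloercontinuation} and \cite[Lemma~6.1(ii)]{BDHO}: you correctly identify the Floer stratum sign $+1$ and the Morse stratum sign $(-1)^{|x|-\ind(z)-1}$ (with the Morse end playing the role of the $-\infty$ Floer end), and you carry out the standard induction plus lift through the long exact sequence of the pair. One small slip worth correcting: in the Morse-end product $\sigma_{x,z}\times s^{\mathrm{Morse}}_{z,y}$ the local-system coefficient actually lives on the \emph{left} factor $\sigma_{x,z}$, not the right, because $\ol\ev_r$ restricted to the stratum $\ol\cB(x;z)\times\ol\cL(z;y)$ factors through $\ol\cB(x;z)$ --- it is only the Floer stratum $\ol\cM(x;w)\times\ol\cB(w;y)$ whose product needs the parallel-transport twist.
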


\begin{proof}
The analysis of signs is analogous to that of~\S\ref{sec:orientations_continuation}, and the construction of the chains $\sigma_{x,y}$ proceeds by induction as in Proposition~\ref{representingchainFloercontinuation} and~\cite[Lemma~6.1(ii)]{BDHO}.
\end{proof}

\begin{remark}[Behavior under trivializations]  \label{rmk:behavior-under-triv-Bxy} This remark is the analogue of~\ref{rmk:twist-by-eta} for the moduli spaces $\cB(x;y)$. Having chosen orientations of ${\det}^-_x$, $x\in\Per(H)$ and $W^u(y)$, $y\in\Crit(f^r)$, the isomorphism~\eqref{eq:iso-ori-Bxy} takes the form 
$$
|\cB(x,y)|[|x|-\ind(y)]\simeq \ev_r^*\ueta^{-1}.
$$
Similarly to~\S\ref{sec:discs-one+-representing-chains}, the local system $\ev_r^*\ueta^{-1}$ over $\cB(x;y)$ is canonically isomorphic to the constant local system $\ueta_x^{-1}$, and therefore 
$$
|\cB(x,y)|[|x|-\ind(y)]\simeq \ueta_x^{-1}.
$$
In this trivialization equation~\eqref{eq:psigmaxy} keeps the same form, but the product of the chains $s^{\mathrm{Floer}}_{x,w}\times \sigma_{w,y}$ is to be understood via the map  
$$
C_*(\ol\cM(x;w))\, \otimes  \, C_*(\ol\cB(w;y);\ueta_w^{-1}) \stackrel\times\longrightarrow C_*(\ol\cB(x;y);\ueta_x^{-1})
$$
that extends linearly the following action: let $\tau:I^k\to \ol\cM(x;w)$, $\vartheta:I^\ell\to \ol\cB(w;y)$ be cubes and $e_w\in \ueta_w^{-1}$ be a coefficient for $\vartheta$. Let $\tau_0=\tau(0,\dots,0)$ and view it as a path in $\cL T^*Q$ from $x$ to $w$. 
Let also $\rho^\eta:\pi_0(\cP_{x\to w}\cL T^*Q)\to \Iso(\ueta_x^{-1},\ueta_w^{-1})$ be the parallel transport for the local system $\ueta^{-1}$. Then
$$
\tau\times (e_w\vartheta) = \rho^\eta(\tau_0)^{-1}(e_w) (\tau\times \vartheta), 
$$
where the product $\tau\times \vartheta$ is understood via the map induced at chain level by the inclusions $\ol\cM(x;w)\times \ol\cB(w;y)\subset \p\ol\cB(x;y)\subset \ol\cB(x;y)$. 
\end{remark}

Similarly to~\S\ref{sec:discs-one+-representing-chains}, the takeaway from the previous remark is the following: 
\begin{center}
{\it Upon trivializing $\oev^*\ueta^{-1}$ on $\ol\cB(x;y)$, $x\in\Per(H)$, $y\in\Crit(f^r)$ as $\ueta_x^{-1}$, the product of chains from the strata $\ol\cM(x;w)\times \ol\cB(w;y)$ of $\p\ol\cM(x)$ gets twisted by the parallel transport of $\ueta^{-1}$.
}  
\end{center}

\subsubsection{Interpolating from Floer to Morse: evaluation into $\Omega\cL^r Q$} \label{sec:interpolatingFM-evaluation}

We construct in this subsection evaluation maps 
$$
\ol q_{x,y}:\ol\cB(x;y)\to \Omega\cL^r Q
$$
for $x\in\Per(H)$ and $y\in\Crit(f^r)$. 

\paragraph{Floer evaluation maps} Recall from~\S\ref{sec:discs-one+-evaluation} the choice of monotone enriched Floer continuation data for the pair $(H_s,J_s)$, consisting for each free homotopy class of loops in $T^*Q$ of a collapsing tree $\cY$, a homotopy inverse $\theta$ for the projection $p:\cL T^*Q\to \cL T^*Q/\cY$, a function $\rho:[0,\infty)\to [0,1]$ and a constant $C>0$.

\paragraph{(i) Cylinders.} This data determines as in~\S\ref{sec:evaluation_Floer_traj} evaluation maps 
$$
\ol q_+:\ol\cM(x;w)\to \Omega\cL T^*Q
$$
for $x,w\in\Per(H)$, which satisfy~\eqref{eq:relation1-Mxy}. 

\paragraph{(ii) Half-cylinders.} The above data also determines as in~\S\ref{sec:discs-one+-evaluation} evaluation maps 
$$
\ol q_x:\ol\cM(x)\to \cP_{\star\to\cL Q}\cL T^*Q
$$
for $x\in\Per(H)$, which satisfy~\eqref{eq:relation1-Mx}.

\paragraph{Morse evaluation maps.} 
We now choose enriched Morse data consisting, for each connected component $\cL^rQ$, of a collapsing tree $\cY^r$ for the elements of $\Crit(f^r)$ that belong to that component and of a homotopy inverse $\theta^r$ for the projection $p^r:\cL^rQ\to \cL^rQ/\cY^r$. (As before, we drop from the notation the mention of the connected component, which corresponds to a free homotopy class of loops.)

\paragraph{(i) Lines.} Let $\ol\cL(z;y)$ be the compactified moduli spaces of $\xi^r$-trajectories connecting $z$ at $-\infty$ to $y$ at $+\infty$. The tuple $(f^r,\xi^r,\cY^r,\theta^r)$ determines as in~\cite[Lemma~5.9]{BDHO} evaluation maps 
$$
\ol q_-:\ol\cL(z;y)\to \Omega\cL^r Q
$$ 
for $z,y\in\Crit(f^r)$, which satisfy 
$$
\ol q_-(u,v)=\ol q_-(u)\# \ol q_-(v)
$$
for all $(u,v)\in \ol\cL(z;z')\times\ol\cL(z';y)$. 

\paragraph{(ii) Half-lines.} Let $\cP_{\cL^r Q\to y}\cL^r Q$ be the space of Moore paths starting anywhere on $\cL^r Q$ and ending at $y$. The tuple $(f^r,\xi^r,\cY^r,\theta^r)$ also determines for $y\in\Crit(f^r)$ evaluation maps 
$$
\ol q_y:\ol W^s(y)\to \cP_{\cL^r Q\to \star}\cL^r Q
$$
that satisfy 
\begin{equation} \label{eq:relation1-Wsy}
\ol q_y(u,v)=\ol q_z(u) \# \ol q_-(v)
\end{equation}
for all $(u,v)\in \ol W^s(z)\times\ol\cL(z;y)$. Moreover, the maps $\ol q_y$ can be constructed such that they preserve the initial point of any broken trajectory in $\ol W^s(y)$, i.e., 
$$
\ev_0\circ \ol q_y= \ev_{\text{in}},
$$
where $\oev_\text{in}:\ol W^s(y)\to \cL^r Q$ associates to a broken trajectory its initial point. 
 
The construction of the maps $\ol q_y$ is made in two steps: 

1. There is a canonical evaluation map 
$$
\ev_y:W^s(y)\to \cP_{\cL^rQ\to y}\cL^r Q
$$ 
that associates to a \emph{$\xi^r$-half-trajectory} $\gamma:[0,\infty)\to \cL^r Q$ the path $[0,f^r(\gamma(0))-f^r(y)]\to \cL^r Q$, $\ell\mapsto \gamma(s_\ell)$, where $s_\ell\in[0,\infty]$ is uniquely determined by the condition $f^r(\gamma(s_\ell))=f^r(\gamma(0))-\ell$. In other words, we parametrize any $\xi^r$-half-trajectory in $W^s(y)$ by the values of $f^r$ shifted down by $f^r(y)$. The map $\ev_y$ admits a canonical extension to the compactification, denoted 
$$
\oev_y:\ol W^s(y)\to \cP_{\cL^rQ\to y}\cL^r Q.
$$

2. Let $H^r:[0,1]\times\cL^r Q\to \cL^r Q$ be a homotopy between $\Id$ and $\theta^r\circ p^r$, so that $H^r(0,\cdot)=\Id$ and $H^r(1,\cdot)=\theta^r\circ p^r$. We define 
$$
\ol q_y(u)=(t\mapsto H^r(t,\oev_{\text{in}}(u)))\, \# \, \theta^r\circ p^r (\oev_y(u)).
$$  

\paragraph{Putting together Floer and Morse evaluation maps.}

Recall from~\S\ref{sec:DGViterbo-finite-dim} the map 
$$
\ev_r:\cL Q\to Q^r, \qquad x\mapsto (x(0),x(\tfrac{1}{r}),\dots,x(\tfrac{r-1}{r})).
$$
Recall also the projection $\pi:T^*Q\to Q$.

\begin{proposition}
For $r$ large enough there exists a collection of evaluation maps 
$$
\ol q_{x,y}:\ol\cB(x;y)\to \Omega \cL^r Q
$$
indexed by $x\in\Per(H)$, $y\in\Crit(f^r)$ such that $\ol q=\ol q_{x,y}$ satisfies the relations 
\begin{equation}\label{eq:relation1-Bxy}
\ol q(u,v)\, =\, \ev_r \circ \pi\circ \ol q_+(u)\, \# \, \ol q(v)
\end{equation} 
for all $(u,v)\in \ol\cM(x;w)\times \ol\cB(w;y)$, and 
\begin{equation}\label{eq:relation2-Bxy}
\ol q(u,v)\, =\, \ol q(u)\, \# \, \ol q_-(v)
\end{equation} 
for all $(u,v)\in\ol\cB(x;z)\times \ol\cL(z;y)$. 
\end{proposition}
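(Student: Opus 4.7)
The natural definition is a concatenation of a projected Floer half-cylinder evaluation with a Morse half-line evaluation: for $(u_1, u_2) \in \ol\cB(x;y) = \ol\cM(x)\times_{\oev_r}\ol W^s(y)$, I would set
$$
\ol q_{x,y}(u_1, u_2) \, = \, \bigl(\ev_r\circ\pi\circ\ol q_x(u_1)\bigr) \, \# \, \ol q_y(u_2),
$$
where $\ol q_x$ is the Floer half-cylinder evaluation from~\S\ref{sec:discs-one+-evaluation}, $\ol q_y$ the Morse half-line evaluation from~\S\ref{sec:interpolatingFM-evaluation}, and $\pi$, $\ev_r$ are applied pointwise to loops. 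Choosing the basepoint of $\cL T^*Q$ to be a constant loop on the zero section and the basepoints $\q^r_a\in\cL^r Q$ compatibly under $\ev_r$, the first factor is a Moore path in $\cL^r Q$ from the basepoint to $\ev_r(\ev(u_1))=\oev_r(u_1)$, while the second factor is a Moore path from $\oev_{\text{in}}(u_2)$ back to the basepoint. The fiber product condition $\oev_r(u_1)=\oev_{\text{in}}(u_2)$ guarantees that the concatenation is a genuine based Moore loop in $\cL^r Q$. The constraint that $r$ be large enough serves only to ensure that $\ev_r$ sends the compact image $\pi\circ\ol q_x(\ol\cM(x))\subset \cL Q$ into $\cL^r Q$ rather than merely into $Q^r$; by compactness of $\ol\cM(x)$ and finiteness of $\Per(H)$, a single $r$ handles all $x$, as in Lemma~\ref{lem:cLL1Q} (after reparametrizing loops by arc length if needed).

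The compatibility relations~\eqref{eq:relation1-Bxy} and~\eqref{eq:relation2-Bxy} then follow automatically from the analogous relations for the constituent evaluation maps. For~\eqref{eq:relation1-Bxy}, given a boundary configuration $(u, v_1, v_2)$ with $(u, v_1)\in\ol\cM(x;w)\times\ol\cM(w)$ and $v_2\in\ol W^s(y)$ glued at $\oev_r(v_1)=\oev_{\text{in}}(v_2)$, relation~\eqref{eq:relation1-Mx} gives $\ol q_x(u, v_1) = \ol q_+(u)\,\#\,\ol q_w(v_1)$; applying $\ev_r\circ\pi$ (which commutes with concatenation of paths) and then concatenating with $\ol q_y(v_2)$ yields
$$
\ol q_{x,y}(u, v_1, v_2) \, = \, (\ev_r\circ\pi\circ\ol q_+(u))\,\#\,\ol q_{w,y}(v_1, v_2),
$$
as claimed. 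Relation~\eqref{eq:relation2-Bxy} is verified identically using~\eqref{eq:relation1-Wsy}: for a boundary configuration $(u_1, u_2, v)$ with $(u_2, v)\in \ol W^s(z)\times\ol\cL(z;y)$ one has $\ol q_y(u_2, v) = \ol q_z(u_2)\,\#\,\ol q_-(v)$, from which the required factorization of $\ol q_{x,y}(u_1, u_2, v)$ drops out.

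The main difficulty is essentially bookkeeping around basepoints: one must set up the choices of basepoint, collapsing tree $\cY\subset \cL T^*Q$ and homotopy inverse $\theta$ from~\S\ref{sec:discs-one+-evaluation} together with their analogues $\cY^r\subset\cL^r Q$ and $\theta^r$ from~\S\ref{sec:DGViterbo-Morse-DG} so that $\ev_r\circ\pi$ transports one datum to the other strictly, not merely up to homotopy. Concretely one takes all basepoints on the zero section and, for $r$ sufficiently large, chooses $\cY^r$ to contain the image under $\ev_r\circ\pi$ of the vertices of $\cY$ that lie in the relevant compact region. Once this coherence is arranged, the evaluation maps $\ol q_{x,y}$ and the two relations are formal consequences of the constructions in~\S\ref{sec:discs-one+-evaluation} and~\S\ref{sec:interpolatingFM-evaluation}.
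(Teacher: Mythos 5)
Your proposal is correct and coincides with the paper's proof: define $\ol q_{x,y}(u,v)=\ev_r\circ\pi\circ\ol q_x(u)\#\ol q_y(v)$, take $r$ large enough that $\ev_r$ sends the compact set $\bigcup_x\pi\circ\mathrm{Im}\circ\ol q_x(\ol\cM(x))$ into $\cL^rQ$, and read off the two relations from~\eqref{eq:relation1-Mx} and~\eqref{eq:relation1-Wsy} since $\ev_r\circ\pi$ commutes with concatenation. One small overstatement: no alignment of the collapsing tree $\cY$ with $\cY^r$ under $\ev_r\circ\pi$ is required (each tree is used internally to its own evaluation map $\ol q_x$, resp.\ $\ol q_y$, which never interact); the only compatibility needed is that the basepoint of $\cL T^*Q$ projects under $\ev_r\circ\pi$ to the chosen basepoint $\q^r_a$ of $\cL^rQ$, which is arranged once and for all.
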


\begin{proof}
Denote $\Im:\cP_{\star\to \cL Q}\cL T^*Q\to \cL T^*Q^{\{0,1\}}$ the \emph{image} map, which associates to each path in $\cL T^*Q$ its image as a subset in $\cL T^*Q$. The subset 
$$
\bigcup_{x\in\Per(H)} \pi\circ \Im \circ \ol q_x (\ol\cM(x)) \subset \cL Q
$$
is compact since each $\pi\circ \Im \circ \ol q_x (\ol\cM(x))$ is compact and $\Per(H)$ is finite. We choose henceforth $r$ large enough so that the image of this set under $\ev_r$ is contained in $\cL^r Q$.

We define 
$$
\ol q_{x,y}:\ol\cB(x;y)\to \Omega\cL^r Q
$$
by 
$$
\ol q_{x,y}(u,v)= \ev_r \circ \pi \circ \ol q_x(u) \, \# \, \ol q_y(v),
$$
where $u\in \ol\cM(x)$ and $v\in\ol W^s(y)$ are such that $\ev_r(u)=\ev_{\text{in}}(v)$. Equations~\eqref{eq:relation1-Bxy} and~\eqref{eq:relation2-Bxy} are then direct consequences of~\eqref{eq:relation1-Mx} and~\eqref{eq:relation1-Wsy}. 
\end{proof}

\subsubsection{Interpolating from Floer to Morse: chains on $\Omega\cL^r Q$} \label{sec:interpolatingFM-chains}

Given the representing chain system $(\sigma_{x,y})$ from Proposition~\ref{prop:representing-chains-Bxy}, we define 
$$
b_{x,y}=(\ol q_{x,y})_*(\sigma_{x,y})\in C_{|x|-\ind(y)}(\Omega \cL^r Q).
$$

Let $(m^{\mathrm{Floer}}_{x,w})$ be the twisting cocycle associated to the Floer representing chain system $(s^{\mathrm{Floer}}_{x,w})$ via the auxiliary data $(\cY,\theta)$, and let $(m^{\mathrm{Morse}}_{z,y})$ be the twisting cocycle associated to the Floer representing chain system $(s^{\mathrm{Morse}}_{z,y})$ via the auxiliary data $(\cY^r,\theta^r)$. Recall that $m^{\mathrm{Floer}}_{x,w}\in C_{|x|-|w|-1}(\Omega \cL T^*Q)$ and $m^{\mathrm{Morse}}_{z,y}\in C_{\ind(z)-\ind(y)-1}(\Omega \cL^r Q)$. Denote further 
$$
m^{\mathrm{Floer};\, r}_{x,w} = (\ev_r\circ \pi)_*m^{\mathrm{Floer}}_{x,w}\in C_{|x|-|w|-1}(\Omega \cL^r Q).
$$
Recall from~\S\ref{sec:discs-one+-chains-in-paths} the notation $\Phi^\eta=\Phi^{\ueta^{-1}}$, where $\Phi^{\ueta^{-1}}:C_*(\Omega \cL T^*Q)\to C_*(\Omega \cL T^*Q)$ is defined in~\S\ref{sec:twist}. Denote 
$$
\Phi^\eta(m^{\mathrm{Floer};\, r}_{x,w}) = (\ev_{r*}\circ \Phi^\eta\circ \pi_*)(m^{\mathrm{Floer}}_{x,w})\in C_{|x|-|w|-1}(\Omega \cL^r Q).
$$

\begin{proposition} \label{prop:bxy} Each chain $b_{x,y}$ satisfies the equation 
\begin{equation} \label{eq:pbxy}
\p b_{x,y} = \sum_{w\in\Per(H)} \Phi^\eta(m^{\mathrm{Floer};\, r}_{x,w})\cdot b_{w,y} + \sum_{z\in\Crit(f^r)} (-1)^{|x|-\ind(z)-1} b_{x,z}\cdot m^{\mathrm{Morse}}_{z,y}.
\end{equation}
\end{proposition}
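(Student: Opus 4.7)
The plan is to apply the pushforward $(\ol q_{x,y})_*$ to both sides of equation~\eqref{eq:psigmaxy} and then analyze the two types of boundary contributions separately, using the factorization properties~\eqref{eq:relation1-Bxy} and~\eqref{eq:relation2-Bxy} of the evaluation maps. Since pushforward commutes with the boundary operator, the left-hand side becomes $\p b_{x,y}$. It only remains to identify each term on the right-hand side.

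For the Morse breaking strata $\ol\cB(x;z)\times \ol\cL(z;y)$, the computation is essentially formal: relation~\eqref{eq:relation2-Bxy} reads $\ol q_{x,y}(u,v)=\ol q_{x,z}(u)\#\ol q_-(v)$, so that on chains, the pushforward of the product $\sigma_{x,z}\times s^{\mathrm{Morse}}_{z,y}$ coincides with the concatenation product $b_{x,z}\cdot m^{\mathrm{Morse}}_{z,y}$ in $\Omega\cL^rQ$. No $\eta$-twisting intervenes here because the Morse factor $s^{\mathrm{Morse}}_{z,y}$ carries trivial coefficients and the coefficient in $\ueta_x^{-1}$ carried by $\sigma_{x,z}$ (see Remark~\ref{rmk:behavior-under-triv-Bxy}) is simply transported along; after trivialization at the basepoint $\star$ via the homotopy inverses $\theta^r$, this yields untwisted chains on $\Omega\cL^r Q$ as required. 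The sign $(-1)^{|x|-\ind(z)-1}$ carries over verbatim from~\eqref{eq:psigmaxy}, matching the sign in~\eqref{eq:pbxy}.

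For the Floer breaking strata $\ol\cM(x;w)\times \ol\cB(w;y)$, the key input is relation~\eqref{eq:relation1-Bxy}, $\ol q_{x,y}(u,v)=\ev_r\circ \pi\circ \ol q_+(u)\,\#\,\ol q_{w,y}(v)$. Naively this would give a product of the form $m^{\mathrm{Floer};\,r}_{x,w}\cdot b_{w,y}$, but this is not literally correct: the product $s^{\mathrm{Floer}}_{x,w}\times \sigma_{w,y}$ is $\eta$-twisted (Remark~\ref{rmk:behavior-under-triv-Bxy}), since $\sigma_{w,y}$ carries coefficients in $\ueta_w^{-1}$ while $b_{w,y}$ has been trivialized to carry coefficients in $\ueta_\star^{-1}\simeq A$ via a path from $\star$ to $w$. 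The discrepancy is exactly the parallel transport $\rho^\eta(\tau_0)^{-1}$ along the initial vertex of each Floer cube $\tau$. This is precisely the data encoded by the DGA map $\Phi^\eta=\Phi^{\ueta^{-1}}:C_*(\Omega\cL T^*Q)\to C_*(\Omega\cL T^*Q)$ introduced in~\S\ref{sec:twist}, whose defining property is that it multiplies each cube by the $\ueta^{-1}$-monodromy of its basepoint. Consequently, pushing forward via $\ev_r\circ \pi$ and concatenating yields exactly the $\Phi^\eta(m^{\mathrm{Floer};\,r}_{x,w})\cdot b_{w,y}$ term, as in the analogous computation leading to~\eqref{eq:mx}.

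The only delicate point — the one I expect to require care — is the bookkeeping of the $\eta$-coefficients along the Floer stratum: one must verify that the twisting of the product of chains by $\rho^\eta(\tau_0)^{-1}$, together with the trivialization of $\ueta^{-1}$ at the basepoint used to define both $m^{\mathrm{Floer}}_{x,w}$ (as an untwisted chain on $\Omega \cL T^*Q$) and $b_{w,y}$ (as an untwisted chain on $\Omega \cL^r Q$), recombine exactly into an application of $\Phi^\eta$ after pushforward by $\ev_r\circ \pi$. This is the same mechanism as in Proposition~\ref{prop:FotimesL-mL}(1), and can be checked cube by cube using the fact that $\Phi^\eta$ is a DGA map. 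Combining the two contributions yields~\eqref{eq:pbxy}.
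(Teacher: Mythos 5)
Your proposal is correct and follows exactly the route the paper takes: the paper's proof simply cites properties~\eqref{eq:relation1-Bxy} and~\eqref{eq:relation2-Bxy}, Proposition~\ref{prop:representing-chains-Bxy}, Remark~\ref{rmk:behavior-under-triv-Bxy} and equation~\eqref{eq:mx} as making the statement "a direct consequence", and your write-up is a faithful expansion of precisely those four ingredients, including the correct identification of the $\eta$-twisting on the Floer strata as the source of the $\Phi^\eta$ factor.
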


\begin{proof}
This is a direct consequence of properties~\eqref{eq:relation1-Bxy} and~\eqref{eq:relation2-Bxy} for the maps $\ol q_{x,y}$, combined with Proposition~\ref{prop:representing-chains-Bxy}, Remark~\ref{rmk:behavior-under-triv-Bxy} and equation~\eqref{eq:mx}.
\end{proof}

\subsection{Proof of the isomorphism theorem} \label{sec:DGViterbo-proof}

\subsubsection{Floer homology in the finite dimensional approximation}

Given a based space $X$ and a DG local system $\cF$ on $X$, we use the \emph{ad hoc} notation 
$$
FH_*(H,\{m_{x,w}\};\cF)
$$
for the homology of $\cF\otimes \langle \Per(H)\rangle$ twisted by some cocycle $\{m_{x,w}\}$ indexed by $x,w\in\Per(H)$ and valued in $C_*(\Omega X)$. (In the sequel $X=\cL T^*Q$ or $X=\cL^r Q$.)

Recall from~\S\ref{sec:interpolatingFM-chains} the cocycles $\{m^{\mathrm{Floer}}_{x,w}\}$ and $\{\Phi^\eta(m^{\mathrm{Floer};\, r}_{x,w})\}$. 
Recall also that $\cF^r = (\mathrm{geo}^r)^*\cF$, where $\mathrm{geo}^r: \call^r Q\ri \call Q$ was defined in~\S \ref{sec:DGViterbo-finite-dim}.

\begin{lemma} \label{lem:FH-finite} Let $\cF$ be a DG local system on $\cL Q$. We have isomorphisms 
$$
FH_*(H,\{m^{\mathrm{Floer}}_{x,w}\};\Pi^*(\cF\otimes\ueta^{-1}))\simeq
FH_*(H,\{\Phi^\eta(m^{\mathrm{Floer};\, r}_{x,w})\};\cF^r),
$$
\end{lemma}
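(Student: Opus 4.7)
The strategy is to identify both complexes, at the chain level, with a common twisted complex of the form $\cF\otimes \langle \Per(H)\rangle$ acted upon by an appropriate Maurer--Cartan element in $C_*(\Omega \cL Q)$, and then show that the two resulting Maurer--Cartan elements are gauge equivalent.

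First, using Proposition~\ref{prop:FotimesL-mL} (the twist-by-a-rank-one-local-system proposition) applied to the local system $\ueta^{-1}$ on $\cL Q$ pulled back to $\cL T^*Q$, the complex $\Pi^*(\cF\otimes \ueta^{-1})$ is identified as a $C_*(\Omega \cL T^*Q)$-module with the complex $\cF$ where a chain $c\in C_*(\Omega \cL T^*Q)$ acts by $\alpha\cdot c = \alpha\cdot \Phi^\eta(\Pi_*c)$, the right-hand side action being the original one on $\cF$. Consequently, the first complex is chain-level isomorphic to $\cF\otimes \langle \Per(H)\rangle$ with twisting cocycle
$$M_{x,w} := \Phi^\eta(\Pi_* m^{\mathrm{Floer}}_{x,w}) \;\in\; C_{|x|-|w|-1}(\Omega \cL Q).$$
The same reasoning on $\cL^rQ$, using the local system $(\mathrm{geo}^r)^*\ueta^{-1}$ and the operator $\Phi^{\eta,r} := \Phi^{(\mathrm{geo}^r)^*\ueta^{-1}}$, together with the naturality identity $\mathrm{geo}^r_*\circ \Phi^{\eta,r} = \Phi^\eta \circ \mathrm{geo}^r_*$ (an immediate consequence of the definitions in~\S\ref{sec:twist}), identifies the second complex at chain level with $\cF\otimes \langle \Per(H)\rangle$ acted upon by
$$M'_{x,w} := \Phi^\eta\bigl((\mathrm{geo}^r\circ \ev_r\circ \pi)_*\, m^{\mathrm{Floer}}_{x,w}\bigr) = \Phi^\eta\bigl((j\circ \Pi)_*\, m^{\mathrm{Floer}}_{x,w}\bigr),$$
where $j := \mathrm{geo}^r\circ \ev_r : \cL Q \to \cL Q$.

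Next, since all chains $\Pi_* m^{\mathrm{Floer}}_{x,w}$ are supported on a compact subset $K\subset \cL Q$ (being images of the compact moduli spaces $\ol\cM(x;w)$ through the evaluation maps), Lemma~\ref{lem:cLL1Q} applied componentwise ensures that, for $r$ large enough, $j$ is homotopic to the identity in a neighborhood of $K$. Pushing such a homotopy through the based-loop functor $\Omega$ and the cubical chains functor $C_*$ yields a chain homotopy $h : C_*(\Omega \cL Q)\to C_{*+1}(\Omega \cL Q)$ between $j_*$ and the identity on the support of $\Pi_* m^{\mathrm{Floer}}_{x,w}$. Define
$$h_{x,w} := \Phi^\eta\bigl(h(\Pi_*m^{\mathrm{Floer}}_{x,w})\bigr) \;\in\; C_{|x|-|w|}(\Omega \cL Q).$$
Combining the Maurer--Cartan identity $\p m^{\mathrm{Floer}}_{x,w} = \sum_z (-1)^{|x|-|z|} m^{\mathrm{Floer}}_{x,z} m^{\mathrm{Floer}}_{z,w}$ with the Leibniz rule for $h$, and the fact that $\Phi^\eta$ is a DGA map, a direct computation shows that $(h_{x,w})$ satisfies a continuation-type identity
\begin{equation*}
\p h_{x,w} = M'_{x,w} - M_{x,w} + \sum_z (-1)^{|x|-|z|-1}\bigl(M'_{x,z}\cdot h_{z,w} + h_{x,z}\cdot M_{z,w}\bigr)
\end{equation*}
of the exact form appearing in Proposition~\ref{prop:existence_cont_cocycles}. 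Consequently, $(h_{x,w})$ defines a continuation chain map between the two twisted complexes. Running the same argument with the reverse homotopy from $j$ back to the identity produces a homotopy inverse, yielding the claimed isomorphism in homology.

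The main obstacle is the bookkeeping in the last paragraph: one must verify carefully that the chain homotopy $h$ between $j_*$ and $\mathrm{id}$ in $C_*(\Omega K)$ interacts correctly with both the DGA map $\Phi^\eta$ and with concatenation of based loops, so that the derived family $h_{x,w}$ fits the continuation-cocycle formalism of~\S\ref{sec:continuation}. This is a homotopy-invariance statement for Maurer--Cartan elements under quasi-isomorphisms of coefficient DGAs; once established, the remaining steps are formal consequences of the DG Floer toolset developed in~\S\ref{sec:DGFloer}.
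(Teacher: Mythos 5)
The core of your plan---rewrite both sides as twisted complexes $\cF\otimes\langle\Per(H)\rangle$ with Maurer--Cartan elements $M=\Phi^\eta\circ\Pi_*(m^{\mathrm{Floer}})$ and $M'=\Phi^\eta\circ j_*\circ\Pi_*(m^{\mathrm{Floer}})$, where $j=\geo^r\circ\ev_r$---is sound and parallels the paper's first, second, and fourth identifications. Where you depart from the paper is in the remaining comparison. The paper never changes the cocycle: it keeps $\{\Phi^\eta\circ\pi_*m^{\mathrm{Floer}}_{x,w}\}$ fixed and instead changes the coefficient local system from $\cF$ to $\ev_r^*(\geo^r)^*\cF=j^*\cF$, then invokes the homotopy-invariance of enriched homology in the DG local system slot, which is already established in \cite[\S 8.2]{BDHO}. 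You instead fix $\cF$ and try to compare $M$ with $M'=j_*M$ by producing an explicit continuation cocycle from a chain homotopy.

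The gap is in that last step. First, the displayed identity is not the continuation-cocycle identity of Proposition~\ref{prop:existence_cont_cocycles} (which has no $M'-M$ term), but rather the identity of Proposition~\ref{prop:uniqueness_continuation} for a chain homotopy between two pre-existing continuation cocycles; the role assigned to $(h_{x,w})$ is therefore confused, and the signs fit neither reading. Second, and more seriously, your ``direct computation'' requires $h$ to obey an $(\mathrm{id},j_*)$-Leibniz rule $h(ab)=h(a)\cdot j_*(b)\pm a\cdot h(b)$ for the Pontryagin product, so that $h(\mu_{x,z}\cdot\mu_{z,w})$ unfolds into the mixed terms; a chain homotopy between $j_*$ and $\mathrm{id}$ obtained by applying the functors $\Omega$ and then cubical chains to a topological homotopy is in general \emph{not} a derivation of the Pontryagin product. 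This is precisely the classical obstruction to treating Maurer--Cartan gauge equivalence by naive chain-level means, and it is not bookkeeping: one must either invoke $A_\infty$-homotopy machinery or, as the paper does, compare at the level of DG local systems, where the required homotopy-invariance has been established once and for all in \cite{BDHO}.
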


\begin{proof} We claim that we have a sequence of isomorphisms 
\begin{align*}
FH_*(H,\{m^{\mathrm{Floer}}_{x,w}\};\Pi^*(\cF\otimes\ueta^{-1}))
& \simeq FH_*(H,\{\pi_*m^{\mathrm{Floer}}_{x,w}\};\cF\otimes\ueta^{-1})\\
& \simeq FH_*(H,\{\Phi^\eta\circ\pi_*m^{\mathrm{Floer}}_{x,w}\};\cF)\\
& \simeq FH_*(H,\{\Phi^\eta\circ\pi_*m^{\mathrm{Floer}}_{x,w}\};\ev_r^*(\geo^r)^*\cF)\\
& \simeq FH_*(H,\{\ev_{r*}\circ \Phi^\eta\circ\pi_*m^{\mathrm{Floer}}_{x,w}\};(\geo^r)^*\cF)\\
& = FH_*(H,\{\Phi^\eta(m^{\mathrm{Floer};\, r}_{x,w})\};\cF^r).
\end{align*}
The first isomorphism follows from the functoriality property of enriched homology from~\cite[Remark~4.6]{BDHO}. The second isomorphism is Proposition~\ref{prop:FotimesL-mL} applied with $L=\ueta^{-1}$ (we recall that we denote $\Phi^\eta=\Phi^{\ueta^{-1}}$ for readability). 
The third isomorphism holds because $\geo_r\circ \ev_r$ is homotopic to the identity by Lemma~\ref{lem:cLL1Q}, see also \cite[\S 8.2]{BDHO}. The fourth isomorphism is again an instance of functoriality~\cite[Remark~4.6]{BDHO}. Finally, the last equality is notational. 
\end{proof}

\subsubsection{Proof of the isomorphism theorem}
Recall from~\S\ref{sec:interpolatingFM-chains} the cocycles $\{\Phi^\eta(m^{\mathrm{Floer};\, r}_{x,w})\}$, $\{m^{\mathrm{Morse}}_{z,y}\}$, and $\{b_{x,y}\}$. Define 
$$
\widetilde \Psi^r:FC_*(H,\{\Phi^\eta(m^{\mathrm{Floer};\, r}_{x,w})\};\cF^r)\to C_*(f^r,\{m^{\mathrm{Morse}}_{z,y}\};\cF^r),
$$
$$
\widetilde \Psi^r(\alpha\otimes x) = \sum_{y\in\Crit(f^r)} \alpha\cdot b_{x,y}\otimes y. 
$$

\begin{proposition}
The map $\widetilde \Psi^r$ is a chain map. 
\end{proposition}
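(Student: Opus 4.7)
The plan is a direct sign check: I would unfold both $D^M\circ\widetilde\Psi^r$ and $\widetilde\Psi^r\circ D^F$ on a generator $\alpha\otimes x$, apply Proposition~\ref{prop:bxy} to expand $\partial b_{x,y}$, and verify that the four resulting sums regroup into matching pairs.

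First I would write the differentials explicitly. Recall that the source complex is the Floer complex with twisting cocycle $\{\Phi^\eta(m^{\mathrm{Floer};\,r}_{x,w})\}$, with differential $D^F(\alpha\otimes x)=\partial\alpha\otimes x+(-1)^{|\alpha|}\sum_w\alpha\cdot\Phi^\eta(m^{\mathrm{Floer};\,r}_{x,w})\otimes w$, and the target is the DG Morse complex with twisting cocycle $\{m^{\mathrm{Morse}}_{z,y}\}$, with differential $D^M(\beta\otimes z)=\partial\beta\otimes z+(-1)^{|\beta|}\sum_y\beta\cdot m^{\mathrm{Morse}}_{z,y}\otimes y$. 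Applying $D^M$ to $\widetilde\Psi^r(\alpha\otimes x)=\sum_y\alpha\cdot b_{x,y}\otimes y$, and using $|b_{x,y}|=|x|-\ind(y)$ together with the Leibniz rule $\partial(\alpha\cdot b_{x,y})=\partial\alpha\cdot b_{x,y}+(-1)^{|\alpha|}\alpha\cdot\partial b_{x,y}$, the expression $D^M\widetilde\Psi^r(\alpha\otimes x)$ splits as a sum of four terms. Three of them involve $\partial\alpha$, $\partial b_{x,y}$, or the composition with $m^{\mathrm{Morse}}$; after substituting the formula from Proposition~\ref{prop:bxy} for $\partial b_{x,y}$, the second of these splits further into a piece with a Floer factor $\Phi^\eta(m^{\mathrm{Floer};\,r}_{x,w})$ and a piece with a Morse factor $m^{\mathrm{Morse}}_{z,y}$.

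Next I would compute $\widetilde\Psi^r D^F(\alpha\otimes x)$ in the same way, obtaining exactly two terms: one with $\partial\alpha\cdot b_{x,y}$ and one with $\alpha\cdot\Phi^\eta(m^{\mathrm{Floer};\,r}_{x,w})\cdot b_{w,y}$. These two terms match on the nose the ``$\partial\alpha$-term'' and the ``Floer-factor term'' of $D^M\widetilde\Psi^r(\alpha\otimes x)$. The proof then reduces to showing that the two remaining ``Morse-factor'' terms in $D^M\widetilde\Psi^r$ cancel against each other. Specifically, after relabelling, one obtains
\[
(-1)^{|\alpha|}(-1)^{|x|-\ind(z)-1}\,\alpha\cdot b_{x,z}\cdot m^{\mathrm{Morse}}_{z,y}\otimes y
\ +\ (-1)^{|\alpha|+|x|-\ind(z)}\,\alpha\cdot b_{x,z}\cdot m^{\mathrm{Morse}}_{z,y}\otimes y,
\]
and the sum of the two signs $(-1)^{|\alpha|+|x|-\ind(z)-1}+(-1)^{|\alpha|+|x|-\ind(z)}=0$ gives cancellation.

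This argument is entirely formal once Proposition~\ref{prop:bxy} is in hand; there is no geometric obstacle. The only point that requires care is bookkeeping: the sign $(-1)^{|x|-\ind(z)-1}$ appearing in Proposition~\ref{prop:bxy}, which originates from the boundary-orientation analysis of $\ol\cB(x;y)$ (cf.~\S\ref{sec:interpolatingFM-representing-chains} and the parallel discussion in~\S\ref{sec:orientations_continuation}), must match, after the Koszul shift by $|\alpha|$, the $(-1)^{|\beta|}$ sign in $D^M$ with $|\beta|=|\alpha|+|x|-\ind(z)$. This is precisely what the computation above confirms, and it is the only step where a sign could in principle have gone wrong; the fact that it works out is a consistency check that the orientation conventions of~\S\ref{sec:orientations_continuation} and~\S\ref{sec:interpolatingFM-representing-chains} are compatible with those used to define the DG Floer and DG Morse differentials.
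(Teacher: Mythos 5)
Your proof is correct and is exactly the computation that the paper's one-line proof (``this is a rephrasing of Proposition~\ref{prop:bxy}'') summarizes: unfolding $D^M\widetilde\Psi^r$ and $\widetilde\Psi^r D^F$, the $\partial\alpha$-terms and the Floer-factor terms match on the nose, while the two Morse-factor terms carry signs $(-1)^{|\alpha|+|x|-\ind(z)-1}$ and $(-1)^{|\alpha|+|x|-\ind(z)}$ and cancel. The sign $(-1)^{|x|-\ind(z)-1}$ in Proposition~\ref{prop:bxy} is exactly what is needed for this cancellation against the Koszul sign $(-1)^{|\alpha\cdot b_{x,z}|}$ in $D^M$, as you observe.
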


\begin{proof}
This is a rephrasing of Proposition~\ref{prop:bxy}.
\end{proof}

In view of Lemma~\ref{lem:FH-finite}, we obtain an induced map  
$$
\widetilde \Psi^r_*: FH_*(H,\{m^{\mathrm{Floer}}_{x,w}\};\Pi^*(\cF\otimes\ueta^{-1})) \to H_*(f^r,\{m^{\mathrm{Morse}}_{z,y}\};\cF^r).
$$

\begin{proof}[Proof of Theorem~\ref{thm:Viterbo_iso_loc_sys_DG}]
The map $\widetilde \Psi^r$ and the parameter $r$ depend by definition on the choice of Hamiltonian $H$. As such, we denote $\widetilde \Psi^r=\widetilde \Psi^{r(H)}_H$. We define 
$$
\widetilde \Psi = \colim_{H} \widetilde \Psi^{r(H)}_{H*} : \colim_H FH_*(H;\Pi^*\cF\otimes \ueta^{-1})\to \colim_r H_*(f^r;\cF^r). 
$$
The chain map $\widetilde \Psi^r$ preserves the canonical filtrations on the enriched Floer and Morse complexes, and induces therefore a morphism of spectral sequences. In the direct limit over $H$ the map $\widetilde \Psi^{r(H)}_H$ induces on the $E^2$-page the map from the statement of Abouzaid's theorem from~\S\ref{sec:intro-Viterbo}, which is an isomorphism. As a consequence, the map $\widetilde \Psi^{r(H)}_H$ induces in the direct limit over $H$ an isomorphism of spectral sequences from the second page onwards. The assumption that $H_*(\cF)$ is bounded from below, together with the general fact that $H_*(\cL W;H_q(\cF))$ is supported in nonnegative degrees for all $q$, ensures that both spectral sequences converge for dimensional reasons (see Theorem~\ref{thm:spectral-sequence-SH}(ii)). Therefore
 $\colim_{H} \widetilde \Psi^{r(H)}_{H*}=\widetilde \Psi$ is an isomorphism as well. 
\end{proof}

\begin{proof}[Proof of Proposition~\ref{prop:Viterbo-iso-energy-zero}]
At energy zero the Viterbo isomorphism becomes a continuation map, as described in~\cite[\S10, Proposition~10.3]{BDHO}. This implies the statement of the proposition.
\end{proof}

We will need the following corollary of Theorem~\ref{thm:Viterbo_iso_loc_sys_DG} and Proposition~\ref{prop:Viterbo-iso-energy-zero}: 

\begin{corollary}\label{cor:Viterbo-cotangent} Consider a closed manifold $Q$ and let 
$\calf$ be a DG local system  on  $\call Q$. Then there exists a DG local system $\cal G$ on  $ \call T^*Q$ such that the Viterbo isomorphism $\widetilde{\Psi}$ of  Theorem~\ref{thm:Viterbo_iso_loc_sys_DG} yields the following commutative diagram: 
$$
\xymatrix{
SH_*^{=0}(T^*Q;\cG)\ar[r] \ar[d]_\simeq^{{\widetilde\Psi}^{=0}} & SH_*(T^*Q;\cG) \ar[d]_\simeq^{\widetilde\Psi} \\
H_*(Q;i_Q^*\cF) \ar[r]^-{i_{Q*}} & H_*(\cL Q;\cF).
}
$$
Moreover, there is an identification 
$$
SH_*^{=0}(T^*Q;\cG)\simeq H_{*+n}(D^*Q, S^* Q; \pi^*i_Q^*\calf\otimes \pi^*\underline{|Q|})
$$ 
such that  $\widetilde{\Psi}^{=0}$ becomes the Thom isomorphism 
$$
H_{*+n}(D^*Q, S^* Q; \pi^*i_Q^*\calf\otimes \pi^*\underline{|Q|})\simeq H_*(Q; i_Q^*\calf),
$$ 
i.e., the inverse of the shriek map of the projection $\pi:D^*Q\ri Q$.
\end{corollary}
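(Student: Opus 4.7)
The plan is to take $\cG = \Pi^*(\cF\otimes\ueta^{-1})$ and deduce everything from Theorem~\ref{thm:Viterbo_iso_loc_sys_DG}, Proposition~\ref{prop:Viterbo-iso-energy-zero} and Proposition~\ref{prop:SH-action-zero}. Since $\ueta^{-1}$ is supported in degree $0$, the hypothesis that $H_*(\cF)$ is bounded from below passes to $H_*(\cF\otimes\ueta^{-1})$, so Theorem~\ref{thm:Viterbo_iso_loc_sys_DG} applies and yields
\[
\widetilde\Psi : SH_*(T^*Q;\cG)\stackrel\simeq\longrightarrow H_*(\cL Q;(\cF\otimes\ueta^{-1})\otimes\ueta).
\]
The canonical evaluation isomorphism $\ueta^{-1}\otimes\ueta\simeq\Z$ from~\eqref{eq:ori_lines_evaluation} identifies the right hand side with $H_*(\cL Q;\cF)$, producing the right vertical arrow of the claimed diagram.

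Next I would compute $i_Q^*\ueta$, where $i_Q:Q\hookrightarrow\cL_0 Q$ is the inclusion of constant loops. Unwinding the definition $\ueta=\sigma\otimes\underline\mu\otimes\tilde o$ from~\S\ref{sec:DGViterbo-context}: on constant loops the adjoint map $S^1\times S^1\to Q$ is constant so $w_2$ transgresses trivially and $i_Q^*\sigma$ is canonically trivial; since $\ev_0\circ i_Q=\id_Q$ we get $i_Q^*\underline\mu\simeq\underline{|Q|}^{-1}$; and the pull-back of $TQ$ by a constant loop is a product, hence orientable, so the function $w$ vanishes on constants and $i_Q^*\tilde o$ is canonically trivial. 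Consequently $i_Q^*\ueta\simeq\underline{|Q|}^{-1}$ and $i_Q^*\ueta^{-1}\otimes\underline{|Q|}\simeq\underline{|Q|}\otimes\underline{|Q|}$ is canonically trivial (the orientation local system is of order~$2$). Applying Proposition~\ref{prop:Viterbo-iso-energy-zero} to $\cF\otimes\ueta^{-1}$ therefore gives
\[
\widetilde\Psi^{=0}: SH_*^{=0}(T^*Q;\cG)\stackrel\simeq\longrightarrow H_*(Q;i_Q^*\cF),
\]
providing the left vertical arrow of the diagram; commutativity is then inherited from the corresponding statement in Proposition~\ref{prop:Viterbo-iso-energy-zero} together with the naturality of the identifications above.

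For the final assertion, I would first invoke Proposition~\ref{prop:SH-action-zero} to write $SH_*^{=0}(T^*Q;\cG)\simeq H_{*+n}(D^*Q,S^*Q;i_{D^*Q}^*\cG)$. The composition $\Pi\circ i_{D^*Q}:D^*Q\to\cL Q$ is canonically homotopic to $i_Q\circ\pi$, so
\[
i_{D^*Q}^*\cG = i_{D^*Q}^*\Pi^*(\cF\otimes\ueta^{-1})\simeq \pi^*i_Q^*(\cF\otimes\ueta^{-1})\simeq \pi^*(i_Q^*\cF\otimes\underline{|Q|}),
\]
using $i_Q^*\ueta^{-1}\simeq\underline{|Q|}$ from the previous step. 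This yields the identification with $H_{*+n}(D^*Q,S^*Q;\pi^*i_Q^*\cF\otimes\pi^*\underline{|Q|})$. By Proposition~\ref{prop:Viterbo-iso-energy-zero} and Remark~\ref{rmk:Thom-explication}, under this identification $\widetilde\Psi^{=0}$ is precisely the DG Thom shriek map $j_{Q!}$ of the zero section $j_Q:Q\hookrightarrow D^*Q$, i.e., the inverse of $\pi_!$.

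The only nontrivial point in the argument is the computation $i_Q^*\ueta\simeq\underline{|Q|}^{-1}$ and, more subtly, checking that the chosen isomorphism is compatible with the identification implicit in Proposition~\ref{prop:Viterbo-iso-energy-zero} so that $\widetilde\Psi^{=0}$ really becomes the Thom isomorphism (rather than a twist of it); this is essentially a sign bookkeeping based on the orientation conventions of~\S\ref{sec:orientations} and the definition of $\ueta$ in~\cite[Chapter~11]{Abouzaid-cotangent}.
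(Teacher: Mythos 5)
Your proposal is correct and follows essentially the same route as the paper: the paper sets $\cG=\Pi^*(\cF\otimes\ueta)$ and uses that $\ueta^{\otimes 2}$ and $i_Q^*\ueta\otimes\underline{|Q|}\simeq\underline{|Q|}^{\otimes 2}$ are trivial, while you set $\cG=\Pi^*(\cF\otimes\ueta^{-1})$ and cancel via $\ueta^{-1}\otimes\ueta\simeq\Z$ — an immaterial difference since $\ueta\simeq\ueta^{-1}$ for a rank-one local system with $\pm 1$ monodromy. Your explicit verification of $i_Q^*\ueta\simeq\underline{|Q|}^{-1}\simeq\underline{|Q|}$ is just an unwinding of what the paper asserts in \S\ref{sec:DGViterbo-context}, and the rest (Proposition~\ref{prop:Viterbo-iso-energy-zero}, Proposition~\ref{prop:SH-action-zero}, Remark~\ref{rmk:Thom-explication}) is used exactly as in the paper's proof.
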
 
\begin{proof} 
Let $\cG=\Pi^*(\cF\otimes\underline{\eta})$, so that the diagram in Proposition~\ref{prop:Viterbo-iso-energy-zero} becomes
$$
\xymatrix{
SH_*^{=0}(T^*Q;\Pi^*\cF\otimes\Pi^*\ueta)\ar[r] \ar[d]_\simeq^{{\widetilde\Psi}^{=0}} & SH_*(T^*Q;\Pi^*\cF\otimes \Pi^*\ueta ) \ar[d]_\simeq^{\widetilde\Psi} \\
H_*(Q;i_Q^*\cF\otimes i_Q^*\ueta\otimes\underline{|Q|}) \ar[r]^-{i_{Q*}} & H_*(\cL Q;\cF\otimes \ueta^{\otimes 2}).
}
$$
 This local system satisfies the first assertion of the corollary because both  $\underline{\eta}^{\otimes 2}$ and $ i_Q^*\ueta\otimes\underline{|Q|} \simeq\underline{|Q|}^2$ are trivial rank $1$ local systems. 
 
 Moreover, again by Proposition~\ref{prop:Viterbo-iso-energy-zero}, the identification 
 $$SH_*^{=0}(T^*Q;\Pi^*\cF\otimes\Pi^*\ueta)\, \simeq \, H_{*+n}(D^*Q, S^* Q; \pi^*i_Q^*\calf\otimes\pi^*i_Q^*\ueta)$$
 transforms $\widetilde{\Psi}^{=0}$ into the Thom isomorphism. Note that $i_Q^*\ueta\simeq \underline{|Q|}$. 
\end{proof}

\section{The  contractible almost existence property}\label{sec:almost-existence}

Let $(W,\omega)$ be a symplectic manifold. Recall from~\S\ref{sec:intro-almost-existence} the  contractible almost existence property, which states that, for any autonomous and proper Hamiltonian function $H:W\to\R$ and any regular level set $\Sigma=H^{-1}(c)$, there exists $\eps>0$ such that, for almost every $c'\in(c-\eps, c+\eps)$, the level set $H^{-1}(c')$ admits a contractible closed characteristic.  

This section is organised as follows. In~\S\ref{sec:homological-condition-W} we establish a criterion that implies this property for Liouville domains -- condition \eqref{eq:criterion-almost-existence}.  Let $U\subset W$ be a relatively compact domain bounded by $\Sigma$. Fix a complement of a small collar neighborhood of $\Sigma$ in $U$, denoted by  $U_{\mathrm{int}}$.
As pointed out in the introduction, the proof of the almost existence property often relies on the finiteness of some Hofer-Zehnder type capacity. Our proof uses a relative Hofer-Zehnder capacity  $c_{HZ}^0(W,\ol{U}_{\mathrm{int}})$.
This was introduced in a slightly more general setting (see below) by Ginzburg and G\"urel in~\cite{Ginzburg-Gurel-relative-capacity}. The content of~\S\ref{sec:homological-condition-W} can be schematically summarised as
$$
\mathrm{Condition}~\eqref{eq:criterion-almost-existence}\ \Longrightarrow\ {c_{HZ}^0(W,{\ol{U}_{\mathrm{int}}})<+\infty}\   \Longrightarrow\ \text{almost existence for }  \Sigma=\p U.
$$ 

The  second subsection \S\ref{sec:almost-existence-cotangent} is devoted to applications to the case when  the ambient manifold is the unit cotangent bundle $D^*Q\subset T^*Q$. We start by reformulating the criterion \eqref{eq:criterion-almost-existence} for the cotangent bundle using the Viterbo isomorphisms from~\S\ref{sec:DGViterbo} and then we show that it holds for a large class of domains $U$ in cotangent bundles. In particular, we prove the almost existence properties stated in  Theorems~\ref{thm:almost-existence-simplified}, \ref{thm:almost-existence-Thom-simplified} and~\ref{thm:almost-existence-nonorientable}.  
We have already recalled in~\S\ref{sec:intro-other_work} the most important results previously proved on this subject.

\subsection{Homological criterion for  contractible almost exist\-ence} \label{sec:homological-condition-W}\label{sec:criterion1}

In this subsection we consider a Liouville domain $(W,\omega,\lambda)$ and its symplectic homology groups $SH_*(W;\cF)$ with coefficients in a DG local system $\cF$ on $\call W$. As in~\S\ref{sec:intro-Liouville}(i), we consider a relatively compact open subset $U\subset W$ with smooth boundary, the inclusions $i_W:W\hookrightarrow \cL W$ and $j_U:U\hookrightarrow W$, as well as the maps that they induce $i_{W*}$ and $j_{U!}$. 

We rewrite for readability the key diagram~\eqref{eq:diagram-criterion-intro}, 
\begin{equation}\label{eq:diagram-criterion}
  \xymatrix{
     H_{*+n}(U, \partial U;j_U^*i_W^*\cF) &\, \\
      H_{*+n}(W,\p W;i_W^*\cF) \ar[r]^-{i_{W*}}\ar[u]^-{j_{U!}}     
    & SH_*(W;\cF),}
\end{equation}

and we restate for readability Theorem~\ref{thm:almost-sure-existence-W-intro} as Theorem~\ref{thm:almost-sure-existence-W} below. This is the main result of this section. 

\begin{theorem}\label{thm:almost-sure-existence-W} Let $\Sigma\subset W$ be a closed hypersurface bounding a relatively compact open subset $U$. We assume that there exists a DG local system $\cF$ on $\cL W$ and a class $\alpha\in H_*(W,\p W;i_W^*\cF)$ such that $j_{U!}(\alpha) \neq 0$ and $i_{W*}(\alpha)=0$.

Then the    contractible almost existence property holds near $\Sigma$.
\end{theorem}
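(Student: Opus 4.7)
My strategy is to deduce from the hypothesis a finiteness statement for the relative $\pi_1$-sensitive Hofer--Zehnder capacity $c_{HZ}^\circ(W,\overline{U}_{\mathrm{int}})$ in the sense of Ginzburg--G\"urel, and then to invoke the by-now-classical argument (Hofer--Zehnder, Struwe, Macarini--Schlenk) which deduces contractible almost existence near $\Sigma=\p U$ from such a finiteness statement. Thus the whole task reduces to proving the implication \[\text{condition of Theorem~\ref{thm:almost-sure-existence-W}} \ \Longrightarrow \ c_{HZ}^\circ(W,\overline{U}_{\mathrm{int}}) < +\infty.\]

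I would proceed by contradiction and assume the existence of HZ-admissible Hamiltonians $H_n:W\to[0,\max H_n]$ supported away from $\p W$, equal to $\max H_n$ on a neighborhood of $\overline{U}_{\mathrm{int}}$, having no non-constant contractible periodic orbit of period $\leq 1$, and with $\max H_n\to+\infty$. The hypothesis $i_{W*}\alpha=0$ in $SH_*(W;\cF)$ combined with the tautological long exact sequence and Proposition~\ref{prop:SH-action-zero} provides a lift $\tilde\alpha\in SH_{*+1}^{>0}(W;\cF)$ with boundary $\alpha$ under the connecting map $\p:SH_{*+1}^{>0}(W;\cF)\to SH_*^{=0}(W;\cF)\simeq H_{*+n}(W,\p W;i_W^*\cF)$. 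I would then associate to $\tilde\alpha$ a spectral invariant $\rho(\tilde\alpha,H)$ measuring the lowest action $b$ at which $\tilde\alpha$ admits a representative in the image of $FH_*^{(\eps,b)}(H;\cF)\to SH^{>0}_*(W;\cF)$, for $\eps>0$ smaller than the minimal Reeb period. The action-filtration properties of the DG Floer toolset -- in particular Corollaries~\ref{cor:DGcontinuation-monotone} and~\ref{cor:DGcontinuation-Hofer-norm} -- guarantee that $\rho(\tilde\alpha,\cdot)$ is finite, monotone under monotone homotopies, and controlled by Hofer norms, exactly as for classical spectral invariants.

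The core of the argument is then to derive two competing estimates for $\rho(\tilde\alpha,H_n)$. On the one hand, a uniform upper bound $\rho(\tilde\alpha,H_n)\leq C'$ should be produced by sandwiching $H_n$ below by a fixed small cofinal Hamiltonian $K$ (linear at infinity with fixed slope larger than $c'$, where $c'>0$ is a threshold such that $i_{W*}\alpha$ vanishes already in $SH_*^{<c'}(W;\cF)$; such $c'$ exists because $SH_*(W;\cF)$ is a colimit of the $SH_*^{<c}$), and applying the monotonicity of $\rho$ to the monotone continuation map from $H_n$ to $K$. On the other hand, a lower bound of the form
\begin{equation*}
\rho(\tilde\alpha,H_n)\ \geq\ \max H_n - C
\end{equation*}
with $C$ independent of $n$, should follow from the non-vanishing of $j_{U!}\alpha$. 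The idea is that the HZ-admissibility of $H_n$ forces all its $1$-periodic orbits to be either (i) constants in the maximum region $\overline{U}_{\mathrm{int}}$, with action $\approx -\max H_n$, (ii) constants outside a neighborhood of $\overline{U}$, with action $\approx 0$, together with non-constant orbits of period strictly greater than $1$ in the collar. Combined with the DG identification $SH_*^{=0}(W;\cF)\simeq H_{*+n}(W,\p W;i_W^*\cF)$ and the fact that $j_{U!}\alpha\neq 0$ (which means that any cycle representing $\alpha$ at the chain level must involve generators concentrated in $\overline{U}_{\mathrm{int}}$, cf.~Proposition~\ref{prop:living-fundamental}), a primitive of $\alpha$ in the Floer complex of $H_n$ cannot have action smaller than a generator of action $\approx -\max H_n$, from which the lower bound follows. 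Combining the two bounds yields $\max H_n\leq C+C'$, contradicting $\max H_n\to\infty$.

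The main obstacle will be to make the lower bound rigorous: $j_{U!}\alpha\neq 0$ is a Morse-theoretic statement on $W$, while the Hamiltonian $H_n$ lives on all of $W$ and its Floer complex mixes orbits coming from $\overline{U}_{\mathrm{int}}$ and from $W\setminus U$. To translate $j_{U!}\alpha\neq 0$ into an action-quantitative statement at the chain level, I expect to use a cofinal family of Hamiltonians $(K_n)$ approximating $H_n$ together with the canonical filtration of~\S\ref{sec:DGFloercomplex} and the DG enhancement of the identification $SH_*^{=0}\simeq H_{*+n}(W,\p W;i_W^*\cF)$, arguing that the shriek map $j_{U!}$ is realized at chain level by ``projecting'' onto generators supported in $\overline{U}_{\mathrm{int}}$, as in the proof of Proposition~\ref{prop:living-fundamental}. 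This is the step where the DG refinement genuinely enters, since for constant coefficients the criterion is vacuous in cotangent bundles, as noted in~\S\ref{sec:intro-finiteness-HZ}.
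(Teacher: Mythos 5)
Your overall strategy is the same as the paper's: reduce to finiteness of the relative $\pi_1$-sensitive Hofer--Zehnder capacity $c_{HZ}^\circ(W,\overline U_{\mathrm{int}})$ and bound it by a spectral invariant built out of $\alpha$, using $j_{U!}\alpha\neq0$ for a lower bound on the relevant action and $i_{W*}\alpha=0$ for the complementary bound. However, the specific implementation you propose has genuine gaps and diverges from what actually works.

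First, you propose to attach a spectral invariant to the \emph{lift} $\tilde\alpha\in SH^{>0}_{*+1}(W;\cF)$ rather than to $\alpha$ itself. This creates an immediate problem: the HZ-admissible Hamiltonians $H_n$ are compactly supported, hence have slope zero at infinity, and even after the necessary modification to produce a positive slope $\delta$ at infinity (the paper's $H_{\eps,\delta}$), the slope must be taken \emph{small} -- in particular smaller than the minimal Reeb period -- so that no spurious non-constant orbits appear. For such small $\delta$, the positive Floer homology of $H_{1,\delta}$ is trivial and the class $\tilde\alpha$ is simply not represented there. The paper avoids this by defining the spectral invariant for $\alpha$ viewed in the filtered symplectic homology $SH_*^{<c}(W;\cF)$, which is meaningful for all slopes; the vanishing of $i_{W*}\alpha$ then enters through a quite different mechanism (Step~3 of the paper's proof of Lemma~\ref{lemma:existence-fast-orbit-W}: comparing the $(a,+\infty)$-action quotient complexes of two Hamiltonians $H_{1,\delta}$ and $H_{1,a}$ which coincide near their deep critical set, so that the quotient complexes are canonically isomorphic, and reading off a contradiction). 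You cannot bypass this step by an abstract ``sandwiching'' argument: a monotone continuation map $FH(H_n)\to FH(K)$ requires $K\geq H_n$ pointwise, which fails for $\max H_n\to\infty$ with $K$ fixed; reversing the inequality reverses the direction of the map and gives a lower bound on $\rho$, not an upper one.

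Second, the lower bound ``$\rho(\tilde\alpha, H_n)\geq\max H_n-C$'' is asserted without a valid mechanism. The fact that $j_{U!}\alpha\neq0$ constrains where \emph{cycles representing $\alpha$} must live (they must hit generators inside $\overline U_{\mathrm{int}}$, cf.\ Proposition~\ref{prop:living-fundamental}), but $\tilde\alpha$ is a \emph{primitive} of $\alpha$, and the differential being action-decreasing only yields the trivial inequality that the primitive has action at least that of the cycle -- nothing like the quantitative gap you need. The paper's proof instead exploits $j_{U!}\alpha\neq0$ to compute the spectral invariant of $\alpha$ (not of a lift) exactly at small parameter $\eps$: with the Floer complex reduced to a Morse complex, $\rho(H_{\eps,\delta}, i_\delta(\alpha))=-\eps\min H$, because the shriek map $j_{U!}$ is realized at chain level as the projection onto critical points inside $U$. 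It then uses the absence of short contractible orbits -- crucially -- to deform from $\eps$ small to $\eps=1$ via the spectrality and Lipschitz-continuity of $\rho$, keeping the equality $\rho(H_{1,\delta},i_\delta(\alpha))=-\min H$. Your sketch mentions that the HZ-admissibility forces the orbit structure, but never actually uses the \emph{absence} of non-constant orbits in a quantitative step; as written, the bounds you state would yield finiteness of $c_{HZ}^\circ$ without ever invoking the no-short-orbit hypothesis, which cannot be right.

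Finally, a smaller but real point: your Hamiltonians $H_n:W\to[0,\max H_n]$ have the opposite sign to the paper's $(W,\overline U)$-admissible Hamiltonians, which are \emph{negative} on $\overline U$ (the capacity is $\sup(-\min H)$). With the paper's action convention $\cA_H(\gamma)=\int\bar\gamma^*\omega-\int H$, it is essential that the well be negative so that the constant orbits concentrated in $\overline U_{\mathrm{int}}$ have large \emph{positive} action; with your positive bump they would have large negative action and fall out of the relevant action window entirely. This is not a cosmetic normalization issue: the entire direction of the spectral-invariant estimate depends on it.
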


\begin{remark}\label{rmk:criterion-short} The above criterion for the  contractible almost existence property may be written concisely as 
\begin{equation}\label{eq:criterion-almost-existence}
\ker i_{W*}\, \not\subseteq\, \ker j_{U!}.
\end{equation}
\end{remark}

As mentioned above, the proof goes through the intermediate of a relative version of the so-called \emph{$\pi_1$-sensitive Hofer-Zehnder capacity} (cf.~\cite{HZ}), which was introduced in \cite{Ginzburg-Gurel-relative-capacity} and will be denoted by $c_{HZ}^\circ(W,\overline{U}_{\mathrm{int}})$. 
We will show that this relative capacity is finite by defining a suitable spectral invariant that has the following two properties: it is an upper bound for $c_{HZ}^\circ(W,\overline{U}_{\mathrm{int}})$ 
and, under our assumptions, it is finite. The definition of the spectral invariant is an adaptation to the setting of DG coefficients of a construction of Irie~\cite{Irie14}. That the finiteness of the Hofer-Zehnder capacity implies almost existence is a classical fact due to Hofer-Zehnder and Struwe~\cite{Struwe, Hofer-Zehnder_periodic_solutions,HZ}, and this still holds for our relative capacity.

We begin the proof of Theorem~\ref{thm:almost-sure-existence-W} by giving the definition of the relative capacity for a general pair $(V,Z)$ where $V$ is an open subset of $W$ and $Z\subset V$ a compact  subset.  

\begin{definition}[\cite{Ginzburg-Gurel-relative-capacity}]\label{def:relative-capacity} A Hamiltonian $H$ on $W$ is \emph{$(V,Z)$-admissible} if it is autonomous, compactly supported  in $V$ 
  and equal to its minimum on 
  $Z$; see Figure~\ref{fig:admissible-hamiltonian}.  
We define: \begin{align*}
              c_{HZ}^\circ(V,Z) 
             =\sup\{-\min H : & \ H\text{ is $(V,Z)$-admissible and admits no nontrivial}\\
  & \text{contractible closed orbit of period $\leq1$}\}.
\end{align*}
\end{definition}

\begin{remark}\label{rem:monotonicity-capacity} If $(V',Z')$ is any other pair consisting of an open set and a compact  subset, such that $Z\subseteq Z'\subset V'\subseteq V$, then it follows immediately from the definition that
  \[c_{HZ}^\circ(V',Z')\leq c_{HZ}^\circ(V,Z).\]
\end{remark}

We will be particularly interested in the case where $V$ is the whole ambient manifold and $Z$ is the closure of a small retraction of the domain bounded by $\Sigma$, i.e., $(V,Z)=(W,\overline U_{\mathrm{int}})$.

\begin{figure}
  \centering
  \includegraphics[scale=.6]{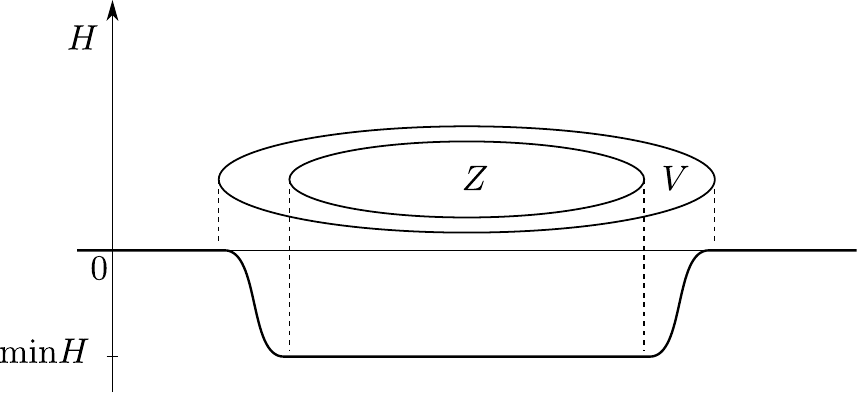} 
  \caption{A  Hamiltonian which is $(V,Z)$-admissible.}
  \label{fig:admissible-hamiltonian}
\end{figure}

Now consider  a nonvanishing class $\alpha\in H_*(W,\p W;i_W^*\cF)$ that belongs to $\ker i_{W*}$ and define its spectral number
\begin{equation}\label{eq:spectral-number}
  c(\alpha)=\inf\{c\in\R : i_{W*}(\alpha)=0\text{ in }SH_*^{<c}(W; \cF)\}.
\end{equation}
The fact that $\alpha$ is nonzero ensures that $c(\alpha)>0$. The finiteness of $c(\alpha)$ is equivalent to the vanishing of $i_{W*}(\alpha)$ in $SH_*(W;\cF)$, by~\eqref{eq:SH-limit-filtered}.

\begin{proposition}\label{prop:finite capacity-W}
  Under the assumptions of Theorem~\ref{thm:almost-sure-existence-W} we have
  \[ c_{HZ}^\circ(W,\overline{U}_{\mathrm{int}}) 
    \leq c(\alpha).\]
  In particular, $c_{HZ}^\circ(W,\overline {U}_{\mathrm{int}})$   
  is finite. 
\end{proposition}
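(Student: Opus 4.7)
We follow the approach of Irie~\cite{Irie14}, adapted using the DG Floer toolset of~\S\ref{sec:DGFloer} and the Morse--Floer correspondence. We argue by contradiction: suppose $H$ is $(W,\overline{U}_{\mathrm{int}})$-admissible with $c:=-\min H>c(\alpha)$ but has no nontrivial contractible closed orbit of period $\leq 1$, so every $1$-periodic orbit of $H$ is constant. Fix $\epsilon>0$ small enough that $c-\epsilon>c(\alpha)$, and pick $c''\in(c(\alpha),c-\epsilon)$; by definition of $c(\alpha)$ we have $i_{W*}(\alpha)=0$ in $SH_*^{<c''}(W;\cF)$.

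Next, perturb and extend $H$ to a nondegenerate, time-independent admissible Hamiltonian $\widetilde H$ on $\widehat W$, linear at infinity with slope $\kappa$ smaller than the minimal period of a closed Reeb orbit on $\partial W$. On $\overline{U}_{\mathrm{int}}$ arrange $\widetilde H=-c-\eta f$ with $\eta>0$ small and $f:U\to\R$ a Morse function whose negative gradient points outward along $\partial U$; elsewhere, take $\widetilde H$ to be a small Morse perturbation of $H$. For $\eta$ and $\kappa$ small enough, all $1$-periodic orbits of $\widetilde H$ are critical points, clustered by action into three groups: critical points of $f$ on $U$, with action in $(c-\epsilon,c+\epsilon)$; critical points in $\supp H\setminus\overline{U}_{\mathrm{int}}$, with action in $(0,c-\epsilon)$; and critical points near $\partial W$ or in the linear region, with action close to $0$.

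By a time-independent version of Theorem~\ref{thm:Floer-Morse} adapted to the Liouville setting (cf.~\cite[Theorem~7.3]{SZ92}), Floer trajectories for $(\widetilde H,J)$ coincide with negative $\widetilde H$-gradient trajectories, and $FC_*(\widetilde H;\cF)$ is canonically identified, as an action-filtered DG complex, with the DG Morse complex on $\Crit(\widetilde H)$. Under this identification $FH_*(\widetilde H;\cF)\cong H_{*+n}(W,\partial W;i_W^*\cF)$, the subquotient $FH^{(c-\epsilon,c+\epsilon)}_*(\widetilde H;\cF)$ computes $H_{*+n}(U,\partial U;j_U^*i_W^*\cF)$, and the quotient map $FH_*(\widetilde H;\cF)\to FH^{(c-\epsilon,c+\epsilon)}_*(\widetilde H;\cF)$ is identified with $j_{U!}$, since by construction the DG shriek map of~\cite[\S9--10]{BDHO} is given at chain level by projection onto Morse generators lying in $U$.

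To exploit the vanishing of $i_{W*}(\alpha)$ in $SH^{<c''}$, choose an admissible Hamiltonian $K\geq\widetilde H$ of sufficiently large slope that $FH^{<c''}_*(K;\cF)\cong SH^{<c''}_*(W;\cF)$, agreeing with $\widetilde H$ on a compact neighborhood of $\Crit(\widetilde H)$ and whose additional $1$-periodic orbits in the cone all have action exceeding $c+\epsilon$; the latter is arranged by shifting the affine constants in the linear extension. Then $FC^{<c+\epsilon}_*(K;\cF)=FC^{<c+\epsilon}_*(\widetilde H;\cF)$ and monotone continuation between them induces the identity on this subcomplex. If $z\in FC^{<c+\epsilon}_*(\widetilde H;\cF)$ represents the class $\widetilde\alpha\in FH_*(\widetilde H;\cF)$ corresponding to $\alpha$, it also represents $i_{W*}(\alpha)$ in $FC^{<c+\epsilon}_*(K;\cF)$, so by hypothesis $z=\partial w$ for some $w\in FC^{<c''}_*(K;\cF)=FC^{<c''}_*(\widetilde H;\cF)\subset FC^{<c-\epsilon}_*(\widetilde H;\cF)$. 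Therefore the image of $\widetilde\alpha$ in $FH^{(c-\epsilon,c+\epsilon)}_*(\widetilde H;\cF)$ vanishes, contradicting $j_{U!}(\alpha)\neq 0$ via the identification of the previous paragraph. The main obstacle is the action-filtered chain-level Morse--Floer identification of the third paragraph together with its compatibility with the DG shriek map; one adapts~\cite[Theorem~7.3]{SZ92} to the Liouville setting with a deep well through an interpolation argument, and uses the explicit Morse-theoretic description of $j_{U!}$ from~\cite[\S9.2]{BDHO}.
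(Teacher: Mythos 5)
Your overall strategy (contradiction, Morse--Floer comparison, action-window bookkeeping against a large-slope Hamiltonian) is in the spirit of the paper's Irie-style argument, but there is a genuine gap at the step you yourself flag as ``the main obstacle'', and it cannot be repaired the way you suggest. You work directly with a perturbation $\widetilde H$ of the \emph{unrescaled} Hamiltonian $H$, whose well has depth $-\min H>c(\alpha)$, and you claim (i) that all $1$-periodic orbits of $\widetilde H$ are constant and (ii) that the action-filtered Floer complex of $\widetilde H$ is chain-level identified with a DG Morse complex via an adaptation of \cite[Theorem~7.3]{SZ92}. Neither holds. For (i), the hypothesis only excludes nonconstant contractible orbits of $H$ of period $\le 1$; $H$ may well have orbits of period slightly greater than $1$, and an arbitrarily small autonomous perturbation can bring such an orbit to period exactly $1$ (the set of Hamiltonians without fast orbits is not open). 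For (ii), the Salamon--Zehnder identification of Floer trajectories with gradient trajectories requires the Hamiltonian to be $C^2$-small; for a deep well it fails, and even when all $1$-periodic orbits are constant the Floer differential of a large autonomous Hamiltonian need not be the Morse differential. Since your identification of the quotient map onto the action window $(c-\epsilon,c+\epsilon)$ with $j_{U!}$ rests entirely on this chain-level correspondence, the contradiction in your last paragraph is not established.

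A telling symptom is that your argument never uses the full strength of the hypothesis ``no nontrivial contractible closed orbit of period $\le 1$'' --- only ``$=1$''. The paper uses periods $\le 1$ essentially: it first computes everything for the rescaled Hamiltonians $\eps H$ with $\eps$ small, where $C^2$-smallness makes the Morse--Floer identification and the description of $j_{U!}$ as a projection legitimate, and then transports the computation from $\eps$ to $1$ by means of a spectral invariant $\rho(\cdot,i_\delta(\alpha))$ that is continuous in the Hofer norm and spectral. Along the family $sH$, $s\in[\eps,1]$, the $1$-periodic orbits correspond to orbits of $H$ of period $s\le 1$, hence are constant by hypothesis, so the action spectrum is $s\cE$ with $\cE$ the (measure-zero) set of critical values of $-H$; continuity plus spectrality then force $\rho(H_{s,\delta},\cdot)/s$ to be constant. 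This deformation step is exactly what replaces the unavailable chain-level identification at $s=1$, and it is the missing idea in your proposal. To fix your proof you would need to introduce such a spectral invariant (or an equivalent quantitative continuation argument) and run the rescaling, rather than perturbing $H$ itself.
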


The proof follows  from a slight variation of an argument of Irie~\cite{Irie14}. We will present  it in \S\ref{sec:spectral-invariant-W} and in \S\ref{sec:proof-of-lemma} below.

\begin{proof}[Proof of Theorem~\ref{thm:almost-sure-existence-W} assuming Proposition~\ref{prop:finite capacity-W}] \quad 

  Building on Struwe's enhancement of the Hofer-Zehnder theorem~\cite{Struwe, Hofer-Zehnder_periodic_solutions} in $\R^{2n}$, Hofer and Zehnder explain in~\cite[\S 4.1]{HZ} that, in any symplectic manifold, a hypersurface $\Sigma$ that bounds a relatively compact open subset with finite Hofer-Zehnder capacity satisfies the almost existence property for periodic orbits. The same argument applied to the $\pi_1$-sensitive Hofer-Zehnder capacity gives the almost existence for contractible orbits.  Furthermore, it can also be applied to the relative version of the capacity: finiteness of $c_{HZ}^\circ(V,Z)$ implies almost existence for contractible orbits near $\partial V$ \cite[Theorem~2.14 and Theorem~2.16]{Ginzburg-Gurel-relative-capacity}.

   Recall that  $U_{\mathrm{int}}$ denotes the complement of a small collar neighborhood of $\Sigma$ in $U$. Proposition~\ref{prop:finite capacity-W} applies to $U_{\mathrm{int}}$ and implies that
  $c_{HZ}^\circ(W,\overline U_{\mathrm{int}})<\infty.$
By Remark \ref{rem:monotonicity-capacity}, we deduce that $$c_{HZ}^\circ(U,\overline U_{\mathrm{int}})<\infty,$$ hence the almost existence property for contractible orbits near $\Sigma=\partial U$. 
\end{proof}

  \rmk Since $U_{\mathrm{int}}$ is a small retract of $U$ the criterion \eqref{eq:criterion-almost-existence} is valid for $U_{\mathrm{int}}$ if and only if it is valid for $U$. For readability we will replace $U_{\mathrm{int}}$ by $U$
in the rest of the present section~\S\ref{sec:homological-condition-W}.  
\kmr

This statement of Proposition~\ref{prop:finite capacity-W}  may  then be reformulated as follows: 

\begin{lemma}\label{lemma:existence-fast-orbit-W} Under the assumptions of Theorem~\ref{thm:almost-sure-existence-W}, any $(W,\overline{U})$-admissible Hamiltonian $H$ such that 
  $-\min H > c(\alpha)$ admits a nontrivial contractible closed orbit of period at most $1$. 
\end{lemma}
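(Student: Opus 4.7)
Argue by contradiction. Assume $H$ is $(W,\overline U)$-admissible with $e := -\min H > c(\alpha)$ and admits no nontrivial contractible closed orbit of period at most $1$. Fix $c$ with $c(\alpha) < c < e$; the plan is to contradict the definition of $c(\alpha)$ by exhibiting $i_{W*}(\alpha) \neq 0$ in $SH_*^{<c}(W;\cF)$.

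First, extend $H$ from $W$ to an admissible Hamiltonian $\hat H$ on the completion $\widehat W$ that equals $H$ on $W$ and is linear at infinity with generic slope $\mu > c$ lying outside the period spectrum of the Reeb flow on $\partial W$; this is unobstructed because $H$ vanishes near $\partial W$. Then perturb $\hat H$ to a nondegenerate admissible Hamiltonian $\tilde H$ by replacing its constant value $-e$ on $\overline U$ with $-e + \varepsilon f_U$, where $f_U$ is a Morse function on $\overline U$ whose gradient points inward along $\partial U$, and by adding analogous small Morse perturbations in the other critical regions of $H$. For $\varepsilon > 0$ small and data generic, the hypothesis that $H$ has no short contractible orbit forces every $1$-periodic orbit of $\tilde H$ inside $W$ to be a constant at such a critical point; the constants in $\overline U$ have action close to $e$, all others have action strictly less than $e$, and one arranges the profile of $\hat H$ between $\overline U$ and the linear region so that no $1$-periodic orbit at infinity has action in $[c,e]$.

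The decisive step is the identification of the filtered quotient. The complex $FC_*(\tilde H;\cF)/FC_*^{<c}(\tilde H;\cF)$ is generated by the Floer generators of action at least $c$, all concentrated in $\overline U$. By the relative version of Theorem~\ref{thm:Floer-Morse} (the inward direction of $\nabla f_U$ along $\partial U$ yields Morse homology relative to the boundary), the homology of this quotient is canonically isomorphic to $H_{*+n}(U,\partial U; j_U^*i_W^*\cF)$. Via the isomorphism~\eqref{eq:isom-SH-filtered-FH}, $FH_*(\tilde H;\cF) \cong SH_*^{<\mu}(W;\cF)$, and applying a continuation map from a $C^2$-small Morse function $G$ computing $SH_*^{=0}(W;\cF)$ (Proposition~\ref{prop:SH-action-zero}), the class $\alpha$ is sent to $i_{W*}(\alpha)$ in $FH_*(\tilde H;\cF)$. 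By naturality of the DG shriek map from~\cite{BDHO}, the induced projection of $i_{W*}(\alpha)$ to $H_*(U,\partial U; j_U^*i_W^*\cF)$ coincides with $j_{U!}(\alpha)$, which is nonzero by assumption. Hence $i_{W*}(\alpha) \neq 0$ in $SH_*^{<\mu}(W;\cF)$. Since $i_{W*}$ factors as $SH_*^{=0}(W;\cF) \to SH_*^{<c}(W;\cF) \to SH_*^{<\mu}(W;\cF)$, vanishing in $SH_*^{<c}(W;\cF)$ would entail vanishing in $SH_*^{<\mu}(W;\cF)$; this forces $i_{W*}(\alpha) \neq 0$ in $SH_*^{<c}(W;\cF)$, contradicting $c(\alpha) < c$.

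\textit{Main obstacle.} The hard part is the chain-level identification of the filtered quotient with the DG Morse complex of $(U,\partial U)$, together with the verification that the continuation map realizes the shriek morphism $j_{U!}$ in this identification. This requires simultaneously: extending Theorem~\ref{thm:Floer-Morse} to the relative setting of manifolds with boundary; choosing enriched Floer data from~\S\ref{sec:DGFloer} that match enriched Morse data on $(U,\partial U)$; and invoking the functoriality of DG Morse homology established in~\cite[\S9,\S10,\S12]{BDHO} so that the signs, grading shifts, and naturality of $j_{U!}$ all align. The technical details of the spectral invariant construction and this comparison will be carried out in \S\ref{sec:spectral-invariant-W} and \S\ref{sec:proof-of-lemma}.
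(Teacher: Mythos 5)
Your proposal diverges from the paper's actual proof at its crux, and the divergence hides a genuine gap.

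The paper's argument (carried out in \S\ref{sec:spectral-invariant-W}--\S\ref{sec:proof-of-lemma}) does not attempt to identify the filtered quotient of $FC_*(\tilde H)$ with the Morse complex of $(U,\partial U)$. Instead it introduces the spectral invariant $\rho(\cdot,\sigma)$, computes it in \textbf{Step~1} for $H_{\eps,\delta}$ with $\eps$ small, where $\eps H$ is $C^2$-small and the Salamon--Zehnder correspondence makes the Floer complex literally a Morse complex; there the hypothesis $j_{U!}(\alpha)\neq 0$ is read off directly from the chain-level description of the shriek map as a projection. It then propagates the value of $\rho$ from small $\eps$ to $\eps=1$ in \textbf{Step~2} via Hofer-continuity and spectrality of $\rho$, using the no-short-orbits hypothesis only to guarantee that $\mathrm{spec}(H_{s,\delta})=s\,\cE$ is a rescaled copy of a fixed measure-zero set, so that the continuous spectrum-valued function $s\mapsto\tfrac1s\rho(H_{s,\delta},i_\delta(\alpha))$ must be constant. \textbf{Step~3} then compares the $(a,\infty)$-windows of $H_{1,\delta}$ and $H_{1,a}$, which are canonically chain-isomorphic because the two Hamiltonians agree on $W$ and the maximum principle confines the relevant Floer cylinders there.

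Your plan skips the spectral invariant and instead asserts that $FC_*(\tilde H;\cF)/FC_*^{<c}(\tilde H;\cF)$ is ``canonically isomorphic to $H_{*+n}(U,\partial U;j_U^*i_W^*\cF)$'' via ``the relative version of Theorem~\ref{thm:Floer-Morse}.'' This is precisely the step that cannot be justified for your $\tilde H$. Theorem~\ref{thm:Floer-Morse} and the Salamon--Zehnder argument it rests on require the Hamiltonian to be globally $C^2$-small, while your $\tilde H$ is a small perturbation of the \emph{given} $(W,\overline{U})$-admissible $H$, which is not $C^2$-small (it varies from $-e$ to $0$ across $W$). The no-short-orbits hypothesis does confine the $1$-periodic orbits to the constants, but it gives you no control over the Floer \emph{cylinders} between the high-action generators in $\overline{U}$: low action or low energy is a bound on a functional of loops, not a spatial confinement to $\overline{U}$, and the maximum principle only confines trajectories to $W$, not to the subdomain. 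So the quotient differential is not a Morse differential for $-\tilde H|_U$, and the identification you propose as the ``decisive step'' is open. This is exactly the obstruction that forces the paper to route through $\rho$: the shriek map is computed where Floer equals Morse (small $\eps$), and continuity plus spectrality carry the resulting numerical invariant to $\eps=1$ without ever computing the filtered complex of $H$ itself. Note also that, even granting the quotient identification, your argument that the continuation map from a small Morse function sends $\alpha$ to something projecting onto $j_{U!}(\alpha)$ is another nontrivial naturality claim that you flag as the ``main obstacle'' and defer to \S\ref{sec:spectral-invariant-W}--\S\ref{sec:proof-of-lemma}; but those sections develop the spectral invariant and deliberately avoid the comparison you are deferring to them.

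A secondary issue: you take $\nabla f_U$ pointing \emph{inward} along $\partial U$, which produces Morse homology of $U$ rather than of $(U,\partial U)$ under the identification $FH\simeq H(\text{Morse of }-\tilde H)$; the paper's convention is the opposite, precisely so that the quotient complex computes homology rel boundary and the chain-level projection realizes $j_{U!}$.
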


We now give the proof of  Lemma~\ref{lemma:existence-fast-orbit-W}.

\subsubsection{A spectral invariant}\label{sec:spectral-invariant-W}

In order to prove Lemma~\ref{lemma:existence-fast-orbit-W}, we introduce the following spectral invariant. A similar invariant was introduced by Irie~\cite{Irie14} in the classical case. Given a nondegenerate Hamiltonian $H$ linear at infinity with slope $c$, 
and a nonzero class $\sigma\in SH_*^{<c}(W;\cF)$, we define
\[\rho(H,\sigma)=\inf\{a\, : \, \Psi_H^{-1}(\sigma)\in \im (FH_*^{<a}(H;\cF)\to FH_*(H;\cF))\},\]
where $\Psi_H$ stands for the isomorphism $FH_*(H;\cF)\stackrel{\sim}{\to}SH_*^{<c}(W;\cF)$, see~\eqref{eq:isom-SH-filtered-FH}, where we also explain that $\Psi_H$ can be understood as a continuation map. 

The basic properties of this invariant are listed in the next proposition.

\begin{proposition}\label{prop:spectral-invariant-W} Let $\sigma \in SH_*^{<c}(W;\cF)$ be a nonzero class. The following properties hold:
  \begin{enumerate}
  \item (Continuity) the function $\rho(\cdot,\sigma)$ extends to a continuous function on the set of all (not necessarily nondegenerate) smooth Hamiltonians with slope $c$ at infinity. Here continuity is understood with respect to the Hofer norm.
  \item (Spectrality) for any such Hamiltonian $H$, the value $\rho(H,\sigma)$ belongs to the spectrum $\mathrm{spec}(H)$, i.e., the set of critical values of the Hamiltonian action functional $\cA_H$. 
  \end{enumerate}
\end{proposition}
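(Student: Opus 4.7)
The strategy is the familiar one for spectral invariants (cf.\ Schwarz, Oh, Irie), adapted to our DG setup. The two properties are proved in sequence: continuity first, by quantitative control of continuation maps, and spectrality second, by an attainment argument for nondegenerate $H$ combined with an approximation/compactness argument in the general case. The key inputs are already available in the DG Floer toolset developed in~\S\ref{sec:DGFloer}, in particular Corollaries~\ref{cor:DGcontinuation-monotone},~\ref{cor:DGcontinuation-Hofer-norm}, the composition property (Proposition~\ref{prop:composition-v1}), and the identity homotopy property (Corollary~\ref{cor:Id-Id}).

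For continuity, fix two nondegenerate Hamiltonians $H_0,H_1$ of slope $c$ at infinity. For any $\eps>0$, Corollary~\ref{cor:DGcontinuation-Hofer-norm} yields continuation maps $\Psi^{ij}:FC_*(H_i;\cF)\to FC_*(H_j;\cF)$ sending $FC_*^{<a}$ into $FC_*^{<a+E_{ij}}$ with $E_{ij}=\int_0^1\max(H_i^t-H_j^t)\,dt+\eps$. Because $H_0$ and $H_1$ have the same slope, Propositions~\ref{prop:composition-v1} and~\ref{prop:Id-Id} together with the identification~\eqref{eq:isom-SH-filtered-FH} imply that $\Psi^{ij}$ intertwines the isomorphisms $\Psi_{H_i}$, i.e.\ $\Psi^{01}\bigl(\Psi_{H_0}^{-1}(\sigma)\bigr)=\Psi_{H_1}^{-1}(\sigma)$. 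Choosing $a>\rho(H_0,\sigma)$ and applying $\Psi^{01}$ gives $\rho(H_1,\sigma)\le a+E_{01}$; letting $a\downarrow\rho(H_0,\sigma)$ and $\eps\downarrow 0$ we get $\rho(H_1,\sigma)\le \rho(H_0,\sigma)+\int_0^1\max(H_0^t-H_1^t)\,dt$, and symmetrically. This proves Lipschitz continuity with respect to the Hofer-type seminorm $\int_0^1\bigl(\max(H_0^t-H_1^t)+\max(H_1^t-H_0^t)\bigr)dt$ on the space of nondegenerate Hamiltonians of fixed slope $c$. Since this seminorm extends continuously to all smooth Hamiltonians of slope $c$, and nondegenerate Hamiltonians are $C^\infty$-dense in that set, $\rho(\cdot,\sigma)$ extends uniquely to a continuous function on the whole space.

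For spectrality, first take $H$ nondegenerate of slope $c$. By admissibility $\Per(H)$ is contained in a fixed compact set, hence is finite; consequently the set of possible action thresholds $\{a\in\R\,:\,\Psi_H^{-1}(\sigma)\in\im(FH_*^{<a}(H;\cF)\to FH_*(H;\cF))\}$ is a half-line closed on the left, and the infimum~\eqref{eq:spectral-number} is realised: any cycle achieving it necessarily contains a generator $x\in\Per(H)$ with $\cA_H(x)=\rho(H,\sigma)$. Hence $\rho(H,\sigma)\in\mathrm{spec}(H)$. For an arbitrary $H$ of slope $c$, pick nondegenerate $H_k\to H$ in $C^\infty$ with the same slope; by the continuity just proved, $\rho(H_k,\sigma)\to\rho(H,\sigma)$, and each $\rho(H_k,\sigma)=\cA_{H_k}(x_k)$ for some $x_k\in\Per(H_k)$. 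Since all $H_k$ have slope $c$ at infinity, the orbits $x_k$ lie in a common compact subset of $\hat W$; passing to a subsequence, $x_k\to x$ with $x\in\Per(H)$, and by continuity of the action functional $\cA_{H_k}(x_k)\to\cA_H(x)$, so $\rho(H,\sigma)=\cA_H(x)\in\mathrm{spec}(H)$. The main point requiring attention is the compatibility of the continuation maps with the isomorphisms $\Psi_{H_i}$ between the two action-filtered pictures, which is where Propositions~\ref{prop:composition-v1} and~\ref{prop:Id-Id} are essential; once this is in hand both properties follow by the standard arguments.
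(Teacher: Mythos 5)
Your proposal is correct and follows essentially the same strategy as the paper: establish the Hofer--Lipschitz estimate via action control of continuation maps and the compatibility with the colimit identifications $\Psi_{H_i}$, then deduce spectrality for nondegenerate $H$ from finiteness of $\Per(H)$, and pass to the general case by approximation plus Arzel\`a--Ascoli. Two small remarks: (1) you invoke Corollary~\ref{cor:DGcontinuation-Hofer-norm} where the paper re-derives the same energy estimate in line (the paper's footnote acknowledges these are the same argument), so this is cosmetic; (2) in the nondegenerate spectrality step, the set $\{a : \Psi_H^{-1}(\sigma)\in\im(FH_*^{<a}\to FH_*)\}$ is actually an \emph{open} half-line $(\rho,\infty)$ rather than ``closed on the left,'' since $FC_*^{<a}$ uses strict inequality; this does not affect the conclusion, as $\rho$ is still a jump point of the finite filtration and hence lies in $\mathrm{spec}(H)$.
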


The proof follows a familiar pattern (see, e.g., \cite{Schwarz, Frauenfelder-Schlenk-convex}) and we only sketch its main steps below. Let $\hat W$ be the symplectic completion of $W$, as in~\S\ref{sec:symplectic_homology}.

\begin{proof} We will establish the inequality
\begin{equation}\label{eq:Hofer-estimate-spectral-inv-W}
    \rho(H^-,\sigma)\leq \rho(H^+, \sigma) + \int_{0}^1\max_{\hat W}(H_t^+-H_t^-)\,dt
  \end{equation}
  for any two nondegenerate Hamiltonians $H^\pm$ that are linear of slope $c$ at infinity. 
  By switching the roles of $H^+$ and $H^-$, this yields 
\[|\rho(H^-,\sigma)-\rho(H^+, \sigma)|\leq \|H^+-H^-\|,\]
where $\|\cdot\|$ stands for the Hofer norm. The first item of the proposition follows.

To prove (\ref{eq:Hofer-estimate-spectral-inv-W}), let 
$(H_s)$ be a regular admissible homotopy which interpolates between $H^+$ and $H^-$ and which is arbitrarily close to a homotopy given by 
  \begin{equation}
H_t^-(x)+\beta(s)(H^+_t(x)-H^-_t(x)),\label{eq:linear-homotopy-W}
\end{equation}
for a smooth cutoff function $\beta:\R\to[0,1]$ that vanishes near $-\infty$ and equals 1 near $+\infty$. Let $u\in \mathcal H(x^+; y^-)$ be a continuation curve associated to $(H_s)$ (for some auxiliary homotopy of almost complex structures $J_s$).
A standard computation involving the Stokes formula yields
\begin{align*}
  0&\leq \int_{\R}\int_{S^1}\|\p_su\|^2\,dt\,ds\\
   &=\cA_{H^+}(x^+)-\cA_{H^-}(y^-)+\int_\R\int_{S^1}(\p_sH_s)\circ u\,dt\,ds\\
   &\leq \cA_{H^+}(x^+)-\cA_{H^-}(y^-) + \int_{0}^1\max_{\hat W}(H_t^+-H_t^-)\,dt +\eps.
\end{align*}
Here $\eps>0$ can be made arbitrarily small by choosing $H_s$ close enough to (\ref{eq:linear-homotopy-W}).
This implies that the continuation map $\Psi^\cF$ from Definition \ref{def:continuation-map} sends the subcomplex $FC_*^{<a}(H^+;\cF)$ into $FC_*^{<a+E}(H^-;\cF)$ (we omit auxiliary data from the notation), where $E=\int_{0}^1\max_{\hat W}(H_t^+-H_t^-)\,dt +\eps$.\footnote{This argument is the same as the one proving Corollary~\ref{cor:DGcontinuation-Hofer-norm}.}
We obtain a commutative diagram
\begin{equation*}
  \xymatrix{FH_*^{<a}(H^+;\cF) \ar[r]\ar[d]_{\Psi^\cF} & FH_*(H^+;\cF) \ar[r]^{\Psi_{H^+}} \ar[d]_{\Psi^\cF} & SH_*^{<c}(W;\cF)\\
    FH_*^{<a+E}(H^-;\cF) \ar[r] & FH_*(H^-;\cF) \ar[ru]_{\Psi_{H^-}} 
    }
\end{equation*}
where all maps in the right triangle are isomorphisms. The triangle is commutative because the isomorphisms $\Psi_{H^+}$ and $\Psi_{H^-}$ are realized by continuation maps, and the composition of two continuation maps is again a continuation map (Proposition~\ref{prop:composition-v1}). 

The inequality (\ref{eq:Hofer-estimate-spectral-inv-W}) readily follows from this diagram, the definition of $\rho(\cdot, \sigma)$ and the fact that $\eps$ can be made arbitrarily small.

\medskip
We now turn to the spectrality property. This property is obvious in the case where the Hamiltonian $H$ is nondegenerate. In the general case, we approximate $H$ in the $C^2$-topology by a sequence of nondegenerate Hamiltonians $(H_i)$. Each number $\rho(H_i,\sigma)$ is the action $\cA_{H_i}(x_i)$ of some 1-periodic orbit $x_i$ of $H_i$. By the Arzela-Ascoli theorem, a subsequence of $(x_i)$ converges to a 1-periodic solution $x$ of the Hamiltonian $H$. By continuity of $\rho(\cdot, \sigma)$, we deduce that $\rho(H,\sigma)=\cA(x)$.
\end{proof}

\subsubsection{Proof of Lemma~\ref{lemma:existence-fast-orbit-W}}\label{sec:proof-of-lemma}

Our proof is a reformulation of an argument of Irie~\cite{Irie14}. All homology groups in this section are understood with coefficients in a fixed DG local system $\cF$, which we henceforth omit from the notation.

\begin{proof} Let $H$ be a $(W,\overline{U})$-admissible Hamiltonian such that 
  $-\min H > c(\alpha)$. We assume that $H$ does not admit any nontrivial contractible closed orbit of period at most $1$. Given $R\ge 1$ we let $W_R = W\sqcup ([1,R]\times \p W$). Equivalently, $W_R$ is the image of $W$ under the time $\log R$ flow of the Liouville vector field in the symplectic completion $\hat W$. Note that we have an identification $\hat W=W_R\sqcup ([R,+\infty)\times \p W)$. 

Recall that $\overline U\subset W=W_1$ and choose $R''> R'>1$. Given $\eps>0$ and $\delta>0$, we consider smooth autonomous Hamiltonians $H_{\eps, \delta}$ of the following form (see Figure \ref{fig:haliltonian-family}): 
  \begin{itemize}
  \item[---] on $W$ the Hamiltonian $H_{\eps,\delta}$ coincides with $\eps H$. 
  \item[---] on $(1,+\infty)\times \p W$, it is equal to a function $h:(1,+\infty)\to\R$ that vanishes on $[1, R']$, that increases and is strictly convex on $(R', R'')$, and that is linear of slope $\delta$ on $(R'',+\infty)$.
\end{itemize}

\begin{figure}
  \centering
  \includegraphics[scale=.6]{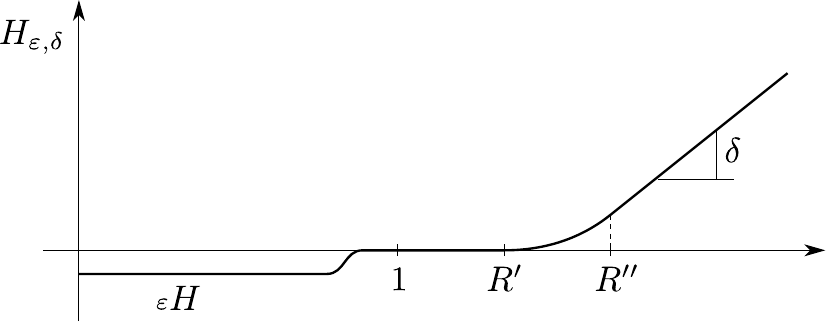} 
  \caption{Deformation.}
  \label{fig:haliltonian-family}
\end{figure}

  Let $i_\delta$ be the natural map $SH^{=0}_*(W)\to SH^{<\delta}_*(W)$.

  \noindent\textbf{Step 1: $\rho(H_{\eps, \delta}, i_{\delta}(\alpha))=-\eps\min H$ for $\eps, \delta$ small enough. } 
  
  In order to compute $\rho(H_{\eps, \delta}, i_{\delta}(\alpha))$ we perturb the Hamiltonian into a sequence of $C^2$-small Morse functions $(H_i)$, while keeping it fixed in the region where it is linear of slope $\delta$.  We choose our perturbations so that a negative gradient of $H_i$ points outwards along the boundary $\partial U$. This implies that for each of these, the Floer complex reduces to the Morse complex of $-H_i$ and the homology $H_*(U,\p U;\cF)$ is computed by a complex
   spanned by critical points arising in the region where $H_{\eps,\delta}$ is equal to its minimum $\eps H$. The assumption ${j_U}_!(\alpha)\neq 0$ in the hypothesis of Theorem~\ref{thm:almost-sure-existence-W} means that $\alpha$ is not represented in the complement of $U$ in these Morse complexes. To see this, recall from \cite[\S 9.1]{BDHO} that the shriek map $j_{U!}: H_*(W,\p W;\calf)\ri H_*(U,\p U; j_{U!}^*\calf )$ was defined at the level of complexes as a projection: we consider a Morse-Smale pair $(f,\xi)$ on $U$ with the negative gradient $\xi$ pointing outwards along the boundary $\p U$ and we extend it to $(W,\p W)$ with negative gradient pointing outwards along $\p W$. If ($F,\ol \xi)$ is the resulting couple on $W$, it is easy to see that the Morse complex defined by $(f,\xi)$ is isomorphic to the quotient of the Morse complex defined by $(F,\ol\xi)$ by its subcomplex spanned by the critical points of $F$ in $W\setminus U$. We thus have a well-defined projection between the two and in the case of DG coefficients this projection is by definition the shriek map $j_{U!}$ above.
  
   Thus, the minimal level at which $\alpha$ is represented in the Morse complex of $-H_i$ is close to $-\eps \min H$. We conclude that  $\rho(H_i, i_{\delta}(\alpha))$ is close to $-\eps \min H$. Letting $i$ go to infinity, we obtain  $\rho(H_{\eps, \delta}, i_{\delta}(\alpha))=-\eps\min H$ as claimed.

    \noindent\textbf{Step 2: $\rho(H_{1, \delta}, i_{\delta}(\alpha))=-\min H$ for $\delta$ small enough. } 
   
   We consider the family of Hamiltonians $H_{s,\delta}$ for $s\in[\eps, 1]$. By our assumption of nonexistence of nonconstant contractible periodic orbits of $H$ of period less than $1$, and also because there are no periodic orbits of period $1$ in the region $(R',\infty)\times \p W$, we infer that the spectrum of $H_{s,\delta}$ is  of the form $s \cE$, where $\cE$ is the set of critical values of $-H$.  
   The conclusion follows from the continuity and the spectrality of $\rho(\cdot, i_{\delta}(\alpha))$.  Indeed, the function $s\mapsto \frac 1s\rho(H_{s,\delta}, i_{\delta}(\alpha))$ is continuous on $[\delta, 1]$ and takes values in $\cE$. Since $\cE$ is totally disconnected by Sard's lemma, this function must be constant.
   
    \noindent\textbf{Step 3: Getting a contradiction. } Recall that we work under the standing assumption $-\min H >c(\alpha)$. Let us choose $-\min H > a> c(\alpha)$. We consider the following commutative diagram, in which we abbreviate $\Psi_\delta=\Psi_{H_{1,\delta}}$ and $\Psi_{a}=\Psi_{H_{1,a}}$: 
\begin{equation*}
      \xymatrix{
        SH_*^{=0}(W) \ar[dr]_-{i_{a}} \ar[r]^{i_{\delta}} & SH_*^{<\delta}(W) \ar[d] & \ar[l]_-{\Psi_\delta}^-\sim FH_*(H_{1,\delta})\ar[d]\ar[r] & FH_*^{(a,+\infty)}(H_{1,\delta})\ar[d]_-{\wr}
        \\
        & SH_*^{<a}(W)  & \ar[l]_-{\Psi_{a}}^-\sim FH_*(H_{1,a}) \ar[r]& FH_*^{( a,+\infty)}(H_{1,a})
      }      
\end{equation*}
The rightmost horizontal arrows are induced by projection maps onto the corresponding Floer quotient complexes in action $\ge a$. The rightmost vertical arrow is an isomorphism because the Hamiltonians $H_{1,\delta}$ and $H_{1,a}$ coincide in the neighborhood of their critical set of action $\ge a$ (which is the locus of critical points at level $-\min H$). Moreover, by the maximum principle the Floer orbits involved in the differential are constrained to $W$, where the Hamiltonians coincide. 

We now look at how the class $\alpha\in SH^{=0}_*$ is spread in the diagram by these maps. We will obtain a contradiction by showing that the image of $\alpha$ in $FH^{\geq a}(H_{1,\delta})$ obtained by following the first line in the diagram is nonzero, whereas the image of $\alpha$ in $FH^{\geq a}(H_{1,a})$ obtained by following the second line in the diagram is zero. 

Consider the first line of the diagram. The image of $\alpha$ in $FH(H_{1,\delta})$ is nonzero by definition of $c(\alpha)$ and the fact that $\delta< c(\alpha)$. To prove that its image in $FH^{(a,+\infty)}(H_{1,\delta})$ is also nonzero we argue by contradiction. Assuming the contrary, we would get that $\alpha$ is represented in $FH^{< a}(H_{1,\delta})$, hence the spectral invariant would satisfy $\rho(H_{1, \delta}, i_{\delta}(\alpha)) \le a$. However, this contradicts Step 2 in view of $-\min H>a$.

We now consider the image of $\alpha$ along the second line of the diagram. By definition of $c(\alpha)$ we have $i_{a}(\alpha)=0$, hence the image of $\alpha$ in $FH^{(a,+\infty)}(H_{1,a})$ is also zero. 
\end{proof}

\subsection{Applications to cotangent bundles}\label{sec:almost-existence-cotangent}

In this subsection we consider the particular case of cotangent bundles and we recast the results of the previous section \S\ref{sec:homological-condition-W} in topological terms using the Viterbo isomorphism from Theorem~\ref{thm:Viterbo_iso_loc_sys_DG}. We then prove Theorem~\ref{thm:almost-existence-simplified} and Theorem~\ref{thm:almost-existence-Thom-simplified}, as a consequence of the more general Theorems~\ref{thm:almost-existence} and~\ref{thm:almost-existence-Thom}.
 
Let $Q$ be a closed manifold and $U\subset D^*Q$ a relatively compact open subset with smooth boundary $\Sigma=\p U$. Recall the notation  $i_Q:Q\to\call_0 Q\subset \call Q$  for the canonical inclusion 
 of the constant loops and $j_U:U\to D^*Q$ for the inclusion of $U$ into $D^*Q$, and let $\pi_U:U\to Q$ be the restriction to $U$ of the canonical projection $\pi:T^*Q\to Q$.

Our criterion \eqref{eq:criterion-almost-existence} established in Theorem~\ref{thm:almost-sure-existence-W}  specializes in this situation to the following: 

\begin{proposition}\label{prop:almost-sure-existence-T*Q} Let $Q^n$ be a smooth closed orientable manifold and let $\Sigma\subset T^*Q$ be a closed hypersurface bounding a relatively compact open subset $U$. We assume that there exists a DG local system $\cF$ on $\cL_0 Q$ such that  
\begin{equation}\label{eq:criterion-almost-existence-cotangent}
\ker i_{Q*} \, \not\subseteq \ker \pi_{U!}
\end{equation} 
where 
$$
i_{Q*}: H_*(Q, i_Q^*\calf)\ri H_*(\call_0 Q;\calf),
$$ 
$$
\pi_{U!}: H_*(Q; i_Q^*\calf)\ri H_{*+n}(U,\p U; \pi_U^*i_Q^*\calf)
$$ 
are, respectively, the direct map defined by $i_Q$ and the shriek map defined by $\pi_U$. Then  $\Sigma = \p U$ has the contractible almost existence property.  
\end{proposition}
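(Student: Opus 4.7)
The plan is to apply Theorem~\ref{thm:almost-sure-existence-W} to the Liouville domain $W = D^*Q$, rescaled so that the relatively compact $\overline{U}\subset T^*Q$ sits in its interior, for a suitable DG local system $\cG$ on $\cL T^*Q$ obtained from $\cF$. I will transport the hypothesis~\eqref{eq:criterion-almost-existence-cotangent}, which is phrased downstairs on $Q$, to the Liouville criterion on $W$ via the Viterbo and Thom isomorphisms.

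I will first apply Corollary~\ref{cor:Viterbo-cotangent} to produce a DG local system $\cG$ on $\cL T^*Q$ fitting into the commutative diagram
\[
\xymatrix{
SH_*^{=0}(T^*Q;\cG)\ar[r] \ar[d]_{\widetilde\Psi^{=0}}^{\simeq} & SH_*(T^*Q;\cG) \ar[d]_{\widetilde\Psi}^{\simeq} \\
H_*(Q;i_Q^*\cF) \ar[r]^-{i_{Q*}} & H_*(\cL Q;\cF),
}
\]
in which the top row becomes the canonical map $i_{W*}$ of Theorem~\ref{thm:almost-sure-existence-W} once the left vertical isomorphism is composed with the action-zero identification of Proposition~\ref{prop:SH-action-zero}. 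Since $Q$ is orientable, the orientation local system $\underline{|Q|}$ is trivial, and Corollary~\ref{cor:Viterbo-cotangent} together with Remark~\ref{rmk:Thom-explication} identifies $\widetilde\Psi^{=0}$ with the inverse of the Thom shriek $\pi_!\colon H_*(Q;i_Q^*\cF) \stackrel{\simeq}{\longrightarrow} H_{*+n}(D^*Q, S^*Q; \pi^*i_Q^*\cF)$; simultaneously, it yields canonical identifications $i_W^*\cG \simeq \pi^*i_Q^*\cF$ and $j_U^*i_W^*\cG \simeq \pi_U^*i_Q^*\cF$ of DG local systems.

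Using~\eqref{eq:criterion-almost-existence-cotangent}, I pick a class $\beta \in H_*(Q; i_Q^*\cF)$ with $i_{Q*}(\beta)=0$ and $\pi_{U!}(\beta) \neq 0$, and set $\alpha := \pi_!(\beta) \in H_{*+n}(D^*Q, S^*Q; i_W^*\cG)$. Chasing the diagram above, the vanishing $i_{Q*}(\beta)=0$ together with the fact that $\widetilde\Psi$ is an isomorphism forces $i_{W*}(\alpha)=0$ in $SH_*(D^*Q;\cG)$. On the other hand, factoring $\pi_U = \pi \circ j_U$ and invoking the contravariant functoriality of shriek maps in the DG setting for manifolds with boundary (\cite[\S9--10]{BDHO}) gives
\[\pi_{U!}(\beta) \;=\; j_{U!}\bigl(\pi_!(\beta)\bigr) \;=\; j_{U!}(\alpha) \quad \text{in } H_{*+n}(U,\partial U; j_U^*i_W^*\cG),\]
so $j_{U!}(\alpha) \neq 0$. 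The class $\alpha$ therefore satisfies the two hypotheses of Theorem~\ref{thm:almost-sure-existence-W} for $W = D^*Q$ and the DG local system $\cG$, and we conclude the contractible almost existence property near $\Sigma$.

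The main delicacy of the argument is the careful bookkeeping of the twisting local systems across the Viterbo and Thom isomorphisms, and the verification that the concrete top map in the diagram really does coincide, after the Proposition~\ref{prop:SH-action-zero} and Thom identifications, with the canonical map $i_{W*}\colon H_{*+n}(W,\partial W;i_W^*\cG)\to SH_*(W;\cG)$ appearing in the statement of Theorem~\ref{thm:almost-sure-existence-W}. Corollary~\ref{cor:Viterbo-cotangent} is designed precisely to absorb the orientation line $\underline{\eta}$ present in the general DG Viterbo isomorphism, and under the orientability of $Q$ it produces exactly the clean identifications needed here.
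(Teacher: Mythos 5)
Your proof is correct and takes essentially the same approach as the paper: both invoke Corollary~\ref{cor:Viterbo-cotangent} to obtain $\cG$, splice its diagram with diagram~\eqref{eq:diagram-criterion}, and use the Thom isomorphism $\pi_!$ together with the factorization $\pi_{U!}=j_{U!}\circ\pi_!$ to reduce the cotangent criterion~\eqref{eq:criterion-almost-existence-cotangent} to the Liouville criterion of Theorem~\ref{thm:almost-sure-existence-W}. The only difference is cosmetic: you phrase the diagram chase on an explicit class $\beta$ and its image $\alpha=\pi_!(\beta)$, whereas the paper states the conclusion directly as a translation of one kernel-noninclusion into the other.
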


\begin{proof} Let $\cal G$ be a DG local system on $T^*Q$ provided by Corollary \ref{cor:Viterbo-cotangent}. We put together the diagram of this corollary with  diagram \eqref{eq:diagram-criterion} and get 
$$
\xymatrix{
H_{*+n}(U,\p U; \pi_U^*i_Q^*\calf)&\, &\, \\
H_{*+n}(D^*Q,S^*Q; \pi^*i_Q^*\calf)\ar[r]_-{\simeq}\ar[u]_-{j_{U!}}\ar@/^2pc/[rr]^- {(i_{D^*Q})_*}
&SH_*^{=0}(T^*Q;\cG)\ar[r] \ar[d]_\simeq^{{\widetilde\Psi}^{=0}} & SH_*(T^*Q;\cG) \ar[d]_\simeq^{\widetilde\Psi} \\
\, & H_*(Q;i_Q^*\cF) \ar[r]^-{i_{Q*}} \ar[ul]^-{\simeq}_-{\pi_!}& H_*(\cL_0 Q;\cF).
}
$$

Recall that the criterion  \eqref{eq:criterion-almost-existence} is $\ker (i_{D^*Q})_*\, \not\subseteq \ker j_{U!}$. Composing these maps with  the Thom isomorphism $\pi_!$ we immediately get \eqref{eq:criterion-almost-existence-cotangent} from the above diagram.
\end{proof} 

\subsubsection{Proof of the almost existence results in the cotangent bundle} 

As we will see, Theorem~\ref{thm:almost-existence-simplified} is a consequence of the following more general result.

    \begin{theorem}\label{thm:almost-existence}  Let $Q^n$ be a closed orientable manifold. Let $\varphi:S\to T^*Q$ be a continuous map, defined on a 
  smooth $n$-dimensional orientable closed manifold $S$, and such that the degree $d=\deg(\pi\circ\varphi)$ is nonzero.

  Assume that either $\pi_2(Q)\neq 0$ or
   one of the following conditions is satisfied: \begin{enumerate}\item The first nonzero homotopy group $\pi_k(Q)$, $k\geq 3$ has an element that is not of $d$-torsion.
   \item There exists $\ell\geq 3$ such that, for any integer $m \geq 1$, the homotopy group $\pi_\ell(Q)$ is not of $d^m$-torsion.
 \end{enumerate}
     
  Then, any hypersurface $\Sigma\subset T^*Q$ that bounds a relatively compact domain $U$ that contains $\varphi(S)$ has the almost existence property for closed characteristics that are contractible in $T^*Q$.
    \end{theorem}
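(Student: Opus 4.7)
The plan is to verify the homological criterion of Proposition~\ref{prop:almost-sure-existence-T*Q}, which reduces the theorem to exhibiting a DG local system $\cF$ on $\cL_0 Q$ together with a class $\alpha\in H_*(Q;i_Q^*\cF)$ satisfying $i_{Q*}\alpha=0$ and $\pi_{U!}\alpha\neq 0$. I would use the DG local system induced on $\cL_0 Q$ by the Serre fibration $\Omega^2 Q\hookrightarrow \cP_Q\cL_0 Q\to \cL_0 Q$, where $\cP_Q\cL_0 Q$ denotes the space of Moore paths in $\cL_0 Q$ starting at a constant loop -- the same fibration that will be used in the proof of Theorem~\ref{thm:abundant-intro}. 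Contracting each such path to its initial point exhibits $Q\subset\cL_0 Q$ as a deformation retract of $\cP_Q\cL_0 Q$, so Theorem~\ref{thm:FH_for_fibrations-intro} (together with the fibration theorem of~\cite{BDHO}) identifies $H_*(\cL_0 Q;\cF)\simeq H_*(Q)$. Collapsing the two constant end circles of a Moore path between constant loops yields a $2$-sphere, and this adjunction gives $\cP_Q\cL_0 Q|_Q\simeq \mathrm{Map}(S^2,Q)$; hence $H_*(Q;i_Q^*\cF)\simeq H_*(\mathrm{Map}(S^2,Q))$, with $i_{Q*}$ identified with the evaluation $\ev_*:H_*(\mathrm{Map}(S^2,Q))\to H_*(Q)$ at the basepoint of $S^2$, which admits the inclusion of constant maps as a right inverse.

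Next, I would exploit the degree hypothesis. Factoring $\pi\circ\varphi=\pi_U\circ\varphi_U$ with $\varphi_U\colon S\to U$ and using the contravariant functoriality of shriek maps yields $(\pi\circ\varphi)_!=\varphi_{U!}\circ\pi_{U!}$, so $\pi_{U!}\alpha\neq 0$ whenever $(\pi\circ\varphi)_!\alpha\neq 0$. The shriek map $(\pi\circ\varphi)_!$ is the one induced on Morse homology with DG coefficients by the fiber-preserving map from the pullback fibration $(\pi\circ\varphi)^*\mathrm{Map}(S^2,Q)\to S$ to $\mathrm{Map}(S^2,Q)\to Q$. Analysing this shriek map through the induced morphism of Leray--Serre spectral sequences, and in particular its compatibility with the fiberwise shriek from Proposition~\ref{prop:shriek-commute} over each base point, one sees that if $\alpha\in H_{n+k}(\mathrm{Map}(S^2,Q))$ lives above the fundamental class $[Q]$ in the sense of Proposition~\ref{prop:living-fundamental-fibrations} with fiber class $\beta\in H_k(\Omega^2 Q)$, then the nonvanishing of $(\pi\circ\varphi)_!\alpha$ is controlled by whether $d^m\beta\neq 0$ for a value of $m$ determined by the spectral sequence depth of $\alpha$.

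It then remains to construct, under each of the three hypotheses, such a class $\alpha$ whose reduced fiber class $\beta$ (ensuring $\alpha\in\ker\ev_*=\ker i_{Q*}$) satisfies the corresponding non-$d^m$-torsion condition. If $\pi_2(Q)\neq 0$, take $\beta=[g]-[e]\in \widetilde H_0(\Omega^2 Q)\subset H_0(\Omega^2 Q)=\Z[\pi_2(Q)]$ for any $g\in\pi_2(Q)\setminus\{e\}$; the abelian group $\Z[\pi_2(Q)]$ is torsion-free, so $d^m\beta\neq 0$ for every $m\ge 1$ and every nonzero $d$, handling this case without further assumption on $d$. Under condition~(1), the Hurewicz theorem identifies the first nonzero $H_{k-2}(\Omega^2 Q)$ with $\pi_{k-2}(\Omega^2 Q)=\pi_k(Q)$; here the spectral sequence collapses at the first interesting page, so only one multiplication by $d$ is needed and a non-$d$-torsion element suffices. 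Under condition~(2), the stronger non-$d^m$-torsion-freedom of $\pi_\ell(Q)$ for every $m\ge 1$ is precisely what allows us to push $\beta\in H_{\ell-2}(\Omega^2 Q)$ through the iterated multiplications by $d$ that arise when $\pi_\ell$ is not the first nonzero homotopy group.

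The hard part will be the lifting step: realising each chosen $\beta$ as the fiber class of a genuine $\alpha\in H_{n+k}(\mathrm{Map}(S^2,Q))$ living above $[Q]$, i.e., proving that $\beta$ survives every outgoing differential $d_r\colon E^r_{n,k}\to E^r_{n-r,k+r-1}$ in the Leray--Serre spectral sequence of $\Omega^2 Q\to\mathrm{Map}(S^2,Q)\to Q$. These differentials carry transgression-type contributions together with a monodromy twist from the $\pi_1(Q)$-action on $H_*(\Omega^2 Q)$, both of which can be arranged to interact with $\beta$ through multiplications by powers of $d$ absorbed by the hypothesis; the monodromy twist itself is handled by symmetrising $\beta$ over the $\pi_1(Q)$-action. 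Carrying out this page-by-page survival analysis, and checking its compatibility with the shriek identifications above, constitutes the technical heart of the proof.
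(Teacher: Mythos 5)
Your overall strategy (verify the criterion of Proposition~\ref{prop:almost-sure-existence-T*Q} by exhibiting $\alpha$ with $i_{Q*}\alpha=0$ and $\pi_{U!}\alpha\neq 0$, and use $(\pi\circ\varphi)_*\circ(\pi\circ\varphi)_!=d\cdot\Id$ to control $\ker\pi_{U!}$) is the right one, but your choice of DG local system creates a gap that the deferred ``technical heart'' cannot close. You take $\cF$ to be the system of the evaluation fibration $\Omega^2Q\hookrightarrow\Map(\SS^2,Q)\to Q$ (extended over $\cL_0Q$ as the ice-cream-cone fibration), for which $H_*(\cL_0Q;\cF)\simeq H_*(Q)$ and $i_{Q*}=\ev_*$. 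With this choice, producing a class $\alpha\in\ker\ev_*$ that is non-$d$-torsion --- let alone one living above the fundamental class --- requires controlling the Leray--Serre spectral sequence of $\Map(\SS^2,Q)\to Q$, including the $\pi_1(Q)$-monodromy on $H_*(\Omega^2Q)$ and all differentials out of the column $E_{n,*}$. Your assertion that these differentials ``interact with $\beta$ through multiplications by powers of $d$'' is unfounded: the differentials and the monodromy depend only on the topology of $Q$, not on the map $\varphi$ or its degree $d$, so the torsion hypotheses on $\pi_k(Q)$ give you no leverage over them. In fact, the existence of a class in $H_*(\Map(\SS^2,Q))$ living above $[Q]$ is precisely the ``abundant with $2$-spheres'' condition of Definition~\ref{defi:abundant_with_2spheres}, which the paper can only verify for special manifolds (\S\ref{sec:abundant_examples}) and which is tied to the open Conjecture~\ref{conj:conjecture_Kpi1}; it is certainly not implied by the hypotheses of Theorem~\ref{thm:almost-existence}.

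The paper's proof sidesteps all of this by taking $\cF=C_*(\Omega\cL_0Q)$, the system of the path-loop fibration $\cP_{\star\to\cL_0Q}\cL_0Q\to\cL_0Q$. Then $H_*(\cL_0Q;\cF)\simeq H_*(\mathrm{pt})$ (the total space is contractible) while $H_*(Q;i_Q^*\cF)\simeq H_*(\cP_{\star\to Q}\cL_0Q)\simeq H_*(\Omega^2Q)$, so $\ker i_{Q*}$ automatically contains all of $H_{>0}(\Omega^2Q)$ together with the augmentation ideal in degree $0$ --- no survival analysis over the base $Q$ is needed. The only remaining work (Proposition~\ref{prop:Ker-i}) is to find a non-$d$-torsion class in $H_*(\Omega^2Q)$: when $\pi_2(Q)\neq 0$ or under condition (1) this is immediate from Hurewicz (your observations on these two cases are correct and match the paper's), and under condition (2) it uses Serre's mod-$\cC$ Hurewicz theorem for the class of $d^m$-torsion groups together with the spectral sequence of the path-loop fibration over $\Omega_0Q$ --- this is where the $d^m$ genuinely enters, not in the fibration $\Map(\SS^2,Q)\to Q$. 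Combined with the $d$-torsion of $\ker\pi_{U!}$ (your factorization through $(\pi\circ\varphi)_!$ is exactly the paper's Proposition~\ref{propo:d-torsion}), the criterion follows. As written, the lifting step you defer is not a technicality but the main obstruction, and your proposed mechanism for handling it does not work; switching to the path-loop local system removes it entirely.
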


 We will use our criterion  \eqref{eq:criterion-almost-existence-cotangent} for the DG local system $\calf= C_*(\Omega\call_0  Q)$ with module structure over itself given by right multiplication. As a first step, we prove the following proposition which provides information on $\ker(i_{Q*})$.

\begin{proposition}\label{prop:Ker-i}  Let $\pi_k(Q)$  be the first nontrivial group among the higher homotopy groups $\pi_j(Q)$, $j\geq 2$. Then, if $k=2$, $\ker i_{Q*}$ contains an infinite cyclic subgroup, and if $k>2$ we have: 
\begin{enumerate} 
\item  $\ker i_{Q*}$ contains a subgroup that is isomorphic to   $\pi_k(Q)$.
\item let $d\geq 2$ be an integer. If some homotopy group $\pi_\ell(Q)$ for $\ell>2$ 
 is not of $d^m$-torsion for any $m\geq 1$, then $\ker i_{Q*}$ is not of $d$-torsion. 
\end{enumerate}
\end{proposition}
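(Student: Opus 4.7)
My strategy is to first recast the problem as a statement about the integral homology of the double based loop space $\Omega^{2}Q$, and then to analyse the three assertions using Hurewicz-type tools.

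\textbf{Step 1 (identification).} Since $\cF = C_{*}(\Omega\cL_{0}Q)$ is the DG local system associated to the Serre path-loop fibration $\Omega\cL_{0}Q\hookrightarrow\cP_{\star}\cL_{0}Q\to\cL_{0}Q$, the Fibration Theorem for DG Morse homology \cite[Theorem~A]{BDHO} provides canonical identifications $H_{*}(\cL_{0}Q;\cF)\simeq H_{*}(\cP_{\star}\cL_{0}Q)\simeq H_{*}(\mathrm{pt})$ and $H_{*}(Q;i_{Q}^{*}\cF)\simeq H_{*}(\cP_{\star}\cL_{0}Q|_{Q})\simeq H_{*}(\Omega^{2}Q)$, through the homotopy equivalence $\cP_{\star}\cL_{0}Q|_{Q}\simeq\Omega^{2}Q$ used in the proof of Lemma~\ref{lem:kpi1}. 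Under these identifications $i_{Q*}$ becomes the augmentation $H_{*}(\Omega^{2}Q)\to H_{*}(\mathrm{pt})$, so $\ker i_{Q*}$ equals $\widetilde H_{0}(\Omega^{2}Q)$ in degree zero and $H_{*}(\Omega^{2}Q)$ in positive degrees.

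\textbf{Step 2 (cases $k=2$ and assertion (1)).} When $k=2$ one has $\pi_{0}(\Omega^{2}Q)=\pi_{2}(Q)\ne 0$, so $\Omega^{2}Q$ is disconnected and $\widetilde H_{0}(\Omega^{2}Q)$ contains the infinite cyclic subgroup generated by $[c]-[\star]$ for any point $c$ in a non-basepoint component. When $k\ge 3$, the vanishing of $\pi_{2}(Q),\dots,\pi_{k-1}(Q)$ together with $\pi_{j}(\Omega^{2}Q)=\pi_{j+2}(Q)$ implies that $\Omega^{2}Q$ is $(k-3)$-connected. Hurewicz's theorem then yields an isomorphism $\pi_{k-2}(\Omega^{2}Q)\simeq H_{k-2}(\Omega^{2}Q)$: for $k\ge 4$ it is the standard simply-connected Hurewicz statement, while for $k=3$ one uses that $\pi_{1}(\Omega^{2}Q)=\pi_{3}(Q)$ is already abelian and so coincides with its abelianisation $H_{1}(\Omega^{2}Q)$. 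In either case this exhibits a subgroup of $\ker i_{Q*}$ isomorphic to $\pi_{k}(Q)$.

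\textbf{Step 3 (assertion (2) and main obstacle).} For the last assertion, pick $\alpha\in\pi_{\ell}(Q)$ with $d^{m}\alpha\ne 0$ for every $m\ge 1$: then $\alpha$ is either of infinite order or of finite order with a prime factor $p\nmid d$, and in either case a prime $p$ coprime to $d$ can be selected so that $\pi_{\ell-2}(\Omega^{2}Q)\otimes\Z_{(p)}\ne 0$. When $\ell=3$ (which forces $k=3$) the Hurewicz argument of Step 2 gives $\alpha\in H_{1}(\Omega^{2}Q)$ directly, and $d\alpha\ne 0$ since $d^{m}\alpha\ne 0$ for every $m$. When $\ell\ge 4$ we apply Serre's mod-$\cC$ Hurewicz theorem with $\cC$ the Serre class of uniquely $p$-divisible abelian groups: the contrapositive of its conclusion, ``$\pi_{j}\notin\cC$ for some $j$ forces $H_{j}\notin\cC$ for some $j$'', yields a class $\beta\in H_{j}(\Omega^{2}Q)$ whose image in $H_{j}(\Omega^{2}Q)\otimes\Z_{(p)}$ is nonzero; since $p\nmid d$ the element $d$ is invertible in $\Z_{(p)}$, so multiplication by $d$ is an isomorphism on this $p$-local module and hence $d\beta\ne 0$ in $H_{j}(\Omega^{2}Q)$ itself. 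The principal technical hurdle here is the possible non-simple-connectedness of $\Omega^{2}Q$ when $\pi_{3}(Q)\ne 0$: Serre's machinery is formulated for simply-connected (or nilpotent) spaces, so one first passes to the universal cover of the basepoint component $\Omega^{2}_{0}Q$, which has the same homotopy groups in degrees $\ge 2$, or equivalently invokes Bousfield $p$-localisation in the nilpotent setting.
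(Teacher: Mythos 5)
Your Steps 1 and 2 are correct and coincide with the paper's argument: the identification of $i_{Q*}$ with the augmentation $H_*(\Omega^2Q)\to H_*(\mathrm{pt})$ via the fibration theorem, the disconnectedness of $\Omega^2Q$ when $k=2$, and the (ordinary) Hurewicz isomorphism $H_{k-2}(\Omega^2Q)\simeq\pi_{k-2}(\Omega^2Q)\simeq\pi_k(Q)$ for $k\ge 3$ (treating $k=3$ via abelianness of $\pi_1(\Omega^2Q)$) are exactly what the paper does.

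Step 3, however, has a genuine gap, and it occurs already in its first sentence. The hypothesis ``$\pi_\ell(Q)$ is not of $d^m$-torsion for any $m\ge 1$'' means: for every $m$ there is an element not killed by $d^m$. It does \emph{not} produce a single $\alpha$ with $d^m\alpha\neq 0$ for all $m$. For instance $G=\bigoplus_{m\ge1}\Z/d^m$ satisfies the hypothesis, yet every element of $G$ is $d$-power torsion, $G\otimes\Q=0$, and $G\otimes\Z_{(p)}=0$ for every prime $p\nmid d$. So your reduction to ``$\pi_{\ell-2}(\Omega^2Q)\otimes\Z_{(p)}\neq 0$ for some prime $p$ coprime to $d$'' fails in general, and the whole $p$-localisation strategy cannot get started. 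The Serre class adapted to the actual hypothesis is $\cC=\{G:\ d^mG=0\text{ for some }m\}$, not a $p$-local one. (Your $\ell=3$ case survives because there only $m=1$ is needed and Hurewicz is an honest isomorphism.)

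A second, independent problem is the application of the mod-$\cC$ Hurewicz theorem to $\Omega^2Q$, which need not be simply connected when $\pi_3(Q)\neq0$. Passing to the universal cover is not a fix: Serre's theorem applied to $\widetilde{\Omega^2_0Q}$ gives information about $H_*(\widetilde{\Omega^2_0Q})$, which is not $H_*(\Omega^2Q)$, and non-torsion classes upstairs need not survive the Cartan--Leray spectral sequence. The nilpotent/H-space localisation route you mention parenthetically is a different (not ``equivalent'') argument and would still have to be carried out for the correct class $\cC$. The paper avoids both issues at once: it applies Serre's theorem to $\Omega_0Q$, which \emph{is} simply connected because $\pi_2(Q)=0$ under the standing hypothesis $k>2$, concluding that some $H_\ell(\Omega_0Q)\notin\cC$; it then argues by contradiction, assuming all positive-degree $H_i(\Omega^2Q)$ are of $d$-torsion and running the Leray--Serre spectral sequence of the path-loop fibration $\Omega^2Q\hookrightarrow\cP_{\star\to\Omega_0Q}\Omega_0Q\to\Omega_0Q$: the bottom row then contributes a non-$\cC$ group to $E^\infty$, contradicting contractibility of the total space. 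You should replace Step 3 for $\ell\ge4$ by this dévissage (or an equivalent argument with the class $\cC$ above).
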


\begin{proof}
  As pointed out in \S\ref{sec:def-MorseDG}, the homology $H_*(\call_0Q;\calf)$ is isomorphic to the homology of  the total space of the path-loop fibration $\calp_{\star\ri\call_0 Q}\call_0Q\ri \call_0 Q$. 
  We refer to \cite[Theorem~7.2 and  Remark~13.19]{BDHO} for the proof of this result.  On the other hand, by \cite[Proposition~9.14 and \S13]{BDHO} we know that  $H_*(Q; i_Q^*\calf)$ is isomorphic to the homology of the total space of  the pullback of the preceding fibration by the inclusion of constant loops $i_Q:Q\ri \call_0Q$. Moreover, via these isomorphisms, the direct map $i_{Q*}$ coincides with the map induced by the inclusion at the level of total spaces in homology (see also the discussion in the paragraph ``Functoriality'' in \S\ref{sec:def-MorseDG}). In other words $$H_*(Q; i_Q^*\calf)\, \simeq\, H_*(\calp_{\star\ri Q}\call_0Q;\Z), $$
 and  $$i_{Q*}: H_*(\calp_{\star\ri Q}\call_0Q;\Z)\ri H_*(\calp_{\star\ri \call_0Q}\call_0Q;\Z)\simeq H_*(\mathrm{point};\Z).$$
In particular $\ker i_{Q*}$ contains $H_*(\calp_{\star\ri Q}\call_0Q;\Z)$ in positive degrees. Let us now analyse this homology. 
 
 We may  identify $\calp_{\star\ri Q}\call_0Q$ with the iterated based loop space $\Omega^2Q$. Indeed, denoting by $\SS^2$ the $2$-sphere, by $\SS^0$ the $0$-sphere, by $\Sigma^2$ the double suspension, and by $\mathrm{Map}_*$ the space of based maps, we have canonical identifications 
$$
\calp_{\star\ri Q}\call_0Q  = \mathrm{Map}_*(\SS^2,Q)=\mathrm{Map}_*(\Sigma^2\SS^0,Q) 
= \mathrm{Map}_*(\SS^0,\Omega^2 Q) = \Omega^2 Q.
$$
This implies 
$$
\pi_j(\calp_{\star\ri Q}\call_0Q) \simeq \pi_{j+2} (Q)
$$
for all $j\ge 0$. 

In particular for $j=0$ we have $\pi_0(\calp_{\star\ri Q}\call_0Q)=\pi_2(Q)$ and the conclusion follows for $k=2$: in degree $0$ we have $i_{Q*}: \Z[\pi_2(Q)]\ri \Z$ and therefore its kernel contains an infinite cyclic subgroup. 

Suppose now  $k>2$ and let us prove the first assertion.
We distinguish two cases. 
\begin{itemize}
\item $k=3$. In this case $\calp_{\star\ri Q}\call_0Q$ is connected,  $\pi_1(\calp_{\star\ri Q}\call_0Q) \simeq\pi_3(Q)$ is abelian, hence by Hurewicz  $H_1(\calp_{\star\ri Q}\call_0Q)\simeq\pi_3(Q)$ and the conclusion follows from $\ker i_{Q*}\supset H_1(\calp_{\star\ri Q}\call_0Q)$. 
\item $k\ge 4$. We have that  $\calp_{\star\ri Q}\call_0Q$ is $(k-3)$-connected, therefore simply connected. Again by Hurewicz we get $$H_{k-2}(\calp_{\star\ri Q}\call_0Q)\simeq \pi_{k-2}(\calp_{\star\ri Q}\call_0Q)\simeq \pi_{k}(Q)$$ 
and the conclusion follows from $\ker i_{Q*}\supset H_{k-2}(\calp_{\star\ri Q}\call_0Q)$. \end{itemize}

We now prove the second assertion of our statement. By the above, it suffices to prove that  $H_{p}(\Omega^2Q;\Z)$ is not of $d$-torsion  for some $p>0$. 

 Denote by $\cal C$ the collection of all the abelian groups that are of $d^m$-torsion for some integer $m\geq 1$. Remark that $\cal C$ is a {\it class} in the sense of Serre~\cite{Serre1952}, meaning that, if $L\ri M\ri N $ is an exact sequence of abelian groups and $L,N \in \cal C$, then $M\in \cal C$. In the aforementioned paper, Serre proves the following generalisation of the Hurewicz theorem.
\begin{theorem}\label{thm:Serre}\cite[Theorem~1]{Serre1952} If $Y$ is a simply connected topological space and $\cal C$ is a class,  then for any integer $k\geq 1$ we have $\pi_i(Y)\in \cal C$ for $i\leq k$ if and only if $H_i(Y;\Z)\in \cal C$ for $i\leq k$. 
\end{theorem} 

We apply it for $Y=\Omega_0 Q$, the space of based contractible loops. The space $Y$ is simply connected because we work under the assumption that $\pi_2 (Q)=0$. Using our assumption on the homotopy groups we get some homology group  $H_\ell(\Omega_0Q;\Z)$ that possesses  $d^m$-torsion-free elements  for any integer $m\geq 1$.

  Arguing by contradiction, let us suppose that  $H_i(\Omega^2Q;\Z)$ is of $d$-torsion for any degree $i>0$. 
  Consider the path-loop fibration 
$$\Omega^2Q\ri \calp_{\star\ri \Omega_0Q}\Omega_0 Q\ri \Omega_0 Q$$
and its corresponding Leray-Serre spectral sequence whose second page is 
$$
E_{p,q}^2=  H_p(\Omega_0Q;H_q(\Omega^2Q;\Z))\simeq H_p(\Omega_0Q;\Z)\otimes H_q(\Omega^2Q;\Z). 
$$
It converges to $H_*(\calp_{\star\ri \Omega_0Q}\Omega_0Q)\simeq H_*(\mathrm{point})$. In particular $E_{p,q}^\infty = 0$ for all $(p,q)\neq (0,0)$. We will get a contradiction by proving that $E_{\ell,0}^r$ is not of $d^m$-torsion for any $m\geq 1$ and  $r\geq 2$.  We proceed by induction over $r$. For $r=2$ the property is true, since $E_{\ell,0}^2= H_\ell(\Omega_0Q;\Z)$. Supposing that it is fulfilled for $r$, notice that $E_{\ell,0}^{r+1}$ is the kernel of the map $E^r_{\ell,0}\to E_{\ell-r, \ell+r-1}^{r}$. By our assumption on $H_i(\Omega^2Q;\Z)$, we know that $E_{p,q}^2$ is of $d$-torsion if $q>0$ and this property obviously persists throughout all the following pages of the spectral sequence. In particular $E_{\ell-r, \ell+r-1}^{r}$ is of $d$-torsion. Now, if $E_{\ell, 0}^{r+1}$ was of $d^m$-torsion for some $m$, we would have that $E_{\ell, 0}^{r}$ is of $d^{m+1}$-torsion thus contradicting the induction hypothesis.
 This finishes our proof.
\end{proof}

Now that we established the existence of some non-trivial classes in $\ker i_{Q*}$ we turn our attention to the other element in the criterion \eqref{eq:criterion-almost-existence-cotangent}, namely $\ker \pi_{U!}$. We prove:

\begin{proposition}\label{propo:d-torsion} Assume there exists $\varphi:S\to U$ as in Theorem~\ref{thm:almost-existence}. Then,  $\ker \pi_{U!}$ is of $d$-torsion.
\end{proposition}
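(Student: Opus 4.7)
The plan is to exploit contravariant functoriality of shriek maps together with the classical degree formula for self-maps of closed oriented manifolds of the same dimension. Since $\varphi(S)\subset U$ by hypothesis, $\varphi$ factors through the inclusion $U\hookrightarrow T^*Q$, and I set $f := \pi_U\circ\varphi : S \to Q$, so that $f$ is a continuous map between closed orientable $n$-manifolds of degree $d$.

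First I would invoke contravariant functoriality of shriek maps for DG local systems~\cite[Theorem~8.2 and \S\S9--10]{BDHO} to obtain the factorization
$$
f_! \,=\, \varphi_! \circ \pi_{U!} \colon H_*(Q; i_Q^*\calf) \longrightarrow H_*(S; f^*i_Q^*\calf).
$$
In particular, every class $\alpha\in\ker\pi_{U!}$ satisfies $f_!\alpha = 0$.

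Next I would invoke the identity
$$
f_* \circ f_! \,=\, d\cdot\id_{H_*(Q;i_Q^*\calf)}.
$$
For constant coefficients this follows from a standard Poincar\'e duality and projection formula argument: $f_*f_!\alpha = f_*\bigl(f^*(\mathrm{PD}_Q^{-1}\alpha)\cap[S]\bigr) = \mathrm{PD}_Q^{-1}\alpha\cap f_*[S] = d\,\alpha$, using $f_*[S]=d[Q]$. In the DG setting the identity is either verified directly at chain level from the Morse-theoretic models of $f_*$ and $f_!$ in~\cite[\S\S9--10]{BDHO}, or deduced from a DG enhancement of the cap-product and Poincar\'e duality.

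Putting the two ingredients together, for every $\alpha\in\ker\pi_{U!}$ we get
$$
d\cdot\alpha \,=\, f_*\bigl(f_!(\alpha)\bigr) \,=\, f_*(0) \,=\, 0,
$$
which is precisely the assertion that $\ker\pi_{U!}$ is of $d$-torsion. The main obstacle will be establishing the identity $f_*\circ f_! = d\cdot\id$ in the DG setting; the non-DG version is classical, but the DG extension requires careful unpacking of the chain-level formulas for direct and shriek maps from~\cite{BDHO}.
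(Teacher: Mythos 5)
Your proof takes essentially the same route as the paper's: both factor $(\pi\circ\varphi)_! = \varphi_!\circ\pi_{U!}$ via contravariant functoriality to get $\ker\pi_{U!}\subseteq\ker(\pi\circ\varphi)_!$, and both invoke the degree formula $(\pi\circ\varphi)_*\circ(\pi\circ\varphi)_! = d\cdot\Id$. What you flag as the main obstacle — the DG version of the degree formula — is in fact already available as Proposition~10.14 of \cite{BDHO}, which is exactly what the paper cites, so no further work is needed there.
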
 
\begin{proof}

Let $\varphi:S\ri U$ be as in Theorem~\ref{thm:almost-existence}. Consider the diagram 
$$
\xymatrix{\, &H_{*+n}(U,\p U;\pi_U^*i_Q^*\calf)\ar[dl]_-{\varphi_!}&\, \\
H_*(S;(\pi\circ\varphi)^*i_Q^*\calf)&\, &H_*(Q;i_Q^*\calf) \ . \ar[ul]_-{\pi_{U!}}\ar[ll]_-{(\pi\circ\varphi)_!}} 
$$
The diagram commutes since obviously $\pi_U\circ\varphi= \pi\circ\varphi$. In particular we have  $\ker \pi_{U!}\subset \ker (\pi \circ\varphi)_!$. On the other hand, since $\pi\circ\varphi:S\ri Q$ is of degree $d$, Proposition~10.14 of \cite{BDHO} asserts that  $(\pi\circ\varphi)_*\circ(\pi\circ\varphi)_!=d\cdot\Id$ which implies that $\ker(\pi\circ\varphi)_!$ is of $d$-torsion.
\end{proof} 
We are now ready to finish the proof of Theorem~\ref{thm:almost-existence}.

\begin{proof}[Proof of Theorem~\ref{thm:almost-existence}]
We combine Proposition~\ref{prop:Ker-i} and Proposition~\ref{propo:d-torsion} to prove that the criterion \eqref{eq:criterion-almost-existence-cotangent} for the  contractible almost existence property is satisfied,  i.e., $\ker i_{Q*}\not\subseteq \ker \pi_{U!}$. 

 We claim that, under the assumptions of Theorem~\ref{thm:almost-existence}, $\ker i_{Q *}$ contains at least one element that is not of $d$-torsion. Since $\ker \pi_{U!}$ is $d$-torsion by Proposition~\ref{propo:d-torsion}, we find that $\ker i_{Q*}\not\subseteq \ker \pi_{U!}$.

To prove the claim, assume first that $\pi_2(Q)\neq 0$. By the first part of Proposition~\ref{prop:Ker-i} we find that $\ker i_{Q*}$ contains an infinite cyclic subgroup, which is in particular not of $d$-torsion. 

Assume now $\pi_2(Q)=0$. Under condition 1 of  Theorem~\ref{thm:almost-existence}, by applying Proposition~\ref{prop:Ker-i}.1 we find that $\ker i_{Q *}$ contains at least one element that is not of $d$-torsion. Under condition 2, by applying Proposition~\ref{prop:Ker-i}.2  we find again that $\ker i_{Q *}$ is not of $d$-torsion. This proves the claim, and therefore the theorem. 
\end{proof}

\begin{proof}[Proof of Theorem~\ref{thm:almost-existence-simplified} as a consequence of Theorem~\ref{thm:almost-existence}]
\qquad 

Assume first that $d=\pm 1$.  Then $Q$ is not a $K(\pi,1)$ if and only if $Q$ satisfies condition 1. in the statement of Theorem~\ref{thm:almost-existence}, and we therefore conclude.  

Assume now that $d$ is nonzero and $Q$ is not a rational $K(\pi,1)$, i.e, $\pi_\ell(Q)\otimes \Q\neq 0$ for some $\ell\geq 2$. Then $Q$ satisfies condition 2. in the statement of Theorem~\ref{thm:almost-existence}, and we again conclude. 
\end{proof}

Theorem~\ref{thm:almost-existence-Thom-simplified} is a consequence of the following slightly more general statement. 

\begin{theorem} \label{thm:almost-existence-Thom} Let $Q^n$ be a closed orientable manifold, and assume that either $\pi_2(Q)\neq 0$ or some homotopy group $\pi_\ell(Q)$, for $\ell\geq 3$, has an element that is not torsion.  

Given a relatively compact domain  $U\subset T^*Q$ such that the projection $\pi :U\ri Q$ induces a nonzero map $H_n(U;\Z)\ri H_n(Q;\Z)$, the boundary $\Sigma=\p U$  has the almost existence property for closed characteristics that are contractible  in $T^*Q$. 
\end{theorem}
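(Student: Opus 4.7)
The plan is to deduce Theorem~\ref{thm:almost-existence-Thom} directly from Theorem~\ref{thm:almost-existence} by producing a continuous map $\varphi:S\to U$ whose composition with $\pi$ has nonzero degree. The only bridge that is needed between the purely homological hypothesis here and the geometric hypothesis of Theorem~\ref{thm:almost-existence} is Thom's classical representability theorem.

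More concretely, from the assumption that $\pi_{U*}:H_n(U;\Z)\to H_n(Q;\Z)=\Z$ is nonzero, I would first pick a class $\alpha\in H_n(U;\Z)$ with $\pi_{U*}\alpha = d[Q]$ for some $d\neq 0$. Invoking Thom's theorem (which asserts that $\Omega_n^{SO}(X)\otimes\Q\to H_n(X;\Q)$ is surjective for any topological space $X$), one then obtains a positive integer $N$, a closed oriented smooth $n$-dimensional manifold $S$, and a continuous (in fact smooth) map $\varphi:S\to U$ such that $\varphi_*[S]=N\alpha$ in $H_n(U;\Z)$. The composition $\pi\circ\varphi:S\to Q$ then satisfies $(\pi\circ\varphi)_*[S]=Nd\,[Q]$, so it has nonzero degree $Nd$, and by construction $\varphi(S)\subset U$.

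Finally, I would verify that the assumption on $Q$ made in Theorem~\ref{thm:almost-existence-Thom} implies the homotopical hypothesis of Theorem~\ref{thm:almost-existence} applied with degree $Nd$. If $\pi_2(Q)\neq 0$, this is the first alternative in both statements. If instead some $\pi_\ell(Q)$ for $\ell\geq 3$ contains a torsion-free element, then that element has infinite order, and in particular it is not $(Nd)^m$-torsion for any integer $m\geq 1$, which is exactly condition~(2) in the statement of Theorem~\ref{thm:almost-existence}. Thus Theorem~\ref{thm:almost-existence} applies to $\varphi$ and yields the contractible almost existence property for $\Sigma=\partial U$. The only point requiring care beyond what is already established for Theorem~\ref{thm:almost-existence} is the appeal to Thom's representability theorem, and this is essentially the whole content of the reduction.
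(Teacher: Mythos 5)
Your proposal is correct and follows essentially the same route as the paper: use Thom's representability theorem to produce a closed oriented manifold $S$ and a map $\varphi:S\to U$ with $\varphi_*[S]=N\alpha$, so that $\pi\circ\varphi$ has nonzero degree $Nd$, and then reduce to Theorem~\ref{thm:almost-existence}. The one presentational difference is that you invoke Theorem~\ref{thm:almost-existence} as a black box after checking that a torsion-free element in some $\pi_\ell(Q)$, $\ell\geq 3$, gives condition~(2) for the (a priori unknown) degree $Nd$, whereas the paper re-assembles the same argument from the ingredients: it applies Proposition~\ref{propo:d-torsion} to conclude $\ker\pi_{U!}$ is $d$-torsion, and then shows that $\ker i_{Q*}$ is not a torsion group (a statement independent of $d$) via Serre's mod-$\mathcal{C}$ Hurewicz theorem for the class of all torsion groups. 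Your reduction is a touch cleaner as exposition, while the paper's variant yields the marginally stronger, $d$-independent non-torsion statement about $\ker i_{Q*}$; both proofs are valid and carry the same mathematical content.
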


\begin{proof}
Let $\alpha \in H_n(U;\Z)$ be a class such that $\pi_{U*}(\alpha)\in H_n(Q;\Z)$ is not zero, i.e., $\pi_{U*}(\alpha)= b\cdot [Q]$ for some $b\in \N^*$. A theorem of Thom~\cite[Theorem~II.29 and Remark before Theorem~III.1]{Thom} asserts that, for some integer $N\in \N^*$, the class $N\cdot \alpha$ may be represented by a submanifold, i.e., $N\cdot \alpha=\varphi_*([S])$ for some map $\varphi:S\ri U$. We are  in the framework of Theorem~\ref{thm:almost-existence} with the degree of the map $\pi_U\circ\varphi$ being equal to $d=Nb$. We may thus apply Proposition~\ref{propo:d-torsion} and infer that $\ker \pi_{U!}$ is of $d$-torsion. 

It suffices therefore to prove that $\ker i_{Q*}$ is not  torsion in order to be able to apply the criterion \eqref{eq:criterion-almost-existence-cotangent}. The proof is similar to the one of Proposition~\ref{prop:Ker-i}: we apply Serre's generalisation of the Hurewicz theorem for the class of torsion groups and then use the Leray-Serre spectral sequence for the path-loop fibration over $\Omega_0Q$. 
\end{proof}

\begin{proof}[Proof of Theorem~\ref{thm:almost-existence-Thom-simplified} as a consequence of Theorem~\ref{thm:almost-existence-Thom}] 
\qquad 

If $Q$ is not a rational $K(\pi,1)$, meaning that $\pi_\ell(Q)\otimes \Q\neq 0$ for some $\ell\ge 2$, then it obviously satisfies the topological assumption from the statement of Theorem~\ref{thm:almost-existence-Thom}. We therefore conclude using Theorem~\ref{thm:almost-existence-Thom}.
\end{proof}

\begin{proof}[Proof of Theorem~\ref{thm:almost-existence-nonorientable}] The theorem is an immediate consequence of the  following slightly more general statement.

\begin{theorem}\label{thm:C-more-general}
  Let $Q^n$ be a closed non-orientable $n$-dimensional manifold. 
   and let $\varphi:S \ri T^*Q$ be a continuous map defined on a closed $n$-dimensional manifold $S$ such that $\pi\circ\varphi$ has non-zero mod-2 degree. 
   
  Assume  moreover that either $\pi_2(Q)\neq 0$, or $\pi_\ell(Q)\otimes_\Z\Z_2\neq 0$ for some  $\ell\geq 3$. 
  
   Then, any hypersurface $\Sigma\subset T^*Q$ that bounds a relatively compact domain $U$ that contains $\varphi(S)$ has the  contractible almost existence property.
\end{theorem}
\begin{proof} Notice first that  the criterion \eqref{eq:criterion-almost-existence-cotangent} is still valid for $Q$ non-orientable if we use $\Z_2$-coefficients instead of integer coefficients. Passing to $\Z_2$-coefficients throughout the  whole process that led to our criterion simply consists in changing  the DG local system $\calf$ into $\calf\otimes\Z_2$.  

Under our assumption $\mathrm{deg}(\pi\circ\varphi)= 1 \, \, \mathrm{ mod} \, \,  2$ the same argument as in the proof of Proposition~\ref{propo:d-torsion} above yields $\ker(\pi_{U!})=0$. This is true for any DG local system of the form $\calf\otimes \Z_2$ under the hypothesis of the theorem. In order to have  our criterion satisfied, we need to establish that $$\ker \left( i_{Q*}: H_*(Q; i_Q^*\calf\otimes\Z_2)\ri H_*(\call_0Q; \calf\otimes\Z_2)\right)$$ does not vanish for some choice of $\calf$. As above, we choose $\calf=C_*(\Omega\call_0Q)$ and applying  again \cite[Theorem~7.2]{BDHO} we may consider that $i_{Q*}$ is the map $$H_*(\Omega^2Q;\Z_2)\simeq H_*(\calp_{\star\ri Q}\call_0Q; \Z_2)\ri H_*(\calp_{\star\ri \call_0Q}\call_0Q;\Z_2)\simeq H_*(\mathrm{point};\Z_2)$$ induced by the inclusion $\calp_{\star\ri Q}\call_0Q\hookrightarrow \calp_{\star\ri \call_0Q}\call_0Q$. Again, if $\pi_2Q\neq 0$ then $\Omega^2Q$ is not connected and therefore $\ker i_{Q_*}\neq 0$ and the theorem is proved. Then, if $\pi_2 Q=0$, we argue by contradiction and suppose that
$H_i(\Omega^2Q; \Z_2)= 0$ for all $i>0$. As in Proposition~\ref{prop:Ker-i} we consider the path-loop fibration 
$$\Omega^2Q\ri \calp_{\star\ri \Omega_0Q}\Omega_0 Q\ri \Omega_0 Q.$$
 Our assumption implies that the second page $E_{p,q}^2=H_p\big(\Omega_0Q;H_q(\Omega^2Q;\Z_2)\big)$ of the Leray-Serre spectral sequence with $\Z_2$-coefficients associated with this fibration vanishes for $q>0$, and since this spectral sequence converges to the homology of a point we infer that  $E^{2}_{p,0}\simeq H_p(\Omega_0Q;\Z_2)$ vanishes for $p>0$ too. 
 
 On the other hand, denote by $\cal C$ the set of the abelian groups $G$ such that $G\otimes_{\Z}\Z_2= 0$ and notice that it is a class in the sense of Serre. Therefore  we may apply Theorem~\ref{thm:Serre} to the simply connected space $Y=\Omega_0Q$ and deduce (from the fact that $\pi_{\ell}(Q)\otimes \Z_2\neq 0$ for some $\ell\geq 3$) that for some positive  $k$ we have $H_k(\Omega_0Q;\Z)\otimes_{\Z}\Z_2\neq 0$. Since  by the universal coefficients theorem we have an injection $H_k(\Omega_0Q;\Z)\otimes_{\Z}\Z_2\to H_k(\Omega_0Q;\Z_2)$ we infer that the latter does not vanish either and we get a contradiction. 
 
The proof is complete. 
\end{proof}
 Theorem~\ref{thm:almost-existence-nonorientable} is a direct consequence of Theorem \ref{thm:C-more-general}.
\end{proof}

\section{Finiteness of the $\pi_1$-sensitive Hofer-Zehnder capacity} \label{sec:HZ}

\subsection{Homological criterion for finiteness}\label{sec:criterion2}

This subsection is the counterpart of~\S\ref{sec:homological-condition-W}. Our goal is to put into context the criterion for the finiteness of the $\pi_1$-sensitive Hofer-Zehnder capacity of subsets of Liouville domains from Theorem~\ref{thm:finiteness-HZ-intro}.

Let $(W,\omega,\lambda)$ be a Liouville domain of dimension $2n$.

\begin{definition}\label{def:HZ-capacity} Let $U\subset W$ be a relatively compact open subset with smooth boundary. A Hamiltonian $H$ on $W$ is said to be \emph{HZ-admissible for $U$} if it is autonomous, compactly supported in $U$ and equal to its minimum on some open subset of $U$.   

We define the \emph{$\pi_1$-sensitive Hofer-Zehnder capacity of $U$} to be
\begin{align*}
  c_{HZ}^\circ(U)=\sup\{-\min H : & \ H\text{ is HZ-admissible for $U$ and admits no nontrivial}\\
  & \text{contractible closed orbit of period $\leq1$}\}.
\end{align*}
\end{definition} 

The terminology ``$\pi_1$-sensitive'' refers to the fact that we only take into account contractible orbits. There are variations of the definition that take into account some arbitrary set of free homotopy classes in $U$, see~\cite{Irie14}. The following remark will be useful.
\begin{remark}\label{rem:cover-Hofer-Zehnder} Let $\pi:W'\to W$ be a symplectic covering map between Liouville domains and $U\subset W$ be an open set such that  $c_{HZ}^\circ(\pi^{-1}(U))<+\infty$. Then, $c_{HZ}^\circ(U)<+\infty$ too.
\end{remark}

It follows from the definition that the function $c_{HZ}^\circ$ is monotone with respect to inclusion: 
$$
U\subseteq V \qquad \Rightarrow \qquad c_{HZ}^\circ(U)\le c_{HZ}^\circ(V). 
$$
Also, because any Hamiltonian $H$ that is $(V,Z)$-admissible in the sense of Definition~\ref{def:relative-capacity} is also HZ-admissible for $V$, we find 
$$
 c_{HZ}^\circ(V,Z)\le c_{HZ}^\circ(V). 
$$
As a consequence, the finiteness of the $\pi_1$-sensitive Hofer-Zehnder capacity of the Liouville domain $W$ implies finiteness of the $\pi_1$-sensitive Hofer-Zehnder capacity $c_{HZ}^\circ(U)$ of any $U\subset W$, and also the finiteness of the relative $\pi_1$-sensitive Hofer-Zehnder capacity $c_{HZ}^\circ(W,\overline U)$ for any $\ol{U}\subset \mathrm{int}(W)$. 
The next result underpins the dynamical significance of the Hofer-Zehnder capacity, and should be compared with Theorem~\ref{thm:almost-sure-existence-W} and Proposition~\ref{prop:finite capacity-W}.

\begin{proposition}[{\cite[\S 4.1]{HZ},\cite{Struwe}}]  \label{prop:almost-existence-HZ-Struwe}
Assume the  $\pi_1$-sensitive Hofer-Zehnder capacity of $W$ is finite. Then the  contractible almost existence property holds in $W$. \qed
\end{proposition}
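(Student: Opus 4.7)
The plan is to follow the classical Hofer-Zehnder-Struwe argument, which adapts without essential change to the $\pi_1$-sensitive setting because the contractible $1$-periodic orbits of the Hamiltonian reparametrizations used in the proof correspond exactly to contractible closed characteristics of the hypersurfaces under consideration. More precisely, let $H:W\to\R$ be proper autonomous and $\Sigma=H^{-1}(c)$ a regular level in $\mathrm{int}(W)$. Choosing $\eps>0$ small enough that $[c-\eps,c+\eps]$ consists only of regular values, the flow of a normalized gradient-like vector field identifies a neighborhood of $\Sigma$ with $\Sigma\times(-\eps,\eps)$, with $H^{-1}(c+s)$ corresponding to $\Sigma_s:=\Sigma\times\{s\}$. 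I would argue by contrapositive: assume the set $B\subset(-\eps,\eps)$ of values $s$ for which $\Sigma_s$ carries no contractible closed characteristic has positive Lebesgue measure, and derive that $c_{HZ}^\circ(W)=+\infty$.

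The key dictionary is the following. To each smooth nondecreasing function $\phi:(-\eps,\eps)\to\R$ that is constant equal to $-M$ near $-\eps$ and constant equal to $0$ near $+\eps$, associate the Hamiltonian $F=\phi\circ H$ extended by $0$ outside the tubular neighborhood. Such an $F$ is HZ-admissible in the sense of Definition~\ref{def:HZ-capacity}, with $-\min F=M$. Since $X_H$ generates the characteristic foliation on each level $\Sigma_s$ and $X_F=\phi'(H)X_H$, a nontrivial $1$-periodic orbit of $F$ of primitive period $\le 1$ exists on $\Sigma_s$ if and only if $\Sigma_s$ carries a closed characteristic of primitive $X_H$-period at most $\phi'(s)$, and such an orbit is contractible in $W$ exactly when the underlying characteristic is. Writing $P(s)\in(0,+\infty]$ for the infimum of $X_H$-periods of contractible closed characteristics on $\Sigma_s$, with $P(s)=+\infty$ for $s\in B$, the absence of any contractible $1$-periodic orbit of $F$ of period $\le 1$ is therefore equivalent to the pointwise inequality $\phi'(s)<P(s)$.

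With this dictionary in hand, for any prescribed $M>0$ I would use the assumption $|B|>0$ together with the Lebesgue density theorem to construct a smooth nondecreasing $\phi$ satisfying $\phi'<P$ pointwise and $\int\phi'=M$: concentrate $\phi'$ near a density point of $B$, on a small interval in which the complement $G:=(-\eps,\eps)\setminus B$ has small relative measure, so that after a Lusin-type regularization $\phi'$ can be chosen small (hence below $P$) on a neighborhood of $G$ while the total mass $M$ is supported essentially in $B$, where $P=+\infty$ imposes no constraint. The associated Hamiltonian $F$ is then HZ-admissible with $-\min F=M$ arbitrarily large and admits no contractible $1$-periodic orbit of period $\le 1$, contradicting $c_{HZ}^\circ(W)<+\infty$.

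The main obstacle is this smooth approximation step: $B$ is only measurable and $P$ is in general neither continuous nor bounded below on $G$, so the pointwise bound $\phi'<P$ must be imposed by a careful regularization that concentrates the mass of $\phi'$ inside $B$ while keeping $\phi'$ below $P$ on an open neighborhood of $G$. Executing this regularization is the measure-theoretic core of Struwe's refinement of the Hofer-Zehnder theorem, and it transports verbatim to the $\pi_1$-sensitive setting because the period-to-orbit dictionary above respects the contractibility class of orbits.
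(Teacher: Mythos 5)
The paper itself gives no proof here: the proposition is quoted from Hofer--Zehnder (\S 4.1--4.2 of their book) and Struwe, with the sole remark, in the proof of Theorem~\ref{thm:almost-sure-existence-W}, that the classical argument carries over to the $\pi_1$-sensitive capacity. Your dictionary between closed characteristics of $X_H$-period at most $\phi'(s)$ on $\Sigma_s$ and nontrivial $1$-periodic orbits of $F=\phi\circ H$ is correct, and so is the observation that it respects contractibility --- this is indeed the only new ingredient needed.

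The gap is in the measure-theoretic step, which as you describe it cannot close. Suppose $B$ is a fat Cantor set. Then $G=(-\eps,\eps)\setminus B$ is open and dense in every subinterval, including around every Lebesgue density point of $B$, so any open neighborhood of $G\cap I$ is all of $I$. Since $P$ may well be bounded above on $G$, requiring the continuous function $\phi'$ to lie below $P$ on a neighborhood of $G\cap I$ forces $\phi'$ to be bounded on all of $I$ by continuity, hence $\int_I\phi'$ is bounded independently of the target mass $M$: there is nowhere to ``concentrate'' $\phi'$ inside a set with empty interior. Your construction establishes that $B$ has empty interior, which is essentially the pre-Struwe density statement of Hofer--Zehnder, but not that $B$ is null. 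Struwe's refinement (and the exposition in [HZ, \S 4.2]) uses a genuinely different mechanism, namely Lebesgue's differentiation theorem applied to the bounded monotone function $s\mapsto c_{HZ}^\circ(U_s)$, where $U_s$ is the domain bounded by $\Sigma_s$: this function is differentiable almost everywhere; at a point of differentiability $s_0$ one compares admissible Hamiltonians for $U_{s_0}$ and $U_{s_0+\delta}$ to produce contractible closed characteristics on levels $\Sigma_{s_\delta}\to\Sigma_{s_0}$ with $X_H$-periods controlled by the local derivative of the capacity, and one passes to the limit using Arzel\`a--Ascoli together with the uniform lower bound on periods of nontrivial closed orbits of the nonvanishing field $X_H$ near $\Sigma$. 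That differentiability argument, not a Lusin-type regularization of $\phi'$ at a density point, is the substantive content you need to supply.
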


We now restate for readability Theorem~\ref{thm:finiteness-HZ-intro} from the Introduction as Theorem~\ref{thm:finiteness-HZ} below. This is our criterion for the finiteness of $c_{HZ}^\circ(W)$. 

Recall from~\S\ref{sec:intro-Liouville} the inclusion of constant loops $i_W:W\hookrightarrow \cL_0W$, the inclusion of a point $j:\star\hookrightarrow W$ and the inclusion of an open domain $j_U:U\hookrightarrow W$, as well as the maps that they induce $i_{W*}$, $j_!$ and $j_{U!}$. We also recall the key diagram~\eqref{eq:diagram-finiteness-HZ-intro}
\begin{equation}\label{eq:diagram-finiteness-HZ}
  \xymatrix{
     H_{*-n}(\cF) &\, \\
      H_{*+n}(W,\p W;i_W^*\cF) \ar[r]^-{i_{W*}}\ar[u]^-{j_{!}}     
    & SH_*(W;\cF).}
\end{equation}

\begin{theorem}\label{thm:finiteness-HZ} Assume that there exists a DG local system $\cF$ on $\cL_0W$ and a class $\alpha\in H_*(W,\p W;i_W^*\cF)$ such that $j_{!}(\alpha) \neq 0$ and $i_{W*}(\alpha)=0$. Then 
$$
c_{HZ}^\circ(W)<\infty. 
$$
In particular, the  contractible almost existence property holds in $W$.
\end{theorem}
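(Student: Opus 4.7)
The plan is to follow the template of the proof of Theorem~\ref{thm:almost-sure-existence-W}, with the single modification that the critical chain-level argument localizes onto a single point rather than onto a relative retract. By Proposition~\ref{prop:almost-existence-HZ-Struwe} it is enough to show that every HZ-admissible Hamiltonian $H$ on $W$ with $-\min H$ large enough admits a nontrivial contractible closed orbit of period~$\le 1$.

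First I would copy verbatim the definition of the spectral number
\[
c(\alpha)=\inf\{c\in\R \, : \, i_{W*}(\alpha)=0\text{ in }SH_*^{<c}(W;\cF)\},
\]
which is positive because $j_!(\alpha)\neq 0$ forces $\alpha\neq 0$, and finite because $i_{W*}(\alpha)=0$ in $SH_*(W;\cF)=\colim_{c\to\infty}SH_*^{<c}(W;\cF)$. The spectral invariant $\rho(\cdot,\sigma)$ of Proposition~\ref{prop:spectral-invariant-W} is unchanged, and so are its continuity and spectrality properties. The task then reduces to the analogue of Lemma~\ref{lemma:existence-fast-orbit-W}: any HZ-admissible Hamiltonian $H$ on $W$ with $-\min H>c(\alpha)$ admits a nontrivial contractible closed orbit of period~$\le 1$. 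By Definition~\ref{def:HZ-capacity} this yields $c_{HZ}^\circ(W)\le c(\alpha)<\infty$.

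To prove this analogue I would repeat the three-step argument from the proof of Lemma~\ref{lemma:existence-fast-orbit-W}, building from $H$ the same family $H_{\eps,\delta}$ (equal to $\eps H$ on $W$ and growing to a linear slope $\delta$ at infinity), and arguing under the standing assumption that $H$ has no nontrivial contractible $1$-periodic orbit. Steps~2 and~3 carry over without modification, since they only exploit spectrality, the total disconnectedness of $\mathrm{spec}(H)$, and the composition/action estimates for the continuation maps. The step that must be revisited is Step~1, where we must establish $\rho(H_{\eps,\delta},i_\delta(\alpha))=-\eps\min H$ for $\eps,\delta$ small enough. Let $V\subset W$ be the open subset on which $H$ attains its minimum, and pick a point $p\in V$. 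I would perturb $H_{\eps,\delta}$ to a $C^2$-small Morse modification $H_i$ such that (i) $H_i$ has its unique minimum at $p$ with $H_i(p)$ arbitrarily close to $\eps\min H$, and (ii) the negative gradient of $-H_i$ points outwards along $\p W$, as in the original Step~1. By Theorem~\ref{thm:Floer-Morse} the corresponding Floer complex reduces to the DG Morse complex of $-H_i$ and computes $H_{*+n}(W,\p W;i_W^*\cF)$. Since $-H_i$ has a unique maximum at $p$, Proposition~\ref{prop:living-fundamental} applied in this Morse model translates the hypothesis $j_!(\alpha)\neq 0$ into the statement that $\alpha$ admits a chain representative whose coefficient at $p$ is a nontrivial cycle of $\cF$. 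This forces $\rho(H_i,i_\delta(\alpha))\ge \cA_{H_i}(p)$, which tends to $-\eps\min H$, while the upper bound $\rho\le -\eps\min H$ is automatic. Passing to the limit via continuity of $\rho$ gives equality.

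The main obstacle is this local construction in Step~1: one must verify that the DG Morse representation used to invoke Proposition~\ref{prop:living-fundamental} is compatible with the basepoint of $\cL_0W$, so that the ``unique maximum" in the proposition really is the point $p$ sitting in the minimum region of $H$, and that the perturbation $H_i$ creates no new short contractible periodic orbits in the transition and slope regions. Both points are handled by taking the Morse perturbation supported in an arbitrarily small neighborhood of the minimum region, together with a choice of almost complex structure ensuring Floer regularity and Morse-Smale transversality, exactly as in the proof of Lemma~\ref{lemma:existence-fast-orbit-W}. Once Step~1 is established, Steps~2 and~3 go through verbatim and the final contradiction at $c(\alpha)<a<-\min H$ in the commuting square involving $FH_*^{(a,+\infty)}(H_{1,\delta})$ and $FH_*^{(a,+\infty)}(H_{1,a})$ yields the theorem.
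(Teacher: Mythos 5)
Your argument is correct and uses the same underlying mechanism as the paper's, but it unrolls work that the paper packages more efficiently. The paper's proof of Theorem~\ref{thm:finiteness-HZ} is a short reduction to Proposition~\ref{prop:finite capacity-W}: first, one observes that $c_{HZ}^\circ(W)=\sup_B c_{HZ}^\circ(W,B)$ with $B$ ranging over small closed balls in $\mathrm{int}(W)$ (this follows from Definition~\ref{def:HZ-capacity} and the monotonicity of the relative capacity from Remark~\ref{rem:monotonicity-capacity}); second, by functoriality of shriek maps, $j_!$ factors as $j_! = j^{B}_{!}\circ j_{B!}$ with $j^B:\star\hookrightarrow B$, so the hypothesis $j_!(\alpha)\neq 0$ immediately gives $j_{B!}(\alpha)\neq 0$ for every ball $B$; third, Proposition~\ref{prop:finite capacity-W} then yields $c_{HZ}^\circ(W,B)\leq c(\alpha)$ for every $B$, hence $c_{HZ}^\circ(W)\leq c(\alpha)<\infty$. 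Your modified Step~1 --- invoking Proposition~\ref{prop:living-fundamental} to translate $j_!(\alpha)\neq 0$ into a statement about chain representatives at the unique maximum of $-H_i$ --- is a correct, equivalent way of seeing that $\alpha$ sits at the top of the action filtration, but it re-derives Lemma~\ref{lemma:existence-fast-orbit-W} instead of citing it. The basepoint subtlety you flag is not a real obstruction precisely because the ``lives over the fundamental class'' property is basepoint-independent (Proposition~\ref{prop:living-fundamental}(iv)), which is the same fact encoded in the factorization of $j_!$ through $j_{B!}$ used in the paper; and your minor citation of Theorem~\ref{thm:Floer-Morse} (which concerns closed manifolds) should be Proposition~\ref{prop:SH-action-zero} in the Liouville setting. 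None of this is a gap; it is a missed shortcut.
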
 

\begin{proof}
Finiteness of the Hofer-Zehnder capacity follows from Proposition~\ref{prop:finite capacity} below. The  contractible almost existence property is ensured by Proposition~\ref{prop:almost-existence-HZ-Struwe} above.
\end{proof}

\begin{remark}\label{rmk:criterion-short-HZ} 
Similarly to~\eqref{eq:criterion-almost-existence}, the above criterion for the contractible almost existence property may be written concisely as 
\begin{equation}\label{eq:criterion-almost-existence-HZ}
\ker i_{W*} \, \not\subseteq\, \ker j_{!}.
\end{equation}
\end{remark}  

Given a class $\alpha\in H_*(W,\p W;i_W^*\cF)$ that belongs to $\ker i_{W*}$, recall from~\eqref{eq:spectral-number} its spectral number $c(\alpha)<\infty$.

\begin{proposition}\label{prop:finite capacity}
  Under the assumptions of Theorem~\ref{thm:finiteness-HZ} we have
  \[c_{HZ}^\circ(W)\leq c(\alpha).\]
  In particular, $c_{HZ}^\circ(W)$   
  is finite. 
\end{proposition}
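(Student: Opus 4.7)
The plan is to prove Proposition~\ref{prop:finite capacity} by following very closely the strategy used for Proposition~\ref{prop:finite capacity-W}, i.e., by reducing the inequality $c_{HZ}^\circ(W)\le c(\alpha)$ to an analog of Lemma~\ref{lemma:existence-fast-orbit-W}. Concretely, I would first state the following: every HZ-admissible Hamiltonian $H$ for $W$ (in the sense of Definition~\ref{def:HZ-capacity}) with $-\min H > c(\alpha)$ admits a nontrivial contractible closed orbit of period at most~$1$. Given this, the bound $c_{HZ}^\circ(W)\le c(\alpha)$ is tautological from the definition of $c_{HZ}^\circ$. The spectral invariant $\rho(H,\sigma)$ constructed in~\S\ref{sec:spectral-invariant-W} depends only on the ambient Liouville domain $W$ and on a class in $SH_*^{<c}(W;\cF)$, with no reference to any specific subset, so Proposition~\ref{prop:spectral-invariant-W} (continuity and spectrality) applies verbatim here.

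The main work then consists in establishing the analog of Lemma~\ref{lemma:existence-fast-orbit-W}, by rerunning Irie's three-step contradiction argument. Let $H$ be HZ-admissible with $-\min H > c(\alpha)$, let $V\subset W$ be an open subset on which $H\equiv \min H$, and assume by contradiction that $H$ has no nontrivial contractible periodic orbit of period $\le 1$. I would build exactly the same one-parameter family $H_{\eps,\delta}$ as in~\S\ref{sec:proof-of-lemma}, with the role previously played by $\ol U$ now played by the closure of $V$; note that this only requires $V$ to be open, which is guaranteed by HZ-admissibility. Steps~2 and~3 of the proof of Lemma~\ref{lemma:existence-fast-orbit-W} then carry over word for word: Step~2 computes $\rho(H_{1,\delta},i_\delta(\alpha))=-\min H$ using continuity, spectrality and Sard's lemma on the rescaled family $H_{s,\delta}$; Step~3 derives a contradiction from the commutative diagram comparing $H_{1,\delta}$ with $H_{1,a}$ for $a$ slightly larger than $c(\alpha)$.

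The one step that requires genuine modification is Step~1, where one shows $\rho(H_{\eps,\delta},i_\delta(\alpha))=-\eps\min H$ for $\eps,\delta$ sufficiently small. After perturbing $H_{\eps,\delta}$ into a sequence of $C^2$-small Morse functions $H_i$ and applying the Salamon-Zehnder-type identification, the computation of $\rho(H_i,i_\delta(\alpha))$ reduces to finding the minimal action level at which $\alpha$ admits a DG Morse chain representative. In the proof of Lemma~\ref{lemma:existence-fast-orbit-W} this was controlled by the condition $j_{U!}(\alpha)\neq 0$, which localized the representatives to critical points in $U$. Here the corresponding input is the assumption $j_!(\alpha)\neq 0$: by Proposition~\ref{prop:living-fundamental}, $\alpha$ \emph{lives above the fundamental class}, so any DG Morse chain representative of $\alpha$ must contain at least one local maximum of the Morse function $-H_i$, i.e., a local minimum of $H_i$. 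For a sufficiently small perturbation of $H_{\eps,\delta}$, every such local minimum lies near $V$ and takes value arbitrarily close to $\eps\min H$, hence contributes a generator of action close to $-\eps\min H$. Conversely, no critical point has smaller action since $\min H_i\to \eps\min H$. Passing to the limit and invoking the continuity of $\rho$ yields $\rho(H_{\eps,\delta},i_\delta(\alpha))=-\eps\min H$.

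The main obstacle I anticipate is making the Morse perturbation argument in Step~1 rigorous in the DG setting: one must verify that Proposition~\ref{prop:living-fundamental} can be read off Salamon-Zehnder's identification of the DG Floer complex of a $C^2$-small Morse Hamiltonian with the DG Morse complex of the function (as discussed in Theorem~\ref{thm:Floer-Morse}), and one must control the location and values of local minima of $H_i$ inside $V$ uniformly, to ensure that taking the limit $i\to\infty$ and then $\eps\to 0$ produces the correct spectral value. Once these pieces are in place, Steps~2 and~3 conclude the argument unchanged, and the almost existence statement then follows from Proposition~\ref{prop:almost-existence-HZ-Struwe}.
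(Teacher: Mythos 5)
The paper's own proof is a two-line reduction: $c_{HZ}^\circ(W)=\sup_B c_{HZ}^\circ(W,B)$ over small closed balls $B\subset\mathrm{int}(W)$, and since $j_!$ factors as an isomorphism composed with $j_{B!}$, the hypothesis $j_!(\alpha)\neq 0$ forces $j_{B!}(\alpha)\neq 0$ for every such ball, so Proposition~\ref{prop:finite capacity-W} applies directly with $U=B$. Your plan of rerunning Irie's three-step argument for HZ-admissible Hamiltonians could in principle be made to work, but as written your Step~1 contains a genuine gap.

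The gap is the claim that, for a $C^2$-small Morse perturbation $H_i$ of $H_{\eps,\delta}$, ``every such local minimum lies near $V$ and takes value arbitrarily close to $\eps\min H$''. This is false: an HZ-admissible $H$ is compactly supported, so $H_{\eps,\delta}$ vanishes identically on a large region of $W$ (the complement of $\mathrm{supp}(H)$ together with the collar where $h\equiv 0$), and any Morse perturbation acquires local minima there with critical value close to $0$, hence constant orbits of action close to $0$, which is strictly less than $-\eps\min H$. Proposition~\ref{prop:living-fundamental} only guarantees that \emph{some} local maximum of $-H_i$ appears in every representative of $\alpha$; it does not localize which one, so it cannot exclude that $\alpha$ is represented entirely at action level $\approx 0$. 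In that case $\rho(H_{\eps,\delta},i_\delta(\alpha))$ would not equal $-\eps\min H$, Step~2 would only give some value in $-\,\mathrm{spec}$, and Step~3's contradiction (which needs $\rho(H_{1,\delta},i_\delta(\alpha))=-\min H>a$ for $a$ just above $c(\alpha)$) collapses. The missing ingredient is a localization of the shriek map to the minimum region of $H$: either place the basepoint at a local minimum of $H_i$ \emph{inside} $V$ and use the chain-level description of $j_!$ as the projection onto the coefficient of that particular maximum of $-H_i$ (so that any cycle supported in action $<-\eps\min H$ misses it and is annihilated by $j_!$), or, more economically, choose a small ball $B$ with $\overline B\subset V$, deduce $j_{B!}(\alpha)\neq 0$ from $j_!(\alpha)\neq 0$ by functoriality, and invoke the already-proved relative statement --- which is exactly the paper's route.
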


\begin{proof} Note that, by definition, $c_{HZ}^\circ(W)=\sup c_{HZ}^\circ(W,\overline{U})$, where the supremum runs over all open subsets $U$ with $\overline{U}\subset \mathrm{int}(W)$. By Remark~\ref{rem:monotonicity-capacity}, we may write $c_{HZ}^\circ(W)=\sup c_{HZ}^\circ(W,B)$, where the supremum is taken over all small closed balls $B$ in $\mathrm{int}(W)$.

   By functoriality, the shriek map $ j_!$ of the inclusion $j:\star\hookrightarrow W$ is the composition 
    $$\xymatrix{ H_{*+n}(W,\p W; i_{W}^*\calf) \ar[r]_-{j_{B!}} &H_{*+n}(B, \p B; j_{B}^*i_W^*\calf)\ar[r]^-{\simeq}_-{j_!}&H_{*-n}(\mathrm{pt}; j^*\calf)\ar[d]^-{\simeq}\\
    \, &\, & H_{*-n}(\calf)}$$ 
    the second map being an isomorphism by definition of the shriek. Therefore the class $\alpha$ being nonzero in the DG-homology of the point, it is also nonzero in the DG-homology of $(B,\partial B)$. We may thus apply Proposition~\ref{prop:finite capacity-W} and conclude that $c_{HZ}^\circ(W,B)\leq c(\alpha)$, for any $B$. This yields the result.
\end{proof}

\subsection{The case of cotangent bundles}\label{sec:case-cotangents}

In this subsection we rephrase our previous criterion in topological terms in the case of cotangent bundles. Let $Q$ be a closed $n$-manifold, and let $D^*Q\subset T^*Q$ be the unit disc cotangent bundle for some Riemannian metric on $Q$. The choice of Riemannian metric will not be relevant in the sequel. 

Let $i_Q:Q\hookrightarrow\call_0 Q\subset \call Q$ be the inclusion of constant loops and $j:\star\hookrightarrow Q$ the inclusion of the basepoint. Given a DG local system $\cF$ on $\cL_0 Q$, these induce the direct map $i_{Q*}:H_*(Q;i_Q^*\cF)\to H_*(\cL_0 Q;\cF)$ and the shriek map $j_{!}:H_*(Q;i_Q^*\cF) \to H_{*-n}(\cF\otimes \underline{|Q|})$, where $\underline{|Q|}$ is the orientation local system of $Q$, seen as supported in degree $0$. We consider the diagram 
\begin{equation}\label{eq:diagram-finiteness-HZ-T*Q}
  \xymatrix{
     H_{*-n}(\cF\otimes\underline{|Q|}) &\, \\
      H_*(Q;i_Q^*\cF) \ar[r]^-{i_{Q*}}\ar[u]^-{j_{!}}     
    & H_*(\cL_0 Q;\cF).}
\end{equation}
  
Our criterion from Theorem~\ref{thm:finiteness-HZ} translates into the following.

\begin{proposition}\label{prop:finiteness-HZ-T*Q} Assume that there exists a DG local system $\cF$ on $\cL_0Q$ and a class $\alpha\in H_*(Q;i_Q^*\cF)$ such that $j_{!}(\alpha) \neq 0$ and $i_{Q*}(\alpha)=0$. Then 
$$
c_{HZ}^\circ(D^*Q)<\infty. 
$$
In particular, the  contractible almost existence property holds in $D^*Q$.
\end{proposition}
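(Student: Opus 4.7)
The plan is to apply Theorem~\ref{thm:finiteness-HZ} with $W=D^*Q$, choosing the DG local system $\cG$ on $\cL_0 T^*Q$ provided by Corollary~\ref{cor:Viterbo-cotangent} (namely $\cG=\Pi^*(\cF\otimes\ueta)$), and then transport the homological hypothesis across the DG Viterbo isomorphism. The natural candidate class in the Floer world is $\tilde\alpha:=\pi_!(\alpha)\in H_{*+n}(D^*Q,S^*Q;i_{D^*Q}^*\cG)$, obtained from $\alpha$ via the Thom isomorphism of the cotangent disc bundle. Corollary~\ref{cor:Viterbo-cotangent} (in particular Remark~\ref{rmk:Thom-explication}) tells us that this Thom isomorphism coincides, modulo the identification $SH_*^{=0}(D^*Q;\cG)\simeq H_{*+n}(D^*Q,S^*Q;i_{D^*Q}^*\cG)$ of Proposition~\ref{prop:SH-action-zero}, with $(\widetilde\Psi^{=0})^{-1}$.

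The vanishing $i_{D^*Q*}(\tilde\alpha)=0$ in $SH_*(D^*Q;\cG)$ is then immediate from the commutative square in Corollary~\ref{cor:Viterbo-cotangent}: its image corresponds under the isomorphism $\widetilde\Psi$ to $i_{Q*}(\alpha)=0\in H_*(\cL_0 Q;\cF)$, which is exactly the standing hypothesis. Thus condition $i_{W*}(\tilde\alpha)=0$ in Theorem~\ref{thm:finiteness-HZ} is satisfied.

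The main technical point is to identify $j_!(\tilde\alpha)$ in $H_{*-n}(\cG_\star)$ with $j_!(\alpha)$ in $H_{*-n}(\cF\otimes\underline{|Q|})$. For this, I would pick the point in $D^*Q$ to lie on the zero section and factor the inclusion as $j=j_Q\circ j_\star^Q:\star\hookrightarrow Q\hookrightarrow D^*Q$, where $j_Q$ is the zero section. Contravariant functoriality of shriek maps gives $j_!=(j_\star^Q)_!\circ j_{Q!}$. Because $\pi\circ j_Q=\Id_Q$, the shriek $j_{Q!}$ is the inverse of the Thom isomorphism $\pi_!$ (Remark~\ref{rmk:Thom-explication}), so $j_{Q!}(\tilde\alpha)=\alpha$; applying $(j_\star^Q)_!$ and using the hypothesis $j_!(\alpha)\neq 0$ produces a nonzero class. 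A short bookkeeping check then matches the coefficients: on constant loops the local system $\ueta$ restricts to $\underline{|Q|}^{-1}\simeq\underline{|Q|}$, so $\cG_\star=(\cF\otimes\ueta)_\star\simeq\cF\otimes\underline{|Q|}$, and the iterated orientation twist $\underline{|Q|}^{\otimes 2}$ picked up along $j_{Q!}$ then $(j_\star^Q)_!$ is canonically trivial.

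The hard part is precisely this last orientation bookkeeping, because one has to be careful that the twist by $\ueta$ built into the DG Viterbo isomorphism and the twist by $\underline{|Q|}$ appearing in the shriek of a point inclusion in a non-orientable $Q$ combine to produce exactly the right coefficients; once this is checked, both hypotheses of Theorem~\ref{thm:finiteness-HZ} hold and we conclude $c_{HZ}^\circ(D^*Q)<\infty$. The  contractible almost existence property in $D^*Q$ then follows from Proposition~\ref{prop:almost-existence-HZ-Struwe}.
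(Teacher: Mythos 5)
Your proposal is correct and follows essentially the same route as the paper: the paper likewise takes $\cG$ from Corollary~\ref{cor:Viterbo-cotangent}, splices the commutative square of that corollary onto diagram~\eqref{eq:diagram-finiteness-HZ}, and deduces the criterion $\ker(i_{D^*Q})_*\not\subseteq\ker j_!$ for $D^*Q$ from $\ker i_{Q*}\not\subseteq\ker j_!$ by composing with the Thom isomorphism $\pi_!$. Your explicit factorization $j_!=(j_\star^Q)_!\circ j_{Q!}$ with $j_{Q!}=\pi_!^{-1}$ and the check that $i_Q^*\ueta\otimes\underline{|Q|}\simeq\underline{|Q|}^{\otimes 2}$ is trivial are exactly the details the paper compresses into ``composing these maps with the Thom isomorphism''.
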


\begin{remark}\label{rmk:criterion-short-HZ-T*Q} 
The above criterion for the finiteness of the $\pi_1$-sensitive Hofer-Zehnder capacity of cotangent bundles may be written concisely as 
\begin{equation}\label{eq:criterion-almost-existence-HZ-T*Q}
\ker i_{Q*}\, \not\subseteq\, \ker j_{!}.
\end{equation}
\end{remark}  

\begin{proof}[Proof of Proposition~\ref{prop:finiteness-HZ-T*Q}] Let $\cal G$ be the DG local system on $T^*Q$ provided by Corollary \ref{cor:Viterbo-cotangent}. We put together the diagram of that corollary and diagram~\eqref{eq:diagram-finiteness-HZ} to get 
$$
\xymatrix{
H_{*-n}(\calf\otimes\underline{|Q|})&\, &\, \\
H_{*+n}(D^*Q,S^*Q; \pi^*i_Q^*\calf\otimes \pi^*\underline{|Q|})\ar[r]_-{\simeq}\ar[u]_-{j_!}\ar@/^2pc/[rr]^- {(i_{D^*Q})_*}
&SH_*^{=0}(T^*Q;\cG)\ar[r] \ar[d]_\simeq^{{\widetilde\Psi}^{=0}} & SH_*(T^*Q;\cG) \ar[d]_\simeq^{\widetilde\Psi} \\
\, & H_*(Q;i_Q^*\cF) \ar[r]^-{i_{Q*}} \ar[ul]^-{\simeq}_-{\pi_!}& H_*(\cL_0 Q;\cF).
}
$$

Recall that the criterion (\ref{eq:criterion-almost-existence-HZ})
from Theorem~\ref{thm:finiteness-HZ} is $\ker (i_{D^*Q})_*\, \not\subseteq \ker j_!$. Composing these maps in the above diagram  with  the Thom isomorphism $\pi_!$  
we immediately get (\ref{eq:criterion-almost-existence-HZ-T*Q}), and hence the conclusion.
\end{proof}

\begin{remark} One necessary condition for the criterion (\ref{eq:criterion-almost-existence-HZ-T*Q})
  to be applicable is that $j_{!}\neq 0$. In case the base manifold $Q$ is orientable, this exactly means that there is a class $\alpha\in H_*(Q;\cF)$ that ``lives over the fundamental class of $Q$" in the sense of Definition~\ref{def:living-above-fundamental}. Section~\S\ref{sec:classes_over_fund_class} contains a number of equivalent formulations of that property. 
\end{remark}

\subsection{Example: manifolds abundant with $2$-spheres} \label{sec:abundant}

In this subsection we exhibit a class of closed connected orientable manifolds $Q$ that satisfy the criterion from Proposition~\ref{prop:finiteness-HZ-T*Q}. 

{\bf Notation.} Given a space $X$ and subsets $A,B\subset X$, we denote $\cP_{A\to B}X$ the space of Moore paths in $X$ starting in $A$ and ending in $B$. 

This class of examples arises from considering the fibration 
\begin{equation} \label{eq:ice-cream-cones}
\cP_{Q\to \star}\cL_0 Q\hookrightarrow \cP_{Q\to \cL_0 Q}\cL_0 Q \stackrel\ev\longrightarrow \cL_0 Q
\end{equation}
defined by evaluation at the endpoint of a path  (see also Figure~\ref{fig:bananes-cones}). 
This fibration determines via the choice of a lifting function a $C_*(\Omega \cL_0 Q)$-right module structure on the chains on the fiber $\cF=C_*(\cP_{Q\to\star}\cL_0 Q)$ (see~\cite[\S7]{BDHO}). In the situation at hand the module structure
is induced by the concatenation of paths $\cP_{Q\to\star}\cL_0 Q\times \Omega \cL_0Q\to \cP_{Q\to\star}\cL_0 Q$.

The homology of the total space of this fibration is 
$$
H_*(\cL_0 Q;\cF)\simeq H_*(\cP_{Q\to \cL_0 Q}\cL_0 Q)\simeq H_*(Q).
$$
The first isomorphism is an instance of the Fibration Theorem from~\cite[Theorem~A and~\S13.3]{BDHO} (see also \S\ref{sec:def-MorseDG} of the present paper). The second isomorphism holds because $\cP_{Q\to \cL_0 Q}\cL_0 Q$ retracts onto $Q$. 

Restricting the fibration~\eqref{eq:ice-cream-cones} to $Q$ we obtain the fibration  
\begin{equation} \label{eq:bananas}
\cP_{Q\to \star}\cL_0 Q\hookrightarrow \cP_{Q\to Q}\cL_0 Q \longrightarrow Q
\end{equation}
such that
$$
H_*(Q;i_Q^*\cF) \simeq H_*(\cP_{Q\to Q}\cL_0Q) \simeq H_*(\Map(\SS^2,Q)). 
$$
The first isomorphism is again a consequence of the fibration theorem for Morse homology with DG coefficients mentioned above (\cite[Theorem A, Corollary 7.10 and \S13.3]{BDHO}). The second isomorphism follows from the canonical identification between $\cP_{Q\to Q}\cL_0Q$ and $\Map(\SS^2,Q)$, the space of free maps from $\SS^2$ to $Q$. The direct map $i_{Q*}: H_*(Q;i_Q^*\calf)\ri H_*(\call_0Q;\calf)$ is induced by the inclusion map $\calp_{Q\ri Q}\call_0Q\hookrightarrow \calp_{Q\ri\call_0Q}\call_0Q$, by \cite[Proposition~9.14,  Corollary~7.10 and Remark~13.20]{BDHO}, see also the discussion in \S\ref{sec:def-MorseDG}.

The fibration 
$$
\cP_{Q\to \star}\mathcal{L}_0Q\hookrightarrow \Map(\SS^2,Q)\to Q
$$ 
given by evaluation at a fixed basepoint on $\SS^2$ admits a canonical section given by constant spheres. As such, there is an injection $H_*(Q)\hookrightarrow H_*(\Map(\SS^2,Q))$.

\begin{remark} \label{rmk:topological_remark}
  We have seen in 
  Lemma \ref{lem:kpi1} 
that the homology of the fiber $\cP_{Q\to\star}\cL_0Q\equiv \Omega^2Q$ is that of a point if and only if $Q$ is a $K(\pi,1)$. An analogous result holds for the homology of the total space of this fibration: 

(i) If $Q$ is a $K(\pi,1)$, the evaluation map $\ev:\Map(\SS^2,Q)\to Q$ induces an isomorphism in homology.

(ii) If the evaluation map $\ev:\Map(\SS^2,Q)\to Q$ induces an isomorphism in homology and the action of $\pi_1(Q)$ on $\pi_3(Q)$ is trivial, then $Q$ is a $K(\pi,1)$ (and in particular $\pi_3(Q)=0$).\footnote{The assumption that the action of $\pi_1$ on $\pi_3$ is trivial may be redundant.}

Since we will not use them, we leave the proof of these statements to the interested reader. The way to read them is that, for most of the manifolds $Q$ that are not aspherical, the homology of $\Map(\SS^2,Q)$ is strictly larger than that of $Q$. The point of the definition below is to single out situations where classes in $H_*(\Map(\SS^2,Q))$ can be used in order to fulfil our criterion for the finiteness of the $\pi_1$-sensitive Hofer-Zehnder capacity. 
\end{remark}

 Let $j:\star\hookrightarrow Q$ be the inclusion of the basepoint.

\begin{definition} \label{defi:abundant_with_2spheres}
Let $Q$ be a closed connected oriented $n$-manifold. We say that \emph{$Q$ is abundant with $2$-spheres} if there exists a class $\alpha\in H_*(\Map(\SS^2,Q))$ in the  homology of the total space of the fibration $\Map(\SS^2,Q)\to Q$ such that:
\begin{itemize}
\item[(i)] $\alpha$ lives over the fundamental class in the sense of Definition~\ref{def:living-above-fundamental} and Example \ref{def:living-above-fundamental-fibration}, and 
\item[(ii)]  Either $\deg(\alpha)>n$, or $\deg(\alpha)=n$ and the image of $\alpha$ under the composition $H_n(\mathrm{Map}(\SS^2,Q))\stackrel{j_!}{\longrightarrow} H_0(\Omega^2 Q)\to H_0(\Omega^2 Q, \mathrm{pt})$ is nonzero.
\end{itemize} 
\end{definition}

The next result is Theorem~\ref{thm:abundant-intro} from the introduction. 

\begin{theorem} \label{thm:abundant}
Let $Q$ be a closed oriented manifold that is abundant with $2$-spheres. Then 
$$
c_{HZ}^\circ(D^*Q)<\infty.
$$
In particular the  contractible almost existence property holds in $D^*Q$.
\end{theorem}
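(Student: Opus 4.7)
The plan is to verify the homological criterion from Proposition~\ref{prop:finiteness-HZ-T*Q} with the DG local system $\cF=C_*(\cP_{Q\to\star}\cL_0 Q)$ coming from the ice-cream cones fibration~\eqref{eq:ice-cream-cones}. Under this choice, the DG Morse fibration theorem \cite[Theorem~A]{BDHO}, together with the identifications spelled out just before Definition~\ref{defi:abundant_with_2spheres}, yields
$$
H_*(Q;i_Q^*\cF)\simeq H_*(\Map(\SS^2,Q)), \qquad H_*(\cL_0 Q;\cF)\simeq H_*(Q),
$$
under which the direct map $i_{Q*}$ becomes the evaluation map $\ev_*$ at a fixed point of $\SS^2$ and, using orientability of $Q$ (so that $\underline{|Q|}$ is trivial) together with Lemma~\ref{lem:shriek-point}, the shriek map $j_!$ appearing in the criterion coincides with the shriek of the fiber inclusion in the fibration $\Omega^2 Q\hookrightarrow \Map(\SS^2,Q)\to Q$. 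Hence the criterion reduces to exhibiting a class in $H_*(\Map(\SS^2,Q))$ that is annihilated by $\ev_*$ but not by $j_!$.

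I will take as starting point the class $\alpha\in H_k(\Map(\SS^2,Q))$ supplied by Definition~\ref{defi:abundant_with_2spheres}, which already satisfies $j_!\alpha\neq 0$. When $k>n$, the image $\ev_*\alpha$ lies in $H_k(Q)=0$ for dimensional reasons, so $\alpha$ works unchanged. When $k=n$, I will correct $\alpha$ using the constant-sphere section $\iota:Q\to\Map(\SS^2,Q)$ of $\ev$; this is a right inverse to $\ev$ and, by inspection of the fiber over the basepoint, satisfies $\ev_*(\iota_*[Q])=[Q]$ and $j_!(\iota_*[Q])=[c_\star]\in H_0(\Omega^2 Q)$, where $c_\star$ denotes the constant map at the basepoint. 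Writing $\ev_*\alpha=d[Q]\in H_n(Q)\simeq\Z$ and setting $\alpha':=\alpha-d\,\iota_*[Q]$, I obtain $\ev_*\alpha'=0$ and $j_!\alpha'=j_!\alpha-d[c_\star]$, which is nonzero in $H_0(\Omega^2 Q)$ precisely because condition~(ii) of abundance asserts that $j_!\alpha$ is nonzero in the reduced quotient $\widetilde H_0(\Omega^2 Q)=H_0(\Omega^2 Q)/\Z[c_\star]$. In either case the hypotheses of Proposition~\ref{prop:finiteness-HZ-T*Q} are satisfied, so $c_{HZ}^\circ(D^*Q)<\infty$, and the almost existence statement follows from Proposition~\ref{prop:almost-existence-HZ-Struwe}.

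The only mildly delicate step is the degree-$n$ correction: one has to cancel the $\ev_*$-contribution of $\alpha$ without killing its $j_!$-image. The entire purpose of condition~(ii) in the definition of abundance with $2$-spheres is to make this correction safe, i.e.\ to guarantee that subtracting any integer multiple of the section class $\iota_*[Q]$ from $\alpha$ cannot make $j_!$ vanish. Once this is in place the argument is formal.
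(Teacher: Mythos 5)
Your proof is correct and follows essentially the same approach as the paper: both use the ice-cream cones local system $\cF=C_*(\Omega^2 Q)$, both handle the degree-$>n$ case by dimension and the degree-$n$ case by subtracting the section class (your $\alpha-d\,\iota_*[Q]$ is literally the paper's $\alpha-\sigma_*\ev_*\alpha$), and both invoke Proposition~\ref{prop:shriek-commute} (implicitly on your part, when asserting $j_!(\iota_*[Q])=[c_\star]$) to see that the correction does not kill $j_!$. The only cosmetic difference is that the paper is more explicit about the homotopy between the two evaluation maps $\ev'_0\simeq\ev'_1$ on $\Map(\SS^2,Q)$ when identifying $i_{Q*}$ with $\ev_*$; you elide this, which is harmless since homotopic maps induce the same maps on homology.
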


\begin{proof}
Let $n=\dim\, Q$ and let $\alpha\in H_*(\Map(\SS^2,Q))$ be a class as in Definition~\ref{defi:abundant_with_2spheres}. We prove that there exists a class $\alpha'\in H_*(\Map(\SS^2,Q))=H_*(Q;i_Q^*\cF)$ that satisfies the assumptions of Proposition~\ref{prop:finiteness-HZ-T*Q}, where $\cF=C_*(\cP_{Q\to\star}\cL_0 Q)=C_*(\Omega^2Q)$ is the local system on $\cL_0Q$ that corresponds to the fibration~\eqref{eq:ice-cream-cones}.
That $\alpha$ lives over the fundamental class means precisely that $j_{!}(\alpha)\in H_*(\cF)$ is nonzero. In particular, by Proposition~\ref{prop:living-fundamental-fibrations}, the projection of $\alpha$ on the right-most column $E^\infty_{n,*}$ of the homology spectral sequence for the fibration $\Map(\SS^2,Q)\equiv \cP_{Q\to Q}\cL_0 Q \stackrel\ev\longrightarrow Q$ is nonzero, hence $\alpha$ has degree $\ge n$. 

If $\deg(\alpha)>n$, then $i_{Q*}(\alpha)\in H_*(\cL_0Q;\cF)=H_*(Q)$ vanishes for degree reasons and we can take $\alpha'=\alpha$. 

If $\deg(\alpha)=n$, then we take $\alpha'=\alpha-\sigma_*\ev_*(\alpha)\in H_n(\Map(\SS^2,Q))$,  where $\sigma: Q\hookrightarrow \mathrm{Map}(\SS^2,Q)$ is the section defined by the constant maps.
We need to check that $j_!(\alpha')\neq 0\in H_0(\Omega^2Q)$ and $i_{Q*}(\alpha')=0\in H_n(\cL_0Q;\cF)=H_n(Q)$. That $j_!(\alpha')\neq 0$ follows from the fact that the images of $\alpha$ and $\alpha'$ under the composition $\tilde{j}:H_n(\mathrm{Map}(\SS^2,Q))\stackrel{j_!}{\longrightarrow} H_0(\Omega^2 Q)\to H_0(\Omega^2 Q, \mathrm{pt})$ are the same, hence nonzero by the assumption on $\alpha$. Indeed, remark that  $\sigma_*: H_*(Q) \ri H_*(\mathrm{Map}(\SS^2, Q))$ fits into the commutative diagram 
$$\xymatrix
@C=15pt
{ 
H_n(Q)\ar[r]^-{\sigma_*}\ar[d]_-{j_!}&H_n(\mathrm{Map}(\SS^2,Q))\ar[d]^-{j_!}\ar[dr]^-{\tilde{j}}&\, \\
 H_{0}(\mathrm{pt})\ar[r]^{\sigma_*}&H_{0}(\Omega^2Q)\ar[r]&H_0(\Omega^2 Q, \mathrm{pt})}
 $$

The leftmost square is commutative by Proposition~\ref{prop:shriek-commute}, the map $\sigma$ being seen as a fibration map between the trivial fibration $\id:Q\ri Q$ and the fibration $\mathrm{ev}:\mathrm{Map}(\SS^2,Q)\ri Q$. From the fact that the bottom horizontal line of the diagram is exact we infer that $\tilde{j}\sigma_* =0$, so $\tilde{j}(\alpha')=\tilde{j}(\alpha)\neq 0$, which implies $j_!(\alpha')\neq 0$.  
 
To prove that $i_{Q*}(\alpha')=0$, it is enough to show that $\ev_{0*}i_{Q*}(\alpha')=0$, where $\ev_0:\cP_{Q\to\cL_0Q}\cL_0Q\to Q$ is the evaluation at the initial point of a path, which is a homotopy equivalence. We identify $\Map(\SS^2,Q)$ with $\cP_{Q\to\cL_0Q}\cL_0Q|_Q$ by marking 2 points $z_0$ and $z_1$ on the sphere $\SS^2$, corresponding to the initial, respectively final point of a path in $\cP_{Q\to Q}\cL_0Q$. Let $\ev'_i:\Map(\SS^2,Q)\to Q$ be the evaluation at $z_i$, $i=0,1$. Let $\tilde i_Q:\cP_{Q\to Q}\cL_0 Q\hookrightarrow \cP_{Q\to\cL_0Q}\cL_0 Q$ be the inclusion of the total spaces of the fibrations.
We then have $\ev'_1=\ev$, $\ev_0 \tilde i_Q=\ev'_0$, $\ev_0 {\tilde i_Q} \sigma=\mathrm{Id}_Q$, and we obtain 
\begin{align*}
\ev_{0*}\tilde i_{Q*}(\alpha') & = \ev_{0*}\tilde i_{Q*}(\alpha) - \ev_{0*}\tilde i_{Q*}\sigma_*\ev_*(\alpha) \\
& = \ev_{0*}\tilde i_{Q*}(\alpha) - \ev_*(\alpha)\\
& = \ev'_{0*}(\alpha) - \ev'_{1*}(\alpha) \\
& = 0.
\end{align*}
The last equality holds because the maps $\ev'_0$ and $\ev'_1$ are homotopic. 
\end{proof}

In the next section we exhibit several classes of manifolds that are abundant with $2$-spheres. That discussion is summarized, but not exhausted, by the statement of Theorem~\ref{thm:examples-intro}.

Our examples of manifolds that are abundant with $2$-spheres go well-beyond those of~\cite{Albers-Frauenfelder-Oancea}, but are strictly contained in the class of rationally inessential manifolds of Frauenfelder-Pajitnov~\cite{Frauenfelder-Pajitnov}. 
See also~\S\ref{sec:intro-finiteness-HZ}. 

\subsection{Explicit constructions of manifolds that are abundant with $2$-spheres} \label{sec:abundant_examples}
  
The goal of this section is to present explicit constructions of manifolds that are abundant with $2$-spheres. We will prove in particular Theorem~\ref{thm:examples-intro}. 

We prove that the following manifolds $Q$ are abundant with $2$-spheres. 
\begin{itemize}
\item[(i)] The product $Q=M\times N$, where $M$ is abundant with
  $2$-spheres and $N$ is an arbitrary closed oriented manifold 
  (Proposition~\ref{prop:abundant_products}).
  \item[(ii)] Manifolds that admit fibered families of $2$-spheres (Definition~\ref{defi:fibered_2spheres}). In particular, the spheres $\SS^n$ of even dimension $n=2k$, $k\ge 1$ (Example~\ref{example:spheres}) or odd dimension $n=4k-1$, $k\ge 1$ (Example~\ref{example:spheres-4k-1}), and also total spaces of fiber bundles with such spherical fibers over highly connected bases (Example~\ref{example:fibrations_spherical}). 
\item[(iii)] Manifolds that are not a $K(\pi,1)$ and that are covered by diffeomorphisms in the sense of Definitions~\ref{defi:covered_by_diffeomorphisms} and~\ref{defi:ell_covered_by_diffeomorphisms}, in particular all compact connected Lie groups other than tori (Proposition~\ref{prop:Lie_groups}), and all odd-dimen\-sio\-nal spheres $\SS^{n=2k+1}$, $k\ge 1$ (Proposition~\ref{prop:odd-spheres}).
\item[(iv)]  
The compact rank 1 symmetric spaces $\C P^d$, $\H P^d$, $d\ge 1$, or $\mathrm{Ca} P^2$ (Propositions~\ref{prop:CPd} and~\ref{prop:HPd-CaP2}). 
\end{itemize}
We also make some remarks about homogeneous spaces in~\S\ref{sec:homogeneous-spaces}, and about the use of minimal models in~\S\ref{sec:minimal-models}. 

\subsubsection{Stability under products} 

\begin{proposition} \label{prop:abundant_products}
Let $M$ be a closed, connected and oriented manifold that is abundant with $2$-spheres. Then the product $M\times N$ is abundant with $2$-spheres for any closed oriented manifold $N$. 
\end{proposition}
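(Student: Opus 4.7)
The plan is to construct a witness class for $M\times N$ as the cross product of a witness class for $M$ with the section-by-constants image of $[N]$, and to verify the two conditions of Definition~\ref{defi:abundant_with_2spheres} by combining Corollary~\ref{cor:shriek-commute} with the K\"unneth formula in degree $0$. Write $m=\dim M$, $k=\dim N$, so $n=m+k$, and assume $N$ connected (work componentwise otherwise). Let $\alpha_M\in H_d(\Map(\SS^2,M))$ witness abundance for $M$, and let $\sigma_N:N\to\Map(\SS^2,N)$ be the section assigning to $q\in N$ the constant sphere at $q$. Via the canonical identification $\Map(\SS^2,M\times N)\simeq \Map(\SS^2,M)\times\Map(\SS^2,N)$ as fibered spaces over $M\times N$, I take the candidate class to be
\[
\alpha \;=\; \alpha_M\times\sigma_{N*}[N] \;\in\; H_{d+k}(\Map(\SS^2,M\times N)).
\]

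For condition~(i), I would factor the point inclusion $j:\{\star\}\hookrightarrow M\times N$ as $j_M\times j_N$ and invoke the multiplicativity of shriek maps under cross products to get $j_!\alpha = \pm (j_{M!}\alpha_M)\times(j_{N!}\sigma_{N*}[N])$. The first factor is nonzero by the hypothesis on $M$; for the second, Corollary~\ref{cor:shriek-commute} applied to $\sigma_N$ viewed as a fibration map from the trivial fibration $\mathrm{id}_N$ to $\ev_N:\Map(\SS^2,N)\to N$, with fiber map $\{\star_N\}\hookrightarrow\Map_*(\SS^2,N)$ given by the constant sphere at the basepoint, identifies $j_{N!}\sigma_{N*}[N]$ with the nonzero class $[\star_N]\in H_0(\Map_*(\SS^2,N))$. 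The K\"unneth formula in degree $0$, i.e., $H_0(X\times Y)=H_0(X)\otimes H_0(Y)$, then yields $j_!\alpha\neq 0$.

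For condition~(ii), suppose $\deg\alpha = n$, so $d=m$ and by hypothesis $j_{M!}\alpha_M\neq 0$ in $\widetilde{H}_0(\Map_*(\SS^2,M))$. Writing $\widetilde{H}_0(X)=H_0(X)/\Z[\star_X]$, I would show that $-\times[\star_N]$ descends to an injection $\widetilde{H}_0(\Map_*(\SS^2,M))\hookrightarrow \widetilde{H}_0(\Map_*(\SS^2,M\times N))$: indeed $[\star_M]\otimes[\star_N]$ corresponds to $[\star_{M\times N}]$ under the K\"unneth isomorphism, and if $x\otimes[\star_N]\in\Z([\star_M]\otimes[\star_N])$, then freeness of $H_0(\Map_*(\SS^2,N))$ forces $x\in\Z[\star_M]$. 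Combined with the cross product formula for $j_!$, this gives $j_!\alpha\neq 0$ in $\widetilde{H}_0$.

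The hard part will be the multiplicativity formula $j_! = j_{M!}\times j_{N!}$ on cross products in the DG setting, since the shriek maps of \cite{BDHO} are not defined by duality but by explicit chain-level projections. I expect to prove it either by choosing product Morse--Smale data on $M\times N$ with basepoint at $(\star_M,\star_N)$ and unwinding the chain-level definition from \cite[\S9--\S10]{BDHO}, or by factoring $j$ through $\{\star_M\}\times N$ and combining contravariant functoriality of shriek maps with a product-with-trivial-factor version of Proposition~\ref{prop:shriek-commute}.
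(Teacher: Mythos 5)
Your proof is correct and takes essentially the same approach as the paper's: the candidate class is the same (the paper's $\alpha\times[N]$ implicitly uses the section by constant spheres, which you make explicit as $\alpha_M\times\sigma_{N*}[N]$), and the verification via $\Map(\SS^2,M\times N)\simeq\Map(\SS^2,M)\times\Map(\SS^2,N)$ and cross-product multiplicativity of the shriek map is the intended argument. The one step you flag as needing elaboration---that $j_!$ factors as $j_{M!}\times j_{N!}$ on cross products in the DG Morse framework---is also left implicit in the paper's (one-line) proof, and your suggested route via product Morse data at the basepoint $(\star_M,\star_N)$ is the natural way to establish it.
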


\begin{proof}
The proof relies on the canonical identification 
$$\Map(\SS^2,M\times N)\simeq \Map(\SS^2,M)\times \Map(\SS^2,N).$$ Given a class $\alpha\in H_*(\Map(\SS^2,M))$ that lives over the fundamental class of $M$ 
 and satisfies Definition~\ref{defi:abundant_with_2spheres}.(ii), 
the class $\alpha\times [N] \in H_*(\Map(\SS^2,M)\times \Map(\SS^2,N))$ lives over the fundamental class $[M\times N]=[M]\times [N]$ and 
also satisfies Definition~\ref{defi:abundant_with_2spheres}.(ii).
\end{proof}

\subsubsection{Manifolds that admit fibered families of $2$-spheres}

The first general geometric framework that guarantees abundance with $2$-spheres is provided by fibrations.  
Let $Q$ be a closed oriented manifold, and let $p$ be the basepoint of $Q$.  

\begin{definition} \label{defi:fibered_2spheres} We say that the manifold $Q$ \emph{admits a fibered family of $2$-spheres} if there exists a smooth fibration $\Sigma_{p}\hookrightarrow\Sigma\to Q$ with fiber $\Sigma_p$ a closed oriented manifold of positive  dimension, endowed with a fibration map $\sigma$ to $\Map(\SS^{2},Q)$,
$$
\begin{tikzcd}
  \Sigma_{p} \arrow[d,hookrightarrow] \arrow[r, "\sigma_p"]& \Omega^{2}Q \arrow[d,hookrightarrow]\\
  \Sigma      \arrow[r,"\sigma"]\arrow[d,"\pi"]& \Map(\SS^{2},Q)\arrow[d,"\mathrm{ev}"]\\
  Q\arrow[r,equal]&Q
\end{tikzcd}
$$
such that
$$
\sigma_{p\,*}[\Sigma_{p}]\neq 0
\quad\text{ in }\quad H_{*}(\Omega^{2}Q).
$$  
\end{definition}

\begin{proposition} \label{prop:class-over-Q}
  In the above setting, the class 
  $$
  \alpha=\sigma_*[\Sigma]\in H_{*}(\Map(\SS^{2}, Q))
  $$ 
  lives above the fundamental class of $Q$ and has degree $>\dim\, Q$.
  In particular, the manifold $Q$ is abundant with $2$-spheres.
\end{proposition}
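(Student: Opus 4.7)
The plan is to reduce the statement to a direct application of Corollary~\ref{cor:shriek-commute}. The setting of Definition~\ref{defi:fibered_2spheres} fits exactly the hypotheses of Proposition~\ref{prop:shriek-commute}: we have two Hurewicz fibrations over $Q$, namely $\pi:\Sigma\to Q$ with fiber $\Sigma_p$ and $\mathrm{ev}:\Map(\SS^2,Q)\to Q$ with fiber $\Omega^2 Q$, and the map $\sigma$ is by definition a fibration map between them that restricts to $\sigma_p$ on the fibers over the basepoint. Since $\Sigma$ is, by hypothesis, a finite-dimensional closed orientable manifold and $\sigma_{p*}[\Sigma_p]\neq 0$ in $H_*(\Omega^2 Q)$, Corollary~\ref{cor:shriek-commute} applies verbatim and yields that $\alpha=\sigma_*[\Sigma]$ lives above the fundamental class of $Q$ in the fibration $\Map(\SS^2,Q)\to Q$, i.e., $j_!\alpha\neq 0$ in $H_*(\Omega^2 Q)$.

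Next I would compute the degree. Unwinding the proof of Corollary~\ref{cor:shriek-commute}, the shriek map evaluated on $[\Sigma]$ recovers $[\Sigma_p]$ (cf.\ Remark~\ref{rmk:shriek-fundamental-class}), so the commutative diagram of Proposition~\ref{prop:shriek-commute} gives $j_!\alpha=\sigma_{p*}[\Sigma_p]$, a class of degree $\dim\Sigma_p>0$. Since the shriek map $j_!$ lowers degree by $n=\dim Q$, the degree of $\alpha$ is $n+\dim\Sigma_p>n$.

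Finally, to conclude that $Q$ is abundant with $2$-spheres, I would verify the two conditions of Definition~\ref{defi:abundant_with_2spheres} for the class $\alpha$. Condition (i), that $\alpha$ lives above the fundamental class, is exactly the output of the first step. Condition (ii) has two alternatives; the strict inequality $\deg\alpha>n$ established in the second step selects the first alternative automatically, so no further check in reduced degree-$0$ homology is needed. This completes the verification.

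No step presents a genuine obstacle: the proposition is essentially a packaging of Corollary~\ref{cor:shriek-commute} into the language of Definition~\ref{defi:abundant_with_2spheres}, and the only mildly delicate point is keeping track of the degree shift so that one lands in the ``$\deg\alpha>n$'' case of Definition~\ref{defi:abundant_with_2spheres}(ii), which avoids any subtlety with reduced versus unreduced homology of $\Omega^2 Q$.
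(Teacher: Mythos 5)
Your proof is correct and follows essentially the same route as the paper: Corollary~\ref{cor:shriek-commute} gives that $\alpha$ lives above the fundamental class, and the degree count $\deg\alpha=\dim Q+\dim\Sigma_p>\dim Q$ finishes the argument. The paper reads off the degree directly as $\dim\Sigma$ rather than via the shriek map, but this is a cosmetic difference.
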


\begin{proof}  The fact that $\alpha$ lives above the fundamental class of $Q$ is a direct consequence of Corollary \ref{cor:shriek-commute}.

 To conclude, we note that the degree of $\sigma_*[\Sigma]$ is equal to $\dim Q + \dim \Sigma_p$, and this is $>\dim Q$ since we assumed that $\Sigma_p$ has positive dimension.  
\end{proof}

 Let us now describe the first examples of manifolds that satisfy Definition \ref{defi:fibered_2spheres}.

\begin{example}[\bf Even-dimensional spheres of dimension $n\ge 2$ admit fibered families of $2$-spheres] \label{example:spheres}
  Let $Q=\SS^{n}$ be the unit sphere in $\R^{n+1}$, $n\ge 2$ (not necessarily even),  and let 
  $$
  \Sigma =\mathrm{Stiefel}_2(\SS^n)
  $$ 
  be the Stiefel manifold of orthonormal $2$-frames on $\SS^n$. A point in $\Sigma$ consists of a point $p\in\SS^{n}$ and of an ordered pair
  $(u,v)$ of orthogonal unit tangent vectors at $p$.
  The manifold $\Sigma$ fibers naturally over $\SS^n$, and the fiber $\Sigma_{p}$ over the basepoint $p$ of $\SS^n$ is the space of orthonormal $2$-frames in $T_p\SS^n$. As such, the dimension of $\Sigma_p$ is $2n-3$. Let us  define  a  map of fibrations $\sigma:\Sigma\to \Map(\SS^2,\SS^n)$  that satisfies the conditions of Definition~\ref{defi:fibered_2spheres}.

  A point $(p,u,v)\in \Sigma$ determines a $3$-dimensional vector subspace $\mathrm{Vect}(p,u,v)\subset \R^{n+1}$, whose intersection with $\SS^{n}$ is
  a round $2$-sphere. 
  This sphere 
determines a canonical map $\sigma:\Sigma\to \Map(\SS^2,\SS^n)$, $(p,u,v)\mapsto \sigma_{p,u,v}$. 

Let us describe this map explicitly.  Fix a point $p\in\SS^{n}$. 
For any non zero $u\in T_p\SS^n$, we let 
$$\gamma_{u}:
[0,\pi]\to\SS^{n},\quad \varphi  \mapsto  p\cos\varphi +  u\sin\varphi
$$
be the geodesic  (with respect to the round metric) from $p$ to $-p$ starting in the direction of $u$. To any $(u,v)\in \Sigma_p$ 
we associate the map $\Gamma_{u,v}$ defined by 
$$ \Gamma_{u,v}
:  [0,2\pi]^{2}\to\SS^{n},\quad
(\theta,\varphi)\mapsto
(\gamma_{u\cos\theta + v\sin\theta}\, \#\, \bar{\gamma}_{u})(\varphi).
$$
where $\bar{\gamma}$ is $\gamma$ with reversed time (see Figure \ref{fig:loops_of_loops}). 
\begin{figure}
    \centering
    \includegraphics{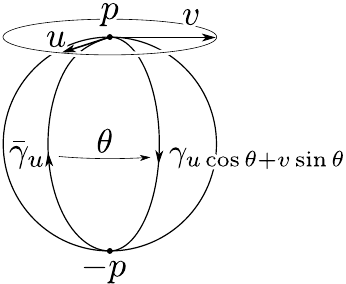}
    \caption{The loop of loops $\Gamma_{u,v}$.}
    \label{fig:loops_of_loops}
\end{figure} 
This is the loop of
loops obtained by moving from $p$ to $-p$ along geodesics associated to a
rotating direction in the plane $(u,v)$, and coming back from $-p$ to $p$
along the fixed geodesic associated to $u$, so $\Gamma_{u,v}\in \Omega_{\gamma_u\, \#\, \bar{\gamma}_u}\Omega_p\SS^n$. Consider now the path $\chi_u: [0,\pi]\to \Omega_p(\SS^n)$ between the constant loop at $p$ and $\gamma_u\, \#\, \bar{\gamma}_u$, defined by
$\chi_u(\theta)=\gamma_u|_{[0,\theta]}\, \#\,
\overline{\gamma_u|_{[0,\theta]}}
$. 
By conjugating with $\chi_u$ we get 
$$\sigma_{p,u,v}= \chi_u\, \#\, \Gamma_{u,v}\, \#\, \bar{\chi}_u\in\Omega_p\Omega_p(\SS^n)$$ 
Now we identify $\Omega_p\Omega_p(\SS^n)$ with $\mathrm{Map}[(\SS^2,\star), (\SS^n,p)]$ and vary $p$ to get a map $\sigma :\Sigma \to\mathrm{Map}(\SS^2, \SS^n)$.

  The map $\sigma$ is a map of fibrations, and we denote by $\sigma_p$ its restriction to the fiber $\Sigma_p$. We prove in Proposition~\ref{prop:nontrivial-class-Sn} below that $\sigma_{p*}[\Sigma_p]$ defines a nontrivial class in $H_{2n-3}(\Omega^2 \SS^n)$ for $n$ even. Therefore $\SS^{n=2k}$ admits a fibered family of $2$-spheres and, as a consequence of Proposition~\ref{prop:class-over-Q}, the class 
  $$
  \alpha=\sigma_*[\Sigma]\in H_{3n-3}(\Map(\SS^2,\SS^n))
  $$
 is nonzero and lives over the fundamental class of $Q= \SS^n$.

\begin{proposition} \label{prop:nontrivial-class-Sn}
With the same notations as above, if $n\geq 2$ is even, the class $\sigma_{p*}[\Sigma_p]$ is nonzero in $H_{2n-3}(\Omega^2 \SS^n)$. 
\end{proposition}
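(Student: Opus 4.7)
Plan: I would detect the class $(\sigma_p)_*[\Sigma_p] \in H_{2n-3}(\Omega^2 \SS^n)$ by analyzing the adjoint map
\[
\tilde{\sigma}_p : \Sigma_p \times \SS^2 \longrightarrow \SS^n,\qquad ((u,v),(a,b,c)) \longmapsto a\, p + b\, u + c\, v,
\]
where I identify $\SS^n \subset \R^{n+1}$ with basepoint $p$ and view $u,v$ as an orthonormal pair in $p^\perp$. The formula is basepoint--preserving with respect to natural choices, so $\sigma_p$ itself is based and the pushforward of the fundamental class makes sense.

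First, I would reduce to the top cell. Since $\Sigma_p = V_2(T_p\SS^n) = SO(n)/SO(n-2)$ is a closed connected orientable manifold of dimension $2n-3$, the collapse of the $(2n-4)$--skeleton gives a map $\Sigma_p \to \SS^{2n-3}$ carrying $[\Sigma_p]$ to the fundamental class of $\SS^{2n-3}$. Composing with smashing and $\tilde{\sigma}_p$ yields a map $\SS^{2n-1} = \SS^{2n-3}\wedge \SS^2 \to \SS^n$. Using the bilinear form of the formula for $\tilde{\sigma}_p$ and the standard identification of the top cell of $V_2(\R^n)$ with the join $\SS^{n-2}\ast\SS^{n-2}$, I would show that this map is (up to sign and unit) the Whitehead square $[\iota_n,\iota_n]\in\pi_{2n-1}(\SS^n)$.

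Second, I would conclude using the classical non-vanishing of $[\iota_n,\iota_n]$ for every $n\ge 2$. For $n$ even, $[\iota_n,\iota_n]$ generates a $\Z$--summand of $\pi_{2n-1}(\SS^n)$; its Hurewicz image pairs non-trivially with the generator of $H^{2n-3}(\Omega^2 \SS^n;\Q)$ appearing in the Sullivan minimal model of $\Omega^2 \SS^n$ (which has a free generator in that degree). For $n$ odd (and $n\neq 1$), $[\iota_n,\iota_n]$ is a nonzero $2$--torsion element detected mod $2$ by the action of $\mathrm{Sq}^n$ on $H^*(\Omega^2 \SS^n;\F_2)$. The small exceptional case $n=2$ can be verified directly: the family of rotations of $\SS^2$ around the axis through $p$ recovers the Hopf generator of $\pi_1(\Omega^2\SS^2)\simeq H_1(\Omega^2\SS^2)\simeq\Z$.

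The main obstacle is to ensure that the contribution from the top cell of $\Sigma_p$ is not cancelled by contributions from lower-dimensional cells under $(\sigma_p)_*$. This will be bypassed by choosing the detecting cohomology class carefully so that it pairs trivially with everything coming from $\Sigma_p^{(2n-4)}$; concretely, for $n$ even one works rationally with the generator of the minimal Sullivan model in degree $2n-3$, while for $n$ odd one uses the unique Steenrod operation $\mathrm{Sq}^n$ detecting the Whitehead square. A uniform treatment across all $n\ge 2$ (especially handling the H--space dimensions $n\in\{2,4,8\}$ where algebraic behaviour is exceptional) is the most delicate part of the argument.
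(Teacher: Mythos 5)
Your plan has a genuine gap in the ``reduce to the top cell'' step, and a second, more serious problem with the Whitehead-square identification for odd $n$.

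First, the reduction itself is not well-defined: the collapse map $c:\Sigma_p\to\SS^{2n-3}$ points the wrong way. You cannot compose $c$ (or $c\wedge\mathrm{id}_{\SS^2}$) with $\tilde\sigma_p:\Sigma_p\wedge\SS^2\to\SS^n$, and since $\Sigma_p=V_2(\R^n)$ is not a homotopy sphere (it has cells in dimensions $0,n-2,n-1,2n-3$), there is no map $\SS^{2n-3}\to\Sigma_p$ realizing $[\Sigma_p]$ and no reason for $\tilde\sigma_p$ to factor through the collapse. So the element of $\pi_{2n-1}(\SS^n)$ you want to extract does not exist as stated. Second, even granting a rigorous notion of ``top cell contribution'' equal to the Whitehead square $[\iota_n,\iota_n]$, this would not prove the proposition when $n$ is odd: the transgression $\beta_*:H_{2n-3}(\Omega^2\SS^n)\to H_{2n-2}(\Omega\SS^n)$ sends the Hurewicz image of $[\iota_n,\iota_n]$ (i.e.\ of the Samelson product) to the graded commutator $[x,x]=(1-(-1)^{n-1})x^2$ in the Bott--Samelson Pontryagin ring $H_*(\Omega\SS^n)\simeq\Z[x]$, which is $0$ for $n$ odd. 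By contrast the class $(\sigma_p)_*[\Sigma_p]$ transgresses to $x^2$, a generator. So $(\sigma_p)_*[\Sigma_p]$ is \emph{not} the Hurewicz image of the Whitehead square, and a detection scheme built around the Whitehead square is bound to fail integrally for odd $n$; shifting to $\Z/2$ coefficients and $\mathrm{Sq}^n$ would require yet another unproved identification.

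The paper avoids both issues by using the adjunction map $\beta_*:H_*(\Omega^2\SS^n)\to H_{*+1}(\Omega\SS^n)$ and then working entirely in the much simpler ring $H_*(\Omega\SS^n)\simeq\Z[x]$. It exhibits an explicit degree-one map $\Sigma_p\times\SS^1\to\SS^{n-1}\times\SS^{n-1}$ through which $\beta(\sigma_p)$ factors, identifies the resulting cycle geometrically with the Pontryagin square $x\cdot\bar x\simeq x^2$, and reads off nonvanishing directly. This works uniformly for all $n\ge2$, with no parity splitting, no minimal models, no Steenrod operations, and no exceptional treatment for $n\in\{2,4,8\}$. If you want to salvage the Whitehead-square intuition, the clean statement is rather that $\beta_*(\sigma_p)_*[\Sigma_p]=x^2$ is the image of the Bott--Samelson generator and that its relation to the (additive) Samelson/Whitehead product is $[x,x]=x^2-(-1)^{n-1}x^2$, which explains why the Whitehead square alone cannot detect this class for odd $n$.
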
 

As a preparation for the proof of Proposition~\ref{prop:nontrivial-class-Sn}, we recall some standard facts about the homology of the based loop spaces of spheres. 

\begin{proposition}\label{2prop:omega-sn}
Let $n\ge 2$ and $p\in\SS^n$ a basepoint. The homology $H_*(\Omega_p\SS^n)$ endowed with the Pontryagin product is a polynomial ring (not graded commutative!) in one variable of degree $n-1$. 
\end{proposition}

\begin{proof}
That $H_*(\Omega_p\SS^n)$ is isomorphic to $\Z$ in degrees $k(n-1)$, $k\ge 0$, and $0$ otherwise, is a straightforward consequence of the existence of the Leray-Serre spectral sequence for the path-loop fibration $\Omega_p\SS^n\to\cP\SS^n\to\SS^n$. Here the total space $\cP\SS^n$ is the space of paths in $\SS^n$ that start at $p$, and is contractible. 

To describe the multiplicative structure is a bit more subtle. One possible reference is~\cite[Example~3.C.7 and~\S4.J]{Hatcher}, where a model for the based loop space is provided by the James construction. 

For completeness, let us show how the multiplicative structure and the generator can be described from Morse homology with DG coefficients. We again consider the path-loop fibration $\Omega_p\SS^n\to \cP\SS^n\to\SS^n$. For this fibration, the construction of \cite[Chapter 5]{BDHO} yields a DG Morse complex $C_*(\Omega_p \SS^{n})\otimes \langle\mathrm{Crit}(f)\rangle$ associated with some Morse data on $\SS^n$, whose homology equals the homology of a point. If we take the height function with maximum at $ p$ and minimum at $q=-p$ as Morse function and consider its gradient with respect to the round metric, this complex is written $C_{*-n}(\Omega_{p}\SS^n)\langle p\rangle\oplus  C_{*}(\Omega_{p}\SS^n)\langle q\rangle$ 
with differential \begin{equation}\label{diff-DG}\partial (a \otimes p+b\otimes q)= \partial a \otimes p + ((-1)^{|a|} a \cdot m_{pq}+\partial b)\otimes q,\end{equation}
with $m_{pq}\in C_{n-1}(\Omega_p\SS^n)$ the chain obtained from the moduli space of connecting trajectories from $p$ to $q$.

Denoting by $1\in C_0(\Omega_p\SS^n)$ the constant loop at $p$, the relation $\partial^2(1\otimes p) =0$ shows that $m_{pq}$ is a cycle. Denote $x= [m_{pq}]\in H_{n-1}(\Omega_p\SS^n)$.

The multiplicative structure on $H_*(\Omega_p\SS^n)$ is then determined by the fact that the differential in the DG Morse complex involves the Pontryagin product.

The DG Morse homology spectral sequence associated with the filtration of the above complex by the Morse indices \cite[\S4.2 and Remark~5.13]{BDHO} satisfies $E^2_{r,s}= H^{\mathrm{Morse}}_{r}(\SS^n)\otimes  
H_s(\Omega_p\SS^n)$.
Also, for degree reasons, we have $E^2_{r,s}=E^n_{r,s}$ and, since this spectral sequence converges to the homology of a point,  $d^n:E^n_{n, k(n-1)}\to E^n_{ 0,(k+1)(n-1)}$ must be an isomorphism for all $k\ge 0$. Moreover, the form of the differential \eqref{diff-DG} implies that 
$d^n([p]\otimes a) = [q]\otimes a\cdot x$. Starting with $d^n([p]\otimes 1)= [q]\otimes x$ and arguing by induction, we infer that $x^k$ is a generator of $H_{k(n-1)}(\Omega_p \SS^n)$ for all $k\ge 1$.
\end{proof}

\begin{proof}[Proof of Proposition~\ref{prop:nontrivial-class-Sn}] 
Consider the composed map  $$\beta : \Sigma_p \times \SS^1\xrightarrow{\sigma_p\times\mathrm{Id}} \Omega_p\Omega_p(\SS^n)\times \SS^1\xrightarrow{\mathrm{ev}}\Omega_p(\SS^n)$$ where the last map represents the evaluation at $\theta\in \SS^1$. 
It suffices to prove that $\beta_*([\Sigma_p]\times [\SS^1])\in H_{2n-2}(\Omega_p\SS^n)$ does not vanish. Taking into account that $H_{2n-2}(\Omega_p\SS^n)$ is isomorphic to $\Z$, we will actually show that $\beta_*([\Sigma_p]\times [\SS^1])$ is equal to twice a generator when $n$ is even.

The map $\beta=\ev\circ (\sigma_p\times\mathrm{Id})$ can be read on the upper side of the following diagram 
$$\xymatrix{\Sigma_p\times \SS^1\ar[r]^-{\sigma_p\times\mathrm{Id}}\ar[d]_-{\tau}&\Omega_p\Omega_p(\SS^n)\times \SS^1\ar[d]^-{\mathrm{ev}}\\
\SS^{n-1}\times\SS^{n-1}\ar[r]^-{\alpha}&\Omega_p(\SS^n)}.$$
The map $\tau$ is defined by the formula $[(u,v), \theta]\mapsto (u,u\cos\theta +v\sin\theta)$, with $\SS^{n-1}$ being understood as the orthogonal of $p$ inside $\SS^n$, and the map $\alpha$ is defined by $(u,w)\mapsto \gamma_w\#\bar{\gamma}_u$.
The diagram above is commutative {\it only up to homotopy}. To prove this fact,  recall that the loop of loops $\sigma_{p,u,v}$ was defined as the concatenation of the path of loops $\chi_u$, the loop of loops  $\Gamma_{u,v}$, and the reverse path $\bar\chi_u$. Therefore, the map $\beta=\ev\circ (\sigma_p\times\mathrm{Id})$ is given by  the formula 
$$
\beta[(u,v),\theta)]= \sigma_{p,u,v}(\theta,\cdot)= 
\begin{cases}
\chi_u(\theta)& \mathrm{if }\ \theta\in [0,\pi],\\
\Gamma_{u,v}(\theta-\pi,\cdot) & \mathrm{if }\ \theta\in [\pi,3\pi],\\
\bar{\chi}_u(\theta-3\pi)& \mathrm{if }\ \theta\in [3\pi,4\pi].
\end{cases}
$$
It is immediate that $\beta$ is homotopic to $\alpha\circ \tau$. A homotopy is given by 
$$
\beta_s[(u,v),\theta)]= \sigma_{p,u,v}(\theta,\cdot)= 
\begin{cases}
\chi_u(\theta+ s)& \mathrm{ if }\ \theta\in [0,\pi-s],\\
\Gamma_{u,v}(\theta+s-\pi,\cdot) & \mathrm{ if }\ \theta\in [\pi-s,3\pi-s],\\
\overline{\chi_u|_{[s,\pi]}}(\theta+2s-3\pi)& \mathrm{ if }\ \theta\in [3\pi-s,4\pi-2s],
\end{cases}
$$
for $s\in [0,\pi]$. 

It suffices therefore to show that $(\alpha\circ\tau)_*([\Sigma_p]\times [\SS^1])\neq 0$.

There is a natural identification $\Sigma_p\equiv T^1T^1_p\SS^n\equiv T^1\SS^{n-1}$. 
We notice first that the map $\tau : \Sigma_p \times\SS^{1}=T^{1}\SS^{n-1}\times\SS^{1}\to \SS^{n-1}\times \SS^{n-1}$ has degree $\pm 2$ when $n$ is even. This can be seen as follows. The map $\tau$ is a map of fibrations over $\SS^{n-1}$ whose restriction to the fiber at $u$ is $\tau_u:\SS^{n-2}\times \SS^1\to \SS^{n-1}$ given by $$ (v,\theta)\mapsto u\cos\theta+v\sin\theta.$$
Thinking of $\SS^{n-2}$ as the equator of $\SS^{n-1}$ that is orthogonal to $u$, we see that $\tau_u(v,\cdot)$ parametrises the meridian passing through the north pole $u$ and through $v$. The preimage of a (regular) value in $\SS^{n-1}\setminus\{\pm u\}$ consists of two points $(v, \theta)$ and $(-v,2\pi -\theta)$. Since, for $n$ even, both the antipodal map of $\SS^{n-2}$ and the reverse map of $\SS^1$ given by $\theta\mapsto 2\pi -\theta $  change orientations, it follows that the self-map of $\SS^{n-2}\times\SS^1$ given by $(v,\theta)\mapsto (-v,2\pi-\theta)$ preserves the orientation, and therefore the degree of $\tau_u$ equals $\pm 2$. This implies that the degree of $\tau$ is also equal to $\pm2$.\footnote{In the case of an odd-dimensional sphere, the same argument shows that the degree of the map $\tau$ is zero. This is the reason why this proof works only for even-dimensional spheres.} 

To finish the proof of the proposition it suffices to show that $$\alpha_*([\SS^{n-1}]\times [\SS^{n-1}])\neq 0$$ in $H_{2n-2}(\Omega_p\SS^n)$ (by Proposition~\ref{2prop:omega-sn}, this group is isomorphic to $\Z$ and therefore has no 2-torsion).
Fix a unit vector $u_0\in T^1_p\SS^n$ and consider the map $\lambda: T^1_p\SS^n\equiv \SS^{n-1}\to \Omega_p\SS^n$ defined by $\lambda(w) = \gamma_w\#\bar{\gamma}_{u_{0}}$. Its composition with the time-reverse map of $\Omega_p\SS^n$ yields the map $\bar{\lambda}:\SS^{n-1}\ri \Omega_p\SS^n$ defined by $w\mapsto \gamma_{u_0}\#\bar{\gamma}_w$. By shrinking $\bar{\gamma}_{u_0}\#\gamma_{u_0}$ to the constant loop at $-p$, we notice that $\alpha$ is homotopic to the composition 
$$\SS^{n-1}\times \SS^{n-1}\xrightarrow{\lambda\times \bar{\lambda}}\Omega_p\SS^n\times \Omega_p\SS^n\xrightarrow{\#}\Omega_p\SS^n.$$
By definition of the DG Morse complex  \cite[Chapter 5]{BDHO} associated to the height function on $\SS^n$, we have that  $\lambda_*([\SS^{n-1}])= x$, the generator of $H_{n-1}(\Omega_p\SS^n)$ described in the proof of Proposition \ref{2prop:omega-sn} above.  Denote  $\bar{\lambda}_*([\SS^{n-1}])$ by $\bar{x}$. In order to complete the proof of the proposition, it suffices now to prove that the Pontryagin product  $x\cdot \bar{x}$ does not vanish. 

We first notice that $\bar{x}\neq 0$ in $H_{n-1}(\Omega_p\SS^{n})$ since it is the image of $x$ by the homeomorphism $\gamma \mapsto \bar{\gamma}$ of $\Omega_p\SS^n$.
Then Proposition~\ref{2prop:omega-sn} implies that $x\cdot \bar{x} \neq 0$, since $\bar x=-x$ hence $x\cdot \bar x=-x^2$.

The proof of Proposition \ref{prop:nontrivial-class-Sn} is now complete.
\end{proof}

\end{example}

\begin{remark}
\label{rmk:generator-S2}
In the case $n=2$ the previous construction can be slightly refined by noticing that $\Sigma=\mathrm{Stiefel}_2(\SS^2)$ has two connected components, corresponding to the two possible orientations of $\SS^2$. By restricting the map $\sigma$ to one of these components, denoted by $\Sigma'$, we obtain a class $\sigma_*[\Sigma']$ such that $\beta_*(\sigma_p)_*[\Sigma'_p]\in H_{2}(\Omega_p\SS^n)\simeq \Z$ is equal to a generator.
\end{remark}

\begin{example}[Spheres of dimension $4k-1$, $k\ge 1$  admit fibered families of $2$-spheres]
\label{example:spheres-4k-1}
A theorem of Adams~\cite{Adams} ensures that spheres of dimension $4k-1$ admit at least 2 (and actually 3) pointwise linearly independent vector fields. (In contrast, spheres of dimension $4k+1$ do not admit a pair of such vector fields.) We will crucially use this feature in the construction below. 

Let $n=4k-1$ and choose on $\SS^n$ two such vector fields $u$, $v$, which we assume w.l.o.g. to be orthogonal and of unit length. Let $\Sigma$ be the $\SS^{n-2}$-bundle over $\SS^n$ with fiber at $p\in\SS^n$ given by $\Sigma_p=\SS^n\cap \langle u(p),v(p)\rangle^\perp$, where the orthogonal of the plane spanned by $u(p)$ and $v(p)$ is considered in $\R^{n+1}$. 
We now define a map $\sigma:\Sigma\to \Map(\SS^2,\SS^n)$.

For the definition we need the following preliminary notation. Let $V$ be a Euclidean vector space of dimension $m+1$ and $\SS_V\subset V$ the unit sphere. (For us, $V$ will be either $\R^{n+1}$, or a subspace of $\R^{n+1}$ of dimension $n$). Given a point $p\in\SS_V$ and a unit vector $u\in T^1_p\SS_V$, let $E_{V,p,u}\subset \SS_V$ be the equator that passes through $p$ and is orthogonal to $u$. Then $E_{V,p,u}=\SS_{u^\perp}$, the unit sphere in the Euclidean $m$-space $u^\perp\subset V$. Let 
$$
\varphi_{V,u}: E_{V,p,u}\to\Omega_p\SS_V
$$
be the map that associates to a point $q\in E_{V,p,u}$, if $q=p$, the constant loop at $p$, and if $q\neq p$, the parametrized planar circle on $\SS_V$ tangent to the direction of $u$ and passing through $q$, with its parametrization proportional to arc length on the interval $[0,2\pi]$ (see Figure \ref{fig:cycle_sphere_4k_1}). (This circle is the intersection with $\SS_V$ of the affine plane containing the direction $u$ and passing through $p$ and $q$.)   

\begin{figure}
    \centering
    \includegraphics{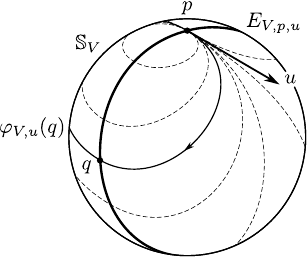}
    \caption{Sweeping a sphere with loops.}
    \label{fig:cycle_sphere_4k_1}
\end{figure}

Let $\sigma_p:\Sigma_p\to \Omega_p\Omega_p\SS^n$ be the composition 
\[
\Sigma_p=E_{u(p)^\perp,p,v(p)}\stackrel{\varphi_{u(p)^\perp,v(p)}}{\longrightarrow} \Omega_p E_{\R^{n+1},p,u(p)} \stackrel{\Omega\varphi_{\R^{n+1},u(p)}}{\longrightarrow} \Omega_p\Omega_p\SS^n.
\]
By varying $p$ we obtain a map of fibrations $\sigma:\Sigma\to\Map(\SS^2,\SS^n)$. 

We prove in Proposition~\ref{prop:nontrivial-class-S4k-1} below that $\sigma_{p*}[\Sigma_p]$ defines a nontrivial class in $H_{n-2}(\Omega^2 \SS^n)$. Therefore $\SS^{n=4k-1}$ admits a fibered familiy of $2$-spheres and, as a consequence of Proposition~\ref{prop:class-over-Q}, the class 
  $$
  \alpha=\sigma_*[\Sigma]\in H_{2n-2}(\Map(\SS^2,\SS^n))
  $$
 is nonzero and lives over the fundamental class of $Q= \SS^n$.

\begin{proposition} \label{prop:nontrivial-class-S4k-1}
The class $\sigma_{p*}[\Sigma_p]\in H_{n-2}(\Omega^2\SS^n)\simeq \Z$ is a generator. 
\end{proposition}

\begin{proof}
We fix a point $p$ and assume without loss of generality that $u(p)=e_{n+1}$ and $v(p)=e_n$, the vectors of the canonical basis of $\R^{n+1}$. The map $\varphi_{\R^{n+1},u(p)}$ represents a generator of $\pi_{n-1}(\Omega_p \SS^n)$: indeed, through the canonical adjunction $[\SS^{n-1},\Omega_p \SS^n]\simeq [\SS^n,\SS^n]$, and after identifying $E_{\R^{n+1},p,u(p)}$ with $\SS^{n-1}$, the map $\varphi_{\R^{n+1},u(p)}$ corresponds to $\mathrm{Id}_{\SS^n}$, which is a generator of $\pi_n(\SS^n)$.
We conclude by the Hurewicz theorem, which ensures $H_{n-2}(\Omega^2\SS^n)\simeq \pi_{n-2}(\Omega^2\SS^n)\simeq \pi_{n-1}(\Omega\SS^n)\simeq \pi_n(\SS^n)\simeq \Z$.
\end{proof}

\end{example}

\begin{example}[\bf Fibrations with spherical fibers over highly-connected bases]  \label{example:fibrations_spherical}

Such fibrations admit a fibered family of 2-spheres. More precisely, we have the following

\begin{proposition}\label{prop:fibrations-spherical-fiber}
Let $Q$ be the total space of a smooth sphere bundle 
$$
\SS^n\hookrightarrow Q\stackrel\pi\longrightarrow B
$$ 
with fiber $\SS^n$, $n\ge 2$ whose structure group can be reduced to $O(n+1)$. Assume one of the following conditions: 
\begin{itemize}
\item $n$ is even and $B$ is $2n$-connected. 
\item $n$ is odd $\equiv \, 3 \, (\mbox{mod } 4)$, $B$ is $(n+1)$-connected, and there exist 
two pointwise linearly independent vector fields on $Q$ tangent to the fibers. 
\end{itemize}
Then $Q$ admits a fibered family of $2$-spheres. 
\end{proposition}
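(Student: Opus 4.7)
The strategy is to imitate the construction of Example~\ref{example:spheres} (respectively Remark~\ref{rmk:smaller_cycles_on_odd_spheres}) \emph{fiberwise}, and then verify that the resulting class in $H_*(\Omega^2 F_q) = H_*(\Omega^2\SS^k)$ survives the push-forward to $H_*(\Omega^2 Q)$, which is the content of a Serre spectral sequence argument based on the high connectivity of~$B$.

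First I would construct $\Sigma$. The reduction of structure group to $O(k+1)$ equips the vertical tangent bundle $T^vQ=\ker d\pi$ with a fiberwise inner product and each fiber $F_q=\pi^{-1}(\pi(q))$ with a round metric, well-defined up to isometry. Set $\Sigma=V_2(T^vQ)$, the Stiefel bundle of orthonormal $2$-frames in $T^vQ$; this is a closed orientable manifold that fibers over $Q$ with fiber $V_2(\R^k)$ of dimension $2k-3$. To each $(q,(u,v))\in\Sigma$ we associate, as in Example~\ref{example:spheres}, the isometric parameterization $\sigma_{(u,v)}:\SS^2\to F_q\hookrightarrow Q$ of the round $2$-sphere obtained by exponentiating the plane $\mathrm{Span}(u,v)$ inside $F_q$. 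This defines a fibration map $\sigma:\Sigma\to \Map(\SS^2,Q)$. By construction, the restriction $\sigma_q:\Sigma_q=V_2(\R^k)\to\Omega^2 Q$ factors as $\iota\circ\tilde\sigma_q$, where $\tilde\sigma_q:V_2(\R^k)\to\Omega^2 F_q\simeq\Omega^2\SS^k$ is exactly the map from Example~\ref{example:spheres} and $\iota:\Omega^2\SS^k\hookrightarrow\Omega^2 Q$ is the inclusion of a fiber in the twice-looped fibration.

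For case~(i), I would apply Proposition~\ref{prop:nontrivial-class-Sn} to conclude that $\tilde\sigma_{q*}[\Sigma_q]\neq 0\in H_{2k-3}(\Omega^2\SS^k)$, and then use the Serre spectral sequence of $\Omega^2\SS^k\hookrightarrow \Omega^2 Q\to\Omega^2 B$. The hypothesis that $B$ is $2k$-connected implies that $\Omega^2 B$ is $(2k-2)$-connected; in particular it is simply connected (since $2k-2\ge 2$), so the spectral sequence has trivial local coefficients, and the only surviving column in total degrees $\le 2k-2$ is $p=0$. Therefore $\iota_*:H_i(\Omega^2\SS^k)\to H_i(\Omega^2 Q)$ is an isomorphism for $i\le 2k-2$, and in particular $\sigma_{q*}[\Sigma_q]=\iota_*\tilde\sigma_{q*}[\Sigma_q]\neq 0$ in the required degree $2k-3$.

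For case~(ii), I would use the smaller cycle from Remark~\ref{rmk:smaller_cycles_on_odd_spheres}. The nowhere vanishing vertical vector field $v$ provides, at every $q\in Q$, a nonzero vector $v(q)\in T^v_qQ$; setting $\Sigma'_q=S(v(q)^\perp)\simeq\SS^{k-2}$ yields a sphere sub-bundle $\Sigma'\subset V_2(T^vQ)$ over $Q$, with the fibration map $\sigma':\Sigma'\to\Map(\SS^2,Q)$ obtained by completing each $w\in\Sigma'_q$ to the 2-frame $(v(q),w)$ and applying the construction above. Since $k$ is odd and $k\ge 3$, Remark~\ref{rmk:smaller_cycles_on_odd_spheres} gives $\tilde\sigma'_{q*}[\Sigma'_q]\neq 0\in H_{k-2}(\Omega^2\SS^k)$. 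The assumption that $B$ is $(k+1)$-connected implies that $\Omega^2 B$ is $(k-1)$-connected and simply connected (as $k-1\ge 2$), so the same Serre spectral sequence argument gives $\iota_*:H_i(\Omega^2\SS^k)\stackrel\simeq\to H_i(\Omega^2 Q)$ for $i\le k-1$, and in particular in degree $k-2$.

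The only delicate point is verifying that the Serre spectral sequence really does collapse in the relevant range, but this is straightforward once one checks simple-connectedness of $\Omega^2 B$ to rule out twisted coefficients. The orientability and compactness of $\Sigma$ and $\Sigma'$ are immediate from the compactness of $Q$ and the structure of $V_2(\R^k)$ and $\SS^{k-2}$ as oriented homogeneous spaces.
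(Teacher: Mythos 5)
Your proof is correct and follows the paper's strategy closely: build $\Sigma$ as the Stiefel bundle $V_2(T^vQ)$ (resp.\ the sphere bundle $S(v^\perp)$), recognize the fiber-level cycle as the one from Example~\ref{example:spheres} (resp.\ Remark~\ref{rmk:smaller_cycles_on_odd_spheres}), and transfer nonvanishing to $\Omega^2Q$ via the connectivity of $B$. The one difference in route is that you compare homology in the \emph{doubly} looped fibration $\Omega^2\SS^k\hookrightarrow\Omega^2Q\to\Omega^2B$, quoting Proposition~\ref{prop:nontrivial-class-Sn} as a black box, whereas the paper loops only once, proves $H_i(\Omega\SS^k)\to H_i(\Omega Q)$ is an isomorphism in the relevant degree, and then uses the commuting square with the transgression $\beta_*$ (already central to the proof of Proposition~\ref{prop:nontrivial-class-Sn}) to lift the conclusion back to $\Omega^2$. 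Your version is slightly more direct; both rest on the same connectivity input. One small imprecision: since $\Omega^2B$ is only $(2k-2)$-connected (resp.\ $(k-1)$-connected), the Serre spectral sequence comparison gives $\iota_*$ an isomorphism for $i\le 2k-3$ (resp.\ $i\le k-2$), not up through $i\le 2k-2$ (resp.\ $i\le k-1$) as you claim --- in degree $2k-2$ an incoming differential $d_{2k-1}\colon E^{2k-1}_{2k-1,0}\to E^{2k-1}_{0,2k-2}$ could in principle be nonzero. Fortunately the cycle of interest sits in degree exactly $2k-3$ (resp.\ $k-2$), so the argument closes as written.
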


\begin{remark}
Assume $n \equiv \, 3 \, (\mbox{mod } 4)$. Determining whether a given fibration admits two pointwise linearly independent vector fields that are tangent to the fibers is an interesting problem. Here is a class of examples where this condition holds automatically. Assume $\SS^n\hookrightarrow Q \to B$ is the unit sphere bundle of a quaternionic Hermitian vector bundle $E\to B$. In each fiber of $E$ we have three complex structures denoted $I$, $J$, $K$ and a well-defined radial vector field $\rho(z)=z$. Its images via the complex structures $I$, $J$ and $K$ are everywhere tangent to the fibers $\SS^{k}$ and pointwise linearly independent.

It is likely that there exist sphere bundles $\SS^{4k-1}\hookrightarrow Q\to B$ with $4k$-connected base that do not admit a a pair of pointwise linearly independent  vector fields that are tangent to the fibers. We do not know an explicit example though. The following questions arise: 
\begin{enumerate}
\item Under what conditions on the sphere bundle does such a pair of vector fields exist? The primary obstruction for the existence of \emph{one} nonvanishing vector field tangent to the fibers is the Euler class of the rank $n$ vector bundle $T\SS^n\to Q$ whose fiber at a point $p\in Q$ is the tangent space to the fiber through that point. There may be of course higher obstructions.
\item What is the homotopy type of the space of nonvanishing vector fields on an odd-dimensional sphere? Works of G.W. Whitehead~\cite{GWWhitehead1946}, J.H.C. Whitehead~\cite{JHCWhitehead1953}, and V.L. Hansen~\cite{Hansen1974}, are relevant in this direction.
\end{enumerate}
\end{remark}

\begin{remark}
The assumption on the structure group of the fibration in Proposition~\ref{prop:fibrations-spherical-fiber} is equivalent to requiring that it is isomorphic to the unit sphere bundle of a vector bundle of rank $n+1$ endowed with a fiberwise scalar product.
\end{remark} 

\begin{proof}[Proof of Proposition~\ref{prop:fibrations-spherical-fiber}]
We first prove that the conclusion holds if $n$ is even and $B$ is $2n$-connected. Let $\Sigma$ be the fibration whose fiber at a point $p\in Q$ is 
$$
\Sigma_p=\mathrm{Stiefel}_2(T_p\SS^n),
$$ 
the Stiefel manifold of orthonormal $2$-frames in the tangent space $T_p\SS^n$ to the fiber. The map $\sigma$ from Example \ref{example:spheres} can be put in families and gives rise to a diagram 
$$
\begin{tikzcd}
  \Sigma_{p} \arrow[d,hookrightarrow] \arrow[r,"\tilde \sigma_p"]& \Omega^{2}Q \arrow[d,hookrightarrow]\\
  \Sigma      \arrow[r,"\tilde \sigma"]\arrow[d,"\pi"]& \Map(\SS^{2},Q)\arrow[d,"\mathrm{ev}"]\\
  Q\arrow[r,equal]&Q
\end{tikzcd}
$$
where the top horizontal arrow now factors through $\Omega^2\SS^n$. 
 Under our standing assumption that the base $B$ is $2n$-connected, we find that $\pi_i(\Omega^2 B)=0$ for $i\le 2n-2$. By the Hurewicz theorem we have $H_i(\Omega^2 B)=0$ for $1\le i\le 2n-2$, and by examining the spectral sequence of the fibration $\Omega^2\SS^n\hookrightarrow \Omega^2 Q\to \Omega^2 B$ we find that the inclusion $\Omega^2 \SS^n\hookrightarrow \Omega^2 Q$ induces an isomorphism in homology in degrees $i\le 2n-3$. 
As a consequence we find that $(\tilde\sigma_p)_*[\Sigma_p]\neq 0\in H_{2n-3}(\Omega^2 Q)$, which is the conclusion that we were seeking. 

We assume now that $n$ is odd $\equiv\, 3 \, (\mbox{mod } 4)$, $B$ is $(n+1)$-connected and the fibration admits a pair of pointwise linearly independent vector fields $u$, $v$ tangent to the fibers. In this case we put in a family the construction described in Example~\ref{example:spheres-4k-1}.

We find a fibration $\Sigma\to Q$ whose fiber at a point $p\in Q$ is $\Sigma_p=E_{u(p)^\perp,p,v(p)}\subset T_p\SS^n$, the $(n-2)$-dimensional sphere consisting of unit vectors tangent at $p$ to the fiber and orthogonal to both $u(p)$ and $v(p)$. The resulting map ${\tilde \sigma}'_p:\Sigma_p\to \Omega^2Q$ factors through $\Omega^2\SS^n$ and, in order to show that $({\tilde \sigma}'_p)_*[\Sigma_p]\neq 0\in H_{n-2}(\Omega^2 Q)$, it is enough in view of Example~\ref{example:spheres-4k-1} to show that the map $H_{n-2}(\Omega^2 \SS^n)\to H_{n-2}(\Omega^2Q)$ is injective.

Since $B$ is $(n+1)$-connected we have that $\pi_i(\Omega^2 B)=0$ for $i\le n-1$ and $\pi_i(\Omega^2\SS^n)\to \pi_i(\Omega^2 Q)$ is an isomorphism for $i\le n-2$. By the Hurewicz theorem we have $H_i(\Omega^2 B)=0$ for $1\le i\le n-1$ and the spectral sequence of the fibration $\Omega^2\SS^n\hookrightarrow \Omega^2 Q\to \Omega^2 B$ shows that the map $H_i(\Omega^2 \SS^n)\to H_i(\Omega^2 Q)$ induced by the inclusion is an isomorphism for $i\le n-2$. 
\end{proof}

\end{example}

\subsubsection{Manifolds covered by diffeomorphisms}

\begin{definition} \label{defi:covered_by_diffeomorphisms}
Let $M$ be a closed connected oriented manifold of dimension $n$. Denote by $\Diff(M)$ the diffeomorphism group of $M$. Fix a basepoint $\star\in M$ and consider the evaluation map at the basepoint
$$
\ev:\Diff(M)\to M, \qquad \varphi\mapsto \varphi(\star).
$$
We say that $M$ is \emph{covered by diffeomorphisms} if there exists a homology class $A\in H_n(\Diff(M))$ such that 
$$
\ev_*A=[M].
$$
\end{definition}

\begin{remark}
This condition should be compared to the one appearing in work of Viterbo~\cite[Definition~6.1]{viterbo2022inverse}. 
\end{remark}

\begin{example}
Let $\Diff_{\star}(M)\subset \Diff(M)$ be the subgroup of diffeomorphisms that preserve the basepoint. Assume that the 
fibration\footnote{That this is a locally trivial fibration is a particular case of a theorem of Palais~\cite{Palais1960}. Here is a sketch proof: fix a point $p\in M$ and an open neighborhood around $p$ modelled on the ball $B(1)\subset \R^n$ centered at $0$ and of radius $1$. There is a smooth family $\varphi_b$, $b\in B(1/2)$ parametrized by the ball $B(1/2)$ centered at $0$ and of radius $1/2$, consisting of diffeomorphisms that are compactly supported in $B(1)$, which have the property that $\varphi_b(0)=b$. Such a family can be used to trivialize $\Diff(M)$ over $B(1/2)$ by sending $B(1/2)\times \Diff_\star(M)$ to $\Diff(M)|_{B(1/2)}$ via $(b,\varphi)\mapsto \varphi_b\circ\varphi$.} $\Diff_{\star}(M)\hookrightarrow \Diff(M)\to M$ possesses a section. Then $M$ is covered by diffeomorphisms. This happens in particular if $M$ is a compact connected Lie group, in which case it embeds into its group of diffeomorphisms by left translations, see also Example~\ref{example:Lie-groups} below.  
\end{example}

\begin{proposition}\label{prop:covered-by-diffeo-implies-abundant}
Let $M$ be a closed connected oriented manifold of dimension $n$ that is covered by diffeomorphisms. Assume that $M$ is not a $K(\pi,1)$. Then $M$ is abundant with $2$-spheres. 
\end{proposition}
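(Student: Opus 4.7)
The plan is to construct the required class $\alpha\in H_*(\Map(\SS^2,M))$ as the image of $A\times\beta$, for a suitably chosen $\beta\in H_*(\Omega^2M)$, under the action map
$$
\mu:\Diff(M)\times\Omega^2M\longrightarrow\Map(\SS^2,M),\qquad(\varphi,\gamma)\mapsto\varphi\circ\gamma.
$$
The first step is to observe that $\mu$ is a morphism of Hurewicz fibrations over $M$: the source carries the fibration $(\varphi,\gamma)\mapsto\varphi(\star)$ (which is a fibration by Palais's theorem) with fiber $\Diff_\star(M)\times\Omega^2M$ over $\star$, the target carries the usual evaluation fibration $\ev_{\Map}$ at the basepoint of $\SS^2$ with fiber $\Omega^2M$, and $\mu$ commutes with both projections because $(\varphi\circ\gamma)(z_0)=\varphi(\star)$. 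The induced map on fibers $\mu|_\mathrm{fiber}:\Diff_\star(M)\times\Omega^2M\to\Omega^2M$ is again $(\varphi,\gamma)\mapsto\varphi\circ\gamma$, and admits the obvious section $\gamma\mapsto(\id,\gamma)$.

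The next step is to use the hypothesis that $M$ is not a $K(\pi,1)$ together with the proof of Lemma~\ref{lem:kpi1} to produce a nonzero class $\beta$ in the homology of $\Omega^2M$: either $H_k(\Omega^2M)\neq 0$ for some $k\geq 1$ (in which case pick such a $\beta$), or $\pi_2(M)\neq 0$ is the only source of nontriviality, in which case one picks $\beta=[\gamma_0]-[c_\star]\in\widetilde H_0(\Omega^2M)$ with $\gamma_0\in\pi_2(M)$ nontrivial. Set $\alpha=\mu_*(A\times\beta)\in H_{n+\deg\beta}(\Map(\SS^2,M))$. When $\deg\beta\geq 1$ the degree of $\alpha$ strictly exceeds $n$ and condition~(ii) of Definition~\ref{defi:abundant_with_2spheres} is vacuous; when $\deg\beta=0$ the degree equals $n$ and condition~(ii) needs to be verified directly. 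In all cases it remains to show that $j_!\alpha\neq 0$ in $H_{\deg\beta}(\Omega^2M)$ (and in particular in $\widetilde H_0(\Omega^2M)$ in the degree-zero case).

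For the computation of $j_!\alpha$ I would apply Proposition~\ref{prop:shriek-commute} to the fibration map $\mu$, obtaining $j_!\alpha=(\mu|_\mathrm{fiber})_*(j_!(A\times\beta))$. Since the source fibration splits as the product of $\ev:\Diff(M)\to M$ with the trivial factor $\Omega^2M$, the shriek of the product class decomposes as $j_!(A\times\beta)=(j_!^\Diff A)\times\beta$. A second application of Proposition~\ref{prop:shriek-commute} to $\ev:\Diff(M)\to M$ and to the constant map $\Diff_\star(M)\to\mathrm{pt}$, combined with the hypothesis $\ev_*A=[M]$, yields that the augmentation of $j_!^\Diff A\in H_0(\Diff_\star(M))$ equals $j_!^M[M]=1$. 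Writing $j_!^\Diff A=\sum_in_i[\varphi_i]$ with $\varphi_i\in\Diff_\star(M)$ representing distinct components and $\sum_in_i=1$, the computation reduces to showing
$$
j_!\alpha\;=\;\sum_in_i\,(\varphi_i\circ\cdot)_*\beta\;\neq\;0\quad\text{in}\quad H_{\deg\beta}(\Omega^2M).
$$

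The hard part will be to control the possibly nontrivial action of $\pi_0(\Diff_\star(M))$ on $H_*(\Omega^2M)$. The plan is first to reduce to the case $A\in H_n(\Diff^0(M))$ by a diagram chase: decomposing $A=\sum_\sigma A_\sigma$ along the components $\Diff^\sigma(M)$ of $\Diff(M)$ with $\ev_*A_\sigma=d_\sigma[M]$ and $\sum_\sigma d_\sigma=1$, and using that each $\Diff^\sigma(M)$ is diffeomorphic over $M$ to $\Diff^0(M)$ by right multiplication with any element of the (nonempty) fiber $\Diff^\sigma_\star(M)$, one checks that $\im(\ev^0_*)=\im(\ev_*)\ni[M]$. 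Once $A$ has been replaced by a class in $H_n(\Diff^0(M))$, all $\varphi_i$ lie in $\Diff^0(M)\cap\Diff_\star(M)$; a path-homotopy argument then shows that the identity component $\Diff_\star(M)^0$ acts trivially on $H_*(\Omega^2M)$, so that the $\varphi_i$'s act through the connecting homomorphism $\pi_1(M)\to\pi_0(\Diff_\star(M))$, i.e., by the natural $\pi_1(M)$-action on $H_*(\Omega^2M)$. The remaining subtlety is to check that the $\Z[\pi_1(M)]$-element $\sum_in_i[\gamma_i]$ (of augmentation $1$) acts nontrivially on $\beta$. This holds automatically whenever the $\pi_1(M)$-action on the relevant summand of $H_*(\Omega^2M)$ is trivial, and in general the flexibility in the choice of $\beta$ coming from the abundance provided by $M$ not being a $K(\pi,1)$ should be sufficient to conclude, with the interaction between the $\pi_1$-monodromy and the augmentation constraint being the key technical point to be handled with care.
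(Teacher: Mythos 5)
Your strategy is the same as the paper's: push $A\times\beta$ forward under the action map $(\varphi,\gamma)\mapsto\varphi\circ\gamma$, reduce $A$ to a class in $H_n(\Diff_0(M))$, compute $j_!$ via Proposition~\ref{prop:shriek-commute} exploiting the product structure of the source fibration, and then analyse the residual action of the $\varphi_i\in\Diff_{0,\star}(M)$ supporting $j_!A$ on $\beta$. Your reduction by right multiplication (rather than the paper's left translation with orientation-sign correction) is a small, clean variant, and your observation that this action factors through the $\pi_1(M)$-monodromy via the connecting map $\pi_1(M)\to\pi_0(\Diff_\star(M))$ is precisely the delicate point.

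The problem is that you do not close that last step, and it is not a detail. Having reached $j_!\alpha=\sum_in_i(\varphi_i)_*\beta$ with $\sum_in_i=1$, you write that the ``flexibility in the choice of $\beta$ … should be sufficient to conclude''; that is a hope, not an argument, and it is exactly where all the work lies. For $k=0$ (i.e.\ $\pi_2(M)\neq0$) the gap \emph{can} be filled structurally: the monodromy fixes the constant component and permutes the others, so $\sum_in_i[\gamma_{\varphi_i}\cdot\gamma_0]$ has total augmentation $1$ while its $[c_\star]$-coefficient is $0$, hence it is nonzero in $\widetilde H_0(\Omega^2M)$ regardless of which loops occur. For $k\geq 1$ you give no such argument. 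The paper at this point simply asserts that the $\varphi_i$, being isotopic to the identity in $\Diff(M)$, act trivially on $H_*(\Omega^2M)$, and your worry that the isotopy need not fix $\star$ (so that the action is a priori the $\gamma_{\varphi_i}$-monodromy) is legitimate; but to have a proof you must either justify that assertion or replace it, and as written the proposal stops exactly where the hard work begins.
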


\begin{proof}
Let $\Diff_0(M)$ be the connected component of the identity in $\Diff(M)$, and $\Diff_{0,\star}(M)\subset \Diff_0(M)$ the subgroup of diffeomorphisms that preserve the basepoint. We can assume without loss of generality that $A\in H_n(\Diff_0(M))$. Indeed, the class $A$ can be written as a finite sum $A=\sum_i A_i$, with $A_i\in H_n(\Diff_i(M))$, where the $\Diff_i(M)$ are disjoint components of $\Diff(M)$. Choose $\varphi_i\in \Diff_i(M)$ and let $\underline{\varphi}_i^{-1}:\Diff_i(M)\to \Diff_0(M)$, $\psi\mapsto \varphi_i^{-1}\psi$ be the left translation by $\varphi_i^{-1}$. Then $\ev\circ\underline{\varphi}_i^{-1}=\varphi_i^{-1}\circ \ev$, 
 so that $\ev_* (\underline{\varphi}_i^{-1})_*A_i=(\varphi_i^{-1})_* \ev_*A_i$. Let $\eps_i=\pm 1$ according to whether $\varphi_i$ preserves or reverses the orientation of $M$. Then $\eps_i(\varphi_i^{-1})_*=\Id$ on $H_n(M)$ and, upon replacing $A=\sum_i A_i$ with $A'=\sum_i \eps_i(\underline{\varphi}_i^{-1})_*A_i\in H_n(\Diff_0(M))$, we find that $\ev_*A'=\sum_i \eps_i(\varphi_i^{-1})_* \ev_*A_i=\sum_i \ev_*A_i =\ev_*A= [M]$.  We thus henceforth assume that $A\in H_n(\Diff_0(M))$.

The assumption that $M$ is not a $K(\pi,1)$ is equivalent to the fact that $\Omega^2M$ is not contractible. 
Let $k\ge 0$ be minimal such that $\pi_k\Omega^2M\neq 0$. Choose a class $\alpha\in \pi_k\Omega^2M$ that is nonzero and different from the class of the basepoint $[\mathrm{pt}]$ if $k=0$. By the Hurewicz theorem this class defines a nontrivial element $\alpha\in H_k(\Omega^2M)$, which is not a multiple of $[\mathrm{pt}]$ if $k=0$. 

Consider the map 
$$
\Phi:\Diff_0(M)\times  \Omega^2M\to \Map(\SS^2,M), \qquad (\varphi,u)\mapsto \varphi\circ u. 
$$
This is a map of fibrations (we denote $\Phi'$ the map induced between the fibers)
$$
\xymatrix{
\Diff_{0,\star}(M)\times \Omega^2M \ar@{^(->}[d] \ar[r]^-{\Phi'} & \Omega^2M \ar@{^(->}[d] \\
\Diff_0(M)\times \Omega^2M \ar[r]^-\Phi \ar[d]^{\ev\circ \mathrm{pr}_1} & \Map(\SS^2,M) \ar[d]^\ev \\
M \ar@{=}[r] & M
}
$$

Let $j:\star \hookrightarrow M$ be the inclusion of the basepoint. We have $j_![M]=[\mathrm{pt}]$ and the diagram
$$
\xymatrix{
H_n(\Diff_0(M)) \ar[r]^-{\ev_*} \ar[d]_{j_!} & H_n(M) \ar[d]^{j_!} \\
H_0(\Diff_{0,*}(M)) \ar[r]^-{\simeq} & H_0(\mathrm{pt}) 
}
$$
is commutative by Proposition~\ref{prop:shriek-commute}, therefore $j_!A=[\mathrm{pt}]\in H_0(\Diff_{0,\star}(M))$.   

Since $\Phi$ is a fibration map, the diagram 
$$
\xymatrix{
H_*(\Diff_0(M)\times \Omega^2 M) \ar[r]^-{\Phi_*} \ar[d]_{j_!} & H_*(\Map(\SS^2,M)) \ar[d]^{j_!} \\
H_{*-n}(\Diff_{0,*}(M)\times \Omega^2M) \ar[r]_-{\Phi'_*} & H_{*-n}(\Omega^2M) 
}
$$
is also commutative, again by Proposition~\ref{prop:shriek-commute}  and this implies  \begin{align*}
j_!\Phi_*(A\times \alpha) & = \Phi'_*j_!(A\times \alpha) \\
& = \Phi'_* ((j_!A)\times \alpha) \\
& = \Phi'_* ([\mathrm{pt}]\times \alpha) \\
& = \alpha\neq 0\in H_k(\Omega^2 M).
\end{align*}
The first equality holds by the commutativity of the diagram, the second equality holds because the fibration $\Diff_0(M)\times\Omega^2M\to M$ is trivial in the $\Omega^2M$-factor, the third equality holds because $j_!A=[\mathrm{pt}]$ as proved before, and the last equality holds because all elements of $\Diff_{0,\star}(M)$ are isotopic to the identity and therefore act by the identity on the homology of $\Omega^2M$.

We find that 
$$
\Phi_*(A\times \alpha) \in H_{n+k}(\Map(\SS^2,M))
$$
lives above the fundamental class of $M$.

If $k=0$ we still have to check that condition (ii) from Definition~\ref{defi:abundant_with_2spheres} is satisfied.  In this case $\pi_2(M)\neq 0$ and $H_0(\Omega^2(M))\simeq \Z[\pi_2(M)]$. We may choose $$\alpha \not\in \ker\left( H_0(\Omega^2M)\ri H_0(\Omega^2(M), \mathrm{pt})\right)$$ in order   to fulfil this condition and therefore prove 
that $Q$ is abundant with $2$-spheres. 
\end{proof}

\begin{example}[\bf Lie groups] \label{example:Lie-groups} Let $G$ be a compact connected Lie group. Then $G$ embeds into $\Diff(G)$ via left translations, and the homology class of this embedding represents $[G]$ under the evaluation map at the basepoint $e\in G$. Thus $G$ is covered by diffeomorphisms. If, in addition, $G$ is not a $K(\pi,1)$, then it is abundant with $2$-spheres by Proposition~\ref{prop:covered-by-diffeo-implies-abundant}.  

The proof of abundance can be made very explicit in this case using the homeomorphism
$$
G\times \Omega^2 G\stackrel\simeq\longrightarrow \Map(\SS^2,G),\qquad (g,\sigma)\mapsto (z\mapsto g\sigma(z)).
$$
The space $\Omega^2G$ is homotopy equivalent to a point if and only if $G$ is a $K(\pi,1)$. If $G$ is not a $K(\pi,1)$ then $H_*(\Omega^2G)\neq H_*(\mathrm{pt})$ because $\pi_1\Omega^2G$ is abelian. 
Given \emph{any} nonzero homology class $\alpha\in \widetilde{H}_*(\Omega^2G)= H_*(\Omega^2G, \mathrm{pt})$, 
the class $[G]\times \alpha \in H_*(\Map(\SS^2,G))$ satisfies the requirements of Definition~\ref{defi:abundant_with_2spheres}.  Note that $\alpha$ necessarily has positive degree because $\pi_0\Omega^2G=\pi_2G=0$ by a classical theorem of Cartan-Borel, see for example~\cite{Borel_SHC49-50}.

The classification theorem for compact connected Lie groups states that, up to a finite cover, each such group is the product between a torus and a compact connected and simply connected Lie group. See for example~~\cite[Appendice~1, \S3]{Bourbaki_Lie9}, or~\cite[\S V.8, Theorem~8.1.]{Brocker-TomDieck}, or~\cite[\S10.7.2, Theorem~4]{Procesi_Lie_groups}. In particular, we obtain that a compact connected Lie group is a $K(\pi,1)$ if and only if it  is a torus, which implies the following:

\begin{proposition} \label{prop:Lie_groups}
All compact connected Lie groups other than tori are abundant with $2$-spheres. 
\end{proposition}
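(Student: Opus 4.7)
The approach is to apply Proposition~\ref{prop:covered-by-diffeo-implies-abundant} directly. Example~\ref{example:Lie-groups} already shows that every compact connected Lie group $G$ is covered by diffeomorphisms: the homology class $[G] \in H_n(\Diff(G))$ of the image of the left translation embedding $G \hookrightarrow \Diff(G)$ maps to the fundamental class under evaluation at the identity. Hence the only thing left to prove is that $G$ is not a $K(\pi,1)$ whenever $G$ is not a torus.

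To see this, I will invoke the classification theorem recalled just before the statement of the proposition: up to a finite cover, $G$ is isomorphic to a product $T^k \times H$, with $T^k$ a torus and $H$ compact connected and simply connected. The key observation is that $\pi_3(H) \neq 0$ whenever $H$ is nontrivial. This is a classical consequence of Cartan--Borel: for a compact connected simply connected nontrivial Lie group, $\pi_2(H)=0$ and $\pi_3(H)$ is free abelian of rank equal to the number of simple factors (in particular, $\pi_3 \simeq \Z$ for each simple compact Lie group). Since $\pi_3(T^k)=0$, it follows that $\pi_3(T^k \times H)\neq 0$ as soon as $H$ is nontrivial. Finite covers induce isomorphisms on $\pi_n$ for $n\ge 2$, so $\pi_3(G)\neq 0$, and thus $G$ is not a $K(\pi,1)$.

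Finally, if $G$ is not a torus, then in any presentation $\widetilde G = T^k \times H$ of a finite cover the factor $H$ must be nontrivial; otherwise $\widetilde G$ would be a torus and $G$, a compact connected abelian Lie group (as a quotient of the abelian $\widetilde G$ by a finite central subgroup), would itself be a torus. Combining the above: $G$ not a torus implies $\pi_3(G) \neq 0$, hence $G$ is not a $K(\pi,1)$, and Proposition~\ref{prop:covered-by-diffeo-implies-abundant} yields that $G$ is abundant with $2$-spheres.

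There is no substantive obstacle here; the only subtlety to be careful about is the passage from the classification statement (which is up to finite cover and also involves quotients of $T^k \times H$ by a finite central subgroup) to the conclusion about $G$ itself, and this is handled by the invariance of higher homotopy groups under finite covers together with the fact that compact connected abelian Lie groups are tori.
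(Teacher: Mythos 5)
Your proof is correct and follows essentially the same route as the paper: reduce to Proposition~\ref{prop:covered-by-diffeo-implies-abundant} via the left-translation embedding $G\hookrightarrow\Diff(G)$ from Example~\ref{example:Lie-groups}, and then use the classification theorem to show a non-torus compact connected Lie group is not a $K(\pi,1)$. The only difference is that you spell out the last step via $\pi_3(G)\neq 0$ (the paper leaves this implicit; note that the even simpler observation that a nontrivial compact simply connected manifold cannot be aspherical would also suffice), and your handling of the finite-cover subtlety is correct.
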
 

Frauenfelder-Pajitnov~\cite{Frauenfelder-Pajitnov} proved that $c_{HZ}^\circ(D^*Q)<\infty$ for closed manifolds $Q$ that are rationally inessential, i.e., such that their fundamental class vanishes in the rational homology of $K(\pi_{1}(Q),1)$. This includes all compact connected Lie groups $G$ that are not tori. 
To see this, denote by $n$ the dimension of $G$, by $t$ the dimension of a maximal torus of $G$, and by $r$ the rank of $\pi_1(G)$. We have $n> t$ because $G$ is not a torus, and $t\geq r$ by \cite[Theorem~7.1.]{Brocker-TomDieck}. Thus, 
$$H_n(K(\pi_1(G)^{\mathrm{free}},1);\Q)=H_n(\T^r;\Q)=0.$$
On the other hand, $H_i(K(\pi_1(G)^{\mathrm{torsion}},1);\Q)$ vanishes for all $i>0$, because the homology of a finite group is torsion in all positive degrees.
Using $K(\pi_1(G),1)=K(\pi_1(G)^{\mathrm{free}},1)\times K(\pi_1(G)^{\mathrm{torsion}},1)$, and the Künneth formula, we conclude that $H_n(K(\pi_1(G),1);\Q)=0$, hence that $G$ is rationally inessential.
\end{example}

A useful variant of Definition~\ref{defi:covered_by_diffeomorphisms} is the following. 

\begin{definition} \label{defi:ell_covered_by_diffeomorphisms}
Let $M$ be a closed connected oriented manifold of dimension $n$. Denote $\Diff(M)$ the diffeomorphism group of $M$. Fix a basepoint $\star\in M$ and consider the evaluation map at the basepoint
$$
\ev:\Diff(M)\to M, \qquad \varphi\mapsto \varphi(\star).
$$
Let $\ell>0$ be a positive integer. We say that $M$ is \emph{$\ell$-covered by diffeomorphisms} if there exists a homology class $A\in H_n(\Diff(M))$ such that 
$$
\ev_*A=\ell [M].
$$
\end{definition}

The following statement is proved in exactly the same way as Proposition~\ref{prop:covered-by-diffeo-implies-abundant}. 

\begin{proposition} \label{prop:ell-covered-by-diffeo-implies-abundant}
Let $M$ be a closed connected oriented manifold of dimension $n$ that is $\ell$-covered by diffeomorphisms. Assume that $M$ is not a $K(\pi,1)$ and either $\pi_2(M)\neq 0$ or the first nontrivial homotopy group of $M$ in degree $k\ge 3$ is not $\ell$-torsion. Then $M$ is abundant with $2$-spheres. \qed
\end{proposition}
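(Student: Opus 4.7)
The plan is to imitate the proof of Proposition~\ref{prop:covered-by-diffeo-implies-abundant} step by step, tracking the extra factor $\ell$ throughout. As in \emph{loc.~cit.}, by replacing each component of $A$ by its left translate via a fixed representative and reversing orientations appropriately, we may assume $A \in H_n(\Diff_0(M))$ (the factor $\ell$ is preserved by this reduction). We then consider the fibration map
$$
\Phi : \Diff_0(M) \times \Omega^2 M \to \Map(\SS^2, M), \qquad (\varphi, u) \mapsto \varphi \circ u,
$$
covering the identity on $M$ via the projection $\ev \circ \mathrm{pr}_1$ on the source and $\ev$ on the target, with fiber-level map $\Phi' : \Diff_{0,\star}(M) \times \Omega^2 M \to \Omega^2 M$.

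Applying Proposition~\ref{prop:shriek-commute} to the fibration map $\ev : \Diff_0(M) \to M$ and using $j_![M] = [\mathrm{pt}]$, the hypothesis $\ev_* A = \ell [M]$ now yields $j_! A = \ell [\mathrm{pt}] \in H_0(\Diff_{0,\star}(M))$ rather than $[\mathrm{pt}]$. Then, exactly as before, since the fibration $\Diff_0(M)\times \Omega^2M\to M$ is trivial in the $\Omega^2M$-factor and all elements of $\Diff_{0,\star}(M)$ act trivially on $H_*(\Omega^2 M)$, Proposition~\ref{prop:shriek-commute} applied to $\Phi$ gives, for any $\alpha \in H_k(\Omega^2 M)$,
$$
j_! \Phi_*(A \times \alpha) = \Phi'_*\bigl((j_! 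A) \times \alpha\bigr) = \Phi'_*\bigl(\ell[\mathrm{pt}] \times \alpha\bigr) = \ell \alpha \in H_k(\Omega^2 M).
$$
Thus, to exhibit a class in $H_{n+k}(\Map(\SS^2, M))$ living above the fundamental class of $M$ (in the sense of Example~\ref{def:living-above-fundamental-fibration}), it suffices to find $\alpha \in H_k(\Omega^2 M)$ for some $k \ge 0$ such that $\ell \alpha \neq 0$.

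The choice of $\alpha$ splits into two cases according to our hypothesis. If $\pi_2(M) \neq 0$, then $H_0(\Omega^2 M) \simeq \Z[\pi_2(M)]$ is a free abelian group of rank $\ge 2$, and we pick $\alpha$ to be any nonzero element of the reduced group $\widetilde{H}_0(\Omega^2 M)$, which is torsion-free; this satisfies both $\ell \alpha \neq 0$ in $H_0(\Omega^2 M)$ and the reduced non-vanishing condition (ii) of Definition~\ref{defi:abundant_with_2spheres} required when $\deg(\alpha) = n$. If instead $\pi_2(M) = 0$ and $k \ge 3$ is minimal with $\pi_k(M) \neq 0$, then $\Omega^2 M$ is $(k-3)$-connected, and the Hurewicz theorem (using that $\pi_1 \Omega^2 M = \pi_3 M$ is automatically abelian when $k=3$) yields an isomorphism $H_{k-2}(\Omega^2 M) \simeq \pi_k(M)$; by hypothesis this group is not $\ell$-torsion, so we pick $\alpha$ corresponding to an element $x \in \pi_k(M)$ with $\ell x \neq 0$. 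In both cases $\ell \alpha \neq 0$, and since $k \ge 1$ in the second case the condition $\deg(\alpha) > n$ holds automatically, so condition (ii) of Definition~\ref{defi:abundant_with_2spheres} is vacuous. This completes the sketch, and the main point where the hypothesis on $\ell$-torsion enters is precisely in ensuring the existence of such an $\alpha$.
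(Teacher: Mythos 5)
Your proof is correct and is exactly the argument the paper intends: the paper gives no separate proof for this proposition, stating only that it "is proved in exactly the same way as Proposition~\ref{prop:covered-by-diffeo-implies-abundant}", and your write-up carries out precisely that adaptation, tracking the factor $\ell$ through $j_!A=\ell[\mathrm{pt}]$ and $j_!\Phi_*(A\times\alpha)=\ell\alpha$, and invoking the non-$\ell$-torsion hypothesis (via Hurewicz on $\Omega^2M$, or freeness of $\widetilde H_0(\Omega^2M)=\Z[\pi_2(M)]/\Z[\mathrm{pt}]$ when $\pi_2(M)\neq 0$) exactly where it is needed. The degree bookkeeping for condition~(ii) of Definition~\ref{defi:abundant_with_2spheres} is also handled as in the original proof.
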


\begin{proposition}\label{prop:odd-spheres}
Odd-dimensional spheres of dimension $n\ge 3$ are abundant with $2$-spheres. 
\end{proposition}

We actually prove that, for any odd $n\ge 3$, the sphere $\SS^n$ is $\ell$-covered by diffeomorphisms for a suitable $\ell>0$. We will give two proofs of this fact. The first proof is of a more algebraic nature and has the merit of emphasizing the interplay between homology and homotopy. The second proof can be interpreted as a geometric realization of the homology class exhibited in the first proof.

The key observation for the first proof is the following. 

\begin{lemma}
  For $n\ge 3$ odd, $\pi_{n-1}(SO(n))$ is torsion.
\end{lemma}

\begin{proof}
We rely on computations of homotopy groups from~\cite[Appendix~A, Tables~6.VII and~6.VIII]{Japanese_Encyclopedic_Dictionary}. From~\cite[Appendix~A, Table~6.VII]{Japanese_Encyclopedic_Dictionary} we read 
\[
\pi_2(SO(3))=0,\quad \pi_4(SO(5))=\Z_2, \quad \pi_6(SO(7))=0,\quad \pi_8(SO(9))=\Z_4,
\]
\[
 \qquad \pi_{10}(SO(11))=\Z_2,\qquad \pi_{12}(SO(13))=\Z_2, \qquad \pi_{14}(SO(15))=\Z_2.
\]
From~\cite[Appendix~A, Table~6.VIII]{Japanese_Encyclopedic_Dictionary} we read that, for $n\ge 16$, 
$$
\pi_{n-1}(SO(n))=\pi_{n-1}(O)\oplus \pi_n(V_{n+2,2}(\R)),
$$
with $\pi_{n-1}(O)$ the stable homotopy group of the orthogonal group, and $V_{n+2,2}(\R)$ the Stiefel manifold of orthonormal $2$-frames in $\R^{n+2}$. 

The Bott periodicity theorem~\cite[Appendix~A, Table~6.VII]{Japanese_Encyclopedic_Dictionary} ensures that
\[
\pi_k(O)=\left\{\begin{array}{ll} \Z,& k\equiv 3, 7 \mbox{ mod } 8,\\
\Z_2,& k\equiv 0, 1 \mbox{ mod } 8,\\
0, & k\equiv 2, 4, 5, 6 \mbox{ mod } 8.
\end{array}\right.
\]
In particular, if $n$ is odd we infer that $\pi_{n-1}(O)$ is either $\Z_2$ or $0$.

Finally, we read from~\cite[Appendix~A, Table~6.VIII]{Japanese_Encyclopedic_Dictionary} that 
\[
\pi_n(V_{n+2,2})=\Z_2, \qquad n \mbox{ odd}.
\]

Summarizing, we find that, for $n\ge 17$ odd, $\pi_{n-1}(SO(n))$ is either $\Z_2 \oplus \Z_2$,  or $\Z_2$, and in all cases it is torsion.  
\end{proof}

\begin{proof}[First proof of Proposition~\ref{prop:odd-spheres}]
We write the relevant portion of the exact sequence of the fibration $SO(n)\hookrightarrow SO(n+1)\stackrel{\ev}\longrightarrow \SS^n$, i.e.,  
$$
\xymatrix{
\dots \ar[r] & \pi_n(SO(n+1))\ar[r]^-{\ev_*} & \pi_n(\SS^n)\ar[r] \ar@{=}[d] & \pi_{n-1}(SO(n)) \ar[r] \ar@{=}[d] & \dots \\
 & & \Z & \mbox{torsion} & 
}
$$
Since $\pi_n(\SS^n)=\Z$ and $\pi_{n-1}(SO(n))$ is torsion, the map $\pi_n(\SS^n)\to \pi_{n-1}(SO(n))$ cannot be injective, hence the map $\ev_*$ cannot be zero. There exist therefore $\alpha\in \pi_n(SO(n+1))$ and $\ell>0$ such that $\ev_*(\alpha)=\ell[\mathrm{Id}_{\SS^n}]$, with $\mathrm{Id}_{\SS^n}:\SS^n\to\SS^n$ the generator of $\pi_n(\SS^n)$. The commutative diagram 
$$
\xymatrix{
\pi_n(SO(n+1)) \ar[r]^-{\ev_*} \ar[d]_{i_*} & \pi_n(\SS^n) \ar[d]^{j_*} \\
H_n(SO(n+1)) \ar[r]_-{\ev_*} & H_n(\SS^n),
}
$$
where $i_*$ and $j_*$ are the canonical maps, and $j_*$ is an isomorphism such that $j_*[\mathrm{Id}_{\SS^n}]=[\SS^n]$, shows that the class $A=i_*\alpha\in H_n(SO(n+1))$ is such that $\ev_*(A)=\ell[\SS^n]$. Since the map $\ev_*:H_n(SO(n+1))\to H_n(\SS^n)$ factors through $\ev_*:H_n(\Diff(\SS^n))\to H_n(\SS^n)$, we find that $\SS^n$ is $\ell$-covered by diffeomorphisms. Since the first nontrivial homotopy group $\pi_n(\SS^n)$ has no torsion, we infer by Proposition~\ref{prop:ell-covered-by-diffeo-implies-abundant} that $\SS^n$ is abundant with $2$-spheres. 
\end{proof}

\begin{proof}[Second proof of Proposition~\ref{prop:odd-spheres}]
We prove that $\SS^n$ is $2$-covered by diffeomorphisms if $n$ is odd. 

\begin{figure}
    \centering
    \includegraphics{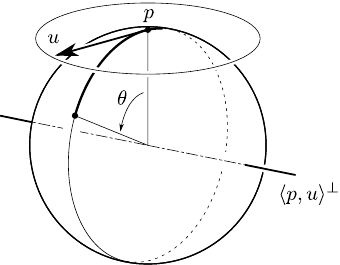}
    \caption{The rotation $f(u,\theta)$ associated to $(u,\theta)\in T^1_p\SS^n\times \SS^1$ }
    \label{fig:family_of_rotations}
\end{figure}

Let $p\in \SS^n$ be fixed. We define the map 
$$
f:T^1_p\SS^n\times \SS^1\to SO(n+1)
$$
such that $f(u,\theta)$ is the rotation that fixes the subspace $\langle p,u\rangle^\perp\subset \R^{n+1}$ and rotates by an angle $\theta$ in the oriented plane $\langle p,u\rangle$ (see Figure \ref{fig:family_of_rotations}).

Let $\ev_p:SO(n+1)\to\SS^n$, $\ev_p(\varphi)=\varphi(p)$ be the evaluation at $p$. 
We claim that the composition 
$$
T^1_p\SS^n\times \SS^1\stackrel{f}\longrightarrow SO(n+1)\stackrel{\ev_p}\longrightarrow \SS^n
$$
has degree $1+(-1)^{n-1}$ for a suitable orientation of the source and target. Indeed, any point in $\SS^n\setminus \{\pm p\}$ has exactly two pre-images, which are regular, of the form $(u,\theta)$ and $(-u,2\pi-\theta)$. Since the degree of the self-map of $T^1_p\SS^n\times\SS^1$ given by $(u,\theta)\mapsto (-u,2\pi-\theta)$ is $1$ if $n$ is odd, and $-1$ if $n$ is even, we infer the claim. 

We now use that $n$ is odd. The class $A=f_*([\SS^{n-1}]\times [\SS^1])\in H_n(SO(n+1))$ is such that $(\ev_{p})_*A=2[\SS^n]$. Since $\ev_p:SO(n+1)\to\SS^n$ factors through the canonical inclusion $SO(n+1)\hookrightarrow \Diff(\SS^n)$, we find that $\SS^n$ is $2$-covered by diffeomorphisms. We conclude using Proposition~\ref{prop:covered-by-diffeo-implies-abundant}. 
\end{proof}

\subsubsection{Homogeneous spaces}
\label{sec:homogeneous-spaces}

Let $M=G/H$ be a homogeneous space with $G$ a compact connected Lie group and $H$ a closed Lie subgroup of positive dimension. (The elements of $M$ are understood to be cosets with respect to the action of $H$ on $G$ by right multiplication.) The starting point of the investigation in this section is the commuting diagram 
$$
\xymatrix{
H \ar[r] \ar[d]& \Diff_*(G/H) \ar[d]\\
G\ar[r]& \Diff(G/H).
}
$$
The group $H$ acts on the left on $G/H$ in the diagram above and therefore 
 on $\Omega^2(G/H)=\Map_*(\SS^2,G/H)$ by 
$$
\Phi_H:H\times \Omega^2(G/H)\to\Omega^2(G/H), \qquad (h,\sigma)\mapsto h\sigma. 
$$

\begin{proposition} \label{prop:homogeneous_spaces}
Assume there exists a class $\alpha\in H_*(\Omega^2(G/H))$ such that 
$$
(\Phi_H)_*([H]\times \alpha)\neq 0\in H_*(\Omega^2(G/H)). 
$$
Then $G/H$ is abundant with $2$-spheres. 
\end{proposition}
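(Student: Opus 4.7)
The plan is to mimic the strategy of Proposition~\ref{prop:covered-by-diffeo-implies-abundant}, replacing the embedding of $G$ into $\Diff_0(G/H)$ used there by the action of $G$ on $G/H$ by left translations. Set $n = \dim G/H$ and consider the map
$$
\Psi : G \times \Omega^2(G/H) \to \Map(\SS^2, G/H), \qquad (g,\sigma) \mapsto g\sigma,
$$
where $g\sigma$ denotes the pointwise left action of $g$ on $\sigma$. Evaluating at the basepoint of $\SS^2$ gives $g\cdot[e] = [g]$, so $\Psi$ is a map of fibrations over $G/H$ with respect to the projections $(g,\sigma)\mapsto[g]$ on the source and $\mathrm{ev}$ on the target. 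The fiber of the source over $[e]$ is $\pi^{-1}([e])\times \Omega^2(G/H) = H\times \Omega^2(G/H)$, and the restriction of $\Psi$ to that fiber is precisely the map $\Phi_H$ of the statement.

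Next I would exhibit a class $\Psi_*([G]\times\alpha) \in H_*(\Map(\SS^2,G/H))$ that fulfills Definition~\ref{defi:abundant_with_2spheres}. Let $j:\star\hookrightarrow G/H$ be the inclusion of the basepoint. Proposition~\ref{prop:shriek-commute}, applied to the fibration map $\Psi$, yields
$$
j_!\,\Psi_*([G]\times\alpha) \,=\, (\Phi_H)_*\, j_!([G]\times\alpha).
$$
To compute $j_!([G]\times\alpha)$, note that the source fibration is the product of the principal $H$-bundle $H\hookrightarrow G\xrightarrow{\pi}G/H$ with a trivial factor $\Omega^2(G/H)$; functoriality of the shriek map together with Remark~\ref{rmk:shriek-fundamental-class} applied to the oriented fibration $H\hookrightarrow G\to G/H$ gives
$$
j_!([G]\times\alpha) \,=\, (j_G)_![G]\times\alpha \,=\, [H]\times\alpha.
$$
Combining with the hypothesis, we obtain
$$
j_!\,\Psi_*([G]\times\alpha) \,=\, (\Phi_H)_*([H]\times\alpha) \,\neq\, 0,
$$
so $\Psi_*([G]\times\alpha)$ lives above the fundamental class of $G/H$ in the sense of Definition~\ref{def:living-above-fundamental}.

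Finally, I would verify condition~(ii) of Definition~\ref{defi:abundant_with_2spheres}. The degree of $\Psi_*([G]\times\alpha)$ equals $\dim G + \deg\alpha = n + \dim H + \deg\alpha$, and since $\dim H>0$ by assumption this is strictly greater than $n$. Condition~(ii) is then automatic, and we conclude that $G/H$ is abundant with $2$-spheres.

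The main subtlety is the product formula $j_!([G]\times\alpha) = [H]\times\alpha$: strictly speaking, this is the shriek map of the inclusion of the fiber of a product fibration, which requires one to combine Remark~\ref{rmk:shriek-fundamental-class} for the $G\to G/H$ factor with the compatibility of shriek maps with products. This can be verified directly by a Morse-theoretic computation using product Morse data on $G\times \Omega^2(G/H)$, or by invoking Proposition~\ref{prop:shriek-commute} applied to the projection $G\times\Omega^2(G/H)\to G$ viewed as a fibration map over $G/H$. No orientability hypothesis on $G/H$ enters the argument beyond what is already required by the definition of ``abundant with $2$-spheres''.
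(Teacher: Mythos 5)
Your proof is correct and follows essentially the same route as the paper's: the same fibration map $G\times\Omega^2(G/H)\to\Map(\SS^2,G/H)$ over $G/H$, the same application of Proposition~\ref{prop:shriek-commute} together with $j_![G]=[H]$ from Remark~\ref{rmk:shriek-fundamental-class} to compute $j_!\Psi_*([G]\times\alpha)=(\Phi_H)_*([H]\times\alpha)$, and the same degree count using $\dim H>0$ to dispose of condition~(ii). The only (cosmetic) difference is that you restrict the action map to $G\times\Omega^2(G/H)$ from the outset, which is in fact what the paper's diagram uses as well.
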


\begin{proof}
Denote by
$$
\Phi:G\times \Map(\SS^2,G/H)\to \Map(\SS^2,G/H), \qquad (g,\sigma)\mapsto g\sigma
$$
the action of $G$ on $2$-spheres, and 
$$
j: \star\ri G/H
$$
the inclusion of the basepoint. We then claim that for the fibration $\mathrm{ev}: \mathrm{Map}(\SS^2, G/H)\ri G/H$ we have 
$$
j_!\Phi_*([G]\times \alpha)=(\Phi_H)_*([H]\times \alpha),
$$
which is nonzero in $H_*(\Omega^2(G/H))$ by assumption. This  
means that $\Phi_*([G]\times \alpha)$ 
 lives over the fundamental class of $G/H$.
 It is actually nonzero in the reduced homology $\widetilde{H} _*(\Omega^2(G/H))$ for degree reasons, since we assumed $H$ to have positive dimension, and therefore $G/H$ is abundant with $2$-spheres.

To prove the claim we proceed as in the proof of Proposition~\ref{prop:covered-by-diffeo-implies-abundant}. Let $\pi:G\to G/H$ be the projection. We have a map of fibrations
$$
\xymatrix{
H\times \Omega^2(G/H) \ar@{^(->}[d] \ar[r]^-{\Phi_H} & \Omega^2(G/H) \ar@{^(->}[d] \\
G\times \Omega^2(G/H) \ar[r]^-\Phi \ar[d]^{\pi \circ \mathrm{pr}_1} & \Map(\SS^2,G/H) \ar[d]^\ev \\
G/H \ar@{=}[r] & G/H
}
$$   
We know that  $j_![G]=[H]$ for the fibration $H\hookrightarrow G\stackrel{\pi}\longrightarrow G/H$ by Remark~\ref{rmk:shriek-fundamental-class}. Then, the  following  diagram is commutative by Proposition~\ref{prop:shriek-commute}
$$
\xymatrix{
H_*(G\times \Omega^2 (G/H)) \ar[r]^-{\Phi_*} \ar[d]_{j_!} & H_*(\Map(\SS^2,G/H)) \ar[d]^{j_!} \\
H_{*-n}(H\times \Omega^2(G/H)) \ar[r]_-{(\Phi_H)_*} & H_{*-n}(\Omega^2(G/H))  \ , 
}
$$ 
where $n=\dim G/H$, and we infer that 
\begin{align*}
j_!\Phi_*([G]\times\alpha) & =(\Phi_H)_*j_!([G]\times\alpha)\\
& =(\Phi_H)_*((j_![G])\times\alpha)\\
& = (\Phi_H)_*([H]\times\alpha).
\end{align*} 
This proves the claim, and the proposition.
\end{proof}

\begin{example}[\bf The sphere $\SS^2$] It is unclear what the true range of applicability of the previous proposition is. We can at least recover the fact that $\SS^2$ is abundant with $2$-spheres. We present $\SS^2$ as $SO(3)/SO(2)$ and start with a class $\alpha\in H_0(\Omega^2\SS^2)$ represented by a degree $1$ map. Acting on $\alpha$ with $SO(2)$ turns it into $\alpha'\in H_1(\Omega^2\SS^2)$ that is nonzero. Indeed, this is precisely the nontrivial cycle $(\sigma_p)_*[\Sigma'_p]$ constructed in Remark~\ref{rmk:generator-S2}, which corresponds under the map $\beta_*:H_1(\Omega^2\SS^2)\to H_2(\Omega\SS^2)$ to the Pontryagin square of the generator of $H_1(\Omega\SS^2)$. Denoting $\Phi:SO(3)\times \Map(\SS^2,\SS^2)\to \Map(\SS^2,\SS^2)$, we find that the class $\Phi_*([SO(3)]\times\alpha)\in H_3(\Map(\SS^2,\SS^2))$ provided by Proposition~\ref{prop:homogeneous_spaces} is exactly the nontrivial class $\sigma_*[\Sigma']$ that we constructed in Remark~\ref{rmk:generator-S2}. 
\end{example}

\subsubsection{Compact rank one symmetric spaces (CROSS)}

We generalize in this section Example~\ref{example:spheres}, which deals with spheres, to complex and quaternionic projective spaces, as well as the Cayley plane. Besides spheres, the only other type of CROSS are real projective spaces. Since the finiteness of the $\pi_1$-sensitive Hofer-Zehnder capacity for real projective spaces follows from that of spheres by a covering argument (see Remark \ref{rem:cover-Hofer-Zehnder}), we do not further investigate whether they are abundant with $2$-spheres or not. 

\begin{proposition} \label{prop:CPd} Complex projective spaces are abundant with $2$-spheres. 
\end{proposition}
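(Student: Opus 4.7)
For $d=1$, the space $\C P^1 \simeq \SS^2$ is abundant with $2$-spheres by Example~\ref{example:spheres}, so I assume $d\ge 2$. The plan is to invoke Proposition~\ref{prop:class-over-Q} using the fibered family given by the unit tangent sphere bundle $\Sigma = S(T\C P^d)$ of $\C P^d$ for the Fubini--Study metric, which fibers over $\C P^d$ with fiber $\Sigma_p = \SS^{2d-1}$. The fibration map $\sigma \colon \Sigma \to \mathrm{Map}(\SS^{2}, \C P^d)$ sends $(p,u)$ to the canonical holomorphic parametrization $\sigma_{p,u} \colon \C P^{1} \to \C P^d$ of the unique complex projective line through $p$ tangent to $u$, normalized by $\sigma_{p,u}(\infty)=p$ and $\sigma_{p,u}'(\infty)=u$; in homogeneous coordinates with $p=[1{:}0{:}\cdots{:}0]$ and $u = (u_1,\ldots,u_d) \in \SS^{2d-1}\subset \C^{d}$, this is $\sigma_{p,u}([z_0{:}z_1]) = [z_0 : z_1 u_1 : \cdots : z_1 u_d]$. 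Each such $\sigma_{p,u}$ is a degree-one holomorphic map.

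The task reduces to checking that $\sigma_{p*}[\SS^{2d-1}]$ is nonzero in $H_{2d-1}(\Omega^{2}\C P^d)$; since $\sigma_{*}[\Sigma]$ has degree $4d-1 > 2d = \dim \C P^d$, condition~(ii) of Definition~\ref{defi:abundant_with_2spheres} is automatic. The image of $\sigma_p$ lies in the degree-one component $\Omega^{2}_1 \C P^d$. From the Hopf fibration $\SS^{1} \to \SS^{2d+1} \to \C P^d$ and the contractibility of $\Omega^{2}\SS^{1}$, the Puppe sequence produces a weak equivalence $\Omega^{2}\SS^{2d+1} \xrightarrow{\sim} \Omega^{2}_0 \C P^d$, and all components of $\Omega^{2}\C P^d$ are pairwise homotopy equivalent via the $H$-space structure. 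As $\Omega^{2}\SS^{2d+1}$ is $(2d-2)$-connected with $\pi_{2d-1} \simeq \pi_{2d+1}(\SS^{2d+1}) = \Z$, Hurewicz gives $H_{2d-1}(\Omega^{2}_1 \C P^d) \simeq \Z$, and nonvanishing is equivalent to asking that the class of $\sigma_p$, after loop-multiplication by the inverse of a fixed $\iota = \sigma_{p,u_0}$ to translate it into $\Omega^{2}_0 \C P^d$, have adjoint generating $\pi_{2d+1}(\C P^d)$.

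The decisive identification is with the Hopf projection $\pi$, which generates $\pi_{2d+1}(\C P^d)$ under the isomorphism $\pi_*\colon \pi_{2d+1}(\SS^{2d+1}) \xrightarrow{\sim} \pi_{2d+1}(\C P^d)$. The adjoint $\widehat\sigma_p \colon \SS^{2d-1} \times \SS^{2} \to \C P^d$, $(u,[z_0{:}z_1]) \mapsto [z_0 : z_1 u]$, admits a natural $\SS^{1}$-equivariant lift $\Psi \colon \SS^{2d-1}\times\SS^{3}\to \SS^{2d+1}$, $(u,(z_0,z_1))\mapsto(z_0,z_1 u)$, such that $\pi\circ\Psi$ descends to $\widehat\sigma_p$. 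Since $\widehat\sigma_p(u,\infty)=p$ for all $u$, the map $\widehat\sigma_p$ factors through $\SS^{2d-1}_+ \wedge \SS^{2} \simeq \SS^{2}\vee \SS^{2d+1}$; its $\SS^{2}$-part equals $\iota$ and its $\SS^{2d+1}$-part represents, after shifting by $\iota^{-1}$, the class we seek. A direct computation with $\Psi$, evaluating it on the fundamental class after collapsing $\{u_0\}\times\SS^2$, identifies this $\SS^{2d+1}$-part with $\pm\pi$, proving nonvanishing.

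The main obstacle will be this last homotopical identification: tracking the basepoint translation between $\Omega^{2}_1$ and $\Omega^{2}_0$ and rigorously isolating the $\SS^{2d+1}$-summand from the wedge splitting requires careful bookkeeping. An alternative route, mimicking the proof of Proposition~\ref{prop:nontrivial-class-Sn}, is to compute instead the transgression $\beta_*\sigma_{p*}[\SS^{2d-1}] \in H_{2d}(\Omega \C P^d)$; its image should match, under the Hopf lift, a known polynomial generator of $H_{2d}(\Omega \SS^{2d+1}) = \Z$, and verifying nonvanishing at the level of $\Omega \C P^d$ avoids some of the delicate wedge-splitting analysis while still suffices to conclude $\sigma_{p*}[\SS^{2d-1}]\ne 0$.
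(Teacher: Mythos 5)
Your geometric construction is exactly the one the paper uses: the unit sphere tangent bundle $\Sigma=S(T\C P^{d})\simeq ST\C P^{d}$ fibered over $\C P^{d}$ with fiber $\SS^{2d-1}$, mapped to $\Map(\SS^{2},\C P^{d})$ by parametrizing the unique projective line tangent to each unit vector. You correctly reduce, via Proposition~\ref{prop:class-over-Q}, to showing that $\sigma_{p*}[\SS^{2d-1}]\neq 0$ in $H_{2d-1}(\Omega^{2}\C P^{d})$, and you correctly note that the degree $4d-1>2d$ makes condition~(ii) of Definition~\ref{defi:abundant_with_2spheres} automatic. So up to and including the reduction to the fiber class, you and the paper agree.

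Where you diverge is the nonvanishing argument, and this is where your proposal has a genuine gap. The paper applies the transgression $\beta_{*}$ to land in $H_{2d}(\Omega\C P^{d})$ and then invokes Ziller's theorem that the energy functional on $\Omega\C P^{d}$ is a perfect Morse--Bott function, identifying the transgressed class with the fundamental class of the first nontrivial critical manifold $ST_{p}\C P^{d}\simeq\SS^{2d-1}$ of index~$1$. You instead propose a direct homotopy-theoretic computation: using the Hopf fibration and Hurewicz to get $H_{2d-1}(\Omega^{2}_{1}\C P^{d})\cong\pi_{2d+1}(\C P^{d})\cong\Z$, and then identifying the translated class with the Hopf generator via the lift $\Psi:\SS^{2d-1}\times\SS^{3}\to\SS^{2d+1}$. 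Your reduction to $\Z$ is correct, and the ``alternative route'' you sketch via $\beta_{*}$ into $H_{2d}(\Omega\C P^{d})\cong H_{2d}(\Omega\SS^{2d+1})$ (using $\Omega\C P^{d}\simeq\SS^{1}\times\Omega\SS^{2d+1}$) is even closer to what the paper does. But the crucial last step --- that the class equals $\pm 1$ --- is precisely what you label as requiring ``careful bookkeeping,'' and as stated it does not go through.

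The specific problem: you claim that evaluating $\Psi$ ``on the fundamental class after collapsing $\{u_{0}\}\times\SS^{2}$'' identifies the $\SS^{2d+1}$-part with $\pm\pi$. But $\widehat\sigma_{p}$ does \emph{not} descend to the quotient of $\SS^{2d-1}\times\SS^{2}$ by $\SS^{2d-1}\times\{*\}\cup\{u_{0}\}\times\SS^{2}$, because $\widehat\sigma_{p}$ restricted to $\{u_{0}\}\times\SS^{2}$ is the projective line $\iota$, not a constant. Likewise $\Psi$ restricted to $\{u_{0}\}\times\SS^{3}$ is the great $3$-sphere $\C e_{0}\oplus\C u_{0}$ in $\SS^{2d+1}$, not a point, and $\Psi$ maps a $(2d+2)$-manifold to a $(2d+1)$-manifold, so it has no degree in the naive sense. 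The wedge decomposition $\SS^{2d-1}_{+}\wedge\SS^{2}\simeq\SS^{2}\vee\SS^{2d+1}$ applies to the quotient of $\SS^{2d-1}\times\SS^{2}$ by $\SS^{2d-1}\times\{*\}$ only, and extracting the $\SS^{2d+1}$-component of $g:\SS^{2}\vee\SS^{2d+1}\to\C P^{d}$ does not amount to pushing forward a fundamental class. One can make the difference class $\delta\in\pi_{2d+1}(\C P^{d})$ well-defined for $d\ge 2$ because the stabilizer $[\SS^{2d}\vee\SS^{3},\C P^{d}]=\pi_{2d}(\C P^{d})\times\pi_{3}(\C P^{d})$ vanishes, but computing $\delta$ requires lifting the \emph{translated} family $u\mapsto\sigma_{p,u}\cdot\iota^{-1}$ (which consists of degree-zero spheres) continuously to $\Omega^{2}\SS^{2d+1}$ and then computing the degree of the adjoint $\SS^{2d+1}\to\SS^{2d+1}$. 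This is feasible but is the entire content of the proposition, not bookkeeping. You should either carry out that lift-and-degree argument in full, or adopt the paper's route and cite Ziller's perfectness theorem, which sidesteps the homotopy-theoretic computation at the cost of a Morse-theoretic input.
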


\begin{proof}
We prove that $\C P^n$ admits a fibered family of $2$-spheres in the sense of Definition~\ref{defi:fibered_2spheres}.

We endow $\C P^n$ with the homogeneous metric, and we denote by $I$ the complex structure on the tangent bundle $T\C P^n$. Let $\Sigma=T^1\C P^n\subset T\C P^n$ be the unit tangent sphere bundle. Given a point $p\in \C P^n$ and a unit tangent vector $v\in T_p\C P^n$, there is a unique complex line through $p$ tangent to $v$. That complex line is a totally geodesic round sphere $\SS^2$ (see~\cite[\S2.A.5 and~2.32]{Gallot-Hulin-Lafontaine}). The orthonormal basis $(v,Iv)$ of its tangent plane at $p$ allows us to parametrize it via a degree $1$ map $\SS^2\to\SS^2$. 
In this way we construct a map of fibrations $\sigma:\Sigma\to \Map(\SS^2,\C P^n)$, and we denote $\alpha=\sigma_*[\Sigma]\in H_{4n-1}(\Map(\SS^2,\C P^n))$. 

The shriek $j_!\alpha$ under the inclusion of a basepoint $p\ri \C P^n$ for the fibration $\Omega^2\C P^n\hookrightarrow \Map(\SS^2,\C P^n)\ri \C P^n$ lives in $H_{2n-1}(\Omega^2\C P^n)$, and is equal to $(\sigma_p)_*[\Sigma_p]$, with $\Sigma_p=T^1_p\C P^n$ the fiber of the fibration $\Sigma\to\C P^n$, and $\sigma_p=\sigma|_{\Sigma_p}$. 

Under the map $\beta_*:H_*(\Omega^2\C P^n)\to H_{*+1}(\Omega\C P^n)$ determined by the adjunction $\Omega^2\C P^n\to \Map(\SS^1,\Omega\C P^n)$, we obtain $\beta_*j_!\alpha= \beta_*(\sigma_p)_*[\Sigma_p]\in H_{2n}(\Omega\C P^n)$, and this class does not vanish. Indeed, it is classical~\cite{Ziller1977} that the energy functional on $\Omega\C P^n$ is perfect, and one can prove as in Example~\ref{example:spheres} that this class corresponds up to sign to
twice the fundamental class of the first
nontrivial critical manifold $T^1_p\C P^n$, of dimension $2n-1$ and of index $1$. (See also the proof of Proposition~\ref{prop:HPd-CaP2} below.) This proves that $\alpha$ lives over the fundamental class, or, equivalently, that $\Sigma$ is a fibered family of $2$-spheres for $\C P^n$.   
\end{proof}

The previous construction amounts to using the family of $2$-spheres on $\SS^2$ described in Remark~\ref{rmk:generator-S2} on each complex line in $\C P^n$. In the next proposition we treat quaternionic projective spaces and the Cayley plane using the families of $2$-spheres on $\SS^4$ and $\SS^8$ from Example~\ref{example:spheres}, by implanting them on each quaternionic line, respectively on each octonionic line.

\begin{proposition} \label{prop:HPd-CaP2}
Quaternionic projective spaces and the Cayley plane are abundant with $2$-spheres.
\end{proposition}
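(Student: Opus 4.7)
The plan is to extend the argument of Proposition~\ref{prop:CPd} to $M = \H P^n$ and $M = CaP^2$. Unlike $\C P^n$, neither space admits a canonical global compatible complex structure, but at each point there is a continuous sphere of such structures, and we incorporate this choice into the parametrization space.

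First, I would define an auxiliary closed oriented fiber bundle $P \to M$ whose fiber parametrizes pairs (unit tangent vector, compatible complex structure on a $2$-plane through it). For $\H P^n$, take $P = STM \times_M Z$, where $Z \to \H P^n$ is the twistor bundle (an $\SS^2$-bundle of compatible complex structures); the fiber is $\SS^{4n-1} \times \SS^2$ of dimension $4n+1$, so $\dim P = 8n + 1$. For $CaP^2$, take $P$ to be the analogous bundle whose fiber parametrizes pairs (unit tangent vector, compatible Cayley complex structure on the associated tangent $2$-plane), a closed oriented manifold of dimension $37$. In each case there is a tautological map over $M$,
\[ \sigma : P \to \Map(\SS^2, M), \qquad (p, v, I) \mapsto \bigl[\SS^2 \xrightarrow{\mathrm{deg}\,1} S_{v, I} \subset M\bigr], \]
sending $(p, v, I)$ to the degree-$1$ parametrization of the totally geodesic round $\SS^2$ through $p$ tangent to $\mathrm{Span}_{\R}(v, Iv)$, using the orthonormal basis $(v, Iv)$. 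Set $A := \sigma_*[P]$, living in $H_{8n+1}(\Map(\SS^2, \H P^n))$ or $H_{37}(\Map(\SS^2, CaP^2))$.

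The degrees then match Ziller's classical Morse data \cite{Ziller1977}: $j_! A \in H_*(\Omega^2 M)$ has degree $4n+1$ (resp.\ $21$), so $\beta_* j_! A \in H_*(\Omega M)$ has degree $4n+2$ (resp.\ $22$), which is precisely the degree (dimension of critical submanifold plus Morse index) of the first nontrivial critical submanifold of the energy functional on $\Omega M$, namely $(4n-1) + 3$ for $\H P^n$ and $15 + 7$ for $CaP^2$. By Ziller's perfectness theorem, this class (if nonzero) is a generator of the relevant piece of $H_*(\Omega M)$. To conclude that $\beta_* j_! A \neq 0$, I would follow the strategy of Proposition~\ref{prop:nontrivial-class-Sn}: show that the adjoint map $\beta(\sigma_p): P_p \times \SS^1 \to \Omega M$ factors (up to a degree-$1$ map between compact manifolds) through the geometric unstable cycle attached to the critical submanifold of minimal closed geodesics through $p$. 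Geometrically, the $P_p$-variable parametrizes the starting direction together with the tangent plane of the $\SS^2$, while the $\SS^1$-variable traces the first half-period of the geodesic; the resulting cycle sweeps out the unstable manifold of minimal closed geodesics, which is the desired nonzero generator.

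The main obstacle will be the rigorous construction of the parametrization bundle $P$ for $CaP^2$, where the non-associativity of the octonions complicates the definition of a compatible complex structure and the resulting $\mathrm{Spin}(9)$-equivariant geometry of max-curvature $2$-planes. A secondary obstacle is the verification that $\beta(\sigma_p)$ indeed represents the fundamental class of the relevant critical submanifold; this is a geometric computation analogous to the one in Proposition~\ref{prop:nontrivial-class-Sn}, with the fiber $\SS^{4n-1} \times \SS^2$ (or its $CaP^2$ analogue) replacing the sphere of orthonormal $2$-frames.
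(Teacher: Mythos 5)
Your proposal is correct and matches the approach the paper intends, which is simply to run the $\C P^n$ argument \emph{mutatis mutandis} (the paper gives no further detail). You have rightly identified the one genuine novelty: since $\H P^n$ and $CaP^2$ carry no global compatible complex structure, the parametrizing space $ST M$ from the $\C P^n$ case must be augmented by the sphere of local compatible complex structures (an $\SS^2$- resp.\ $\SS^6$-bundle), and your resulting degree counts against Ziller's Morse data --- $(4n-1)+3 = 4n+2$ for $\H P^n$, $15+7=22$ for $CaP^2$ --- line up exactly as they should. The two obstacles you flag are real but not serious: the bundle $P$ for $CaP^2$ is cleanly described as the unit-vector pairs $(v,w)$ with $w$ orthogonal to $v$ inside the Cayley line through $v$ (an $\SS^6$-bundle over $STCaP^2$, so no global octonionic structure on $TCaP^2$ is needed), and the nonvanishing of $\beta_*j_!A$ is handled at the same level of rigor the paper itself uses for $\C P^n$, namely by appeal to Ziller's perfectness.
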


\begin{proof}
We prove that $\H P^n$ and $\mathrm{Ca} P^2$ admit a fibered family of $2$-spheres in the sense of Definition~\ref{defi:fibered_2spheres}. We only give the details for $\H P^n$, since $\mathrm{Ca} P^2$ is treated by an analogous construction.

Consider therefore a quaternionic projective space $\H P^n$ with $n\ge 1$. We endow $\H P^n$ with the homogeneous metric. For any $p\in \H P^n$ and $u\in T^1_p\H P^n$ there is a unique quaternionic projective line $L_{p,u}\subset \H P^n$ passing through $p$ and tangent to $u$. This line is totally geodesic and isometric to a sphere $\SS^4$ of constant curvature (see~\cite[Theorem~3.25]{Besse}). 

We now consider the ``partial'' Stiefel manifold of $2$-frames 
$$
\Sigma=\mathrm{Stiefel}'_2(\H P^n)
$$
whose fiber at point $p\in \H P^n$ is the space of orthogonal unitary $2$-frames $(u,v)$ at $p$ such that $v$ is tangent to $L_{p,u}$. (The vector $u$ is also tangent to $L_{p,u}$, by definition.) The dimension of $\Sigma$ is 
$$
\dim\Sigma=\dim \H P^n + \dim T^1_p\H P^n + 2 = 8n+1.
$$
Given such a $2$-frame $(u,v)$ there is a unique isometric parametrization $\sigma_{p,u,v}:\SS^2\to L_{p,u}$ of the unique round $2$-sphere in $L_{p,u}$ tangent to $u$ and $v$. This determines a map $\sigma:\Sigma\to \Map(\SS^2,\H P^n)$, $\sigma(p,u,v)=\sigma_{p,u,v}$. We denote  $\alpha=\sigma_*[\Sigma]\in H_{8n+1}(\Map(\SS^2,\H P^n))$. 

The shriek $j_!\alpha$ under the inclusion of a basepoint $p\ri \H P^n$ for the fibration $\Omega^2\H P^n\hookrightarrow \Map(\SS^2,\H P^n)\ri \H P^n$ lives in $H_{4n+1}(\Omega^2\H P^n)$, and is equal to $(\sigma_p)_*[\Sigma_p]$, with $\Sigma_p$ the fiber of the fibration $\Sigma\to\H P^n$, 
and $\sigma_p=\sigma|_{\Sigma_p}$. 

Under the map $\beta_*:H_*(\Omega^2\H P^n)\to H_{*+1}(\Omega\H P^n)$ determined by the adjunction $\Omega^2\H P^n\to \Map(\SS^1,\Omega\H P^n)$, we obtain $\beta_*j_!\alpha= \beta_*(\sigma_p)_*[\Sigma_p]\in H_{4n+2}(\Omega\H P^n)$. We claim that $H_{4n+2}(\Omega\H P^n)\simeq \Z$ and $\beta_*(\sigma_p)_*[\Sigma_p]$ equals twice a generator of this group. As a consequence $(\sigma_p)_*[\Sigma_p]\neq 0$, which proves that $\H P^n$ admits a fibered family of 2-spheres. 

For our purposes, the most useful way to infer the isomorphism $H_{4n+2}(\Omega\H P^n)\simeq \Z$ is via Morse theory. We refer to~\cite{Ziller1977} for the following facts concerning the energy functional $\cE$ on $\Omega_p \H P^n$ for the homogeneous metric. Its critical points are either the constant loop at $p$, or, for every $k\ge 1$, the (geodesic) great circles $\gamma_u^k:\SS^1\to L_{p,u}$ based at $p$, tangent to $u$ and traversed $k$ times at constant speed. As such, the energy functional is Morse-Bott, with a ``trivial" critical manifold consisting of the point $p$, and, for each $k\ge 1$, a critical manifold $C_k$ diffeomorphic to $T^1_p\H P^n$ consisting of the (geodesic) great circles $\gamma_u^k$. Each such ``nontrivial'' critical manifold has dimension $4n-1$ and index $2(k-1)(2n+1)+3$~\cite[Theorem~3]{Ziller1977}. The first nontrivial critical manifold, which corresponds to the great circles $\gamma_u^1$ traversed once, has index $3$. By~\cite[Theorem~8]{Ziller1977}, the energy functional $\cE$ is perfect because every critical manifold $C_k$ admits a so-called ``completing manifold''. As a consequence, each critical manifold $C_k$ contributes to $H_*(\Omega_p\H P^n)$ a copy of its own homology shifted in degree by the index. For $C_1$, the completing manifold can be described using the total space of a fibration in 3-spheres $\pi':\Sigma'_p\to T^1_p\H P^n$, whose fiber at $u$ is the equator $E_{p,u}$ orthogonal to $u$ inside the round 4-sphere $L_{p,u}$. There is a canonical map $\sigma'_p:\Sigma'_p\to \Omega_p\H P^n$ that associates to a vector $u$ and a point $q\in E_{p,u}$ the (not necessarily geodesic) circle on $L_{p,u}$ based at $p$, tangent to $u$, and passing through $q$. (If $q=p$ then $\sigma'(q)$ is the constant loop at $p$.) The contribution of $C_1$ to the homology of $H_*(\Omega_p\H P^n)$ is a copy of $H_*(T^1_p\H P^n)$ shifted in degree by $3$, via the map that associates to a homology class $A\in H_*(T^1_p\H P^n)$ the homology class $(\sigma'_p)_*\pi_!A\in H_{*+3}(\Omega_p\H P^n)$ (geometrically, this amounts to apply $\sigma'_p$ to the ``fundamental class'' of the restriction of the fibration $\Sigma'_p$ over a representative of the homology class $A$). 

That $H_{4n+2}(\Omega_p\H P^n)\simeq \Z$ is a consequence of $H_{4n-1}(T^1_p\H P^n)\simeq \Z$. A generator of $H_{4n-1}(T^1_p\H P^n)\simeq \Z$ is given by the fundamental class $[T^1_p\H P^n]$, and a generator of $H_{4n+2}(\Omega_p\H P^n)$ is described fiberwise in $\Sigma'_p$ by associating to each $u$ the family of circles parametrized by the equator $E_{p,u}$. 

The canonical map $\varphi:\Sigma_p\times \SS^1\to \Sigma'_p$, $(u,v,\theta)\mapsto (u,\gamma_v^1(\theta))$ is a map of fibrations over $T^1_p\H P^n$. As seen in Proposition~\ref{prop:nontrivial-class-Sn}, this map has fiberwise degree $\pm2$, and therefore total degree $\pm 2$. The map $\beta\circ(\sigma_p\times \mathrm{Id}):\Sigma_p\times\SS^1\to \Omega_p\H P^n$ factors as a composition $\sigma'_p\circ \varphi$, and the image of the fundamental class $[\Sigma_p]\times [\SS^1]$ is therefore equal to twice a generator of $H_{4n+2}(\Omega_p\H P^n)$. This proves the claim, and the proposition. 
\end{proof}

\subsubsection{Further remarks on abundance with $2$-spheres}
\label{sec:minimal-models}

1. Abundance with $2$-spheres in the case of $\SS^2$ can also be retrieved from computations of Hansen~\cite{Hansen-2-sphere} (use $\Z_2$-coefficients to find a class in the component $\Map_1(\SS^2,\SS^2)$ of degree $1$ maps).

2. Abundance with $2$-spheres in the case of $\C P^n$ follows also from computations of Kallel-Salvatore~\cite{Kallel-Salvatore} (use $\Z_{(n+1)}$-coefficients to find a class in $\Map_0(\SS^2,\C P^n)$ of degree $0$ maps).

3. Minimal models provide a potentially powerful tool for computations. Brown and Szczarba~\cite{Brown-Szczarba} construct a minimal model for function spaces $\Map(X,Y)$, with $X$ a finite complex and $Y$ nilpotent, both of finite type. Their model is particularly simple if $X$ is $\Q$-formal, which is the case of $\SS^2$. In particular, we have a minimal model for $\Map(\SS^2,M)$ where $M$ is any simply connected manifold. 

This procedure works as follows for $\SS^n$ if $n$ is odd. The minimal model for $\SS^n$ is $(\Lambda(x), d=0)$ with $|x|=n$. Accordingly, the Brown-Szczarba minimal model for $\Map(\SS^2,\SS^n)$ is $(\Lambda(x,Sx), d=0)$ with $|x|=n$, $|Sx|=n-2$. The nonzero cohomology class $x\cdot (Sx)$ is dual to a rational homology class that lives on top of the fundamental class in the spectral sequence of the fibration $\Map(\SS^2,\SS^n)\to \SS^n$. 

The exact same argument works for $M=G$ a compact Lie group. It is known that the minimal model of any compact Lie group is $(\Lambda(V),d=0)$ where $V$ is supported in odd degree. This recovers from the perspective of minimal models the geometric computations from Example~\ref{example:Lie-groups}.

However, this procedure does not seem to work for $\SS^n$ if $n$ is even.

\section{Almost existence for hypersurfaces knotted with spheres} \label{sec:knotted}

The motivation for this section is the following. We have previously described two situations in which we can prove contractible almost existence on a closed hypersurface in $T^*Q$:
\begin{itemize}
    \item In~\S\ref{sec:almost-existence} we considered hypersurfaces bounding a domain that contains a submanifold of dimension $n$ that projects with nonzero degree onto the zero section. Such hypersurfaces can be thought of as being ``wide'', in the sense that they cover the zero section under projection. 
    \item In~\S\ref{sec:HZ} we considered arbitrary hypersurfaces, under the assumption that there is a homology class in $H_*(\Map(\SS^2,Q))$ that lives over the fundamental class of the zero section $Q$. Such homology classes can also be thought of as being ``wide'', in the sense that there is an abundance of $2$-spheres through every point.
\end{itemize}

Our purpose in this section is to study a certain intermediate setup. We prove Theorem~\ref{thm:orbits-knotted-case-intro} from the introduction, restated as Theorem~\ref{thm:orbits-knotted-case} below. 

We assume that $Q$ is a closed orientable $n$-manifold such that:
\begin{itemize}
\item 
  $Q$ contains an embedded sphere $X$ of dimension $k\ge 2$, i.e., $X=s(\SS^k)$ for some embedding $s:\SS^k\to Q$,
\item
  $Q$ contains a 
  closed orientable $(n-k)$-submanifold $Y$ that 
  intersects $X$ transversely at a single point $Y\cap X=\{p\}$.
\item $Y$ is $\ell$-connected, with $\ell=2k-2$ if $k$ is even, and $\ell=k-1$ if $k$ is odd. 
\item
  the normal bundle of $Y$ is trivial.
\end{itemize}
Note that, because their intersection number is nonzero, both $X$ and $Y$ are homologically nontrivial, and also not torsion. Also, the assumption that $Y$ is $\ell$-connected implies the following bounds: if $k$ is even, then $n-k\ge 2k-1$, i.e., $n\ge 3k-1$, and if $k$ is odd, then $n-k\ge k$, i.e., $n\ge 2k$.

Let $\pi:T^*Q\to Q$ be the projection. 

\begin{theorem}\label{thm:orbits-knotted-case}
  In the above situation, consider a continuous map $\varphi:S\to
  \pi^{-1}(Y)\subset T^*Q$ defined on a closed orientable $(n-k)$-manifold $S$, and suppose that the map
  $\pi\circ\varphi:S\to Y$ has degree  $d\neq 0$.

  Then, any hypersurface 
  that bounds a relatively
  compact domain $U$ that contains $\varphi(S)$ has the contractible almost 
  existence property.
\end{theorem}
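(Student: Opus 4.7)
The proof applies the criterion of Proposition~\ref{prop:almost-sure-existence-T*Q}: one must exhibit a DG local system $\cF$ on $\cL_0Q$ and a class $\alpha\in H_*(Q;i_Q^*\cF)$ with $i_{Q*}(\alpha)=0$ and $\pi_{U!}(\alpha)\neq 0$. Since $\pi\circ\varphi:S\to Q$ has degree zero as a map between manifolds of dimensions $n-k$ and $n$ (its image lies in the proper submanifold $Y$), the argument of Theorem~\ref{thm:almost-existence} does not apply verbatim. We instead use the ice-cream cone DG local system $\cF = C_*(\Omega^2 Q)$ of Section~\ref{sec:abundant}, which identifies $H_*(Q;i_Q^*\cF)\cong H_*(\Map(\SS^2, Q))$, the map $i_{Q*}$ with the evaluation $\ev_{N*}$ at one pole of $\SS^2$, and the fibration relevant to $\pi_{U!}$ with the evaluation $\ev_S$ at the opposite pole.

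The class $\alpha$ is constructed from the sphere $X$ and the framing of $\nu(Y)$. Arranging $\star=p_0\in X\cap Y$, we exploit the transversality $X\pitchfork Y=\{p_0\}$ and the trivialization $\nu(Y)\cong Y\times D^k$ to build a \emph{translation family} $\hat F:Y\times\SS^k\to Q$ with the properties $\hat F(p_0,\cdot)=s$ (the embedding of $X$), $\hat F(y,N_k)=y$ for every $y\in Y$, and $\hat F^{-1}(Y)=Y\times\{N_k\}$. The smash-product adjunction coming from $\SS^{k-2}\wedge\SS^2=\SS^k$ converts $\hat F$ into a map $G:Y\times\SS^{k-2}\to\Map(\SS^2,Q)$, from which we extract the class $\alpha = G_*([Y]\times[\SS^{k-2}])\in H_{n-2}(\Map(\SS^2,Q))$ (corrected by a constant-sphere contribution if necessary, as in the borderline-degree case of Theorem~\ref{thm:abundant}). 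The identity $G(y,p)(N_2)=\hat F(y,N_k)=y$ implies that $\ev_N\circ G:Y\times\SS^{k-2}\to Q$ factors through the projection $Y\times\SS^{k-2}\to Y\hookrightarrow Q$, so for $k\ge 3$ the induced class in $H_{n-2}(Q)$ vanishes for dimensional reasons, giving $\alpha\in\ker i_{Q*}$; the case $k=2$ is handled by a reduced-homology correction.

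The verification that $\pi_{U!}(\alpha)\neq 0$ is performed by composition with $\varphi_!$ and the factorization $\pi\circ\varphi=\iota_Y\circ f$, where $\iota_Y:Y\hookrightarrow Q$ is the inclusion and $f:S\to Y$ is $\pi\circ\varphi$ viewed as landing in $Y$, a map of degree $d$ between closed oriented $(n-k)$-manifolds. This gives
\[
\varphi_!\pi_{U!}(\alpha)=(\pi\circ\varphi)_!(\alpha)=f_!\iota_Y^!(\alpha).
\]
The property $\hat F^{-1}(Y)=Y\times\{N_k\}$ is the algebraic-topological incarnation of the transversal intersection $[X]\cdot[Y]=\pm 1$: a perturbation of $G$ to achieve transversality with the codimension-$k$ submanifold $\Map(\SS^2,Q)|_Y$ (whose normal bundle is trivial of rank $k$ since $\nu(Y)$ is trivial) shows that $\iota_Y^!(\alpha)\in H_{n-k-2}(\Map(\SS^2,Q)|_Y)$ lives above the fundamental class of $Y$ in the sense of Definition~\ref{def:living-above-fundamental}, its fiber component being the $(k-2)$-fold suspension of the sphere $s$. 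Passing to $\Z[1/d]$-coefficients if needed (the whole construction being compatible with arbitrary ground rings), the identity $f_*f_!=d\cdot\id$ implies $f_!\iota_Y^!(\alpha)\neq 0$ and therefore $\pi_{U!}(\alpha)\neq 0$, completing the verification of the criterion.

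The main obstacle will be the careful analysis of $\iota_Y^!(\alpha)$: the naive smash adjunction $G$ is non-transverse along $Y\times\{N_{k-2}\}$, with excess dimension~$2$ coming from the two ``collapsed'' directions of the smash identification $\SS^{k-2}\times\SS^2\twoheadrightarrow\SS^k$. A transversality perturbation (or a reformulation using the join decomposition $\SS^k=\SS^{k-3}*\SS^2$ for $k\ge 3$) must be performed before one can invoke the shriek functoriality for fibrations and the ``living above the fundamental class'' characterization of Section~\ref{sec:classes_over_fund_class} to identify $\iota_Y^!(\alpha)$ with the expected suspended-sphere class.
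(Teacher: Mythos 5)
Your approach is genuinely different from the paper's, and it contains a gap that I think is serious rather than merely technical.

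The paper's proof pushes forward the Stiefel-sphere class $\sigma_0\in H_{3k-3}(\Map(\SS^2,\SS^k))$ of Example~\ref{example:spheres} along the embedding $s:\SS^k\hookrightarrow Q$ to get $[\sigma]\in H_{3k-3}(Q;i_Q^*\cF)$, and then uses the Thom--Pontryagin collapse $Q\xrightarrow{p}\overline Q=(Y\times\D^k)/(Y\times\partial\D^k)\xrightarrow{p_2}\SS^k$ — whose composition with $s$ has degree $[X]\cdot[Y]=\pm 1$ — to show both that $[\sigma]$ is non-torsion and that $j_{Y!}[\sigma]$ is non-torsion (Lemmas~\ref{lemma:last-lemma}, \ref{lem:very-last-lemma}). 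Everything is done homologically; no geometric family of spheres over $Y$ is ever constructed. Your construction, by contrast, tries to build a class in $H_{n-2}$ (a different degree unless $n=3k-1$) out of an actual family $\hat F:Y\times\SS^k\to Q$ of spheres translated along $Y$, and then reads off the shriek by transversality of $\hat F$ with $Y$.

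The gap is in the existence of $\hat F$ with the property $\hat F^{-1}(Y)=Y\times\{N_k\}$. The trivialization of $\nu(Y)$ lets you translate the disc $X\cap(Y\times\D^k)$ along $Y$, so $\hat F$ is well-defined on the northern cap $Y\times D^k_{1/2}$ of $Y\times\SS^k$ and lands where it should. But on the southern hemisphere you need, for every $y\in Y$, a disc in $Q\setminus Y$ with prescribed boundary $\{y\}\times\partial D^k_{1/2}$, varying continuously in $y$; for $y=p_0$ this disc is the rest of $X$, and for nearby $y$ you can follow by isotopy, but there is no reason the family closes up over loops in $Y$ — this is a monodromy obstruction in a relative homotopy group of $(Q\setminus Y,\,Y\times\partial D^k_{1/2})$, and nothing in the hypotheses ($[X]\cdot[Y]=\pm1$, $\nu(Y)$ trivial) kills it. Triviality of $\nu(Y)$ only controls the picture near $Y$; it says nothing about how the far side of the translated sphere behaves. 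The paper's collapse argument is insensitive to precisely this difficulty, since one never needs to move the sphere $X$ at all: one works only with the single embedded sphere $s$ and detects non-triviality through the degree-$(\pm1)$ self-map of $\SS^k$ it induces through $\overline Q$. So you would need either to prove existence of $\hat F$ (which I believe is false in general), or to replace the translation-family construction by something collapse-based as in the paper. You have correctly anticipated the secondary transversality problems in the smash adjunction, but those are downstream of the primary gap.
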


\begin{remark}
  Since $\dim S=n-k<n=\dim Q$, the projection $\pi:U\to Q$
  may fail to be surjective. If that is the case, then $U$ is known to be
  displaceable and hence to support contractible closed characteristics near its boundary. However, no such
  argument is known when the projection is surjective.
\end{remark}

\begin{remark} We suspect that some of the assumptions in Theorem~\ref{thm:orbits-knotted-case} may be weakened (the embededness of the sphere, the triviality of the normal bundle, the existence of a unique point of intersection, which may be replaced by an assumption on the intersection number $[X]\cdot [Y]$). 
\end{remark}

We already mentioned the following in  Example~\ref{example:knotted_with_sphere} from the Introduction.

\begin{example} \label{example:knotted-with-spheres-section-7}
  Let $k\ge 2$ and $Q=Q'\sharp (\SS^k\times Y^{n-k})$, with $Q'$ closed orientable and $Y$ closed orientable and $(2k-2)$-connected (in particular $n\ge 3k-1$). Then $Q$ satisfies the assumptions of Theorem~\ref{thm:orbits-knotted-case}.
  
  Indeed, let $B\subset \SS^k\times Y$ be the ball used to
  construct the connected sum, and let 
  $X=\SS^{k}\times\{q\}$ and $Y\equiv \{p\}\times Y$ for some point $(p,q)\in \SS^{k}\times Y$ such that $X\cap B=Y\cap B=\emptyset$. 
  Then  
  $X$ and $Y$ meet transversely at the single point $(p,q)$ and
  the normal bundle of $Y$ is trivial. Also, $Y$ is connected enough.

  \medskip

  The case where $k\ge 3$ and $Q'$ is a $K(\pi, 1)$, for instance $Q'=\T^{n}$, is particularly interesting. Indeed, no technique from the literature, nor from this paper, allows to prove finiteness of the $\pi_1$-sensitive Hofer-Zehnder capacity for $D^*Q$: the second homotopy group is trivial, hence~\cite{Albers-Frauenfelder-Oancea} does not apply, and the manifold is rationally essential in the sense of Gromov \cite{Gromov-83}, hence~\cite{Frauenfelder-Pajitnov} does not apply either.   
  While we cannot yet prove finiteness of the $\pi_1$-sensitive Hofer-Zehnder capacity for $D^*Q$, our Theorem~\ref{thm:orbits-knotted-case} goes strictly beyond the scope of Theorems~\ref{thm:almost-existence-simplified} and~\ref{thm:almost-existence-Thom-simplified}.
\end{example}

The proof of Theorem~\ref{thm:orbits-knotted-case} is based on
Proposition~\ref{prop:almost-sure-existence-T*Q} and on the existence of a
homology class with suitable properties that is described in the next proposition.

Let $j_Y :Y\to Q$ and  $i_Q:Q\to\cL_{0}Q$ be the inclusions, and
$\cF=C_*(\cP_{Q\to\star}\cL_0Q)$ the DG local system over $\cL_{0} Q$
associated to the fibration
$$
\cP_{Q\to\star}\cL_0Q\hookrightarrow \cP_{Q\to\cL_0Q}\cL_0Q\stackrel{\mathrm{ev}}{\longrightarrow} \cL_0Q.
$$

\begin{proposition}\label{prop:class-alpha-Y}
There exists a class $\alpha\in H_*(Q;i_Q^*\cF)$ such that $i_{Q*}\alpha=0$ and $j_{Y!}\alpha$ is not torsion.
\end{proposition}

\begin{proof}[Proof of Theorem~\ref{thm:orbits-knotted-case} assuming Proposition~\ref{prop:class-alpha-Y}]
Let $U$ be the relatively compact domain in $T^*Q$ that contains $\varphi(S)$, and let $\pi_U:U\to Q$. Consider the commutative diagram 
$$
\begin{tikzcd}
  S\arrow[r,"\varphi"]\arrow[d,"\pi\circ\varphi"'] & U\arrow[d,"\pi_{U}"]\\
  Y\arrow[r,"j_Y"] & Q
\end{tikzcd}
$$
Then 
$$
\ker \pi_{U!}\subset \ker \varphi_!\pi_{U!} = \ker(\pi\circ\varphi)_!j_{Y!} \subset 
\ker (\pi\circ\varphi)_*(\pi\circ\varphi)_!j_{Y!} =\ker (d\cdot j_{Y!}). 
$$
The last equality follows from $(\pi\circ\varphi)_*(\pi\circ\varphi)_! = d\cdot\mathrm{Id}$, with $d=\deg (\pi\circ\varphi)$ (see~\cite[Proposition~10.14]{BDHO}). 

Consider the class $\alpha$ in $H_{*}(Q;i_Q^{*}\cF)$ from Proposition \ref{prop:class-alpha-Y}. 
Since $j_{Y!}\alpha$ is not torsion, we find that $\alpha \notin \ker (d\cdot j_{Y!})$,
and therefore
$$
\alpha\notin\ker  \pi_{U!}.
$$
On the other hand $i_{Q*}\alpha=0$, and we conclude by applying Proposition~\ref{prop:almost-sure-existence-T*Q}. 
\end{proof}

\begin{proof}[Proof of Proposition~\ref{prop:class-alpha-Y}] 
By~\cite[Corollary~7.10 and Remark~13.20]{BDHO} we have
$$
H_*(Q;i_Q^*\calf)\simeq H_*(i_Q^*\cP_{Q\to\cL_0Q}\cL_0Q)=H_*(\calp_{Q\ri Q}\call_0Q)=
H_*(\mathrm{Map}(\SS^{2}, Q)).
$$
The embedding $s:\SS^{k}\ri Q$ induces canonically a map $$
s_{\call_0}: \calp_{\SS^{k}\ri
  \SS^{k}}\call_0\SS^{k}\ri \calp_{Q\ri Q}\call_0Q,
  $$ 
  or, equivalently,  
$s_{\call_0}:\Map(\SS^2,\SS^k)\to \Map(\SS^2,Q)$. 
Let $\alpha_0\in H_*(\Map(\SS^2,\SS^k))$ be the class of degree $*=3k-3$ if $k$ is even, respectively $*=2k-2$ if $k$ is odd, constructed in Example~\ref{example:spheres}, respectively in Proposition~\ref{prop:odd-spheres}. We define 
$$
\alpha= s_{\cL_0 *}\alpha_0
$$
and claim that $i_{Q*}\alpha=0$ and $j_{Y!}\alpha$ is not torsion. 

We recall some important properties of the class $\alpha_0$. Let $i_{\SS^k}:\cP_{\SS^{k}\to \SS^{k}}\cL_0 \SS^{k}\to \cP_{\SS^{k}\to \cL_0 \SS^{k}}\cL_0 \SS^{k}$ be the canonical inclusion, and let $j:\star\to \SS^k$ be the inclusion of a basepoint. We proved that $i_{\SS^k*}\alpha_0=0$ and $j_!\alpha_0\neq 0\in H_*(\Omega^2\SS^k)$. Moreover, the same proof shows that $j_!\alpha_0$ is not torsion.   

We prove $i_{Q*}\alpha=0$. We consider the commuting diagram 
  $$
  \begin{tikzcd}
    \cP_{\SS^{k}\to \SS^{k}}\cL_0 \SS^{k}\arrow[d,"i_{\SS^{k}}"]
    \arrow[r,"s_{\call_0}"]&
    \cP_{Q\to Q}\cL_0 Q\arrow[d,"i_Q"]\\
    \cP_{\SS^{k}\to \cL_0 \SS^{k}}\cL_0 \SS^{k}\arrow[r,"s_{\cL_{0}}"]
    & \cP_{Q\to \cL_0 Q}\cL_0 Q,
  \end{tikzcd}
  $$
  with $i_{\SS^k}$ and $i_Q$ being the canonical inclusions. 
    Then $i_{Q*}\alpha=i_{Q*}s_{\cL_{0}*}\alpha_0=s_{\cL_{0}*}i_{\SS^{k}*}\alpha_{0}=0$ as a consequence of $i_{\SS^k*}\alpha_0=0$, which in turn holds because $\deg \alpha_0>k$ (see also the proof of Theorem~\ref{thm:abundant}).

We now prove that $j_{Y!}\alpha$ is not torsion.

The map of fibrations $s_{\call_0}: \calp_{\SS^{k}\ri
  \SS^{k}}\call_0\SS^{k}\ri \calp_{Q\ri Q}\call_0Q$ factors
through the pullback by $s:\SS^{k}\ri Q$ of the target fibration, i.e., 
we have a diagram
\begin{equation}\label{eq:last-diagram}
    \begin{tikzcd}
  \Omega^2 \SS^k\arrow[d]\arrow[r,"\ol{s}"]&
  \Omega^2 Q\arrow[d]\arrow[r,equal]&
  \Omega^2 Q\arrow[d]&
  \\
  \cP_{\SS^{k}\to\SS^{k}}\cL_0\SS^{k} 
  \arrow[d, "\mathrm{ev}"]\arrow[r,"\ol{s}"]&
  s^{*}\big(\cP_{Q\to Q}\cL_0Q\big)
  \arrow[d]\arrow[r,"\widetilde{s}"]&
  \cP_{Q\to Q}\cL_0 Q
  \arrow[d,"\mathrm{ev}"]\\
  \SS^{k}\arrow[r,equal]&\SS^{k}\arrow[r,"s"]& Q\\
\end{tikzcd}
\end{equation}
with
$$
s_{\call_0} = \tilde{s}\circ\bar{s}.
$$

  Let $\beta=\bar{s}_*\alpha_0 \in H_*(s^*(\calp_{Q\ri Q}\call_0Q))$. In order to show that $j_{Y!}\alpha$ is not torsion, we first prove that $j_!\beta$ is not torsion,  with  $j: \star\ri \SS^k$ the inclusion of the basepoint.

\begin{lemma}\label{lemma:last-lemma}
  If the normal bundle of $Y$ is trivial, then $j_!\beta$ is not torsion in $H_*(\Omega^2Q)$.
\end{lemma}

\begin{proof}
  We have that $j_! \beta= \bar{s}_* j_! \alpha_0$ by Proposition \ref{prop:shriek-commute}. 
  We also know that $j_!\alpha_0$ is not torsion. It therefore suffices to show that $\bar{s}_* : H_*(\Omega^2\SS^k)\to H_*(\Omega^2Q)$ is injective. 

  For this purpose, we use the Thom-Pontryagin construction. The normal bundle of $Y$ being
  trivial, there is a tubular neighborhood of $Y$ that can be identified
  with $Y\times \D^{k}$, and we have a projection $Q\xrightarrow
  p\overline{Q}=(Y\times\D^k)/(Y\times\partial\D^{k})$, where $\overline{Q}$ is the
  space in which the complement of this neighborhood is collapsed to a
  point.

  We can assume without loss of generality that $X$ intersects the tubular neighborhood of $Y$
  along a fiber, and that $X$ intersects $Y$ positively.

  The image of the projection of $X$ in $\overline{Q}$ is then a single
  sphere $\SS^{k}=\D^{k}/\partial\D^{k}$. Moreover, the second
  projection $Y\times  \D^{k}\to \D^k$ induces a projection $\overline{Q}\xrightarrow{ p_{2}} \SS^{k}$, and we have the following diagram
  \begin{equation*}
  \xymatrix{
    \SS^k\ar[r]^-s \ar[drr]^>>>>>>{\delta}|!{[r];[dr]}\hole & Q \ar[d]^<<<{p} \\
    & \overline{Q} \ar[r]_-{p_2} & \SS^k
   }
  \end{equation*}
  where the diagonal map $\delta$ has topological degree $1$. This in turn induces the 
  diagram
  \begin{equation*}
  \xymatrix{
    \Omega^2\SS^k\ar[r]^-{\bar{s}} \ar[drr]^>>>>>>{\bar\delta}|!{[r];[dr]}\hole & \Omega^2 Q \ar[d]^<<<{\bar{p}} \\
    & \Omega^2\overline{Q} \ar[r]_-{\bar p_2} & \Omega^2\SS^k
   }
  \end{equation*}
Since $\delta$ has
topological degree $1$, it is homotopic to $\Id_{\SS^k}$ and therefore $\bar{\delta}$
is also homotopic to $\Id$. It follows that 
${{\ol{p}}_{2}}_{*} {\bar{p}}_{*} {\bar{s}}_{*}= \mathrm{\Id}
$
and in particular that $\bar{s}_*$ is injective. 

This completes the proof of the lemma. 
\end{proof}

We look now at the rightmost part of the diagram \eqref{eq:last-diagram}. We prove
\begin{lemma}\label{lem:very-last-lemma}
    Let $d\geq 2$ be an integer and suppose that the manifold $Y$ is $(d+1)$-connected. 
    Given a class $\beta\in H_{d+k}(s^*(\calp_{Q\to Q}\call_0Q))$ such that $j_!\beta$ is not torsion in $H_d(\Omega^2Q)$, the class $j_{Y!}\tilde{s}_*\beta$ is not torsion in $H_{d}(\calp_{Q\to Y}\call_0 Q)$.
\end{lemma}

\begin{proof}
  Since $X$ and $Y$ intersect transversely at a single point, we may assume w.l.o.g.  that 
  the fiber product $\SS^{k} \ {_s}\!\times_{j_Y} Y$  equals the basepoint $\star$.
  Consider the following diagram
  $$
  \begin{tikzcd}
   \star
   \arrow[r,"j^{(Y)}"]\arrow[d,"j=j^{(\SS^k)}"']&
    Y\arrow[d,"j_Y"]\\
    \SS^{k} \arrow[r,"s"]& Q
  \end{tikzcd}
  $$

  By \cite[Proposition~10.18]{BDHO}, we have $j_{Y!}s_*= j^{(Y)}_*j_!$. This relation is valid for any DG coefficients on $Q$. For the coefficients $C_*(\Omega^2 Q)$  defined by the fibration $\Omega^2Q\to \calp_{Q\to Q}\call_0Q\to Q$  the fibration theorem \cite[Theorem 7.2]{BDHO} transforms the relation above into the commutative diagram 
  $$
  \begin{tikzcd}
   H_*(\Omega^2Q)
   \arrow[r,"j^{(Y)}_*"]&
    H_{*}(\calp_{Q\to Y}\call_0 Q)  \\
    H_{*+k}(s^*(\cP_{Q\to Q}\cL_0Q)) \arrow[r,"\tilde{s}_*"] \arrow[u,"j_!"]& 
    H_{*+k}(\cP_{Q\to Q}\cL_0Q)\arrow[u,"j_{Y!}"']
  \end{tikzcd}
  $$

  We infer that $j_{Y!}\tilde{s}_*\beta = j_*^{(Y)}j_!\beta$ and since by hypothesis $j_!\beta$ is not torsion it suffices to prove that $j_*^{(Y)}$ is injective in degree $d=\mathrm{deg}(j_!\beta)$. This map is induced in homology by the inclusion of the fiber $\Omega^2Q\hookrightarrow \calp_{Q\ri Y}\call_0Q$ for the fibration $\calp_{Q\ri Y}\call_0Q\stackrel{\mathrm{ev}}{\longrightarrow}Y.$ Examining the Leray-Serre spectral sequence of this fibration we notice that, since $Y$ is $(d+1)$-connected, this inclusion induces an isomorphism in homology in degrees $\leq d$. The proof of the lemma is now complete.
  \end{proof}

\noindent {\it End of the proof of Proposition~\ref{prop:class-alpha-Y}.} We recall the notation $\beta=\bar s_*\alpha_0$. By Lemma~\ref{lemma:last-lemma} we know that $j_!\beta$ is not torsion. We now apply  Lemma~\ref{lem:very-last-lemma} to the class $\beta$ for $d=k-2$ if $k$ is odd, respectively $d=2k-3$ if $k$ is even, and infer that
$$
j_{Y!}\tilde s_*\beta =j_{Y!}\tilde s_* \bar s_*\alpha_0 = j_{Y!} s_{\cL_0*} \alpha_0=j_{Y!} \alpha
$$
is not torsion. This concludes the proof of the proposition.
\end{proof}

\bibliographystyle{alpha}
\bibliography{000_dgfloer2}

\bigskip

{\small

\medskip
\noindent Jean-François BARRAUD \\
\noindent Institut de mathématiques de Toulouse, Université Paul Sabatier – Toulouse
III, 118 route de Narbonne, F-31062 Toulouse Cedex 9, France.\\
{\it e-mail:} barraud@math.univ-toulouse.fr
\medskip

\medskip
\noindent Mihai DAMIAN \\
\noindent Université de Strasbourg, Institut de recherche mathématique avancée, IRMA, Strasbourg, France.\\
{\it e-mail:} damian@math.unistra.fr 
\medskip

\medskip
\noindent Vincent HUMILI\`ERE \\
\noindent Sorbonne Université and Université de Paris, CNRS, IMJ-PRG, F-75006 Paris, France.\\
\noindent \& Institut Universitaire de France.\\
{\it e-mail:} vincent.humiliere@imj-prg.fr
\medskip

\medskip
\noindent Alexandru OANCEA \\
\noindent Université de Strasbourg, Institut de recherche mathématique avancée, IRMA, Strasbourg, France.\\
{\it e-mail:} oancea@unistra.fr 
\medskip

}

\end{document}